\begin{document}
\frontmatter
\chapterstyle{front}
\pagestyle{front}
%
\thispagestyle{empty}
{
%
%
\vspace*{1cm}

\usefont{T1}{cmss}{bx}{n}  \fontsize{29.86pt}{0pt} \selectfont
\begin{center}
Notes  \\[6mm] on the Sigma invariants
\end{center}
\usefont{T1}{cmss}{bx}{n}  \fontsize{20.74pt}{0pt} \selectfont
\vspace{1cm}
\begin{center}
Version 2
\end{center}

\vspace*{4cm}

\usefont{T1}{cmss}{m}{n} \fontsize{12pt}{0pt} \selectfont 
\begin{center}
edited by Ralph Strebel, \par
\vspace{0.3cm}
professor emeritus of Mathematics\par
\vspace{0.3cm}
at the University of Fribourg
\end{center}
\vspace{4 cm}
\usefont{T1}{cmss}{m}{sl} \fontsize{12pt}{0pt} \selectfont 

\hfill February 2013
}
\newpage
\thispagestyle{empty}
 \hspace*{0cm}
\includegraphics[width=4.0cm]{Frontispiece.diaL10n43.eps}\par
\vspace*{-1.4cm} \hspace*{4.9cm}
\begin{minipage}[c]{7cm}{
\begin{align*}
r =\;&a^2b^{-1}a^{-1}baba^{-2}bab^{-2}aba^{-2}bab\;\cdot\\
&a^{-1}b^{-1}a^2b^{-1}a^{-1}b^{-1}aba^{-2}baba^{-2}b\;\cdot\\
&ab^{-1}a^{-1}b^{-1}a^2b^{-1}a^{-1}b^2a^{-1}b^{-1}a^2b^{-1}\cdot \\
&a^{-1}b^{-1}aba^{-2}
baba^{-1}b^{-1}a^2b^{-1}a^{-1}b^{-1}
\end{align*}
}
\end{minipage}

\vspace*{-1.0cm}
\begin{figure}[htb]
\psfrag{1}{\hspace*{-1.5mm} \tiny $1$}
\psfrag{2}{  \hspace*{-2.3mm}\tiny $2$}
\psfrag{3}{\hspace*{-0.7mm} \tiny $3$}
\psfrag{4}{\hspace*{-0.2mm}\tiny $4$}
\psfrag{5}{\hspace*{-2.5mm} \tiny $5$}
\psfrag{6}{  \hspace*{-2.1mm}   \tiny $6$}
\psfrag{7}{\hspace*{-2.2mm} \tiny $7$}
\psfrag{8}{\hspace*{-1.2mm} \tiny $8$}
\psfrag{9}{\hspace*{-1.7mm} \tiny $9$}
\psfrag{10}{\hspace*{-1.9mm} \tiny $10$}
\psfrag{11}{\hspace*{-1.8mm} \tiny $11$}
\psfrag{12}{  \hspace*{-1.3mm}\tiny $12$}
\psfrag{13}{\hspace*{-1.5mm} \tiny $13$}
\psfrag{14}{\hspace*{-0.2mm}\tiny $14$}
\psfrag{15}{\hspace*{-2.3mm} \tiny $15$}
\psfrag{16}{  \hspace*{-3.2mm}   \tiny $16$}
\psfrag{17}{\hspace*{-1.5mm} \tiny $17$}
\psfrag{18}{\hspace*{-2.0mm} \tiny $18$}
\psfrag{19}{\hspace*{-2.7mm} \tiny $19$}
\psfrag{20}{\hspace*{-1.7mm} \tiny $20$}
\psfrag{21}{\hspace*{-1.7mm} \tiny $21$}
\psfrag{22}{  \hspace*{-1.5mm}\tiny $22$}
\psfrag{23}{\hspace*{-1.4mm} \tiny $23$}
\psfrag{24}{\hspace*{-0.5mm}\tiny $24$}
\psfrag{25}{\hspace*{-1.6mm} \tiny $25$}
\psfrag{26}{  \hspace*{-2.3mm}   \tiny $26$}
\psfrag{27}{\hspace*{-1.3mm} \tiny $27$}
\psfrag{28}{\hspace*{-0.1mm} \tiny $28$}
\psfrag{29}{\hspace*{-1.7mm} \tiny $29$}
\psfrag{30}{\hspace*{-1.7mm} \tiny $30$}
\psfrag{31}{\hspace*{-0.6mm} \tiny $31$}
\psfrag{32}{  \hspace*{-1.8mm}\tiny $32$}
\psfrag{33}{\hspace*{-1.8mm} \tiny $33$}
\psfrag{34}{\hspace*{-0.9mm}\tiny $34$}
\psfrag{35}{\hspace*{-1.4mm} \tiny $35$}
\psfrag{36}{  \hspace*{-2.5mm}   \tiny $36$}
\psfrag{37}{\hspace*{-1.8mm} \tiny $37$}
\psfrag{38}{\hspace*{-1.7mm} \tiny $38$}
\psfrag{39}{\hspace*{-1.3mm} \tiny $39$}
\psfrag{40}{\hspace*{-1.7mm} \tiny $40$}
\psfrag{41}{\hspace*{-0.7mm} \tiny $41$}
\psfrag{42}{\hspace*{-1.6mm} \tiny $42$}
\psfrag{43}{\hspace*{-1.8mm} \tiny $43$}
\psfrag{44}{\hspace*{-0.6mm}\tiny $44$}
\psfrag{45}{\hspace*{-1.5mm} \tiny $45$}
\psfrag{46}{  \hspace*{-1.7mm}   \tiny $46$}
\psfrag{47}{\hspace*{-1.7mm} \tiny $47$}
\psfrag{48}{\hspace*{-1.2mm} \tiny $48$}
\psfrag{49}{\hspace*{-1.4mm} \tiny $49$}
\psfrag{50}{\hspace*{-2.6mm} \tiny $50$}
\psfrag{51}{\hspace*{-1.6mm} \tiny $51$}
\psfrag{52}{  \hspace*{-1.7mm}\tiny $52$}
\psfrag{53}{\hspace*{-2mm} \tiny $53$}
\psfrag{54}{\hspace*{-0.6mm}\tiny $54$}
\psfrag{55}{\hspace*{-1.7mm} \tiny $55$}
\psfrag{56}{  \hspace*{-2.8mm}   \tiny $56$}
\psfrag{57}{\hspace*{-2mm} \tiny $57$}
\psfrag{58}{\hspace*{-1.4mm} \tiny $58$}
\psfrag{59}{\hspace*{-1.4mm} \tiny $59$}
\psfrag{60}{\hspace*{-2mm} \tiny $60$}
\psfrag{61}{\hspace*{-2.0mm} \tiny $61$}
\psfrag{62}{  \hspace*{-0.8mm}\tiny $62$}
\psfrag{63}{\hspace*{-1.2mm} \tiny $63$}
\psfrag{64}{\hspace*{-0.4mm}\tiny $64$}
\psfrag{65}{\hspace*{-1.0mm} \tiny $65$}
\psfrag{66}{  \hspace*{-2.8mm}   \tiny $66$}
\psfrag{67}{\hspace*{-2.8mm} \tiny $67$}
\psfrag{68}{\hspace*{-1.9mm} \tiny $68$}
\psfrag{aa}{\hspace*{-1.0mm} \scriptsize $a$}
\psfrag{bb}{\hspace*{-1.0mm} \scriptsize $b$}

\vspace*{5mm}
\begin{center}
\includegraphics[width=11cm]{Frontispiece.pathL10n43.eps}
\end{center}
\vspace*{-0.5cm}\hspace*{0.0cm}
\psfrag{aa}{\hspace*{-2.0mm} \scriptsize $\chi(a)$}
\psfrag{bb}{\hspace*{-2.0mm} \scriptsize $\chi(b)$}
\psfrag{ori}{\hspace*{-2.0mm} \scriptsize $(0,0)$}
\includegraphics[height=4cm]{Frontispiece.invariantL10n43.eps} 
\end{figure}
\bigskip

\begin{center}
\sffamily
Diagram of the link L10n43, defining relator of its group,\par
path tracing out the relator and invariant of the group
\end{center}
%
\chapter{Preface}
\label{chap:Notice-reader}
%
\markboth{Preface}{Preface}

Towards the end of the 1980s
Robert Bieri and I started to write up a monograph on the Sigma invariants.
The first such invariant had been introduced in 1980 in a paper 
that dealt mainly with soluble groups 
and culminated in a characterization of the \emph{finitely related} metabelian groups 
among the \emph{finitely generated} metabelian groups (\cite{BiSt80}).
Several years later, Robert Bieri, Walter Neumann and R. Strebel invented new Sigma invariants,
defined for all finitely generated groups, 
and showed that they had implications far beyond the realm of soluble groups.
The theory put forward in  \cite{BNS} enables one, in particular, 
to characterize those normal subgroups $N$ of a finitely generated group $G$
that are finitely generated (as groups) and contain the derived group $G'$ of $G$.
At about the same time, 
Ken Brown, Gilbert Levitt, Ga{\"e}l  Meigniez and Jean-Claude Sikorav 
found alternate definitions for one of the Sigma invariants studied in \cite{BNS} 
(see \cite{Bro87b}, \cite{Lev87}, \cite{Mei87} and \cite{Sik87}).
And a bit later, 
Robert Bieri and his student Burkhardt Renz succeeded in defining higher dimensional Sigma-invariants 
and in establishing analogues of some of the main theorems of \cite{BNS}
(see \cite{BiRe88} and \cite{Ren89}).
Among their results, is a characterization of the normal subgroups $N$ of a finitely presented group $G$
that are finitely presentable and contain $G'$.
\smallskip

\textbf{1.}\quad By the end of the 1980s,
time seemed ripe for a comprehensive account of the various kinds of Sigma invariants 
developed in the previous decade and of their mutual relations.
Robert Bieri and I started work on such a memoir in 1988 and completed a first version in 1992.
This version consisted of 4 chapters, written up in detail, 
and 3 appendices, the first one dealing with metabelian groups, 
the original motivation for the creation of the theory,
the second one sketching the higher dimensional theory,
the third one being a collection of notes.
The first two of these appendices indicated
that this version was only half-finished, and so we tried to complete it in the following years. 
But we did not reach this goal.
\smallskip

\textbf{2.}\quad 
After my retirement in 2007, 
I made a new attempt at completing the monograph.
My plan was to write a memoir with roughly the same scope,
but to incorporate new developments that had taken place  in the intervening years,
and to present applications to other parts of Group Theory and attractive examples 
as early as feasible.
These tasks turned out to be more time consuming than expected,
and so I have decided to publish the monograph in several installments of which this is the second one.
\smallskip

\textbf{3.}\quad 
In presenting these \emph{Notes on the Sigma Invariants}
it is a pleasure to acknowledge the help I received from many colleagues.
My foremost thanks go to Robert Bieri.
It is with him that I tried to fathom the structure of finitely presented metabelian groups
in the second half of the 1970s, a problem that led us in 1980 to introduce the first Sigma invariant.
This introduction was followed in the 1980s by several joint papers dealing 
with various aspects of the invariant and also generalizations thereof,
and culminated in 1987 with the introduction of the invariant $\Sigma_{G'}(G)$,
this time in collaboration with Walter Neumann.
My debt to Robert is also substantial in another respect:
the first chapter of these \emph{Notes} is in content and form close to  Chapter I of the monograph \cite{BiSt92}
which, in turn, is an outgrowth of a lecture Robert gave in the second half of the 1980s.

I owe also many thanks to other colleagues. 
Their suggestions have served me well; I list their names in alphabetical order:
Roger Alperin,
Gilbert Baumslag,
Markus Brodmann,
Ken Brown,
Gerhard Burde,
Ian Chiswell,
Laura Ciobanu,
Yves de Cornulier,
Reinhard Diestel,
Nathan Dunfield,
Martin J. Dunwoody, 
Ross Geoghegan,
Slava Grigorchuk,
John Groves,
Pierre de la Harpe,
Jim Howie,
Manfred Karbe,
Desi Kochloukova,
J{\"u}rg Lehnert,
Charles Livingston,
John Meier,
Chuck Miller III,
Gaël Meigniez,
Peter M. Neumann,
Derek J. S. Robinson
and
Hanspeter Scherbel.
To all of them, I express my gratitude.
\smallskip

\textbf{4.}\quad 
Much of this text has been written in a small village in the Swiss Alps.
I maintained contact with the world outside mostly through mails,
but had the good luck of being invited to two stimulating meetings:
the first one, the \emph{Finitely Presented Solvable Groups Conference},
taking place in March 2011 at the City College of New York
and being organized by Gilbert Baumslag, Stuart Margolis, Gretchen Ostheimer, Vladimir Shpilrain, 
and Sean Cleary.
The second one, a workshop held at the Erwin Schr{\"o}dinger International Institute in Vienna,
organized by Goulnara Arzhantseva and Mark Sapir in December 2011  
and entitled \emph{Infinite Monster Groups}.
I thank all the organizers for giving me these opportunities.

Almost all books contain errors, and these \emph{Notes} will be no exception. 
I thus encourage readers to communicate to me misprints and errors,
but also comments, criticism, questions and suggestions.
Their contributions will be well-come.
\bigskip

\hfill
Feldis,  February  2013
\vspace{3cm}

{\small
\begin{center}
D\'epartement de Math\'ematiques, Chemin du Mus\'ee 23, \\
 Universit\'e de Fribourg, 1700 Fribourg (Switzerland)\\
 ralph.strebel@unifr.ch
 \end{center}
 }

\setcounter{tocdepth}{2}
\markboth{Contents}{Contents}
\tableofcontents
%
%
\chapter{Introduction}
\label{chap:Intro-Part-1}
%
{
\renewcommand{\theequation}{\arabic{equation}}
\renewcommand{\thethmNN}{\arabic{thmNN}}
\markboth{Introduction to Part 1}{Introduction to Part 1}
The origin of the Sigma invariants lies in an answer to a question G. Baumslag had raised in 1973.
In the early 1970s, G. Baumslag and V. N. Remeslennikov had discovered independently 
that there are many more finitely generated metabelian groups with a finite presentation
that one might have suspected.
They proved, in particular, that \emph{every finitely generated metabelian group embeds into a a finitely presented metabelian group} (\cite{Bau73}, \cite{Rem73a}).
G. Baumslag therefore to ask (\cite[p.\;70]{Bau74}):
\begin{problemNN}
\label{problem:G.Baumslag}
Is there any way of discerning finitely presented metabelian groups
from the other finitely generated metabelian groups?
\end{problemNN}
\index{Baumslag, G.}
\index{Remeslennikov, V. N.}

Schur's multiplicator $H_2(-,\Z)$ is not sufficiently discriminating to settle this problem.
This fact called for a search of additional  conditions 
that a finitely presentable metabelian or, more generally, soluble group must satisfy.
One such condition was published by R. Bieri and R. Strebel  in 1978 (cf.\;\cite[Thm.\;A]{BiSt78}):
\setcounter{thmNN}{1}
\begin{thmNN}
\label{thm:BiSt78-Intro}
Assume $G$ is a soluble group  and $N$ is a normal subgroup with infinite cyclic quotient.
Choose an element $t \in G$ that generates a complement of $N$.
If $G$ can be finitely presented, 
then $G$ is an ascending HNN-extension over a \emph{finitely generated base group} $B$ contained in $N$.
\index{Bieri, R.}
\index{Strebel, R.}

More precisely, there exists a sign $\varepsilon$ 
so that $u = t^\varepsilon$ and $B$ satisfy the conditions
\begin{equation}
\label{eq:Ascending-HNN-Intro}
B  \subseteq u B u^{-1} \quad\text{and}\quad \bigcup\nolimits_{j \geq 0} u^j B u^{-j} = N.
\end{equation}
\end{thmNN}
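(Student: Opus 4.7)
I would begin by choosing a finite presentation $G = \langle t, b_1, \ldots, b_n \mid r_1, \ldots, r_m \rangle$ in which $\chi(t) = 1$ and $\chi(b_i) = 0$ for the quotient map $\chi: G \to G/N = \Z$. Since $N$ is the normal closure of $\{b_1, \ldots, b_n\}$, it is generated as an abstract group by the conjugates $t^{-k} b_i t^k$, $k \in \Z$. The task is then to pin down a sign $\varepsilon \in \{\pm 1\}$ and an integer $K$ so that $B := \langle t^{-k} b_i t^k : 0 \leq \varepsilon k \leq K \rangle$ satisfies $u^{-1} B u \subseteq B$ for $u := t^\varepsilon$ and has ascending union $N$.

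\textbf{Module-theoretic heart.} The critical technical step is a \emph{tameness lemma} for the abelianization $M := N/[N,N]$, regarded as a $\Z[t^{\pm 1}]$-module via conjugation: if $G$ is finitely presented, then $M$ is finitely generated as a module over $\Z[t]$ or over $\Z[t^{-1}]$. I would approach this by working in the Cayley $2$-complex of the chosen presentation equipped with the height function $\chi$. Each relator traces a loop in $\Z$ whose partial $\chi$-sums lie in a bounded interval, so after cyclic rearrangement these partial sums can be assumed non-negative (or all non-positive). Reading such a rearranged relator produces an identity expressing $t^{-(M+1)} b_i t^{M+1}$ modulo $[N,N]$ as an integer combination of conjugates $t^{-k} b_l t^k$ with $0 \leq k \leq M$. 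The finite collection of relators thus witnesses that $M$ is finitely generated as a module over $\Z[t]$ or $\Z[t^{-1}]$, and the sign $\varepsilon$ is exactly the one for which the reading is admissible.

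\textbf{Descending induction using solubility.} Solubility enters when promoting this abelian conclusion to all of $N$. Consider the derived series $N = N^{(0)} \supseteq N^{(1)} \supseteq \cdots \supseteq N^{(d)} = 1$. Each factor $N^{(k)}/N^{(k+1)}$ is abelian and carries a natural $\Z[t^{\pm 1}]$-module structure, and each quotient $G/N^{(k)}$ is again a finitely presented soluble group (a standard consequence of the soluble setting). Applying the tameness lemma at every layer with the \emph{same} sign $\varepsilon$, one obtains finitely generated abelian $\bar B_k \subseteq N^{(k)}/N^{(k+1)}$ stable under $t^{-\varepsilon}$-conjugation and with ascending union the entire layer. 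I then build $B$ by descending induction: set $B^{(d)} = 1$, and obtain $B^{(k)}$ by adjoining to $B^{(k+1)}$ finitely many lifts in $N^{(k)}$ of generators of $\bar B_k$. Then $B := B^{(0)}$ is finitely generated, satisfies $u^{-1} B u \subseteq B$, and has $\bigcup_{j \geq 0} u^j B u^{-j} = N$.

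\textbf{Principal obstacle.} The main obstacle is the tameness lemma of the second paragraph --- the translation of the combinatorial hypothesis ``$G$ finitely presented'' into the algebraic conclusion ``$M$ finitely generated over $\Z[t^\varepsilon]$''. A second, more subtle difficulty is securing a \emph{uniform} choice of sign $\varepsilon$ throughout the derived series of $N$: one must rule out the possibility that successive layers $N^{(k)}/N^{(k+1)}$ are tame only at opposite signs, and here solubility combines with finite presentability of $G$ in an essential way, since the sign determined by the relators of $G$ must be inherited consistently by every quotient that appears in the induction.
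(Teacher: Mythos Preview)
Your approach is genuinely different from the paper's, and it has two serious gaps.

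\textbf{The tameness lemma is not established by your sketch.} You claim that cyclically rearranging each relator so that its partial $\chi$-sums are all non-negative (or all non-positive) detects whether $M=N/[N,N]$ is finitely generated over $\Z[t]$ or over $\Z[t^{-1}]$. But for \emph{any} relator one can do \emph{both} rearrangements (start at a minimum, respectively a maximum, of the $\chi$-track), so this manoeuvre carries no asymmetry and cannot be the source of the sign $\varepsilon$. Concretely, for $G=\langle a,t\mid tat^{-1}=a^{2}\rangle$ the single relator can be cycled to have non-negative or non-positive track, yet $M=\Z[1/2]$ is finitely generated over $\Z[t^{-1}]$ and not over $\Z[t]$. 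What the bounded-height rewriting of the relators actually gives you is that $G$ is an HNN-extension with finitely generated base $B$ and finitely generated associated subgroups $S,T\subseteq B$; it does \emph{not} by itself force $S=B$ or $T=B$.

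\textbf{The inductive step is circular.} You assert that each quotient $G/N^{(k)}$ is again finitely presented as ``a standard consequence of the soluble setting''. This is false in general: finitely presented soluble groups can have infinitely related soluble quotients (Abels' group, discussed in the paper as Example~\ref{example:Abels-matrix-group}). In the particular case at hand, $G/N'$ \emph{is} finitely presented, but the only way I know to see this is to first prove that $G$ is an ascending HNN-extension over a finitely generated base --- which is the theorem you are trying to prove. So the induction does not get started, and the ``uniform sign'' worry you flag is a symptom of the same circularity.

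\textbf{What the paper does instead.} The paper separates the argument into two independent steps. First (Theorem~\ref{thm:FP2-groups-and-HNN-extensions}), using only finite presentability and no solubility, one shows that $G$ is an HNN-extension $\langle B,y\mid ysy^{-1}=\mu(s),\ s\in S\rangle$ with $B$, $S$, $T=\mu(S)$ all finitely generated and contained in $N$: the relators, after conjugation, involve only conjugates $t^{\ell}b_it^{-\ell}$ with $0\le\ell\le m+1$, and one takes $B$ to be the subgroup generated by these. Second (Proposition~\ref{prp:Soluble-FP2-groups-and-HNN-extensions} and Lemma~\ref{lem:Existence-free-subgroups-in-HNN-extension}), solubility enters only to force the extension to be ascending: if both $S\ne B$ and $T\ne B$, one exhibits a free subgroup of rank~$2$ inside $G$ by a Britton's-Lemma argument. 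This is where the sign $\varepsilon$ is born, and it arises from the global structure of $G$ rather than from reading individual relators.
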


Suppose now that $G$ is a finitely presented soluble group
whose abelianization $G_{\ab} = G/G'$ has torsion-free rank $r_0(G_{\ab})$ greater than 1.
Then $G$ contains infinitely many normal subgroups $N$
to which Theorem \ref{thm:BiSt78-Intro} can be applied.
To express the impact of all these applications, 
one needs a space made up of all possible couples $(N, \varepsilon)$
and a subset of this space that records the couples 
for which requirement \eqref{eq:Ascending-HNN-Intro} is satisfied.
Such a space is the rational sphere $S_\Q(G)$ associated to $G$ 
and the subset recording the answers is a primitive version of an Sigma invariant.
In the refined version,
the rational sphere $S_\Q(G)$ is replaced  by its ambient real sphere $S(G)$
and conditions \eqref{eq:Ascending-HNN-Intro}
are supplemented by a new type of requirement
which makes sense for every point of the sphere $S(G)$. 
This new condition was discovered around 1985 by Bieri, Neumann and Strebel;
the answer to Problem \ref{problem:G.Baumslag},
as given in \cite{BiSt80}, uses a weaker substitute. 
\index{Neumann, W. D.}

\subsection{Sigma invariants and their ambient sphere}
\label{ssec:Intro-invariants}

I next describe some invariants that will play a rôle in this monograph, 
beginning with their ambient sphere.

\subsubsection*{The sphere $S(G)$}
Common to all invariants denoted by Sigma is the fact
that they associate to a group $G$ a subset of a sphere $S(G)$.
Typically, $G$ is an infinite, finitely generated group;
for certain invariants the group must satisfy more stringent conditions.
The sphere $S(G)$ is derived from the real vector space $\Hom(G, \R)$; 
this vector space consists of all homomorphisms  $\chi \colon G \to \R$ of $G$ 
into the additive group of the field $\R$.
In case $G$ is the fundamental group of a path connected topological space $X$,
$\Hom(G, \R)$  is canonically isomorphic to 
$\Hom( \Cohom_{1}(X, \Z), \R)$ and  to $\Cohom^1(X, \R)$.

Each non-zero homomorphism $\chi \colon G \to \R$ gives rise to a submonoid of $G$,
namely
\begin{equation}
\label{eq:Definition-G-sub-chi}
G_\chi = \{ g \in G \mid  \chi(g) \geq 0 \} = \chi^{-1} ( [0, \infty)).
\end{equation}
This submonoid  does not change if $\chi$ is replaced by a positive multiple;
the monoids are therefore parametrized by the open rays in $\Hom(G, \R)$
emanating from the origin.
These rays are the points of the space $S(G)$, 
called \emph{character sphere} of $G$.
If $G$ is finitely generated, 
the real vector space $\Hom(G, \R)$ is finite dimensional 
and carries a canonical topology induced by its norms.
The space $S(G)$ equipped with the quotient topology is then homeomorphic 
to the unit sphere in a Euclidean vector space of the appropriate dimension. 

\subsubsection*{Homotopical and homological invariants}
There are two kinds of  Sigma invariants.
Those of the first sort depend only on a group $G$ and are often called \emph{homotopical},
partly because the definition of some of them is
in terms of homotopical properties of a space associated to $G$,
partly because they depend only on $G$, 
similar to the homotopy groups $\pi_{n} (X, x_{0})$ which depend only on a pointed space $(X,x_{0})$.
The invariants of the other type depend on a couple $(G, A)$ consisting of a finitely generated group $G$
and a $\Z{G}$-module $A$ or, more generally, a $G$-operator group.
These invariants are called \emph{homological} for analogous reasons.

\subsubsection*{The invariant $\Sigma^0(G;A)$}
\label{Intro_Invariant-Sigma0}
The ancestor of all later invariants was  introduced in \cite{BiSt80} and denoted there by $\Sigma_A(G)$.
It depends on a finitely generated \emph{abelian} group $G$ and a finitely generated $\Z{G}$-module $A$;
it is thus of homological type.
Its definition can be stated easily and shows how the submonoids enter into play;
so I shall give it here in spite of the fact that it plays no r{\^{o}}le 
in this version of the monograph. 
I shall, however, use the notation proposed by R. Bieri and B. Renz in \cite{BiRe88}.
\begin{equation}
\label{eq:Notice-Definition-Sigma-sub-A}
\Sigma^0(G; A)= \{ [\chi| \in S(G) \mid  A \text{ finitely generated over the monoid ring } \Z{G_\chi\}}.
\end{equation}

In terms of this invariant, the main results of \cite{BiSt80}, Theorems A and B, can now be expressed as follows:
\begin{thmNN}
\label{thmNN:Characterization-fp-metabelian-group}
Let $H$ be a finitely generated group, $G= H_{\ab}$ its abelianization
and $A = H'/H''$ the abelianization of its derived group $H'$ viewed as a \emph{left} $\Z{G}$-module via conjugation.
Then $A$ is a finitely generated $\Z{G}$-module and the following statements hold:
\begin{enumerate}[(i)]
\item if $H$ is finitely related and soluble, then $\Sigma^0(G; A) \cup -\Sigma^0(G; A) = S(G)$;
\item if $H$ is metabelian and $\Sigma^0(G; A) \cup -\Sigma^0(G; A) = S(G)$ then $H$ is finitely related.
\end{enumerate}
Here $-\Sigma$ denotes the image of $\Sigma$ under the antipodal map  $[\chi] \mapsto [-\chi]$ of  $S(G)$.
\end{thmNN}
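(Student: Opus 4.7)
My plan runs as follows. First, $A=H'/H''$ is finitely generated as a $\Z G$-module: if $h_1,\dots,h_n$ generate $H$, the commutators $[h_i,h_j]$ normally generate $H'$ in $H$ and their images generate $A$ over $\Z G$ via conjugation. For assertion~(i), I treat first a discrete character $\chi\colon G\to\Z$ by invoking Theorem~\ref{thm:BiSt78-Intro}. Pulling $\chi$ back to $H$ along the projection $\pi\colon H\to G$ and letting $N$ be its kernel, one has $H'\subseteq N$, so the theorem supplies a sign $\varepsilon$, an element $t\in H$ with $\chi(\pi(t))=1$, and a finitely generated subgroup $B\subseteq N$ satisfying $B\subseteq uBu^{-1}$ and $\bigcup_{j\geq 0}u^jBu^{-j}=N$, where $u=t^\varepsilon$. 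To read this off as $[\varepsilon\chi]\in\Sigma^0(G;A)$, I consider $M=(B\cap H')H''/H''\subseteq A$ and prove: (a) $M$ is finitely generated as a $\Z\bar B$-module, where $\bar B=BH'/H'\subseteq G$ --- indeed $C=B/(B\cap H'')$ is a finitely generated metabelian group whose abelian normal subgroup $(B\cap H')/(B\cap H'')\cong M$ has finitely generated abelian quotient $C/M\cong\bar B$, so $M$ is finitely generated over $\Z\bar B$ by the classical fact that the derived subgroup of a finitely generated metabelian group is finitely generated over its abelianization; and (b) $A=\sum_{j\geq 0}\bar u^j\cdot M$, because each $h\in H'$ lies in some $u^jBu^{-j}$ and thus $u^{-j}hu^j\in B\cap H'$. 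Since $\bar B\subseteq\ker\chi$ and $\bar u\in G_{\varepsilon\chi}$, both pieces of data live in $G_{\varepsilon\chi}$, so $A=\Z G_{\varepsilon\chi}\cdot M$. Extending from discrete to arbitrary $\chi\in S(G)$ requires either a valuation-theoretic refinement of Theorem~\ref{thm:BiSt78-Intro} (producing an analogous ascending decomposition for every non-zero real character, with $\langle u\rangle$ replaced by a suitable positive submonoid) or additional closedness information about $S(G)\smallsetminus\Sigma^0(G;A)$, since density of discrete characters alone does not close the gap ($\Sigma^0$ is open but not generally closed).

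For assertion~(ii), $H$ is metabelian, so $H''=1$, $A=H'$, and $H$ fits in an extension $1\to A\to H\to G\to 1$ of abelian groups. The plan is to exhibit an explicit finite presentation of $H$ consisting of a finite generating set of $G$, a finite $\Z G$-generating set of $A$, and finitely many relations of four kinds --- the defining relations of $G$, conjugation laws encoding the $G$-action on the $A$-generators, $\Z G$-module relations on $A$ (automatically finite, since $\Z G$ is Noetherian), and abelianness relations $[y,g\cdot y']=1$ for $A$-generators $y,y'$ and $g\in G$. The essential difficulty is to reduce this last, a priori infinite, family to a finite list, and this is exactly where the Sigma hypothesis is used: compactness of $S(G)$ under the open cover $\{\Sigma^0(G;A),-\Sigma^0(G;A)\}$ yields finitely many characters $\chi_1,\dots,\chi_m$ and signs $\varepsilon_k\in\{\pm 1\}$ such that $A$ is finitely generated over each $\Z G_{\varepsilon_k\chi_k}$, and this finite generation lets one express the abelianness commutators for $g\in G_{\varepsilon_k\chi_k}$ through finitely many ``seed'' commutators; gluing these across the finite cover produces the required finite list.

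The hardest part is clearly~(ii): bridging the qualitative Sigma condition to a concrete finite presentation. Informally, $\Sigma^0\cup-\Sigma^0=S(G)$ is precisely the amount of directional control one needs to compress the infinite abelianness family to a finite one, but making the reduction rigorous --- and verifying that the finitely chosen seed commutators really do entail all the other ones as one glues across the cover --- is the technical heart of the proof.
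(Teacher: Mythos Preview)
The paper does not give a full proof of this theorem; it appears in the Introduction as the main result of \cite{BiSt80}, accompanied only by a brief sketch.

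For part~(i), your route via Theorem~\ref{thm:BiSt78-Intro} is correct for rank~1 characters --- your steps (a) and (b) are sound --- and you rightly flag the passage to arbitrary $\chi$ as unresolved. That is a genuine gap: openness of $\Sigma^0(G;A)$ alone cannot carry you from the dense set of rational points to all of $S(G)$. The paper's sketch avoids this gap by taking a different route: for \emph{every} non-zero $\chi$ (not just rank~1) it decomposes $H'$ as a free product with amalgamation $M_-\star_{M_0}M_+$, the pieces depending on $\chi$; solubility of $H'$ then forces $H'$ to coincide with one factor after readjustment, and the generating sets of $M_\pm$ yield directly that $A$ is finitely generated over $\Z G_\chi$ or over $\Z G_{-\chi}$. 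The analogue of this decomposition for the invariant $\Sigma^1$ is carried out in detail later in the monograph (Section~\ref{sec:Sigma1-and-fp-groups}, see Proposition~\ref{prp:Key-properties-graph-Delta} and Theorem~\ref{thm:Structure-kernel-fp-group}); that construction works uniformly for characters of arbitrary rank, and shows what your approach is missing. The alternative you mention --- polyhedrality of $\Sigma^0(G;A)^c$ via the Bieri--Groves valuation description \cite{BiGr84} --- would also close the gap, but that material is deferred to a later chapter.

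For part~(ii), your plan matches the paper's sketch: compactness of $S(G)$ and openness of $\Sigma^0(G;A)$ furnish a finite subcover, and the resulting local finite-generation data are used to compress the infinite family of commutativity relations to a finite one.
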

\index{Bieri, R.}
\index{Strebel, R.}

\subsubsection*{Generalizing the invariant $\Sigma^0(G; A)$}
In \cite{BiSt80} the invariant $\Sigma^0(G;A)$ is only defined for finitely generated \emph{modules} $A$ 
over the group ring of a finitely generated \emph{abelian} group $G$.
Definition  \eqref{eq:Notice-Definition-Sigma-sub-A},
however,
admits of an obvious generalization to finitely generated modules over an arbitrary finitely generated group.
This extension appeared in \cite{Str81b}.
More radical generalizations were then proposed in \cite{BNS}.
\index{Bieri, R.}
\index{Strebel, R.}
\index{Neumann, W. D.}

These later generalizations are far from evident and so I would like to say a word on their motivation;
to do so,
I give some details of the proof of assertion (i) in Theorem \ref{thmNN:Characterization-fp-metabelian-group}.
This assertion claims
that the invariant $\Sigma^0(H_{ab};H'_{ab})$ of a finitely related soluble group $H$ must
contain at least one point from every pair $\{[\chi], [-\chi] \}$ of antipodal points.
The proof consists, roughly speaking, in representing the derived group  $H'$  of $H$
as a free product with amalgamation $M_{-}\star_{M_{0}} M_{+}$,
the representation depending on the point $[\chi] \in S(G)$.
Since $H'$ is soluble, this representation allows one to infer, after possible readjustment,
that $H'$ is either $M_{+}$ or $M_{-}$.
An analysis of the generating systems of $M_{+}$ and $M_{-}$ then leads to the conclusion
that  $A$ is finitely generated either over $\Z{G_{\chi}}$ or over $\Z{G_{-\chi}}$.

The above outline explains the purpose of $\Sigma^0(G;A)$:
it records local properties of the $\Z{G}$-module $A$.
In the proof of claim (ii)
these local data are then used to find a finite set of relators of the metabelian group $H$.
In this search, 
two properties of the sphere $S(G)$ and the invariant are exploited:
the sphere is compact  and $\Sigma^0(G;A)$ is an open subset of the sphere 
that comes equipped with a canonical open covering.

The method used in proving claim (ii) of Theorem \ref{thmNN:Characterization-fp-metabelian-group}
can be employed to obtain a result dealing with the module $A$.
It states that $A$ is finitely generated as an abelian group 
if, and only if, $\Sigma^0(G;A)$  coincides with $S(G)$.
Actually a relative version of this result is also valid:
instead of the entire sphere one considers great subspheres defined by
\begin{equation}
\label{eq:Definition-S(G,G1)}
S(G, G_{1})  = \left\{ [\chi] \in S(G) \mid \chi(G_{1}) = \{0 \} \right\}.
\end{equation}
Here $G_{1}$ is a subgroup of $G$. 
The relative version now reads:
\begin{equation}
\label{eq:Finite-generation-over-G1}
S(G, G_{1}) \subseteq \Sigma^0(G;A) \Longleftrightarrow 
A \text{ is finitely generated over the subring } \Z{G_{1}}.
\end{equation}
The right hand side of this equation admits of a reformulation in simpler terms:
let $H_{1}$ be the preimage of $G_{1}$ under the canonical projection $\pi \colon H \epi G = H_{\ab}$.
Then the statement on the right  holds precisely  if the group $H_{1}$ is finitely generated.

\subsubsection*{The invariant $\Sigma(G)$}
The characterization just stated is generalized in \cite{BNS} 
from finitely generated metabelian to arbitrary finitely generated groups.
To do so,
new invariants are introduced, two in the homotopical and two in the homological vein. 
Here I shall restrict attention to the most important among them;
it  is called $\Sigma(G)$ or, more explicitly $\Sigma_{G'}(G)$, in \cite{BNS}
and defined like this:
\begin{equation}
\label{eq:Notice-Definition-Sigma-sub-Gprime}
\Sigma_{G'}(G) = 
 \{ [\chi| \in S(G) \mid   G' \text{ is \emph{fg}  over a \emph{fg} submonoid of } G_\chi\}.
\end{equation}
The insistence on $G'$ of being finitely generated, not over the entire,
but over a finitely generated submonoid of $G_{\chi}$,
renders many of the proofs in \cite{BNS} technical and lengthy and may seem contrived.
Notwithstanding this quibble,
the invariant yields the characterization mentioned at the beginning of this section:

\begin{thmNN}
\label{thmNN:Characterization-fg-normal-subgroup}
Let $N$ be a normal subgroup of the finitely generated group $G$  with $G/N$ abelian.
Then $N$ is finitely generated if, and, only if, $S(G,N) \subseteq \Sigma(G)$.

In particular, $G'$ is finitely generated if, and only it, $\Sigma(G) = S(G)$.
\end{thmNN}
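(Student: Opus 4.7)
The plan is to prove the main equivalence $N$ finitely generated $\iff S(G, N) \subseteq \Sigma(G)$, and then to deduce the special case $N = G'$ by observing that $S(G, G') = S(G)$, since every homomorphism $\chi \colon G \to \R$ factors through the abelian quotient $G/G'$ and hence annihilates $G'$.

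For the forward direction, suppose $N$ is finitely generated. Pick $[\chi] \in S(G, N)$, so that $\chi$ vanishes on $N$ and $N$ is entirely contained in $\chi^{-1}(0) \subseteq G_\chi$. I would choose generators $n_1, \ldots, n_k$ of $N$ (all lying in $G_\chi$) together with lifts $t_1, \ldots, t_r \in G$ of a finite generating set of the finitely generated abelian quotient $G/N$. A finitely generated submonoid $M$ of $G_\chi$ can be built from $\{n_1^{\pm 1}, \ldots, n_k^{\pm 1}\}$ together with those $t_j$ or $t_j^{-1}$ whose $\chi$-value is non-negative. The finite collection of commutators $\{[t_i, t_j], [t_i, n_\ell], [n_i, n_j]\}$ normally generates $G'$ in $G$, and by expressing a general conjugating element $g \in G$ as a word in the $t_j$ and $n_i$ and invoking the commutator identity $[gh,k] = {}^g[h,k]\cdot [g,k]$ to push conjugations through, I would reduce each conjugate to a $M$-conjugate of one of these finitely many commutators, witnessing $[\chi] \in \Sigma(G)$.

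For the reverse direction, assume $S(G, N) \subseteq \Sigma(G)$. The tools are that $S(G, N)$ is a closed, hence compact, great subsphere of $S(G)$ and that $\Sigma(G)$ is an open subset of $S(G)$ carrying the canonical open covering whose members are indexed by the finite subsets $F \subseteq G'$ and finite submonoid-generating sets $T \subseteq G$ that witness membership in $\Sigma(G)$. Compactness provides finitely many characters $[\chi_1], \ldots, [\chi_s] \in S(G, N)$ with associated finite data $(M_i, F_i)$, $M_i \subseteq G_{\chi_i}$ a finitely generated submonoid and $F_i \subseteq G'$ finite, such that $G'$ is generated as a group by the $M_i$-conjugates of $F_i$ for each $i$, and such that the corresponding open cover exhausts $S(G, N)$. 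Writing $F = \bigcup_i F_i \subseteq G' \subseteq N$ and $T = \bigcup_i$ (finite generating set of $M_i$), I would propose that $N$ is generated by $F$ together with the elements $tt'^{-1}$ for $t, t' \in T$ mapping to the same class of $G/N$, plus lifts of a finite generating set of the abelian quotient $G/N$.

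The principal obstacle is the final assembly step. A naive appeal to the data of a single $[\chi_i]$ only gives a generating set relative to $G_{\chi_i}$, and one must exploit antipodal pairs $\{[\chi], [-\chi]\}$ both of which lie in $S(G, N) \subseteq \Sigma(G)$: each element of $N$ can be split into parts lying in $G_\chi$ and $G_{-\chi}$, with both parts expressible by the local data. Turning this dichotomy into a uniform finite generating set of $N$ requires the usual chain argument from \cite{BNS}, threading a word representing an arbitrary $n \in N$ through finitely many local charts while controlling how the commutators in $F$ and the conjugations by $M_i$ combine. Reproducing this compactness-plus-dichotomy argument in essentially the form of \cite{BNS} is the bulk of the technical work, and I expect it to be the main difficulty.
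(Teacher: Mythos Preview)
You work throughout with the monoid definition $\Sigma_{G'}(G)$, following the original \cite{BNS} line; the paper instead proves the statement for the Cayley-graph invariant $\Sigma^1(G)$ and separately establishes $\Sigma^1 = \Sigma_{G'}$. For the forward direction the paper's argument is much shorter than your commutator-pushing: include a finite generating set of $N$ in that of $G$; since $\chi(N)=\{0\}$ these generators lie in $G_\chi$ and connect $1$ to every element of $N$ inside $\Gamma_\chi$, after which a letter-reordering argument handles arbitrary $g \in G_\chi$. Equivalently, $G/N$ abelian gives $\Sigma^1(G/N) = S(G/N)$, and this pulls back when the kernel is finitely generated.

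Your converse sketch has real gaps. Several stated intuitions are off: every $n \in N$ already lies in $G_\chi \cap G_{-\chi}$ (since $\chi(n)=0$), so there is no ``splitting'' of $n$ to perform; the antipodal-pair picture is what drives the rank-one case $G/N \cong \Z$ but does not organise the higher-rank argument; and your candidate generating set lists ``lifts of generators of $G/N$'', which are not in $N$ (presumably you meant lifts of generators of $N/G'$). Beyond these slips, the substantive work is entirely deferred to ``the usual chain argument from \cite{BNS}''. The paper takes a different, geometric route due to Bieri: reduce to $G/N \cong \Z^k$, fix $\vartheta\colon G \twoheadrightarrow \Z^k$, and filter the Cayley graph by balls $\Gamma(\rho)$ on $\{g : \|\vartheta(g)\| \le \rho\}$. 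The $\Sigma^1$-criterion at each direction $u \in \s^{k-1}$ supplies path-replacement rules that push a vertex of maximal norm strictly inward; compactness of $\s^{k-1}$ yields finitely many such rules with a uniform inward gain $\varepsilon_0 > 0$ and a bounded replacement-path radius $r$. For $\rho_0 = r^2/(2\varepsilon_0)$ one iterates the replacements to show $\Gamma(\rho_0)$ is connected; $N$ acts freely on this connected graph with finite quotient, whence $N$ is finitely generated by a standard graph-theoretic criterion. Here compactness controls \emph{directions on the sphere}, not an assembly of monoid data $(M_i, F_i)$ as you propose.
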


The question now arises whether there is a simpler, alternate definition of $\Sigma(G)$.
The paper \cite{BNS} contains such a definition (stated in part (ii) of Proposition 3.4),
but at the time of writing the paper 
none of the three authors seems to have noticed the geometric content of this new definition.
Only later did Robert Bieri and, independently, Ga{\"e}l Meigniez 
(see \cite[Section 6]{Mei87} or \cite[Theorem 3.19]{Mei90})   
detect 
that this new definition can be restated by saying 
that a certain subgraph of the Cayley graph of $G$ is connected.
\index{Bieri, R.}
\index{Meigniez, G.}

\subsubsection*{The invariant $\Sigma^1(G)$}
The invariant $\Sigma(G)$  
when expressed in terms of Cayley graphs 
has been denoted by $\Sigma^1(G)$  since the late 1980s; 
the new definition can be stated as follows.

Let $G$ be a finitely generated group,
$\XX$ a finite generating set of $G$
and let $\Gamma(G, \XX)$ be the Cayley graph of $G$ with respect to the generating set $\XX$.
For each non-zero homomorphism $\chi \colon G \to \R$,
let $\Gamma(G, \XX)_{\chi}$ denote the subgraph induced by the submonoid $G_{\chi}$.
Set
\begin{equation}
\label{eq;Notice-Definition-Sigma1}
\Sigma^1(G) = 
\{ [\chi| \in S(G) \mid  \text{the subgraph }  \Gamma(G, \XX)_{\chi}  \text{ is connected}\}.
\end{equation}
Then $\Sigma^1(G)$ does not depend on the choice of the finite generating set $\XX$ 
and it coincides with the invariant $\Sigma(G) =\Sigma_{G'}(G) $ of \cite{BNS}.

The fact that $\Sigma^1(G)$ does not depend on $\XX$ reminds one of a similar fact 
underlying the definition,
in terms of resolutions  $\mathbf{F} \epi \Z$,
of the homology groups $\Cohom_{*}(G,\Z)$ of the $\Z{G}$-module $\Z$;
this fact can be exploited just as one does with resolutions:
often a good choice of a generating set $\XX$ renders the computation of $\Sigma^1(G)$ easy.

So far, the definition of  $[\chi] \in \Sigma^1(G)$ 
in terms of the connectedness of the subgraph $\Gamma(G,\XX)_{\chi}$  has been presented 
as a requirement that is simpler and more familiar than the condition 
stated in definition \eqref{eq:Notice-Definition-Sigma-sub-Gprime} of $\Sigma(G)$.
But this is only one aspect of the new definition.
More important is the discovery
that several fundamental results about $\Sigma^1(G)$ ---
established  in \cite{BNS}  for $\Sigma(G)$ ---
can be proved for $\Sigma^1(G)$ by geometric arguments.
These proofs have been found by Robert Bieri in the late 1980s.
In addition, 
the definition of $\Sigma^1(G)$ in terms of connectedness of subgraphs of the Cayley graph 
indicates an avenue to analogous invariants $\Sigma^m(G)$ in higher dimensions:
loosely speaking,
the Cayley graph gets replaced by an $m$-dimensional space 
and connectivity by $(m-1)$-connectivity.
A main goal of these generalizations
--- carried out by Burkhardt Renz in his thesis \cite{Ren88} under the supervision of Robert Bieri ---
is an analogue of Theorem \ref{thmNN:Characterization-fg-normal-subgroup} for higher dimensions.
In it,
the finite generation of the normal subgroup $N$ is replaced by the property of being of type $F_{m}$.
(A group $N$ is of type $F_{m}$ 
if there is an Eilenberg-MacLane space for $N$ with finite $m$-skeleton.) 
\index{Bieri, R.}
\index{Renz, B.}

%
\subsection{Contents of the actual version}
\label{ssec:Contents-Part-1}
%
This version comprises chapters \ref{ch:Sigma-1-Cayley-graph}, 
\ref{ch:Sigma-1-Cayley-graph-complements} and \ref{ch:Alternative-definitions-overview} 
and a glimpse of two chapters that will be included in later versions.
\subsubsection*{Chapter \ref{ch:Sigma-1-Cayley-graph}}
\label{sssec:Contents-Chapter-A-Intro}
The chapter begin with a few remarks about the sphere, associated to a finitely generated group, 
its geometrical structure and about Cayley graphs.
The invariant $\Sigma^1$ itself is introduced in Section \ref{sec:Introducing-invariant}.
In the following three sections, fundamental results about $\Sigma^1$ are established:
first the so-called $\Sigma^1$-criterion and the openness of $\Sigma^1$ in its ambient sphere.
Theorem \ref{thmNN:Characterization-fg-normal-subgroup} is proved in the next section,
while the final section \ref{sec:Sigma1-and-fp-groups}
deals with a far reaching generalization of claim (i) in Theorem \ref{thmNN:Characterization-fp-metabelian-group}.
The proofs in Chapter \ref{ch:Sigma-1-Cayley-graph} use intensively  the fact 
that $\Sigma^1$ is defined in terms of the connectivity of certain graphs and have a geometric ring.
\subsubsection*{Chapter \ref{ch:Sigma-1-Cayley-graph-complements}}
\label{sssec:Contents-Chapter-B-Intro}
In Chapter \ref{ch:Sigma-1-Cayley-graph}, the emphasis is on basic properties of $\Sigma^1$;
some examples, computations and applications are also given,
but their goal is mainly to illustrate definitions and results.
Chapter \ref{ch:Sigma-1-Cayley-graph-complements} supplements the first chapter 
by additional examples and detailed computations.
It starts out with some words on the morphisms of spheres induced by homomorphisms $\varphi \colon G \to G_1$.
These morphisms provide useful aids in computing the invariant of a group.
As an illustration, 
the invariant of graph groups (alias right angled Artin groups) is determined
(see Theorem \ref{thm:Sigma1-right-angled-Artin-group}). 

In Section \ref{sec:Sigma1-criterion-revisited}  
two algorithms are described that produce lower bounds for $\Sigma^1(G)$.
The input for both algorithms is a set of relators satisfied by the chosen set of generators;
typically, neither bound will coincide with $\Sigma^1(G)$.
The algorithms are illustrated by examples of groups of piecewise linear homeomorphisms of the real line 
and by groups of links.
 The third section explores another theme:
the sphere $S(G)$ of a finitely generated group $G$ contains \emph{points of rank 1};
they are represented by homomorphisms $\chi \colon G \to \R$ with infinite cyclic image.
A point $[\chi]$ of rank 1 lies in $\Sigma^1(G)$ if, and only if, 
$G$ is an \emph{ascending HNN-extension with finitely generated base group} $B \subseteq N = \ker \chi$;
see the statement of Theorem  \ref{thm:BiSt78-Intro} for more details.
The mentioned result is actually only one out of several consequences of a structure theorem 
for \emph{finitely presented groups} $G$ admitting a rank 1 character.
In Section \ref{sec:Sigma1-via-HNN-extensions}, 
this structure theorem is established and four of its consequences are discussed.

In the last section,
an algorithm, due to Ken Brown \cite{Bro87b}, is derived;
it allows one to compute the invariant of a group
that is given by a presentation with a single defining relation.
\index{Brown, K. S.}
%
\subsubsection*{Chapter \ref{ch:Alternative-definitions-overview}}
\label{sssec:Contents-Chapters-C-Intro}
One peculiarity of the invariant $\Sigma(G) = \Sigma_{G'}(G)$ was already noticeable in the account \cite{BNS}:
$\Sigma(G)$ has alternate definitions which, at first sight, seem unrelated to it. 
Each such definition opens up new horizons;
for this reason they are precious.

In Chapters \ref{ch:Sigma-1-Cayley-graph}
and \ref{ch:Sigma-1-Cayley-graph-complements}
the Cayley graph definition is at center stage.
Chapter \ref{ch:Alternative-definitions-overview}
redresses the balance by giving voice to alternate definitions:
they range from the main definition used in \cite{BNS},
the description in terms of actions on $\R$-trees detected by Ken Brown \cite{Bro87b},
to definitions involving closed one-forms (cf. \cite{Lev87}).
Also established in Chapter \ref{ch:Alternative-definitions-overview}
is the equality of $\Sigma^1$ with the invariant of homological type $\Sigma^1(-; \Z)$ 
introduced by R. Bieri and B. Renz in \cite{BiRe88}.
\index{Bieri, R.}
\index{Brown, K. S.}
\index{Levitt, G.}
\index{Renz, B.}
%
\subsubsection*{Chapter D (preview)}
\label{sssec:Contents-Chapters-D-Intro}
In this chapter, three alternate definitions,
 introduced in Chapter \ref{ch:Alternative-definitions-overview},
will be studied in greater detail.
The invariant  $\Sigma^0(G;A)$ is the topic of the first two sections.
In Section D1, we discuss results 
that are available for arbitrary finitely generated groups $G$ and finitely generated $G$-modules $A$.
If $G$ is polycyclic, these results can be sharpened.
In the second section, the group $G$ is required to be (finitely generated) \emph{abelian};
this is the original set-up of the invariant, introduced and exploited in \cite{BiSt80}.
In this special set-up, a new tool is available: 
the invariant $\Sigma^0(G;A)$ depends only on the commutative ring $R = \Z{G}/\Ann_{\Z{G}}(A)$;
moreover,
characters $\chi \colon G \to \R$ representing points outside of $\Sigma^0(G;A)$ are characterized  by the fact
that they can be extended to valuations $v \colon R \to \R \cup \{\infty \}$ (see \cite{BiGr84}).
\index{Bieri, R.}
\index{Renz, B.}

In Section D3 we shall turn to the invariant $\Sigma_A(G)$ propounded in \cite{BNS}
and establish results that have no direct counter-parts in the theory of $\Sigma^1$
and which allow one to deduce very useful properties of $\Sigma^1$.
In the final section D4, 
we shall give an algebraic characterization of transitive measured $G$-trees.

%
\subsubsection*{Chapter E (preview)}
\label{sssec:Contents-Chapter-E-Intro}
%
The invariant $\Sigma^1$ is defined for the class of all finitely generated groups,
hence for $2^{\aleph_{0}}$ isomorphism types of groups.
It encapsulates non-trivial information for each of these groups,
provided, of course,
one can compute the invariant with sufficient precision.
Over the years, various classes of groups have been detected 
which are of interest to a wider segment of group theorists
and for which $\Sigma^1$ can be computed.

One such class consists of graphs groups; 
its invariant is determined in section \ref{sec:Computing-Sigma1-via-change-groups}.
Another one is formed by one-relator groups; their invariant can be worked out by an algorithm
(see \ref{sec:Invariant-one-relator-group}).
In Chapter \ref{ch:Alternative-definitions-overview},
the invariants of two more classes are calculated, 
those of a sequence of groups introduced by C. H. Houghton  in \cite{Hou78} 
and those of wreath products.

The listed classes are by now means the only ones 
for which the computation of $\Sigma^1$ is possible and yields valuable insights.
Chapter E will aim at presenting the most interesting of them.
\index{Houghton, C. H.}

%
%
}

\mainmatter
\chapterstyle{main}
\pagestyle{main}
\setsecnumdepth{subsubsection}
%
%
\chapter{The invariant $\Sigma^1$}
\label{ch:Sigma-1-Cayley-graph}
%
%
In this chapter, 
the first $\Sigma$-invariant is introduced and some of its properties are established.
This invariant will be defined in terms of connectivity properties of subgraphs of Cayley graphs 
and denoted by $\Sigma^1$.
It has alternate definitions; some of them will be discussed in Chapter \ref{ch:Alternative-definitions-overview}.
%
%
\section{Setting the stage}
\label{sec:Preliminaries}
%
This section sets the stage for the invariant $\Sigma^1(G)$ of a finitely generated group $G$.
The invariant is a subset of a sphere $S(G)$ and consists of those points $[\chi]$ 
for which an associated subgraph $\Gamma_{\chi}$ of the Cayley graph $\Gamma(G, \XX)$ is connected.

Accordingly, 
the section starts out with the definition of the sphere $S(G)$, 
lists some of its properties and then turns to graphs.
The invariant $\Sigma^1(G)$ will enter scene only in section
\ref{sec:Introducing-invariant}.
%
\subsection{Characters and the character sphere}
\label{ssec:Character-sphere}
%
Let $G$ be a finitely generated group.
A homomorphism $\chi \colon  G \to \R$ into the additive group of the field of real numbers 
will be called a  \emph{character of} $G$.
\index{Notation!chi@$\chi$}%
\index{Character of a group!definition}
The set $\Hom(G,\R)$ of all characters of $G$ has the structure of  a real vector space;
\index{Notation!Hom(G,G)@$\Hom(G,\R)$}%
as $G$ is finitely generated it is finite dimensional.
Its dimension is the subject of
\begin{lem}
\label{lem:Dimension-S(G)}
The dimension of $\Hom(G,\R)$ is equal 
to the torsion-free rank of the abelianization  $G_{\ab} = G/[G,G]$ of $G$.
\end{lem}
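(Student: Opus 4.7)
The plan is to reduce the computation to the abelianization and then to apply the structure theorem for finitely generated abelian groups.

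First I would observe that, since the target $\R$ is abelian, every character $\chi \colon G \to \R$ annihilates the commutator subgroup $[G,G]$ and therefore factors uniquely through the canonical projection $G \epi G_{\ab}$. This yields a natural isomorphism of real vector spaces
\begin{equation*}
\Hom(G,\R) \iso \Hom(G_{\ab},\R).
\end{equation*}

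Next, since $G$ is finitely generated, so is $G_{\ab}$, and the structure theorem for finitely generated abelian groups gives a decomposition
\begin{equation*}
G_{\ab} \cong \Z^{n} \oplus T,
\end{equation*}
where $n = r_{0}(G_{\ab})$ is the torsion-free rank and $T$ is the finite torsion subgroup. Applying $\Hom(-,\R)$ turns direct sums into direct products, so
\begin{equation*}
\Hom(G_{\ab},\R) \cong \Hom(\Z^{n},\R) \oplus \Hom(T,\R).
\end{equation*}
The second summand vanishes because $\R$ is torsion-free: any homomorphic image of an element of finite order in $\R$ must be $0$. The first summand is canonically isomorphic to $\R^{n}$ via evaluation on the standard basis.

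Combining the three isomorphisms gives $\Hom(G,\R) \cong \R^{n}$, so its dimension over $\R$ equals $n = r_{0}(G_{\ab})$, as claimed. The argument is routine; there is no real obstacle, the only point that deserves being spelled out carefully is why the torsion part contributes nothing, i.e.\ the torsion-freeness of the additive group of $\R$.
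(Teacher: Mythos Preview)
Your proof is correct and follows essentially the same approach as the paper: factor through the abelianization, invoke the structure theorem for finitely generated abelian groups, and use that $\R$ is torsion-free to kill the contribution of the torsion part. The only cosmetic difference is that the paper passes directly to the torsion-free quotient $G_{\ab}/T(G_{\ab})$ before applying $\Hom(-,\R)$, whereas you split $G_{\ab}\cong\Z^n\oplus T$ and handle the summands separately; the substance is identical.
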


\begin{proof}
Since the additive group of $\R$ is a torsion-free abelian group,
every character $\chi \colon G \to \R$ factors  over the canonical projection 
\[
\can \colon G  \epi G_{\ab} = G/[G,G] \epi \overline{G} = G_{\ab}/T(G_{\ab}).
\]
Here $T(G_{\ab})$ denotes the \emph{torsion-subgroup}  of the  abelianization $G_{\ab}$ of $G$; 
it is finite.
This projection induces an isomorphism  of vector spaces
\[
\can^* \colon \Hom(\overline{G}, \R) \iso \Hom(G,\R).
\]
The quotient group $\overline{G}$, being a finitely generated torsion-free abelian group, 
is free abelian of rank $n$, say.
Since a homomorphism  $\bar{\chi} \colon \overline{G} \to \R$ 
is  determined by  its images on a  basis of $\overline{G}$
and as these images can be prescribed arbitrarily, 
the dimension $\dim_{\R}\Hom(G, \R)$ of the real vector space  $\Hom(G,\R)$
equals the rank of the free abelian group $\overline{G}$.
This rank, in turn, coincides with the torsion-free rank $r_{0}(G_{\ab})$ of $G_{\ab}$.
(For the justifications of the claims about abelian groups made in the above 
see, e.\;g., \cite[Section 4.2]{Rob96} or \cite[Section 5.1]{LeRo04}.) 
\index{Notation!rank@$r_{0}$}
\end{proof}

%
\subsubsection{Equivalent characters} 
\label{sssec:Equivalence-characters}
%
Two characters $\chi_1$, $\chi_2$ of $G$ will be called \emph{equivalent}
 if there is a positive real number $r$ with $\chi_1 = r  \cdot \chi_2$. 
The equivalence class $[\chi]$ of a non-zero  character $\chi$ 
is  the ray emanating from $0$ and passing through $\chi$. 
\index{Character of a group!equivalence relation}
\index{Notation![chi]@$[\chi]$}%
The set of all equivalence classes
\begin{equation}
\label{eq:Equivalent-characters}
S(G) = \left\{[\chi] \mid \chi \in \Hom(G,\R) \smallsetminus \{ 0 \} \;\right\}
\end{equation}
together with the structure inherited from $\Hom(G,\R)$
will be called  the \emph{character sphere of the group} $G$.
\index{Character sphere!definition}
\begin{remark}
\label{Remark:Empty-sphere}
If the abelianized group $G_{\ab}$ is finite ,
the vector space $\Hom(G, \R)$ is reduced to the zero homomorphism 
and the character sphere $S(G)$ is empty. 
The methods and results of this monograph are of no interests for such groups.
\end{remark}
\smallskip

In the remainder of section \ref{ssec:Character-sphere}  
some of its properties of the sphere $S(G)$ are discussed.
Further features will be studied in section \ref{ssec:Morphisms}.
 %
\subsubsection{Rank of a point} 
\label{ssec:Rank-point}
%
We begin with a stratification of the sphere.
Each character $\chi \colon G \to \R$ has a \emph{rank} 
given by the $\Z$-rank of the additive group $\chi(G) \subset \R$.
Equivalent characters have the same rank,
so one can speak of the rank of a point $[\chi] \in S(G)$; it will be denoted by  $\rk[\chi]$. 
\index{Character of a group!rank}
\index{Character sphere!rank}
\index{Character of a group!rational}
The points of rank $1$  are also called \emph{rational}. 
A point is rational if and only 
if it can be  represented by a character $\chi \colon G \to \R$  with $\chi(G) = \Z$. 
The set of all rational points of $S(G)$ is the sphere of rational characters,  denoted $S_{\Q}(G)$;
it is dense in $S(G)$ with respect to the topology defined below
(see Lemma \ref{lem:Density-rank-1-points}).
%
\subsubsection{Geometric structure}
\label{sssec:Geometric-structure-sphere}
%
The sphere $S(G)$ inherits from the  $\R$-linear structure of $\Hom(G,\R)$  
the following geometric features:
\begin{itemize}
\item 
a \emph{topology:} 
since $V = \Hom(G,\R)$ is a finite dimensional real vector space,
all norms $\| \cdot \|$ on it induce the same topology. 
The sphere $S(G)$, being a quotient of the open subset  $V \smallsetminus \{0\}$, 
inherits the quotient topology.
Equipped with this topology,   it is is homeomorphic to the unit sphere $\s^{n-1}$ in the Euclidian space of dimension $ n = r_0(G_{\ab})$.
 \index{Character sphere!topology}%
\item 
a \emph{family of subspheres:}
every subset  $\SS$ of $V =\Hom(G,\R)$ 
gives rise to a great subsphere  $\left\{[\chi] \mid \chi(\SS) = \{ 0\}  \; \right \}$ of $S(G)$. 
Particularly  important are the subspaces consisting of a characters 
that vanish on a subgroup $H \leq G$. 
 The corresponding subsphere will then be denoted by
\begin{equation}
\label{def:S-of-G-and-H}
S(G,H) = \left\{ [\chi] \mid \chi(H) = \{ 0 \} \right\}.
\end{equation}
\index{Character sphere!subspheres}%
\index{Notation!Subsphere-G-H@$S(G,H)$}%
\item 
a  \emph{family of sub-hemispheres:}
every open (respectively closed) half space ${\HH}$ of $\Hom(G,\R)$ 
gives rise to an open (respectively closed) hemisphere 
\[
\left\{[\chi] \mid \chi \in {\HH} \setminus 0  \right\}.
\] 
\index{Character sphere!sub-hemispheres}%
\end{itemize}
%
%
\subsubsection{Introduction of coordinates} 
\label{sssec:Coordinates-sphere}
%
\index{Character sphere!coordinates}%
The sphere $S(G)$ and its topology have been defined without the use of coordinates.
Occasionally, however, it  is convenient to work with a Euclidean model of $S(G)$.
In such a situation, coordinates will be introduced as follows.

Let $\E^n$ denote the  real vector space $\R^n$ with standard basis  $(e_1, \ldots, e_n)$ 
and equipped with the usual inner product $\langle - ,  - \rangle$, 
and let $\Z^n$ denote the standard lattice in $\E^n$. 
Consider now a finitely generated group $G$ the abelianization of which has torsion-free rank $n$.
Then the groups $ \overline{G} = G_{\ab} / T(G_{\ab})$  and $\Z^n$ are isomorphic
and every isomorphism  $ \iota \colon \overline{G}  \iso\Z^n$ 
induces an epimorphism $\vartheta =    \iota \circ \can \colon G\epi \Z^n$.
This epimorphism and the scalar product on $\E^n$ give rise to an isomorphism 
$\vartheta^* \colon  \E^n \iso  \Hom(G,\R)$
which sends the vector $v \in \E^n$ to the character  $g \mapsto \langle v, \vartheta(g) \rangle$.
Its restriction to the unit sphere of $\E^n$ induces then a homeomorphism of spheres
\begin{equation}
\label{eq:Definition-sigma-theta}
\sigma(\vartheta) \colon \s^{n-1} \iso S(G).
\end{equation}
\index{Notation!Chart-sigma-theta@$\sigma(\vartheta)$}%
It sends the unit vector $u \in \s^{n-1}$ to the point  $[\chi_u] \in S(G)$
where $\chi_u \colon G \to \R$ denotes the character
\begin{equation}
\label{eq:Definition-chi-u}
\chi_u(g) = \langle u, \vartheta(g) \rangle.
\end{equation}
%
%
\subsection{On graphs and Cayley graphs}
\label{ssec:Graphs}
%
The definition of $\Sigma^1(G)$ 
uses the Cayley graph $\Gamma(G,\XX)$ of the group $G$ 
with respect to a finite generating system $\XX$ of $G$.
For this reason, we continue with a few remarks on graphs and Cayley graphs.
%
\subsubsection{Terminology and notation used for graphs}
\label{ssec:Terminology-graphs}
%
Most of the graphs occurring in this monograph will be Cayley graphs of  groups.
The definition of such a graph uses \emph{oriented} edges;
but it does not furnish inverses of edges provided by the definition. 
On the other hand,
the geometric definition of a path presupposes
that an edge can be traversed in both directions of an oriented edge.
If the inverses of the edges are not furnished by the construction of the graph, 
they will therefore have to be added later on.
These facts have influenced the type of graph that will be favoured in the sequel:
it is a graph equipped with positively oriented edges $e^+$, but lacking inverses $e^-$ of the given edges:
\begin{definition}
\label{definition:Oriented-graph}
\index{Definition of!oriented graph}%
An  \emph{oriented graph} $\Gamma$ 
is given by a non-empty vertex set  $V = \ver(\Gamma)$,  
an edge set $E = \edg(\Gamma)$, 
and two maps $\iota \colon  E \to V$,   $\tau \colon  E \to V$  
assigning to an edge $e$ its origin $\iota(e)$ and its terminus $\tau(e)$, 
respectively. 
\end{definition}
\index{Notation!Vertexset@$\ver(\Gamma)$}
\index{Notation!Edgeset@$\edg(\Gamma)$}

Given an oriented graph $(V, E, \iota, \tau)$,
one constructs a new graph having the same vertex set,
but a larger edge set.
Specifically,
one introduces for each edge $e \in E$ a new element $e^{-1}$
such that the assignment $e \mapsto e^{-1}$ is injective
and the sets  $E^- = \{e^{-1} \mid e \in E\}$  and $E$ are disjoint. 
Then one extends the inversion  $E \to E^-$, $e \mapsto e^{-1}$ 
and the  endpoint maps $\iota$, $\tau$ to the union $E^\pm = E \cup E^-$ 
by  setting
\[
(e^{-1})^{-1} = e, \quad \iota(e^{-1}) = \tau(e) \text{  and  }\tau(e^{-1}) = \iota(e).
\]
Figure
\ref{fig:Various-type-graphs}
illustrates the concepts of a combinatorial graph, an oriented graph 
and a graph resulting from an oriented graph by adding inverse edges.
\begin{figure}[htbp]
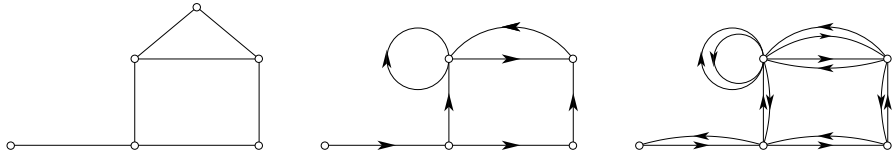

\begin{center}
\includegraphics[width=3.6cm]{A1.fig1a.eps}
\hspace*{0.3cm}
\includegraphics[width=3.6cm]{A1.fig1b.eps}
\hspace*{0.3cm}
\includegraphics[width=3.6cm]{A1.fig1c.eps}
\caption{A combinatorial, an oriented  and a completed graph}
\label{fig:Various-type-graphs}
\end{center}
\end{figure}

An edge path of length $m\geq 1$ in $\Gamma$ 
is a sequence of edges and  inverse edges $p = e_1e_2 \ldots e_m$ 
such that $\tau(e_i) = \iota(e_{i+1})$ for all $i = 1, \ldots,  m-1$. 
The set of all edge paths of $\Gamma$ will be denoted by  $P(\Gamma)$;
\index{Notation!P-Gamma@$P(\Gamma)$}%
the set $E^\pm$ can be thought of as a subset of $P(\Gamma)$.
One extends the maps $\iota$, $\tau$ and the inversion on $E^\pm$ to $P(\Gamma)$ 
by putting  $\iota(p) = \iota(e_1)$, $\tau(p) = \tau(e_m)$ and 
$p^{-1} =  e_m^{-1} e_{m-1}^{-1} \ldots e_1^{-1}$. 
It is also convenient to introduce for each vertex $v \in V$ 
a unique empty path $\varnothing_v \in P(\Gamma)$ 
with $\iota(\varnothing_v) = \tau(\varnothing_v) = v$ 
and  $\varnothing_v^{-1} = \varnothing_v$. 
If one identifies $v \in V$ with $\varnothing_v \in  P(\Gamma)$ 
one can consider the vertex set $V$ as a subset of  $P(\Gamma)$.

\begin{definition}
\label{definition:Group-acting-graph}
A group $G$ is said to \emph{act on an oriented graph} $\Gamma = (V, E, \iota, \tau)$ 
if $G$ acts on the sets $V$ and $E$ in such  a way
that the maps $\iota$, $\tau$ are equivariant. 
By putting $g.(e^{-1}) = (g.e)^{-1}$,  $g.(e_1e_2 \ldots e_m) = (g.e_1)(g.e_2) \ldots (g.e_m)$
and $g.\varnothing_v =  \varnothing_{g.v}$ 
the action of $G$ on $V \cup E$ can be extended to a  $G$-action on $P(\Gamma)$ 
so that $\iota(g.p) = g.\iota(p)$,  $\tau(g.p) = g.\tau(p)$ and $(g.p)^{-1} = g.p^{-1}$ for all 
$g \in G$ and $ p \in P(\Gamma)$.

If $a$ is a vertex or an edge of the graph, 
the stabilizer  $\{g \in G\mid g.a = a\}$ of $a$  will be denoted by $G_a$ 
If all stabilizers are trivial, the group  $G$ is  said to act \emph{freely}.
\end{definition}
%
\subsubsection{Terminology and notation used for Cayley graphs}
\label{sssec:Terminology-Cayley-graph}
Let $G$ be a group. 
The construction of Cayley graph of $G$ involves a generating set $\XX$ of $G$.
If $G/N$ is a quotient of $G$,
the canonical image  of  $\XX$ on $G/N$ generates, of course, the quotient group,
but the map $\XX \to G/N$ that sends $x $ to $xN$ may no longer be injective.
In the sequel,
we  therefore use generating subsets only in special situations
and work, in general, with maps $\eta \colon \XX \to G$ the image of which generates $G$;
such maps will be called \emph{generating systems} of $G$.
\index{Notation!eta@$\eta \colon \XX \to G$}%
To ease notation,
the symbol $\eta$ will often be suppressed.

\begin{definition}
\label{definition-Cayley-graph}
\index{Graph!Cayley graph}%
\index{Cayley graph!definition}%
\index{Notation!Gamma-G-XX@$\Gamma(G, \XX)$}%
Let $G$ be a group and  $\eta \colon \XX \to G$ a generating family of $G$. 
The \emph{Cayley graph} $\Gamma =  \Gamma(G, \XX)$ of $G$ with respect to $\XX$
is the oriented  graph 
\[
(G, G \times \XX, \iota, \tau)
\] 
with vertices the elements of $G$, 
(positively) oriented edges the pairs $e =  (g,x) \in G \times \XX$, 
and origin and terminus given by $\iota(e) = g$ and $\tau(e) = g\cdot \eta(x)$. 
\footnote{In the sequel, the product $g \cdot \eta(x)$ will often be written $g\cdot x$ or $gx$.}

The Cayley graph is equipped with a canonical $G$-action induced by left multiplication 
on the vertex set $G$ and on the first factor of the edge set $G \times \XX$.
\end{definition}

The  inverse edges of the Cayley graph form a set  $ G \times \XX^{-1}$;
here  $\XX^{-1}$ denotes a set that is disjoint from $\XX$ 
and related to $\XX$ with a bijection $\inv \colon x \mapsto x^{-1}$.
Origin, terminus and inverse of edges and the inverse edges are then given by the same expressions:
if $e = (g,y) \in G \times \XX^\pm$ then $\iota(e) = g$,  $\tau(f) = gy$ and  $e^{-1} = (gy, y^{-1})$.
(Of course, $(x^{-1})^{-1}$ is to be interpreted as $x$.)

An edge path $p = e_1e_2 \cdots e_m \in P(\Gamma)$ is uniquely determined 
by its origin $\iota(p) \in G$ and the word $y_1y_2 \ldots  y_m \in W(\XX^\pm)$ with $e_i = (g_i, y_i)$. 
Thus the set of all  paths  $P(\Gamma)$ can be identified with the set of pairs 
$p = (g,w)  \in G \times W(\XX^\pm)$; 
we shall use this identification  whenever convenient. 
Using this notation,
the origin, terminus and the inverse of $p$ are  then given by 
$\iota(p) = g$,  $\tau(p) = g\eta(w)$ and $p^{-1} =  (\tau(p),\, w^{-1})$. 
Finally, 
the action of the group $G$ on the set of edge paths $P(\Gamma)$ 
is given by the left $G$-action on the first factor of $G \times W(\XX^\pm)$.

\begin{examples}
\label{examples:Cayley-graphs}
\index{Cayley graph!examples|(}%
The Cayley graphs used in the monograph are those of infinite groups;
as only a small patch of such a graph can be illustrated by a figure,
the first two examples will be Cayley graphs of small finite groups.
\smallskip

a) The \emph{dihedral group}  $D_4$ of order 8 admits a  presentation with generators $a$, $t$ 
and defining relations $a^4 = 1$, $t^2 = 1$, $ta= a^3t$.
The Cayley graph $\Gamma(D_4,\{a,t\})$ has 8  vertices and 16 (positively) oriented edges;
see figure \ref{fig:Cayley-graphs-finite-groups}.
\begin{figure}[htbp]
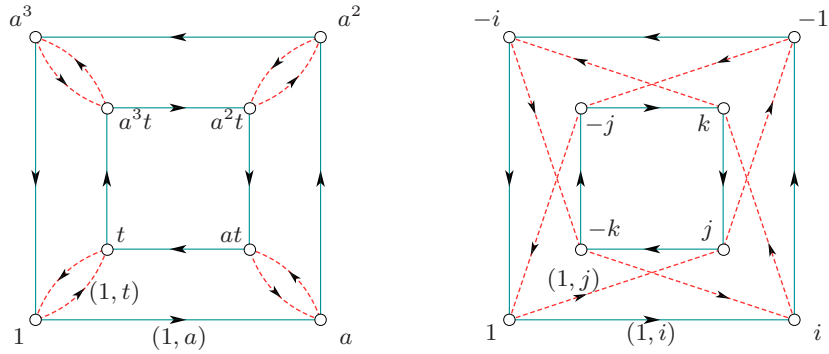

\psfrag{1}{\small 1}
\psfrag{a}{\small $a$}
\psfrag{a2}{\hspace*{1mm}\small $a^2$}
\psfrag{a3}{\hspace*{-1mm}\small $a^3$}
\psfrag{t}{\small $t$}
\psfrag{at}{\hspace*{-2mm}\small $at$}
\psfrag{a2t}{\hspace*{-2mm}\small $a^2t$}
\psfrag{a3t}{\small $a^3t$}
\psfrag{ea}{\hspace*{-3mm}\small  $(1,a)$}
\psfrag{et}{\hspace*{-1mm}\small  $(1,t)$}
\psfrag{i}{\small $i$}
\psfrag{m1}{\hspace*{-1mm}\small $-1$}
\psfrag{mi}{\hspace*{-1.5mm}\small  $-i$}
\psfrag{j}{\hspace*{-1.5mm}  \small  $j$}
\psfrag{mj}{\hspace*{-1mm}\small  $-j$}
\psfrag{k}{\hspace*{-0.5mm}\small $k$}
\psfrag{mk}{\hspace*{0mm}\small  $-k$}
\psfrag{ei}{\hspace*{-3mm}\small  $(1,i)$}
\psfrag{ej}{\hspace*{-4mm}\small  $(1,j)$}
\begin{center}
\includegraphics[width=4.5cm]{A1.fig2.eps}
\hspace*{1.5cm}
\includegraphics[width=4.5cm]{A1.fig3.eps}
\caption{Cayley graphs of the dihedral group and the quaternion group}
\label{fig:Cayley-graphs-finite-groups}
\end{center}
\end{figure}

b) The \emph{quaternion group} $Q_8$ of order 8 is the subgroup of the multiplicative group
of the field of quaternions $\H$. 
It is generated by the quaternions $i$ and $j$, both elements of order 4. 
Its Cayley graph depicted on the right of figure \ref{fig:Cayley-graphs-finite-groups}.

c) We now turn to Cayley graphs of infinite groups. The first one embeds into the plane,
but the second one will no longer have this exceptional property.
Let $G$ be the \emph{free abelian group of rank} 2 and  $\XX = \{a,b\}$ a basis of $G$. 
If one identifies $G$ with the standard lattice $\Z^2$ of the Euclidian plan $\R^2$, 
the Cayley graph of $G$ can be visualized by a Euclidean grid;
see figure \ref{fig:Cayley-graph-free-abelian-rank-2}.
\begin{figure}[htb]
\psfrag{1}{\hspace*{-1mm}\small$1$}
\psfrag{a}{\hspace*{1.5mm}\small $a$}
\psfrag{am1}{\hspace*{-5.8mm}\small $a^{-1}$}
\psfrag{a2}{\hspace*{-0.5mm}  \small $a^2$}
\psfrag{b}{\hspace*{-1mm}\small $b$}
\psfrag{bm1}{\hspace*{-2.3mm}\small $b^{-1}$}
\psfrag{b2}{\hspace*{-1.0mm}\small $b^2$}
\psfrag{ab}{\hspace*{0.5mm}\small $ab$}
\psfrag{ar1}{\hspace*{-2.5mm}\small $(1,a)$}
\psfrag{ar2}{\hspace*{-0mm}\small $(1,b)$}
\vspace*{3mm}
\begin{center}
\includegraphics[width = 5cm]{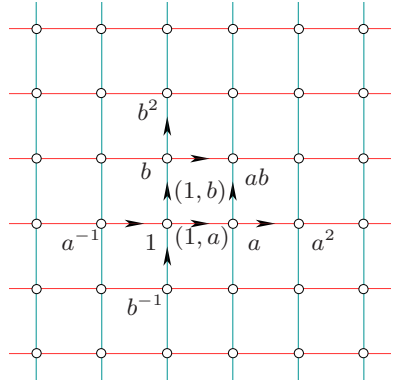}
\end{center}
\caption{Cayley graph of a free abelian group of rank 2}
\label{fig:Cayley-graph-free-abelian-rank-2}
\end{figure}
\smallskip

d) The Cayley graph of our last example is harder to visualize;
in this respect,
it  is  typical for the Cayley graphs of the groups that are at center stage in this monograph.
Let $\Z[\tfrac{1}{2}]$ denote the additive group of  all dyadic rational numbers 
acted on the left by an infinite cyclic group $C = \gp(s)$, 
in such a way that the generator $s$  acts by multiplication by 2, 
and let $G$ to the semi-direct product  $\Z[\tfrac{1}{2}]  \rtimes C$.
(This group is often referred to as Baumslag-Solitar group $BS(1,2)$.) 
The product of two elements 
$(x,s^m)$, $(x',s^{m'})$  of $\Z[\frac{1}{2}] \rtimes C $ is then given by the formula
\begin{equation}
\label{eq:Multiplication-metabelian-example}
(x,t^m) \cdot (x',s^{m'}) = ( x + 2^{m} \cdot x', s^{m + m'}).
\end{equation}

If one identifies $C$ with the additive group of the ring $\Z$ by means of  the map $t^m \mapsto m$, 
the elements of $G$ correspond to points  in the plane; 
these points are dense on each horizontal line $\R \times \{m\}$ with $m \in \Z$. 

The group $G$ is generated by the elements $a = (1, s^0)$ and $u = (0, s)$.
The Cayley graph $\Gamma$ of $G$ with respect to this generating set does not embed in the plane,
but it is still useful to describe it in geometric terms.
An edge $(g=(x,m),a)$ can be visualized by a segment of the horizontal line $\R \times \{m\} $
with origin $g = (x,m)$ and terminus $ga = (x + 2^m,m)$;
note that its length is not constant, but increases with increasing second coordinate.
An edge $(g=(x,m),u)$ corresponds to a segment of the vertical line $\{x\} \times \R$,
starting at  $g = (x,m)$ and  ending in $(x, m+1)$.

Our next aim is to convey an idea of the shape of the Cayley graph $\Gamma = \Gamma(G, \{a, u \})$.
Consider the subset $M = \Z \times \{s^m \mid m \in \N \}$.
The corresponding full subgraph has the form indicated by figure 
\ref{fig:Cayley-graph-metabelian-group-1}.
The Cayley graph is a union of such subgraphs. 
First of all, one has an infinite  collection of subgraphs arising from the given one by translating it to the right 
by  $x \in [0,1) \cap \Z[\tfrac{1}{2}]$ units. 
The union of these subgraphs is a graph $\Gamma_0$.
This graph $\Gamma_0$ is contained in an isomorphic subgraph $\Gamma_{-1}$ 
which arises from $\Gamma_0$ by rescaling the horizontal axis by $\tfrac{1}{2}$ 
and translating the rescaled graph by a unit towards the bottom. 
Similarly one defines a subgraph $\Gamma_{-2}$ 
having the same relationship to $\Gamma_{-1}$, 
as $\Gamma_{-1}$ has to $\Gamma_0$. And so on and so forth.
\begin{figure}[htb]
\psfrag{Z}{\hspace*{-1mm}\small $m$}
\psfrag{Z2}{\hspace*{0mm}\small $x$}
\psfrag{1}{\hspace*{-3mm}\small $1$}
\psfrag{a}{\hspace*{-1mm}\small $a$}
\psfrag{am1}{\hspace*{-2.5mm}\small $a^{-1}$}
\psfrag{a2}{\hspace*{-2mm}\small $a^2$}
\psfrag{t}{\hspace*{0mm}\small $u$}
\psfrag{t2}{\hspace*{0mm}\small $u^2$}
\psfrag{t3}{\hspace*{0mm}\small $u^3$}
\psfrag{ar1}{\hspace*{-2.5mm}\small $(1,a)$}
\psfrag{ar2}{\hspace*{-1.5mm}\small $(1,u)$}
\begin{center}
\includegraphics[width=11cm]{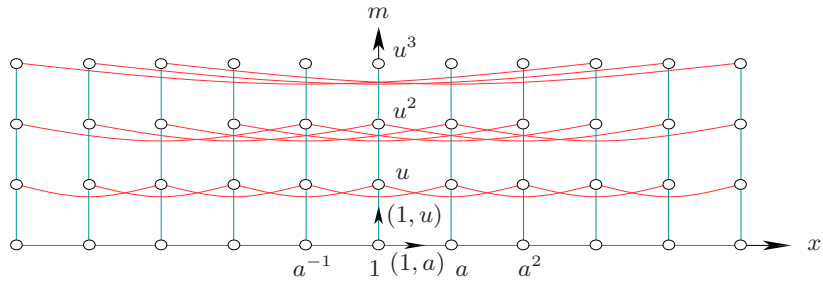}
\caption{A patch of the Cayley graph of the group defined by \eqref{eq:Multiplication-metabelian-example}}
\label{fig:Cayley-graph-metabelian-group-1}
\end{center}
\end{figure}
\end{examples}
\index{Cayley graph!examples|)}%
%
 
%
\section{Enter the invariant $\Sigma^1$}
\label{sec:Introducing-invariant}
%
In this section,
the invariant is defined and computed in some simple cases.
%
\subsection{Definition}
\label{ssec:Definition-Sigma1}
%
As in the previous section,
$G$ denotes a finitely generated group. 
Let $\eta \colon \XX \to G$ be a finite system of generators of $G$
and $\Gamma = \Gamma(G,\XX)$ the Cayley graph of $G$ with respect to $\XX$.
For every character $\chi \colon  G \to \R$ 
we define a subset
\begin{equation}
\label{def:G-sub-chi}
\index{Notation!G-sub-chi@$G_\chi$}
G_\chi = \{g \in G \mid \chi(g) \geq 0\}.
\end{equation}
This set is actually a submonoid of $G$;
we view it as a subset of the vertex set  of the Cayley graph $\Gamma$ 
and define $\Gamma_\chi = \Gamma(G, \XX)_\chi$ to be the subgraph of $\Gamma$ 
generated by the subset $G_\chi$.
Thus the vertex set of $\Gamma_\chi$ is the submonoid $G_\chi$, 
the set of oriented edges consists of all the  edges $(g,y)  \in G \times \XX$ 
with both $g \in G_\chi$ and $gy \in G_\chi$,
and the incidence functions  $\iota_*$, $\tau_*$ 
are induced by the functions $\iota$ and $\tau$ of $\Gamma(G,\XX)$.
If $\chi_1$ and $\chi_2$ are equivalent characters
the subsets $G_{\chi_1}$ and $ G_{\chi_2}$ coincide;
so   $G_\chi$ and $\Gamma_\chi$ depend only on  the point $[\chi] \in S(G)$.
\index{Notation!Gamma-sub-chi@$\Gamma_\chi$}

Since $\eta \colon \XX \to G$ is a generating system,
the graph $\Gamma(G,\XX)$ is connected.
The subgraph $\Gamma_\chi$, however, need not have this property.
This prompts us to introduce the subset 
\begin{equation}
\label{eqf:Definition-Sigma-one-of-G}
\index{Definition of!invariant Sigma1@invariant $\Sigma^1$}
\index{Invariant Sigma1@Invariant $\Sigma^1$!definition}
\Sigma^1(G) = \{[\chi] \mid \Gamma_\chi(G,\XX) \text{ is connected}\};
\end{equation}
it is often referred to as \emph{(homotopical) geometric invariant of the group} $G$.

\begin{remarks} 
\label{remarks:Definition-Sigma-1}
a) The definition of $\Sigma^1(G)$ involves the choice of a generating system $\XX$,
but the set $\Sigma^1(G)$ does not depend on this choice 
(see Theorem \ref{thm:Sigma1-well-defined} below); in addition. it is invariant under isomorphisms
(see section \ref{sssec:Isomorphisms-groups}).
These facts justify the epithet \emph{invariant}.
The adjective \emph{geometric} refers to the aspect that the mapping $G \mapsto \Sigma^1(G)$ 
associates to a (finitely generated) group $G$ a geometric object, namely a subset of the sphere $S(G)$. 
In section \ref{sec:Sigma1-criterion}, 
it will be proved that $\Sigma^1(G)$ is an open subset of $S(G)$.

b) The superscript 1 in the symbol $\Sigma^1$ is meant to indicate
that $\Sigma^1$ is the first member in a sequence of invariants $\Sigma^2$, $\Sigma^3$, \ldots; 
they are all subsets of the sphere $S(G)$,
but are defined in terms of higher connectivity of higher dimensional spaces associated to groups.
\end{remarks}
%
\subsubsection{Examples illustrating the definition}
\label{sssec:Sigma1-first-examples}
%
%
In the following examples 
the global features of the Cayley graphs are so clear
that the invariants can be determined by inspection.

1)
Let $G$ be the free abelian group of rank 2 and 
$\XX = \{a,b\}$ a basis of $G$. 
If we identify $G$ with the standard lattice $\Z^2 \subset \R^2$ 
the Cayley graph with respect to the generating set $\XX$ can be visualized as a Euclidean grid.
Every character $\chi \colon G \to \R$ extends to a unique $\R$-linear map  $\chi_{\R} \colon  \R^2 \to \R$; 
if $\chi \neq 0$,
 the subset $G_\chi$ is the intersection of the lattice $\Z^2$ 
 with the half plane $\{x \in  \R^2 \mid \chi_{\R}(x) \geq 0\}$. 
Figure \ref{fig:Half-of-Cayley-graph-free-abelian-rank-2}
indicates that $\Gamma\chi$ is connected;
a rigorous justification of this suggestion is easy and will be given later 
(see example \ref{examples:Groups-with-non-trivial-centre}a).
So  $\Sigma^1(G) = S(G)$.
\index{Computation of Sigma1@Computation of $\Sigma^1$ for!abelian groups}
\begin{figure}[htb]
\psfrag{pos}{\small $\chi > 0$}
\psfrag{neg}{\small $\chi < 0$}
\psfrag{nul}{\hspace*{-3mm}\small $\chi = 0$}
\begin{center}
\includegraphics[width = 7cm]{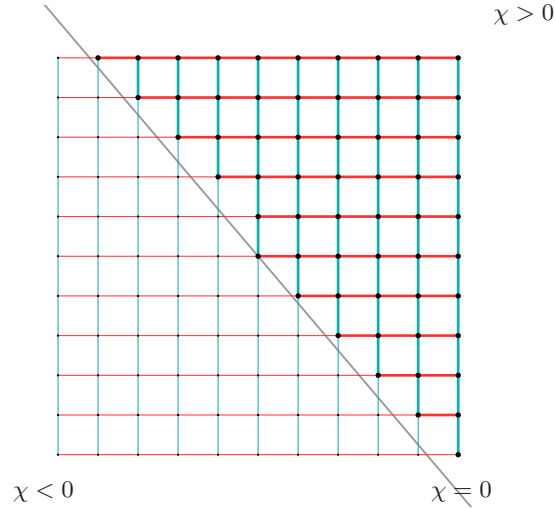}
\end{center}
\caption{Cutting  in half the Cayley graph of a free abelian group of rank 2}
\label{fig:Half-of-Cayley-graph-free-abelian-rank-2}
\end{figure}

2) The second example is the Baumslag-Solitar group $BS(1,2)$
whose Cayley graph has been discussed in Example  \ref{examples:Cayley-graphs}d).
\index{Soluble groups!examples of Baumslag-Solitar}%
The group $G$ is the semi-direct product of the abelian group  $\Z[\tfrac{1}{2}]$  by an infinite cyclic group 
$C = \gp(s)$, 
the generator $s$  acting by multiplication by 2. 
The product of two elements of $G$ is given by
\begin{equation}
\label{eq:Multiplication-metabelian-example-bis}
(x,s^m) \cdot (x',s^{m'}) = ( x + 2^{m} \cdot x', s^{m + m'}).
\end{equation}
The group $G$ is generated by the elements $a = (1, s^0)$ and $u = (0, s)$;
they satisfy relation $uau^{-1}= a^2$.
If one writes the relation in the form $ a^{-1}\cdot uau^{-1} = a$,
one sees that every character $\chi \colon G \to \R$ maps $a$ to 0.
So $S(G)$ is made up of only two points, 
the point $[\chi]$  with $\chi$ given by $\chi(u) = 1$ and its antipode $[-\chi]$. 

The subgraph $\Gamma_\chi$ is a union of subgraphs isomorphic to the graph 
depicted in figure \ref{fig:Cayley-graph-metabelian-group-1}.
Each of these subgraphs is connected ---
actually they are the connected components of $\Gamma_\chi$ ---,
but $\Gamma_\chi$ itself  is not connected.

The subgraph $\Gamma_{-\chi}$, however, is connected. 
To see this,
note that for every number $x \in \Z[\tfrac{1}{2}]$ there exists a natural number $n$ 
such that $x$ is an integral multiple of $2^{-n}$.
The point $(x, -n)$ can therefore be connected to the origin 
by a path made up of a sequence of horizontal edges of length $2^{-n}$
followed by a sequence of $n$ vertical edges.
Since every vertex $(x,m)$  with $m \leq 0$ 
can be connected to a vertex of the form $(x, -n)$  by a sequence of vertical edges,
it follows that $\Gamma_{-\chi}$ is connected.
Altogether we have shown that 
\index{Computation of Sigma1@Computation of $\Sigma^1$ for!group BS(1,2)@group $\BS(1,2)$}
\begin{equation}
\label{eq:Sigma-1-metabelian-example}
S(G) = \{[\chi], [-\chi]\} \quad \text{and} \quad \Sigma^1(G) = \{[-\chi]\}. 
\end{equation}

3)  We conclude with a class of groups whose invariants are empty.
Let  $F_n$ be a free group of rank $n \geq 2$ or, more generally, a free product $G_{1} \star G_{2}$ 
with non-trivial finitely generated factors $G_{1}$ and $G_{2}$.
If the abelianized group $G_{\ab} \isoinv (G_{1})_{\ab} \oplus  (G_{2})_{\ab}$ is finite,
$S(G)$ and hence $\Sigma^1(G)$ are empty.
If not, let  $\chi$ be non-zero character of $G$;
exchanging $G_{1}$ and $G_{2}$ if need be, 
we may assume  that $G_{1}$ contains an element $g_{1}$ with $\chi(g_{1}) > 0$.
Choose $g_{2} \in G_{2}\smallsetminus \{1\}$ with $\chi(g_2) \geq 0$
and set  $g =g_{1}^{-1} g_{2} g_{1}$.
Then $g$ is an element of $G_\chi$
whose unique normal form of $g$ is $(g_{1}^{-1}, g_{2}, g_{1})$.  
It follows that $1$ and $g$ cannot be connected inside $\Gamma(G, \XX)_{\chi}$
if $\eta \colon \XX \to G$ is a suitably chosen finite generating system.

Indeed,
let $\XX$ be the union $\XX_{1} \cup \XX_{2}$ of a finite generating systems $\XX_{1}$ of $G_{1}$ 
and $\XX_{2}$ of $G_{2}$.
Consider a path $p = (1, w)$ from 1 to $g$. The word $w$ is a product of subwords $w_{1}\cdots w_{k}$
with each $w_{j}$ either a word in $\XX_{1}^\pm$ or in $\XX_{2}^\pm$.
If one of these subwords represents $1 \in G$ the path $p$ contains a loop;
omitting it leads to a path $p'$ containing a subset of the vertices lying along the path $p$.
We may thus assume that none of the subwords $w_{j}$ represents $1 \in G$.
But if so,
the uniqueness of the normal form implies that $\eta(w_{1}) = g_{1}^{-1}$;
since $\chi(g_{1}^{-1})$ is negative 
the path $p$ does therefore not run inside the subgraph $\Gamma_{\chi}(G; \XX_{1}\cup \XX_{2})$.
\index{Computation of Sigma1@Computation of $\Sigma^1$ for!free groups}
\index{Computation of Sigma1@Computation of $\Sigma^1$ for!free products}
%
\subsection{Invariance} 
\label{ssec:Invariance-Definition-Sigma1}
%
The definition of $\Sigma^1(G)$,
as given by formula \eqref{eqf:Definition-Sigma-one-of-G},
involves the generating system $\eta \colon \XX \to G$. 
We shall prove that the set $\Sigma^1(G)$ does not depend on the choice of $\eta$
and so deserves to be called an invariant of the group $G$.

In the proof of this claim and in many other verifications,
an extension of a character $\chi$ from  $G$ 
to the set of all paths $ P(\Gamma)$ on the Cayley graph $\Gamma = \Gamma(G, \XX)$ is useful.
Given  $\chi \colon G \to \R$ and a path $p = e_1e_2 \cdots e_m = (g, y_1y_2 \cdots y_m)$,
set
\index{Notation!v-sub-chi@$v_\chi$|(}
\begin{align}
v_\chi(p) 
& = 
\min\{
\chi(\iota(e_1)), \chi(\tau(e_1)), \ldots, \chi (\tau(e_m))\}
\notag\\
& = 
\min\{\chi(g),\; \chi(gy_1),  \; \chi(gy_1y_2), \ldots, \chi(gy_1 \cdots y_m)\}
\label{eq:Definition-v-sub-chi-1}\\
& = 
\chi(g) + \min\{0, \chi(y_1), \chi(y_1y_2), \ldots, \chi(y_1 \cdots y_m)\}. \notag
\label{eq:Definition-v-sub-chi-2}
\end{align}
One  verifies readily
that $v_\chi$  satisfies the following identities:
\begin{equation}
\label{eq:Properties-valuation-path}
\left.
\begin{aligned}
v_\chi(p^{-1}) 
& =  v_\chi(p)\\
v_\chi(g.p) 
& =  
\chi(g) + v_\chi(p)\\
v_\chi(pq) 
& =
\min\{v_\chi(p),\; v_\chi(q)\}
\end{aligned}
\hspace*{0.2cm}
\begin{aligned}
\text{for } &p \in  P(\Gamma),\\
\text{for } &g \in  G \text{ and } p \in P(\Gamma),\\
\text{for } &p,q \in P(\Gamma) \text{ with } \tau(p) = \iota(q).
 \end{aligned}
 \right\}
\end{equation}
\index{Notation!v-sub-chi@$v_\chi$|)}

\begin{remark} 
\label{remark:Valuation-for-words}
The set $W(\XX^\pm)$ can be regarded as 
the set of edge path $p \in P(\Gamma)$ with $\iota(p) = 1$; 
so equation
\eqref{eq:Definition-v-sub-chi-1}
defines a map $v_\chi:W(\XX^\pm) \to \R$. 
If the set $W(\XX^\pm)$ is identified with a set  of paths starting at 1,
 formula \eqref{eq:Properties-valuation-path}
has to be adapted:
\begin{equation}
\label{eq:Properties-valuation-word}
\left.
\begin{aligned}
v_\chi(w^{-1}) 
& =  v_\chi(w) -  \chi(w)\\
v_\chi(ww') 
& =
\min\{v_\chi(w),\chi(w) + v_\chi(w')\}
\end{aligned}
\hspace*{0.2cm}
\begin{aligned}
\text{for } &w \in  W(\XX^\pm),\\
\text{for } &w,w' \in W(\XX^\pm).
 \end{aligned}
 \right\}
\end{equation}
\end{remark}
\begin{thm} 
\label{thm:Sigma1-well-defined}
\index{Invariant Sigma1@Invariant $\Sigma^1$!independence}
The subset $\Sigma^1(G) \subseteq S(G)$ does not depend on  $\eta \colon \XX \to G$.
\end{thm}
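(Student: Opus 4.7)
Let $\eta_1:\XX_1\to G$ and $\eta_2:\XX_2\to G$ be two finite generating systems of $G$ and fix a non-zero character $\chi\in\Hom(G,\R)$. The goal is to show that $\Gamma(G,\XX_1)_\chi$ is connected if and only if $\Gamma(G,\XX_2)_\chi$ is connected. Passing through the common refinement $\eta\colon\XX_1\sqcup\XX_2\to G$, which is itself a finite generating system, one reduces to the case of an inclusion $\XX_1\subseteq\XX_2$.

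In this case the direction ``$\Gamma(G,\XX_1)_\chi$ connected $\Rightarrow$ $\Gamma(G,\XX_2)_\chi$ connected'' is immediate: both subgraphs share the same vertex set $G_\chi$, and the first is a spanning subgraph of the second. The non-trivial direction is the converse. So assume that $\Gamma(G,\XX_2)_\chi$ is connected. Since $\XX_1$ generates $G$, one may choose once and for all a word $w_y\in W(\XX_1^\pm)$ with $\eta_1(w_y)=\eta_2(y)$ for each $y\in\XX_2$ (taking $w_y:=y$ if $y\in\XX_1$, and extending by $w_{y^{-1}}:=w_y^{-1}$). Define the finite quantity
$$
D := \max\bigl(\{-v_\chi(w_y)\;:\;y\in\XX_2^{\pm}\}\cup\{0\}\bigr)\;\geq\;0.
$$

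Given vertices $g_0,g_1\in G_\chi$, the hypothesis provides a path $q=(g_0,y_1\cdots y_n)$ from $g_0$ to $g_1$ in $\Gamma(G,\XX_2)_\chi$, so $v_\chi(q)\geq 0$. Substituting each letter $y_i$ by the word $w_{y_i}$ produces a path $\tilde q=(g_0,w_{y_1}\cdots w_{y_n})$ in $\Gamma(G,\XX_1)$ from $g_0$ to $g_1$. The anchor vertices $g_0\cdot\eta(y_1\cdots y_i)$ lie in $G_\chi$ by choice of $q$, and between consecutive anchors $\tilde q$ drops by at most $D$; combining the identities \eqref{eq:Properties-valuation-path} then yields $v_\chi(\tilde q)\geq -D$.

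The main obstacle is that $\tilde q$ need not stay inside $G_\chi$ when $D>0$. To close this gap, the plan is to re-route $q$ so that after substitution it avoids the region $\{\chi<0\}$. Since $\chi(G)$ is a non-trivial additive subgroup of $\R$, it is unbounded; pick $t\in G$ with $\chi(t)\geq D$. Using the connectivity of $\Gamma(G,\XX_2)_\chi$, I aim to assemble a path from $g_0$ to $g_1$ in $\Gamma(G,\XX_2)_\chi$ whose interior vertices all have $\chi$-value at least $D$ by first ascending from $g_0$ to $g_0t$, then traversing at altitude $\geq D$ to $g_1t$, and finally descending to $g_1$; substitution on each piece then gives a path in $\Gamma(G,\XX_1)_\chi$. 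The hardest part is the construction of the high-altitude middle segment: connectivity of $\Gamma(G,\XX_2)_\chi$ alone delivers only $v_\chi\geq 0$, not $v_\chi\geq D$. This I expect to handle by an iterative refinement, successively lifting problematic subsegments of the path to higher and higher altitudes, and using finiteness of $\XX_2$ together with $G$-equivariance of the Cayley graph to ensure the procedure terminates.
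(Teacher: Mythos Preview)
Your reduction to the case $\XX_1\subseteq\XX_2$ and the substitution bound $v_\chi(\tilde q)\geq -D$ are exactly right, and your overall shape---ascend by a power of a generator, cross at high altitude, descend---matches the paper's argument. But you are overcomplicating the step you yourself flag as hardest, and as stated your proposal has a genuine gap there.

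Two points. First, you should take $t\in\XX_1^\pm$ with $\chi(t)>0$ (such a letter exists since $\XX_1$ generates $G$ and $\chi\neq 0$) and then use a power $t^k$ with $k\chi(t)\geq D$; choosing $t$ as an arbitrary element of $G$ leaves you no clean way to realise the ascent $g_0\to g_0t$ inside $\Gamma(G,\XX_1)_\chi$.

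Second---and this is the key observation you are missing---no iterative refinement is needed to obtain the high-altitude middle segment. The group $G$ acts on the Cayley graph $\Gamma(G,\XX_2)$ by left multiplication, and left multiplication by $t^k$ sends the subgraph $\Gamma(G,\XX_2)_\chi=\Gamma(G,\XX_2)_\chi^{[0,\infty)}$ isomorphically onto $\Gamma(G,\XX_2)_\chi^{[k\chi(t),\infty)}$. Since the former is connected by hypothesis, so is the latter. Now $g_0t^k$ and $g_1t^k$ both lie in $\Gamma(G,\XX_2)_\chi^{[k\chi(t),\infty)}$ (because $\chi(g_it^k)=\chi(g_i)+k\chi(t)\geq k\chi(t)$), so there is a path between them at altitude $\geq k\chi(t)\geq D$ in a single stroke. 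The paper phrases this as: connect the conjugates $g'=t^{-k}g_0t^k$ and $h'=t^{-k}g_1t^k$ (which are in $G_\chi$) inside $\Gamma(G,\XX_2)_\chi$, substitute to get a $\XX_1^\pm$-path with $v_\chi\geq -k\chi(t)$, and then left-translate by $t^k$; the resulting path $(g_0,\,t^k w\, t^{-k})$ runs from $g_0$ to $g_1$ entirely inside $\Gamma(G,\XX_1)_\chi$.
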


\begin{proof}
The passage from one finite generating system to another such system 
can be carried out in finitely many steps, 
each of which consists in adjoining or deleting a redundant generator. 
So it suffices to compare the Cayley graphs 
$\Gamma = \Gamma(G,\XX)$  and $\Gamma' = \Gamma(G,\XX \cup \{z\})$. 
For every non-zero character $\chi \colon G \to \R$,
the graphs $\Gamma_\chi$ and  $\Gamma'_\chi$ have  the same vertices
and $\Gamma_\chi$  is a subgraph of $\Gamma'_\chi$.
Hence $\Gamma'_\chi$ is connected if $\Gamma_\chi$ is so. 

Conversely,
assume $\Gamma'_\chi$ is connected. 
Let $y_1y_2 \cdots y_m$ be a word in  $\XX^\pm$ 
which represents the redundant generator $z$.
Next choose  an element $t \in \XX^\pm$ with $\chi(t) > 0$ and a natural number $k$ 
such that $\chi(t^k) = k\chi(t) \geq -v_\chi(y_1y_2 \cdots y_m)$.
Given  vertices $g$ and $h$ of  $\Gamma_\chi$,
we can find a path from $g$ to $h$ that stays inside $\Gamma_\chi$ like this.
Set $g' =t^{-k}gt^k$ and $h' =t^{-k}ht^k$. 
Then $\chi(g') = \chi(g)$ and so $g'$ is a vertex of $\Gamma_{\chi}$; similarly, $h' \in \Gamma_{\chi}$.
Because $\Gamma'_\chi$ is connected,
there exist  a path $ p' = (g', w')$ inside $\Gamma'_\chi$ 
that leads  from $g'$ to $h'$.
The word $w'$ may contain  the letter $z$ or its inverse $z^{-1}$;
by replacing each occurrence of $z$ or $z^{-1}$ by the word  $y_1y_2 \cdots y_m$, respectively its inverse,
one obtains an $\XX^\pm$-word $w$;
it gives rise to a path $p = (g',w)$ from $g'$ to $h'$ in the Cayley graph $\Gamma(G,\XX)$.
This path need not be contained in  the the subgraph $\Gamma_\chi$,
but it can only leave it by  an \emph{a priori} known amount,
for the construction of $p$ guarantees that
\[
v_\chi(p) \geq v_\chi(y_1y_2 \cdots y_m) \geq -\chi(t^k).
\]
Let  $\alpha_{u}$ denote the automorphism of the Cayley graph $\Gamma(G,\XX)$
that is induced by left multiplication  by the element $u = t^k$.
The translated path $\alpha_{u}(p)$ starts in $u \cdot g' =  t^k \cdot (t^{-k} g t^k) = g\cdot t^k$,
ends in $h \cdot t^k$ and runs inside $\Gamma_\chi$,
and so the path $(g, t^k w t^{-k})$ leads from $g$ to $h$ and stays inside $\Gamma\chi$.
\end{proof}

%
\subsection{Consequences of the invariance}
\label{ssec:Consequences-invariance-Sigma1}
%
Theorem \ref{thm:Sigma1-well-defined} 
is a basic result in the theory of the invariant $\Sigma^1$;
it is also helpful in calculations
in that it permits one to adapt the generating system to the situation at hand.
The proofs of the following three results illustrate this aspect.
\footnote{Further results making use of Theorem  \ref{thm:Sigma1-well-defined}  
will be given in section \ref{ssec:Properties-Sigma1-change-groups}.}
%
%
\subsubsection{Groups with non-trivial centre or with finitely generated derived group } 
\label{sssec:Group-non-trivial-centre-or-fg-derived-group}
%
\begin{prp}
\label{prp:Sigma1-centre}
\index{Computation of Sigma1@Computation of $\Sigma^1$ for!groups with centre}
Let $G$ be a finitely generated group with centre $\zeta(G)$.
Then $\Sigma^1(G)$ contains every point $[\chi]$ which does not vanish on $\zeta(G)$.
In symbols,
\begin{equation}
\label{eq:Sigma1-centre}
 S(G, \zeta(G))^c   \subseteq \Sigma^1(G).
\end{equation}
\end{prp}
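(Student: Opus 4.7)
The plan is to use the invariance of $\Sigma^1(G)$ under the choice of generating system to put a central element with $\chi(z) > 0$ into the generating set, and then exploit centrality to translate bad paths upwards until they lie in $\Gamma_\chi$.

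More precisely, suppose $[\chi] \in S(G)$ with $\chi(\zeta(G)) \neq \{0\}$. Then there exists a central element $z \in \zeta(G)$ with $\chi(z) \neq 0$; after possibly replacing $z$ by $z^{-1}$, we may assume $\chi(z) > 0$. By Theorem \ref{thm:Sigma1-well-defined}, the invariant does not depend on the chosen finite generating system, so we may enlarge any given finite generating system to one of the form $\XX = \XX_0 \cup \{z\}$. I would then show that every vertex $g \in G_\chi$ can be connected to the identity $1 \in G_\chi$ inside $\Gamma_\chi = \Gamma(G,\XX)_\chi$.

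Starting from a path $(1,w)$ in $\Gamma(G,\XX)$ from $1$ to $g$, set $M = -v_\chi((1,w)) \geq 0$ and choose $k \in \N$ with $k\chi(z) \geq M$. I would then construct a concatenation of three subpaths, all staying in $G_\chi$:
\begin{itemize}
\item an ascending $z$-path $1, z, z^2, \ldots, z^k$, whose vertices satisfy $\chi(z^i) = i\chi(z) \geq 0$;
\item the translated path $(z^k, w)$ from $z^k$ to $z^k g$, whose valuation satisfies $v_\chi((z^k,w)) = k\chi(z) + v_\chi((1,w)) \geq 0$ by the second rule in \eqref{eq:Properties-valuation-path};
\item a descending $z$-path from $gz^k$ down to $g$, whose vertices $gz^i$ satisfy $\chi(gz^i) = \chi(g) + i\chi(z) \geq 0$ since $g \in G_\chi$ and $\chi(z) > 0$.
\end{itemize}
The crucial use of the hypothesis that $z$ is \emph{central} appears in linking the second and third subpaths: centrality gives $z^k g = g z^k$, so the terminus of the translated path coincides with the initial vertex of the descending $z$-path.

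The main (and really only) delicate point is the choice of $k$ and the verification that the translated path stays in $G_\chi$; this is a straightforward application of the identities in \eqref{eq:Properties-valuation-path}. Once this is in place, the three subpaths connect $1$ to $g$ inside $\Gamma_\chi$, and since $g \in G_\chi$ was arbitrary, $\Gamma_\chi$ is connected, which gives $[\chi] \in \Sigma^1(G)$.
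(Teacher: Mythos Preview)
Your proof is correct and is essentially the same as the paper's: both adjoin a central element $z$ with $\chi(z)>0$ to the generating set and replace a path $(1,w)$ by $(1,z^k w z^{-k})$, using centrality to ensure the endpoint is unchanged. The only cosmetic difference is that you spell out the decomposition into three subpaths and the role of centrality more explicitly than the paper does.
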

\begin{proof}
Let $\chi$ be a character with $\chi(\zeta(G)) \neq \{0\}$. 
Pick $z \in \zeta(G) \smallsetminus \ker(\chi)$
and choose a generating set of $G$ that  includes $z$.
Given any pair of elements $g$ and $h$ in $G_\chi$ 
there exists a path $p = (g, w)$ in $\Gamma(G, \XX)$ from $g$ to $h$.
If $v_\chi(p) \geq 0$, the path runs inside $\Gamma_\chi$;
otherwise there exists a natural number $k$ with $\chi (z^k) \geq |v_\chi(p)|$.
The path $(g, z^k \cdot w \cdot z^{-k})$  then stays inside $\Gamma_\chi$ 
and it leads from $g$ to $g \cdot z^{k}wz^{-k} = g \cdot w = h$.
It follows that  $\Gamma_\chi$ is connected 
and so $[\chi] \in \Sigma^1(G)$ by Theorem \ref{thm:Sigma1-well-defined}.
\end{proof}

\begin{examples}
\label{examples:Groups-with-non-trivial-centre}
Here are two illustrations of Proposition \ref{prp:Sigma1-centre}.

a) Let $G$ be a (finitely generated) \emph{abelian} group.
Then $\zeta(G) = G$ and so the set $S(G, \zeta(G))$ is empty. 
Proposition \ref{prp:Sigma1-centre} therefore implies that $\Sigma^1(G) = S(G)$.
\index{Computation of Sigma1@Computation of $\Sigma^1$ for!abelian groups}

b) Let $p$, $q$ be positive integers and $G$ the group
with generators $x_1$, $x_2$ and single defining relation $x_1^p= x_2^q$.
The abelianization of $G$ has $\Z$-rank $1$ and so the sphere $S(G)$ consists of two points.
These points are represented by the character $\chi_0 \colon G \to \R$  
with $\chi_0(x_1) =  q$, $\chi_0(x_2) = p$ and its antipode.
The element $z =x_1^p \in G$ commutes with the generator $x_1$;
as it coincides with $x_2^q$, it commutes also with $x_2$
and so it belongs to the centre of the group $G$.
Since neither $\chi_0$ nor $-\chi_0$ map $z$ to 0, 
Proposition \ref{prp:Sigma1-centre}  allows one to conclude
that $\Sigma^1(G) = \{[\chi_0], [-\chi_0] \} =S(G)$.
\index{Computation of Sigma1@Computation of $\Sigma^1$ for!group of torus knot}
\end{examples}

\begin{prp}
\label{prp:Fg-derived-group}
\index{Computation of Sigma1@Computation of $\Sigma^1$ for!groups with fg derived group}
If the derived group  of $G$ is finitely generated $\Sigma^1(G) = S(G)$.
\end{prp}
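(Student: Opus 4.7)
The plan is to imitate the argument for Proposition A2.3: choose a generating system tailored to the hypothesis, then use a convenient path-modification trick to stay inside $\Gamma_\chi$. The key structural observation is that any character $\chi\colon G\to\R$ factors through the abelianization $G/G'$, because the additive group of $\R$ is abelian; hence $\chi$ vanishes on $G'$ and, more importantly, is \emph{constant on every coset} of $G'$.

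First I would choose the generating system. Since $G'$ is assumed finitely generated, pick a finite generating set $\YY=\{y_1,\dots,y_k\}$ of $G'$. Choose also finitely many elements $\TT=\{t_1,\dots,t_n\}$ of $G$ whose images $\bar t_i$ generate the finitely generated abelian quotient $G/G'$, and set $\XX = \YY\cup \TT$. By Theorem A2.2 it suffices to check connectedness of $\Gamma_\chi$ for this particular $\XX$.

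Fix a non-zero character $\chi$ and two vertices $g,h\in G_\chi$. The plan to connect them has two stages. Stage one operates in the quotient: let $\bar\chi\colon G/G'\to\R$ be the induced non-zero character, and consider the Cayley graph $\Gamma(G/G',\bar\TT)$. By Example A2.4(a), $[\bar\chi]\in\Sigma^1(G/G')$, so there is an edge path $\bar p=(\bar g,\bar t_{i_1}^{\varepsilon_1}\cdots\bar t_{i_m}^{\varepsilon_m})$ from $\bar g$ to $\bar h$ lying entirely in $\Gamma(G/G',\bar\TT)_{\bar\chi}$. Lift it letter by letter to the path $p=(g, t_{i_1}^{\varepsilon_1}\cdots t_{i_m}^{\varepsilon_m})$ in $\Gamma(G,\XX)$, which starts at $g$ and ends at some $h'\in hG'$. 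Because $\chi$ is constant on $G'$-cosets, the $\chi$-value of every vertex of $p$ agrees with the $\bar\chi$-value of the corresponding vertex of $\bar p$, so $p$ stays inside $\Gamma_\chi$. Stage two moves inside the single coset $hG'$: the subgraph of $\Gamma(G,\XX)$ spanned by $hG'$ with edges labelled by $\YY$ is (canonically isomorphic to) the Cayley graph $\Gamma(G',\YY)$, which is connected because $\YY$ generates $G'$. Since $\chi$ is constant on $hG'$ with value $\chi(h)\ge 0$, this connected subgraph lies in $\Gamma_\chi$, so $h'$ can be joined to $h$ by a $\YY$-path in $\Gamma_\chi$. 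Concatenating yields the desired path from $g$ to $h$ inside $\Gamma_\chi$, proving $[\chi]\in\Sigma^1(G)$.

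There is no genuine obstacle; the only two points requiring a line of justification are that $\chi$ factors through $G/G'$ (immediate, since $[G,G]\subseteq\ker\chi$) and that Example A2.4(a) indeed supplies connectedness of the positive half of the Cayley graph of the finitely generated abelian group $G/G'$. The slight subtlety is ensuring that the lifted path $p$ from stage one \emph{automatically} stays in $G_\chi$; this is where the fact that $\chi$ is constant on cosets of $G'$ — rather than merely zero on $G'$ — is doing the work, and it removes the need for any conjugation trick like the one used in the proof of Proposition A2.3.
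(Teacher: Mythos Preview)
Your proof is correct, but it takes a different route from the paper's. The paper argues more directly: it picks any $\XX$-word $w$ representing $g\in G_\chi$, \emph{reorders} the letters of $w$ so that those with positive $\chi$-value come first, obtaining a word $w'$ whose path $(1,w')$ visibly stays in $\Gamma_\chi$ (the $\chi$-value rises, then falls monotonically back to $\chi(g)\ge 0$); since $w$ and $w'$ have the same exponent sums, the discrepancy $(w')^{-1}w$ represents an element of $G'$, which is then corrected by a $\YY$-word. So the paper never invokes the abelian case as a lemma --- the reordering trick is its substitute.

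Your argument, by contrast, factors through $G/G'$: you use the already-established abelian case to find a good path in the quotient, lift it, and then correct inside a $G'$-coset. This is exactly the strategy the paper later abstracts as Proposition~A4.2 and its Addendum~A4.3 (with $N=G'$), so you have in effect anticipated that more general result. Your approach is a bit more conceptual and immediately suggests generalisations to other finitely generated normal subgroups $N$ with $G/N$ abelian; the paper's approach is more self-contained at this early stage and avoids forward references.
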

\begin{proof}
Choose a finite generating system $\eta \colon \XX \to G$ that includes a generating set $\YY$ of $G'$.
Given $g \in G_{\chi}$, represent $g$  by an $\XX$-word $w$. 
Reorder the letters in $w$ so 
that those with positive $\chi$-values precede the other letters, obtaining a word $w'$. 
Then the  exponent sums of the word $w_1 = w \cdot (w')^{-1}$ with respect to the set of generators $\XX$ all vanish;
so $w_1$ represents a element $h$  in the derived group $G'$ of $G$.
Represent $h$ by a $\YY^\pm$-word, say $u$.
Then the path $p = (1, w'\cdot u)$ runs inside $\Gamma_{\chi}$ and links the unit element with $g$.
\end{proof}
%
\subsubsection{Invariant of a direct product of groups} 
\label{sssec:Invariant-direct-product}
%
Given finitely generated groups $G_1$ and $G_2$, 
let $G$ denote their direct product $G_1 \times G_2$ 
and $\pi_1$, $\pi_2$ the canonical projections of $G_1 \times G_2$ onto $G_i$. 
If $\chi_1 \colon G_1 \to \R$ is a non-zero character, 
the composition $\pi_1 \circ \chi_1$ is non-zero; 
so $\pi_1$ induces an embedding  $\Hom(\pi_1, \id) \colon \Hom(G_1, \R) \mono \Hom(G, \R)$ of vector spaces
and hence an embedding $\pi_1^* \colon S(G_1) \mono S(G)$ of character spheres.
Similarly, $\pi_2$ induces an embedding $\pi_2^* \colon S(G_2) \mono S(G)$.
These embeddings allow one to express  the \emph{complement}  $\Sigma^1(G)^c $ 
of $\Sigma^1(G) $ by a simple formula:
\begin{prp}
\label{prp:Sigma1-direct-product}
\index{Computation of Sigma1@Computation of $\Sigma^1$ for!direct products}
The complement of the invariant of a direct product $G_1 \times G_2$ of finitely generated groups $G_1$, $G_2$ is given by the the formula
\begin{equation}
\label{eq:Sigma-1-direct-product}
\Sigma^1(G_1 \times G_2)^c = \pi^*_1(\Sigma^1(G_1))^c \cup \pi^*_2(\Sigma^1(G_2))^c .
\end{equation}
\end{prp}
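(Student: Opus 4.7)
The plan is to use the canonical decomposition $\Hom(G_1\times G_2,\R)\cong\Hom(G_1,\R)\oplus\Hom(G_2,\R)$, so that every non-zero character of $G=G_1\times G_2$ can be written uniquely as $\chi(g_1,g_2)=\chi_1(g_1)+\chi_2(g_2)$ for some $\chi_i\in\Hom(G_i,\R)$. This partitions the sphere $S(G)$ into the two subspheres $\pi_i^*(S(G_i))$ (the cases where $\chi_{3-i}=0$) and the ``mixed'' region where both $\chi_1$ and $\chi_2$ are non-zero. I would fix finite generating systems $\XX_i$ of $G_i$ and work throughout with the generating system $\XX=(\XX_1\times\{1\})\cup(\{1\}\times\XX_2)$ of $G$, which is permitted by Theorem~\ref{thm:Sigma1-well-defined}.

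For a character of the form $\chi=\chi_1\circ\pi_1$ (so $\chi_2=0$), the submonoid $G_\chi$ equals $(G_1)_{\chi_1}\times G_2$, and one sees directly from the definition that $\Gamma(G,\XX)_\chi$ is the ``horizontal'' union, over all $g_2\in G_2$, of translates of $\Gamma(G_1,\XX_1)_{\chi_1}$, with these translates linked by complete copies of $\Gamma(G_2,\XX_2)$ (all of whose edges have $\chi$-value $\chi_1(g_1)\geq 0$ on the appropriate slice). Since $\Gamma(G_2,\XX_2)$ is connected, the whole subgraph $\Gamma_\chi$ is connected if and only if $\Gamma(G_1,\XX_1)_{\chi_1}$ is, which is exactly the condition $[\chi_1]\in\Sigma^1(G_1)$. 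The analogous statement holds with the r{\^o}les of the factors reversed.

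For the mixed case ($\chi_1,\chi_2\neq 0$) I claim $[\chi]\in\Sigma^1(G)$, which will show that the complement of $\Sigma^1(G)$ in $S(G)$ lies entirely in $\pi_1^*(S(G_1))\cup\pi_2^*(S(G_2))$. Pick generators $t_i\in\XX_i^\pm$ with $\chi_i(t_i)>0$, and note that $(t_1,1)$ and $(1,t_2)$ commute since they lie in different factors. Given $(g_1,g_2)\in G_\chi$, the strategy is a three-segment path $(1,1)\to\cdots\to(g_1,g_2)$: first traverse a $G_i$-factor in which the target coordinate satisfies $\chi_i\geq 0$ (without loss of generality say $\chi_2(g_2)\geq 0$, using one of $\chi_1(g_1)\geq 0$ or $\chi_2(g_2)\geq 0$ which must hold since $\chi_1(g_1)+\chi_2(g_2)\geq 0$); then, before traversing the other factor, insert a large power of the boosting generator ($(1,t_2)^n$ or $(t_1,1)^n$) to raise the running $\chi$-value above the negative excursions of the intermediate $\chi_{3-i}$-values of a chosen word for $g_{3-i}$; traverse the other factor; finally subtract off the inserted power. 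The routine verification that every intermediate vertex has $\chi$-value $\geq 0$ reduces to comparing $n\chi_i(t_i)$ to a finite constant depending on the chosen spelling of $g_{3-i}$, so $n$ can always be taken large enough.

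Combining these steps gives $[\chi]\notin\Sigma^1(G)$ precisely when $\chi=\chi_i\circ\pi_i$ for some $i$ with $[\chi_i]\notin\Sigma^1(G_i)$, which is the desired identity. The main obstacle is purely bookkeeping in the mixed case: organizing the three-segment path and checking that every intermediate $\chi$-value is non-negative after the boost; the rest is essentially a consequence of the definition together with the invariance result.
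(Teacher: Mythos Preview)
Your approach is essentially the same as the paper's: same generating system, same case split into the two subspheres versus the mixed region, and the same boosting trick using a commuting element from the other factor. However, your description of the mixed case has a gap. You say ``first traverse a $G_i$-factor in which the target coordinate satisfies $\chi_i\geq 0$'' --- but even if $\chi_2(g_2)\geq 0$, an arbitrary $\XX_2^\pm$-word for $g_2$ may well have intermediate partial products with negative $\chi_2$-value, so this first segment need not stay in $\Gamma_\chi$. Your boost is applied only \emph{before traversing the other factor}, and your ``routine verification'' only compares $n\chi_i(t_i)$ against the excursions of the word for $g_{3-i}$; the excursions of the word for $g_i$ in the first segment are never controlled.

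The paper fixes this by conjugating \emph{each} half of the path separately: having observed that the origin $1_G$, the midpoint $(g_1,1)$ (or $(1,g_2)$), and the terminus $(g_1,g_2)$ all lie in $\Gamma_\chi$, it replaces the first half by $(1_G,\,u^\ell w_1 u^{-\ell})$ with $u$ a generator from the \emph{other} factor with $\chi(u)>0$, and then does the analogous modification to the second half. Your argument becomes correct once you apply the same trick to the first segment (using $(t_1,1)^m$ to lift the $G_2$-traversal), but as written the three-segment path can leave $\Gamma_\chi$ already in the first leg.
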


\begin{proof}
Let $\XX_1$  be a finite generating subset  of $G_1$ which includes the neutral element $1_{G_1}$,
and let $\XX_2 \subset G_2$ have the analogous properties.
Then the set
\[
\XX = \XX_1  \times \{1_{G_2}\}  \cup \{1_{G_1} \} \times \XX_2
\]
is finite and it generates the direct product $G = G_1 \times G_2$.
Theorem \ref{thm:Sigma1-well-defined} allows one to base our computation on this generating set.

For each  vertex $g =(g_1,g_2)$  of the Cayley graph $\Gamma(G, \XX)$
there exists an $\XX_1^\pm$-word $y_{11}y_{12} \cdots y_{1h}$
representing the element $g_1$
and an  $\XX_2^\pm$-word $y_{21}y_{22} \cdots y_{2k}$ representing  $g_2$.
These words give rise  to the path
\begin{equation}
\label{eq:Path-form-12}
p_{12} = \left(1_G, (y_{11},1)  (y_{12},1) \cdots (y_{1h},1) \cdot (1,y_{21})  (1, y_{22}) \cdots (1, y_{2k})\right)
\end{equation}
but also to the path
\begin{equation}
\label{eq:Path-form-21}
p_{21} = (1_G, (1,y_{21})  (1, y_{22}) \cdots (1, y_{2k}) \cdot (y_{11},1)  (y_{12},1) \cdots (y_{1h},1) ).
\end{equation}
Both paths connect the origin with the vertex $(g_1,g_2)$

Consider now  a non-zero character  $\chi \colon G \to \R$
and let $g= (g_1,g_2)$ be a vertex of the subgraph $G_{\chi}$,  
\ie, an element of $G$ with  $\chi (g) \geq 0$.
Assume first $\chi$ vanishes on $G_2$ 
and denote the restriction of $\chi$ to $G_1$ by $\chi_1$. 
Then a  path of the form $p_{12}$ runs inside the subgraph $\Gamma_\chi(G,\XX)$ 
if, and only if, the path
$p_1 = (1, y_{11}  y_{12} \cdots y_{1h}) $  stays inside  $\Gamma_{\chi_1} (G_1, \XX_1)$.
This shows that $\pi_1^*$ maps $\Sigma^1(G_1)$ bijectively onto $\Sigma^1(G) \cap S(G, G_2)$.
Similarly, $\pi_2^*$ sends $\Sigma^1(G_2)$ bijectively onto $\Sigma^1(G) \cap S(G, G_1)$.

Assume now that $\chi$ vanishes neither on $G_1 \times \{1\}$ nor on $\{1\}  \times G_2$ . 
If $\chi(g_1,g_2) \geq 0$,
then $\chi(g_1, 1)\geq  0 $ or $\chi(1,g_2)\geq  0$.
If $\chi(g_1, 1)\geq  0$,
pick a path $p_{12}$ from $1_G$ to $g$ having the form  \eqref{eq:Path-form-12}.
This path may leave the subgraph $\Gamma_\chi$,
but its origin $1_G$, its mid point $(g_1,1)$ and its terminus $g$ are in $\Gamma_\chi$.
We are going to modify the first half of the path, and later on the second one.

Let $u$ be an element of $\{1\} \times \XX_2^{\pm}$ with positive $\chi$-value.
For every $\ell \in \N$ the path
\[
q_\ell  = (1_G,u^\ell \cdot (y_{11},1)  (y_{12},1) \cdots (y_{1h},1) \cdot u^{-\ell})
\]
leads from the origin to the mid point $(g_1,1)$ 
and it runs inside the subgraph $\Gamma_\chi$ if  $\ell$ is  large enough.
The second part of the path $p_{12}$ can be modified similarly.
The result of both transformations is a path  inside $\Gamma_\chi$ from $1_G$ to $g=(g_1,g_2)$.

If, on the other hand,  $\chi(1, g_2)\geq  0$,
one picks a path $p_{21}$ from $1_G$ to $g$ having the form  \eqref{eq:Path-form-21}
and modifies it in like manner.
\end{proof}

\begin{examples}
\label{examples:Direct-product}
Proposition \ref{prp:Sigma1-direct-product} allows one to work out the invariant 
of more complicated groups than those considered hitherto. 
Here are some specimens.

a) Let $G_1$ be the Baumslag-Solitar group discussed in example 2 of  \ref{sssec:Sigma1-first-examples};
it has the presentation
$\pi_1 \colon \langle a_1, u_1\mid u_{1}a_{1}u_{1}^{-1} = a_{1}^2 \rangle \iso G_1$.
Let $G_2$ be an isomorphic copy of $G_1$,
generated by  elements $a_2$, $u_2$, and set $G = G_1 \times G_2$.
By the discussion carried out in  example 2 of  \ref{sssec:Sigma1-first-examples}
the invariant $\Sigma^1(G_1)^c$ consists of the point $[\chi_1]$  with $\chi_1(u_1) = 1$ (and $\chi_1(a_1) = 0$);
in view of Proposition \ref{prp:Sigma1-direct-product}
the complement $\Sigma^1(G)^c$ consists therefore of two points,
the point $[\chi_1 \circ \pi_1]$ and the analogously defined point $[\chi_2 \circ \pi_2]$.
\index{Computation of Sigma1@Computation of $\Sigma^1$ for!direct products|(}

b) Next let  $G = C \times F_2$ be the direct product 
of an infinite cyclic group $C$ generated by $t$ 
and a free group $F_2$ of rank 2 with basis $\{x_1,x_2\}$;
the sphere of $G$ is then two-dimensional.
Since the element $t$ lies in the centre of $G$,
Proposition \ref{prp:Sigma1-centre} implies that the complement $\Sigma^1(C \times F_2)^c$
is contained in the subsphere $S(G, C)$;
this subsphere is a great circle that can be thought of as the equator of a 2-sphere.
Proposition \ref{prp:Sigma1-direct-product} and example 3 in  \ref{sssec:Sigma1-first-examples}
allow one to infer a stronger conclusion:
\emph{$\Sigma^1(G)^c$ coincides with the equator $S(G, C)$}.

c) Consider, finally, the direct product $G = F_2 \times F_2$ of two free groups of rank 2. 
Then $G_{\ab}$ is free abelian of rank 4  and so $S(G)$ is a 3-sphere.
In view of Proposition \ref{prp:Sigma1-direct-product}
and example 3 in  \ref{sssec:Sigma1-first-examples},
the complement $\Sigma^1(G)^c$ is  the union of the subspheres $S(G,G_2)$ and $S(G,G_1)$.
These subspheres are both great circles.
\end{examples}
\index{Computation of Sigma1@Computation of $\Sigma^1$ for!direct products|)}
%
%
\subsection{The subspaces $\Gamma(G, \XX)_\chi^I$}
\label{ssec:Subspaces:Gamma-chi}
%
We conclude this section with a comment on the definition of $\Sigma^1$.
Let $\Gamma(G,\XX)$ be the Cayley graph of a  finitely generated group $G$,
where $\eta \colon \XX \to G$ is  a finite generating family.
Each non-zero character $\chi \colon G \to \R$ gives rise to a collection  $I \mapsto G^I_\chi$ 
of subgraphs of $\Gamma$,
one for each  non-empty  interval $I \subseteq \R$.

The collection is defined as follows:
given a non-empty interval $I$,
set
\begin{equation}
\label{eq:G-superI-subchi}
G^I_\chi = \{g \in G \mid \chi(g) \in I\},
\end{equation}
and then define $\Gamma^I_\chi$ to be the full subgraph of $\Gamma(G,\XX)$ 
generated  by this subset. 
\index{Notation!G-sub-chi-super_I@$G_\chi^I$}
Special cases of this construction are
$\Gamma_\chi =  \Gamma_\chi^{[0,\infty)}$ 
and 
$\Gamma_{-\chi} = \Gamma_\chi^{(-\infty,  0]}$.

For every $g \in G$ the translated graph $g.\Gamma^I_\chi $ coincides with $\Gamma_\chi^{\chi(g)+I}$; 
hence $\Gamma^I_\chi$ and $\Gamma_\chi^{r+I}$ 
are isomorphic graphs whenever $r \in \im \chi$. 
It is not clear  to what extent this conclusion remains valid for other real numbers $r$; 
it does not  hold in general, 
for $G_\chi^{r+I}$ may be empty if $I$ is rather small. 
For a ray one has, however,
\begin{lem}
\label{lem:Connected-rays}
If $\Gamma_{a_*} = \Gamma_\chi^{[a_*,\infty)}$ is connected  for some $a_* \in \R$ 
then $\Gamma_a = \Gamma_\chi^{[a,\infty)}$ connected for every $a \in \R$.
\end{lem}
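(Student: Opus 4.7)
The strategy is to exploit the fact that left multiplication by any element $u \in G$ is an automorphism of $\Gamma(G,\XX)$ that shifts the $\chi$-value of every vertex uniformly by $\chi(u)$, hence maps $\Gamma_\chi^I$ bijectively onto $\Gamma_\chi^{\chi(u)+I}$. Since $\chi \neq 0$ and $\XX$ generates $G$, some letter $t \in \XX^\pm$ must satisfy $\chi(t) > 0$; this will supply us with ``vertical'' edges. I would split the argument into the two cases $a \leq a_*$ and $a \geq a_*$.

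When $a \leq a_*$, the inclusion $\Gamma_{a_*} \subseteq \Gamma_a$ holds. To join two vertices $g, h \in G_\chi^{[a, \infty)}$ inside $\Gamma_a$, walk along $t$-edges from $g$ to some $gt^{k_g}$ with $\chi(gt^{k_g}) \geq a_*$, and similarly from $h$ to $ht^{k_h}$; these auxiliary sub-paths stay in $\Gamma_a$ because their vertex values only increase from $\chi(g) \geq a$ respectively $\chi(h) \geq a$. The endpoints $gt^{k_g}$ and $ht^{k_h}$ lie in $\Gamma_{a_*}$ and are therefore connected by a path in $\Gamma_{a_*} \subseteq \Gamma_a$, by hypothesis.

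When $a \geq a_*$, the hypothesis is not directly useful, because a path inside $\Gamma_{a_*}$ joining two given vertices of $\Gamma_a$ may dip below the threshold $a$. Here I would import the conjugation device used in the proof of Theorem \ref{thm:Sigma1-well-defined}. Pick $k \in \N$ with $k\chi(t) \geq a - a_*$ and set $g_c = t^{-k}gt^k$, $h_c = t^{-k}ht^k$; since $\chi$ is invariant under conjugation, $\chi(g_c) = \chi(g) \geq a \geq a_*$, and likewise $\chi(h_c) \geq a_*$, so $g_c, h_c \in \Gamma_{a_*}$. Take a path $p_c$ from $g_c$ to $h_c$ inside $\Gamma_{a_*}$ and apply to it the left translation by $t^k$: the resulting path runs from $t^kg_c = gt^k$ to $t^kh_c = ht^k$ with all $\chi$-values shifted up by $k\chi(t) \geq a - a_*$, hence all at least $a$, so it lies inside $\Gamma_a$. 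Prepending the ascending $t$-path from $g$ to $gt^k$ and appending the descending $t$-path from $ht^k$ back to $h$---both of which stay in $\Gamma_a$ since their $\chi$-values are bounded below by $\chi(g)$ and $\chi(h)$ respectively---yields the desired path from $g$ to $h$.

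The hard part will be this second case. Naively translating a path of $\Gamma_{a_*}$ by an element $u \in G$ shifts the endpoints together with the vertex values, so one cannot correct both at once. The conjugation device resolves this tension by producing auxiliary base points $g_c, h_c$ whose $\chi$-values match those of $g$ and $h$ and yet whose translates by $t^k$ coincide with $gt^k$ and $ht^k$, so that the $\chi$-lifting via left multiplication and the endpoint adjustment via concatenation with short $t$-paths fit together cleanly.
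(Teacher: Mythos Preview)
Your proof is correct, and the case $a \leq a_*$ matches the paper exactly. For the case $a \geq a_*$, however, the paper takes a shorter route than your conjugation device: it simply picks $h \in G$ with $\chi(h) < a_* - a$, observes that left translation by $h$ is a graph isomorphism sending $\Gamma_a$ onto $\Gamma_{a+\chi(h)}$, and notes that $\Gamma_{a+\chi(h)}$ is connected by the first case because $a + \chi(h) < a_*$. So the ``hard part'' you anticipated dissolves once you translate the whole subgraph rather than an individual path: there is no need to fix the endpoints, because connectedness is an isomorphism invariant. Your explicit path construction works, of course, and has the minor virtue of producing a concrete path in $\Gamma_a$, but the paper's argument is a one-line reduction to the first case.
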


\begin{proof} 
Assume first $a < a_*$.
Let $t \in \XX^\pm$ be an element with positive $\chi$-value and choose $\ell$ so large 
that  $ a + \chi(t^\ell) \geq a_*$.
For every  vertex $g\in \Gamma_a$ the path $p_g = (g,t^k)$ 
stays inside $\Gamma_a$ and connects $g$ with the vertex $gt^\ell$  that lies in $\Gamma_{a_*}$. 
This implies that $\Gamma_a$ is connected.
If, on the other hand, $a_*< a$, choose an element $h\in G$ with $a  + \chi(h) <a_*$.
Then the subgraph $h(\Gamma_{a})$ is connected by the previous argument;
as is isomorphic to $\Gamma_{a}$, this subgraph is connected, too. 
\end{proof}

The above lemma will find various applications in the sequel.
A first one is
\begin{crl}
\label{crl:Infinitely-many-components}
If $\Gamma(G, \XX)_\chi$ is not connected,
it has infinitely many components.
\end{crl}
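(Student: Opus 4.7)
The plan is to argue by contradiction. Assume $\Gamma_\chi$ is disconnected but has only finitely many components $C_1,\ldots,C_k$ with $k\geq 2$. The three ingredients to combine are (a) the translation automorphisms of $\Gamma(G,\XX)$ arising from left multiplication by elements of $G$, (b) the fact that the whole Cayley graph $\Gamma(G,\XX)$ is connected, and (c) a letter $t\in\XX^{\pm}$ with $\chi(t)>0$, which exists because $\chi\neq 0$ and $\XX$ generates $G$.

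First I would establish that each component $C_i$ contains vertices of arbitrarily large $\chi$-value: starting from any $g\in C_i$ the successive edges labelled $t$ joining $g,gt,gt^2,\ldots$ all lie in $\Gamma_\chi$ because $\chi(gt^n)=\chi(g)+n\chi(t)\geq 0$, so the whole ray $\{gt^n\mid n\geq 0\}$ stays inside $C_i$ and its $\chi$-values tend to $+\infty$.

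Next, pick representatives $u_i\in C_i$ for $i=1,\ldots,k$. Since $\Gamma(G,\XX)$ is connected, for each pair $i\neq j$ one can choose an edge path $p_{ij}$ from $u_i$ to $u_j$ in $\Gamma(G,\XX)$; as there are only finitely many such pairs, the quantity $D=-\min_{i\neq j}v_\chi(p_{ij})$ is a finite real number. Now choose an integer $N$ with $N\chi(t)>D$. Left multiplication by $t^N$ is an automorphism of $\Gamma(G,\XX)$, and by the third identity of \eqref{eq:Properties-valuation-path} the translated path $t^N\cdot p_{ij}$ satisfies
\[
v_\chi(t^N\cdot p_{ij})=N\chi(t)+v_\chi(p_{ij})\geq N\chi(t)-D>0,
\]
so it runs inside $\Gamma_\chi$ and links $t^Nu_i$ to $t^Nu_j$. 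Consequently the connected subgraphs $t^N\cdot C_1,\ldots,t^N\cdot C_k$ (each of which is contained in $\Gamma_\chi$, since $\chi(t^Ng)\geq N\chi(t)>0$ for $g\in C_i$) all lie in a single common component $C_c$ of $\Gamma_\chi$.

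The vertex set of $t^N\cdot\Gamma_\chi$ is exactly $\{h\in G\mid\chi(h)\geq N\chi(t)\}$: write $h=t^N\cdot(t^{-N}h)$ with $t^{-N}h\in G_\chi$. The preceding paragraph therefore forces every vertex $h$ with $\chi(h)\geq N\chi(t)$ to belong to $C_c$. But the initial observation guarantees that each component $C_j$ with $j\neq c$ contains vertices with $\chi$-value exceeding $N\chi(t)$, contradicting $C_j\cap C_c=\varnothing$. Hence $\Gamma_\chi$ cannot have finitely many components. The main obstacle I anticipate is the bookkeeping in the third paragraph: one must produce a single $N$ that simultaneously forces the translates of \emph{all} components into the same component of $\Gamma_\chi$, which uses both the uniform bound $D$ and the automorphism property of left translation in tandem.
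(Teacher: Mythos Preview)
Your proof is correct and uses essentially the same translation idea as the paper, though the paper compresses the argument into two lines by invoking Lemma~\ref{lem:Connected-rays}: if $\Gamma_\chi$ had finitely many components, one could connect representatives of them inside $\Gamma_\chi^{[a,\infty)}$ for a sufficiently negative $a$, making that subgraph connected, whence $\Gamma_\chi$ itself is connected by the lemma. Your version unpacks this by translating upward by $t^N$ rather than enlarging the subgraph downward, and then uses your initial observation (each component contains vertices of arbitrarily large $\chi$-value) in place of the lemma; this makes the argument self-contained but longer.
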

\begin{proof}
If $\Gamma_\chi$ has finitely many components, 
there exist a  negative number $a$ 
such that these components can be connected inside $\Gamma_\chi^{[a, \infty)}$.
This implies that $\Gamma_\chi^{[a, \infty)}$ is connnected,
whence so is $\Gamma_\chi$ by lemma \ref{lem:Connected-rays}.
\end{proof}
%
 
%
%
\section{The $\Sigma^1$-criterion}
\label{sec:Sigma1-criterion}
%
%
Let $G$ be a finitely generated group with finite generating system $\eta \colon \XX \to G$.
If $G$ admits a non-zero character --- and this is the only case we are genuinely interested in ---
the Cayley graph $\Gamma(G, \XX)$ is an infinite geometric object 
whose global form is, typically, hard to visualize.
If a good grasp of  $\Gamma(G, \XX)_\chi$ were a prerequisite for the determination of  $\Sigma^1(G)$,
one could not hope to work out the invariant, except in some simple cases.
The $\Sigma^1$-criterion now shows
that  a small number of  local conditions on the Cayley graph $\Gamma(G, \XX)$
imply that a subgraph $\Gamma(G, \XX)_\chi$ is connected.
In many examples of interest,
this criterion enables one to determine $\Sigma^1(G)$ completely; 
in less favourable instances,  
it permits one to construct at least a non-empty, open subset of the invariant.
\footnote{The proof that a subgraph $\Gamma(G, \XX)_\chi$ is \emph{not} connected
requires often other techniques; 
they will be discussed in later chapters.}
\smallskip

This section divides into three parts:
in the first,  Theorem \ref{thm:Sigma1-criterion},
the main result of the section,  is stated and  proved;
it  provides a necessary and sufficient condition for a point $[\chi]$ to belong to $\Sigma^1(G)$.
Then two applications of the main result  will be given;
one of them says that $\Sigma^1(G)$ is an open subset of the sphere $S(G)$.
In the third part,
a variant of the $\Sigma^1$-criterion is presented. 
%
\subsection{The main result}
\label{ssec:Stating-criterion}
%
It reads as follows:
\begin{thm} 
\label{thm:Sigma1-criterion}
\index{Sigma-criterion@$\Sigma^1$-criterion!geometric form}
Let $G$ be a  group and $\eta \colon \XX \to G$ a finite generating system of $G$.
For every non-zero character $\chi \colon G \to \R$
and  for every choice of $t \in \YY = \XX \cup \XX^{-1} $  with $\chi(t) > 0$,
the following conditions are equivalent:
\begin{enumerate}[(i)]  
\item
 the subgraph $\Gamma(G,\XX)_\chi$ of the Cayley graph $\Gamma(G,\XX)$ is connected;
 \item
 for every $y \in \YY  $  there exists a path $p_y$ from $t$ to $y\cdot t$ in $\Gamma(G,\XX)$
 that satisfies the inequality  $v_\chi(p_y) > v_\chi((1,y))$.
 \end{enumerate}
\end{thm}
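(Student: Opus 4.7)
For the direction (i) $\Rightarrow$ (ii) I would invoke Lemma \ref{lem:Connected-rays}. If $\Gamma_\chi$ is connected then $\Gamma(G,\XX)_\chi^{[a,\infty)}$ is connected for every $a \in \R$. Since $\chi(t) > 0$ and $\chi(yt) = \chi(y) + \chi(t) > \chi(y)$, both $\chi(t)$ and $\chi(yt)$ strictly exceed $v_\chi((1,y)) = \min\{0, \chi(y)\}$, so I can pick a real number $a$ with $v_\chi((1,y)) < a \le \min\{\chi(t), \chi(yt)\}$. Both $t$ and $yt$ then lie in $\Gamma(G,\XX)_\chi^{[a,\infty)}$, and any path $p_y$ between them inside that subgraph satisfies $v_\chi(p_y) \ge a > v_\chi((1,y))$.

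For the converse (ii) $\Rightarrow$ (i) the plan is to proceed in two steps. First I set
\[
\epsilon := \min\bigl(\{\chi(t)\} \cup \{v_\chi(p_y) - v_\chi((1,y)) : y \in \YY\}\bigr),
\]
which is strictly positive because $\YY$ is finite and by (ii) each of the listed differences is positive. I then prove by induction on $|w|$ the following word claim: for every word $w$ in $\YY$ there exists a path $q_w$ from $t$ to $\eta(w)\cdot t$ with $v_\chi(q_w) \ge v_\chi((1,w)) + \epsilon$. The cases $|w|=0$ (the constant path at $t$) and $|w|=1$ (directly from (ii), together with the definition of $\epsilon$) are immediate. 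For $w = w'y$ I concatenate the path $q_{w'}$ given by induction with the $\eta(w')$-translate of $p_y$, which runs from $\eta(w')\cdot t$ to $\eta(w)\cdot t$ with valuation $\chi(\eta(w')) + v_\chi(p_y)$. Using the identities \eqref{eq:Properties-valuation-path}, the valuation of the concatenation is at least $\min\{v_\chi((1,w')), \chi(\eta(w')) + v_\chi((1,y))\} + \epsilon$; a short case split on the sign of $\chi(y)$, leveraging that a path always visits its endpoint so $v_\chi((1,w')) \le \chi(\eta(w'))$, shows that this minimum equals $v_\chi((1,w))$, closing the induction.

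Second, given $g \in G_\chi$, I would iteratively improve an arbitrary initial word $w_0$ representing $g$. Setting $w_{k+1} := t \cdot W_k \cdot t^{-1}$, where $W_k$ is the $\YY$-word spelling the path $q_{w_k}$ when read off from $t$, one has $\eta(W_k) = t^{-1} g t$ and hence $\eta(w_{k+1}) = g$. The path $(1, w_{k+1})$ traverses $1 \to t \to (\text{vertices of } q_{w_k}) \to gt \to g$, and since $\chi(g), \chi(gt) \ge 0$ its valuation satisfies
\[
v_\chi((1,w_{k+1})) = \min\{0,\; v_\chi(q_{w_k})\} \ge \min\{0,\; v_\chi((1,w_k)) + \epsilon\}.
\]
Thus while $v_\chi((1,w_k))$ is still negative the iteration raises it by at least $\epsilon$; after at most $\lceil -v_\chi((1,w_0))/\epsilon \rceil$ steps one reaches $v_\chi((1,w_K)) \ge 0$, so the path $(1,w_K)$ runs entirely in $\Gamma_\chi$ and connects $1$ to $g$. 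This forces $\Gamma_\chi$ to be connected and yields (i).

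The main obstacle is isolating the right strengthening of (ii) to survive an induction: the bare inequality $v_\chi(p_y) > v_\chi((1,y))$ does not propagate through concatenation, since the strict slack can collapse to zero when two paths are joined. Building the uniform positive gap $\epsilon$ into the inductive invariant from the start, and then feeding that same $\epsilon$ into a separate \emph{lift-and-drop} construction $w_k \mapsto t\,W_k\,t^{-1}$ at the word level, is what allows finitely many iterations to suffice.
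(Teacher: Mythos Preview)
Your proof is correct and follows essentially the same approach as the paper. The paper's argument for (ii) $\Rightarrow$ (i) is phrased as a direct path transformation $T$ that replaces each edge $(h,y)$ of a path by $(h, t w_y t^{-1})$ and cancels the resulting $t^{-1}t$ subwords, then iterates $T$; your version separates this into an explicit induction on word length (your ``word claim'') followed by the lift-and-drop iteration, but the uniform positive gap and the inequality $v_\chi(\text{new}) \geq \min\{0, v_\chi(\text{old}) + \epsilon\}$ are identical in both.
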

\begin{figure}[htb]
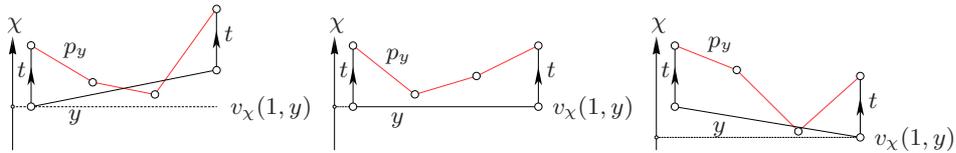

\psfrag{aa}{\hspace*{-0.0mm}\small  $t$}
\psfrag{bb}{\hspace*{-0.3mm}\small  $t$}
\psfrag{ori}{\hspace*{0.3mm}\small  0}
\psfrag{py}{\hspace*{0.5mm}\small  $p_y$}
\psfrag{v}{\hspace*{-0.5mm}\small  $v_{\chi}(1,y)$}
\psfrag{x}{\hspace*{-0.6mm}\small  $\chi$}
\psfrag{y}{\hspace*{1mm}\small  $y$}

\psfrag{chi}{\hspace*{0.5mm}\small $\chi$}
\begin{center}
\includegraphics[width=3.1cm]{A3.fig1a.eps}
\hspace{9mm}
\includegraphics[width=3.1cm]{A3.fig1b.eps}
\hspace{9mm}
\includegraphics[width=3.1cm]{A3.fig1c.eps}
\end{center}
\caption{Condition $v_\chi(p_y) > v_\chi((1,y))$ for $\chi(y)$ positive, zero or negative}
\label{fig:Condition}
\end{figure}

\begin{remarks}
\label{remark:Sigma-1-criterion}
a)  Figure  \ref{fig:Condition} illustrates the condition $v_\chi(p_y) > v_\chi((1,y))$ 
which the edge paths $p_y$ are required to satisfy. 
It reveals that the existence of the edge path $p_{y^{-1}}$ can be deduced from that of $p_y$.

b) Statement (ii)  in Theorem \ref{thm:Sigma1-criterion}
will be be referred to as the \emph{geometric version of the $\Sigma^1$-criterion}.
There exists more algebraic versions of the criterion;
they will be discussed in section \ref{sec:Sigma1-criterion-revisited}.
\index{Invariant Sigma1@Invariant $\Sigma^1$!geometric criterion} 

c) The geometric version of the $\Sigma^1$-criterion is convenient in proofs of general results,
examples being those of the main result or of Theorem
\ref{thm:Openness-Sigma-1} below.
The algebraic versions are better suited to calculations of the invariant in examples. 

d) The $\Sigma^1$-criterion is due to Robert Bieri (cf. \cite[Thm.\;3.1]{BiSt92}).
It is the analogue of Proposition 2.1 given in \cite{BNS}.
\index{Bieri, R.}
\index{Neumann, W. D.}
\index{Strebel, R.}
\end{remarks}
%

\begin{proof}
Assume first that statement (i) is valid.
Given $y \in \YY$,
define $r_y$ to be the number $\min\{\chi(t), \chi(y\cdot t)\}$.  
Since $\Gamma_\chi$ is  connected,
the graph $\Gamma_\chi^{[r_y,\infty)}$ is connected by lemma \ref{lem:Connected-rays}. 
So there is an edge path $p_y = (t, w_y)$ from $t$ to $y\cdot t$ with
\[
v_\chi(p_y) \geq r_y  =  \min\{\chi(t),\chi(y\cdot t)\} =   v_\chi((1,y)) + \chi(t) >   v_\chi((1,y)).
\]
By varying $y$ we thus obtain a collection of paths $p_y$ 
with $v_\chi(p_y) > v_\chi((1,y))$ and so statement (ii) holds.
\smallskip

 Conversely,
assume (ii) holds and define a number $d$ by setting
\[
d = \min \left\{v_\chi(p_y) - v_\chi((1,y)) \} \mid y \in \YY\right\};
\]
it is positive.
Now let $g \in G_\chi$ be a vertex of the subgraph $\Gamma_\chi$;
we aim at constructing a path from 1 to $g$ that does not leave this subgraph.
Since the Cayley graph $\Gamma$  is connected 
there exists a path $p$ from $1$ to $g$. 
This path $p$ will now be subjected to the transformation $T\colon  P(\Gamma) \to P(\Gamma)$
which replaces each edge $(h,y)$ of $p$  by the path $(h, t w_y t^{-1})$ 
and then eliminates the subpaths of the form $(h',t^{-1}t)$ (see Figure \ref{fig:Transformation-tau}).
This transformation fixes the endpoints of  $p$ and satisfies the inequality 
\begin{equation}
\label{eq:Improvement-caused-by-tau}
v_\chi(T(p)) \geq \min\{v_\chi(p) + d, 0\}.
\end{equation}

\begin{figure}[htb]
\psfrag{g0}{\hspace*{0.9mm}\footnotesize  1}
\psfrag{Y1}{\hspace*{-2mm}\footnotesize  $y^{-1}$}
\psfrag{g1}{\hspace*{0mm}\footnotesize  $g_1$}
\psfrag{Y2}{\hspace*{-1.0mm}\footnotesize $t^{-1}$}
\psfrag{g2}{\hspace*{0mm}\footnotesize  $g_2$}
\psfrag{Y3}{\hspace*{0.0mm}\footnotesize $y'$}
\psfrag{g3}{\hspace*{0.5mm}\footnotesize  $g_3$}
\psfrag{Y4}{\hspace*{-0.5mm}\footnotesize $y$}
\psfrag{g4}{\hspace*{0.5mm}\footnotesize  $g_4$}
\psfrag{Y5}{\hspace*{1mm}\footnotesize  $y$}
\psfrag{g5}{\hspace*{0.8mm}\footnotesize  $g$}
\psfrag{chi}{\hspace*{-0.0mm}\footnotesize $\chi$}
\psfrag{tt}{\hspace*{-0.0mm}\small $t$}
\begin{center}
\includegraphics[width=12.5cm]{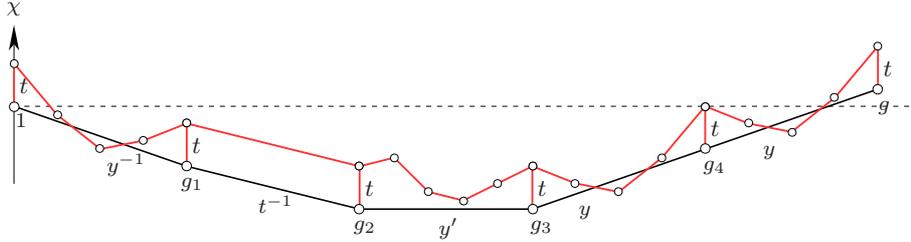}
\caption{The path transformation $T$}
\label{fig:Transformation-tau}
\end{center}
\end{figure}
By iterating the described transformation,
one obtains a sequence $p$, $T(p)$, $T^2(p)$, \ldots of paths from 1 to $g$ 
that leave the subgraph $\Gamma_\chi$ by smaller and smaller amounts.  
If $\ell \geq |v_\chi(p)|/d$, the path $T^\ell(p)$ runs therefore inside $\Gamma_\chi$ from 1 to $g$.
This shows that $\Gamma_\chi$ is connected.
\end{proof}
%

%
\subsection[Corollaries]{Corollaries of main result}
\label{ssec:Sigma-criterion-corollaries}
%
We continue with two important consequences of Theorem \ref{thm:Sigma1-criterion}.
%
\subsubsection{Openness of $ \Sigma^1(G)$}
\label{sssec:Openness-Sigma-1}
%
The first consequence could be called a corollary of Theorem \ref{thm:Sigma1-criterion}.
In view of its importance we prefer to give it the title of a theorem.
\begin{thm} 
\label{thm:Openness-Sigma-1}
\index{Invariant Sigma1@Invariant $\Sigma^1$!openness} 
For every finitely generated group $G$,
the geometric invariant $\Sigma^1(G)$ is an open subset of the sphere $S(G)$.
\end{thm}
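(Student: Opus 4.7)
The plan is to deduce openness directly from the $\Sigma^1$-criterion (Theorem~\ref{thm:Sigma1-criterion}), whose power lies in reducing membership in $\Sigma^1(G)$ to a finite collection of strict, linear inequalities on $\chi$. Fix a finite generating system $\eta \colon \XX \to G$, set $\YY = \XX \cup \XX^{-1}$, and let $[\chi_0] \in \Sigma^1(G)$ be given. We aim to produce an open neighbourhood of $\chi_0$ in $\Hom(G,\R) \setminus \{0\}$ on which the criterion's condition (ii) holds with a single fixed choice of witnessing data; passing to the quotient $S(G)$ then gives an open neighbourhood of $[\chi_0]$ inside $\Sigma^1(G)$.

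First, I would invoke (i)$\Rightarrow$(ii) of Theorem~\ref{thm:Sigma1-criterion} to pick $t \in \YY$ with $\chi_0(t) > 0$, and for every $y \in \YY$ a path $p_y$ from $t$ to $y \cdot t$ satisfying
\[
v_{\chi_0}(p_y) \;>\; v_{\chi_0}((1,y)).
\]
These are finitely many paths, each involving only finitely many vertices of $\Gamma(G,\XX)$. The key point is that for any \emph{fixed} finite edge path $p$, the map $\chi \mapsto v_\chi(p)$ is the minimum of the $\chi$-values at the (finitely many) vertices traversed by $p$, hence a continuous (even piecewise linear) function of $\chi \in \Hom(G,\R)$. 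The same holds for $\chi \mapsto v_\chi((1,y)) = \min\{0, \chi(y)\}$ and for $\chi \mapsto \chi(t)$.

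Consequently each of the finitely many conditions
\[
\chi(t) > 0, \qquad v_\chi(p_y) > v_\chi((1,y)) \quad (y \in \YY)
\]
is an open condition on $\chi \in \Hom(G,\R)$, and since $\YY$ is finite their intersection defines an open neighbourhood $U$ of $\chi_0$ in $\Hom(G,\R) \setminus \{0\}$. For every $\chi \in U$, the same element $t$ and the same paths $p_y$ witness that $\chi$ satisfies condition (ii) of the $\Sigma^1$-criterion, so $[\chi] \in \Sigma^1(G)$. Since $U$ is invariant under multiplication by positive reals (replacing $\chi$ by $r\chi$ scales every inequality by $r > 0$, preserving strict positivity) — or, more simply, since the quotient map $\Hom(G,\R) \setminus \{0\} \to S(G)$ is open — the image of $U$ in $S(G)$ is an open neighbourhood of $[\chi_0]$ contained in $\Sigma^1(G)$.

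There is no real obstacle beyond the conceptual one of recognizing that (ii) packages $\Sigma^1$-membership as a finite system of strict inequalities with a continuous (actually piecewise linear) dependence on $\chi$; the same witnessing data $t,(p_y)_{y\in\YY}$ works uniformly on a whole neighbourhood of $\chi_0$. This is precisely the reason condition (ii) is stated with a \emph{strict} inequality rather than a non-strict one — only strict inequalities are stable under perturbation of $\chi$.
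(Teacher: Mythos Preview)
Your proof is correct and follows essentially the same approach as the paper's: both use the $\Sigma^1$-criterion to extract a finite system of strict inequalities at $\chi_0$, observe that these inequalities are open conditions in $\chi$, and conclude that the same witnessing data $(t, (p_y)_{y\in\YY})$ certifies membership in $\Sigma^1(G)$ for all nearby characters. Your version is slightly more explicit about the continuity of $\chi \mapsto v_\chi(p)$ and the passage to the quotient sphere, but the argument is the same.
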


\begin{proof} 
Suppose  $\chi$ is a  non-zero character of $G$ that represents a point of $\Sigma^1(G)$. 
Let $\XX$ be a finite system of generators of $G$ 
and choose an element $t \in \YY = \XX \cup \XX^{-1}$ with $\chi(t) > 0$.
Implication (i) $\Rightarrow$ (ii)  of Theorem  \ref{thm:Sigma1-criterion}
then produces, for every $y \in \YY$, a path   $p_y \in P(\Gamma(G, \XX))$ 
from $t$ to $yt$ that satisfies the inequality $v_\chi(p_y) > v_\chi((1,y))$. 
As this is a \emph{finite} system of inequalities it remains valid 
if $\chi$ is replaced by a character $\psi$ sufficiently close to $\chi$. 
Implication  (ii) $\Rightarrow$ (i)  of Theorem  \ref{thm:Sigma1-criterion} 
then applies to the character $\psi$  and shows that $[\psi]$ belongs to $\Sigma^1(G)$.
So the open neighbourhood
\begin{equation}
\label{eq:Defining-neighbourhood-chi}
\OO([\chi]) = \left\{[\psi] \mid \psi(t) > 0 \text{ and } v_\psi(p_y) > v_\psi((1,y))  
\text{ for }  y \in \YY \right\}
\end{equation}
of $[\chi]$ is contained in $\Sigma^1(G)$.
\end{proof}

The above theorem and the fact that closed subsets of the sphere $S(G)$ are compact,
are key ingredients of some finiteness results 
that are amongst the main applications of the theory of  $\Sigma^1$. 
One such result is Theorem \ref{thm:Characterizing-fg-N}.
%
\subsubsection{Characterization of rank 1 points in $\Sigma^1$} 
\label{sssec:Characterization-rank-1-points-Sigma-1}
%
A second consequence of Theorem \ref{thm:Sigma1-criterion} concerns rank 1 points in  $\Sigma^1$
and describes them in terms of a widely known algebraic property of groups:
\begin{prp}
\label{prp:Ascending-HNN-extension}
 If $G$ is a finitely generated group,
 $\chi\colon  G \epi \Z \mono \R$ a rank 1 character and $t \in G$ an element with $\chi(t) = 1$,
the following conditions are equivalent:
\begin{enumerate} [(i)]
\item $[\chi] \in \Sigma^1(G)$,
\item $N = \ker \chi$ contains a finitely generated group  $H$ with the properties
\begin{equation}
\label{eq:Ascending-HNN-extension}
 t^{-1}H t \subseteq H \text{ and }  \bigcup\nolimits_{\ell \in \N} t^{\ell }H t^{-\ell} = N.
\end{equation}
\end{enumerate}
\end{prp}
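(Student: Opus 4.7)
The plan is to prove both implications via the geometric form of the $\Sigma^1$-criterion (Theorem \ref{thm:Sigma1-criterion}), applied with the distinguished positive-$\chi$ element being $t$ itself.

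For the easy direction $(ii) \Rightarrow (i)$, pick a finite generating set $\SS$ of $H$ and work with $\XX = \{t\} \cup \SS$; this generates $G$ because $N = \bigcup_\ell t^\ell H t^{-\ell}$, so $\langle H, t \rangle = G$. The edges $y = t^{\pm 1}$ pose no difficulty. For $s \in \SS^\pm$, the hypothesis $t^{-1} H t \subseteq H$ provides a word $w_s$ in $\SS^{\pm}$ representing $t^{-1} s t$; then the path $(t, w_s)$ runs from $t$ to $t \cdot w_s = s t$ and its intermediate vertices all sit at level $\chi = 1$, since every letter of $w_s$ lies in $\ker \chi$. Hence $v_\chi(p_s) = 1 > 0 = v_\chi((1, s))$, and the criterion yields $[\chi] \in \Sigma^1(G)$.

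For the harder direction $(i) \Rightarrow (ii)$, I first normalise the generating system: after enlarging $\XX$ to contain $t$ and replacing each remaining generator $x$ by $x \cdot t^{-\chi(x)} \in N$, I may assume $\XX = \{t\} \cup \SS$ with $\SS$ a finite subset of $N$. The $\Sigma^1$-criterion furnishes, for each $s \in \SS^\pm$, a path $p_s = (t, u_s)$ from $t$ to $s t$ with $v_\chi(p_s) > 0$; since $\chi$ is integer-valued, $v_\chi(p_s) \geq 1$, so translating by $t^{-1}$ produces a path $(1, u_s)$ from $1$ to $t^{-1} s t$ which stays inside $G_\chi$. The key algebraic step is the identity, valid in the free group on $\XX$ by induction on word length,
\[
u_s \;=\; \prod_{j \,:\, z_j \in \SS^\pm} \bigl(t^{c_j}\, z_j\, t^{-c_j}\bigr),
\]
where $z_1, z_2, \ldots$ are the successive letters of $u_s$ and $c_j$ denotes the $t$-exponent of the prefix $z_1 \cdots z_{j-1}$. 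The constraint $v_\chi((1, u_s)) \geq 0$ forces every $c_j \geq 0$, and since only finitely many paths $p_s$ of bounded length are involved, the integer $K := \max c_j$ is finite.

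I then define the finitely generated subgroup
\[
H \;:=\; \bigl\langle\, t^c\, s\, t^{-c} \;:\; s \in \SS,\; 0 \leq c \leq K\, \bigr\rangle \;\subseteq\; N.
\]
Conjugation by $t^{-1}$ sends a generator $t^c s t^{-c}$ with $c \geq 1$ to $t^{c-1} s t^{-(c-1)}$, still a generator of $H$; for $c = 0$, the displayed factorization writes $t^{-1} s t$ as a product of elements $t^{c_j} z_j^{\pm 1} t^{-c_j} \in H$ with $c_j \in [0, K]$. Thus $t^{-1} H t \subseteq H$, and iterating the inclusion gives $t^{-k} s t^k \in H$ for every $s \in \SS$ and $k \geq 0$; consequently $t^\ell H t^{-\ell}$ contains $t^{\ell - k} s t^{-(\ell - k)}$ for $0 \leq k \leq \ell$, and as $\ell \to \infty$ these exhaust the generating set $\{t^m s t^{-m} : s \in \SS,\; m \in \Z\}$ of $N$, whence $\bigcup_\ell t^\ell H t^{-\ell} = N$. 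The main obstacle I expect is precisely the algebraic factorization above: it is the one point where the geometric connectivity hypothesis is repackaged as the ascending HNN structure on $N$, and it is what produces the uniform bound $K$ that makes $H$ finitely generated.
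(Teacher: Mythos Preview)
Your proof is correct and follows essentially the same approach as the paper's: both directions go through the $\Sigma^1$-criterion with a normalised generating set $\{t\} \cup \SS$ where $\SS \subset N$, and the heart of $(i)\Rightarrow(ii)$ --- rewriting each word $u_s$ as a product of conjugates $t^{c}z\,t^{-c}$ with $c \geq 0$ bounded by some $K$, then setting $H = \langle t^c s t^{-c} : s\in\SS,\ 0\le c\le K\rangle$ --- is exactly the paper's construction (your $K$ is the paper's $\mu$). The paper's write-up is slightly terser, but there is no substantive difference.
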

\index{Invariant Sigma1 and@Invariant $\Sigma^1$ and!ascending HNN-extensions} 
\index{Invariant Sigma1@Invariant $\Sigma^1$!and rank 1 points}

\begin{proof} (i) $\Rightarrow$ (ii).
Since  $t$ generates a complement of  $N = \ker \chi$, 
the group $G$ admits a finite generating set of the form $\XX = \{t\}  \cup  \{a_1, a_2, \ldots, a_m\}$ 
where each generator $a_j$ lies in $N$.
As the subgraph $\Gamma(G,\XX)_\chi$ is connected,
implication (i) $\Rightarrow$ (ii) of Theorem  \ref{thm:Sigma1-criterion} 
then provides one, for $j = 1$, 2, \ldots, $m$,  with a  path $p_j$ 
that connects  $t$ with $a_jt$  and satisfies the inequality $v_{\chi}(p_j) > v_\chi((1,a_j))=0$;
as $\chi$ takes values in $\Z$,  
this inequality amounts to say that $v_\chi(p_j) = 1$.
Let $w_j$ be the word with $p_j = (t, w_j)$.
This word  will then satisfy the relation $t^{-1} a_jt = w_j$ and the equality $v_\chi(w_j) = 0$.
This equality can be restated by saying 
that $w_j$ is freely equal to a product of conjugates 
$a_{i,\ell} = t^\ell a_i t^{-\ell}$ of  the $a_i$ with \emph{non-negative} exponents $\ell$.

Let $\mu$ denote the largest index $\ell$ for which some conjugate $a_{i,\ell}$ occurs in one of the words $w_j$
and set
\[
\HH = \{ a_{j,\ell} \mid 1 \leq j \leq m \text{ and } 0 \leq \ell \leq \mu \}.
\]
Then  $\HH$ generates a finitely generated subgroup $H$ of $N$,
and $H$ satisfies the inclusion  $t^{-1}H t \subseteq H$.
Indeed,  
if $\ell>0$ and $a_{j,\ell} \in \HH$ then $t^{-1} a_{j,\ell}t \in \HH$ by the definition of the generators $a_{j,\ell}$ 
and of $\HH$;
if $\ell = 0$, 
the relation $t^{-1} a_j t = w_j$ and the definition of $\HH$ guarantee 
that $t^{-1} a_{j,0} t \in H$.
The  statement  $\bigcup\nolimits_{\ell \in \N} t^\ell H t^{-\ell} = N$, finally, holds
since  $N$ is generated by the conjugates $a_{j,\ell}$ with $1 \leq j \leq m$ and $\ell \in \Z$.
\smallskip

(ii) $\Rightarrow$ (i).
Let $H$ be  a finitely generated subgroup of $N $ that enjoys properties 
\eqref{eq:Ascending-HNN-extension}.
Choose a finite set of generators $\BB = \{b_1, b_2, \ldots, b_f\}$ of $H$ 
and express each conjugate $t^{-1}b_j t$ as a word $w_j$ in $\BB^\pm$;
such expressions exist by the assumption 
that $t^{-1}H t\subseteq H$.
In view of the second  assumption in \eqref{eq:Ascending-HNN-extension}
the set $\XX = \{t\} \cup \BB$ generates $G$.
The $\Sigma^1$-criterion is then satisfied for $\chi$.
Indeed, 
the relation $t^{-1}b_j t = w_j$ implies that the path $p_j = (t, w_j)$ leads from $t$ to $b_jt$,
and the fact that $w_j$ is a word in $\BB^\pm$ and $\chi(\BB) = \{0\}$ implies
that 
\[v_\chi(p_j)= v_\chi(w_j)  + \chi(t) = 1 > 0 =v_\chi((1,b_j).
\]
Moreover, 
the path $p_t = (t, t)$ connects $t$ with $t \cdot t$ and satisfies $v_\chi(p_t) > v_\chi((1,t))$.
The claim now follows from Theorem \ref{thm:Sigma1-criterion}.
\end{proof}

\begin{remarks}
\label{remarks:HNN-extension}
a) Suppose $H$ is a group that contains two isomorphic subgroups $S$ and $T$.
Each isomorphism $\mu \colon S \iso T$ then leads to group,
called  \emph{HNN-extension with base group $H$, associated subgroups $S$, $T$ and stable letter $y$},
and defined by the presentation
\begin{equation}
\label{eq:Definition-HNN-extension} 
G = \langle H, y \mid y\cdot s\cdot y^{-1} = \mu(s) \text{ for every  } s \in S \rangle.
\end{equation}
This construction was introduced and put to good use in the paper \cite{HNN49}
by G. Higman, B.\;H. Neumann and H. Neumann.  
Their analysis revealed, in particular, 
that the canonical homomorphism $\kappa \colon H \to G$ is injective.
(See \cite[Sect.\;IV.2]{LS77} for more information on HNN-extensions.)

b) On comparing formulae 
\eqref{eq:Ascending-HNN-extension} and \eqref{eq:Definition-HNN-extension},
one sees that statement (ii) says
that $G$ is a HNN-extension with finitely generated base group $H$,
associated subgroups $S=t^{-1}Ht$ and $T = H$ and stable letter $t$.
The normal subgroup $N$ is then the union of the ascending chain 
$H \subseteq \act{t}{-1.5}{H}  \subseteq \act{t^{2}}{-1.5}{H} \subseteq \cdots$
whence such an HNN-extension is often called \emph{ascending}.
\index{Ascending HNN-extension!definition}
\end{remarks}
\begin{example}
\label{example:Locally-infinite-cyclic-by-infinite-cyclic}
Given two relatively prime, non-zero integers   $p$ and $q$, 
consider the  additive group $A = \Z[1/(p\cdot q)]$  of the  subring of $\Q$
generated by the rational number $1/(p\cdot q)$.
Multiplication by the fraction $p/q$ induces an automorphism $\mu = \mu_{p/q} $ on $A$
and so one can form the semi-direct product $G = G_{p,q}$ of $A$ and an infinite cyclic group $C$ 
generated by an element $s$ that acts on $A$ by $\mu$.
The multiplication of  $G_{p,q}$ is then given by
\[
(x,s^m) \cdot (x',s^{m'}) = ( x + (p/q)^{m} \cdot x', s^{m + m'}).
\]
The  group $G_{p,q}$ is metabelian and
a short calculation, based on Euclid's extended algorithm,  
shows that the elements $a=(1,s^0)$ and $t = (0,s)$ generate it.

If $p=q \in \{1,-1\}$, the group $G = G_{p,q}$ is   free-Abelian of rank 2;
in all other cases, the torsion-free rank of $G_{\ab}$ equals 1 and so  $S(G)$ consists of two points;
they are represented by the character $\chi$ that sends $t$ to $1$  and the subgroup $A$ to $\{0\}$
and its antipode $-\chi$.
If $G$ is free-abelian, the invariant is the entire circle $S(G)$ 
(see example \ref{examples:Groups-with-non-trivial-centre}a).
In the other cases,
$\Sigma^1(G)$ can be determined with the help of Proposition \ref{prp:Ascending-HNN-extension}.
Note first
that every finitely generated subgroup $H $ of $A$ is infinite cyclic, 
for such a group is contained in a  group of the form $\Z \cdot (1/p\cdot q)^k$.
So multiplication by the  fraction $(p/q)^{-1} = q/p$ maps $H$ into $H$ if, 
and only if,  $q/p$ is an integer.
It follows that $[\chi|$ lies in $\Sigma^1(G)$  if and only if  $|p| = 1$
 and that its antipode lies in $\Sigma^1(G)$ exactly if $|q| = 1$.
\end{example} 
\index{Computation of Sigma1@Computation of $\Sigma^1$ for!metabelian groups}
\index{Groups!of Baumslag-Solitar}
%
%
\subsection{An alternate definition of $\Sigma^1$} 
\label{ssec:Alternate-definition-Sigma-1}
%
In Section \ref{sec:Introducing-invariant}, the subset $\Sigma^1$ 
has been defined in terms of the connectivity of subgraphs $\Gamma_\chi$ 
of the Cayley graph $\Gamma$. 
In this section, 
we investigate an alternate condition, 
involving the path components of a (sufficently large) stripe of the Cayley graph,
and show that it leads to the same invariant.
The new definition will turn out to be very useful
in the analysis, to be undertaking in section \ref{sssec:Construction-graph-Delta},
of a class of bi-partite graphs $\Delta_b(\chi)$.
\subsubsection{Comparing the components of $\Gamma(G,\XX)_{\chi}$} 
\label{sssec:Comparing-components}
%
Let $G$ be a finitely generated group, $\chi \colon G \to \R$ a non-zero character of $G$ and $\Gamma_\chi$
the corresponding half of the Cayley graph of $G$ (with respect to a finite generating system).
If $\Gamma_\chi$ is not connected, it has infinitely many components 
(see Corollary \ref{crl:Infinitely-many-components}).
The next lemma implies that these components are isomorphic to each other.
\begin{lem} 
\label{lem:Transitive-action-on components}
Let $\Gamma(G,\XX)$ be the Cayley graph of a group $G$ 
with finite generating system $\eta \colon \XX \to G$ and let $\chi$ be a non-zero character of $G$.
If  $a$ and $b$ are real numbers satisfying the conditions $a  \leq b$ and  $ b \in \im \chi$,
the following assertions hold:
\begin{enumerate}[(i)]
\item each component $\CC$ of the subgraph $\Gamma_\chi^{[a, \infty]}$ contains a vertex $g$ 
with $\chi(g) = b$;
\item the kernel of $\chi$ acts transitively on the components of $\Gamma_\chi^{[a,\infty)}$.
\item the kernel of $\chi$ acts transitively on the components of $\Gamma_\chi^{(-\infty, -a]}$;
\end{enumerate}
\end{lem}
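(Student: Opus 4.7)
The plan is to prove assertion (i) first and then deduce (ii) by pairing vertices of two components at a common height, and finally to obtain (iii) by applying (ii) to the character $-\chi$.

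The core of (i) is to show that any vertex $v$ of a component $\CC$ can be joined, within $\Gamma_\chi^{[a,\infty)}$, to another vertex of $\CC$ of prescribed height $b$. I would fix $k \in G$ with $\chi(k) = b - \chi(v)$ (such a $k$ exists because $\im\chi$ is an additive subgroup of $\R$ containing both $\chi(v)$ and $b$), pick a word $w_k$ in $\XX^\pm$ representing $k$ with $c = v_\chi(w_k) \leq 0$, and choose $t \in \XX^\pm$ with $\chi(t) > 0$. For $m \in \N$ large enough that $\chi(v) + m\chi(t) + c \geq a$, the conjugated edge path $p = (v,\, t^m w_k t^{-m})$ runs from $v$ to $g := v t^m k t^{-m}$. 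A direct inspection shows that $p$ first climbs from $\chi(v)$ to $\chi(v) + m\chi(t)$, then may descend by at most $|c|$ while reading $w_k$, and finally returns monotonically down to height $b$. The minimum height reached is $\min\{\chi(v),\, \chi(v) + m\chi(t) + c,\, b\} \geq a$, so $p$ stays inside $\Gamma_\chi^{[a,\infty)}$; since $\chi(g) = \chi(v) + \chi(k) = b$, the vertex $g$ is the one sought.

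Part (ii) follows almost for free: given two components $\CC_1$ and $\CC_2$ of $\Gamma_\chi^{[a,\infty)}$, I choose any $b \in \im\chi$ with $b \geq a$ (which exists since $\im\chi$ is an unbounded subgroup of $\R$), apply (i) to obtain $g_i \in \CC_i$ with $\chi(g_i) = b$, and observe that $h := g_2 g_1^{-1}$ lies in $\ker\chi$. Left multiplication by $h$ is a graph automorphism of $\Gamma(G, \XX)$ that leaves $\chi$ invariant; hence it permutes the components of $\Gamma_\chi^{[a,\infty)}$ and sends $\CC_1$ onto $\CC_2$. For part (iii), I would simply note that $\Gamma_\chi^{(-\infty,-a]} = \Gamma_{-\chi}^{[a,\infty)}$ and $\ker(-\chi) = \ker\chi$, so (ii) applied to the non-zero character $-\chi$ gives the statement at once.

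The principal obstacle is assertion (i): reaching a vertex at \emph{exactly} height $b$, not merely above $b$, while remaining above $a$, cannot be done by travelling naively along $w_k$ (that path could plunge below $a$) nor by climbing along $t$-edges alone (heights then lie in $\chi(v) + \chi(t)\Z$, not all of $\im\chi$). The conjugation $t^m w_k t^{-m}$ handles both issues simultaneously: the $t^m$-prefix lifts the entire excursion high enough, the $t^{-m}$-suffix restores the $\chi$-shift to $\chi(k)$, and the free parameter $m$ absorbs whatever dip $c$ the word $w_k$ introduces.
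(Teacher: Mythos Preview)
Your proof is correct. Parts (ii) and (iii) match the paper's argument verbatim. For part (i), however, you and the paper take different routes.

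The paper first reduces to the case $b = 0$ by translating: an element $g$ with $\chi(g) = b$ carries $\Gamma_\chi^{[a-b,\infty)}$ isomorphically onto $\Gamma_\chi^{[a,\infty)}$, so it suffices to find a vertex at height $0$ in each component of the former. Given a component $\CC$, the paper picks any vertex $h \in \CC$ with $\chi(h) \geq 0$, writes $h$ as a word $w$ in $\XX^\pm$, and \emph{reorders} the letters of $w$ so that those with positive $\chi$-value come first. The resulting word $w'$ traces a path from $1$ that first rises and then falls to $\chi(h) \geq 0$, hence never drops below $0 \geq a-b$. Translating this path by $h_0 = h \cdot (\text{value of } w')^{-1} \in \ker\chi$ gives a path inside $\CC$ from a height-$0$ vertex to $h$.

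Your argument avoids the reduction step and instead uses the conjugation trick $t^m w_k t^{-m}$ to lift an arbitrary word above the threshold $a$ --- the same device the paper employs in the proof of Theorem~\ref{thm:Sigma1-well-defined}. The paper's reordering trick, on the other hand, reappears later (e.g.\ in Lemma~\ref{lem:Restatement-connectivity-property}). Both methods are elementary and of comparable length; yours is slightly more direct, while the paper's makes the special role of $\ker\chi$ visible earlier.
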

\begin{proof}
(i) It suffices to to establish the claim for $b = 0$.
Indeed, if $g \in G$ is a element with $\chi(g) = b$,
the automorphism induced by $g$ maps the subgraph $\Delta = \Gamma_\chi^{[a-b,\infty)}$
onto the subgraph $\Gamma_\chi^{[a, \infty)}$ 
and hence the components of the subgraph $\Delta$ onto those of $\Gamma_\chi^{[a,\infty)}$.

Consider a component $\CC$ of $\Delta$.
It contains a vertex, say $h$, with $\chi(h) \geq 0$. 
Choose an $\XX^\pm$-word  $w = y_1y_2 \cdots y_k$ that represents $h$
and reorder the letters of  $w$ so that the letters $y_j$ with positive $\chi$-values  
come before the other letters. 
If $w'$ denotes  the obtained word,
the path $p =(1,w')$ runs in the subgraph $\Delta$ from 1 to a vertex $h_1$  with $\chi(h_1) = \chi(h)$.
The later condition implies 
that there exists an element $h_0 \in \ker \chi$ with $h = h_0 h_1$.
The translated path $h_0.p =(h_0, w')$ then starts at $h_0$ and ends in $h$. 
The component $\CC$, containing the endpoint $h$,  contains therefore also $h_0 \in \ker \chi$.

(ii) Given components  $\CC$ and $\CC'$ of $\Gamma_\chi^{[a,\infty)}$,
there exist, by claim  (i),   vertices $g \in C$ and $ g' \in C'$  with $\chi(g) = \chi(g') = b$.
Then $h = g' \cdot g^{-1}$ lies in $\ker \chi$ 
and  the automorphism of $\Gamma(G, \XX)$ induced by $h$ sends $g$ onto $g'$ 
and hence $\CC$ onto $\CC'$. 
This reasoning shows 
that $\ker \chi$ acts transitively on the components of $\Gamma_{\chi} ^{[a, \infty)}$.

(iii) Statement (ii) holds for every non-zero character $\psi$,
in particular for  $-\chi$. 
Now use that $\Gamma_{\chi} ^{(-\infty, -a]} = \Gamma_{-\chi} ^{[a, \infty)}$.
\end{proof}
%
\subsubsection{The new definition} 
\label{sssec:New-criterion}
%
The new definition is provided by statement (i) in
\begin{thm}
\label{thm:Variant-Sigma-1-criterion}
Let $G$ be a group generated by the finite system $ \eta \colon \XX \to G$
and let $\chi \colon G \to \R$ be a non-zero character.
Given a real number $a$, 
let $\CC$ be one of the connected components of  $\Gamma_{\chi}^{[a, \infty)}$.
Then the following statements are equivalent: 
\begin{enumerate}[(i)]
\item  there is a real number $e \geq \max \{|\chi(x)| \mid x \in \XX \}$  such 
that the intersection $\CC\; \cap \;\Gamma_{\chi}^{[a, a+e]}$ is connected;
\item $-[\chi] \in \Sigma^1(G)$.
\end{enumerate}
\end{thm}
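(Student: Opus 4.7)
My plan is to prove both implications by applying the $\Sigma^1$-criterion (Theorem \ref{thm:Sigma1-criterion}) to the character $-\chi$ and using a downward version of the path transformation $T$ that appears in its proof. The essential tool is this: if $-[\chi] \in \Sigma^1(G)$ then, for a well-chosen $s \in \YY$ with $\chi(s) < 0$, one has paths $p_y = (s, w_y)$ from $s$ to $ys$ (one for each $y \in \YY$) whose vertices $v$ satisfy $\chi(v) \leq \max(0, \chi(y)) - d$ for some uniform $d > 0$; the corresponding detour $(g, sw_ys^{-1})$ replaces an edge $(g,y)$ by a path whose middle vertices lie strictly below $\max(\chi(g), \chi(gy))$.

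For (ii) $\Rightarrow$ (i), I would fix such paths $p_y$, let $B$ be a lower bound for the $\chi$-values along all the $p_y$, and set $e = \max\{|\chi(x)| : x \in \XX\} + |B|$. Given $v_1, v_2 \in \CC \cap \Gamma_\chi^{[a, a+e]}$, I connect them by a path $q$ in $\Gamma_\chi^{[a, \infty)}$ and iteratively apply the downward transformation, but \emph{only} to those edges $(g,y)$ of the current path with $\chi(g) > a - B$; this ensures that the middle vertices of the detour stay at $\chi$-value $\geq \chi(g) + B \geq a$, so the path remains in $\Gamma_\chi^{[a, \infty)}$ and hence in $\CC$. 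Any vertex of the path above $\chi$-level $a + e$ certainly satisfies $\chi > a - B$ (by the choice of $e$), so its edges get transformed; a routine adaptation of the estimate \eqref{eq:Improvement-caused-by-tau} shows $M_\chi(q)$ drops by at least $d$ per iteration and eventually lies in $[a, a+e]$, giving the required connection. For (i) $\Rightarrow$ (ii), I would verify the $\Sigma^1$-criterion for $-\chi$ directly: using the transitive $\ker\chi$-action on components (Lemma \ref{lem:Transitive-action-on components}(ii)), I translate the situation so that $s$ lies at the bottom of a component $\CC'$ of $\Gamma_\chi^{[\chi(s)-e, \infty)}$; hypothesis (i) applied to $\CC'$ yields a connected strip that contains $s$ and $ys$ (since these are at edge-distance $1$, hence within $\chi$-distance $L = \max\{|\chi(x)|:x \in \XX\} \leq e$). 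The connecting path inside this strip is the desired $p_y$, and its vertices all have $\chi \leq \chi(s) - e + e = \chi(s) < 0 \leq \max(0, \chi(y))$, which is the strict upper bound required by the criterion.

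The main obstacle lies squarely in (ii) $\Rightarrow$ (i): naively, the downward transformation drags the path below level $a$, exiting the component $\CC$ and defeating the argument. The resolution is to apply $T$ \emph{selectively}—only on portions of the path safely above $a$—and to pre-inflate $e$ to absorb both the vertical spread $|B|$ of the detours and the $\chi$-width of a single generating edge, which is exactly why the statement requires $e \geq \max\{|\chi(x)|:x \in \XX\}$. Verifying that the selective transformation still forces a strict monotone decrease of $M_\chi(q)$ (restricted to the vertices above level $a + e$) and that the process terminates is the main technical point; everything else reduces to careful bookkeeping on $v_\chi$.
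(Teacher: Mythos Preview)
Your (ii) $\Rightarrow$ (i) is the paper's argument: both apply the path transformation from Theorem~\ref{thm:Sigma1-criterion} (for $-\chi$) \emph{selectively}, only to edges lying high enough above level $a$, so that the detours remain in $\Gamma_\chi^{[a,\infty)}$ and hence in $\CC$. The paper chooses $e = \max\{L,\,|B|\}$ rather than your $L + |B|$, and phrases the selection rule as ``replace only those edges not contained in the slice $\Gamma_\chi^{[a,a+e]}$'', but the mechanism and the estimate are identical.

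Your (i) $\Rightarrow$ (ii) diverges from the paper and has genuine gaps. First, the strip $[\chi(s)-e,\,\chi(s)]$ does \emph{not} contain $ys$ when $\chi(y) > 0$, since then $\chi(ys) = \chi(s)+\chi(y) > \chi(s)$; so the connecting path you produce need not end at $ys$. This is repairable --- by Remark~\ref{remark:Sigma-1-criterion}(a) the criterion only requires $p_y$ for $y$ with $\chi(y) \leq 0$ --- but you do not say so. Second, and more seriously, Lemma~\ref{lem:Transitive-action-on components}(ii) concerns the $\ker\chi$-action, which permutes components at a \emph{fixed} level; it cannot carry hypothesis (i) from level $a$ to level $\chi(s)-e$. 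For that one needs the full $G$-action, which reaches only levels in $a + \im\chi$; since $a$ and $e$ are arbitrary reals, $\chi(s)-e-a$ need not lie in $\im\chi$, so the target level may be unreachable. The paper avoids these difficulties altogether: for (i) $\Rightarrow$ (ii) it proves directly that $\Gamma_\chi^{(-\infty,\,a+e]}$ is connected, by cutting an arbitrary path between two of its vertices into maximal subpaths lying alternately in $\Gamma_\chi^{(-\infty,a+e]}$ and in $\Gamma_\chi^{[a,\infty)}$, and rerouting each upper subpath through the connected slice $\CC_j \cap \Gamma_\chi^{[a,a+e]}$ (where $\CC_j$ is a $\ker\chi$-translate of $\CC$). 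Lemma~\ref{lem:Connected-rays} then gives $[-\chi] \in \Sigma^1(G)$.
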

\begin{proof} 
Assume first statement (i) is valid and set $b = a+e$.
Since $e$ is at least as large as $\max \{|\chi(x)| \mid x \in \XX \}$, 
every vertex of $\Gamma = \Gamma(G,\XX)$ lies in at least one of the subgraphs 
$\Gamma'_b =\Gamma_\chi^{(-\infty,b]}$ and $\Gamma_a =\Gamma_\chi^{[a,\infty)}$. 
This fact allows one to prove that $\Gamma'_b$ is connected.
Indeed, given two vertices  $g_1$ and $g_2$ of $\Gamma'_b$,
there exists a path $p $ in $\Gamma$ from $g_1$ to $g_2$. 
The path $p$ has a  decomposition $p = p_1p_2 \cdots p_m$ 
where each subpath $p_i$ is either a path  of $\Gamma_a$  or a path of $\Gamma'_b$,
and each intermediate endpoint lies in $\Gamma_\chi^{[a,b]}$.
Let $J$ denote the set of indices of the subpaths running in  $\Gamma_a$. 
For every index $j \in J$,
 the subpath $p_j$ runs in a component $\CC_j$ of $\Gamma_a$.
This  component $\CC_j$ is  the image of the component $\CC$ under some graph automorphism  
induced by an element $h_j \in \ker \chi$ (see Lemma \ref{lem:Transitive-action-on components}). 
The intersection $\CC_j \; \cap\; \Gamma_\chi^{[a,b]}$  is therefore connected,
for it is isomorphic to  the intersection 
$\CC \; \cap\; \Gamma_\chi^{[a,b]}$ which is connected by statement (i).
So there exists  a path $p'_j$ in the slice $\Gamma_\chi^{[a,b]}$  with the same endpoints as $p_j$.
Upon replacing the subpaths $p_j$  by the paths $p'_j$ one obtains a path 
that runs in the subgraph $\Gamma_b$ and connects the given vertices $g_1$ and $g_2$ of $\Gamma_b$.
This proves that $\Gamma_b =\Gamma_\chi^{(-\infty, b]}$ is connected,
whence  $\Gamma_{-\chi} = \Gamma_\chi^{(-\infty, 0]}$  is so by Lemma \ref{lem:Connected-rays}
and thus $[-\chi]$ is a point of $\Sigma^1(G)$.
\smallskip

Conversely, assume that $[-\chi] \in \Sigma^1(G)$.
We aim at imitating the proof of implication (ii) $\Rightarrow$ (i) of Theorem \ref{thm:Sigma1-criterion}.
To bring out the similarity,
we set $\psi = -\chi$ and argue in terms of this new character.
By hypothesis,  $[\psi] \in \Sigma^1(G)$.
Choose an element $t \in \YY = \XX \cup \XX^{-1}$ with $\psi(t) > 0$ 
and use  implication (i) $\Rightarrow$ (ii) of Theorem \ref{thm:Sigma1-criterion}
to find, for every $y \in \YY$,  an edge path $p_y  = (h, w_y) \in P(\Gamma)$ from $t$ to $yt$ 
with $v_{\psi}(p_y)  > v_{\psi}(1,y)$. 
Let $e$ be the larger of the two numbers
\begin{align*}
&\max \{|\psi(x)| \mid x \in \XX \} = \max \{|\chi(x)| \mid x \in \XX \} ,\\
&\max\{\psi(g) \mid g \text{ a vertex of }  p_y \text{ for some } y \in \YY\}
\end{align*}
and set  $b = a+e$.
Suppose now that $g_1$ and $g_2$ are vertices of the slice $\SS = \Gamma_\psi^{[-b, -a]}$ 
which lie in a component $\CC$ of $\Gamma_\psi^{(-\infty, -a]} = \Gamma_\chi^{[a,\infty]}$.
Then there exists a path  $p$ in $\CC$ from $g_1$ to $g_2$.
We submit it to  an endpoint preserving transformation $T' \colon P(\Gamma) \to P(\Gamma)$
that is similar to the transformation $T$ used in the proof of implication
(ii) $\Rightarrow$ (i) of Theorem \ref{thm:Sigma1-criterion}. Two cases arise.
If $v_\psi(p) \geq -b$, 
the path $p$ itself runs in the slice  $\SS$ and the transformation $T'$ is defined to leave $p$ as it is.
Otherwise,
set 
\[
 d = \min\{v_\psi(p_y) - v_\psi(1,y) \mid Y \in \YY^\pm \}
 \]
and consider the edges  $(h,y)$ of $p$  which are not contained in the slice $\SS$.
Replace each such edge by the path $(h,tw_yt^{-1})$ and  delete afterwards subpaths of the form $(h', t^{-1}t)$.
The resulting path $T'(p)$ will then run inside the slice $\Gamma_\chi^{[c_1,-a]}$
with $c_1 = \min\{ -b, v_\psi(p) + d \}$;
indeed, 
if $h_0$ is a vertex of $p$ below the slice $\SS$
both the edge of $p$  that terminates  in $h_0$ and the following one are replaced.
By iterating the transformation $T'$,
one will then end up with a path in the slice $\SS = \Gamma_\chi^{[a,b]}$
which links the given vertices $g_1$ and $g_2$ of  $\SS$.
It follows that  the intersection $\CC\; \cap \; \Gamma_\chi^{[a,b]}$ is connected.
\end{proof}

Theorem \ref{thm:Variant-Sigma-1-criterion} 
has a pleasing corollary that deserves to be stated at this point,
in spite of the fact that its significance will only become clear in section 
\ref{sssec:Rank-1-fg-normal-subgroups}.
\begin{crl}
\label{crl:Connected-slice}
Let $G$ be a group generated by the finite system $\eta \colon \XX \to G$
and let $\chi \colon G \to \R$ denote a non-zero character.
For every  real number $a$ the following conditions are equivalent: 
\begin{enumerate}[(i)]
\item 
there is a real number $b \geq a  +\max \{|\chi(x)| \mid x \in \XX \}$ 
such that the slice $\Gamma_{\chi}^{[a, b]}$ is connected;  
\item $\Sigma^1(G,\XX)$ contains the pair of antipodal points  $\{[\chi], -[\chi] \}$.
\end{enumerate}
\end{crl}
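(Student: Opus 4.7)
The plan is to derive both implications from Theorem \ref{thm:Variant-Sigma-1-criterion}, exploiting the fact that, under either hypothesis of the corollary, the half-graph $\Gamma_{\chi}^{[a,\infty)}$ turns out to be connected, so its \emph{unique} component equals the whole half-graph. This reduces the delicate intersection $\CC \cap \Gamma_{\chi}^{[a,a+e]}$ appearing in Theorem \ref{thm:Variant-Sigma-1-criterion} to the slice itself.

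First I would treat the implication (i) $\Rightarrow$ (ii). Assume the slice $\Gamma_{\chi}^{[a,b]}$ is connected. Because $\chi(G)$ is a non-trivial (cyclic or dense) subgroup of $\R$ and the interval $[a,b]$ has length at least $\max\{|\chi(x)| \mid x \in \XX\}$, the slice is non-empty. By Lemma \ref{lem:Transitive-action-on components}(i), every component of $\Gamma_{\chi}^{[a,\infty)}$ contains a vertex of any prescribed value in $\im\chi \cap [a,\infty)$, and in particular meets the slice. As distinct components of $\Gamma_{\chi}^{[a,\infty)}$ are disjoint and the slice is connected, there is only one such component, so $\Gamma_{\chi}^{[a,\infty)}$ is connected. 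Lemma \ref{lem:Connected-rays} then gives that $\Gamma_{\chi}$ is connected, whence $[\chi] \in \Sigma^1(G)$. To obtain the antipodal point, set $e = b-a$; the unique component $\CC = \Gamma_{\chi}^{[a,\infty)}$ satisfies $\CC \cap \Gamma_{\chi}^{[a,a+e]} = \Gamma_{\chi}^{[a,b]}$, which is connected by hypothesis, and $e \geq \max\{|\chi(x)| \mid x \in \XX\}$. Theorem \ref{thm:Variant-Sigma-1-criterion} then supplies $-[\chi] \in \Sigma^1(G)$.

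For (ii) $\Rightarrow$ (i), suppose that both $[\chi]$ and $-[\chi]$ belong to $\Sigma^1(G)$. Since $\Gamma_{\chi}$ is connected, Lemma \ref{lem:Connected-rays} ensures that $\Gamma_{\chi}^{[a,\infty)}$ is connected too, so it consists of the single component $\CC = \Gamma_{\chi}^{[a,\infty)}$. Applying Theorem \ref{thm:Variant-Sigma-1-criterion} in the direction (ii) $\Rightarrow$ (i), with $-[\chi] \in \Sigma^1(G)$, produces a real number $e \geq \max\{|\chi(x)| \mid x \in \XX\}$ such that $\CC \cap \Gamma_{\chi}^{[a,a+e]}$ is connected; but this intersection is just the slice $\Gamma_{\chi}^{[a,a+e]}$, and taking $b := a+e$ finishes the argument.

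No step is especially hard; the only point requiring a moment's care is the \textbf{non-emptiness of the slice} used in (i) $\Rightarrow$ (ii), for otherwise Lemma \ref{lem:Transitive-action-on components}(i) would yield no vertex inside $[a,b]$ and the ``unique component'' argument would collapse. This is settled by the elementary fact that a non-zero character has image a non-trivial subgroup of $\R$, combined with the lower bound imposed on $b - a$.
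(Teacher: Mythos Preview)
Your proof is correct and follows essentially the same approach as the paper, relying on Lemma \ref{lem:Connected-rays} and Theorem \ref{thm:Variant-Sigma-1-criterion} in the same way for (ii) $\Rightarrow$ (i). The only minor difference is in (i) $\Rightarrow$ (ii): the paper argues symmetrically that both half-graphs $\Gamma_{\chi}^{[a,\infty)}$ and $\Gamma_{\chi}^{(-\infty,b]}$ are connected directly (every vertex can be walked into the slice thanks to the width condition) and then applies Lemma \ref{lem:Connected-rays} twice, whereas you obtain $[\chi]$ via Lemma \ref{lem:Transitive-action-on components} and $-[\chi]$ via Theorem \ref{thm:Variant-Sigma-1-criterion}; both routes are equally valid.
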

\begin{proof}
Assume first the slice  $\Gamma_{\chi}^{[a, b]}$ is connected 
for some width $b-a \geq \max \{|\chi(x)| \}$. 
Thanks to the width of the slice 
every point in  $\Gamma_{\chi}^{[a, \infty)}$ 
can be connected inside $\Gamma_{\chi}^{[a, \infty)}$  to a point of the slice,
and so $\Gamma_{\chi}^{[a, \infty]}$ is connected.
Similarly one sees that $\Gamma_{\chi}^{(\infty, b]}$ is connected.
Lemma \ref{lem:Connected-rays} then implies
that the subgraphs $\Gamma_\chi$ and $\Gamma_{-\chi}$ are connected,
whence $[\chi]$ and $[-\chi]$ belong to $\Sigma^1(G)$.

Conversely,
if this conclusion holds 
the subgraphs $\Gamma_{-\chi}^{[-a, \infty)} = \Gamma_{\chi}^{(-\infty, a]}$
and $\Gamma_{\chi}^{[a, \infty)}$  are connected.
Since $\Gamma_{\chi}^{(-\infty, a]}$  is connected, 
implication (ii) $\Rightarrow$ (i) of Theorem \ref{thm:Variant-Sigma-1-criterion}
applies and provides us with a width $b-a  \geq \max \{|\chi(x)| \}$
such that the intersection of the slice $\Gamma_{\chi}^{[a, b]}$ 
with a fixed connected component $\CC$ of $\Gamma_{\chi}^{[a, \infty)}$ is connected.
As  $\Gamma_{\chi}^{[a, \infty)}$ itself is connected,
this says that the entire slice is connected. 
\end{proof}

\subsubsection{A  characterization of descending HNN-extensions} 
\label{sssec:Characterization-descending-HNN-extension}
%
Upon combining Proposition \ref{prp:Ascending-HNN-extension}
and Theorem \ref{thm:Variant-Sigma-1-criterion},
one arrives at the following characterization of \emph{descending} HNN-extensions:
\begin{prp}
\label{prp:Characterization-descending-HNN-extension}
\index{Invariant Sigma1 and@Invariant $\Sigma^1$ and!descending HNN-extensions} 
Let $G$ be a group generated by the finite system $ \eta \colon \XX \to G$,
let $\chi \colon G \epi \Z \incl  \R$ be a rank 1 character and let $u \in G$ be an element with $\chi(u) = 1$.
Finally,
let $\CC$ denote the connected component of  $\Gamma_{\chi}$ that contains the vertex $1_G$.
Then the following conditions are equivalent: 
\begin{enumerate}[(i)]
\item  the intersection $\CC\; \cap \;\Gamma_{\chi}^{[0, k]}$ is connected for some integer $k \geq 0$;
\item[(iii)] $N = \ker \chi$ contains a \emph{finitely generated} subgroup  $H$ with the properties
\begin{equation}
\label{eq:Descending-HNN-extension}
 uH u^{-1} \subseteq H \text{ and }  \bigcup\nolimits_{\ell \in \N} u^{-\ell }H u^{\ell} = N.
\end{equation}
\end{enumerate}
\end{prp}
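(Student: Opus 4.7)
I read the proposition as an equivalence chain $(\mathrm{i}) \Leftrightarrow (\mathrm{ii}) \Leftrightarrow (\mathrm{iii})$, where (ii) --- whose label has been dropped in the statement --- is the assertion $-[\chi] \in \Sigma^1(G)$. The plan is to obtain the first equivalence from Theorem \ref{thm:Variant-Sigma-1-criterion} and the second from Proposition \ref{prp:Ascending-HNN-extension} applied to the antipodal character.

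For $(\mathrm{i}) \Leftrightarrow (\mathrm{ii})$, I will apply Theorem \ref{thm:Variant-Sigma-1-criterion} with parameter $a = 0$ and the character $\chi$: the component of $1_G$ in $\Gamma_\chi = \Gamma_\chi^{[0,\infty)}$ is exactly the $\CC$ of our statement, and the theorem says that the existence of a real number $e \geq M := \max\{|\chi(x)| : x \in \XX\}$ for which $\CC \cap \Gamma_\chi^{[0, e]}$ is connected is equivalent to $-[\chi] \in \Sigma^1(G)$. Since $\chi(G) \subseteq \Z$, the graph $\Gamma_\chi^{[0, e]}$ depends only on $\lfloor e \rfloor$, so the theorem's $e$ may always be replaced by a non-negative integer. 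The direction $(\mathrm{ii}) \Rightarrow (\mathrm{i})$ is then immediate. For $(\mathrm{i}) \Rightarrow (\mathrm{ii})$ one needs a monotonicity assertion: if $\CC \cap \Gamma_\chi^{[0, k]}$ is connected for some integer $k \geq 0$, then so is $\CC \cap \Gamma_\chi^{[0, k']}$ for every integer $k' \geq k$, in particular for $k' = \max(k, M)$. I plan to derive this by adapting the path-transformation technique from the proof of Theorem \ref{thm:Variant-Sigma-1-criterion}: a vertex $v \in \CC \cap \Gamma_\chi^{[0, k']}$ with $\chi(v) > k$ will be connected to $\CC \cap \Gamma_\chi^{[0, k]}$ by successively replacing in a path from $v$ to $1_G$ each ``high-lying'' edge by a controlled detour, so that the highest $\chi$-value attained drops by $1$ at each iteration until the entire path lies inside the slice.

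For $(\mathrm{ii}) \Leftrightarrow (\mathrm{iii})$, I will apply Proposition \ref{prp:Ascending-HNN-extension} to the rank 1 character $\chi' := -\chi$ and the element $t := u^{-1}$, noting that $\chi'(t) = \chi(u) = 1$ and $\ker \chi' = \ker \chi = N$. The proposition yields $-[\chi] \in \Sigma^1(G)$ if and only if $N$ contains a finitely generated subgroup $H$ with $t^{-1} H t \subseteq H$ and $\bigcup_{\ell \in \N} t^\ell H t^{-\ell} = N$. Substituting $t = u^{-1}$, these two conditions rewrite as $u H u^{-1} \subseteq H$ and $\bigcup_{\ell \in \N} u^{-\ell} H u^{\ell} = N$, which is precisely $(\mathrm{iii})$.

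The main obstacle is the monotonicity step in $(\mathrm{i}) \Rightarrow (\mathrm{ii})$. The descent-by-$u^{-1}$ argument is elementary when $u \in \XX$ or when $\XX$ contains a generator of $\chi$-value $\pm 1$; in the general case, the descent must be implemented by a word in $\XX^\pm$ whose partial products may transiently exceed the slice $[0, k']$, forcing a genuine iteration of the path-transformation argument from Theorem \ref{thm:Variant-Sigma-1-criterion}. A possibly cleaner alternative I would explore is to bypass the monotonicity entirely by verifying the geometric $\Sigma^1$-criterion (Theorem \ref{thm:Sigma1-criterion}) for $-\chi$ directly from hypothesis $(\mathrm{i})$, producing out of the connectedness data of $\CC \cap \Gamma_\chi^{[0, k]}$ the path $p_y$ from $u^{-1}$ to $y u^{-1}$ required by that criterion for each $y \in \XX^\pm$.
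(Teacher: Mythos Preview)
Your overall route—bridging (i) and (iii) through the unstated $(\mathrm{ii}) \equiv \{-[\chi] \in \Sigma^1(G)\}$, invoking Theorem~\ref{thm:Variant-Sigma-1-criterion} for $(\mathrm{i})\Leftrightarrow(\mathrm{ii})$ and Proposition~\ref{prp:Ascending-HNN-extension} applied to $(-\chi, t=u^{-1})$ for $(\mathrm{ii})\Leftrightarrow(\mathrm{iii})$—is exactly what the paper does: it says the proposition arises ``upon combining'' those two results and gives no further proof.

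However, your plan to close the small-$k$ gap via monotonicity is wrong, because the monotonicity assertion is \emph{false}. Take $G = \Z$, $\XX = \{2, 3\}$, $\chi = \id$. Then $\CC = \Gamma_\chi$ is all of $\N$, and $\CC \cap \Gamma_\chi^{[0,0]} = \{0\}$ is (trivially) connected; but $\CC \cap \Gamma_\chi^{[0,1]} = \{0, 1\}$ has no edges at all, since every generator has $|\chi|$-value $\geq 2$, so it is disconnected. The ``controlled detours'' you hope to produce by the path-transformation technique cannot exist inside a slice too thin to contain a single edge. Note also that your alternative route has an issue: Theorem~\ref{thm:Sigma1-criterion} requires $t \in \XX \cup \XX^{-1}$, and $u^{-1}$ need not lie there. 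A correct way to handle small $k$ (using tools already in the paper, though stated later) is this: the subgroup $S_+ := \CC \cap N$ acts freely on the connected graph $\CC \cap \Gamma_\chi^{[0,k]}$, and since $N$ acts transitively on the components of each $\Gamma_\chi^{[0,\infty)}$ the quotient has exactly one vertex at each of the $k+1$ heights; hence $S_+$ is finitely generated by Proposition~\ref{prp:Criterion-finite-generation}, and then part (iii) of Proposition~\ref{prp:Key-properties-graph-Delta} yields $-[\chi] \in \Sigma^1(G)$. The paper itself glosses over this point.
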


The preceeding proposition can be illustrated neatly 
by the Baumslag-Solitar group $G$ considered in example 2 of section \ref{sssec:Sigma1-first-examples}.
The group  is generated by  elements  $a = (1,0)$ and $u = (0,1)$,
and it admits the presentation $\langle a, u \mid uau^{-1} = a^2 \rangle$,
describing it is a descending HNN-extension with stable letter $u$.
The following portion of the Cayley graph of $G$ 
(cf.\;Figure \ref{fig:Cayley-graph-metabelian-group-1} on page \pageref{fig:Cayley-graph-metabelian-group-1})
then shows
that condition (i) holds for every $k \geq0$.
\begin{figure}[htb]
\psfrag{Z}{\hspace*{-0.5mm}\footnotesize $m$}
\psfrag{Z2}{\hspace*{0mm}\footnotesize $x$}
\psfrag{1}{\hspace*{-2mm}\footnotesize $1$}
\psfrag{a}{\hspace*{-1mm}\footnotesize $a$}
\psfrag{am1}{\hspace*{-1.5mm}\footnotesize $a^{-1}$}
\psfrag{a2}{\hspace*{-1mm}\footnotesize $a^2$}
\psfrag{t}{\hspace*{-1mm}\footnotesize $u$}
\psfrag{t2}{\hspace*{-1mm}\footnotesize $u^2$}
\psfrag{ar1}{\hspace*{-2.2mm}\footnotesize $(1,a)$}
\psfrag{ar2}{\hspace*{-1mm}\footnotesize $(1,u)$}
\begin{center}
\includegraphics[width=11cm]{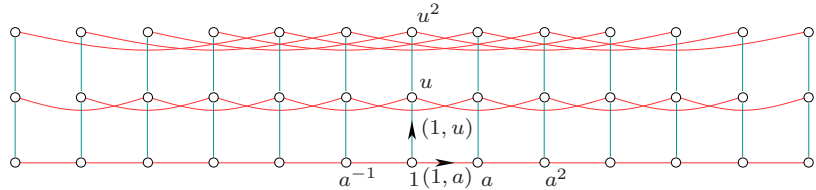}
\caption{A portion of the Cayley graph of $G$}
\label{fig:Cayley-graph-metabelian-group-3}
\end{center}
\end{figure}
%
%
 
%
%
\section{Finitely generated normal subgroups}
\label{sec:Finite-generation-normal-subgroups}
%
%
In this section,
we present one of the main results about the  invariant $\Sigma^1$.
It deals with normal subgroups $N$ of $G$ 
containing the commutator subgroup $G' =[G,G]$ of $G$ 
and gives an answer to the question 
whether such a subgroup is a finitely generated.
\begin{thm}
\label{thm:Characterizing-fg-N}
\index{Invariant Sigma1 and@Invariant $\Sigma^1$ and!fg normal subgroups}
\index{Characterization of!fg normal subgroups}
Let $G$  be a finitely generated group and
$N \vartriangleleft G$ a normal subgroup with abelian factor group. 
Then the  biimplication 
\begin{equation}
\label{eq:Characterization-fg-N}
N \text{  is finitely generated} \Longleftrightarrow S(G,N)  \subseteq \Sigma^1(G)
\end{equation}
holds. 
In particular,  the commutator subgroup $G'$ of $G$ is finitely generated
if $\Sigma^1(G) = S(G)$, and conversely.
\end{thm}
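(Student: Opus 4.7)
The plan is to handle the two implications separately and to deduce the concluding ``in particular'' clause as the special case $N = G'$, for which $S(G,G') = S(G)$ because every character $\chi \colon G \to \R$ annihilates the commutator subgroup $[G,G]$.

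\medskip

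For the direction $N$ finitely generated $\Rightarrow S(G,N) \subseteq \Sigma^1(G)$, I would verify the geometric $\Sigma^1$-criterion (Theorem \ref{thm:Sigma1-criterion}) by hand. Fix a finite generating set $\AA$ of $N$ and extend it to a finite generating set $\XX = \AA \cup \BB$ of $G$ so that $\BB$ projects onto generators of $G/N$. Given $[\chi] \in S(G,N)$, the character $\chi$ is non-zero and annihilates $N$, so some $t \in \BB^\pm$ has $\chi(t) > 0$. For $y \in \AA^\pm$ normality rewrites $t^{-1} y t$ as a word $w_y$ in $\AA^\pm$, and the path $(t, w_y)$ from $t$ to $y t$ satisfies $v_\chi = \chi(t) > 0 = v_\chi((1,y))$. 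For $z \in \BB^\pm$ commutativity of $G/N$ places $t^{-1} z t z^{-1}$ in $N$, so $t^{-1} z t = w \cdot z$ for a word $w$ in $\AA^\pm$, and the path $(t, w z)$ from $t$ to $z t$ satisfies $v_\chi = \chi(t) + v_\chi((1,z)) > v_\chi((1,z))$. The criterion thereby places $[\chi]$ in $\Sigma^1(G)$.

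\medskip

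For the converse, assume $S(G,N) \subseteq \Sigma^1(G)$. If $G/N$ is finite then $S(G,N)$ is empty and $N$, being a finite-index subgroup of the finitely generated group $G$, is finitely generated by Reidemeister-Schreier; so assume $G/N$ is infinite. The great subsphere $S(G,N)$ is closed, hence compact, and Theorem \ref{thm:Openness-Sigma-1} makes $\Sigma^1(G)$ open. Fix a finite generating set $\XX$ of $G$. Each $[\chi] \in S(G,N)$ sits in the open neighbourhood $\OO([\chi])$ described by formula \eqref{eq:Defining-neighbourhood-chi}, itself contained in $\Sigma^1(G)$ and witnessed by a finite family of edge paths $p_y$, $y \in \XX^\pm$. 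Extract a finite subcover $\OO([\chi_1]), \ldots, \OO([\chi_m])$ of $S(G,N)$; the collective data amount to finitely many relations $t_i \cdot w_{i,y} = y \cdot t_i$ with $t_i \in \XX^\pm$, $\chi_i(t_i) > 0$ and $v_{\chi_i}(p_{i,y}) > v_{\chi_i}((1,y))$. From these I would extract a finite subset $\mathcal{F} \subset N$ --- the ``$N$-components'' $t_i^{-1} y t_i$ for $y \in \XX^\pm \cap N$ together with the correction terms $t_i^{-1} z t_i \cdot z^{-1}$ for $z \in \XX^\pm \setminus N$ --- and propose that the subgroup $H = \langle \mathcal{F} \rangle \leq N$ exhausts $N$.

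\medskip

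The hard part is exactly this last equality $H = N$. The intended mechanism is modelled on the path transformation $T$ that drives the proof of Theorem \ref{thm:Sigma1-criterion}. Given $n \in N$ represented by an $\XX^\pm$-word $w$, one selects an index $i$ with $[\chi_i]$ ``negative on $w$'' in a suitable sense and iteratively replaces every edge $(h, y)$ along $w$ that dips below the hyperplane $\{\chi_i = 0\}$ by the detour $(h, t_i w_{i,y} t_i^{-1})$. Each replacement preserves the endpoint, rewrites $n$ modulo $H$ via the defining relations of $H$, and raises the $\chi_i$-valuation of $w$ by at least the positive amount $d_i = \min_y (v_{\chi_i}(p_{i,y}) - v_{\chi_i}((1,y)))$; setting $d = \min_i d_i > 0$, the process terminates after finitely many steps, and the resulting short word for $n$ lies in a bounded region whose $N$-part is manifestly in $H$. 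The delicate point is to show that a coherent choice of $[\chi_i]$ can be maintained throughout the rewriting --- so that a single global height function decreases even as the word wanders across the various chambers of $S(G,N)$ cut out by the cover --- and this is the algebraic counterpart of the geometric ``straightening'' that underlies the $\Sigma^1$-criterion.
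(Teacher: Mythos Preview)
Your forward direction is correct and is essentially a hands-on version of the paper's argument (the paper routes it through the comparison Proposition~\ref{prp:Comparison-invariants-G-and-Q}, but the content is the same).

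For the converse you have assembled the right ingredients --- compactness of $S(G,N)$, openness of $\Sigma^1(G)$, a finite cover with witnessing path families --- but the gap you flag in your last paragraph is genuine and your proposed resolution will not work. You hope to find ``a coherent choice of $[\chi_i]$'' so that a single \emph{linear} height function $\chi_i$ controls the rewriting globally. When $\rk(G/N)\ge 2$ no such linear functional exists: a path that has been pushed above the hyperplane $\{\chi_i=0\}$ may well dip far below some other $\{\chi_j=0\}$, and repairing that with the $\chi_j$-transformation can undo the $\chi_i$-progress. The characters $\chi_i$ see only one direction each, and there is no way to make finitely many half-spaces cover a neighbourhood of infinity in $\Z^k$ in a manner compatible with the rewriting.

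The paper's resolution is to abandon linear height functions for the \emph{Euclidean norm}. After reducing to $G/N$ free abelian of rank $k$, fix an epimorphism $\vartheta\colon G\twoheadrightarrow\Z^k$ and measure a vertex $g$ by $\|\vartheta(g)\|$. One shows that for $\rho_0$ large enough the full subgraph $\Gamma(\rho_0)$ on $\{g:\|\vartheta(g)\|\le\rho_0\}$ is connected. Given a path that strays to a vertex $g$ with $\|\vartheta(g)\|=\rho>\rho_0$, set $u=-\vartheta(g)/\rho$; this unit vector lies in some $\NN_{u_j}$ from the finite cover, and one applies the $\Sigma^1$-criterion transformation for that particular $j$ at that particular vertex. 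The replacement vertices $gh$ have $\vartheta(gh)=z+y$ with $z=\vartheta(g)$, and the key estimate
\[
\|z+y\|^2=\rho^2+\|y\|^2-2\rho\langle u,y\rangle\le\rho^2+r^2-2\rho\,\varepsilon_0<\rho^2
\]
(where $r$ bounds the norms of all vertices on all witnessing paths and $\varepsilon_0>0$ is a uniform lower bound extracted from the finite cover) shows that the \emph{norm} strictly decreases. The crucial point is that the index $j$ is chosen anew at each step according to the current outward direction, while the norm serves as the single global height function you were seeking.

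The paper's endgame is also different from yours: rather than exhibiting a finite subset of $N$ and proving it generates, one observes that $N$ acts freely on the connected graph $\Gamma(\rho_0)$ with finite quotient (since $\vartheta(\Gamma(\rho_0))$ is a finite set of lattice points), and invokes the standard criterion (Proposition~\ref{prp:Criterion-finite-generation}) that a group acting so on a connected graph is finitely generated.
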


The proof of implication $\Rightarrow$ is fairly easy, 
as is the proof in the case where  $G/N$ is infinite cyclic 
(see Corollary \ref{crl:Characterizing-fg-kernel-rank-1-character});
these two proofs are  given in a preliminary discussion.
The justification of implication $\Leftarrow$ occupies the second section.
In the final section,
an application of Theorem \ref{thm:Characterizing-fg-N} will be presented.

\begin{note}
\label{note:Fg-normal-subgroups}
Theorem \ref{thm:Characterizing-fg-N} is the analogue of Theorem B1 in \cite{BNS}.
The proof given in section \ref{ssec:Proof-converse} is an amplified version of an argument due to Robert Bieri (cf.{} \cite[Thm.\;4.1]{BiSt92}).
\end{note}
\index{Bieri, R.}
\index{Neumann, W. D.}
\index{Strebel, R.}

%
\subsection{Preliminary investigation}
\label{ssec:Discussion-Main-result-normal-subgroups}
%
As before,
$G$ denotes a finitely generated group,
$N \triangleleft G$ a normal subgroup of $G$
and $\pi \colon G\epi Q$ the canonical projection of $G$ onto the factor group $Q = G/N$. 
%
\subsubsection{Normal subgroups with infinite cyclic factor group} 
\label{sssec:Rank-1-fg-normal-subgroups}
%
If the factor group $Q$ is infinite cyclic,
$N$ is the kernel of a rank 1 homomorphism $\chi \colon G \epi \Z \mono \R$ 
and Theorem \ref{thm:Characterizing-fg-N} takes on the following simple form:
\begin{crl}
\label{crl:Characterizing-fg-kernel-rank-1-character}
The kernel of a rank 1 character $\chi \colon G \epi \Z \mono \R$ is finitely generated if,
 and only if,  $\{[\chi] , [-\chi] \} \in  \Sigma^1(G)$.
\end{crl}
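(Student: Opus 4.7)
The plan is to derive this corollary directly from Proposition~\ref{prp:Ascending-HNN-extension}, the characterization of rank $1$ points of $\Sigma^1(G)$ as producing ascending HNN-extensions over a finitely generated base. Everything reduces to matching two such structures in opposite directions.

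The forward direction is immediate: if $N$ is finitely generated, I apply Proposition~\ref{prp:Ascending-HNN-extension} with $H = N$, once for $[\chi]$ (stable letter $t$) and once for $[-\chi]$ (stable letter $t^{-1}$). Normality of $N$ makes the inclusions $t^{\pm 1} N t^{\mp 1} \subseteq N$ and the exhaustions $\bigcup_{\ell \geq 0} t^{\pm \ell} N t^{\mp \ell} = N$ trivially valid, so both antipodes lie in $\Sigma^1(G)$.

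For the converse, suppose $\{[\chi],[-\chi]\} \subseteq \Sigma^1(G)$. Proposition~\ref{prp:Ascending-HNN-extension} applied to $[\chi]$ yields a finitely generated $H_+ \subseteq N$ with $t^{-1} H_+ t \subseteq H_+$ and $\bigcup_{\ell \geq 0} t^\ell H_+ t^{-\ell} = N$; applied to $[-\chi]$ with stable letter $t^{-1}$ it yields a finitely generated $H_- \subseteq N$ with $t H_- t^{-1} \subseteq H_-$ and $\bigcup_{\ell \geq 0} t^{-\ell} H_- t^\ell = N$. Since both exhaustions are ascending chains of subgroups and since $H_\pm$ are finitely generated, I can locate integers $m, n \geq 0$ such that $H_- \subseteq t^m H_+ t^{-m}$ and $H_+ \subseteq t^{-n} H_- t^n$; enlarging $n$ if necessary (which is permitted because the chain $t^{-\ell} H_- t^\ell$ is ascending), I arrange $n > m$.

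Substituting the first inclusion into the second gives $H_+ \subseteq t^{-k} H_+ t^k$ with $k = n - m \geq 1$, while iterating $t^{-1} H_+ t \subseteq H_+$ provides the reverse inclusion $t^{-k} H_+ t^k \subseteq H_+$. Hence $t^k H_+ t^{-k} = H_+$, which forces every step of the ascending chain $H_+ \subseteq t H_+ t^{-1} \subseteq \dots \subseteq t^k H_+ t^{-k}$ to be an equality; in particular $t H_+ t^{-1} = H_+$, the chain is constant, and $N = \bigcup_{\ell \geq 0} t^\ell H_+ t^{-\ell} = H_+$ is finitely generated. The main subtlety is that either $\Sigma^1$-hypothesis alone is insufficient, and one must arrange $n$ strictly greater than $m$ for the stabilization to bite; this is exactly where both halves of the hypothesis are consumed.
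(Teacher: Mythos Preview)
Your proof is correct and follows essentially the same approach as the paper: both directions rest on Proposition~\ref{prp:Ascending-HNN-extension}, and the converse matches the two ascending structures against each other via the exhaustion chains. The only cosmetic difference is in the final step: the paper locates a single inclusion $B_1 \subseteq t^{-k}B_2 t^{k}$ and observes that this single conjugate of $B_2$ then absorbs every $t^\ell B_1 t^{-\ell}$ (since $t^\ell B_2 t^{-\ell} \subseteq B_2$), whence it equals $N$; you instead close the loop back to $H_+$ itself and deduce $t^k H_+ t^{-k} = H_+$, forcing the chain to stabilize. Both arguments are equally short and use the same ingredients.
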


The corollary admits proofs that are far simpler than the general proof.
One of them can be based on Corollary \ref{crl:Connected-slice}
and Proposition \ref{prp:Criterion-finite-generation} below. 
A second one follows from the characterization of rank 1 points 
afforded by Proposition \ref{prp:Ascending-HNN-extension}.
Here are the details:
\begin{proof}
Let $t \in G$ be an element with $\chi(t) = 1$.
Assume first, $N = \ker \chi$ is finitely generated.
Then the inclusions $t^{-1} N t \subseteq N$ and $t N t^{-1} \subseteq N$ hold,
and so $G$  is an ascending  HNN-extension with respect to  $t$ and with respect  to $t^{-1}$;
by Proposition \ref{prp:Ascending-HNN-extension}  both $[\chi] $ and $[-\chi]$ 
belong therefore to $\Sigma^1(G)$.

Conversely,
if $[\chi]$ and $[-\chi]$ lie in $\Sigma^1(G)$ 
then $G$ is an ascending HNN-extension with finitely generated base group $B_1$ and stable letter $t$, say, 
and an ascending HNN-extension with finitely generated base group $B_2$, say, and stable letter $t^{-1}$.
So $t^{-1}B_1t \subseteq B_1$ and $tB_2t^{-1} \subseteq B_2$.
Since $B_1$ is finitely generated and $\bigcup_{j \in \N} t^{-j} B_2 t^{j} = \ker \chi$,
the base group $B_1$ is contained in a conjugate of $B_2$, say $B_1 \subseteq t^{-k}B_2 t^k$. 
The chain of inclusions
\[
t^\ell B_1 t^{-\ell}  \subseteq t^\ell ( t^{-k}  B_2 t^k )  t^{-\ell} 
\subseteq 
t^{-k} (t^\ell  B_2 t^{-\ell} )  t^k    \subseteq t^{-k}B_2 t^k
\]
holds therefore for every positive integer $\ell$.
As $\bigcup_{\ell \in \N} u^{\ell} B_1 u^{-\ell} = \ker \chi$ these inclusions show 
that  the subgroup  $u^{-k}B_2 u^k$  coincides with $\ker \chi$.
But if so,  $ \ker \chi = B_2$  and thus $\ker \chi$ is finitely generated.
\end{proof}

\begin{remark}
\label{remark:Characterizing-fg-kernel-rank-1-character}
The above proof makes use of Proposition \ref{prp:Ascending-HNN-extension}
and general properties of ascending HNN-extensions. 
It gives no bound on the number of generators of the normal subgroup $N = \ker \chi$.
An upper bound can be obtained by going back to the proof of implication (i) $\Rightarrow$ (ii) of that proposition.
Here are the details:
let $t \in G$ be an element generating a complement of $N = \ker \chi$. 
Choose a finite subset $\AA = \{a_1, \ldots a_m \}$ of $N$ 
such that $\XX = \AA \cup \{t\}$ generates $G$.
Then $N$ will be generated by the conjugates $a_{j , \ell} = t^\ell a_i  t^{-\ell}$ of the $a_j \in \AA$.

Assume now that $[\chi]$ and $[-\chi]$ lie in $\Sigma^1(G)$.
The hypothesis that $[\chi] \in \Sigma^1(G)$ and implication (i) $\Rightarrow$ (ii) in Theorem 
\ref{thm:Sigma1-criterion}
then provide us with words $w_j$ such that
\[
t^{-1}\cdot a_j \cdot t = w_j(\XX) \text{ and } v_\chi(w_j) = 0 \text{ for } j = 1, 2, \ldots m.
\]
The words $w_j$ can be rewritten as words, say $u_j$,  in the conjugates $a_{j,\ell}$; if this is done,
only non-negative indices $\ell$ will occur (because of the hypothesis that $v_\chi(w_j) = 0$.)
Let $\mu$ the largest value of $\ell$  that occurs in any of the words $u_j$
and set 
\[
N_+ = \gp(\{ a_{j,\ell} \mid j = 1, \ldots, m \text{ and } \ell = 0, \ldots, \mu\}).
\]
Then $t^{-1} \cdot N_+ \cdot t \subseteq N_+$.
By applying the previous argument to $-\chi$, one finds similarly a positive integer $\nu$ 
for which the subgroup
\[
N_- = \gp(\{ a_{j,\ell} \mid j = 1, \ldots, m \text{ and } \ell =  -\nu, \ldots, -1, 0 \}).
\]
satisfies the condition $t \cdot N_- \cdot t^{-1} \subseteq N_-$.
The subgroup $\gp(N_+, t^{\nu} \cdot N_- \cdot t^{-\nu})$ is then invariant under conjugation by $t$, 
coincides therefore with $N$,
and shows  that $N$ is generated by $m \cdot (1 + \max\{\mu, \nu\}) $ elements.
\end{remark}
%
\subsubsection{Comparison of the invariants of $G$ and of $G/N$} 
\label{sssec:Comparison-invariants-G-and-Q}
If $\eta \colon \XX \to G$  is a finite generating system of $G$, 
then $\pi \circ \eta \colon \XX \to G\epi Q$ is a  finite generating system of $Q$. 
So the  projection $\pi$ induces a  map $\pi_* \colon \Gamma(G,\XX) \to \Gamma(Q,\XX)$ 
between the Cayley graphs; it is a covering map. 
This fact is the key ingredient in the proof of the following result
which relates the invariants $\Sigma^1$ of  $G$ and $Q$:
\begin{prp}
\label{prp:Comparison-invariants-G-and-Q}
\index{Invariant Sigma1 and@Invariant $\Sigma^1$ and!quotient groups}
Let $\bar{\chi} \colon Q \to \R$ a character of $Q$ 
and let $\chi = \pi \circ \bar{\chi}  \colon G \to \R$ denote its pull-back to $G$.
Then the implication
\begin{equation}
\label{eq:Comparison-invariants-G-and-Q}
[\chi] \in \Sigma^1(G) \Longrightarrow [\overline{\chi}] \in \Sigma^1(Q) 
\end{equation}
holds. 
Its converse is valid if $N$ is a  finitely generated group.
\end{prp}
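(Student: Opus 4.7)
The plan is to deduce both implications from the geometric $\Sigma^1$-criterion (Theorem \ref{thm:Sigma1-criterion}), exploiting the fact that for any finite generating system $\eta\colon\XX\to G$ the projection $\pi$ induces a label-preserving covering $\pi_*\colon\Gamma(G,\XX)\to\Gamma(Q,\XX)$ along which $v_\chi$ and $v_{\bar\chi}$ agree, because $\bar\chi\circ\pi=\chi$.

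For the forward direction, I would fix any finite generating system $\XX$ of $G$ and pick $t\in\XX\cup\XX^{-1}$ with $\chi(t)>0$ (so $\bar\chi(\pi(t))>0$). The $\Sigma^1$-criterion applied to $\chi$ in $G$ supplies, for every $y\in\XX\cup\XX^{-1}$, a path $p_y$ in $\Gamma(G,\XX)$ from $t$ to $yt$ with $v_\chi(p_y)>v_\chi((1,y))$. The projected path $\pi_*(p_y)$ then leads from $\pi(t)$ to $\pi(y)\pi(t)$ in $\Gamma(Q,\XX)$ and satisfies $v_{\bar\chi}(\pi_*(p_y))=v_\chi(p_y)>v_\chi((1,y))=v_{\bar\chi}((1,\pi(y)))$. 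The $\Sigma^1$-criterion in $Q$ then delivers $[\bar\chi]\in\Sigma^1(Q)$.

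For the converse, assuming $N$ is finitely generated, I would enlarge a finite generating set $\XX_0$ of $G$ by a finite generating set $\mathcal S$ of $N$, setting $\XX=\XX_0\cup\mathcal S$; note that $\chi$ vanishes on $\mathcal S$ since $N\subseteq\ker\chi$. With $t\in\XX_0\cup\XX_0^{-1}$ satisfying $\chi(t)>0$, the $\Sigma^1$-criterion in $Q$ (applied to the projected generating system $\pi\circ\eta$) yields, for each $y\in\XX\cup\XX^{-1}$, a path $\bar q_y$ from $\pi(t)$ to $\pi(y)\pi(t)$ with $v_{\bar\chi}(\bar q_y)>v_{\bar\chi}((1,\pi(y)))$. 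Lift it to the path $\tilde q_y$ in $\Gamma(G,\XX)$ that starts at $t$ and carries the same edge labels; its terminus $g_y$ projects onto $\pi(yt)$, hence lies in the coset $(yt)N$, so $n_y=(yt)^{-1}g_y\in N$ can be written as a word $w_y$ over $\mathcal S\cup\mathcal S^{-1}$. The concatenation $p_y=\tilde q_y\cdot(g_y,w_y^{-1})$ is then a path from $t$ to $yt$.

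The main obstacle — the only place where more than bookkeeping is needed — is to check that the correction segment $(g_y,w_y^{-1})$ does not spoil the required inequality $v_\chi(p_y)>v_\chi((1,y))$. Since every letter of $w_y$ has $\chi$-value zero, every vertex along this segment sits at height $\chi(g_y)=\chi(yt)=\chi(y)+\chi(t)$; the strict positivity of $\chi(t)$ therefore gives $\chi(yt)>\min\{0,\chi(y)\}=v_\chi((1,y))$ irrespective of the sign of $\chi(y)$. Combined with $v_\chi(\tilde q_y)=v_{\bar\chi}(\bar q_y)>v_\chi((1,y))$, this yields $v_\chi(p_y)>v_\chi((1,y))$, and the $\Sigma^1$-criterion in $G$ then gives $[\chi]\in\Sigma^1(G)$. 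The hypothesis that $N$ is finitely generated is used exactly so that $\mathcal S$ may be chosen finite and $\XX$ remain a finite generating system.
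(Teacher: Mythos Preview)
Your proof is correct, but it takes a different route from the paper. The paper argues directly from the Cayley-graph definition of $\Sigma^1$ rather than via the $\Sigma^1$-criterion: since $\bar\chi$ vanishes on $N$, the covering $\pi_*\colon\Gamma(G,\XX)\to\Gamma(Q,\XX)$ restricts to a surjective graph map $\Gamma_\chi\to\Gamma_{\bar\chi}$, so connectivity passes forward immediately. For the converse, the paper lifts a path in $\Gamma_{\bar\chi}$ from $1_Q$ to $gN$ to a path in $\Gamma_\chi$ ending at $g$; its origin $g_1$ lies in $N$, and (after arranging that $\XX$ contains a finite generating set for $N$) one simply walks from $1_G$ to $g_1$ along generators of $\chi$-value zero.

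Your approach via Theorem~\ref{thm:Sigma1-criterion} is perfectly valid and shows good command of that tool, but it is more laborious here: you must manufacture and verify the inequality $v_\chi(p_y)>v_\chi((1,y))$ for every $y$, whereas the paper's argument is a two-line observation about images and lifts of connected subgraphs. The paper's method also makes the role of the hypothesis on $N$ more transparent---it is exactly what lets one bridge the gap between the lifted origin $g_1\in N$ and $1_G$ inside $\Gamma_\chi$.
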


\begin{proof}
Since the $\bar{\chi}$ vanishes on $N$, 
the covering map 
$\pi_* \colon \Gamma(G,\XX) \to \Gamma(Q,\XX)$ restricts to a surjective graph map 
$\pi_* \colon \Gamma_{\chi} \to \Gamma_{\overline{\chi}}$.
If $\Gamma_{\chi} $ is connected, its quotient $\Gamma_{\bar{\chi}}$ is therefore also connected 
and so implication \eqref{eq:Comparison-invariants-G-and-Q} holds.

Conversely, assume $\Gamma_{\bar{\chi}}$ is connected.
Given a vertex $g \in \Gamma_{\chi}$, 
there exists a path $\bar{p}$ that runs in $\Gamma_{\bar{\chi}}$ 
and leads from $1_{Q}$ to $q = gN$.
It can be lifted to a path  $p_{1}$ that ends in $g$. 
This path runs in the subgraph $\Gamma_{\chi}$ and its origin $g_{1}$ is an element of $N$.
Assume now $N$ has a finite generating system $\AA$, say.
By the invariance property of $\Sigma^1$ 
(Theorem \ref{thm:Sigma1-well-defined}),
one may arrange that $\AA$ is part of the generating system $\XX$ 
 that is used in the construction of the Cayley graph $\Gamma$.
Then there clearly exists a path $p_{0}$ from $1_{G}$ to $g_{1}$ that runs inside $\Gamma_{\chi}$
and thus the concatenated path $p_{0}.p_{1}$ connects $1_{G}$ to $g$ in $\Gamma_{\chi}$.
\end{proof}

\begin{addendum}
\label{addendum:Comparison-invariants-G-and-Q}
The notation being as in Proposition \ref{prp:Comparison-invariants-G-and-Q},
assume $N$ is finitely generated and  $Q = G/N$ abelian. 
Then $S(G,N) \subseteq \Sigma^1(G)$.
\end{addendum}

\begin{proof}
Since $Q$ is abelian, its invariant is all of $S(Q)$ by example 
\ref{examples:Groups-with-non-trivial-centre} a);
if, in addition, $N$ is finitely generated Proposition \ref{prp:Comparison-invariants-G-and-Q}
therefore implies that every  non-zero character $\chi$ of $G$ with $\chi(N) = \{0\}$ 
represents a point of $\Sigma^1(G)$.
\end{proof}
%
\subsection[Proof of the main result]{Proof of the Theorem \protect \ref{thm:Characterizing-fg-N}}
\label{ssec:Proof-converse}
%
The proof of implication 
$
S(G,N)  \subseteq \Sigma^1(G) \Rightarrow N \text{\emph{  is finitely generated}} 
$
in Theorem \ref{thm:Characterizing-fg-N}
will be broken into several steps. 
We begin with two reductions 
and the introduction of coordinates on $S(G, N)$.
%
\subsubsection{Reduction to the case where $Q$ is free abelian and choice of $\XX$}
\label{sssec:Reduction-Q-free-abelian}
%
Let $G$, $N \vartriangleleft  G$ and $Q = G/N$  be as in in Theorem  \ref{thm:Characterizing-fg-N}.
Then $Q$ is a finitely generated group abelian group;
thus its torsion group $T(Q)$ is finite  and the factor group $Q/T(Q)$  is free abelian.
Let $\widehat{N}$ denote the preimage of $T(Q)$ under the canonical epimorphism $\pi \colon G \epi Q$.
Then $N$ has finite index in $\widehat{N}$ and so $N$ is finitely generated if, and only, 
$\widehat{N}$ has this property
(see, e.\;g., \cite[p.\;36, \textbf{1.6.18}]{Rob96} for the non-trivial implication).
Moreover, as the additive group of the field of reals $\R$ is torsion-free 
every character that vanishes on $N$ will vanish on  $\widehat{N}$;
so $S(G,N) = S(G,\widehat{N})$. 

It suffices therefore to prove the claim under the \emph{additional hypothesis 
that $Q$ be free abelian, say of rank} $k$.
Accordingly,
we choose the finite set of generators  $\XX = \TT \cup  \ZZ$ of $G$ in such a way
that  $\pi \colon G\epi Q$ maps $\TT$ bijectively onto a  basis of $Q$ and $\ZZ$ is contained in $N$. 
In addition,
we select  $\ZZ$ so that it contains all commutators  $[y_{1}, y_{2}]$ with $y_{1}$, $y_{2}$ in 
$\TT\cup \TT^{-1}$.
%
\subsubsection{Filtering the Cayley graph }
\label{sssec:Introduction-filtering-Cayley-graph}
%
We next introduce coordinates on $S(G,N)$.
\index{Character sphere!coordinates}
Let $\vartheta \colon G \epi Q= G/N  \iso \Z^k$ be an epimorphism of groups
that sends $\TT$ onto the set of standard basis vectors of $\Z^k$
and define $ \Z^k$ to be the standard lattice of the real vector space $\R^k$ 
equipped with  the usual inner product $\langle  -,-\rangle$ and norm $\| -\|$. 
Similarly as in section \ref{sssec:Coordinates-sphere}, 
we  introduce then coordinates on the sphere  $S(G, N)$
by assigning to  $u \in \s ^{k-1}$ the character $\chi_v \colon G \to \R$
given by $\chi_v(g) = \langle u, \vartheta(g)\rangle$.

With the help of these coordinates,
we construct a filtration on  $G$ by \emph{spherical} subsets $G(\rho)$.
These subsets are defined thus:
\begin{equation}
\label{eq:Filtration-G}
G(\rho) = \{g \in G \mid \|\vartheta(g)\| \leq \rho\}.
\end{equation}
Here $\rho $ ranges over the discrete subset $\{\sqrt{m} \mid m \in \N \}$ of $\R$. 
Since $N$ is the kernel of $\vartheta$,
each $G(\rho)$ is $N$-invariant;
but it is neither a subgroup nor a submonoid of $G$.

Let  $\Gamma(\rho) \subseteq \Gamma = \Gamma(G,\XX)$ 
to be the full subgraph  with vertex set $G(\rho)$. 
Then  $N$ acts on $\Gamma(\rho)$ and this action is free,
for it is induced by the action of $N$ on the Cayley graph $\Gamma$. 
The quotient graph $N\backslash G(\rho)$ is finite. 
If it is in connected for some sufficiently large radius $\rho$
then  $N$ will act freely on the connected graph $\Delta =  \Gamma(\rho)$ 
with finite quotient graph $N\backslash \Delta$
and will therefore be finitely generated by Proposition \ref{prp:Criterion-finite-generation} below.
%
\subsubsection{Search for a connected subgraph $\Gamma(\rho_0)$}
\label{sssec:Search-connected-Gamma-rho-0}
%
We next aim at proving that the graph $\Gamma(\rho)$ is connected 
whenever the radius $\rho$ is large enough.
The intuitive reason is this. 
Let $\rho_0$ be a large radius and let $g_{0}$ be a vertex in $\Gamma(\rho_0)$.
There exists then a path $p$ in the  Cayley graph $\Gamma = \Gamma(G, \XX)$  from $1$ to $g_{0}$.
If $p$ runs inside $\Gamma(\rho_0)$  all is well;
otherwise, let  $\rho_p  \in \R$ be the smallest real number $\rho$
for which $p$ is contained in the subgraph $\Gamma(\rho)$,
and let $M_p$ be the set of all vertices $g$ of $p$ with $\|\vartheta(g)\| =  \rho_p$.
Consider now a maximal consecutive sequence of vertices $(g, g', \ldots)$ in $M_p$.
Let  $(g,y_1) $ be the \emph{inverse} of the edge of $p$ that \emph{terminates} in $g$
and let  $(g,y_2)$ be the edge of $p$  that \emph{begins} in $g$.
\begin{figure}[htb]
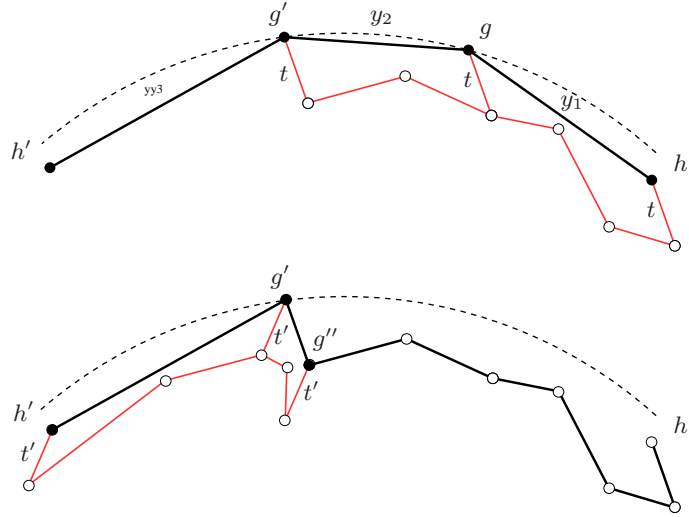

\psfrag{hh}{\hspace*{-1mm} \small $h$}
\psfrag{gg}{  \hspace*{-1mm}   \small $g$}
\psfrag{ggp}{\hspace*{-0mm}\small $g'$}
\psfrag{ggh}{\hspace*{-0.5mm}\small $h'$}
\psfrag{qq3}{\hspace*{-0.5mm}\small $g''$}
\psfrag{yy1}{\hspace*{-0.5mm}\small $y_1$}
\psfrag{yy2}{\hspace*{-0.5mm}\small $y_2$}
\psfrag{tt}{\hspace*{-0.0mm}\small $t$}
\psfrag{tt1}{\hspace*{-0.5mm}\small $t'$}
\psfrag{L}{\hspace*{-7mm} \small $(g,y_{1})$}
\begin{center}
\includegraphics[width=9.0cm]{A4.fig1.eps}\par
\includegraphics[width=9.0cm]{A4.fig2.eps}

\caption{One step in the transformation of the path $p$}
\label{fig:Transforming-path-fg-normal}
\end{center}
\end{figure}
Define  $u$ to be the  vector $u = -\vartheta(g)/\norm{\vartheta(g)}$.
Choose an element $t \in \YY = \XX \cup \XX^{-1}$ with $\chi_u(t) > 0$.  
By Theorem \ref{thm:Sigma1-criterion} there is then a path $p_{y_1}$ from $t$ to $y_1t$, 
and a path $p_{y_2}$ from $t$ to $y_2t$,
all in such a way that $v_u (p_{y_1} > v_u(1,y_1)$ and $v_u (p_{y_2} > v_u(1,y_2)$;
here  $v_u$ is short for $v_{\chi_u}$. 
The norm of $\vartheta(gy_1)$ does not exceed that of $\vartheta(gy_1)$ and so $\chi_u(y_1) \geq 0$.
Similarly, $\chi_u(y_2 \geq 0$. 
The paths $p_{y_1}$ and $p_{y_2}$ satisfy therefore the inequalities
\begin{equation}
\label{eq:Sharpened-inequalities}
v_{\chi_u} (p_y) > 0 \text{ for } y = y_1 \text{ and } y = y_2.
\end{equation}

Replace now the subpath $(h, y_1^{-1}y_2)$ of $p$ 
by the path 
\[
q = (h,t) \cdot (g.p_{y_1})^{-1} \cdot (g.p_{y_2})\cdot (g',t)^{-1},
\] 
ending up with a path $p'$ 
that contains no vertex $z$ between $h = gy_1$ and $g' = g y_2$ with $\|\vartheta(z)\| =  \rho_p$.
The last of the edges of the subpath $q$ is $(g',t)^{-1}$. 
Now apply to the inverse of this edge and the edge $(g', y_3)$ the analogous transformation.
Continuing in this way one will finally arrive at a path from 1 to $g_0$ 
all whose vertices $z$ satisfy the inequality $z$ with $\norm{\vartheta{z}} < \rho_p$.
Because the norm of a lattice point $\vartheta(z)$ lies in the discrete set $\{\sqrt{m} \mid m \in \N \}$,
iteration of the described transformation of paths will lead in finitely many steps 
to a path from 1 to the given $g_0$ that runs in  $\Gamma(\rho_0)$.
\smallskip

The crux of the proof thus lies in finding a radius $\rho_0$ 
that allows one to carry out the described transformation of paths.
To find it, we bring into play the hypothesis 
that the subsphere $S(G,N)$ of $S(G)$ is contained in $\Sigma^1(G)$.
We can construct an open cover of the sphere $\s^{k-1} \iso S(G,N)$ as follows: 
given a unit vector $u \in \s^{k-1}$, 
choose an element $t \in \YY = \XX \cup \XX^{-1}$ with $\chi_u(t) > 0$.
Then find, with the help of Theorem \ref{thm:Sigma1-criterion},
for each $y \in \YY$, a path $p_y(u)$ that leads from $t$  to $yt$ 
and satisfies the inequality $v_{u}(p_{y}(u)) > v_u((1,y))$. 
The couple $\psi_u = (t, \{p_y(u)\})$ gives rise to  a real valued function 
$f_{\psi_u}$;
it is defined on $\s^{k-1}$ and given by
\begin{align}
f_{\psi_u} (u') &= \min \{ v_{u'}(p_y(u)) - v_{u'}((1,y)) \mid y \in \YY \}. \label{eq:Auxiliary-functions-f}
\end{align}
This function enters into the definition of a neighbourhood of $u$,
namely
\begin{equation}
\label{eq:Neighbourhood-u}
\NN_u = \NN_u(\psi_u) = \{u' \in \s^{k-1} \mid 
f_{\psi_u}(u') > 0 \}.
\end{equation}
The set $\NN_u$ is an open subset of $\s^{k-1}$.
As $\s^{k-1}$ is compact, 
there exist a finite family $\FF = \{u_1, \ldots, u_\ell\}$ of unit vectors  
such that the collection of open subsets $\{ \NN_{u_j} \mid 1 \leq j \leq \ell \}$ covers $\s^{k-1}$.
This family is next used in the definition of an auxiliary function $\varepsilon \colon \s^{k-1} \to \R$;
it is given by
\begin{equation}
\label{eq:Defining-function-epsilon}
\varepsilon(u') 
= 
\max \{ f_{\psi_j} (u') \mid 1 \leq j \leq \ell 
\}.
\end{equation}
Here $\psi_j$ is short for $\psi_{u_j}$.
The definition of $\varepsilon$ and of the neighbourhoods $\NN_{u_j}(\psi_j)$ imply
that the function $\varepsilon$ is positive on the sphere $\s^{k-1}$.
As it is continuous and $\s^{k-1}$ is compact, 
it admits therefore a positive lower bound, say $\varepsilon_{0}$. 

To define the radius $\rho_0$ we need a last definition.
For each index $j \in \{1, \ldots, \ell \}$ and each generator $y \in \YY$,
let $\SS_{j, y}$ be the set of all vertices of the path $p_y(u_j)$.
 Define then an auxiliary radius $r$ by setting
\begin{equation}
\label{eq:Radius-ball-enclosing-paths}
r = \max \{\|\vartheta(h)\| \mid  h \in \SS_{j,y} \text{ and } (j,y) \in \{1, \ldots, \ell \} \times \YY\, \}.
\end{equation}
Set now $\rho_0  =  \tfrac{1}{2} r^2/\varepsilon_0$.

Consider a vertex $g$ on a path from $1$ to a given vertex $g_{0} \in \Gamma(\rho_0)$ 
for which $\norm{\theta(g)}$ is maximal, 
and put $z =\theta(g)$,  $\rho = \norm{z}$  and $ u = - \theta(g)/\rho$.
It $\rho \leq \rho_0$, all is well;
otherwise, choose an index $j$ with $f_{\psi_j} (u) \geq \varepsilon_0$. 
Let $y_0 \in \YY$ be a generator with 
$\chi_u(y_0) \geq 0$. The starting point of the path $p_{y_0}(u_j)$ is $t$ 
and $v_u(p_{y_0}(u_j) > v_u((1,y_0)) = 0$, and so $\chi_u(t) > 0$.
Define $(g,y_1) $ to be the inverse of the edge of $p$ that terminates in $g$
and $(g,y_2)$ to be the edge of $p$  that begins in $g$.  
Then formula  \eqref{eq:Sharpened-inequalities} applies to the generators $y_1$ and $y_2$.
Given a group element  $h \in \SS_{j,y_1} \cup \SS_{j,y_2}$,  
set $y = \vartheta(h)$.
Then
\begin{align*}
\norm{z+y}^2 
&=  
\norm{z}^2 +2 \langle z, y \rangle   + \norm{y}^2
=\rho^2  + \norm{y}^2 - 2 \langle u \cdot \rho, y\rangle \\
&\leq \rho^2 + r^2 - 2\varepsilon_0 \cdot \rho  < \rho^2.
\end{align*}

We conclude 
that the family of couples 
$
\{ \psi_{u_j} = (t_{u_j}, \{p_y(u_j)\}_{y \in \YY})\mid 1 \leq j \leq \ell \}
$
permits one to transform every the path $p$ from 1 to a vertex $g_0$ in the graph $\Gamma(\rho_0)$
into a path that runs inside this graph.
The subgraph $\Gamma(\rho_0)$ is thus connected.

The proof of Theorem  \ref{thm:Characterizing-fg-N}
is now complete save for the proof of  Proposition  \ref{prp:Criterion-finite-generation} below.
%
%
\subsubsection{A geometric criterion for finite generation}
\label{sssec:Graph-criterion-finite-generation}
%
The following criterion for the finite generation of a group acting on a connected graph is well-known.
\begin{prp}
\label{prp:Criterion-finite-generation} 
\index{Finite generation!graph theoretic criterion}
Let $G$  be a group acting on a 
connected graph $\Gamma$  so that the following two conditions  are satisfied:
\begin{enumerate}[(i)]
\item   the factor graph $G\backslash\Gamma$  is finite;
\item   the stabilizers  $G_v = \{g \in G \mid gv = v\}$ of all vertices $v \in\Gamma$ are finitely 
generated. 
\end{enumerate}
Then $G$  is finitely generated.
\end{prp}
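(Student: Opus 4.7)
The plan is to build a finite generating set by a Reidemeister--Schreier-style argument: combine finitely many vertex-stabilizer generators with finitely many ``edge transport'' elements, one per orbit of edges. To start, I would exploit connectivity of $\Gamma$ together with hypothesis (i) to produce a convenient fundamental subgraph. The quotient $Q = G\backslash\Gamma$ is finite (by (i)) and connected (as the image of the connected graph $\Gamma$), so it admits a spanning tree $T$. Lifting $T$ edge by edge to $\Gamma$, starting from a chosen preimage of some base vertex, yields a finite connected subtree $\widetilde T \subseteq \Gamma$ whose vertex set $\{v_1,\dots,v_n\}$ meets every $G$-orbit of vertices in exactly one point.

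Next I would choose edge data. For each $G$-orbit of edges of $\Gamma$, pick a representative $\widetilde e$ with $\iota(\widetilde e) \in \widetilde T$ (possible since every vertex of $\Gamma$ is $G$-equivalent to a vertex of $\widetilde T$), and fix an element $g_{\widetilde e} \in G$ satisfying $g_{\widetilde e} \cdot \tau(\widetilde e) \in \widetilde T$. Let $S_i$ be a finite generating set of the stabilizer $G_{v_i}$ furnished by (ii), and set
\[
S \;=\; S_1 \cup \cdots \cup S_n \;\cup\; \{\, g_{\widetilde e} \mid \widetilde e \text{ an edge representative}\,\}.
\]
By (i) there are finitely many edge orbits and by (ii) each $S_i$ is finite, so $S$ is finite.

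The main step is to verify $\langle S\rangle = G$. Given $g \in G$, connectivity provides an edge path $e_1 e_2 \cdots e_m$ in $\Gamma$ from a base vertex $v_1 \in \widetilde T$ to $g \cdot v_1$, and I would track it inductively, maintaining at step $k$ a pair $(h_k, w_k)$ with $h_k \in \langle S\rangle$, $w_k \in \widetilde T$, and $h_k \cdot w_k$ equal to the $k$-th vertex of the path (starting from $h_0 = 1$, $w_0 = v_1$). At the inductive step, the translated edge $h_k^{-1} \cdot e_{k+1}$ has origin $w_k \in \widetilde T$ and hence equals $\kappa \cdot \widetilde e$ for some edge representative $\widetilde e$ and some $\kappa \in G$; the hypothesis that $\widetilde T$ contains only one vertex of each orbit forces $\iota(\widetilde e) = w_k$ and hence $\kappa \in G_{w_k} \subseteq \langle S\rangle$. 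Setting $h_{k+1} = h_k \kappa g_{\widetilde e}^{-1}$ and $w_{k+1} = g_{\widetilde e} \cdot \tau(\widetilde e)$ continues the induction. At the end, the identity $h_m \cdot w_m = g \cdot v_1$ forces $w_m = v_1$ (by the same one-representative-per-orbit property), whence $g \in h_m \cdot G_{v_1} \subseteq \langle S\rangle$. The main obstacle, as I see it, is not any single hard step but the careful bookkeeping at the inductive stage: one must absorb both the freedom to choose edge-orbit representatives and the ambiguity coming from vertex stabilizers into $\langle S\rangle$ at each edge traversal, and invoke the ``one representative per orbit'' property of $\widetilde T$ twice --- once along the way to force $\kappa$ into a stabilizer, and once at the end to force $w_m = v_1$.
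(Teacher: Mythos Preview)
Your argument is correct and follows the same overall strategy as the paper's proof --- track a path from a base vertex $v_1$ to $g\cdot v_1$ and absorb each step into the subgroup generated by finitely many vertex-stabilizer generators plus finitely many ``transport'' elements. The organization differs, though. You lift a spanning tree of $G\backslash\Gamma$, so your fundamental domain $\widetilde T$ meets each vertex orbit \emph{exactly once}, and your transport elements $g_{\widetilde e}$ are indexed by edge orbits. The paper instead chooses an arbitrary finite subgraph $\Gamma_0$ surjecting onto $G\backslash\Gamma$ (so possibly several vertices per orbit), takes as transport elements a finite set $\TT$ of group elements carrying one such vertex to another in the same orbit, and passes through the intermediate characterization $K=\gp(\{g\in G\mid g.\Gamma_0\cap\Gamma_0\neq\varnothing\})$ before running the path induction. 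Your setup is closer to the Bass--Serre presentation (tree lift, edge generators) and makes the induction slightly cleaner, since the ``one vertex per orbit'' property immediately forces $\kappa$ into a stabilizer; the paper's version avoids the tree-lifting step but pays for it with the auxiliary set $\TT$ and the $g\Gamma_0\cap\Gamma_0$ description. One small point worth making explicit in your write-up: the path may traverse edges in either direction, so you should either pick your representatives $\widetilde e$ among edges of both orientations, or note that $g_{\widetilde e}\cdot\widetilde e$ furnishes a representative with terminus in $\widetilde T$ and hence handles the reverse traversal with transport element $g_{\widetilde e}^{-1}$.
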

\begin{proof} 
Let $\pi \colon \Gamma\epi G\backslash \Gamma$ be  the 
canonical projection. Choose a finite subgraph $\Gamma_0 \subseteq 
\Gamma$ with $\pi(\Gamma_0) = G\backslash\Gamma$. 
Next fix, for each couple $(v, v')$ of distinct vertices  of $\Gamma_0$ lying in the same  $G$-orbit,
an element $t \in G$ with $v = t.v'$ 
and let  $\TT$ be the  (finite) set of these elements $t \in G$.
Define  $K$ to be the subgroup of $G$  generated by $\TT$  and by the stabilizers $G_v$ 
of the vertices of  $\Gamma_0$;
then $K$ is a finitely generated. 

The group $K$ admits of an alternative description, namely
\begin{equation}
\label{eq:Alternative-description-H}
K = \gp(\{g \in G \mid g.\Gamma_0 \cap \Gamma_0 \neq \emptyset\}).
\end{equation}
Indeed, 
if $g.\Gamma_0 \cap \Gamma_0 \neq \emptyset$ then $\Gamma_0$ 
contains vertices $v$ and $w$ with $g.v = w$. 
If $v = w$  then $g\in G_v$;
otherwise there is some $t \in \TT$ with $t.v = w$ 
and so $g$ is a product $t \cdot g'$ with  $g' \in G_v$. 
So the right hand side of equation \eqref{eq:Alternative-description-H}  is  contained in $K$. 
The opposite inclusion is obvious.

We assert that $G$ equals $K$.
To see this,
consider an element of $g \in G$. Pick $v_0 \in \ver (\Gamma_0)$.
Since $\Gamma$ is connected, 
there exists a path from  $v_0$ to  $g.v_0$
say 
\[
p =(v_0 =w_0, w_1, \ldots, w_\ell = g.v_0).
\]
Since $\ver(\Gamma) = G.\ver(\Gamma_0)$ 
there exists for every index $j \in \{1, \ldots, \ell-1 \}$ 
a vertex $v_j \in \Gamma_0$ and an element $g_j \in G$ with $w_j = g_j.v_j$.
Moreover, $w_0 = 1.v_0$ and $w_\ell = g.v_\ell$.
Similarly, for $j \in \{1, \ldots, \ell \}$,
there exist for the edge $f_j$ from $w_{j-1}$ to $w_j$ an edge $e_j \in \edg(\Gamma_0)$ 
and a group element $ h_j \in G$ with $f_j = h_j.e_j$.
One has $w_0 = 1.v_0$ and $1 \in K$. 
Assume, inductively, that  $g_{\ell-1} \in K$. 
Since $h_\ell.\Gamma_0$ intersects $g_{\ell-1}.\Gamma_0$ 
the intersection $(g_{\ell-1}^{-1}h_\ell).\Gamma_0 \cap \Gamma_0$ is non-empty whence $g_{\ell-1}^{-1}h_\ell \in K$
and so $h_\ell =g_{\ell-1} \cdot k'$ for some $k' \in K$.
Similarly one sees that $g_{\ell} = h_\ell \cdot k''$ for some $k'' \in K$.
So $g_\ell =  h_\ell \cdot k'' = g_{\ell-1} \cdot k' \cdot k''$; 
as $g_{\ell-1} \in \K$ by the induction hypothesis this shows that $g \in K$.
\end{proof}
%
\subsection{Application: Normal subgroups of large co-rank}
\label{ssec:Applications-main-result-fg-N}
%
We conclude section \ref{sec:Finite-generation-normal-subgroups} 
with some applications of Theorem \ref{thm:Characterizing-fg-N}.
%
%
\subsubsection[Finitely generated normal subgroups of large co-rank]%
{Finding finitely generated normal subgroups of large co-rank} 
\label{sssec:Finding-fg-kernel-large-corank}
\index{Invariant Sigma1 and@Invariant $\Sigma^1$ and!fg normal subgroups}
%
Our applications of Theorem \ref{thm:Characterizing-fg-N}
will actually be corollaries of Theorem  \ref{thm:Finding-normal-sgroup-maximal-corank} given below.
In it,
the \emph{complement} of the invariant is assumed to be contained in a finite union of subspheres
and one aims at finding a finitely generated normal subgroup $N$ containing $G'$ 
with $G/N$ having large torsion-free rank.
\begin{thm}
\label{thm:Finding-normal-sgroup-maximal-corank}
Assume $G$ is a finitely generated group 
for which the complement of the invariant satisfies an inclusion of the form
\begin{equation}
\label{eq:Invariant-union-of-subspheres}
\Sigma^1(G)^c \subseteq \bigcup \{S(G,H_{j}) \mid j \in J\},
\end{equation}
each $H_{j}$ being a subgroup of $G$ and $J$ being finite. 
Then the following claims hold:
\begin{enumerate}[(i)]
\item there exists a finitely generated normal subgroup $N \supseteq G'$ with
\begin{equation}
\label{eq:Minimum-ranks}
 r_{0}(G/N)= \min \{ r_{0}((H_{j}G')/G') \mid j \in J \}.
\end{equation}
\item if equality holds in \eqref{eq:Invariant-union-of-subspheres} 
then no normal subgroup $N \supseteq G'$ with
\begin{equation}
\label{eq:Greater-minimum-ranks}
 r_{0}(G/N) > \min \{ r_{0}((H_{j}G')/G') \mid j \in J \}
\end{equation}
can be finitely generated.
\end{enumerate}
\end{thm}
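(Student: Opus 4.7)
The plan is to reformulate both claims in the language of the real vector space $V = (G_{\ab}/T(G_{\ab})) \otimes_{\Z} \R$, whose unit sphere (after choosing an inner product) realizes $S(G)$; here $T(G_{\ab})$ denotes the torsion subgroup of $G_{\ab}$ and $n = r_0(G_{\ab}) = \dim V$. For any subgroup $K \leq G$, let $V_K \subseteq V$ be the real span of the image of $K$, so that $\dim V_K = r_0(KG'/G')$ and the subsphere $S(G,K)$ is the unit sphere of the annihilator of $V_K$ in $\Hom(G,\R)$. The crucial equivalence is
\begin{equation*}
S(G,N) \cap S(G,H_j) = \emptyset \;\Longleftrightarrow\; V_N + V_{H_j} = V,
\end{equation*}
both sides saying that no nonzero character vanishes on $NH_j$.

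For (i), set $m = \min_j r_0(H_jG'/G') = \min_j \dim V_{H_j}$. The idea is to seek a subspace $U \subseteq V$ of dimension $n-m$ that is transverse to every $V_{H_j}$, i.e., satisfies $U + V_{H_j} = V$ for all $j \in J$. Since each $\dim V_{H_j} \geq m$, the locus of such $U$ is a nonempty Zariski open subset of the Grassmannian $\mathrm{Gr}_{n-m}(V)$, and by density of rational points it contains a subspace defined over $\Q$. Clearing denominators produces a sublattice $L \subseteq G_{\ab}/T(G_{\ab})$ of rank $n-m$ whose real span is such a $U$. Let $\bar N \subseteq G_{\ab}$ be the preimage of $L$ under $G_{\ab} \epi G_{\ab}/T(G_{\ab})$ and let $N \leq G$ be the preimage of $\bar N$. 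Then $N \supseteq G'$, $r_0(G/N) = n-(n-m) = m$, and $S(G,N) \cap S(G,H_j) = \emptyset$ for every $j$ by construction, whence $S(G,N) \subseteq \Sigma^1(G)$; Theorem \ref{thm:Characterizing-fg-N} then forces $N$ to be finitely generated.

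For (ii), assume equality holds in \eqref{eq:Invariant-union-of-subspheres} and suppose, towards a contradiction, that some finitely generated normal subgroup $N \supseteq G'$ satisfies $r_0(G/N) > m$. Theorem \ref{thm:Characterizing-fg-N} then gives $S(G,N) \subseteq \Sigma^1(G)$, so $S(G,N) \cap S(G,H_{j^*}) = \emptyset$ for any index $j^*$ with $\dim V_{H_{j^*}} = m$. However, $\dim V_N + \dim V_{H_{j^*}} = (n-r_0(G/N)) + m < n$, so $V_N + V_{H_{j^*}}$ is a proper subspace of $V$, contradicting the equivalence displayed above.

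The main obstacle is the density step in part (i): one must guarantee that a nonempty Zariski open subset of a Grassmannian meets its set of $\Q$-rational points, so that the generic transversal subspace $U$ can be chosen of arithmetic origin and hence gives rise to a genuine subgroup $\bar N \leq G_{\ab}$. This is standard but deserves explicit verification, since everything else --- normality of $N$ (automatic, as $\bar N \leq G_{\ab}$), the containment $G' \subseteq N$ (by construction), the identification $\dim V_K = r_0(KG'/G')$, and the translation of $S(G,N) \subseteq \Sigma^1(G)$ into finite generation via Theorem \ref{thm:Characterizing-fg-N} --- is routine bookkeeping.
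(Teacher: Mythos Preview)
Your proof is correct and follows the same overall strategy as the paper: both translate the problem via Theorem \ref{thm:Characterizing-fg-N} into the equivalence $S(G,N)\cap S(G,H_j)=\emptyset \Leftrightarrow V_N+V_{H_j}=V$, and then reduce claim (i) to finding a subspace of dimension $n-m$ that is simultaneously transverse to all the $V_{H_j}$, while claim (ii) is a straight dimension count. The difference lies only in how the transversal subspace is produced. The paper works directly over~$\Q$ and proves an elementary linear-algebra lemma (Lemma \ref{lem:Simultanous-supplement}) by induction on $\dim V - m$: at each step one uses that a $\Q$-vector space is not a finite union of proper subspaces to pick a vector outside every $V_{H_j}$, quotient, and repeat. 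You instead invoke the Grassmannian $\mathrm{Gr}_{n-m}(V)$: for each $j$ the transversality condition is Zariski open and nonempty (since $\dim V_{H_j}\ge m$), the finite intersection stays nonempty by irreducibility, and rationality of Grassmannians gives a $\Q$-point. Your route is slicker once one is comfortable with the algebraic-geometry vocabulary, and it makes the ``generic'' nature of the transversal transparent; the paper's route is more self-contained, needing nothing beyond the pigeonhole-style Lemma \ref{lem:Covering-proper-subspaces}, and it avoids the (standard but nontrivial) density-of-rational-points step that you correctly flag as the main thing to check.
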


\begin{proof}
Let $N$ be a normal subgroup containing the derived group $G'$.
According to Theorem \ref{thm:Characterizing-fg-N},
$N$ is finitely generated if, and only if, the subsphere $S(G,N)$ and the complement 
$\Sigma^1(G)^c$ of the invariant are disjoint.
If this complement satisfies inclusion \eqref{eq:Invariant-union-of-subspheres},
$N$ will therefore be finitely generated 
if the intersections $S(G,N) \cap S(G,H_{j})$ are empty for every index $j \in J$.
Now $S(G,N) \cap S(G,H_{j})$ is empty if, and only if, every character $\chi \colon G \to \R$ 
which vanishes on both $N$ and $H_{j}$ is zero or, put differently,
if the group $N \cdot H_{j}$ has finite index in $G$.

Let $\vartheta \colon G \epi G/G' \epi \Z^n$ be an epimorphism of $G$ 
onto the standard lattice of rank $n = r_{0}(G_{\ab})$ 
and let $L = \vartheta(N)$ and $L_{j} = \vartheta(H_{j})$ be the images of $N$ and  $H_{j}$ under $\vartheta$.
Then $L + L_{j}$ has finite index in $\Z^n$ 
and so the subspaces $U$, $U_{j}$ of $\Q^n$ spanned by $L$ and $L_{j}$, respectively, 
satisfy the relation $U + U_{j}= \Q^n$.

Conversely, 
if $U$ and $U_{j}$ are subspaces of $\Q^n$ satisfying $U + U_{j}= \Q^n$ 
then the subgroups $N = \vartheta^{-1} (U \cap \Z^n)$  and $H_{j} = \vartheta^{-1} (U_{j} \cap \Z^n)$ are normal subgroups 
and the subspheres $S(G,N)$ and $S(G,H_{j})$ are disjoint.
\smallskip

The preceding reasoning shows
that Theorem \ref{thm:Finding-normal-sgroup-maximal-corank}
can be reduced to the following claim about finite dimensional vector spaces:
\begin{lem}
\label{lem:Simultanous-supplement}
Given  a finite-dimensional vector space $V$ over an infinite field $\F$ 
and subspaces $U_{1}$, \ldots, $U_{\ell}$,
the exists a subspace $U$ of co-dimension 
\[
m = \min \{\dim U_{j} \mid 1 \leq j \leq \ell \}
\] 
which is a simultaneous supplement to all the subspaces $U_{j}$.
Moreover, there does not exist a simultaneous supplement of co-dimension strictly larger than $m$. 
\end{lem}

 The  proof of Lemma \ref{lem:Simultanous-supplement}
depends on another, more widely known, result about vector spaces,
namely
\begin{lem}
\label{lem:Covering-proper-subspaces}
A finite-dimensional vector space $V$ over an infinite field 
cannot be covered by finitely many \emph{proper} subspaces. 
\end{lem}

Lemma \ref{lem:Simultanous-supplement} will be proved by induction on  $d = \dim V-m$.
The claim is patent for $d=0$.
Assume now $d \geq 1$ and set $J_{1} = \{j \in J \mid U_{j} \neq V \}$.
By Lemma  \ref{lem:Covering-proper-subspaces}
there exists a vector $v_{1}$ which lies outside every $U_{j}$ with $ j \in J_{1}$.
Set $\overline{V} = V/(F \cdot v_{1})$ and let $\pi \colon V \epi \overline{V}$ denote  the canonical projection.
For every $j \in J_{1}$, 
the composition of $U_{j } \incl V$ with $\pi$ is injective,
and so the subspaces $\overline{U_{j}} = \pi(U_{j}) $ of $\overline{V}$ form a finite family of subspaces with 
\[
\dim( \overline{V}) - \min\{\overline{U_{j} } \mid j \in J_{1} \} = d-1.
\]
By the inductive hypothesis, 
there exists therefore a supplement $\overline{U} \subset \overline{V}$ of dimension $d-1$.
Its preimage $U = \pi^{-1} (\overline{U})$ is a supplement of each proper subspace $U_{j}$ with $j \in  J_{1}$, 
and hence of every subspace $U_{j}$ with $j \in  J$.

The addendum is clear and so we are left with establishing Lemma \ref{lem:Covering-proper-subspaces};
this will be by induction on $d = \dim V$.
If $d = 1$, the claim is evident, if $d=2$ it holds 
since V contains infinitely many lines passing through the origin.
Assume now that $d \geq 3$ and let  $U_{1}$, \ldots, $U_{\ell}$ be proper subspaces of $V$.
Some of them may have co-dimension 1, 
but as  $V$ contains infinitely many subspaces of co-dimension 1, 
there exist a subspace $U$ of co-dimension 1 with $U \neq U_{j}$ for all indices $j$, 
whence each of the subspaces $U'_{j} = U_{j}\cap U$ is a proper subspace of $U$.
As $\dim U = d-1$, the inductive hypothesis applies and provides one with a vector $v \in U$,
that lies in none of the intersections $U'_{j} = U_{j} \cap U$.
But, if so, $v$ does not lie in one of the original subspaces $U_{j}$, either.
\end{proof}

\begin{remark}
\label{remark:Author-Simultanous-supplement}
Lemma \ref{lem:Simultanous-supplement} may be well-known.
The given proof is a variant of an argument found in April  of 2008 by Simon Kurmann and Fred Rohrer,
two students of Markus Brodmann (University of Z{\"u}rich).
\end{remark}

\begin{example}
\label{example:Finding-normal-sgroup-maximal-corank}
Assume $G$ is the internal direct product of subgroups $G_{1}$, $G_{2}$
and let $\pi_{i} \colon G \epi G_{i}$ be the canonical projections onto the factors $G_{i}$.
Proposition \ref {prp:Sigma1-direct-product} expresses $\Sigma^1(G)^c$  
in terms of the subsets $\Sigma^1(G_{i})^c$ and implies the upper bound
\begin{equation}
\label{eq:Invariant-direct-product-bound}
\Sigma^1(G)^c =  \pi^*_1(\Sigma^1(G_1))^c \cup \pi^*_2(\Sigma^1(G_2))^c  
\subseteq S(G, G_{2}) \cup S(G,G_{1}).
\end{equation}
Theorem \ref{thm:Finding-normal-sgroup-maximal-corank} then allows us 
to find a normal subgroup $N$ containing $G' = G'_{1} \cdot G'_{2}$  and so that
\begin{align}
r_{0}(G/N) 
&= 
\min \{ r_{0}((G_{2}\cdot G')/G'), r_{0}((G_{1}\cdot G')/G') \}
\notag\\
&=
\min \{ r_{0}((G_{1})_{\ab}), r_{0}((G_{2})_{\ab}) \}.
\label{eq:Rank-quotient-example}
\end{align}

In this particular example,
claim (i)  of Theorem \ref{thm:Finding-normal-sgroup-maximal-corank} 
can be established by a simple, direct argument.
For $i \in \{1,2\}$, 
let $M_{i}$ denote the preimage of the torsion-subgroup of $(G_{i})_{\ab}$ 
under the canonical map $G_{i} \epi (G_{i})_{\ab}$. 
The quotient       groups $A_{1} = G_{1}/M_{1}$ and $A_{2} = G_{2}/M_{2}$
are free-abelian of rank $n$ and $n'$, say. 
Let $\{x_{1},x_{2}, \ldots, x_{n} \}$ be a subset of $G_{1}$ 
that projects onto a basis of $A_{1}$;
similarly,
let $\{x'_{1},x'_{2}, \ldots, x'_{n'} \}$ be a subset of $G_{2}$  projecting onto a basis of $A_{2}$.
Since $A_{1}$ is a finitely presentable group,
there exist a finite subset $\MM_{1} \subset M_{1}$ which generates  $M_{1}$ over the group 
$\gp(\{x_{1},x_{2}, \ldots, x_{n} \})$ (see, e.\;g., \cite[p. 53, {2.2.3}]{Rob96}).
Similarly, 
$M_{2}$ is generated over $\gp(\{x'_{1},\ldots, x'_{n'}\})$ by a finite subset $\MM_{2} \subset M_{2}$.

We are now ready to construct a finitely generated  normal subgroup $N$ of $G$ 
which contains $G'$ for which $G/N$ has torsion-free rank $m  = \min \{n, n'\}$.
Assuming, as we may, that $n \leq n'$ 
we define $N$ to be the subgroup generated by the finite set
\[
\MM_{1} \cup \MM_{2} \cup \{x_{i}\cdot x'_{i} \mid 1 \leq i \leq n \} \cup \{ x'_{i} \mid n <i \leq n'\} .
\]
Since  $x_{i}$ and $x_{i}\cdot x'_{i}$  induce by conjugation the same actions on $M_{1}$,
the normal subgroup $M_{1}$ is contained in $N$;
similarly, one sees that $N$ contains $M_{2}$.
Finally, $G/N$ is  free abelian of rank $m = \min \{ r_{0}((G_{1})_{\ab}), r_{0}((G_{2})_{\ab}) \}$.
\end{example}

\begin{remark}
\label{remark:Invariant-union-subspheres}
There exist several classes of groups $G$
for which $\Sigma^1(G)^c$ is a finite union of subspheres of the form $S(G,H)$.
Graph groups provide such a class; it will be treated below.
Another class consists of groups of automorphisms of free groups studied by J. McCool in \cite{McC86};
the invariants of these groups have been determined  by L. A. Orlandi-Korner in \cite{OrKo00}.
A third class is formed by the fundamental groups of compact \Ka{} manifolds (see \cite{Del10}).
\index{Delzant, T.}
\index{McCool, J.}
\index{Orlandi-Korner, L. A.}
\end{remark}
%
\subsubsection{Finitely generated normal subgroups in graph groups} 
\label{sssec:Fg-normal-subgroups-right-angled Artin-group}
%
Right angled Artin groups, or \emph{graph groups} as they used to be known,
are given by a presentation whose relations are commutator relations among the generators.
Such a presentation can be described by a finite combinatorial graph $\Delta = (\ver (\Delta), \edg (\Delta))$.
Its set of vertices $\ver (\Delta)$ is the set of distinguished generators  
$\XX = \{x_{1},x_{2}, \ldots, x_{n}\}$ of the group.
Each edge $\{x_{i},x_{j}\}$ of $\Delta$ corresponds to the relation $x_{i}x_{j} = x_{j}x_{i}$,
and there are no further defining relations.
The graph group $G_{\Delta}$ is thus given by
\begin{equation}
\label{eq:Right-angled-Artin-group}
\index{Graph groups!definition}
G_{\Delta}
= 
\langle x_{1}, x_{2}, \ldots, x_{n} 
\mid 
x_{i}x_{j} = x_{j}x_{i} \text{ for every edge } \{x_{i} , x_{j}\} \in \edg (\Delta) \rangle.
\end{equation}
Their simple presentations notwithstanding,
graph groups are surprisingly varied, a fact that has led to some remarkable applications
(cf. Ruth Charney's survey \cite{Cha07}).
\index{Right angled Artin groups|see{Graph groups}}

The invariant of a graph group can be described in terms of subsets $\SS \subseteq \XX$ 
whose removal results in a disconnected graph;   
such subsets are called \emph{separating} (cf. \cite[p.\;11]{Die05}).
\index{Graph!separating subset}
One has
\begin{prp}
\label{prp:Sigma-complement-right-angled-Artin-group}
\index{Computation of Sigma1@Computation of $\Sigma^1$ for!graph groups}
Let $G = G_{\Delta}$ be a graph group with graph $\Delta$.
If $\Delta$ is a complete graph, $G$ is free-abelian of rank $\card \ver (\Delta)$ 
and $\Sigma^1(G) = S(G)$.
Otherwise,
the complement of its invariant is given by 
\begin{equation}
\label{eq:Complement-Invariant-right-angled-Artin-group}
\Sigma^1(G)^c = \bigcup\nolimits_{\SS} S(G,\gp(\SS))
\end{equation}
where $\SS$ runs over the minimal separating  subsets of $\ver  (\Delta)$.
\end{prp}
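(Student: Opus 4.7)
The argument splits according to whether $\Delta$ is complete. If so, all pairs of generators commute, so $G_\Delta$ is free-abelian of rank $|\ver(\Delta)|$, and Example~\ref{examples:Groups-with-non-trivial-centre}(a) yields $\Sigma^1(G) = S(G)$, matching the formula (with vacuous right-hand side). From now on I assume $\Delta$ is non-complete and establish the two inclusions of \eqref{eq:Complement-Invariant-right-angled-Artin-group} separately.

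For $\supseteq$, let $\SS$ be a minimal separating subset and $\chi$ a non-zero character vanishing on $\gp(\SS)$. Killing $\SS$ in the presentation \eqref{eq:Right-angled-Artin-group} gives the graph group $\bar G = G_{\Delta \setminus \SS}$ on the induced subgraph, and $\chi$ factors through $\pi \colon G \epi \bar G$ as a non-zero $\bar\chi$. Because $\SS$ separates, $\Delta \setminus \SS$ decomposes into at least two non-empty components $\Delta_1, \Delta_2, \ldots$, so $\bar G = G_{\Delta_1} \star G_{\Delta_2} \star \cdots$ is a non-trivial free product of finitely generated groups. Example~(3) of Section~\ref{sssec:Sigma1-first-examples} then yields $[\bar\chi] \notin \Sigma^1(\bar G)$, and Proposition~\ref{prp:Comparison-invariants-G-and-Q} transports this to $[\chi] \notin \Sigma^1(G)$.

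The reverse inclusion I would argue contrapositively. Assuming $\chi$ vanishes on no minimal separating subset, set $\XX^0 = \{x \in \XX \mid \chi(x) = 0\}$; the hypothesis then says that $\XX^0$ contains no separating subset of $\Delta$, so in particular the induced subgraph on $\XX \setminus \XX^0$ is connected and every bottleneck of $\Delta$ meets some generator of non-zero $\chi$-value. The plan is to verify the $\Sigma^1$-criterion (Theorem~\ref{thm:Sigma1-criterion}) for a carefully selected $t \in \XX^\pm$ with $\chi(t) > 0$ -- this choice is free, since connectedness of $\Gamma_\chi$ is independent of $t$. For each $y \in \XX^\pm$ I must exhibit a path from $t$ to $yt$ whose $v_\chi$ value strictly exceeds $v_\chi((1,y)) = \min\{0, \chi(y)\}$. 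When $y$ commutes with $t$ the direct edge $(t,y)$ suffices, because $v_\chi$ along that edge equals $\min\{\chi(t), \chi(t) + \chi(y)\}$, which always beats $\min\{0,\chi(y)\}$.

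The main obstacle is the non-commuting case, when the letters underlying $y$ and $t$ are distinct and non-adjacent in $\Delta$. Following the strategy of Proposition~\ref{prp:Sigma1-centre}, I would employ auxiliary generators $s$ of positive $\chi$-value that commute with both $t$ and $y$ as ``helpers'', using them to rewrite the conjugate $t^{-1} y t$ as an equivalent word whose partial products are lifted above the required threshold. When no single generator commutes with both, one proceeds inductively along a walk in $\Delta$ joining the letter of $t$ to the letter of $y$ whose intermediate pivots lie inside $\XX \setminus \XX^0$; the non-containment hypothesis on $\XX^0$ is precisely what lets one keep finding such pivots together with helpers of positive $\chi$-value at each step. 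Showing that this iterative construction indeed produces a path with $v_\chi > v_\chi((1,y))$ forms the technical heart of the proof.
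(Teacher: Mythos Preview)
Your handling of the complete-graph case and of the inclusion $\supseteq$ is correct and matches the paper's argument (which appears as part of the proof of Theorem~\ref{thm:Sigma1-right-angled-Artin-group}). The combinatorial translation you make for the reverse inclusion is also right: $[\chi]$ lying in none of the subspheres $S(G,\gp(\SS))$ is equivalent to $\XX^0$ containing no separating subset, which forces the living subgraph $\LL(\chi)$ on $\XX\setminus\XX^0$ to be connected and dominating.

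The gap lies precisely where you locate it. You propose to verify the $\Sigma^1$-criterion directly, handling non-commuting pairs $(t,y)$ by an induction along a walk in $\Delta$ through vertices of positive $\chi$-value, but you do not supply the words. The construction \emph{can} be made to work --- for a walk $t=z_0 - z_1 - \cdots - z_k$ with $z_k=y$ (or $z_k$ adjacent to $y$ when $\chi(y)=0$) one sets recursively $w_k = z_1\, z_0^{-1}\, w_{k-1}\, z_0\, z_1^{-1}$, where $w_{k-1}$ solves the problem for the shorter walk $z_1-\cdots-z_k$, and the commutation $z_0z_1=z_1z_0$ makes this represent $t^{-1}yt$; the $v_\chi$-estimate then goes through. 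But this verification is not automatic, and your one-helper analogy with Proposition~\ref{prp:Sigma1-centre} (where the helper is \emph{central}, hence commutes with every intermediate letter) does not by itself suggest the right recursion.

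The paper avoids this entirely. Rather than appeal to the $\Sigma^1$-criterion, it invokes Proposition~\ref{prp:Sigma-1-join-subgroups}: take as generating subgroups the rank-$2$ free-abelian groups $G_e=\gp(x_i,x_j)$ for edges $e=\{x_i,x_j\}$ of $\Delta$; each has $\Sigma^1(G_e)=S(G_e)$, and the auxiliary graph $\GG(\chi)$ of that proposition is connected exactly because $\LL(\chi)$ is connected and dominating. The ``walk through pivots'' that you outline is already packaged inside the join-of-subgroups result, which is why the paper's proof of sufficiency is a few lines rather than an explicit inductive word construction.
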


Proposition \ref{prp:Sigma-complement-right-angled-Artin-group}
will be established in section \ref{ssec:Application-right-angled-Artin-groups}.
Upon allying it with Theorem \ref{thm:Finding-normal-sgroup-maximal-corank}
one arrives at a result going back to  J. Meier and L. VanWyck (see \cite[Theorem 6.3]{MeVa95}):
\index{Meier, J.}
\index{VanWyck, L.}
\begin{crl}
\label{crl:Finding-normal-sgroup-maximal-corank-right-angled-Artin-group}
Assume $\Delta$ is not the complete graph on $\ver (\Delta)$
and $G_{\Delta}$ is graph group with graph $\Delta$.
Then the number
\[\max \{r_{0}(G/N) \mid N \text{ is a finitely generated group containing } G_\Delta'\}
\]
coincides with the cardinality of a separating subset  of $\ver (\Delta)$ having least number of vertices. 
\end{crl}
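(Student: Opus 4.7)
The plan is to combine Proposition~\ref{prp:Sigma-complement-right-angled-Artin-group}, which gives an exact description of $\Sigma^1(G_\Delta)^c$ as a union of subspheres, with Theorem~\ref{thm:Finding-normal-sgroup-maximal-corank}, whose hypothesis is precisely such a description. Since $\Delta$ is not complete, the family of minimal separating subsets $\SS \subseteq \ver(\Delta)$ is non-empty and finite, and Proposition~\ref{prp:Sigma-complement-right-angled-Artin-group} yields the equality
\[
\Sigma^1(G_\Delta)^c = \bigcup\nolimits_{\SS} S(G_\Delta, \gp(\SS)).
\]
Because equality (not merely inclusion) holds, both parts of Theorem~\ref{thm:Finding-normal-sgroup-maximal-corank} apply, and the maximum in question is the number
\[
m \;=\; \min\{\, r_0((\gp(\SS)\cdot G_\Delta')/G_\Delta') \mid \SS \text{ a minimal separating subset of } \ver(\Delta) \,\}.
\]

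The next step is to identify each quantity $r_0((\gp(\SS)\cdot G_\Delta')/G_\Delta')$ with the cardinality of~$\SS$. The abelianization $(G_\Delta)_{\ab}$ is the free abelian group on the image of $\XX = \ver(\Delta)$, as is immediate from the presentation~\eqref{eq:Right-angled-Artin-group}: all defining relators are commutators and so vanish in $(G_\Delta)_{\ab}$, leaving the free abelian group on $\XX$. Consequently, for any subset $\SS \subseteq \XX$ the image of $\gp(\SS)$ in $(G_\Delta)_{\ab}$ is free abelian of rank exactly $|\SS|$, so that $r_0((\gp(\SS)\cdot G_\Delta')/G_\Delta') = |\SS|$.

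It remains to observe that the minimum of $|\SS|$ over \emph{minimal} separating subsets coincides with the minimum over all separating subsets: any separating set contains a minimal one of no greater cardinality, and a minimal separating set is in particular separating. Combining these three observations yields $m = \min\{|\SS| \mid \SS \subseteq \ver(\Delta) \text{ separating}\}$, which is the stated quantity.

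No serious obstacle is expected; the argument is essentially a routine assembly of the two cited results, and the only point requiring a little care is the translation between $r_0((\gp(\SS)\cdot G_\Delta')/G_\Delta')$ and $|\SS|$, which relies on the very explicit structure of $(G_\Delta)_{\ab}$. The substantive content is packaged entirely in Proposition~\ref{prp:Sigma-complement-right-angled-Artin-group} and Theorem~\ref{thm:Finding-normal-sgroup-maximal-corank}.
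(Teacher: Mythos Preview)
Your proposal is correct and follows exactly the approach the paper indicates: it combines Proposition~\ref{prp:Sigma-complement-right-angled-Artin-group} with Theorem~\ref{thm:Finding-normal-sgroup-maximal-corank}, using the fact that $(G_\Delta)_{\ab}$ is free abelian on $\ver(\Delta)$ to translate the ranks into cardinalities. The paper itself does not spell out the details beyond stating that the corollary follows from this combination, so your write-up is in fact more explicit than the original.
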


\begin{remark}
\label{remark:Explicit-construction-fg-normal-group-N}
A finitely generated normal subgroup $N \triangleleft G_\Delta$ 
enjoying the property stated in Corollary 
\ref{crl:Finding-normal-sgroup-maximal-corank-right-angled-Artin-group}
can be found as follows.
Let  $\{x_1, x_2, \ldots, x_n\}$ be the vertex set of $\Delta$ 
and let $\vartheta \colon G_\Delta \epi \Z^n$  be the epimorphism 
sending $x_i$ to the $i$-th standard basis element of $\Z^n$.
Let $m$ be the cardinality of a minimal separating set of $\Delta$.
Formula \eqref{eq:Complement-Invariant-right-angled-Artin-group}
and the proof of Theorem \ref{thm:Finding-normal-sgroup-maximal-corank}
then show that it suffices to find a free abelian subgroup $A$ of rank $n-m$ in $\Z^n$ 
that is transversal to every subgroup $B \subset \Z^n$ generated by $m$ basis vectors,
and then to set $N = \vartheta^{-1}(A)$.
 An explicit example of $A$ is the subgroup generated by the $n-m$ rows $(1^k,2^k, \cdots, n^k)$
where $k$ ranges from 0 to $n-m-1$ (use the Vandermonde determinant). 
\end{remark}

\begin{examples}
\label{examples:Invariants-right-angled-Artin-groups}
a) Assume $\Delta$ has $n\geq 2$ vertices and is not a complete graph.
If $\Delta$ is not connected, 
the empty set is separating and Corollary \ref{prp:Sigma-complement-right-angled-Artin-group}
implies that $\Sigma^1(G_\Delta)$ is empty. 
Theorem \ref{thm:Characterizing-fg-N} then shows 
that $G_\Delta$ has no finitely generated normal subgroup $N$
whose factor group $G_\Delta/N$ is an infinite abelian group.
If, on the other hand, $\Delta$ is connected
every separating set has at least one vertex.
By Corollary \ref{crl:Finding-normal-sgroup-maximal-corank-right-angled-Artin-group}
the group $G_\Delta$ admits therefore finitely generated normal subgroups $N$ 
with infinite cyclic quotient $G_\Delta/N$. 
This conclusion is well-known;
indeed, by \cite[Thm.\:6.1]{MeVa95}, 
the kernel of every rank 1 character $\chi \colon G \to \Z$ is finitely generated
if $\chi$ sends every generator $x_i \in \ver(\Delta)$ to  a non-zero integer.
\index{Meier, J.}
\index{VanWyck, L.}

b) Let $\Delta$ be a tree with $n\geq 3$ vertices.
Every vertex  $x_{j}$ of  degree  greater than 1 separates the tree, 
while the removal of a  vertex of degree 1 results in a connected subtree.
So the minimal separating subsets $\SS$ are the singletons constituted by the vertices of degree greater than 1.
Each such singleton gives rise to a subsphere $S(G, \SS)$ of co-dimension 1.

c) Let $\Delta_{n}$ be an $n$-gon with $n \geq 4$ sides.
No singleton and no pair of adjacent vertices separates the polygon,
but every pair of non-adjacent vertices does.
By Proposition \ref{prp:Sigma-complement-right-angled-Artin-group}
 the complement of the invariant is therefore the union
\begin{equation}
\label{eq:Invariant-n-gon}
\bigcup \{ S(G, \gp(\{x_{i},x_{j}\}) \mid  j \notin \{ i-1,i,  i+1 \}  \}.
\end{equation}
of subspheres of co-dimension 2. 
According to Corollary \ref{crl:Finding-normal-sgroup-maximal-corank-right-angled-Artin-group},
the group $G$ contains  a finitely generated normal subgroup  $N$ with $G/N \approx \Z^2$,
but no finitely generated normal subgroup  $N$ with $G/N \approx \Z^3$.

d) The reader can find further examples in section 
\ref{sssec:Sigma1-right-angled-Artin-groups-list-minimal-separating-subsets}.
\end{examples}
\newpage
 
%
%
\section{Finitely related groups and $\Sigma^1$}
\label{sec:Sigma1-and-fp-groups}
%
%
In this section, 
a second main result about the invariant $\Sigma^1$ will be established.
It provides a necessary condition 
that every finitely related soluble group $G$ (with $G_{\ab} $ of positive rank) must satisfy;
it is this necessary condition that prompted R. Bieri and R. Strebel in the late 1970s
to introduce $\Sigma^0(G;A)$, a precursor of the protagonist of this chapter.

The result has two versions.
The local one,
Theorem \ref{thm:Structure-kernel-fp-group},
deals with the kernel $N$ of a non-zero character $\chi$ which is such 
that  both $[\chi]$ and $-[\chi]$  lie outside $\Sigma^1(G)$.
It assumes that $G$ admits a finite presentation
and asserts that $N$ has a decomposition $S_{-} \star_{S_0}  S_{+}$ as a free product with amalgamation,
in which the amalgam $S_0$ has infinite index in both  factors.
This implies that $N$ contains non-abelian free subgroups.
The global version arises from the first one by assuming
that $G$ has no non-abelian free subgroups.
Then $\Sigma^1(G)$ must contain at least one point out of each pair $\{[\chi], -[\chi] \}$ of antipodal points.
If one denotes the set of all antipodes of points of a subset $\Sigma \subseteq S(G)$  by  $-\Sigma$,
this result can be stated as
\begin{thm} 
\label{thm:Consequence-finite-presentation}
\index{Invariant Sigma1 and@Invariant $\Sigma^1$ and!finite presentability}
Let $G$ be a finitely presented group that contains no non-abelian free subgroup.
Then
\begin{equation}
\label{eq:Necessary-condition-fp-group}
\Sigma^1(G) \cup -\Sigma^1(G) = S(G).
\end{equation}
In particular, 
$\Sigma^1(G)$ 
is non-empty unless  $G/G'$  is finite.
\end{thm}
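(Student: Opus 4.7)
The plan is to argue by contradiction, using the local structure theorem (Theorem \ref{thm:Structure-kernel-fp-group}) as the main engine and then extracting a non-abelian free subgroup from the resulting amalgamated product. Suppose, contrary to the conclusion, that there exists a non-zero character $\chi \colon G \to \R$ such that \emph{both} $[\chi]$ and $-[\chi]$ lie outside $\Sigma^1(G)$. My aim is to produce a non-abelian free subgroup inside $G$, contradicting the standing hypothesis.

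Since $G$ is finitely presented and $\chi$ is a non-zero character with $\{[\chi], -[\chi]\} \cap \Sigma^1(G) = \varnothing$, I apply the local version Theorem \ref{thm:Structure-kernel-fp-group} to the kernel $N = \ker \chi$. This gives a decomposition
\[
N \;=\; S_{-} \star_{S_{0}} S_{+}
\]
as a non-trivial amalgamated free product in which the amalgamated subgroup $S_{0}$ has infinite index in each of the two factors $S_{-}$ and $S_{+}$. The essential point I need from this decomposition is that both indices $[S_{-}:S_{0}]$ and $[S_{+}:S_{0}]$ are at least $2$ (in fact infinite), so the Bass--Serre tree of the splitting is non-trivial and has no vertex of valence one.

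Now comes the step I would flag as the conceptual heart of the argument, even though it is classical: an amalgamated free product $A \star_{C} B$ with $[A:C] \geq 2$ and $[B:C] \geq 3$ (or both indices $\geq 2$ with at least one $\geq 3$) contains a non-abelian free subgroup. One way I would present this is via Bass--Serre theory: $N$ acts on the associated tree $T$ with edge stabilizers conjugate to $S_{0}$; choosing elements $s_{-} \in S_{-} \smallsetminus S_{0}$ and $s_{+} \in S_{+} \smallsetminus S_{0}$ whose products $s_{-} s_{+}$ and related words act as hyperbolic isometries of $T$ with disjoint axes (or via a ping-pong argument applied to the vertex sets of the two orbits), one obtains two hyperbolic elements generating a free subgroup of rank $2$. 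Since $N \leq G$, this free subgroup sits inside $G$, contradicting the assumption that $G$ contains no non-abelian free subgroup. The subcase where one of the indices equals $2$ (the infinite dihedral type situation) is ruled out here because $S_{0}$ has \emph{infinite} index in each factor, so this case does not arise.

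Hence no such $\chi$ exists, which is precisely the statement $\Sigma^1(G) \cup -\Sigma^1(G) = S(G)$. For the final clause, note that $S(G)$ is non-empty exactly when $r_{0}(G_{\ab}) \geq 1$, i.e.\ when $G/G'$ is infinite; in that case the equality just proved forces $\Sigma^1(G)$ itself to be non-empty, since $S(G)$ is covered by $\Sigma^1(G)$ and its antipode. The main obstacle, assuming Theorem \ref{thm:Structure-kernel-fp-group} is in hand, is purely the extraction of a free subgroup from the amalgamated product; the rest is a direct contrapositive argument.
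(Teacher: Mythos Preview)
Your proof is correct and follows essentially the same contrapositive route as the paper: assume both $[\chi]$ and $[-\chi]$ lie outside $\Sigma^1(G)$, invoke Theorem \ref{thm:Structure-kernel-fp-group}, and contradict the hypothesis via a non-abelian free subgroup of $N$. Note only that the existence of a non-abelian free subgroup is already part of the conclusion of Theorem \ref{thm:Structure-kernel-fp-group} (established there via an explicit commutator construction rather than Bass--Serre/ping-pong), so you need not re-prove it.
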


By combining this result with corollary 
\ref{crl:Characterizing-fg-kernel-rank-1-character}
one obtains the rather surprising
\begin{crl}
\label{crl:Existence-fg-kernels}
Let $G$ be a finitely presented group that does not contain a non-abelian free subgroup.
If $r_{0}(G_{\ab}) \geq 2$  then $G$ contains finitely generated  normal subgroups $N \lhd G$ 
with infinite cyclic factor group $G/N$. 
\end{crl}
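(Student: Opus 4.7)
The plan is to combine Theorem \ref{thm:Consequence-finite-presentation}, the openness of $\Sigma^1(G)$ (Theorem \ref{thm:Openness-Sigma-1}), the density of rank $1$ points, and Corollary \ref{crl:Characterizing-fg-kernel-rank-1-character}. The goal is to produce a rank $1$ character $\chi$ such that both $[\chi]$ and $[-\chi]$ lie in $\Sigma^1(G)$; setting $N = \ker \chi$ will then yield the desired subgroup.

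First I would observe that, by the hypothesis $r_0(G_{\ab}) \geq 2$, the sphere $S(G)$ is homeomorphic to $\mathbb{S}^{n-1}$ with $n \geq 2$, hence is connected and non-empty. Theorem \ref{thm:Consequence-finite-presentation} applies (since $G$ has no non-abelian free subgroup) and gives the covering
\begin{equation*}
S(G) = \Sigma^1(G) \cup -\Sigma^1(G).
\end{equation*}
Each piece is open in $S(G)$: $\Sigma^1(G)$ by Theorem \ref{thm:Openness-Sigma-1}, and $-\Sigma^1(G)$ as its image under the antipodal homeomorphism $[\chi] \mapsto [-\chi]$. Since $S(G)$ is non-empty, at least one of them is non-empty, and applying the antipodal map shows both are non-empty.

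The key step is then a connectedness argument: two non-empty open subsets of a connected space whose union is the whole space cannot be disjoint. Therefore the intersection $\Sigma^1(G) \cap -\Sigma^1(G)$ is a non-empty open subset of $S(G)$. By the density of rational points in $S(G)$ (Lemma \ref{lem:Density-rank-1-points}), this open set contains a rank $1$ point $[\chi]$, which we may represent by a character $\chi \colon G \epi \mathbb{Z} \mono \mathbb{R}$.

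Finally, since both $[\chi]$ and $[-\chi]$ belong to $\Sigma^1(G)$, Corollary \ref{crl:Characterizing-fg-kernel-rank-1-character} guarantees that $N = \ker \chi$ is finitely generated, while by construction $G/N \cong \mathbb{Z}$. The only real point requiring care is verifying that $S(G)$ is connected under the rank assumption and that the two open pieces are honestly non-empty; both are immediate, so I do not expect a genuine obstacle beyond assembling the pieces already proved.
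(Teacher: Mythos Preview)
Your proof is correct and follows essentially the same route as the paper: connectedness of $S(G)$ forces the two open sets $\Sigma^1(G)$ and $-\Sigma^1(G)$ to overlap, density of rank~1 points places such a point in the intersection, and Corollary~\ref{crl:Characterizing-fg-kernel-rank-1-character} finishes the argument. Your write-up is slightly more explicit about why both pieces are non-empty, but otherwise the arguments coincide.
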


\begin{proof}
Since the torsion-free rank of $G_{\ab}$ is greater than 1, the sphere $S(G)$ is connected;
so it can not be written as the disjoint union of two non-empty open subsets. 
Theorem \ref{thm:Consequence-finite-presentation},
on the other hand, 
shows that $S(G)$ is the union of two open subsets,
$\Sigma^1(G)$ and $-\Sigma^1(G)$.
The intersection $\Sigma^1(G) \cap - \Sigma^1(G)$  is therefore a non empty open set 
and so it contains a rank 1 point, say $[\chi\colon G \epi \Z \mono \R]$
(by Lemma \ref{lem:Density-rank-1-points} rank 1 points are dense in $S(G)$).
Corollary
\ref{crl:Characterizing-fg-kernel-rank-1-character} 
then implies 
that the kernel of $\chi$ is a finitely generated group.
\end{proof}

\begin{remarks} 
\label{remarks:Condition-fpr-group}
a) The conclusion of Theorem \ref{thm:Consequence-finite-presentation} need not hold 
if  $G$ does not admit a finite presentation
or if it contains a non-abelian free subgroup.
The first claim can be substantiated by the metabelian groups discussed in example 
\ref{example:Locally-infinite-cyclic-by-infinite-cyclic}:
if  the fraction $p/q \in \Q$ occurring in this example is in reduced form
and if neither $p$ nor $q$ is a unit in the ring $\Z$,
then $\Sigma^1(G)$ is empty.
The second claim is justified by the non-abelian free groups of finite rank:
these groups are finitely related,
their invariants, however, are empty
(see example 3 in section \ref{sssec:Sigma1-first-examples}).

b) The conclusion of Corollary \ref{crl:Existence-fg-kernels} need not be true 
if $G$ is a finitely related soluble group whose abelianization has rank 1.
Witnesses are the metabelian Baumslag-Solitar groups 
$\langle a, t \mid tat^{-1} = a^m \rangle$ for $m> 1$.

c) Theorem  \ref{thm:Consequence-finite-presentation}  goes back to Theorem C in  \cite{BNS}.
\index{Bieri, R.}
\index{Neumann, W. D.}
\index{Strebel, R.}
\end{remarks}
%
\subsection{Main result for finitely related groups} 
\label{ssec:Main-result-fpr-groups}
%
Throughout this section,
$G$ will denote a finitely generated group, with finite generating system $\eta \colon \XX \to G$,
and $\chi \colon  G \to \R$ a non-zero character. 
Our first aim is to construct a collection of connected graphs $\Delta_{b} = \Delta_b(\chi)$ 
that come equipped with a canonical action of $N = \ker \chi $.
We shall prove 
that the quotient graphs $N\backslash \Delta_{b}$ are isomorphic to segments, 
if the real number $b$ satisfies condition \eqref{eq:Condition-parameter-b},
and that they are trees whenever $G$ admits a finite presentation, 
and $b$ is sufficiently large (and satisfies the technical condition \eqref{eq:Condition-parameter-b}). 
%
\subsubsection{Definition and analysis of the graphs $\Delta_b$} 
\label{sssec:Construction-graph-Delta}
\index{Notation!Deltasubb@$\Delta_{b}$}
%
Given a real number $b \geq 0$,
let $D_+$ denote the set of the components of the subgraph $\Gamma_+ =\Gamma_\chi$,
let  $D_-$ be the set of  components of  $\Gamma_- =\Gamma_\chi^{(-\infty, b]}$
and $D_0$ the set of  components of the slice $ \Gamma_{0} = \Gamma_\chi^{[0,b]}$.
The kernel $N = \ker \chi$ acts on the three subgraphs 
$\Gamma_+$, $\Gamma_-$ and $\Gamma_{0}$ 
and hence on the three sets $D_+$,  $D_-$ and $D_0$.
These sets give rise to a bipartite, oriented graph $\Delta = \Delta_b(\chi)$ 
that is equipped with an $N$-action:
\begin{definition}
\label{definition:Graph-Delta-b}
The oriented graph $\Delta_b(\chi)$ has vertex set $\ver (\Delta_b) = D_- \cup D_+$ and
edge set $\edg (\Delta_b) = D_0$. 
The origin and terminus  of an edge  are given by
\begin{align*}
\iota(\EE) &= \text{path component  of $\Gamma_- = \Gamma_\chi^{(-\infty, b]}$ containing } \EE,\\
\tau(\EE) &= \text{path component  of $\Gamma_+ = \Gamma_\chi$ containing } \EE.
\end{align*}
\end{definition}

The next result summarizes some key properties of the graphs $\Delta_{b}(\chi)$.
\begin{prp}
\label{prp:Key-properties-graph-Delta}
Let $\EE_0$  be the component of $ \Gamma_0 = \Gamma_\chi ^{[0,b]}$ that contains the vertex 1 
and set $\CC_- = \iota(\EE_0)$ and $\CC_+ = \tau(\EE_0)$.
Let $S_0$, $S_{-} $ and $S_{+}$ be the stabilizers of the edge $\EE_0$  
and of its end points $\CC_-$, $\CC_+$,
and assume  the real number $b$ satisfies
\begin{equation}
\label{eq:Condition-parameter-b}
b \geq 2 \cdot \max\{ |\chi(x)| \mid x \in \XX\} \text{ and }  b \in \im \chi.
\end{equation}
Then the graph $\Delta_b = \Delta_{b}(\chi)$ and the action of  $N$ on it have the following properties:
\begin{enumerate}[(i)]
\item  $\Delta_b$ is connected;
\item  the  quotient graph $N \backslash \Delta_b$ is an edge with distinct end points;
\item  if $S_0$ has finite index in $S_{+}$   or if   $S_{+}$ is finitely generated
then $-[\chi] \in \Sigma^1(G)$;
\item if $S_0$ has finite index in $S_{-}$  or if  $S_{-}$ is finitely generated then $[\chi] \in \Sigma^1(G)$.
\item
Assume, in addition,
that the group $G$ is finitely related. 
Then $\Delta_{b}$ is a tree whenever $b$ is sufficiently large.
\end{enumerate}
\end{prp}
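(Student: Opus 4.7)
For part (i), connectedness of $\Delta_b$ reduces to connectedness of $\Gamma(G,\XX)$: any edge-path from $1$ to a vertex of $\Gamma$ records, via its successive traversal of $\Gamma_+$, $\Gamma_-$, and the slice $\Gamma_0$, a walk in $\Delta_b$ that reaches an arbitrary vertex of $\Delta_b$ from $\CC_+$ or $\CC_-$. For (ii), transitivity of $N$ on $D_+$ and $D_-$ follows from Lemma \ref{lem:Transitive-action-on components} applied to $\Gamma_\chi^{[0,\infty)}$ and to its $-\chi$-analogue, both of which require only $b \in \im\chi$. Transitivity on $D_0$ is more delicate and draws on the hypothesis $b \geq 2\max\{|\chi(x)| : x \in \XX\}$: given two slice components, one uses transitivity on $D_+$ to reduce to the case where they lie in the same $\CC_+$, then finds a level-matching element of $N$ moving one to the other inside $\CC_+$. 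The two vertex orbits remain distinct in the quotient because $N$ preserves the bipartition of $\ver(\Delta_b)$ into $D_+$ and $D_-$.

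For (iii) and (iv), I would apply the variant $\Sigma^1$-criterion (Theorem \ref{thm:Variant-Sigma-1-criterion}). The slice components sitting inside $\CC_+$ are exactly the $S_+$-orbit of $\EE_0$, and are therefore in bijection with $S_+/S_0$. If $[S_+ : S_0]$ is finite, only finitely many slice components lie in $\CC_+$ and they can be connected by finitely many paths inside $\CC_+$; letting $b'$ be an upper bound for $\chi$ along these paths, the intersection $\CC_+ \cap \Gamma_\chi^{[0,b']}$ is connected and the variant criterion yields $-[\chi] \in \Sigma^1(G)$. If instead $S_+$ is finitely generated with generators $\sigma_1, \ldots, \sigma_k$, one picks a path inside $\CC_+$ from $1$ to each $\sigma_i$; the maximum $\chi$-value encountered along these paths gives a bound $b''$ for which $\CC_+ \cap \Gamma_\chi^{[0, b'']}$ is connected, and the same criterion applies. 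Statement (iv) follows by swapping $\chi$ with $-\chi$.

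The heart of the proposition is (v), on which I would spend most of the effort. Assume $G = \langle \XX \mid R \rangle$ with $R$ finite, and set $D = (\max_{r \in R} |r|) \cdot \max\{|\chi(x)| : x \in \XX\}$. Choose $b \geq 2D$ satisfying \eqref{eq:Condition-parameter-b}. The key geometric observation is that every 2-cell of the Cayley 2-complex (attached along a conjugate of a defining relator) has its vertex set contained in a $\chi$-interval of length at most $2D \leq b$, hence lies entirely inside a single connected component of $\Gamma_+$, of $\Gamma_-$, or of the slice $\Gamma_0$. I would then define an assignment $\phi$ from edge-loops in $\Gamma$ based at $1$ to closed edge-walks in $\Delta_b$, obtained by recording the sequence of components of $\Gamma_+$ and $\Gamma_-$ traversed by the loop, separated by edges of $\Delta_b$ corresponding to slice components crossed. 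The set-up ensures that inserting a conjugate $g r g^{-1}$ of a defining relator into a loop leaves $\phi$ unchanged, since the inserted sub-loop is confined to a single component and contributes no net transition in $\Delta_b$. Because the Cayley 2-complex is simply connected, every loop in $\Gamma$ is obtained from the trivial loop by finitely many such insertions together with elementary back-track reductions, so $\phi(\gamma)$ is trivial for every loop $\gamma$. Conversely, any cycle in $\Delta_b$ is realised by some loop in $\Gamma$ constructed by choosing a base-vertex in each slice component the cycle visits and joining consecutive base-vertices by a path inside the listed $\Gamma_\pm$-component; invariance of $\phi$ then forces the cycle to collapse, so $\Delta_b$ is a tree.

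The principal obstacle is the rigorous definition of $\phi$ and verification of its invariance under relator-insertion. Vertices with $\chi$-value exactly $0$ or $b$ belong simultaneously to a $\Delta_b$-vertex and to an incident $\Delta_b$-edge, so one must fix a consistent convention for the $\Delta_b$-position assigned to each step of the loop and check that the resulting walk is well-defined modulo back-track reductions. Once this bookkeeping is in place, invariance under insertion of a defining relator is immediate from the diameter bound on 2-cells.
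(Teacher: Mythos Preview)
Your outline follows the paper's proof closely for all five parts: the path-recording map $\sigma$ (your $\phi$) for (i) and (v), Lemma \ref{lem:Transitive-action-on components} for the vertex orbits in (ii), and Theorem \ref{thm:Variant-Sigma-1-criterion} for (iii) and (iv). The one point where your sketch is genuinely incomplete is the edge-transitivity in (ii) and the claim in (iii) that the slice components inside $\CC_+$ form a single $S_+$-orbit. You say one ``finds a level-matching element of $N$'' or invokes that these components are ``exactly the $S_+$-orbit of $\EE_0$'', but both assertions rest on a lemma you have not stated: every component $\EE$ of $\Gamma_\chi^{[0,b]}\cap\CC_+$ contains a vertex in $S_+ = \ker\chi \cap \CC_+$. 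This is Lemma \ref{lem:Finding-vertex-in-N} in the paper, and it is precisely where the hypothesis $b\ge 2\max|\chi(x)|$ does its work. The proof is a reordering argument: take a path in $\CC_+$ from $1$ to some vertex $h\in\EE$, and permute the letters of the word so that all partial $\chi$-sums lie in $[0,b]$ (the factor $2$ is what makes this possible). Running the reordered path backwards from $h$ produces a vertex of $\EE$ with $\chi$-value $0$. Without this lemma your argument for (iii) in the finitely-generated case also stalls: having paths from $1$ to the generators $\sigma_i$ inside $\Gamma_\chi^{[0,b'']}$ shows all of $S_+$ lies in one component of $\CC_+\cap\Gamma_\chi^{[0,b'']}$, but you still need the lemma (now applied with $b''$ in place of $b$) to conclude that \emph{every} component of that intersection meets $S_+$.

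For (v) your approach and the paper's coincide; the paper's bookkeeping device is exactly the map $\sigma$ defined before the proof of (i), which records the sequence of $\Gamma_\pm$-components visited by a path while discarding repeated consecutive entries. The paper's bound is the slightly sharper $b\ge d$ with $d$ the maximal $\chi$-diameter of a relator loop, but your $b\ge 2D$ certainly suffices.
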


\begin{proof}
We first describe a procedure 
that associates to every path $p$ in the Cayley graph $\Gamma = \Gamma(G,\XX)$ 
a  unique path $\sigma(p)$ in the graph $\Delta_b(\chi)$.
The path $p$ is a sequence  $(g_1,g_2, \ldots, g_\ell)$ of adjacent vertices.
Set
\[
I = \{j \in \{1,2, \ldots, \ell \} \mid \chi(g_{j}) \notin [0, b] \}.
\]
This set may be empty. 
If so, 
the path runs in a single component $\EE$ of the slice $ \Gamma_{0} = \Gamma_\chi^{[0,b]}$
and we define $\sigma(p)$ to be the path of length 0 which starts at, and ends in, the terminus of the edge $\EE$.
Otherwise, $I$ is a union of subintervals $I_{1}$, $I_{2}$, \ldots, $I_{f}$ made up of consecutive integers.
For each subinterval $I_{k} = (j_{k}, j_{k}+1, \ldots, j_{k}+ m_{k})$, 
the subpath $(g_{j_{k}}, \ldots, g_{j_{k}+ m_{k}})$ runs in a single component $v \in D_{-}\cup D_{+}$.
Denote this component by $v'_{k}$.
It may happen that two consecutive vertices, say $v'_{j}$ and $v'_{j+1}$, coincide;
if so,
replace each maximal constant subsequence of $v'_{1}, v'_{2}, \ldots, v'_{f}$ by its first member; 
let $\sigma(p) = (v_{1}, v_{2}, \ldots, v_{h})$ be the sequence so obtained.
Then  $\sigma(p) $ is a sequence of adjacent vertices of $\Delta_{b}(\chi)$ and thus a path in this graph.
\smallskip

 (i) Given vertices  $v$ and $v'$ of $\ver(\Delta_b) = D_+ \cup D_-$,
we have to find an edge path in $\Delta_b$ from $v$ to $v'$.
If $v =v'$, the empty path will do;
so assume $v \neq v'$.
The vertex $ v$ is a component of either $\Gamma_+$ or $\Gamma_-$;
let  $g \in v$  be a group element that is in this component,
but outside the slice  $\Gamma_{0,b}  =\Gamma_\chi^{[0,b]}$;
similarly, let $g' \in v'$ be a vertex outside this slice.
Since the Cayley graph is connected, there exists a path $p$ from $g$ to $g'$.
This path defines a sequence $\sigma(p)$ in $\Delta_{b}(\chi)$
that leads from $v$ to $v'$.

(ii) By Lemma \ref{lem:Transitive-action-on components} the group $N= \ker \chi$ acts transitively,
both on the set of components  $D_+$ of $\Gamma_\chi$
and on the set of components $D_-$ of $\Gamma_\chi^{(-\infty, b]}$.
As $N$ maps every component of $D_+$ to a component of $D_+$
and every component of $D_-$ to a component of $D_-$,
the quotient graph $N\backslash \Delta_b$ has exactly two vertices.

Consider now an edge $\EE_1$ of the graph $\Delta_b(\chi)$ that terminates in the vertex $\CC_+$.
Then $\EE_{1}$ is a component of the intersection $\Gamma_\chi^{[0,b]} \; \cap\; \CC_+$;
by Lemma \ref{lem:Finding-vertex-in-N} below it contains therefore a vertex $h_{1} \in S_+$.
Its inverse $h_{1}$ then maps $\EE$ onto $\EE_0$.
All taken together,
we have shown that the quotient graph $N \backslash \Delta_b$ is a single edge with distinct end points.

(iii) According to lemma \ref{lem:Finding-vertex-in-N}
every component $\EE$ of $\Gamma_\chi^{[0,b]}\; \cap\; \CC_+$ contains a vertex in $S_{+}$;
so the action of $S_+$ on these components is transitive.
If $S_0$ has finite index in $S_{+}$ the intersection $\Gamma_\chi^{[0, b]}\; \cap\; \CC_+$  
has therefore finitely many components, 
say $\EE_0$, $\EE_1$, \ldots, $\EE_k$.
For each $j \in \{1, 2, \ldots, k\}$ one can find a path $p_j$ in $\Gamma_\chi$ 
that links  the vertex $1 \in  \EE_0$ to a vertex of $\EE_j$.
Let $b' > b$ be a real number that  satisfies condition \eqref{eq:Condition-parameter-b}
and is so large 
that all these paths run inside the slice $\Gamma_\chi^{[0,b']}$.
The components $\EE_0$,  \ldots, $\EE_k$ lie then  in a single component of $\Gamma_\chi^{0,b'}$.
The intersection $\Gamma_\chi^{[0,b']}\cap \CC_+$ is therefore connected;
indeed,
every of its components contains a vertex in $\Gamma_\chi^{[0,b]}$ 
and hence a vertex of one of the subgraphs $\EE_1$,  \ldots, $\EE_k$.
Theorem \ref{thm:Variant-Sigma-1-criterion}
then allows us to conclude  that $-[\chi]$ is in $\Sigma^1(G)$.
If $S_{+}$ is finitely generated, a similar argument shows
that the intersection  $\Gamma_\chi^{[0,b']} \cap \CC_+$ is connected for some $b'>b$, 
and so it follows as before that $-[\chi]$ is in $\Sigma^1(G)$.

(iv) By the choice of $b$,
there exists an element  $g \in G$ with  $\chi(g) = b$.
Its inverse $g^{-1}$ induces an isomorphism of graphs $\Gamma(G,\XX) \iso \Gamma(G, \XX)$ 
that maps the subgraph $\Gamma_- = \Gamma_\chi^{(-\infty, b]}$ onto the graph 
$\Gamma_\chi^{(-\infty,0]} = \Gamma_{-\chi}$.
As $\ker (-\chi) = \ker \chi$,
the claim about $\Gamma_-$ reduces therefore to that about $\Gamma_{-\chi}$. 

(v) We assume now that $G$ admits a finite presentation,
say $\pi \colon \langle \XX;\, \RR\rangle \iso G$.
Each relator $r \in \RR$ defines a closed path  $p_{1,r} = (1, r)$ in the Cayley graph $\Gamma(G, \XX)$.
Let $d$ be the largest of the diameters of these paths ``measured in the direction of''  $\chi$;
i.\;e., set
\begin{equation}
\label{eq:Definition-diameter-d}
d = \max\{ |\chi(g) - \chi(h)| \mid g,h  \text{ vertices of } p_{1,r}  \text{ and } r \in \RR\}.
\end{equation}
Let $b \geq d$ be a real number satisfying condition  \eqref{eq:Condition-parameter-b}. 
\emph{Then $\Delta_b(\chi)$ is a tree}.

Indeed, let $\bar{p} =(v_1,v_2, \ldots, v_\ell = v_1)$ be a closed path in $\Delta_b$;
we can assume it is not reduced to a point.
Pick a vertex $g_j$ in each of the component $v_j$ that lies outside the slice $\Gamma_\chi^{[0,b]}$
and choose,
for each $j < \ell$, 
a path $p_j$ that leads from $g_j$ to $g_{j+1}$. 
The concatenation of these paths is a closed path $p = (g_1, w)$ in the Cayley graph $\Gamma(G, \XX)$.
The  word $w$ is a relator of the group $G$;
as $\RR$ is a defining set of relators 
$w$  is therefore freely equivalent to a product $w_1 \cdot w_2 \cdots w_h$
each subword $w_j$ being the conjugate $u_jr_j^{\varepsilon_j} u_j^{-1}$ 
of an element in $\RR \cup  \RR^{-1}$ by an $\XX^\pm$-word  $u_j$.
The path $p = (g_1,w)$ is then freely equivalent to the concatenation  $p_1 \cdot p_2 \cdots p_h$
of paths $p_j = (g_1, w_j)$.
Each of them is a closed path based at $g_1$ 
which is made up of an initial path $p_{j,1} = (g_1, u_j$ 
leading from $g_1$ to the  point $h_j = g_1 \cdot \eta(u_j)$,
the  loop $p_{j,2} = (h_j, r_j^{\varepsilon_j})$ 
and the final path $p_{j,3}  = p_{j,1}^{-1}$.
Since $b \geq d$, where $d$ is the constant given by \eqref{eq:Definition-diameter-d},
the path $p_{j,2}$ runs either entirely in $\Gamma_{\chi}$,
or entirely in $\Gamma_\chi^{(-\infty, b]}$ and contains then a vertex $g$ with $\chi(g) < 0$.
The sequence $\sigma(p_{j,2})$ is therefore made up of a single vertex-
It follows that the given path $\bar{p} =(v_1,v_2, \ldots, v_\ell)$ in the graph $\Delta_b(\chi)$
is freely equivalent to a path $\bar{p'}$ 
that is a concatenation of subpaths of the form $(v_1, v_k, v_1)$
with $v_k = v_1$, or $v_k \in D_-$ if $v_1 \in D_+$, respectively  $v_k \in D_+$ if $v_1 \in D_-$.
This new path is clearly equivalent to the constant path based at $v_1$.
\end{proof}

We are left with establishing
\begin{lem}
\label{lem:Finding-vertex-in-N}
Assume the real number $b$ satisfies 
\begin{equation}
\label{eq:Condition-on-b-for-reordering}
b \geq 2 \cdot M \text{ where } M = \max\{ |\chi(x)| \mid x \in \XX\}.  
\end{equation}
Then every component $\EE$ of the intersection $\Gamma_\chi^{[0,b]} \; \cap\; \CC_+$ 
contains a vertex in $S_+$.
(As before,  $\CC_+$ denotes the path component of $\Gamma_\chi$ that contains the vertex 1.)
\end{lem}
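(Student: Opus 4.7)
The plan is to start from any vertex $g\in\EE$ and produce a vertex of $\EE$ at $\chi$-level $0$, which automatically lies in $S_+=N\cap\CC_+$. Since $g\in\CC_+$ there is a path $p=(g_0=1,g_1,\ldots,g_k=g)$ in $\Gamma_\chi$ from $1$ to $g$. If $p$ happens to lie entirely in $\Gamma_\chi^{[0,b]}$, then $1$ and $g$ sit in the same slice-component, so $1\in\EE\cap S_+$ finishes the argument. Otherwise, let $j^\flat$ be the largest index with $\chi(g_{j^\flat})>b$; the tail $(g_{j^\flat+1},\ldots,g_k)$ then lies entirely in the slice, so its initial vertex $v:=g_{j^\flat+1}$ belongs to $\EE$, and satisfies $\chi(v)\in(b-M,b]$ because consecutive $\chi$-values along $p$ differ by at most $M$.

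The first use of the hypothesis $b\ge 2M$ is a descent step: since $\chi(v)>b-M\ge M$, any generator $y\in\XX\cup\XX^{-1}$ with $\chi(y)<0$ keeps $v\cdot y$ inside $\Gamma_\chi^{[0,b]}$ while strictly lowering its $\chi$-value, and the resulting edge stays inside $\EE$. Iterating such descents produces a slice-path in $\EE$ from $v$ down to a vertex $v'$ with $\chi(v')\in[0,M)$.

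The remaining task --- and the main obstacle --- is to join $v'$ to a level-$0$ vertex by a path that still lies inside $\Gamma_\chi^{[0,b]}$. Setting $\alpha:=\chi(v')\in[0,M)$, one needs a word $u$ in $\XX\cup\XX^{-1}$ with $\chi(\eta(u))=-\alpha$ whose associated walk has every partial $\chi$-sum inside the interval $[-\alpha,\,b-\alpha]$. Such a word exists because $\chi(G)$ is a finitely generated subgroup of $\R$, so $-\alpha$ is realised as a $\Z$-linear combination of generator values. The point of the hypothesis is that the admissible interval has width $b\ge 2M$, giving ample room to order the letters of $u$ greedily: at every stage either a non-positive letter (when the running sum is sufficiently above $-\alpha$) or a positive letter (when it is sufficiently below $b-\alpha$) is available and keeps the walk in range, the step size being bounded by $M$ and the safety margin by $b-M\ge M$. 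Making this greedy ordering precise is the delicate technical point; once in place, the path $(v',u)$ stays in $\Gamma_\chi^{[0,b]}$ and ends at a vertex $v'\cdot\eta(u)\in\EE\cap N=\EE\cap S_+$, completing the argument.
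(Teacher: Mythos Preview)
Your approach is correct, and the heart of it coincides with the paper's: both reduce to the same combinatorial fact that a multiset of letter-values $f_j$ with $|f_j|\le M$, starting level $c\in[0,b]$, and ending level $c+\sum f_j\in[0,b]$ can be reordered so that every partial sum stays in $[0,b]$ (your greedy argument, the paper's induction). The difference is in how this lemma is deployed. The paper takes any path $(1,w)$ in $\CC_+$ from $1$ to a vertex $h\in\EE$, reorders \emph{all} the letters of $w$ at once to obtain $w'$ with $(1,w')$ confined to the slice, and then observes that the reversed path $(h,(w')^{-1})$ also stays in the slice and terminates at level $0$ --- a single application of the reordering lemma with $c=0$. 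Your route first locates a vertex $v\in\EE$ near the top of the slice, descends by negative generators to $v'$ with $\chi(v')\in[0,M)$, and only then invokes the reordering lemma on an auxiliary word chosen to have $\chi$-sum $-\chi(v')$. Steps 3--4 are sound but redundant: since the reordering lemma handles any starting level in $[0,b]$, one can apply it directly from any vertex of $\EE$ (or, as the paper does, from $1$ with target $\chi(h)$) without the preliminary navigation. The paper's route is thus shorter and avoids the separate choice of $u$, while yours makes the role of $b\ge 2M$ visible in two places rather than one.
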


\begin{proof}
Since the component $\EE$ is contained in $\CC_+$ 
there exists a path $p = (1, w) $ in $\CC_+$ 
that leads from 1 to a vertex $h \in \EE$;
let $y_1 y_2 \cdots y_\ell$ be the spelling of $w$.
We  aim at reordering  the letters of $w$ in such a way
that the new word $w'$ yields a path $p= (1, w')$
that stays inside $\Gamma_\chi^{(0,b]}$.
Then the  path $p' = (h,(w')^{-1})$ starts  with the vertex $h \in \EE$ and
terminates with a vertex having $\chi$-value 0;
its endpoint then yields the desired point in  $S_{+}$.

Consider the sequence $f \colon \{1,2, \ldots, \ell\} \to \R$ of real numbers $f_j = \chi(y_j)$.
By assumption, $s = \sum\nolimits_{1\leq j \leq \ell} f_j \in [0,b]$.
We have to find a permutation $\pi$ of $\{1,2, \ldots, \ell\}$ so that all the partial sums 
$s_{\pi, k} = \sum\nolimits_{1\leq j \leq k} f_{\pi(j)}$ lie in the intervall $[0,b]$.

To establish the existence of $\pi$
we prove a stronger assertion,
namely: 
\emph{for every number $c \in [0,b]$ and every sequence  $f \colon \{1,2, \ldots, \ell\} \to \R$
satisfying $|f_{j}|\leq M$ and $c + \sum\nolimits_{1\leq j \leq \ell} f_j \in [0,b]$,
there exists a permutation $\pi$ such that every partial sum 
$c +  \sum\nolimits_{1\leq j \leq k} f_{\pi(j)}$ lies in $[0,b]$}.
The proof will be by induction on $\ell$.

If $\ell = 0$ the claim holds.
So assume $\ell > 0$.
Let $J_+$ be the set of indices with $f_j \geq 0$
and let $J_-$ be the complement of $J_{+}$ in the interval $J = \{1,2, \ldots, \ell \}$. 
Set
\[
s_{+} = \sum \{s_{j}\mid j \in J_{+} \} \quad \text{and} \quad s_{-} = \sum \{s_{j}\mid j \in J_{-} \}.
\]
Two cases now arise.
If $c \leq b/2$, consider $c_{+} =c+ s_{+}$.
If $c_{+} \leq b$, let $\pi$ be a permutation which lists every $j \in J_{+}$ before the indices $j' \in J_{-}$.
Then the partial sums $c +  \sum\nolimits_{1\leq j \leq k} f_{\pi(j)}$  all belong to $[0,b]$.
Otherwise, 
there exists a subset $J'_{+}$ of $J_{+}$ so that $c' = c +  \sum \{f_{j}\mid j \in J'_{+} \}$
lies in the interval $] b/2, b]$. 
As the set $J'_{+}$ has at least one element,
$J' = \{1, 2, \ldots, \ell \} \smallsetminus J'_{+}$ has fewer than $\ell$ elements 
and so the induction hypothesis applies to the couple $(c',J')$.
If, on the other hand, $c_{+} \geq b$ a reasoning, 
similar to the preceding one but with the rôles of $J_{+}$ and of $J_{-}$ exchanged, 
allows one, either to complete the proof in a single step
or to  apply the induction hypothesis.
\end{proof}

%
\subsubsection{Statement and proof of the local version}
\label{sssec:Fpr-groups-first-main-result}
%
As mentioned in the introduction to section \ref{sec:Sigma1-and-fp-groups},
the main result has two versions.
We are now ready to establish the local version, 
namely
\begin{thm} 
\label{thm:Structure-kernel-fp-group}
Let $G$ be a finitely generated group and assume $\chi$ is a non-zero character of $G$
so that both $[\chi]$ and $-[\chi]$ lie outside $\Sigma^1(G)$.
If $G$ is finitely related,
the kernel $N$ of $\chi$ admits a decomposition 
$ N= S_{-} \star_{S_{0}} S_{+}$ 
as a free product with amalgamation 
where $S_0$ has infinite index in both factors $S_{+}$ and in $S_{-}$
and where both factors are infinitely generated.
Moreover, $N$ contains non-abelian free subgroups.
\end{thm}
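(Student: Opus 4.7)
The plan is to apply Bass--Serre theory to the action of $N = \ker \chi$ on the graph $\Delta_b(\chi)$ constructed in Proposition \ref{prp:Key-properties-graph-Delta}. Since $G$ is finitely related, part (v) of that proposition lets us fix a real number $b$ that satisfies condition \eqref{eq:Condition-parameter-b} and is large enough so that $\Delta_b$ is a tree. By part (ii), the quotient $N \backslash \Delta_b$ is an edge with two distinct vertices, so in particular the $N$-action is without inversion and a fundamental domain is the single edge $\EE_0$ with its endpoints $\CC_-$ and $\CC_+$. Bass--Serre theory then produces the decomposition
\begin{equation*}
N = S_- \star_{S_0} S_+,
\end{equation*}
where $S_-$, $S_+$, and $S_0$ are the stabilizers defined in Proposition \ref{prp:Key-properties-graph-Delta}.

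The hypotheses that $[\chi] \notin \Sigma^1(G)$ and $-[\chi] \notin \Sigma^1(G)$ then feed directly into the contrapositives of parts (iii) and (iv). From (iii), the assumption $-[\chi] \notin \Sigma^1(G)$ forces both that $S_0$ has infinite index in $S_+$ and that $S_+$ is infinitely generated; from (iv), the assumption $[\chi] \notin \Sigma^1(G)$ forces the analogous statements for $S_-$. This gives the infinite indices and the infinite generation asserted in the statement, with no additional work.

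For the final claim, the goal is to exhibit a non-abelian free subgroup of $N$. The plan is to invoke the classical fact that a non-trivial amalgamated product $A \star_C B$ contains a non-abelian free subgroup as soon as $[A : C] \geq 2$ and $[B : C] \geq 3$; since in our situation $[S_+ : S_0]$ and $[S_- : S_0]$ are both infinite, this criterion is amply satisfied. Alternatively, one may argue directly via the normal form theorem for amalgamated products: pick $a \in S_+ \smallsetminus S_0$ and $b_1, b_2 \in S_- \smallsetminus S_0$ in distinct left cosets of $S_0$, and then verify that the conjugates $a b_1 a^{-1}$ and $a b_2 a^{-1}$ together with a suitable element generate a rank-$2$ free subgroup by a ping-pong argument on the tree $\Delta_b$.

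The main obstacle is the appeal to Bass--Serre theory itself, which is an external input, together with the verification that the action is without inversion and that the edge and vertex stabilizers are exactly the groups $S_0$, $S_-$, $S_+$ indicated in Proposition \ref{prp:Key-properties-graph-Delta}; the latter is essentially built into the definition of these stabilizers, so no serious new computation should be needed. The rest of the proof is essentially bookkeeping, leveraging the substantial machinery set up in the preceding sections.
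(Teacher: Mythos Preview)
Your proposal is correct and follows essentially the same route as the paper: both apply Bass--Serre theory to the $N$-action on the tree $\Delta_b$ furnished by Proposition \ref{prp:Key-properties-graph-Delta}(v), read off the amalgam decomposition from part (ii), and extract the infinite-index and infinite-generation claims from the contrapositives of (iii) and (iv). The only cosmetic difference is in the free-subgroup step: the paper writes down explicit commutators $x=[a,b]$, $y=[a,b']$ with $a\in S_+\smallsetminus S_0$ and $b,b'\in S_-\smallsetminus S_0$ in distinct $S_0$-cosets and checks via normal forms that $\gp(x,y)$ is free of rank $2$, whereas you invoke the classical criterion on amalgams with both indices at least $3$; either is fine.
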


\begin{proof}
Let $b \geq 0$ be a real number satisfying condition \eqref{eq:Condition-parameter-b}
and let $\Delta_b(\chi)$ be the $N$-graph given in Definition \eqref{definition:Graph-Delta-b}.
This graph is connected (claim  (i) of Proposition  \ref{prp:Key-properties-graph-Delta})
and its quotient graph $N\backslash \Delta_b$ is a segment (claim (ii)).
Let $\EE_0$ be the edge, i.\;e, the  component of the slice $\Gamma_\chi^{[0,b]}$ 
that contains the vertex $1 \in G$. 
Define $S_0$ to be the stabilizer of $\EE_0$ 
and $S_{-}$ and $S_{+}$ to be the stabilizers of the origin and the terminus of this edge.
Since $-[\chi] \notin \Sigma^1(G)$,
the index of $S_0$ in $S_{+}$ is infinite and $S_{+}$ is an infinitely generated group 
(by claim (iii) of the quoted proposition);
since $[\chi] \notin \Sigma^1(G)$ it follows similarly that the index $|S_- \colon S_0|$ is infinite 
and that $S_{-}$ is infinitely generated.
Finally,
the assumption that $G$ is finitely related implies that the graph $\Delta_b$ is a tree,
provided $b$ is large enough 
(this is the message of point (v) of Proposition \ref{prp:Key-properties-graph-Delta}).
The Bass-Serre Theory (see, e.\;g., \cite[§ 4, Theorem 6]{Ser80}) therefore allows us to conclude
that the canonical inclusions induce an isomorphism $\kappa \colon S_{-}\star _{S_{0}} S_{+} \iso N$.

We are left with proving that $N$ contains a free subgroup of rank 2;
this fact is widely known and can be seen as follows:
pick $a \in S_{+} \smallsetminus S_{0}$ 
and $b$, $b' \in S_{-} \smallsetminus S_{0}$   with $b^{-1}b' \notin S_0$
and put $x = [a,b] = a b a^{-1}b^{-1}$ and $y = [a,b']$. 
Then the elements 
\begin{align*}
x &= [a,b] = a b a^{-1}b^{-1},  &x^{-1} &= [b,a] = bab^{-1}a^{-1}, \\
y &= [a,b'] = ab'a^{-1} (b')^{-1},  &y^{-1} &= [b',q] = b'a(b')^{-1}a^{-1}
\end{align*}
are in normal form
and the products $xy$, $xy^{-1}$, $x^{-1}y$, \ldots, $y^{-1}x$ and  $y^{-1}x^{-1}$ admit so little of cancellation
that no  freely reduced word $w \neq 1$  in $x$, $x^{-1}$, $y$ and  $y^{-1} $ can represent $1 \in N$. 
\end{proof}

\begin{remarks}
\label{remarks:Structure-kernel-fpr-group-rank-1}
a) The idea of the above proof of Theorem \ref{thm:Structure-kernel-fp-group}
goes back to a lecture Robert Bieri gave in the late 1980's;
cf.\;the proof of Theorem 5.1 in Chapter I of  \cite{BiSt92}.
Bieri's proof was not well-known at the time,
as is indicated by a remark in section 1.6.4 of Peter Shalen's survey \cite{Sha91}.
\index{Bieri, R.} 
\index{Shalen, P.}

b) Theorem \ref{thm:Consequence-finite-presentation},
the global version of the result just proved, follows easily from the basic one.
Indeed, let $G$ be a finitely related group  that contains no free subgroup of rank 2
and let $\chi$ be a non-zero character. 
Then its kernel $N$ cannot be a generalized free product
$S_{-} \star_S  S_{+}$ where $S_0$ has infinite index in both $N_{-}$ and in $N_{+}$.
By Theorem \ref{thm:Structure-kernel-fp-group} 
at least one of the points $[\chi]$, $-[\chi]$ must therefore lie in $\Sigma^1(G)$.
\end{remarks}
%
%
\subsection{Main result for groups of type $\FP_2$}
\label{ssec:Variant-for-almost-fpr-groups}
%
In Theorem \ref{thm:Consequence-finite-presentation} 
the  group $G$ is assumed to admit a finite presentation;
the conclusion of the theorem need not be valid if this hypothesis is omitted
(cf.  remark \ref{remarks:Condition-fpr-group}a).
The hypothesis that $G$ be finitely related can, however, be slightly weakened:
it suffices that $G$ is of type $\FP_2$ over a commutative ring $K$.

We begin by recalling the relevant definitions and some facts.
Let $R \triangleleft F \overset{\pi}{\epi} G$  be a free presentation of $G$.
The abelianization $R_{\ab}$ of $R$ is a left $G$-module by conjugation in $F$, 
called the \emph{relation module} of the presentation. 
This module is the first term of the short exact sequence
 \begin{equation}
 \label{eq:Presentation-augmentation-ideal}
R_{\ab} \overset{\kappa}{\mono}\Z{G} \otimes_{\Z{F}} IF \overset{\nu}{\epi} IG 
 \end{equation}
 (see, \eg, \cite[Thm.\;IV,6.3]{HiSt97}).
 Here $IF$ and $IG $ denote the augmentation ideals of $\Z{F}$ and $\Z{G}$, respectively,
 and the group ring $\Z{G}$ in the term $\Z{G} \otimes_{\Z{F}} IF$ 
 is viewed as a right $\Z{F}$-module via the projection $\pi$.
 The map $\kappa$ sends the class $f \cdot R$ to $1_{\Z{G}} \otimes (f-1)$ 
 and $\nu$ takes  $g \otimes (f-1)$ to the product $g \cdot (\pi(f) -1)$.
 \index{Definition of!relation module}%
 
The augmentation ideal $IF$ of the group ring $\Z{F}$ is a free $\Z{F}$-module  
whose rank equals the rank of $F$ (see, \eg, \cite[Th,.\;IV.5.5]{HiSt97}), 
and so the short exact sequence \eqref{eq:Presentation-augmentation-ideal} 
is a \emph{free presentation of the the augmentation ideal} $IG$,
viewed as a left $\Z{G}$-module.

Consider now a commutative ring $K$.
\footnote{The ring $K$ is always assumed to possess a unit element $1 \neq 0$.}
Since the additive group of $IG$ is free abelian, the sequence \eqref{eq:Presentation-augmentation-ideal}
remains exact when tensored with $K \otimes_{\Z} -$.
Schanuel's Lemma (see, \eg, \cite[Lemma VIII.4.2]{Bro94}) therefore implies 
that whether or not the $KG$-module $K \otimes R_{\ab}$ is finitely generated 
does neither depend on the choice of the  \emph{finitely generated} free group $F$
nor on the epimorphism $\pi \colon F \epi G$.
\index{Schanuel's Lemma}%

The trivial $G$-module $K$ has the free presentation 
\[
0  \to K \otimes IF \longrightarrow KG \longrightarrow K \to 0.
\]
Upon splicing this sequence 
with the short exact sequence obtained by applying the functor $K \otimes -$ to 
sequence \eqref {eq:Presentation-augmentation-ideal}, 
one arrives at a beginning of a $KG$-free resolution of $K$, namely
\begin{equation}
 \label{eq:Beginning-free-resolution-K}
0 \to  K \otimes R_{\ab} 
\overset{1 \otimes \kappa}{\longrightarrow}
K{G} \otimes_{\Z{F}} IF 
\overset{\nu}{\longrightarrow} KG 
\longrightarrow K \to 0.
\end{equation}
The previous considerations and Schanuel's Lemma then lead to the following
 \index{Schanuel's Lemma}%
\begin{lem}
\label{lem:Independence-free-presentation}
For every finitely generated group $G$  and every commutative ring $K$ (with $1 \neq 0$),
the following statements are equivalent:
\begin{enumerate}[(i)]
\item there exists a free presentation $R \triangleleft  F \epi G$ with $F$ finitely generated 
so that $K \otimes_{\Z} R_{\ab}$ is a finitely generated left $KG$-module;
\item for every free presentation $R_1 \triangleleft  F_1 \epi G$ with $F_1$ finitely generated 
the $KG$-modules $K \otimes_{\Z} (R_1)_{\ab}$ is  finitely generated;
\item the group $G$ is of type $\FP_2$ over $K$.
\end{enumerate}
\end{lem}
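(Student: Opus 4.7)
The plan is to prove the chain (ii) $\Rightarrow$ (i) $\Rightarrow$ (iii) $\Rightarrow$ (ii). The first implication is trivial, since $G$ being finitely generated guarantees the existence of a finitely generated free group $F$ surjecting onto it, and any such presentation witnesses (i). The workhorse throughout will be the partial $KG$-free resolution
\eqref{eq:Beginning-free-resolution-K}
of the trivial module $K$ already assembled in the excerpt, together with Schanuel's Lemma in length two.

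For (i) $\Rightarrow$ (iii), I would first record the isomorphism $KG \otimes_{\Z F} IF \cong (KG)^{\rk F}$ of left $KG$-modules, coming from the $\Z F$-basis of $IF$ indexed by any basis of $F$. Given a finitely generated free presentation $R \triangleleft F \epi G$ with $K \otimes_{\Z} R_{\ab}$ finitely generated over $KG$, I would pick a surjection $(KG)^m \epi K \otimes_{\Z} R_{\ab}$ and splice it with \eqref{eq:Beginning-free-resolution-K} to obtain the exact sequence
\[
(KG)^m \longrightarrow (KG)^{\rk F} \longrightarrow KG \longrightarrow K \longrightarrow 0,
\]
a length-two partial $KG$-free resolution of $K$ by finitely generated free modules, witnessing that $G$ is of type $\FP_2$ over $K$.

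For (iii) $\Rightarrow$ (ii), fix any finitely generated free presentation $R_1 \triangleleft F_1 \epi G$ and consider the two four-term exact sequences of $KG$-modules supplied by \eqref{eq:Beginning-free-resolution-K} and by an $\FP_2$-presentation $Q_2 \to Q_1 \to Q_0 \to K \to 0$ with finitely generated projective $Q_i$, namely
\[
0 \to K \otimes_{\Z} (R_1)_{\ab} \to (KG)^{\rk F_1} \to KG \to K \to 0
\quad \text{and} \quad
0 \to N \to Q_1 \to Q_0 \to K \to 0,
\]
where $N = \im(Q_2 \to Q_1)$ is finitely generated because $Q_2$ is. The length-two generalized Schanuel Lemma (see, \eg{}, \cite[Lemma~VIII.4.4]{Bro94}) then yields an isomorphism of $KG$-modules
\[
(K \otimes_{\Z} (R_1)_{\ab}) \oplus Q_1 \oplus KG \;\cong\; N \oplus (KG)^{\rk F_1} \oplus Q_0,
\]
whose right-hand side is finitely generated. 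Hence $K \otimes_{\Z} (R_1)_{\ab}$, being a direct summand of a finitely generated $KG$-module, is itself finitely generated. I expect no serious obstacle; the whole argument is formal homological algebra built on top of \eqref{eq:Beginning-free-resolution-K}, and the slightly delicate issue of passing from $\Z G$- to $KG$-modules is already handled in the excerpt's derivation of \eqref{eq:Beginning-free-resolution-K}.
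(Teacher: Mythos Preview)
Your proof is correct and follows essentially the same approach as the paper, which does not give an explicit proof but only indicates that ``the previous considerations and Schanuel's Lemma then lead to'' the lemma; your argument fills in exactly those details using the resolution \eqref{eq:Beginning-free-resolution-K} and the length-two Schanuel Lemma.
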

%
\subsubsection{A characterization of groups of type $\FP_2$}
\label{sssec:Characterization-almost-fpr-groups}
%
The proof of Theorem \ref{thm:Structure-kernel-almost-fp-group} below 
makes use of a characterization of groups of type $\FP_2$ over a commutative ring $K$. 
This result is Lemma 2.1 in  \cite{BiSt78}, 
except for the fact that there groups of type $FP_2$ over $K$ are called 
\emph{almost finitely presented over $K$}.
\index{Almost finitely presented group}
\index{Bieri, R.}
\index{Strebel, R.}
\begin{lem}
\label{lem:Characterization-almost-fp}
Let $R \triangleleft F \overset{\pi}{\epi} G$ be a free presentation of the finitely generated group $G$
with  $F$ free of finite rank. Then the $KG$-module $K \otimes R_{\ab}$ is finitely generated if, and only if,
there exists a finite subset $\RR_1 \subset R$ so that the kernel $M$ of the projection 
$\pi_* \colon G_1 = F /\gp_F(\RR_1) \epi G$ has the property that $K \otimes M_{\ab} = 0$.
\end{lem}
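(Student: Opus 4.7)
The plan is to prove both directions using the right-exact sequence of abelianizations associated with the short exact sequence $1 \to N \to R \to M \to 1$, where $N = \gp_F(\RR_1)$. The key observation is that, although $N_{\ab}$ need not inject into $R_{\ab}$, the image of $N_{\ab}$ in $R_{\ab}$ is precisely the kernel of the surjection $R_{\ab} \epi M_{\ab}$, and this image coincides with the $\Z G$-submodule of $R_{\ab}$ generated by the classes of the elements of $\RR_1$ (since $N$ is the normal closure of $\RR_1$ in $F$, and conjugation by $F$ descends to the $G$-action on $R_{\ab}$). Tensoring with $K$ over $\Z$ preserves right-exactness, so one obtains the exact sequence
\[
K \otimes_{\Z} (\text{image of } N_{\ab} \text{ in } R_{\ab}) \longrightarrow K \otimes_{\Z} R_{\ab} \longrightarrow K \otimes_{\Z} M_{\ab} \longrightarrow 0,
\]
which is the central identity driving both implications.

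For the implication ``$\RR_1$ exists $\Rightarrow$ $K \otimes R_{\ab}$ finitely generated'', I would start from the hypothesis $K \otimes M_{\ab} = 0$ and read the exact sequence above to conclude that $K \otimes R_{\ab}$ equals the image of $K \otimes(\text{image of } N_{\ab})$. Since that image is generated as a $KG$-module by the finite set $\{1 \otimes (r[R,R]) \mid r \in \RR_1\}$, the $KG$-module $K \otimes R_{\ab}$ is finitely generated.

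For the converse, I would choose a finite set $v_1, \ldots, v_n$ of $KG$-module generators of $K \otimes R_{\ab}$; by expanding each $v_i$ as a $K$-linear combination of simple tensors $1 \otimes x_{ij}$ with $x_{ij} \in R_{\ab}$, one obtains a finite generating set of the required form. Lifting each $x_{ij}$ to an element $r_{ij} \in R$ produces the finite subset $\RR_1 \subset R$. Setting $N = \gp_F(\RR_1)$ and $M = R/N$, the image of $N_{\ab}$ in $R_{\ab}$ is the $\Z G$-submodule generated by the $r_{ij}[R,R]$, so its image in $K \otimes R_{\ab}$ is the $KG$-submodule generated by the $1 \otimes r_{ij}[R,R]$, which by construction is all of $K \otimes R_{\ab}$. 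The right-exact sequence above then forces $K \otimes M_{\ab} = 0$.

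The only non-routine point is the identification of the kernel of $R_{\ab} \epi M_{\ab}$ with the $\Z G$-submodule generated by the classes of $\RR_1$; this is a direct verification using that conjugation by $R$ acts trivially on $R_{\ab}$, so the $F$-conjugates of $r \in \RR_1$ collapse to the $G$-orbit of $r[R,R]$ in $R_{\ab}$. No deeper homological machinery (such as the five-term exact sequence or Schanuel's lemma) is needed beyond what the excerpt has already recalled.
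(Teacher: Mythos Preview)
Your proposal is correct and follows essentially the same route as the paper: both arguments hinge on the right-exact sequence $K \otimes (R_1 R')/R' \to K \otimes R_{\ab} \to K \otimes M_{\ab} \to 0$ (with $R_1 = \gp_F(\RR_1)$) and the observation that the first term is the $KG$-submodule generated by the classes of the elements of $\RR_1$. The only cosmetic difference is that the paper writes $(R_1 \cdot R')/R'$ for what you describe as the image of $N_{\ab}$ in $R_{\ab}$.
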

\index{Finiteness condition FP2@Finiteness condition $\FP_2$!characterization}

\begin{proof} 
Let $R_1 \subseteq R$ be normal subgroup of $F$. 
The extension $R_1 \overset{\mu}{\mono}  R \epi R/R_1$ induces then the right exact sequence 
\begin{equation}
\label{eq:Auxiliary-short-exact-sequence}
K \otimes_\Z (R_1\cdot R')/R' \overset{\mu_*}{\longrightarrow} K \otimes_\Z R_{\ab} 
\longrightarrow K \otimes_\Z (R/R_1)_{\ab} \to 0
\end{equation}
of  $KG$-modules.
Assume now the $KG$-module $K \otimes R_{\ab}$ is finitely generated over $KG$.
Pick a finite set $\RR_1 \subset R_1$ 
so that the set $\{1 \otimes r_1R' \mid r_1 \in \RR_1 \}$ generates $K \otimes R_{\ab}$.
Then the embedding $\mu_*$  in the sequence \eqref{eq:Auxiliary-short-exact-sequence} is surjective
and so $K \otimes (R/R_1)_{\ab} = 0$.
But $R/R_1$ is the kernel $M$ of the projection $\pi_* \colon G_1 = F /\gp_F(\RR_1) \epi G$;
so we have proved that $K \otimes M_{\ab} = 0$.

Conversely, 
assume that $R_1  \subseteq R$ is the normal closure of a finite subset $\RR_1$ 
and $R/R_1 = \ker (G_1 = F/R_1 \epi F/R)$ is such that $K \otimes (R/R_1)_{\ab} = 0$.
Then the map $\mu_*$ in sequence \eqref{eq:Auxiliary-short-exact-sequence}
is an epimorphism, 
whence $K  \otimes R_{\ab}$ is generated, as a $KG$-module, 
by  the finite set $\{\mu(1 \otimes r \cdot R') \mid r \in \RR_1\}$. 
\end{proof}

\begin{remarks}
\label{remarks:Almost-finitely-presented}
a) Let $G$ be  a group that is of type $\FP_2$ over the commutative ring $K$ 
and  let $\rho \colon K \to L$  be a homomorphism of commutative  rings  (taking $1_K$ to $1_L$).
Upon tensoring the exact sequence \eqref{eq:Beginning-free-resolution-K} with $L \otimes_K -$, 
one obtains an exact sequence of $LG$-modules;
this sequence  shows that $G$ is of type $\FP_2$ over $L$.
The preceding remark applies, in particular, to the canonical ring homomorphism $\Z \to L$. 
Moreover, 
if $R \triangleleft F$ is finitely generated as a normal subgroup of  $F$, 
then $R_{\ab}$ is finitely generated as a $F/R$-module. 
So we have proved
\begin{lem}
\label{lem:Almost-finitely-presented}
For every finitely generated group $G$ the following claims hold:
\begin{enumerate}[(i)]
\item 
If $G$ is finitely presented then $G$  is of type $\FP_2$ over $\Z$.
\item 
If $G$ is  of type $\FP_2$ over $\Z$ 
then $G$ is  is of type $\FP_2$ over every commutative ring $K$.
\end{enumerate}
\end{lem}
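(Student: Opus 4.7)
The plan is to derive both claims from the beginning of the free resolution \eqref{eq:Beginning-free-resolution-K} together with the equivalence of (i) and (iii) in Lemma \ref{lem:Independence-free-presentation}. The whole argument should fit into two short paragraphs; no serious obstacle is anticipated, as Remark a) already sketches the key tensoring step.

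For claim (i), I would start from a finite presentation $\pi \colon F \epi G$ with $F$ free of finite rank and $R = \ker \pi$ generated as a normal subgroup of $F$ by a finite subset $\RR \subset R$. By conjugation, the classes $\{rR' \mid r \in \RR\}$ then generate the relation module $R_{\ab}$ as a left $\Z G$-module. Since $\Z \otimes_{\Z} R_{\ab} \cong R_{\ab}$, condition (i) of Lemma \ref{lem:Independence-free-presentation} is satisfied with $K = \Z$, hence $G$ is of type $\FP_2$ over $\Z$.

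For claim (ii), fix any free presentation $R \triangleleft F \overset{\pi}{\epi} G$ with $F$ of finite rank. Each term of the exact sequence \eqref{eq:Beginning-free-resolution-K} with $K = \Z$, namely
\begin{equation*}
0 \to R_{\ab} \overset{\kappa}{\longrightarrow} \Z G \otimes_{\Z F} IF \overset{\nu}{\longrightarrow} \Z G \longrightarrow \Z \to 0,
\end{equation*}
is a torsion-free abelian group ($R_{\ab}$ is free abelian because $R$ is free, and the other terms are visibly $\Z$-free). Hence applying the functor $K \otimes_{\Z} -$ preserves exactness and yields
\begin{equation*}
0 \to K \otimes_{\Z} R_{\ab} \overset{1 \otimes \kappa}{\longrightarrow} K G \otimes_{\Z F} IF \overset{1 \otimes \nu}{\longrightarrow} K G \longrightarrow K \to 0.
\end{equation*}
The hypothesis that $G$ is of type $\FP_2$ over $\Z$ gives, by Lemma \ref{lem:Independence-free-presentation}, finitely many generators $\xi_1, \dots, \xi_n$ of $R_{\ab}$ as a $\Z G$-module; the elements $1 \otimes \xi_1, \dots, 1 \otimes \xi_n$ then generate $K \otimes_{\Z} R_{\ab}$ over $KG$. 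Applying Lemma \ref{lem:Independence-free-presentation} once more, this time over $K$, we conclude that $G$ is of type $\FP_2$ over $K$.
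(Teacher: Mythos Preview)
Your proof is correct and follows the same route as the paper's own argument in Remark~\ref{remarks:Almost-finitely-presented}\,a), which immediately precedes the lemma: for (i) one notes that a finite normal generating set of $R$ yields a finite $\Z G$-generating set of $R_{\ab}$, and for (ii) one tensors along $\Z\to K$ and invokes Lemma~\ref{lem:Independence-free-presentation}. One small remark: the exactness of the tensored four-term sequence in your treatment of (ii), while correctly justified, is more than is needed, since Lemma~\ref{lem:Independence-free-presentation} requires only that $K\otimes_{\Z} R_{\ab}$ be finitely generated over $KG$, and this is immediate once $R_{\ab}$ is finitely generated over $\Z G$.
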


b) The converse of implication (i) has been an open problem for many years.
In 1997 it was settled in the negative by  M. Bestvina and N. Brady in \cite{BeBr97}.
Their counter examples are  kernels of rank 1 characters of suitably chosen right angled Artin groups.
\index{Bestvina, M.}
\index{Noel, B.}
\index{Graph groups!examples}

c) The converse of (iii) is known to be false since 1980 (see \cite[p. 464]{BiSt80}).
A counter example is provided by the soluble matrix group 
put forward by H. Abels in  \cite{Abe79}.
\index{Abels, H.}
\begin{example}
\label{example:Abels-matrix-group}
\index{Soluble groups!Abels' example}
Let $G_1$ be  the multiplicative group of all matrices of the form
\[
\begin{pmatrix}
1 & \star & \star & \star\\
0 & \star & \star & \star\\
0 & 0 & \star & \star\\
0 & 0 & 0 & 1 
\end{pmatrix}
\in \text{GL}_4(\Z[\tfrac{1}{2}]).
\]
where $\star$ stands for entries in the ring $\Z[\frac{1}{2}]$ of all dyadic rationals;
in addition, the diagonal entries are required to be positive (and hence of the form  $2^m$ with $m\in \Z$). 
According to  \cite[p. 205]{Abe79}, the group $G_1$ is finitely presented. 

The centre $Z$ of $G_1$ consists of all matrices with diagonal entries equal to 1,
and all other entries equal to 0, 
except for the entry in the upper right corner 
which is an arbitrary element of $\Z[\tfrac{1}{2}]$.
So $Z$ is isomorphic to the additive group of $\Z[\tfrac{1}{2}]$. 
Since  $Z$ is not finitely generated as a normal subgroup of $G_1$, 
the factor group $G = G_1/Z$ cannot be finitely presented. 
On the other hand, 
$\F_2 \otimes Z = 0$ 
and so $G$ is of type $\FP_2$ over the  field $\F_2$
by Lemma \ref{lem:Characterization-almost-fp}.
\end{example}
\end{remarks}

%
\subsubsection{Strengthened version of Theorem \ref{thm:Structure-kernel-fp-group}} 
\label{sssec:Necessary-condition-fpr-group-improved}
The conclusion of Theorem  \ref{thm:Structure-kernel-fp-group} remains valid
if the assumption that $G$ be finitely related is replaced  by the weaker assumption 
that $G$ be of type $\FP_2$ over some commutative ring $K$:
\begin{thm} 
\label{thm:Structure-kernel-almost-fp-group}
\index{Invariant Sigma1 and@Invariant $\Sigma^1$ and!FP2@$\FP_2$}
Let $G$ be a finitely generated group and let $\chi$ be a non-zero character of $G$
so that both $[\chi]$ and $-[\chi]$ lie outside $\Sigma^1(G)$.
If $G$ is of type $FP_2$ over some commutative ring $K$,
the kernel $N$ of $\chi$ admits a decomposition  $ N= S_{-} \star_{S_{0}}  S_{+}$ 
as a free product with amalgamation 
where $S_0$ has infinite index in both factors $S_{-}$ and in $S_{+}$
and where both factors are infinitely generated.

Moreover, $N$ contains non-abelian free subgroups.
\end{thm}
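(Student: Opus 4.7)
The plan is to reduce to the finitely related case by passing to a finite presentation whose kernel has trivial $K$-homology in degree one, and then to transfer the tree structure of the Bass-Serre tree of the auxiliary group down to the graph $\Delta_b(\chi)$ associated to $G$ via the equivariant homology spectral sequence.

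First, I would invoke Lemma~\ref{lem:Characterization-almost-fp} to obtain a free presentation $R \triangleleft F \epi G$ with $F$ finitely generated and a finite subset $\RR_1 \subset R$ such that the kernel $M$ of the projection $\pi_* \colon G_1 = F/\gp_F(\RR_1) \epi G$ satisfies $K \otimes_\Z M_{\ab} = 0$. The pull-back $\chi_1 = \chi \circ \pi_*$ is a character of the finitely presented group $G_1$ that vanishes on $M$, and by Proposition~\ref{prp:Comparison-invariants-G-and-Q} both $[\chi_1]$ and $-[\chi_1]$ lie outside $\Sigma^1(G_1)$. Theorem~\ref{thm:Structure-kernel-fp-group} then applies to $(G_1, \chi_1)$: for $b$ large enough and satisfying \eqref{eq:Condition-parameter-b}, the graph $T = \Delta_b(\chi_1)$ is a tree on which $N_1 = \ker \chi_1$ acts with quotient a single edge. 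Using a common finite generating system $\XX$ for $G_1$ and $G$, the projection $\pi_* \colon \Gamma(G_1,\XX) \to \Gamma(G,\XX)$ is an $M$-equivariant covering with deck group exactly $M$; it maps $\Gamma_{\chi_1}^I$ onto $\Gamma_\chi^I$ for every interval $I$ with $M$ acting transitively on the components of each preimage, so taking $M$-orbits of vertices and edges identifies $M \backslash T$ with $\Delta_b(\chi)$ as a graph.

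The crux is to deduce from $K \otimes_\Z M_{\ab} = 0$ that $\Delta_b(\chi) = M \backslash T$ is itself a tree. Since $T$ is contractible, the equivariant spectral sequence for the $M$-action on $T$,
\[
E^1_{p,q} \;=\; \bigoplus_{[\sigma] \in (M \backslash T)^{(p)}} H_q(M_\sigma; K) \;\Longrightarrow\; H_{p+q}(M; K),
\]
has $E^2$ page $E^2_{p,q} = H_p(M \backslash T; \mathcal{H}_q(M_\sigma; K))$ and collapses on the diagonal $p+q = 1$ because $M \backslash T$ is one-dimensional. The resulting filtration of $H_1(M; K)$ fits into a short exact sequence
\[
0 \to E^2_{0,1} \to H_1(M; K) \to H_1(M \backslash T; K) \to 0.
\]
By hypothesis $H_1(M; K) = K \otimes_\Z M_{\ab} = 0$, whence $H_1(\Delta_b(\chi); K) = 0$. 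The universal coefficient theorem, applied to the graph $\Delta_b(\chi)$ (whose $H_0$ over $\Z$ is free), rewrites this as $H_1(\Delta_b(\chi); \Z) \otimes K = 0$; since $H_1$ of a graph is free abelian this forces $H_1(\Delta_b(\chi); \Z) = 0$, so the connected graph $\Delta_b(\chi)$ is a tree.

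With $\Delta_b(\chi)$ shown to be a tree on which $N$ acts with quotient a single edge with distinct endpoints (Proposition~\ref{prp:Key-properties-graph-Delta}(i)--(ii) applied to $(G, \chi)$), Bass-Serre theory yields the decomposition $N = S_{-} \star_{S_0} S_{+}$; the infinite index of $S_0$ in each factor and the infinite generation of both follow from parts (iii) and (iv) of the same proposition, using the hypothesis that neither $[\chi]$ nor $-[\chi]$ lies in $\Sigma^1(G)$, and the non-abelian free subgroup of $N$ is constructed verbatim as in the last paragraph of the proof of Theorem~\ref{thm:Structure-kernel-fp-group}. The main obstacle is the homological step: one must carefully verify the identification $\Delta_b(\chi) \cong M \backslash T$ (notably the transitivity of the deck group $M$ on the components above each connected component of $\Gamma_\chi^I$) and set up the equivariant spectral sequence so that its edge map produces the surjection $H_1(M; K) \twoheadrightarrow H_1(M \backslash T; K)$; once these are in place the vanishing of $H_1(M; K)$ gives the tree property at once.
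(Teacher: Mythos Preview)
Your argument is correct but proceeds along a genuinely different route from the paper's. Both proofs set up the finitely presented cover $G_1 \epi G$ with $K \otimes_\Z M_{\ab} = 0$ and use that $\Delta_b(\chi_1)$ is a tree. From there the paths diverge. You identify $\Delta_b(\chi)$ with $M \backslash \Delta_b(\chi_1)$ and invoke the equivariant homology spectral sequence for the $M$-action on the tree $\Delta_b(\chi_1)$; the edge map $H_1(M;K) \twoheadrightarrow E^2_{1,0} = H_1(M\backslash T; K)$ then kills $H_1(\Delta_b(\chi);K)$, and the tree property follows. The paper instead works on the algebraic side: it forms the abstract amalgam $S_{-}\star_{S_0} S_{+}$, observes that the surjection $N_1 \cong S_{1-}\star_{S_1} S_{1+} \epi S_{-}\star_{S_0} S_{+}$ forces the kernel $L$ of the canonical map $S_{-}\star_{S_0} S_{+} \to N$ to be a quotient of $M$, and then notes that $L$ acts freely on the Bass--Serre tree of the amalgam, hence is free; a free group with $K \otimes L_{\ab} = 0$ is trivial, so $S_{-}\star_{S_0} S_{+} \iso N$ and $\Delta_b(\chi)$ is a tree.

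Your spectral-sequence route is cleaner conceptually and isolates the topological content (the quotient of a tree by a group with vanishing $H_1(-;K)$ is again a tree), at the price of invoking the equivariant spectral sequence. The paper's route stays within elementary Bass--Serre theory and the observation that a free group $L$ with $K \otimes L_{\ab}=0$ must be trivial; it avoids any spectral sequence but requires tracking the surjectivity of $\pi$ on each stabilizer. Both arguments are short once the machinery is in place, and your identification $\Delta_b(\chi) \cong M \backslash \Delta_b(\chi_1)$ --- which you correctly flag as needing care --- is exactly the statement that $\pi$ maps stabilizers onto stabilizers, which the paper also verifies explicitly.
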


\begin{proof}
The construction of the graphs $\Delta_b(\chi)$ and  properties (i) through (iv) of Proposition
\ref{prp:Key-properties-graph-Delta}
require merely that $G$ is finitely generated.
The finite presentation is only used in establishing that $\Delta_b$ is a tree 
if $b$ is sufficiently large. 
The fact that $\Delta_b$ is a tree under the weaker hypothesis can be seen as follows:

Lemma \ref{lem:Characterization-almost-fp} yields a short exact sequence of groups 
$M \mono  G_1 \overset{\pi}{\epi} G$ 
where $G_1$ is finitely related and $K \otimes  M_{\ab} = 0$. 
Let $\psi = \chi \circ \pi$ be the character of $G_1$ induced by the given character $\chi$
and put $N_1 = \ker \psi$. 
Next choose a finite presentation $F/R_1$ of $G_1$ 
and use the basis  $\XX$ of $F$ as a generating system of $G_1$  and, via $\pi$,  of $G$.
The canonical map $\pi \colon G_1 \epi G$  gives rise to a covering map 
$\pi_*\colon \Gamma_1 \epi \Gamma$  of the corresponding Cayley graphs.
This map  sends the subgraph $\Gamma_{1,+} =\Gamma(G_1, \XX)_\psi$ 
onto the subgraph $\Gamma_+ = \Gamma(G, \XX)_\chi$ 
and hence each of the connected components of $\Gamma_{1+}$ onto a component of $\Gamma_+$.
This implies, in particular,
that $\pi \colon G_1 \epi G$ maps the stabilizer  $S_{1+} = (\ker \psi) \; \cap \; \CC_{1+}$ 
of the component of $\Gamma_{1+}$ containing $1_{G_1}$  
onto the stabilizer $S_{+}$ of $\CC_+$.
One verifies similarly
that $\pi \colon G_1 \epi G$ maps the stabilizer $S_{1-}$  of $C_{1-}$,
the connected component of $\Gamma_\psi^{(-\infty, b]}$  containing $1_{G_1}$, 
onto the stabilizer $S_{-}$ of the connected component $\CC_-$,
and that $\pi$ maps the stabilizer $S_1$ of the connected component $\EE_{1,0}$ of  $\Gamma_\psi^{[0,b]}$ 
onto the stabilizer $S_0$ of the edge $\EE_0$. 

We are now ready to prove that the graph $\Delta_b(\chi)$ is a tree if $b$ is suitably chosen.
Recall that the character  $\psi$ is the composition $\chi \circ \pi$.
Since $M$ is the kernel of the epimorphism $\pi \colon G_1 \epi G$,
the columns of the following diagram \eqref{eq:First-diagram}
are exact.
Select now the real number $b \in \im \psi = \im \chi$ 
so large that $\Delta_b(G_1)$ is a tree (cf. Theorem \ref{thm:Structure-kernel-fp-group}).
Then the  inclusions of $S_{1-}$, $S_{1+}$ and of $S_1$ in  $G_1$ induce 
an isomorphism $S_{1-} \, \star_{S_1}  S_{1+}  \iso N_1 = \ker  \psi$.

\begin{equation} %
\label{eq:First-diagram}
\xymatrix{
M \ar[r]^{=}\ar[d]_{\iota_*}          &  M \ar[d]^{\iota}             & \\
N_1 = \ker \psi  \ar[r]\ar[d]_{\pi_*}& G_1\ar[r]^{\psi}\ar[d]^{\pi} & \R \ar[d]^{=}\\
N = \ker \chi \ar[r]                     &  G \ar[r]^{\chi}                 & \R
}
\end{equation}

If one replaces the middle term $N_1$ of the \emph{left} column of diagram  \eqref{eq:First-diagram}  
by $S_{1-} \, \star_{S_1}  S_{1+}$,
the \emph{first row} of diagram \eqref{eq:Second-diagram} results:
\begin{equation}
\label{eq:Second-diagram}
\xymatrix{
M \ar[r]\ar[d]^{\pi_*}  &   S_{1 -}\, \star_{S_1} \, S_{1+} \ar[r] \ar[d]^{\pi_*} & N \ar[d]^{=}\\
L = \ker \rho \ar[r] & S_{-} \, \star_{S_0} \, S_{+} \ar[r] &N
}
\end{equation}
Its maps are all induced by the obvious inclusions and so the diagram commutes.

In a previous part of the proof  we have seen 
that  $\pi$ maps the stabilizers $S_{1-}$, $S_{1+}$ and of $S_1$ 
occurring in the middle term of the first row 
onto the corresponding stabilizers occurring in the second row.
The middle vertical map  is therefore surjective, 
whence so is the left vertical map.
This fact implies that  $L$ is reduced to the neutral element.
Indeed,
since $ K \otimes M_{\ab} = \{0\}$ and as abelianization and  tensor product are right exact functors,
$ K \otimes L_{\ab} = \{0\}$. 
On the other hand, 
the epimorphism $ S_{-} \, \star_{S_{0}} \, S_{+}  \epi  N$ is induced by the inclusions of the factors;
so its kernel $L$ intersects them trivially
and thus acts freely on the canonical tree of   $ S_{-} \, \star_{S_0} \, S_{+}$, 
whence $L$ is a free group. 
As $ K \otimes L_{\ab} = \{0\}$ it must   be trivial. 

The canonical inclusions thus induce an isomorphism  $ S_{-} \, \star_{S_0}\, S_{-}  \iso  N$.
This can only hold if the graph $\Delta_b$ is a tree (see, e.g., \cite[§4, Theorem 6]{Ser80}).
\end{proof}
Theorem \ref{thm:Structure-kernel-almost-fp-group} is the local version of our final result,
just as Theorem \ref{thm:Structure-kernel-fp-group} is the local version of Theorem 
\ref{thm:Consequence-finite-presentation}:
\begin{thm} 
\label{thm:Consequence-almost-fp}
\index{Invariant Sigma1 and@Invariant $\Sigma^1$ and!FP2@$\FP_2$}
Let $G$ be a finitely generated group that does not contain a non-abelian free subgroup.
If $G$ is of type $FP_2$ over some commutative ring $K \neq \{0\}$ then
\begin{equation}
\label{eq:Necessary-condition-almost-fp-group}
\Sigma^1(G) \cup -\Sigma^1(G) = S(G).
\end{equation}
\end{thm}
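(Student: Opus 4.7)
The plan is to deduce Theorem \ref{thm:Consequence-almost-fp} directly from its local counterpart, Theorem \ref{thm:Structure-kernel-almost-fp-group}, by exactly the same reduction used in Remark \ref{remarks:Structure-kernel-fpr-group-rank-1}\,b to pass from Theorem \ref{thm:Structure-kernel-fp-group} to Theorem \ref{thm:Consequence-finite-presentation}. There is no new geometric or homological content; all the heavy lifting is already done in the local version.

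First I would pick an arbitrary point $[\chi] \in S(G)$, represented by a non-zero character $\chi \colon G \to \R$. The goal is to show that at least one of $[\chi]$ and $-[\chi]$ lies in $\Sigma^1(G)$. I argue by contradiction: suppose both points lie outside $\Sigma^1(G)$. Then the hypotheses of Theorem \ref{thm:Structure-kernel-almost-fp-group} are satisfied, since $G$ is finitely generated and of type $\FP_2$ over the commutative ring $K$. That theorem asserts that the kernel $N = \ker \chi$ contains a non-abelian free subgroup. Since $N \leq G$, the group $G$ itself then contains a non-abelian free subgroup, contradicting the standing hypothesis on $G$. Hence one of $[\chi]$, $-[\chi]$ must belong to $\Sigma^1(G)$, and the equality $\Sigma^1(G) \cup -\Sigma^1(G) = S(G)$ follows.

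There is essentially no obstacle: the proof is a one-paragraph corollary of Theorem \ref{thm:Structure-kernel-almost-fp-group}. The only minor points worth flagging are, first, that when $G_{\ab}$ is finite the sphere $S(G)$ is empty (Remark \ref{Remark:Empty-sphere}) and the statement holds vacuously, so we may assume $S(G) \neq \emptyset$; and second, that the whole argument is content-free unless one has the $\FP_2$-version of the tree property for $\Delta_b(\chi)$---but that is exactly what Theorem \ref{thm:Structure-kernel-almost-fp-group} provides, via the comparison with a finitely presented cover $G_1$ afforded by the characterization in Lemma \ref{lem:Characterization-almost-fp}. No additional techniques are required.
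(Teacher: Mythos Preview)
Your proposal is correct and matches the paper's approach exactly: the paper presents Theorem \ref{thm:Consequence-almost-fp} as the global version of Theorem \ref{thm:Structure-kernel-almost-fp-group} without a separate proof, indicating (by the explicit parallel with Remark \ref{remarks:Structure-kernel-fpr-group-rank-1}\,b) that it follows by the same one-line contrapositive you give. There is nothing to add.
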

%
 
%
 
%
\chapter{Complements to the Cayley graph approach to $\Sigma^1$}
\label{ch:Sigma-1-Cayley-graph-complements}
%
%
In Chapter \ref{ch:Sigma-1-Cayley-graph}, 
the invariant $\Sigma^1(G)$ has been defined via the Cayley graph $\Gamma(G, \XX)$  
associated to a  finite generating system $\eta \colon \XX \to G$ of the group $G$
and some of its properties have been established
There exist alternate definitions;
they are the subjects of  Chapters 
\ref{ch:Alternative-definitions-overview} 
and 
D.
Prior to moving on to them,
we discuss some further topics 
that can conveniently be treated in the frame-work of Cayley graphs.

In section \ref{sec:Computing-Sigma1-via-change-groups}
we explain how the invariants of a group and its subgroups are related.
An algebraic version of the $\Sigma^1$-criterion is then derived in section \ref{sec:Sigma1-criterion-revisited};
it can be used to construct algorithms which find a subset of $\Sigma^1$.
Section \ref{sec:Sigma1-via-HNN-extensions} concentrates on rank 1 points in $\Sigma^1$.
According to Proposition \ref{prp:Ascending-HNN-extension}, 
these points can be described in terms of ascending HNN-extensions with a finitely generated base group.
In some cases,
all of $\Sigma^1$ can be found  with this approach.

In the final section,
an algorithm for computing $\Sigma^1$ of a one-relator group is established;
it is due to Ken Brown \cite{Bro87b}.
The present justification combines results from sections \ref{sec:Sigma1-criterion-revisited} and 
\ref{sec:Sigma1-via-HNN-extensions}
with basic results about one-relator groups, first proved by W. Magnus in the 1930's.

%
%
%
\section{Computing $\Sigma^1$ via change of groups}
\label{sec:Computing-Sigma1-via-change-groups}
%
%
One of the themes of this monograph
is the computation of the invariant $\Sigma^1$ for various classes of groups.
In so doing, 
it is often helpful to consider, not only the group one is primarily interested in, 
but also related groups,  for instance subgroups or quotient groups. 
This  section collects results that are useful in this context.
It starts out with some remarks on the morphism of character spheres 
induced by a group homomorphism 
and then derives some relations between the invariants of two or more suitably related groups.
As an application, we determine in section \ref{ssec:Application-right-angled-Artin-groups}
the invariants of a graph groups.
%
\subsection{Morphisms between character spheres} 
\label{ssec:Morphisms}
\index{Character sphere!morphisms}
%
Let $\varphi \colon G \to G_{1}$ be a homomorphism of finitely generated groups. 
It induces an $\R$-linear  map  $\Hom(\varphi, \R) \colon \Hom(G_{1},\R) \to \Hom(G, \R)$ 
sending $\chi \colon G_{1} \to \R$ to $\chi \circ \varphi$. 
If a character $\chi \colon G \to \R$  vanishes on the subgroup $\im \varphi $ of $G_{1}$ 
the composition $\chi \circ \varphi$ represents the zero-class $[0]$, 
a class that lies outside of $S(G_{1})$.
Accordingly we define the morphism induced by $\varphi$ as follows:
\begin{equation}
\label{eq:Induced-map-spheres}
\varphi^* \colon  S(G_{1}, \im \varphi)^c \to S(G), \quad \varphi^*[\chi]  = [\chi \circ \varphi].
\end{equation}
%
\subsubsection{Determining the image of $\varphi^*$} 
\label{sssec:Determination-image-morphism}
%
It will be useful to have a good idea of the form of the fibers and of the image of the morphism $\varphi^*$.
We begin by determining its image. It is a subsphere,
for the image of the  $\R$-linear map $\Hom (G_{1}, \R) \to \Hom (G, \R)$ is a subspace 
and so its rays emanating from the origin form a subsphere.

One can describe this image more explicitly:
\begin{lem}
\label{lem:Image-morphism}
\begin{enumerate}[(i)]
\item The homomorphism $\varphi \colon G \to G_{1}$ induces the morphism
\begin{equation}
\label{eq:Morphism-induced-homomorphism}
\varphi^* \colon  S(G_{1}, \varphi(G))^c \epi S(G,  \varphi^{-1}(G'_{1})) \incl S(G)
\end{equation}
\item If $\varphi$ is surjective, 
the morphism takes on the simpler form
\begin{equation}
\label{eq:Morphism-induced-epimorphism}
\varphi^* \colon S(G_1) \iso S(G, \ker \varphi) \incl S(G).
\end{equation}
\end{enumerate}
\end{lem}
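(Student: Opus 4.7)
The plan is to lift the claim to the underlying vector spaces $V = \Hom(G,\R)$ and $V_1 = \Hom(G_1,\R)$, analyze the induced $\R$-linear map $\Phi = \Hom(\varphi,\R)\colon V_1 \to V$ sending $\chi$ to $\chi \circ \varphi$, and then project everything down to rays emanating from the origin.

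Identifying $\ker \Phi$ is immediate: $\chi \circ \varphi = 0$ iff $\chi$ vanishes on $\varphi(G)$, so on the sphere level the characters annihilated by $\Phi$ are exactly those representing points of $S(G_1, \varphi(G))$. This explains why the domain of $\varphi^*$ in (i) must be $S(G_1, \varphi(G))^c$: it is the largest subset of $S(G_1)$ where $\varphi^*$ can possibly be defined.

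The heart of the argument is the computation of $\im \Phi \subseteq V$: I would prove that it consists exactly of the characters of $G$ which vanish on the subgroup $\varphi^{-1}(G_1')$. Since every character factors through the abelianization, this reduces to asking when a homomorphism $\bar\psi\colon G_{\ab} \to \R$ factors through the induced map $\varphi_{\ab}\colon G_{\ab} \to (G_1)_{\ab}$. Necessity is straightforward: if $\bar\psi = \bar\chi \circ \varphi_{\ab}$ then $\bar\psi$ vanishes on $\ker \varphi_{\ab} = \varphi^{-1}(G_1')/G'$, whence $\psi$ vanishes on $\varphi^{-1}(G_1')$. For sufficiency, one first defines $\bar\chi$ on $\im \varphi_{\ab}$ by $\bar\chi(\varphi_{\ab}(x)) = \bar\psi(x)$ (well defined under the vanishing hypothesis) and then extends $\bar\chi$ to all of $(G_1)_{\ab}$. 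The main obstacle is this extension step, which I would overcome by invoking the fact that $\R$ is a divisible torsion-free abelian group and hence an injective $\Z$-module; consequently every homomorphism from a subgroup of the finitely generated abelian group $(G_1)_{\ab}$ into $\R$ extends to the whole group. Passing to rays then yields claim (i).

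Part (ii) is a specialization. If $\varphi$ is surjective, then $\varphi(G) = G_1$ forces $\ker \Phi = \{0\}$, so $\Phi$ becomes an injective linear map between finite-dimensional real vector spaces and therefore restricts to a homeomorphism of unit spheres; hence $\varphi^*$ is defined on all of $S(G_1)$ and is a homeomorphism onto its image. Moreover, surjectivity of $\varphi$ gives $\varphi(G') = G_1'$ and therefore $\varphi^{-1}(G_1') = G' \cdot \ker \varphi$; since any character of $G$ automatically kills $G'$, vanishing on this product is equivalent to vanishing on $\ker \varphi$, and thus the image of $\varphi^*$ coincides with $S(G, \ker \varphi)$, as claimed.
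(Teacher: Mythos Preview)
Your proof is correct and uses essentially the same key idea as the paper: both arguments reduce to analyzing the linear map $\Phi = \Hom(\varphi,\R)$ on Hom spaces and invoke the divisibility (injectivity as a $\Z$-module) of $\R$ to extend characters from $\im \varphi_{\ab}$ to all of $(G_1)_{\ab}$. The only difference is organizational: the paper establishes (ii) first and then derives (i) by applying (ii) to the canonical projection $G \epi G/\varphi^{-1}(G_1')$, whereas you prove (i) directly and obtain (ii) as a specialization via the computation $\varphi^{-1}(G_1') = G' \cdot \ker\varphi$.
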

\begin{proof}
(ii) If $\varphi \colon G \epi G_1$ is surjective the induced morphism
\[
\varphi^* \colon \Hom(G_1, \R) \to \Hom(G,\R), \quad \psi \mapsto \psi \circ \varphi
\]
is injective and its image is the subspace of $\Hom(G,\R)$ 
consisting of the characters that vanish on $\ker \varphi$.
This proves assertion (ii).

(i) The homomorphism $\varphi \colon G \to G_1$ induces a homomorphism of abelian groups
$\varphi_* \colon G/\varphi^{-1}(G_1') \mono (G_1)_{\ab}$;
this homomorphism is injective,
As $\R$ is a divisible group, hence an injective $\Z$-module,
every character of  $G/\varphi^{-1}(G_1')$  extends to a character of $(G_{1})_{\ab}$
(see, \eg, \cite[4.1,2]{Rob96}).
The morphism 
\[
\varphi^* \colon S(G_1, \im \varphi)^c \to S(G/(\varphi^{-1}(G_1'))\,)
\] is therefore surjective.
Now use that $S(G/(\varphi^{-1}(G_1'))) \iso S(G, \varphi^{-1}(G_1'))$ is an isomorphism (by claim (ii) ).
\end{proof}
%
\subsubsection{Determining the fibers of $\varphi^*$} 
\label{sssec:Determination-fibers-morphism}
%
Let $[\psi]$ be a point in the image of $\varphi^*$ and let $[\chi]$ and $[\chi_{1}]$ 
two points in the fiber above $[\psi]$. 
Then $[\chi \circ \varphi] = [\psi] = [\chi \circ \varphi]$ and so there is a positive scalar $\lambda$ 
so that $\chi \circ \varphi= (\lambda. \chi') \circ \varphi$. The difference $ (\lambda. \chi') - \chi$ is then in the kernel $V$ of the $\R$-linear map 
$\Hom(\varphi, \R) \colon \Hom(G_{1},\R) \to \Hom(G, \R)$.
Conversely, 
if $v \in V$ then $[\chi]$ and $[\chi + v]$ have the same image under $\varphi^*$.
The points in the fiber above $[\psi]= \varphi^*([\chi])$ can therefore be parametrized by the characters in the \emph{affine} subspace $\chi + V$ of $\Hom(G_{1}, \R)$.
The fiber is therefore an open hemisphere in the sphere made up of the rays in the subspace spanned
by $V$ and $\chi$; 
in particular, 
the dimension of the fiber is equal to the dimension of $V$
and the boundary of the fiber is the subsphere  $S(G_{1}, \im \varphi)$.
Since the dimension of $V$ is the torsion-free rank of 
$(G_1)_{\ab}/(\im \varphi_{\ab}) \iso G_{1}/ (\varphi(G) \cdot G'_{1})$,
we have proved
\begin{lem}
\label{lem:Fiber-morphism}
The fiber of the point $\varphi^*([\chi])$ consists of the points represented 
by the characters in $\chi + V$
where $V$ is the kernel of 
\[
\Hom(\varphi, \R) \colon \Hom(G_{1},\R) \to \Hom(G, \R).
\]
It is a hemisphere in a subsphere of dimension $\dim V = r_{0}(G_{1}/(\varphi(G) \cdot G'_{1})$ 
and it is bounded by the subsphere $S(G_{1}, \varphi(G))$.
\end{lem}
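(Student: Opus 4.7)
The plan is to formalise the parametrisation sketched in the paragraph preceding the lemma and then read off the geometry by standard linear algebra on $V = \ker \Hom(\varphi,\R)$. The argument splits into four short steps, and I do not anticipate a genuine obstacle.

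First I would verify that the fiber above $[\psi] = \varphi^*([\chi])$ is precisely the image, under the projection $\Hom(G_1,\R)\smallsetminus\{0\} \epi S(G_1)$, of the positive cone $\R_{>0}\cdot(\chi + V)$. A character $\chi_1$ satisfies $[\chi_1\circ\varphi] = [\chi\circ\varphi]$ exactly when $\chi_1\circ\varphi = \lambda(\chi\circ\varphi)$ for some $\lambda>0$, equivalently $\chi_1/\lambda - \chi \in V$. The positivity of $\lambda$ is the point that will force the fiber to be a hemisphere rather than a full subsphere; I would note that the affine slice $\chi + V$ injects into $S(G_1)$, since $\chi + v_1 = \lambda(\chi + v_2)$ with $\lambda > 0$ forces $\lambda = 1$ because $\chi \notin V$.

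Next I would introduce $W = \R\chi + V$. Since $\chi\circ\varphi \neq 0$, the character $\chi$ lies outside $V$, so $\dim W = \dim V + 1$. The rays in $W\smallsetminus\{0\}$ constitute a subsphere of $S(G_1)$ of dimension $\dim V$, and $\R_{>0}\cdot(\chi + V)$ projects onto the open half of this subsphere lying on the $\chi$-side of the hyperplane $V\subset W$, i.e.\ an open hemisphere.

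To compute $\dim V$, I would observe that $V$ consists of the homomorphisms $G_1 \to \R$ vanishing on $\varphi(G)$; since $\R$ is torsion-free abelian, these are exactly the homomorphisms vanishing on $\varphi(G)\cdot G_1'$, so $V \cong \Hom(G_1/(\varphi(G)\cdot G_1'), \R)$, and Lemma \ref{lem:Dimension-S(G)} gives $\dim V = r_0(G_1/(\varphi(G)\cdot G_1'))$. Finally, the boundary of the hemisphere is the subsphere of $S(W)$ cut out by $V$, and by definition its points are those $[\chi_1]\in S(G_1)$ with $\chi_1(\varphi(G)) = \{0\}$, namely $S(G_1,\varphi(G))$. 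The only step requiring a little care is keeping the sign of $\lambda$ positive throughout the quotient by scaling, which is what distinguishes the open hemisphere from the full subsphere of $S(W)$.
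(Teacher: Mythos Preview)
Your proposal is correct and follows essentially the same route as the paper: the paper's argument is given in the paragraph immediately preceding the lemma and consists of exactly your parametrisation by the affine subspace $\chi + V$, the identification of the fiber as an open hemisphere in the subsphere spanned by $\R\chi + V$, and the computation of $\dim V$ as the torsion-free rank of $G_1/(\varphi(G)\cdot G_1')$. If anything, your version is slightly more explicit in checking that $\chi + V$ injects into $S(G_1)$ and in invoking Lemma~\ref{lem:Dimension-S(G)} for the dimension count, but there is no difference in strategy.
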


The previous lemma allows one to spell out
when $\varphi$ induces an injective morphism:
\begin{crl} 
\label{crl:Injectivity-phi-star}
For every homomorphism $\varphi \colon  G \to G_{1}$ of finitely generated groups, 
the following three conditions are equivalent:
\begin{enumerate} [(i)]
\item $\varphi^* \colon  S(G, \im \varphi)^c \to S(G_1)$ is injective;
\item the subgroup $\im \varphi \cdot [G,G]$ has finite index in $G$;
\item $\varphi^*$ is defined on $S(G)$.
\end{enumerate}
\end{crl}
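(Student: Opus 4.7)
The plan is to unwind all three conditions into the single algebraic statement that $\varphi(G)\cdot G_1'$ has finite index in $G_1$, and then conclude by transitivity. (I read the subscript ``$G$'' in (ii) and the symbol ``$S(G)$'' in (iii) as typographical slips for $G_1$ and $S(G_1)$, since by the very definition of $\varphi^*$ its codomain is $S(G)$, its domain is the subset $S(G_1,\im\varphi)^c$ of $S(G_1)$, and $\im\varphi$ lives in $G_1$.)

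\smallskip

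\noindent\textbf{First step: (i)$\Leftrightarrow$(ii).} I would apply Lemma \ref{lem:Fiber-morphism} directly. It asserts that every non-empty fiber of $\varphi^*$ is an open hemisphere in a sphere of dimension
\[
\dim V = r_0\bigl(G_1/(\varphi(G)\cdot G_1')\bigr),
\]
where $V = \ker \Hom(\varphi,\R)$. The morphism $\varphi^*$ is injective precisely when each such hemisphere is a singleton, that is, when $\dim V = 0$. Because $G_1/(\varphi(G)\cdot G_1')$ is a finitely generated abelian group, vanishing of its torsion-free rank is equivalent to its being finite, which is exactly the statement that $\varphi(G)\cdot G_1'$ has finite index in $G_1$.

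\smallskip

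\noindent\textbf{Second step: (ii)$\Leftrightarrow$(iii).} The domain $S(G_1,\im\varphi)^c$ equals all of $S(G_1)$ iff $S(G_1,\im\varphi)=\varnothing$. A non-zero character $\chi\colon G_1\to\R$ vanishes on $\im\varphi$ iff it vanishes on the normal closure $\im\varphi\cdot G_1'$ (the derived group being automatically in $\ker\chi$ since $\R$ is abelian). Hence $S(G_1,\im\varphi)$ is empty iff $\Hom(G_1/(\im\varphi\cdot G_1'),\R)=0$, which, by Lemma \ref{lem:Dimension-S(G)} applied to the finitely generated abelian quotient, is equivalent to that quotient being finite, i.e.\ to (ii).

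\smallskip

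\noindent\textbf{Conclusion.} Chaining the two biimplications yields the triple equivalence.

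The ``obstacle,'' such as it is, is purely notational: one has to be careful that ``$\chi$ vanishes on $\im\varphi$'' and ``$\chi$ vanishes on $\im\varphi\cdot G_1'$'' are interchangeable for characters into the torsion-free group $\R$, and that an open hemisphere in a $0$-sphere is by convention a single point, so that ``dimension $0$ fiber'' really does mean ``singleton fiber.'' Beyond those bookkeeping details, the argument is a direct corollary of Lemma \ref{lem:Fiber-morphism} together with the standard description of the characters that are zero on a given subgroup.
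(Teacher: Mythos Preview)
Your proof is correct and is exactly the argument the paper has in mind: the corollary is stated immediately after Lemma \ref{lem:Fiber-morphism} with the remark that ``the previous lemma allows one to spell out when $\varphi$ induces an injective morphism,'' and no further proof is given. Your identification of the typographical slips (the roles of $G$ and $G_1$ are swapped in the printed statement) and your two-step unwinding via the fiber dimension $r_0\bigl(G_1/(\varphi(G)\cdot G_1')\bigr)$ are precisely what is intended.
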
                                             

Since the morphism $\varphi^* \colon  S(G, \im \varphi)^c \to S(G_1)$ is induced by a linear map,
it is compatible with the geometric structures of the spheres;
in particular,
it is continuous and sends a pair of antipodal points to a pair with the same property.
A further, useful fact is recorded in
\begin{lem} 
If $[\chi] \in S(G, \im \varphi)^c$ then 
$\rk [\chi] \geq \rk \varphi^*[\chi] \geq 1$.
In particular, the  image of a rational point is rational. 
\end{lem}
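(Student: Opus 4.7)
The plan is straightforward: unpack the definitions of the two ranks as $\Z$-ranks of image subgroups of $\R$, then use that (a) passing from $G_1$ to $\varphi(G) \subseteq G_1$ shrinks the image of $\chi$, and (b) the hypothesis $[\chi] \in S(G_1, \im \varphi)^c$ guarantees the shrunken image is still non-trivial.

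First, I would recall that $\varphi^*[\chi] = [\chi \circ \varphi]$ and that the rank of a point $[\psi] \in S(H)$ is by definition the $\Z$-rank $r_0(\psi(H))$ of the finitely generated subgroup $\psi(H) \leq \R$. Applied here, $\rk [\chi] = r_0(\chi(G_1))$ and $\rk \varphi^*[\chi] = r_0((\chi \circ \varphi)(G)) = r_0(\chi(\varphi(G)))$. Since $\varphi(G) \leq G_1$, we have the inclusion of subgroups $\chi(\varphi(G)) \leq \chi(G_1)$ inside the additive group of $\R$. Both are finitely generated (as $G$ and $G_1$ are), and a subgroup of a finitely generated torsion-free abelian group has $\Z$-rank bounded by that of the ambient group; this gives $\rk \varphi^*[\chi] \leq \rk [\chi]$.

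For the lower bound, the assumption $[\chi] \in S(G_1, \im \varphi)^c$ means precisely that $\chi$ does \emph{not} vanish on $\im \varphi = \varphi(G)$, i.e.\ $\chi(\varphi(G)) \neq \{0\}$. As $\chi(\varphi(G))$ is a non-trivial subgroup of the torsion-free group $\R$, its $\Z$-rank is at least $1$, giving $\rk \varphi^*[\chi] \geq 1$ (and also confirming that the class $[\chi \circ \varphi]$ really does represent a point of $S(G)$, so that $\varphi^*$ is well defined on this domain). The final sentence is then immediate: if $\rk[\chi] = 1$ then the chain $1 \leq \rk \varphi^*[\chi] \leq \rk[\chi] = 1$ forces $\rk \varphi^*[\chi] = 1$, so $\varphi^*[\chi]$ is rational.

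There is no real obstacle here; the only point that deserves a line of care is citing the standard fact that a subgroup of a finitely generated abelian group has $\Z$-rank at most that of the ambient group (see e.g.\ \cite[Section 4.2]{Rob96}, already referenced in the proof of Lemma \ref{lem:Dimension-S(G)}).
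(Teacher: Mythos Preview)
Your proof is correct and is exactly the natural argument: identify the two ranks as the $\Z$-ranks of the subgroups $\chi(G_1)$ and $\chi(\varphi(G))$ of $\R$, use the inclusion $\chi(\varphi(G)) \leq \chi(G_1)$ for the upper bound, and use the hypothesis $\chi(\im\varphi) \neq \{0\}$ for the lower bound. The paper itself states this lemma without proof, treating it as an immediate consequence of the definitions, so there is nothing to compare beyond noting that your write-up spells out precisely what the author leaves implicit.
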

%
\subsection{Properties of $\Sigma^1$ under change of groups}
\label{ssec:Properties-Sigma1-change-groups}
\index{Invariant Sigma1 and@Invariant $\Sigma^1$ and!change of groups}
%
In this section, 
we present results that relate the invariant of the given group $G$
to invariants of related groups, such as quotient groups or subgroups of finite index.
%
\subsubsection{Isomorphisms and automorphisms of groups} 
\label{sssec:Isomorphisms-groups}
%
We begin with a remark that is long overdue. 
The definition of $\Sigma^1(G)$ uses a finite generating system $\eta \colon \XX \to G$.
By Theorem \ref{thm:Sigma1-well-defined},
the choice of this system plays no r{\^o}le, 
a first fact that justifies calling $\Sigma^1(G)$ an invariant.
But more is true: 
\emph{the construction $G \mapsto \Sigma^1(G)$ assigns to isomorphic groups isomorphic invariants}.
This fact is a direct consequence of the independence of $\Sigma^1$ on the choice of the generating system.
Indeed, let $\alpha \colon  G \iso G_{1} $ be an isomorphism of finitely generated groups,
let $\eta \colon \XX \to G$ be a generating system of $G$
and use $\alpha$ to push $\eta$ forward to a generating system $\eta_{1} = \alpha\circ \eta$ of $G_{1}$.
Then $\alpha$ induces an obvious graph isomorphism 
$\alpha_{*} \colon \Gamma(G, \XX) \iso \Gamma(G_{1}, \alpha(\XX))$ of Cayley graphs.
Moreover, if $\chi_{1}$ is a non-zero character of $G_{1}$ and $\chi = \chi_{1} \circ \alpha$,
then $\alpha_{*}$ maps the subgraph $\Gamma(G, \XX)_{\chi}$ 
onto the subgraph  $\Gamma(G_{1}, \alpha(\XX))_{\chi_{1}}$.
Consequently,  $\alpha$ induces an isomorphism  $\alpha^* \colon \Sigma^1(G_{1}) \iso \Sigma^1(G)$.

Note that the assignment  $\alpha \mapsto \alpha^*$ is contravariant.
If $\alpha$ is an automorphism of $G$,
we consider therefore the assignment $\alpha \mapsto (\alpha^{-1})^*$;
it yields a homomorphism of $\Aut(G)$,
the group of automorphisms of $G$, 
into the group of homeomorphisms of the sphere $S(G)$ 
under which the subset $\Sigma^1(G)$ is invariant.

For later reference we summarize the preceding discussion in
\begin{prp}
\label{prp:Automorphisms-and-invariant}
For every finitely generated group $G$, the assignment 
\[
\alpha \longmapsto (\alpha^{-1})^* \colon S(G) \iso S(G)
\]
is a group homomorphism of $\Aut(G)$ into the group of homeomorphisms of the sphere $S(G)$.
The invariant $\Sigma^1(G)$ is stable under this action of $\Aut(G)$ on $S(G)$.
\end{prp}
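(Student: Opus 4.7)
The plan is to package together three routine verifications, all of which rest on the already-established fact (preceding the statement, in section \ref{sssec:Isomorphisms-groups}) that any isomorphism $\alpha \colon G \iso G_{1}$ of finitely generated groups induces, via pushing a generating system $\eta$ forward to $\alpha \circ \eta$, a bijection $\alpha^{*} \colon S(G_{1}) \iso S(G)$, $[\chi_{1}] \mapsto [\chi_{1} \circ \alpha]$, which restricts to a bijection $\Sigma^1(G_{1}) \iso \Sigma^1(G)$.

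First I would check that the assignment $\alpha \mapsto (\alpha^{-1})^{*}$ is a group homomorphism. The key computation is that for $\alpha, \beta \in \Aut(G)$ and any non-zero character $\chi$ of $G$, one has
\begin{equation*}
(\alpha \circ \beta)^{*}[\chi] = [\chi \circ \alpha \circ \beta] = \beta^{*}([\chi \circ \alpha]) = \beta^{*}\bigl(\alpha^{*}[\chi]\bigr),
\end{equation*}
so that $\alpha \mapsto \alpha^{*}$ is an \emph{anti}-homomorphism into the symmetric group on $S(G)$. Composing with the group anti-automorphism $\alpha \mapsto \alpha^{-1}$ of $\Aut(G)$ yields the desired homomorphism $\alpha \mapsto (\alpha^{-1})^{*}$.

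Next I would verify that each $(\alpha^{-1})^{*}$ is a self-homeomorphism of $S(G)$. Continuity is immediate: $\alpha^{*}$ is the map induced on rays by the $\R$-linear map $\Hom(\alpha, \R) \colon \Hom(G,\R) \to \Hom(G,\R)$, which is continuous (even a linear automorphism, since $\alpha$ is an automorphism). The identity $\alpha^{*} \circ (\alpha^{-1})^{*} = (\alpha^{-1} \circ \alpha)^{*} = \id_{S(G)} = (\alpha^{-1})^{*} \circ \alpha^{*}$ (from the previous paragraph) then shows that $(\alpha^{-1})^{*}$ is a homeomorphism with inverse $\alpha^{*}$.

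Finally, the stability of $\Sigma^1(G)$ under the action is the specialization of the invariance result in section \ref{sssec:Isomorphisms-groups} to the case $G_{1} = G$: applying that result to $\alpha$ gives $\alpha^{*}(\Sigma^1(G)) = \Sigma^1(G)$, and applying it to $\alpha^{-1}$ gives $(\alpha^{-1})^{*}(\Sigma^1(G)) = \Sigma^1(G)$, as required. No step here presents a genuine obstacle; the only thing that requires care is the order of composition in the first step, which forces the appearance of $\alpha^{-1}$ in the definition of the action in order to obtain a covariant homomorphism rather than an anti-homomorphism.
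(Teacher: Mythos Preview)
Your proposal is correct and follows essentially the same approach as the paper: the proposition is stated there as a summary of the preceding discussion in section~\ref{sssec:Isomorphisms-groups}, which establishes exactly the invariance of $\Sigma^1$ under isomorphisms and notes the contravariance forcing the passage to $(\alpha^{-1})^*$. You have simply made explicit the anti-homomorphism computation and the continuity via the induced linear map $\Hom(\alpha,\R)$, both of which the paper leaves implicit.
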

\index{Invariant Sigma1 and@Invariant $\Sigma^1$ and!automorphisms}
%
\paragraph{Application to free soluble groups.} 
\label{para:Application-relatively-free-groups}
The invariant $\Sigma^1$ provides a method for showing 
that finitely generated groups in certain classes of soluble groups are infinitely related.
Prior to turning to this application,
we record an easy consequence of Theorems 
\ref{thm:Consequence-finite-presentation} and
\ref{thm:Characterizing-fg-N}
and the preceding proposition:
\begin{crl}
\label{crl:Fp-groups-without-free-subgroups-and-automorphisms}
Let $G$ be a finitely presented group with no non-abelian free subgroups. 
If $G$ admits an automorphism inducing $- \id$ on $G/\sqrt{G'}$
then $\Sigma^1(G) = S(G)$ and so $G'$  is a finitely generated group.
\end{crl}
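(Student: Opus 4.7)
The plan is to combine Theorem \ref{thm:Consequence-finite-presentation}, Proposition \ref{prp:Automorphisms-and-invariant} and Theorem \ref{thm:Characterizing-fg-N}, the crux being the observation that an automorphism inducing $-\id$ on $G/\sqrt{G'}$ induces the antipodal map on the sphere $S(G)$.

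First I would unravel the notation: the symbol $\sqrt{G'}$ denotes the isolator of $G'$ in $G$, i.e.\ the preimage in $G$ of the torsion subgroup $T(G_{\ab})$, so $G/\sqrt{G'}$ is canonically isomorphic to the free abelian quotient of $G_{\ab}$. Since $\R$ is torsion-free, every character $\chi \colon G \to \R$ factors through $G \epi G/\sqrt{G'}$ (this is the content of the proof of Lemma \ref{lem:Dimension-S(G)}). Now let $\alpha \in \Aut(G)$ be the given automorphism. Because $\alpha$ induces $-\id$ on $G/\sqrt{G'}$, so does $\alpha^{-1}$, and hence $\chi \circ \alpha^{-1} = -\chi$ for every character $\chi$. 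By Proposition \ref{prp:Automorphisms-and-invariant} the map $(\alpha^{-1})^* \colon S(G) \to S(G)$, $[\chi] \mapsto [\chi \circ \alpha^{-1}]$, therefore coincides with the antipodal map $[\chi] \mapsto [-\chi]$, and $\Sigma^1(G)$ is stable under it, so $\Sigma^1(G) = -\Sigma^1(G)$.

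Next I would invoke Theorem \ref{thm:Consequence-finite-presentation}: since $G$ is finitely presented and contains no non-abelian free subgroup, $\Sigma^1(G) \cup -\Sigma^1(G) = S(G)$. Together with the equality $\Sigma^1(G) = -\Sigma^1(G)$ just established, this forces $\Sigma^1(G) = S(G)$.

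Finally, to conclude that $G'$ is finitely generated, I would apply Theorem \ref{thm:Characterizing-fg-N} with $N = G'$. The factor $G/G'$ is abelian, and every non-zero character of $G$ vanishes on $G'$, so $S(G,G') = S(G)$; the inclusion $S(G,G') \subseteq \Sigma^1(G)$ therefore holds and yields the finite generation of $G'$. No serious obstacle is expected; the only point that must be handled carefully is the passage from ``induces $-\id$ on $G/\sqrt{G'}$'' to ``acts as the antipodal map on $S(G)$,'' which rests on the fact that characters factor through the maximal torsion-free abelian quotient of $G$.
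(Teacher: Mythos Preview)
Your proposal is correct and follows essentially the same route as the paper: show that the automorphism induces the antipodal map on $S(G)$, combine $\Sigma^1(G)=-\Sigma^1(G)$ with $\Sigma^1(G)\cup-\Sigma^1(G)=S(G)$ from Theorem~\ref{thm:Consequence-finite-presentation}, and then invoke Theorem~\ref{thm:Characterizing-fg-N}. You merely spell out in more detail why ``$\alpha$ induces $-\id$ on $G/\sqrt{G'}$'' translates into ``$(\alpha^{-1})^*$ is the antipodal map,'' which the paper passes over in a single line.
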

\begin{proof}
If $\alpha$ induces $-\id$ on $G/\sqrt{G'}$ 
then  $\Sigma^1(G) = (\alpha^{-1})^*(\Sigma^1(G)) = -\Sigma^1(G)$. 
On the other  hand, 
Theorem \ref{thm:Consequence-finite-presentation}
guarantees that  $S(G) = \Sigma^1(G) \cup -\Sigma^1(G)$. 
So $\Sigma^1(G) = S(G)$, 
whence $G'$ is finitely generated  by Theorem \ref{thm:Characterizing-fg-N}. 
\end{proof}

The corollary can be applied to relatively free groups in a variety ${\VV}$ 
that is distinct from the variety of all  groups
(see \cite[Section I.3]{Neu67b} for terminology and results relevant to the present context).
If $\VV$ is a soluble variety, 
Corollary \ref{crl:Fp-groups-without-free-subgroups-and-automorphisms},  
in conjunction with a result of J. R. J. Groves \cite{Gro71}, 
yields the next result (see \cite[Thm.\;8]{Str84}, cf. \cite[Cor.\;B1]{BiSt78}).
\index{Neumann, H.}
\index{Groves, J. R. J.}
\index{Bieri, R.}
\index{Strebel, R.}
\begin{crl}
A finitely generated relatively free group of a soluble variety $\VV$ admits a finite presentation 
if, and only  if, it is nilpotent-by-finite.
\end{crl}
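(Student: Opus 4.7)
The ``if'' direction is classical and I would dispatch it in one sentence: a finitely generated nilpotent group is finitely presented (\eg\ by the Hirsch length induction), and a finite extension of a finitely presented group is finitely presented, so a nilpotent-by-finite group is always finitely presented, regardless of $\VV$.

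For the non-trivial direction, let $F = F_n(\VV)$ be a finitely generated relatively free group of the soluble variety $\VV$, and assume $F$ is finitely presented. The plan is to produce the hypotheses of Corollary \ref{crl:Fp-groups-without-free-subgroups-and-automorphisms} and then invoke Groves' theorem from \cite{Gro71}. First, since $\VV$ is soluble, $F$ is soluble and hence contains no non-abelian free subgroup. Second, I claim that the inversion map on the free generators, $x_i \mapsto x_i^{-1}$, extends to an automorphism $\alpha$ of $F$: it is an automorphism of the absolutely free group of rank $n$, and the verbal subgroup of $\VV$ is characteristic, so the automorphism descends to $F = F_n(\VV)$. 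By construction $\alpha$ acts as $-\id$ on $F_{\ab}$, and therefore also on the torsion-free quotient $F/\sqrt{F'}$.

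With both hypotheses in place, Corollary \ref{crl:Fp-groups-without-free-subgroups-and-automorphisms} gives that $F'$ is finitely generated. The final step is to invoke Groves' theorem \cite{Gro71}, which states that a finitely generated relatively free soluble group whose derived subgroup is finitely generated is nilpotent-by-finite; this gives the desired conclusion.

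The only substantial point to be checked carefully is the existence of the inversion automorphism on the relatively free group, but this follows from the characteristicness of the verbal subgroup defining $\VV$; the rest is a direct assembly of previous results, with Groves' theorem doing the hard structural work as a black box.
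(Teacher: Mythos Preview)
Your argument is essentially the paper's proof, reordered: both use the inversion automorphism on a free generating set together with Corollary \ref{crl:Fp-groups-without-free-subgroups-and-automorphisms}, and both appeal to Groves \cite{Gro71} for the structural input. The paper applies Groves' dichotomy first and then the corollary in contrapositive form; you apply the corollary first and then Groves.

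One point to tighten: what you call ``Groves' theorem'' --- that a finitely generated relatively free soluble group with finitely generated derived subgroup is nilpotent-by-finite --- is not the statement actually proved in \cite{Gro71}. Groves' Theorem~A(ii) is a dichotomy on soluble varieties: either $\VV$ contains some $\AA_p\AA$, or $\VV \subseteq \NN_c\BB_e$ for some $c,e$. Your claimed theorem is a \emph{consequence} of this dichotomy, but you still owe the short argument bridging the two: if $\VV \supseteq \AA_p\AA$ and $F$ is non-cyclic, then $F$ surjects onto a non-cyclic free $\AA_p\AA$-group, whose derived subgroup is an infinite elementary abelian $p$-group, contradicting $F'$ finitely generated; hence $\VV \subseteq \NN_c\BB_e$ and $F$ is nilpotent-by-finite. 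The paper supplies exactly this step, and once you add it your proof and the paper's coincide.
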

\index{Free soluble groups}
\begin{proof}
By Theorem A(ii) in \cite{Gro71} every soluble variety 
either contains a met\-abelian subvariety of the form $\AA_p \AA$, 
or it is contained in a product variety of the form ${\NN}_c {\BB}_e$. 
Here $\AA, \AA_p$ are the varieties respectively, of 
abelian groups and elementary abelian $p$-groups, 
while ${\NN}_c$  denotes the variety of all nilpotent groups of class at most $c$ 
and ${\BB}_e$ is the Burnside variety of exponent $e$. 

Suppose $\VV$  contains $\AA_p \AA$ for some prime $p$ 
and $G$ is a non-cyclic, finitely generated, relatively free group of $\VV$ with  basis ${\BB}$. 
Then $G/(G'' \cdot (G')^p)$ is a non-cyclic $\AA_p  \AA$-free group 
and so the derived group of $G$ is infinitely  generated. 
Since the function which assigns to each basis element in ${\BB}$ its inverse, 
extends to an automorphism of $G$ inducing $-\id$ on  $G_{\ab}$, 
the previous corollary forces $G$ to be infinitely related. 
A  relatively free group $G$ in $\VV$ admitting a finite presentation is thus either cyclic, 
or $\VV$ is contained in a product variety of the form ${\NN}_c {\BB}_e$, 
whence $G$ is nilpotent-by-finite. 
The converse is clear.
\end{proof}

\subsubsection{Epimorphisms of groups} 
\label{sssec:Epimorphisms-groups}
%
The morphism of spheres induced by a group homomorphism $\varphi \colon G \to G_{1}$
is, in general, only defined on a proper subset of $S(G_{1})$.
The situation improves if the homomorphism is surjective or if is the inclusion of a subgroup with finite index
(see Corollary \ref{crl:Injectivity-phi-star}).
In both cases, 
a simple formula relates then the complements of the invariants to each other.

The case of an epimorphism has already been treated in Proposition 
\ref{prp:Comparison-invariants-G-and-Q}.
We restate this result as
 \begin{crl}
\label{crl:Sigma1-epimorphism}
\index{Invariant Sigma1 and@Invariant $\Sigma^1$ and!quotient groups}
If $\pi \colon G \epi Q$ be an epimorphism of finitely generated groups,
the induced morphism $\pi^*$  maps the sphere $S(Q)$ 
bijectively onto the subsphere  $S(G, \ker \varphi)$ of $S(G)$ 
and the inclusion
$\pi^*(\Sigma^1(Q) ^c) \subseteq\Sigma^1(G) ^c $ holds. 

If $\ker \pi$ is  finitely generated \emph{qua} group 
this inclusion can be sharpened to the equality 
$\pi^*(\Sigma^1(Q) ^c) = \Sigma^1(G) ^c \cap S(G, \ker \varphi)$.
\end{crl}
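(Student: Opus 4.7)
The statement bundles together three distinct claims: the bijectivity of $\pi^*$ onto the subsphere $S(G, \ker \pi)$, the easy inclusion of $\pi$-pullbacks of $\Sigma^1(Q)^c$ into $\Sigma^1(G)^c$, and the sharpening to an equality under the assumption that $\ker \pi$ is finitely generated as a group. The first of these is purely linear algebra and is already recorded as Lemma \ref{lem:Image-morphism}(ii), so my plan is to invoke it and then concentrate on the two graph-theoretic assertions.

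For the inclusion $\pi^*(\Sigma^1(Q)^c) \subseteq \Sigma^1(G)^c$, I would choose any finite generating system $\eta \colon \XX \to G$ and pass to $\pi \circ \eta \colon \XX \to Q$, which generates $Q$. Left multiplication on vertices and on the first factor of the edge set yields a covering map $\pi_* \colon \Gamma(G,\XX) \to \Gamma(Q,\XX)$. For a non-zero character $\bar{\chi} \colon Q \to \R$ and its pull-back $\chi = \bar{\chi}\circ \pi$, the fact that $\bar{\chi}$ vanishes on $\ker \pi$ implies that $\pi_*$ restricts to a surjection $\Gamma(G,\XX)_\chi \epi \Gamma(Q,\XX)_{\bar{\chi}}$. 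A surjective image of a connected graph is connected, and this is exactly the contrapositive of the desired inclusion.

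For the sharpening, I would use Theorem \ref{thm:Sigma1-well-defined} to enlarge the generating system so that it is of the form $\XX = \TT \cup \AA$ where $\AA$ is a finite generating set of $\ker \pi$, and then take a vertex $g$ of $\Gamma(G,\XX)_\chi$, lift a path in $\Gamma(Q,\XX)_{\bar{\chi}}$ from $1_Q$ to $\pi(g)$ to a path $p_1$ in $\Gamma(G,\XX)_\chi$ ending in $g$, whose origin $g_1$ lies in $\ker \pi$. Since $\AA$ generates $\ker \pi$ and $\chi$ vanishes on $\AA$, any $\AA^{\pm}$-word representing $g_1$ spells out a path $p_0$ from $1_G$ to $g_1$ that lies in the slice $\Gamma_{\chi}^{\{0\}}$ and a fortiori in $\Gamma(G,\XX)_\chi$. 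Concatenating $p_0$ with $p_1$ connects $1_G$ to $g$ inside $\Gamma(G,\XX)_\chi$.

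The main obstacle is the lifting step in the last paragraph: without the hypothesis that $\ker \pi$ is finitely generated, one can still lift the $Q$-path to a path in $\Gamma(G,\XX)_\chi$, but one cannot control the position of the lifted origin $g_1$ within $\ker \pi$, and there is no way to walk from $1_G$ to $g_1$ while staying in the monoid $G_\chi$. The role of finite generation of $\ker \pi$ is precisely to place a fixed finite set of translations inside the zero-slice of $\chi$, so that an arbitrary element of $\ker \pi$ is reachable from $1_G$ by a path at $\chi$-level $0$. Once this point is understood, the proof is essentially a transcription of the argument already presented for Proposition \ref{prp:Comparison-invariants-G-and-Q} and its addendum.
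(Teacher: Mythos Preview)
Your proposal is correct and follows essentially the same approach as the paper: the corollary is stated there as a restatement of Proposition \ref{prp:Comparison-invariants-G-and-Q}, whose proof uses precisely the covering map $\pi_* \colon \Gamma(G,\XX) \to \Gamma(Q,\XX)$, the surjection of subgraphs for the easy direction, and the path-lifting argument with a generating system enlarged by $\AA \subset \ker \pi$ for the converse under finite generation. Your final paragraph explaining the role of the finite-generation hypothesis is a welcome clarification but does not deviate from the paper's line of argument.
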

\begin{example}
\label{example:Direct-product-groups}
Let $G$ be the direct internal product of two normal subgroups $G_{1}$ and $G_{2}$
and let $\pi_i \colon G \epi G_{i}$ be the canonical projection with kernel $G_{j}$ where $\{i,j\} = \{1,2 \}$.
If the corollary is applied to the projections $\pi_{1}$ and $\pi_{2}$ 
the equalities
\[ 
\Sigma^1(G)^c \cap S(G,G_{2}) = \pi_{1}^*(\Sigma^1(G_{1})^c) \quad \text{and} \quad
\Sigma^1(G)^c \cap S(G,G_{1}) = \pi_{2}^*(\Sigma^1(G_{2}^*)^c)
\]
result. 
It follows  that $\Sigma^1(G)^c$ contains $\pi_{1}^*(\Sigma^1(G_{1})^c) \cup \pi_{2}^*(\Sigma^1(G_{2})^c)$. 
Proposition \ref{prp:Sigma1-direct-product} yields a better result:
it shows that this inclusion is an equality
\end{example}
%
\paragraph{Applications to infinitely related soluble groups.} 
\label{para:Application-infinitely-related-soluble-groups}
Assume $G$ is a finitely generated soluble group and $Q = G/N$ is a factor group of $G$.
It may happen that $G$ admits a finite presentation, but $G/N$ does no longer have this property
(cf.\;Abels' group mentioned in example \ref{example:Abels-matrix-group}).
\index{Abels, H.}
If, however, $Q$ can be shown to be infinitely related 
with the help of Theorem \ref{thm:Consequence-finite-presentation},
then $G$ will be infinitely related, too.
One has:
\begin{crl}
\label{crl:Groups-with-infinitely-related-quotients}
Let $G$ be a finitely generated group with no non-abelian free subgroups. 
If $Q$ is a quotient group of $G$ and if $\Sigma^1(Q)^c$ contains a pair of antipodal points,
then $G$ is infinitely related.
\end{crl}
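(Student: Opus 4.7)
The plan is to proceed by contraposition, assuming $G$ is finitely related and deducing that $\Sigma^1(Q)^c$ cannot contain a pair of antipodal points. The argument is a direct assembly of two earlier results and does not require revisiting either proof.

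First, I would apply Theorem \ref{thm:Consequence-finite-presentation} to $G$ itself. The hypothesis that $G$ contains no non-abelian free subgroup together with the assumed finite presentability yields $\Sigma^1(G) \cup -\Sigma^1(G) = S(G)$, which is equivalent to saying that no pair of antipodal points lies simultaneously in $\Sigma^1(G)^c$. It is worth noting that, by contrast with some other arguments in this chapter, we do \emph{not} need to transfer the hypothesis ``no non-abelian free subgroup'' from $G$ to $Q$; the theorem will be invoked only on $G$.

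Next, I would let $\pi \colon G \epi Q$ be the canonical projection and appeal to Corollary \ref{crl:Sigma1-epimorphism}: the induced morphism $\pi^*$ identifies $S(Q)$ with the subsphere $S(G,\ker \pi)$ of $S(G)$ and satisfies $\pi^*(\Sigma^1(Q)^c)\subseteq \Sigma^1(G)^c$. Since $\pi^*$ is induced by the linear map $\Hom(\pi,\R)$, it commutes with negation, so $\pi^*[-\chi]=-\pi^*[\chi]$; in particular, antipodal pairs in $S(Q)$ are sent to antipodal pairs in $S(G)$.

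Combining these two steps closes the argument: if $\{[\chi],[-\chi]\}\subseteq \Sigma^1(Q)^c$ were a pair of antipodes, then $\{\pi^*[\chi],\,-\pi^*[\chi]\}$ would be a pair of antipodes contained in $\Sigma^1(G)^c$, contradicting the conclusion of the first step. There is no real obstacle in this proof; the only minor point to verify explicitly is that $\pi^*$ intertwines the antipodal maps on the two spheres, which is immediate from its definition via the linear map $\Hom(\pi,\R)$.
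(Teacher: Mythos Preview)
Your proof is correct and uses exactly the same two ingredients as the paper's argument: Corollary~\ref{crl:Sigma1-epimorphism} to lift the antipodal pair from $\Sigma^1(Q)^c$ to $\Sigma^1(G)^c$, and Theorem~\ref{thm:Consequence-finite-presentation} to rule this out when $G$ is finitely presented without non-abelian free subgroups. The only cosmetic difference is that you phrase it as a contraposition while the paper proceeds directly; your explicit remark that $\pi^*$ intertwines the antipodal maps is a nice touch that the paper leaves implicit.
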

\begin{proof}
Assume $\pi\colon G \epi Q$ is an epimorphism 
and $\psi \colon Q \to \R$ and $-\psi$ represent points of $\Sigma^1(Q)^c$.
By Corollary \ref{crl:Sigma1-epimorphism} the character $\chi = \psi \circ \pi$ and its negative 
represent points in $\Sigma^1(G)^c$ 
and so Theorem \ref{thm:Consequence-finite-presentation} permits us to conclude
that $G$ does not admit a finite presentation.
\end{proof}
%
\subsubsection{Subgroups of finite index} 
\label{sssec:Subgroup-finite-index}
%
In computing the invariant of a group it is sometimes useful 
to pass to a subgroup of finite index.
The following result provides the link between the invariants of the group and its subgroup:
\begin{prp}
\label{prp:Sigma1-finite-index}
\index{Invariant Sigma1 and@Invariant $\Sigma^1$ and!subgroups of finite index}
Suppose $G$ is finitely generated and
$\iota \colon  H \hookrightarrow G$  is the inclusion of a subgroup of  finite index.
Then the induced morphism $\iota^*$  maps the sphere $S(G)$ 
bijectively onto the subsphere  $S(H, H \cap G')$  
and the following formula holds: 
\begin{equation}
\label{eq:Invariant-Sigma1-passage-subgroup-finite-index}
\iota^*\left(\Sigma^1(G)^c\right) = \Sigma^1(H)^c \cap S(H, H \cap G').
\end{equation}
\end{prp}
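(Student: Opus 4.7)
The plan is to treat the two assertions separately. Part~1, the bijectivity of $\iota^*$, is elementary: since $G/H$ is finite and $\R$ is torsion-free, every character of $G$ that vanishes on $H$ must be zero, so $S(G,H)^c=S(G)$, whence $\iota^*$ is defined on all of $S(G)$ and injective (alternatively, Corollary~\ref{crl:Injectivity-phi-star} applies because $H\cdot G'$ has finite index in $G$). Lemma~\ref{lem:Image-morphism}(i) identifies the image as $S(H,H\cap G')$.

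Part~2, the equality of complements, is the substantive part; by Part~1 it reduces to the biimplication $[\chi]\in\Sigma^1(G)\iff[\chi|_H]\in\Sigma^1(H)$ for every $[\chi]\in S(G)$. I would fix a finite generating set $\SS$ of $H$, a right transversal $T=\{1,t_2,\ldots,t_n\}$ of $H$ in $G$, and the corresponding generating set $\XX=\SS\cup\{t_2^{\pm1},\ldots,t_n^{\pm1}\}$ of $G$; set $M=\max\{|\chi(t_j)|\}$. The pivotal step is to choose an element $s$ in the normal core $N$ of $H$ in $G$ with $\chi(s)>M$: such $s$ exists because $[G:N]<\infty$ and $\chi\neq 0$ force $\chi|_N\neq 0$, and $N\triangleleft G$ ensures that each conjugate $t_j^{\pm1}st_j^{\mp1}$ lies in $N\subseteq H$ with the same positive $\chi$-value; by Theorem~\ref{thm:Sigma1-well-defined} I may enlarge $\SS$ to contain $s$ and all these conjugates without altering $\Sigma^1(H)$.

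For the reverse direction ($\Leftarrow$) I would apply the $\Sigma^1$-criterion (Theorem~\ref{thm:Sigma1-criterion}) to $H$ with base $s$, producing for each $y\in\SS^\pm$ a path $q_y$ in $\Gamma(H,\SS)$ from $s$ to $ys$ with $v_{\chi|_H}(q_y)>v_{\chi|_H}((1,y))$. To verify the criterion for $G$ at the same base $s$ I would reuse $q_y$ when $x=y\in\SS^\pm$, and for $x=t_j^\epsilon\in T^{\pm1}$ construct $p_x$ as the concatenation of the edge $(s,x)\colon s\to sx=x\cdot(x^{-1}sx)$ with a path inside the coset $xH$ from $x\cdot(x^{-1}sx)$ to $xs$. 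The latter is the $x$-translate of a path in the subgraph $\Gamma(H,\SS)_{\chi|_H}^{[M+1,\infty)}$ joining $x^{-1}sx$ to $s$ (both of $\chi$-value $\chi(s)>M$); such a path exists by Lemma~\ref{lem:Connected-rays}, and the estimate $\chi(s)>M\geq|\chi(x)|$ then secures the required $v_\chi(p_x)>v_\chi((1,x))$.

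For the forward direction ($\Rightarrow$) I would invoke Reidemeister--Schreier rewriting: the decomposition $tx=\alpha(t,x)\cdot\tau(tx)$, with $\tau\colon G\to T$ the transversal map, yields a finite generating set $\ZZ=\{\alpha(t,x)\}\setminus\{1\}$ of $H$, and the vertex-wise projection $g\mapsto g\cdot\tau(g)^{-1}$ converts paths in $\Gamma(G,\XX)$ to paths in $\Gamma(H,\ZZ)$ while lowering $\chi$-values by at most $M$. Given $h_1,h_2\in H_{\chi|_H}$ I would shift them up to $h_1s^k,\,h_2s^k$ for some $k$ with $k\chi(s)>M$, join these inside $\Gamma(G,\XX)_\chi$ by hypothesis, project, and append $s$-ladders $h_j\to h_js\to\cdots\to h_js^k$ (monotone in $\chi$-value) to obtain a path in $\Gamma(H,\ZZ)_{\chi|_H}^{[-M,\infty)}$ from $h_1$ to $h_2$; Lemma~\ref{lem:Connected-rays} upgrades this to connectedness of $\Gamma(H,\ZZ)_{\chi|_H}$, and Theorem~\ref{thm:Sigma1-well-defined} concludes. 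The main obstacle is the parallel book-keeping of $\chi$-values in both directions: each construction shifts values by up to $M$, and absorbing these shifts is precisely what compels the choice of $s$ in the normal core with $\chi(s)>M$.
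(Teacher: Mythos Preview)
Your argument is correct, but the route is considerably more elaborate than the paper's. The paper's single organizing idea is to choose the transversal $\TT$ so that $\chi(\TT)\le 0$ (this is always possible: every right coset $Ht$ meets every half-space $\{\chi\le r\}$). With that choice, both directions become one-line computations.

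For $\Rightarrow$, the paper performs the same Schreier rewriting you do, but because $\chi(t_i)\le 0$ one gets
\[
\chi(y_1\cdots y_i)=\chi(x_1\cdots x_i\cdot t_i^{-1})\ge\chi(x_1\cdots x_i)\ge 0,
\]
so the rewritten path already lies in $\Gamma(H,\YY)_{\chi|_H}$; no shift by $s^k$, no appeal to Lemma~\ref{lem:Connected-rays}. (In fact your shift is redundant even within your own scheme: the projection drops $\chi$ by at most $M$ regardless, so Lemma~\ref{lem:Connected-rays} applies directly to the unshifted path.)

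For $\Leftarrow$, the paper simply takes $\YY\cup\TT$ as generating set of $G$ and, for $g\in G_\chi$, writes $g=h\cdot t$ with $t\in\TT$; then $\chi(h)=\chi(g)-\chi(t)\ge\chi(g)\ge 0$, so an $H$-path from $1$ to $h$ followed by the single edge $t$ stays in $\Gamma_\chi$. This bypasses the $\Sigma^1$-criterion, the normal-core element, and the translated-path construction entirely.

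What your approach buys is an explicit demonstration that the $\Sigma^1$-criterion machinery is strong enough to handle the passage to finite index; the normal-core trick is a nice way to produce a hyperbolic element whose conjugates stay in $H$. But the book-keeping you flag as ``the main obstacle'' is precisely what the paper's sign choice on $\TT$ dissolves. Two minor points: the conjugates $t_j^{\pm1}st_j^{\mp1}$ you add to $\SS$ are never used in your argument as written, and the bracket ``$[M+1,\infty)$'' should be ``$[\chi(s),\infty)$''.
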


\begin{proof}
The first claim follows from Lemma \ref{lem:Image-morphism}
and Corollary \ref{crl:Injectivity-phi-star}.
The second will be proved by going back to the Cayley-graph definition of the invariant 
and exploiting the independence of the invariant on the chosen generating system.

Let $\XX \subset G$ be a finite set generating $G$. 
Given $\chi \colon  G \to \R$,
find a subset  $\TT \subset G$  that contains the unit element 1,
represents the homogeneous space $H\backslash G$ and satisfies $\chi(\TT) \leq 0$. 
For $t \in  \TT$ and $x \in \XX$ 
let $\overline{tx}$ denote the element of  $\TT$ representing $H \cdot tx$. 
Then the family 
$\YY = \{t \cdot x \cdot \overline{tx}^{-1} \mid t \in \TT  \text{ and }  x \in \XX^\pm\}$
generates the subgroup $H$ (see, \eg, \cite[Lemma 7.2.2]{Hal76}).

Assume now that $\Gamma_\chi = \Gamma(G,{\XX})_\chi$ is connected.
For each $h$ in $H \cap  G_\chi$ there exists then a path $p = (1, w)$ in $\Gamma_\chi$ from $1$ to $h$. 
If $w$ has the spelling $x_1x_2 \ldots x_m$ with letters in $\XX^\pm$,
let $u_w$ denote the word
$
(1 \cdot x_1t_1^{-1}) \cdot (t_1x_2t_2^{-1}) \cdots  (t_{m-1}x_mt_m^{-1}) \cdot t_m,
$
where $t_i = \overline{t_{i-1} \cdot x_i}$ for each $i$. 
Since $h \in H$  and as  $1 \in \TT$,
one has $t_m =1$ and so $u$ is actually a  $\YY$-word. 
Moreover, if $y_1y_2 \ldots y_m$ is the ${\YY}$-spelling of $u$ then
\[
\chi(y_1 \ldots y_i) = \chi(x_1 \ldots x_i \cdot t_i^{-1}) \geq \chi(x_1  \ldots x_i) \geq 0
\]
for each $i$. This means that the path $(1, y_1 \ldots y_m)$ runs in the  subgraph $\Gamma(H, \YY)_{\chi|H}$. 
It follows that $\Gamma(G,\YY)_{\chi|H}$ is connected and so $\chi|H$ represents a point of $\Sigma^1(H)$.

Conversely
assume  that $[\chi|H] \in \Sigma^1(H)$.
Let   $\eta \colon \YY \to H$ of $H$ be a finite generating system of $H$
and choose, as before, a subset $\TT$ of $G$ that contains 1,
represents  $H \backslash G$ and satisfies $\chi(\TT) \leq 0$.
Then $\YY \cup \TT $ will generate $G$.

We claim  the graph $\Gamma_{\chi} = \Gamma(G, \YY\; \cup\; \TT)_{\chi}$ is connected.
This graph contains the subgraph $\Gamma(G,\YY)_{\chi|H}$ which is connected by hypothesis.
Moreover, given a vertex $g \in \Gamma_{\chi}$ there exists $h\in H$ and $t \in \TT$   with $g = h \cdot t$. 
Then  $\chi(h) = \chi (g\cdot t^{-1} ) \geq \chi(g) \geq 0$ 
and so there exists a path $p = (1, w(\YY^\pm))$ in the subgraph $\Gamma(G,\YY)_{\chi|H}$ from $1$ to $h$. The path $(1, w(\YY^\pm) \cdot t)$ then leads  inside $\Gamma_{\chi}$ from $1$ to $g$.
\end{proof}
\begin{remark}
\label{remark:Subgroup-finite-index}
An analogue of the finite index theorem is used in \cite{BiSt80} for the invariant $\Sigma^0(Q;A)$
with $Q$ a fg abelian group. 
The aim there is to simplify the construction of a finite presentation of a metabelian group $M$ with quotient $G$
by passing from the abelian quotient $G$ of $M$  to a free abelian quotient $H$ of a subgroup of $M$. 

I wonder whether there are situations 
where Proposition \ref{prp:Sigma1-finite-index} can be applied in the way indicated by
\begin{problem}
\label{problem:Use-finite-index}
Find situations where one is interested in the invariant $\Sigma^1(G)$  a group $G$ 
that admits a subgroup of finite index which is easier to deal with than $G$ itself
and  for which $\Sigma^1$ can be computed.
\end{problem}

The only classes I'm actually aware of are the class of abelian groups
and that of co-compact Fuchsian groups of positive genus.
(The groups of the second class have surface groups of positive genus as subgroups with finite index.)
\end{remark}

%
\subsubsection{Joins of subgroups} 
\label{sssec:Join-subgroups}
%
Let $G$ be a finitely generated group 
which is generated by two finitely generated subgroups  $G_{1}$ and $G_{2}$, say.
Familiar examples of this situation are a) the free product $G_{1} \star G_{2}$
and b) the free product with amalgam $G_{1} \star_{A} G_{2}$.
We aim at finding a useful condition that guarantees 
that a non-zero character $\chi \colon G \to \R$ represents a point of $\Sigma^1(G)$
if its restrictions to $G_{1}$ and to $G_{2}$ are non-zero and represent points in 
$\Sigma^1(G_{1})$ and in $\Sigma^1(G_{2})$, respectively.
Example a) reveals that some extra condition is needed;
indeed the invariant of a free product  $G_{1} \star G_{2}$ with factors distinct from the trivial group is empty
(see Example 3 in section \ref{sssec:Sigma1-first-examples}).

One such extra condition is spelled out in
\begin{lem}
\label{lem:Sigma-1-join-subgroups}
Assume $G$ is  a finitely generated group 
which is generated by two finitely generated subgroups  $G_{1}$ and $G_{2}$.
Then the implication
\begin{equation}
\label{eq:Sigma-1-join-subgroups}
\chi (G_{1} \cap G_{2}) \neq \{0\}, \ [\chi[G_{1}] \in \Sigma^1(G_{1}), \  [\chi[G_{2}] \in \Sigma^1(G_{2})
\Longrightarrow [\chi] \in \Sigma^1(G)
\end{equation}
is valid for  every non-zero character $\chi $ of $G$.
\end{lem}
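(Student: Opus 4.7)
The plan is to apply the geometric $\Sigma^1$-criterion (Theorem \ref{thm:Sigma1-criterion}) to a carefully chosen generating system of $G$, in which the paths required by the criterion will be imported from the corresponding paths supplied by the hypothesis that $[\chi|_{G_1}] \in \Sigma^1(G_1)$ and $[\chi|_{G_2}] \in \Sigma^1(G_2)$.

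First I would pick the element that witnesses the ``gluing'' between the two subgroups. Since $\chi(G_1 \cap G_2) \neq \{0\}$, there exists $t \in G_1 \cap G_2$ with $\chi(t) > 0$ (replace $t$ by $t^{-1}$ if necessary). Next, let $\XX_1$ and $\XX_2$ be finite generating systems of $G_1$ and $G_2$, respectively, both chosen so as to contain $t$; this is legitimate by Theorem \ref{thm:Sigma1-well-defined}. Set $\XX = \XX_1 \cup \XX_2$. Then $\XX$ is a finite generating system of $G$, since $G_1$ and $G_2$ generate $G$. Observe that the Cayley graphs $\Gamma(G_i, \XX_i)$ embed naturally as subgraphs of $\Gamma(G, \XX)$: each $G_i$-edge $(g, x)$ with $x \in \XX_i$ is also an edge of $\Gamma(G, \XX)$, and the restriction of $\chi$ to $G_i$ takes the same values on these edges.

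Now I would verify the geometric $\Sigma^1$-criterion (Theorem \ref{thm:Sigma1-criterion}) for $G$, $\XX$, $\chi$ with the distinguished element $t \in \XX$. By the remark following that theorem it suffices to produce, for every $y \in \XX$, a path $p_y$ in $\Gamma(G, \XX)$ from $t$ to $yt$ satisfying $v_\chi(p_y) > v_\chi((1,y))$. If $y \in \XX_1$, the hypothesis $[\chi|_{G_1}] \in \Sigma^1(G_1)$, combined with Theorem \ref{thm:Sigma1-criterion} applied to $(G_1, \XX_1)$ with the distinguished element $t \in \XX_1$ (which is legal because $\chi|_{G_1}(t) > 0$), yields a path $p_y$ from $t$ to $yt$ inside $\Gamma(G_1, \XX_1)$ with $v_{\chi|_{G_1}}(p_y) > v_{\chi|_{G_1}}((1,y))$; viewed inside $\Gamma(G, \XX)$ this path satisfies exactly the required inequality. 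If $y \in \XX_2$, the symmetric argument using $[\chi|_{G_2}] \in \Sigma^1(G_2)$ and the fact $t \in \XX_2$ produces the required path.

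The theorem then delivers the conclusion that $\Gamma(G, \XX)_\chi$ is connected, i.e. $[\chi] \in \Sigma^1(G)$. The only delicate point I expect is ensuring that $t$ really can serve simultaneously as the ``positive direction'' element required by Theorem \ref{thm:Sigma1-criterion} for both subgroups; this is precisely what the hypothesis $\chi(G_1 \cap G_2) \neq \{0\}$ buys us, and explains why it cannot be dropped (as the free product example shows, where $G_1 \cap G_2 = \{1\}$).
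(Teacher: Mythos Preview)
Your proof is correct and takes a genuinely different route from the paper's. The paper argues by showing directly that $\Gamma(G,\XX)_\chi$ is connected: it writes an arbitrary $g\in G_\chi$ as an alternating product $g_{1,1}g_{2,1}\cdots g_{1,m}g_{2,m}$ with $g_{1,j}\in G_1$, $g_{2,j}\in G_2$, and then, by induction on $m$, inserts correction factors $h_i\in G_1\cap G_2$ between the factors so that every partial product has non-negative $\chi$-value; the connectedness of each $\Gamma(G_i,\XX_i)_{\chi|G_i}$ then supplies a path in $\Gamma_\chi$ for each modified factor, and these concatenate to a path from $1$ to $g$.

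Your argument is shorter and more conceptual: by placing the witness $t\in G_1\cap G_2$ with $\chi(t)>0$ in both $\XX_1$ and $\XX_2$, you arrange that the paths demanded by the $\Sigma^1$-criterion for $(G,\XX,t)$ can be read off, generator by generator, from the criterion applied to $(G_1,\XX_1,t)$ and $(G_2,\XX_2,t)$. This avoids the explicit induction and the insertion of correction elements altogether. The paper's approach, on the other hand, makes visible exactly how an arbitrary element of $G_\chi$ is reached inside $\Gamma_\chi$, and this explicit path-building flavour recurs in other parts of the monograph; but for the lemma at hand your use of the criterion is the cleaner argument.
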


\begin{proof}
For $i =1, 2$, let   $\eta_{i}\colon \XX_{i} \to G_{i}$ be  a finite generating system of $G_{i}$
and let $\Gamma$ denote the Cayley graph  $\Gamma(G, \XX_{1} \cup \XX_{2})$.
For every $g \in G_\chi$, there exists a sequence
\begin{equation}
\label{eq:Factorisation-g}
(g_{1,1}, g_{2,1},g_{1,2}, g_{2,2},\ldots, g_{1,m},g_{2,m}) \text{ with } g_{1,j} \in G_{1} \text{ and }  g_{2,j} \in G_{2}
\end{equation}
whose product $g_{1,1} \cdot g_{2,1}  \cdots g_{1,m} \cdot g_{2,m}$ is equal to $g$.
We aim at proving by induction on $m$ that there exist a path $p$
which leads inside $G_\chi$ from $1$ to $g$.

The existence of such a path is trivial for $m = 0$; 
as this case tells one little about the ideas involved in the inductive step, 
we consider next the case $m=1$. 
Then $g = g_{1} \cdot g_{2}$ with $g_{i} \in G_{i}$.
Since $\chi$ does not vanish on the intersection $H = G_{1} \cap G_{2}$,
there exists elements $h_{1}$, $h_{2}$ in $H$ so that $g'_{1} = g_{1}\cdot h_{1}$, as well as
$g'_{2} = h^{-1}_{1}\cdot g_{2} h_{2} $  and $h_2$ have non-negative $\chi$-values.
Since $g'_{1} \in G_{1}$ and $\Gamma(G_{1}, \XX_{1})_{\chi|G_{1}}$ is connected,
there exists a path $p_{1} = (1, w_{1})$ in $\Gamma(G_{1}, \XX_{1})_{\chi|G_{1}}$ that leads from $1$ to $g'_{1}$.
Similarly, 
there exists a path $p_{2} = (1, w_{2})$ that connects $1$ to $g'_{2}$ in 
$\Gamma(G_{2}, \XX_{2})_{\chi|G_{2}}$. 
The concatenated path $p_{12} = (1, w_{1}w_{2}) $ 
connects then the vertex $1$ with the vertex  $g'_{1} \cdot g'_{2}$ and it stays in the subgraph $\Gamma_{\chi}$.
As $h_{2}$ is a vertex of $\Gamma(G_{1}, \XX_{1})_{\chi|G_{1}}$, 
there exists, finally, a path $p_{3} = (1, w_{3})$ from $1$ to $h_{2}$ in $\Gamma(G_{1}, \XX_{1})_{\chi|G_{1}}$.
The path $p = (1, w_{1}w_{2}w_{3}^{-1})$ leads then from $1$ to 
$g'_{1} g'_{2}h^{-1}_{2} = g_{1} h_{1} \cdot h^{-1} g_{2}h_{2} \cdot h^{-1}_{2} = g_{1}g_{2} = g$
without leaving $\Gamma_{\chi}$.
This latter claim follows from formulae \eqref{eq:Properties-valuation-word}:
\begin{align*}
v_{\chi} (w_{1}w_{2}w_{3}^{-1}) 
&= 
\min\left\{
v_{\chi}(w_{1}), \chi(g'_{1})+ v_{\chi}(w_{2}),  \chi(g'_{1}g'_{2})+ v_{\chi}(w_{3}^{-1}) 
\right \}\\
&=
\min\left\{ 
0,  \chi(g'_{1})+ 0, \chi(g'_{1}g'_{2}) + \chi(h^{-1}_{2}) + v_{\chi}(w_{3})
\right \} \\
&= 0.
\end{align*}

The inductive step follows the same pattern as the case just treated.
Assume $g$ is the product of the sequence \eqref{eq:Factorisation-g} with $2m$ factors.
Let  $g'$ be the product of the first $2m-2$ factors.
There exists elements $h_{0}$, $h_{1}$, $h_{2}$ in $H_{\chi | H}$ so that 
\[
\chi(g'h_{0}) \geq 0,\quad g'_{1} = h^{-1}_{0}g_{1,m}h_{1} \in (G_{1})_{\chi|G_{1}}, \quad 
g'_{2} = h^{-1}_{1}g_{2,m}h_{2} \in (G_{2})_{\chi|G_{2}}.
\]
By the inductive hypothesis one can find a path $p' = (1,w')$ 
which leads from $1$ to $g'h_0$ and stays in $\Gamma_{\chi}$.
In addition, one sees as before that there exists paths  $p_{1}$, $p_{2}$ and $p_{3} $ 
enjoying the following properties:
\begin{itemize}
\item $p_{1} = (1, w_{1})$ runs from $1$ to $g'_{1}$ in $\Gamma(G_{1}, \XX_{1})_{\chi|G_{1}}$,
\item $p_{2} = (1, w_{2})$ runs  from $1$ to $g'_{2}$ in $\Gamma(G_{2}, \XX_{1})_{\chi|G_{2}}$,
\item $p_{3} = (1, w_{3})$ runs  from $1$ to $h_{2}$ in $\Gamma(G_{1}, \XX_{1})_{\chi|G_{1}}$.
\end{itemize}
The concatenated path $p = (1, w' w_{1}w_{2} w_{3}^{-1})$ 
then leads inside $\Gamma_{\chi}$ from $1$ to $g$.
\end{proof}

A useful generalization of the previous lemma is
\begin{prp}
\label{prp:Sigma-1-join-subgroups}
Assume $G$ is generated by a finite collection  $\{ G_v  \mid v \in V \} $ 
of finitely generated subgroups.
Given a non-zero character $\chi \colon G \to \R$,
let $\GG(\chi)$ denote the combinatorial graph with vertex set $V$ 
and edge set the set of those pairs $\{u,v\}$ for which $\chi$ is non-zero on the intersection $G_u \cap G_v$.
If the conditions
\begin{enumerate}[(i)]
\item for every $v \in V$,
the restriction of $\chi$ to $G_v$ represents a point of $\Sigma^1(G_v)$,
\item the graph $\GG(\chi)$ is connected
\end{enumerate}
are satisfied, the character $\chi$  represents a point of $\Sigma^1(G)$.
\end{prp}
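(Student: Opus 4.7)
The plan is to reduce the statement to Lemma \ref{lem:Sigma-1-join-subgroups} by induction on $|V|$, using a spanning tree of $\GG(\chi)$ to organize the reduction.

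The case $|V|=1$ is trivial (then $G = G_v$ and $\chi = \chi|_{G_v}$), while $|V|=2$ is precisely Lemma \ref{lem:Sigma-1-join-subgroups}: the connectedness of $\GG(\chi)$ means that the single edge is present, \ie, $\chi(G_{v_1}\cap G_{v_2}) \neq \{0\}$, and hypothesis (i) supplies the remaining assumptions of the lemma.

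For the inductive step, pick a spanning tree $T$ of $\GG(\chi)$ and let $v_0$ be a leaf of $T$, with unique $T$-neighbour $v_1$. Set $V' = V \smallsetminus \{v_0\}$ and $G' = \gp(\{G_v \mid v \in V'\})$. Then $G$ is generated by the two finitely generated subgroups $G'$ and $G_{v_0}$, and I would verify that the hypotheses of Lemma \ref{lem:Sigma-1-join-subgroups} hold for this pair:
\begin{itemize}
\item Since $v_1 \in V'$, the restriction $\chi|_{G'}$ is non-zero (indeed $\chi|_{G_{v_1}} \neq 0$, being a point of $\Sigma^1(G_{v_1})$).
\item The edge $\{v_0,v_1\}$ of $\GG(\chi)$ yields $\chi(G_{v_0}\cap G_{v_1}) \neq \{0\}$; since $G_{v_1}\subseteq G'$, this forces $\chi(G_{v_0} \cap G') \neq \{0\}$.
\item $[\chi|_{G_{v_0}}] \in \Sigma^1(G_{v_0})$ by hypothesis (i).
\item $[\chi|_{G'}] \in \Sigma^1(G')$ by the inductive hypothesis applied to $G'$ with the family $\{G_v \mid v\in V'\}$; this needs that the analogue of (i) holds (it does, as it is just (i) restricted to $V'$) and that the associated graph $\GG(\chi|_{G'})$ on $V'$ is connected. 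The latter graph coincides with the subgraph of $\GG(\chi)$ induced on $V'$ (since $G_u \cap G_v \subseteq G'$ for $u,v \in V'$), and it contains the spanning tree $T \smallsetminus \{v_0\}$ of $V'$, so it is connected.
\end{itemize}
Applying Lemma \ref{lem:Sigma-1-join-subgroups} to $G = \gp(G', G_{v_0})$ then yields $[\chi] \in \Sigma^1(G)$.

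The only subtle point — and the only place where something could go wrong — is checking that passing from $V$ to $V'$ really preserves hypothesis (ii); this is where the choice of $v_0$ as a \emph{leaf} of the spanning tree $T$ is essential, because deleting an interior vertex of $T$ would disconnect it and spoil the induction. Beyond this combinatorial observation, the argument is a routine bootstrap from the two-subgroup case, so no serious obstacle is expected.
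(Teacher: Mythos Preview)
Your proposal is correct and follows essentially the same route as the paper: induction on $|V|$, choose a spanning tree of $\GG(\chi)$, delete a leaf $v_0$, apply the induction hypothesis to the join of the remaining $G_v$, and finish with Lemma~\ref{lem:Sigma-1-join-subgroups}. Your write-up is slightly more detailed in checking the hypotheses, but the argument is the same.
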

\index{Computation of Sigma1@Computation of $\Sigma^1$ for!joins of subgroups}

\begin{proof}
We argue by induction on $n = \card  V$.
If $n=2$, the claim holds by Lemma \ref{lem:Sigma-1-join-subgroups};
so assume $n > 2$. 
Choose a spanning tree $T$ of $\GG(\chi)$
and then a vertex  $v_0$ of degree 1 in $T$.
Let $\GG_1$ denote the subgraph of $\GG(\chi)$ 
obtained by omitting $v_0$ and all the edges incident with it;
it is connected.
Define $G_1\subseteq G$ to be the subgroup generated by the subgroups $G_v$ with $v \neq v_0$.
Then $\GG(\chi |G_1)$ coincides with $\GG_1$,
is therefore connected,
and so $[\chi|G_1] \in \Sigma^1(G_1)$ by the induction hypothesis.
The claim now follows from Lemma  \ref{lem:Sigma-1-join-subgroups},
applied with $G_1$ as defined before and $G_2 = G_{v_0}$.
\end{proof}
%
%
\subsection[$\Sigma^1$ of graph groups]{Application to graph groups} 
\label{ssec:Application-right-angled-Artin-groups}
%
The notion of a graph group (or right angled Artin group) has been introduced in section 
\ref{sssec:Fg-normal-subgroups-right-angled Artin-group}.
Such a group is given by a finite combinatorial graph 
\[
\Delta = (\XX = \{x_{1}, \ldots,x_{n} \}, \edg (\Delta));
\]
the graph encodes a presentation of the group,
namely
\begin{equation}
\label{eq:Right-angled-Artin-group-bis}
G_{\Delta}
= 
\langle x_{1}, x_{2}, \ldots, x_{n} 
\mid 
x_{j}x_{\ell} = x_{\ell}x_{j} \text{ for every edge } \{x_{j} , x_{\ell}\} \in \edg (\Delta) \rangle.
\end{equation}
In this section we establish a formula,
announced in \ref{sssec:Fg-normal-subgroups-right-angled Artin-group},
for the complement of the invariant $\Sigma^1(G_{\Delta})$
and  illustrate it by some examples.
\index{Graph groups!definition}
%
%
\subsubsection{Determination of the invariant} 
\label{sssec:Right-angled-Artin-groups-computing-Sigma1}
%
The determination will rely on Proposition \ref{prp:Sigma-1-join-subgroups}
and on the following 
\begin{lem}
\label{lem:Structure-right-angled-Artin-groups}
If $\Delta'$ is a full subgraph of the graph $\Delta$,
the following assertions hold:
\begin{enumerate} [(i)]
\item the group $G_{\Delta'}$ is a retract of $G_{\Delta}$;
\item if $\Delta'$ is not connected,  $G_{\Delta'}$ is a free product of two non-trivial subgroups 
and so its invariant is empty.
\end{enumerate}
\end{lem}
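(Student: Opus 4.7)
The plan is to derive both assertions directly from the presentation \eqref{eq:Right-angled-Artin-group-bis}, using the fact that a full subgraph $\Delta'$ of $\Delta$ inherits exactly those edges of $\Delta$ whose endpoints lie in $\ver(\Delta')$.

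For part (i), I would first observe that the inclusion $\ver(\Delta') \hookrightarrow \ver(\Delta)$ gives a well-defined homomorphism $\iota \colon G_{\Delta'} \to G_\Delta$, because every commutator relation in the presentation of $G_{\Delta'}$ is already a relation in $G_\Delta$ (the edges of $\Delta'$ form a subset of the edges of $\Delta$). Next, I would define a candidate retraction $r \colon G_\Delta \to G_{\Delta'}$ on the generating set by
\[
r(x_i) = \begin{cases} x_i & \text{if } x_i \in \ver(\Delta'),\\ 1 & \text{otherwise.} \end{cases}
\]
To see that $r$ extends to a homomorphism, one must check each defining relator $[x_j,x_\ell]=1$ of $G_\Delta$ associated with an edge $\{x_j,x_\ell\}\in\edg(\Delta)$. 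If one of the two vertices is absent from $\Delta'$ the image of the commutator is trivial. If both vertices lie in $\ver(\Delta')$, then, since $\Delta'$ is \emph{full}, the edge $\{x_j,x_\ell\}$ already belongs to $\edg(\Delta')$, so $[x_j,x_\ell]=1$ holds in $G_{\Delta'}$. This is the step where fullness is essential. The composition $r\circ \iota$ is the identity on generators, hence the identity of $G_{\Delta'}$.

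For part (ii), suppose $\Delta'$ is disconnected and write $\ver(\Delta') = V_1 \sqcup V_2$, where $V_1$ and $V_2$ are non-empty unions of connected components with no edges between them. Let $\Delta'_i$ denote the full subgraph of $\Delta'$ on $V_i$; then $\edg(\Delta') = \edg(\Delta'_1) \sqcup \edg(\Delta'_2)$. Inspecting the presentation \eqref{eq:Right-angled-Artin-group-bis} of $G_{\Delta'}$, no defining relator involves a generator from $V_1$ and one from $V_2$, so the presentation splits as the free product of presentations of $G_{\Delta'_1}$ and $G_{\Delta'_2}$; thus $G_{\Delta'} \cong G_{\Delta'_1} \star G_{\Delta'_2}$. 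Both factors are non-trivial, since each $V_i$ is non-empty and each generator $x \in V_i$ is sent to a non-trivial element by the retraction from part (i) applied to the full subgraph with single vertex $x$. Both factors are also finitely generated. Example~3 in \ref{sssec:Sigma1-first-examples} therefore applies and yields $\Sigma^1(G_{\Delta'}) = \emptyset$.

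The proof involves no real obstacle; the only subtlety is the role of fullness in (i), needed to guarantee that edges present in $\Delta$ among vertices of $\Delta'$ are already recorded as relations of $G_{\Delta'}$, and the minor verification in (ii) that the two free factors are non-trivial, which is handled by invoking (i).
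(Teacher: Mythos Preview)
Your proof is correct and follows essentially the same route as the paper: define the retraction by killing the generators outside $\ver(\Delta')$, verify it respects the relators (using fullness), and for (ii) read off the free-product decomposition from the presentation and invoke Example~3 in \ref{sssec:Sigma1-first-examples}. The only cosmetic differences are that the paper checks non-triviality of the free factors via their abelianizations rather than via retraction onto a single vertex, and that you spell out the role of fullness a bit more explicitly.
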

\begin{proof}
(i)  For every full subgraph $\Delta'$ of $\Delta$ the assignments
\[
 \pi(x_{j}) =   x_{j}  \text{ if }  x_{j} \in \ver (\Delta'), \text{  and } \pi(x_{j}) =   1  \text{ otherwise},
 \]
extend to a homomorphism $\pi$ of $G_{\Delta}$ onto $G_{\Delta'}$. 
As $\pi$ is the identity on the generators of $G_{\Delta'}$ it is a retract of $G_{\Delta}$.

(ii) Suppose $\Delta'$ be the union of two full, disjoint and non-empty subgraphs $\Delta'_{1}$ and $\Delta'_{2}$.
By (i)  the canonical maps $\iota_{i} \colon G_{\Delta'_{i}} \to G_{\Delta'}$ are embeddings
and  the presentation of $G_{\Delta'}$ shows 
that $G_{\Delta'}$ is the free product of these subgroups.
Since their abelianizations are free-Abelian of rank  $\card (\ver (\Delta'_{i}))$,
none of them is reduced to the identity.
In view of  Example 3 in section \ref{sssec:Sigma1-first-examples})
the invariant of $G_{\Delta'}$ is therefore empty.
\end{proof}

One can ask for two types of depictions of $\Sigma^1(G_{\Delta})$, 
a local and a global one. 
The local description looks for a procedure that allows one to decide, 
given a specific non-zero character $\chi$, whether this character represents a point of the invariant.
The global description seeks a formula for the entire invariant.
The next theorem gives answers to both these questions.

The following terminology will be useful:
a non-zero character $\chi$ determines a non-empty subset 
$\{ x_{j} \in \ver(\Delta) \mid \chi(x_{j}) \neq 0 \}$;
define $\LL(\chi)$ to be the full subgraph on this set and call it the \emph{living subgraph of} $\chi$;
call $\LL(\chi)$ \emph{dominating}
if there exists, 
for every $x_{\ell} \in \Delta \smallsetminus \LL(\chi)$, 
a vertex $x_{j} \in \LL(\chi)$ so that $\{x_{j}, x_{\ell} \}$ is an edge of $\Delta$.
A subset $\SS$ of $\ver (\Delta)$ is called \emph{separating} 
if its removal results in a disconnected subgraph of $\Delta$.
\begin{thm}
\label{thm:Sigma1-right-angled-Artin-group}
\index{Computation of Sigma1@Computation of $\Sigma^1$ for!graph groups}
\index{Graph groups!invariant Sigma1@invariant $\Sigma^1$}
Let $G = G_{\Delta}$ be a right angled Artin group with graph $\Delta$
and let $\chi$ denote a non-zero character of $G$.
If $\Delta$ is a complete graph the group $G$ is free-Abelian of rank $\card\ver (\Delta)$ 
and $\Sigma^1(G) = S(G)$.
Otherwise,
the following assertions are valid:
\begin{enumerate}[(i)]
\item $[\chi] \in \Sigma^1(G)$ if, and only if, 
the living subgraph $\LL(\chi)$ is  connected and dominating;
\item the complement of $\Sigma^1(G)$ is  the union of subspheres  
\begin{equation}
\label{eq:Complement-Invariant-right-angled-Artin-group-bis}
\bigcup\nolimits_{\SS} S(G,\gp(\SS))
\end{equation}
where $\SS$ runs over the minimal separating subsets of $\ver  (\Delta)$.
\end{enumerate}
\end{thm}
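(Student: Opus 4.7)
The plan is to split the argument into the three parts of the statement. The complete-graph case is immediate: if $\Delta$ is complete, then $G_\Delta \cong \Z^n$ is free abelian and Example \ref{examples:Groups-with-non-trivial-centre}a) gives $\Sigma^1(G) = S(G)$. Part (ii) will be a purely combinatorial consequence of (i), so the bulk of the work is to establish the local characterization (i), which I would prove by two separate arguments, one for each direction.

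For the sufficiency in (i), I would cover $G_\Delta$ by \emph{star subgroups} and invoke Proposition \ref{prp:Sigma-1-join-subgroups}. Concretely, for each living vertex $v \in \ver(\LL(\chi))$ let $\st(v) = \{v\} \cup \{w \in \ver(\Delta) : \{v,w\} \in \edg(\Delta)\}$ be its closed star and let $H_v = \gp(\st(v)) \leq G_\Delta$. The dominating hypothesis ensures that every vertex of $\Delta$ belongs to some $\st(v)$, so the family $\{H_v\}$ generates $G_\Delta$. Inside each $H_v$ the element $v$ commutes with all other generators, hence $v \in \zeta(H_v)$; since $\chi(v) \neq 0$, Proposition \ref{prp:Sigma1-centre} gives $[\chi|_{H_v}] \in \Sigma^1(H_v)$. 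For any edge $\{u,w\}$ of $\LL(\chi)$ one has $u \in \st(w) \cap \st(u)$, so $\chi$ is non-zero on $H_u \cap H_w$; hence the graph $\GG(\chi)$ of Proposition \ref{prp:Sigma-1-join-subgroups} contains $\LL(\chi)$, is therefore connected, and the proposition yields $[\chi] \in \Sigma^1(G)$.

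For the necessity in (i), I would isolate a \emph{full} subgraph $\Delta' \subseteq \Delta$ on which $G_{\Delta'}$ is a free product and a retract of $G_\Delta$. If $\LL(\chi)$ is disconnected, take $\Delta' = \LL(\chi)$; if $\LL(\chi)$ is not dominating, pick a dead vertex $x_\ell$ not adjacent to any living vertex and take $\Delta' = \LL(\chi) \cup \{x_\ell\}$. In either case $\Delta'$ is a disconnected full subgraph, so by Lemma \ref{lem:Structure-right-angled-Artin-groups}(ii), $\Sigma^1(G_{\Delta'}) = \emptyset$. Because $\chi$ vanishes on every generator outside $\Delta'$ by construction, $\chi$ factors through the retraction $\pi \colon G_\Delta \epi G_{\Delta'}$ as $\chi = \chi' \circ \pi$ with $\chi'$ the restriction, and $\chi'$ is non-zero because $\LL(\chi) \subseteq \ver(\Delta')$. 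Corollary \ref{crl:Sigma1-epimorphism} then gives $[\chi] = \pi^*[\chi'] \in \pi^*(\Sigma^1(G_{\Delta'})^c) \subseteq \Sigma^1(G_\Delta)^c$. This retract trick is the step I expect to be the main obstacle, since it depends on the precise choice of $\Delta'$ and on checking that $\chi$ really factors through the retraction.

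Finally, to deduce (ii) I would translate (i) into a statement about the \emph{dead set} $Z(\chi) = \ver(\Delta) \smallsetminus \ver(\LL(\chi))$. On the one hand, if $\LL(\chi)$ is disconnected then $Z(\chi)$ itself separates $\Delta$; if $\LL(\chi)$ is not dominating, an isolated dead vertex $x_\ell$ makes $Z(\chi) \smallsetminus \{x_\ell\}$ separate $x_\ell$ from the non-empty $\LL(\chi)$. Conversely, if some separating set $\SS$ is contained in $Z(\chi)$, then the components of $\Delta \smallsetminus \SS$ either split $\LL(\chi)$ (forcing disconnectedness) or contain a component free of living vertices (forcing a non-dominated dead vertex). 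Hence $[\chi] \in \Sigma^1(G)^c$ if and only if $\chi$ vanishes on some separating set, i.e.\ $[\chi] \in \bigcup_\SS S(G, \gp(\SS))$. Since $\SS_1 \subseteq \SS_2$ implies $S(G, \gp(\SS_2)) \subseteq S(G, \gp(\SS_1))$, the union over \emph{minimal} separating sets already exhausts the complement, yielding formula \eqref{eq:Complement-Invariant-right-angled-Artin-group-bis}.
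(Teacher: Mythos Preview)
Your proof is correct; the necessity argument in (i) and the deduction of (ii) are essentially identical to the paper's. The only genuine difference is in the sufficiency direction of (i): the paper indexes by the \emph{edges} of $\Delta$, taking for each edge $e=\{x_i,x_j\}$ the rank-2 abelian subgroup $G_e=\gp(x_i,x_j)$ and using that $\Sigma^1(G_e)=S(G_e)$; you instead index by the \emph{living vertices}, taking the star subgroups $H_v=\gp(\st(v))$ and using that $v$ is central in $H_v$. Both feed into Proposition \ref{prp:Sigma-1-join-subgroups}. Your star cover has the mild advantage that the index set is $\ver(\LL(\chi))$ itself, so the auxiliary graph $\GG(\chi)$ visibly contains $\LL(\chi)$; the paper's edge cover has the advantage that the building blocks are free abelian of rank~2, hence as simple as possible. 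Neither approach dominates the other, and both rely on exactly the same structural lemma about retracts (Lemma \ref{lem:Structure-right-angled-Artin-groups}) for the converse.
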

\begin{remarks}
\label{remarks:Description-invariant-right-angled-Artin-groups}
a) Claim (i) in Theorem \ref{thm:Sigma1-right-angled-Artin-group}
 is due to  J. Meier and L. VanWyck   (\cite[Theorem 4.1]{MeVa95}) 
 and, independently, to H. Meinert  (\cite[Theorem 1]{Mei95a}),
while assertion (ii)  seems to have been observed first by
S. Papadima and A. Sugiu (see Theorem 5.5 and Proposition 5.8 in \cite{PaSu06}).
\index{Meier, J.}
\index{VanWyck, L.}
\index{Meinert, H.}
\index{Papadima, S.}
\index{Suciu, A.}

b)
There are at least two reasons for isolating, 
in the statement of Theorem
\ref{thm:Sigma1-right-angled-Artin-group}, 
the complete graphs from the other graphs. 
From the point of view of the theory of groups,
a right-angled Artin group contains free subgroups of rank 2,  and so is highly non-commutative,
unless its graph is complete.
Indeed, 
the subgroup $H =\gp(\{x_{j},x_{\ell}\})$ of a right-angled Artin group $G_{\Delta}$ generated by two non-adjacent vertices  is free of rank 2.
Note that $H$ is not only a subgroup, but a retract of $G$ 
(see  Lemma \ref{lem:Structure-right-angled-Artin-groups}).

A second reason lies in the connectivity properties of complete graphs.
In order to explain,
I recall a few definitions (see \cite[Section 1.4]{Die05}).
A (combinatorial) graph $\Delta$ is called \emph{connected} 
if it is \emph{non-empty} and if any two of its vertices can be linked by a path;
it is called $k$-\emph{connected} if it has at least $k+1$ vertices 
and if the removal of any subset $\SS$ with at most $k-1$ elements  does not separate  $\Delta$;
in particular,
a  graph $\Delta$ is $1$-connected if, and only if, it is connected and has at least 2 vertices.
The maximum of the integers $k \geq 1$ for which $\Delta$ is $k$-connected, 
is called the \emph{connectivity} of $\Delta$ and written $\kappa(\Delta)$.
\index{Graph!connectivity}

According to these definitions, 
the connectivity of the complete graph $K_{n}$ is $n-1$ for $n \geq 2$.
At first sight, this may look strange;
one of its benefits is that  Menger's  theorem  (see, e.\;g.,  \cite[Theorem 3.3.6]{Die05})
can be stated very simply in this terminology: 
\emph{a graph $\Delta$ with at least 2 vertices is $k$-connected if, and only, 
any two of its vertices can be linked by $k$ independents paths}.
\footnote{Two paths from $v$ to $v'$ are called independent  if they have no  interior vertices in common.}
\end{remarks}

\begin{proof}
(i) Assume first the living subgraph $\LL(\chi)$ is connected  and that it dominates $\Delta$.
Let $V$ denote the set of \emph{edges} of the graph $\Delta$
and, for every edge $e = \{x_i, x_j\} \in V$ set $G_e = \gp(x_i,x_j)$.
Then each $G_e$ is free abelian of rank 2 and so $\Sigma^1(G_e) = S(G_e)$ 
(cf. Example \ref{examples:Groups-with-non-trivial-centre}a).
But if so,
the restriction of $\chi$ to $G_e$  represents a point of $G_e$ 
if $e$ is an edge of the living graph $\LL(\chi)$
or if $e$ is an edge connecting a vertex outside $\LL(\chi)$ to a vertex in $\LL(\chi)$.
The assumption that $\LL(\chi)$ be connected and dominating  thus implies 
that the auxiliary graph $\GG(\chi)$ 
(defined in the statement of Proposition \ref{prp:Sigma-1-join-subgroups}) is connected,
whence $[\chi] \in \Sigma^1(G)$ by that proposition.

Assume next that  $\Delta' = \LL(\chi)$ is \emph{not connected}.
Then $Q = G_{\Delta'}$ is a non-trivial free product 
(by assertion (ii) of Lemma \ref{lem:Structure-right-angled-Artin-groups}) 
and so $[\chi'] = [\chi|Q]$ lies outside $\Sigma^1(Q)$,
whence Corollary \ref{crl:Sigma1-epimorphism} allows one to conclude 
that $[\chi] \in \Sigma^1(G_{\Delta})^c$.
If, finally, $\Delta' = \LL(\chi)$ is \emph{not dominating},
there exists a vertex $v \in \Delta \smallsetminus \LL(\chi)$ 
so that the full subgraph $\Delta_{v}$ on   $\{v\} \cup \ver (\LL(\chi))$  is not connected.
The group $Q_{v} = G_{\Delta_{v}}$ is then a non-trivial free product and a quotient of $G_{\Delta}$,
whence it follows as before that  $[\chi] \in \Sigma^1(G_{\Delta})^c$.
\smallskip

(ii) Let $\chi$ be a non-zero character.
Assume first that  $[\chi] \in S(G, \gp(\SS))$ for some separating subset  $\SS$ of $ \ver (\Delta)$. 
Then $[\chi]$ is the pullback of a character  $[\chi']$ along the obvious projection 
$\pi \colon G \epi  G_{\Delta \smallsetminus \SS}$ 
and the subgraph $\Delta \smallsetminus \SS$ is not connected.
Lemma \ref{lem:Structure-right-angled-Artin-groups}
and Corollary \ref{crl:Sigma1-epimorphism} then show that $[\chi]$ is in $\Sigma^1(G)^c$.

Suppose now that $[\chi]$ lies outsideof  every subsphere $S(G, \gp(\SS))$ 
defined by a separating subset $\SS$;
put differently, 
suppose that $\chi(\SS)  \neq \{0\}$ for every separating subset $\SS$ of $\Delta$.
Consider the subset $\SS' = \ver (\Delta) \smallsetminus \LL(\chi)$.
Then $\chi(\SS') = \{0 \}$ by the definition of $\LL(\chi)$,
so $\SS'$ does not separate $\Delta$, whence $\LL(\chi)$ is connected.
One sees similarly that every set containing $\ver (\Delta) \smallsetminus \LL(\chi)$ is connected.
The living graph $\LL(\chi)$ is thus connected and dominating,
and so  $[\chi] \in \Sigma^1(G)$ by part (i).

The previous reasoning proves 
that $\Sigma^1(G)^c$ coincides with the union of all subspheres $S(G, \gp(\SS))$
where $\SS$ ranges over the separating subsets.
But as the subsphere $S(G, \gp(\SS))$ contains the subsphere $S(G, \gp(\SS'))$ 
if $\SS$ is a subset of $\SS'$,
this union is not altered if $\SS$ ranges only over the \emph{minimal} separating subsets.
\end{proof}
%
\subsubsection{Finding the list of minimal separating subsets} 
\label{sssec:Sigma1-right-angled-Artin-groups-list-minimal-separating-subsets}
%
Let $\Delta$ denote a \emph{connected}, finite combinatorial graph that is not complete
and let $G = G_{\Delta}$ be the associated right-angled Artin group.
According to Theorem \ref{thm:Sigma1-right-angled-Artin-group}
the complement $\Sigma^1(G)^c$ of the invariant is a finite union of great subspheres $S(G, \gp(\SS))$
where $\SS$ ranges over the \emph{minimal, separating} subsets of the vertex set $\ver(\Delta)$. 
This description is very concise and handy;
the list $L_{\Delta}$ of minimal separating subset $\SS$ can, however,  be rather large
and,  worse, it may be difficult to determine it explicitly.
The following example illustrates the first difficulty.
 \begin{example}
 \label{example:Graph-with-many-minimal-separating-subsets}
 \index{Computation of Sigma1@Computation of $\Sigma^1$ for!graph groups|(}
For every positive integer $m \geq 2$,
 let $V_{m}$ denote the set 
 \[
 \{v_{0}, v_{1,1}, \ldots, v_{1,m}, v_{2,1}\ldots, x_{2,m}, v_{3} \}
 \]
 with $2m + 2$ vertices. 
 Define $E_{m}$ to be set consisting of the $3m$ edges
 \[
 \{v_{0}, v_{1,j} \}, \quad \{v_{1,j}, v_{2,j} \},\quad \{ v_{2,j}, v_{3} \}  \text{ with } j \in \{1,2, \ldots, m \}
 \]
 and set $\Delta_{m} = (V_{m}, E_{m})$.
 One can be visualize $\Delta_{m}$ as a bundle of $m$ rods of length 3
 which are  tied together at both ends.
  The graph $\Delta_{m}$ has $2^m + 2m +1$ minimal separating subsets.
  Indeed, 
 every subset $\SS$ 
 that contains exactly one inner point $v_{1,j}$ or $v_{2,j}$ of each rod, 
 but none of the end points, is separating and minimal; there are $2^m$ such subsets.
 In addition, 
 $\{v_{0}, v_{3}\}$ is a minimal separating subset,
 as are the subsets $\{v_{0}, v_{2,j}\}$ and $\{v_{1,j}, v_{3}\}$ for every index $j$.
 Consider now a non-empty subset $\SS'$ 
 that is distinct from the  $2^m + 2m +1$ minimal separating subsets found so far.
 If if contains none of the end points $v_{0}$, $v_{3}$ and omits no inner points of some rod, 
 it does not separate;
 if it contains none of the end points, but both inner points of some rod, it is not minimal.
 Similarly, one finds that $\SS'$ is non-separating or not minimal,
 if it contains one or two end points.
 
  These calculations show that
 \[
 \card(L_{\Delta_{m}}) = 2^m + 2m + 1 > \left(\sqrt{2}\right)^{\;2m} 
 = \tfrac{1}{2} \left(\sqrt{2}\right)^{\card(\ver(\Delta_{m}))}.
 \]
  \end{example}
  \index{Separating subsets of a graph!examples}

We now turn to the second problem,
that of finding the list of all minimal separating subsets.
The next lemma can help one in solving it.
\begin{lem}
\label{lem:Characterization-minimal-separating}
For every subset $\SS$ of the vertex set $\ver(\Delta)$ 
of a connected, finite combinatorial  graph $\Delta$
the following statements are equivalent :
\begin{enumerate}[(i)]
\item $\SS$ is a minimal separating subset,
\item let $\CC_{1}$, \ldots, $\CC_{\ell}$ be the connected components of the subgraph 
induced by the complement $\ver(\Delta) \smallsetminus \SS$ of $\SS$.
Then $\ell \geq 2$ and every vertex $v \in \SS$ is linked to each component $\CC_{j}$ by an edge.
\end{enumerate}
\end{lem}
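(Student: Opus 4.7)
The plan is to prove the biimplication by showing each direction separately, both arguments resting on a careful analysis of what happens to the connected components when one adjusts the set $\SS$ by one vertex. Throughout, I will use that $\Delta$ is connected so that if $\ver(\Delta) \smallsetminus \SS$ has at least two components and every $v \in \SS$ sits in a component of $\ver(\Delta) \smallsetminus \SS'$ with $\SS' \subsetneq \SS$, then the only way to join the pieces $\CC_j$ inside $\ver(\Delta) \smallsetminus \SS'$ is through the vertices of $\SS \smallsetminus \SS'$.

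For the implication (ii) $\Rightarrow$ (i), I would first note that $\SS$ is separating since $\ell \geq 2$ by hypothesis. To establish minimality, I would consider any proper subset $\SS' \subsetneq \SS$ and pick a vertex $v_0 \in \SS \smallsetminus \SS'$. By (ii), $v_0$ has a neighbour in each component $\CC_j$. Inside the subgraph induced by $\ver(\Delta) \smallsetminus \SS'$, every $\CC_j$ is thus joined to $v_0$ by an edge, so all the $\CC_j$ lie in the same component as $v_0$; any vertex of $\SS \smallsetminus \SS'$ is likewise connected to $v_0$ via some $\CC_j$. Hence $\ver(\Delta) \smallsetminus \SS'$ induces a connected subgraph and $\SS'$ is not separating, proving that $\SS$ is minimal.

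For the converse (i) $\Rightarrow$ (ii), the fact that $\ell \geq 2$ is simply the definition of ``separating''. The substantive point is to prove the adjacency property, and I would argue by contradiction: suppose some $v \in \SS$ has no neighbour in $\CC_j$. Put $\SS' = \SS \smallsetminus \{v\}$, so $\ver(\Delta) \smallsetminus \SS' = \CC_1 \cup \cdots \cup \CC_\ell \cup \{v\}$. In the induced subgraph, all internal edges of the $\CC_k$ are retained, and the only new edges involving a vertex outside the original components are the edges from $v$ to its neighbours in $\ver(\Delta) \smallsetminus \SS'$. Since $v$ has no neighbour in $\CC_j$, the component $\CC_j$ remains a full connected component of the new subgraph and is disjoint from $\CC_k$ for $k \neq j$ (these are separated by $\SS$ already, and removing only $v$ cannot create a path between them that avoids $\CC_j$'s separator status). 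Thus $\SS'$ is still separating, contradicting the minimality of $\SS$.

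The only delicate point in the whole argument is the last sentence above, namely ensuring that deleting only the single vertex $v$ from $\SS$ does not accidentally reconnect $\CC_j$ to the rest of $\ver(\Delta) \smallsetminus \SS'$. The key observation is that any edge in $\Delta$ from a vertex of $\CC_j$ to a vertex outside $\CC_j \cup \SS$ would contradict the fact that $\CC_j$ is a \emph{maximal} connected subset of $\ver(\Delta) \smallsetminus \SS$; so the only possible new adjacencies for $\CC_j$ in $\ver(\Delta) \smallsetminus \SS'$ would be through $v$, which is ruled out by hypothesis. I do not expect any further obstacle: both directions reduce to elementary bookkeeping about edges across the cut defined by $\SS$, and no appeal to the theory of $\Sigma^1$ is required.
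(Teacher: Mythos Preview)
Your proof is correct and follows essentially the same approach as the paper's: both directions hinge on examining what happens to the components $\CC_j$ when a single vertex $v$ is removed from $\SS$, using that the only edges leaving $\CC_j$ in $\Delta$ go into $\SS$. The paper's argument is more terse---for (ii)$\Rightarrow$(i) it simply asserts minimality, and for (i)$\Rightarrow$(ii) it argues directly that connectedness of $\ver(\Delta)\smallsetminus(\SS\smallsetminus\{v\})$ forces $v$ to have a neighbour in each $\CC_j$---but your more explicit handling of arbitrary proper subsets $\SS'\subsetneq\SS$ and the contrapositive formulation for the other direction amount to the same reasoning spelled out in full.
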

\begin{proof}
Let $\SS \subset \ver(\Delta)$ be a non-empty, proper  subset and let $\CC_{1}$, \ldots, $\CC_{\ell}$ 
denote the connected components of the subgraph induced  by $\ver(\Delta) \smallsetminus \SS$.
Then $\SS$ is separating if, and only if, $\ell \geq 2$.
Assume now $\ell \geq 2$
and fix a vertex $v \in \SS$.
If $\SS$ is minimal,
the complement of $\SS \smallsetminus \{v\}$ must be connected
and so  there exists, for every component $\CC_{j}$, a vertex $v_{j}$  that is linked to $v$ by an edge.
Conversely, if every $v \in \SS$ has the stated property, $\SS$ is  minimal and so statement (i) holds.
  \end{proof}

The previous lemma allows one to replace the search for minimal separating subsets of a graph $\Delta$
by a search for full, connected subgraphs.
This simplifies the task in two ways:
a minimal separating subset $\SS$ is an unstructured object 
whose cardinality can \emph{a priori} be close to that of the vertex set $\ver(\Delta)$
whereas one of the connected components of $\Delta \smallsetminus \SS$ has a most 
$(\card(\ver(\Delta)) /2)$ vertices.
So one can proceed like this: 
one constructs, for each $m \geq1$, 
the full, connected subgraphs of $\Delta$ with $m$ vertices,
determines their boundaries and then checks
whether these boundaries fulfill  statement (ii) of Lemma \ref{lem:Characterization-minimal-separating}.
 \index{Separating subsets of a graph!determination}
%
\begin{example}
 \label{example:Minimal-separating-subsets-dodecahedral-graph}
\index{Separating subsets of a graph!examples|(}
 Let $\Delta$ be the 1-skeleton of a regular dodecahedron;
 it has 20 vertices and 30 edges.
 Our aim is to find the \emph{list of all connected subgraphs} $\CC$ with the following properties:
 \begin{enumerate}[(i)]
 \item the boundary $\delta(\CC)$ of $\CC$ is a minimal separating subset,
 \item none of the connected components $\CC'$ of $\Delta \smallsetminus \delta(\CC)$ 
 has fewer vertices than $\CC$.
 \end{enumerate}

It will turn out that $\Delta \smallsetminus \delta(\CC)$ has always two components.
In Figures  \ref{fig:Minimal-separating-sets-dodecahedral-graph-I}
through \ref{fig:Minimal-separating-sets-dodecahedral-graph-V},
the chosen connected component $\CC$ is displayed in red, 
the derived component in blue.
\smallskip

 (i) The connected subgraphs with $m\leq 3$ vertices are singletons, edges or lines of length 2.
They give rise to 20 minimal separating subsets with 3 vertices, 30 such subsets with 4 vertices 
 and 60 subsets with 5 vertices (see Figure 
 \ref{fig:Minimal-separating-sets-dodecahedral-graph-I}).
 \begin{figure}[htb!]
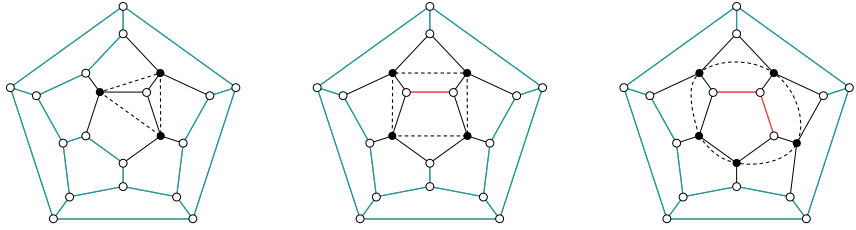

\begin{center}
\includegraphics[width = 3.2cm]{B1.fig1a.eps}
\hspace{0.6cm}
\includegraphics[width = 3.2cm]{B1.fig1b.eps}
\hspace{0.6cm}
\includegraphics[width = 3.2cm]{B1.fig1c.eps}
\end{center}
\caption{Minimal separating sets of a dodecahedral graph for $m = 1$, 2 and 3}
\label{fig:Minimal-separating-sets-dodecahedral-graph-I}
\end{figure}

 (ii) If $m = 4$, the component $\CC$ is a  line or a tripod;
in the first case,
the boundary has either 5 or 6 vertices,  in the second it has 6 vertices.
The number of boundaries is $60 + 60 + 20 = 140$ separating subsets
(see  Figure  \ref{fig:Minimal-separating-sets-dodecahedral-graph-II}).
\begin{figure}[htb!]
\begin{center}
\includegraphics[width = 3.2cm]{B1.fig2a.eps}
\hspace{0.6cm}
\includegraphics[width = 3.2cm]{B1.fig2b.eps}
\hspace{0.6cm}
\includegraphics[width = 3.2cm]{B1.fig2c.eps}
\end{center}
\caption{Minimal separating sets of a dodecahedral graph for $m = 4$}
\label{fig:Minimal-separating-sets-dodecahedral-graph-II}
\end{figure}

(iii) If $m=5$, the component $\CC$ is a pentagon, a line not contained in a single face, or a tripod.
The first case leads to 12 minimal separating subsets, each with 5 vertices.
The second case gives rise to $ 12 \cdot 5 \cdot 3 = 180$ separating subsets with  6 vertices.
Finally, there are $20 \cdot 6 = 120$ tripods; their boundaries have 6 vertices.
See Figure  \ref{fig:Minimal-separating-sets-dodecahedral-graph-III}.
\smallskip

\begin{figure}[ht!]
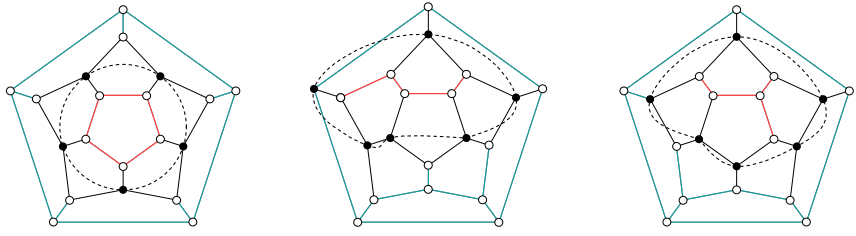

\begin{center}
\includegraphics[width = 3.2cm]{B1.fig3a.eps}
\hspace{0.6cm}
\includegraphics[width = 3.2cm]{B1.fig3b.eps}
\hspace{0.6cm}
\includegraphics[width = 3.2cm]{B1.fig3c.eps}
\end{center}
\caption{Minimal separating sets  for $m = 5$}
\label{fig:Minimal-separating-sets-dodecahedral-graph-III}
\end{figure}

(iv) For $m=6$, 
the components can again take on three shapes: 
a pentagon with a spike, 
a capital letter H and a tripod.
The first leads to $12 \cdot 5 = 60$ separating subsets,
the second to 30 such subsets,
the third to $20 \cdot 3 \cdot  3 = 180$ separating subsets.
All separating subsets have 6 vertices; 
see Figure \ref{fig:Minimal-separating-sets-dodecahedral-graph-IV}.
\smallskip

\begin{figure}[htb!]
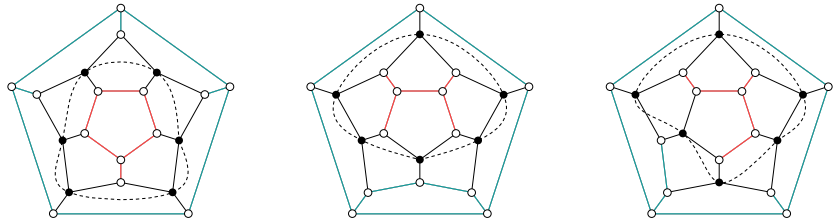

\begin{center}
\includegraphics[width = 3.1cm]{B1.fig4a.eps}
\hspace{0.6cm}
\includegraphics[width = 3.1cm]{B1.fig4b.eps}
\hspace{0.6cm}
\includegraphics[width = 3.1cm]{B1.fig4c.eps}
\end{center}
\caption{Minimal separating sets  for $m = 6$}
\label{fig:Minimal-separating-sets-dodecahedral-graph-IV}
\end{figure}

(v) The case $m=7$ is more involved.
Detailed analysis shows that the components can be of six forms;
they give rise to four types of separating subsets. 
In Figure \ref{fig:Minimal-separating-sets-dodecahedral-graph-V},
the first components are
a pentagon with a long handle, 
a pentagon with two adjacent spikes,
a tripod with a long handle and a symmetric tripod.
The second component of the pentagon with a long handle is different from the first component,
as is the the second component of the tripod with a long handle,
while both components are isomorphic in the other two cases.
The separating subsets have always 6 vertices.
There numbers are: 
$12 \cdot 10 = 120$ for the pentagon with the long handle,
$\tfrac{1}{2} 12 \cdot 5 = 30$ for the pentagon with two spikes,
$20 \cdot 3 \cdot 2 \cdot 2 = 240$  for the tripod with a long handle
and
$\tfrac{1}{2} 20 \cdot 2  = 20$ for the symmetric tripod.
%
\begin{figure}[htb!]
\begin{center}
\includegraphics[width = 2.8cm]{B1.fig5a.eps}
\hspace{0.0cm}
\includegraphics[width = 2.8cm]{B1.fig5b.eps}
\hspace{0.0cm}
\includegraphics[width = 2.8cm]{B1.fig5c.eps}
\hspace{0.0cm}
\includegraphics[width = 2.8cm]{B1.fig5d.eps}
\end{center}
\caption{Minimal separating sets for $m = 7$}
\label{fig:Minimal-separating-sets-dodecahedral-graph-V}
\end{figure}

All taken together, 
there are $20 + 30 + 60 + 140 + 312 + 270 + 410 = 1242$ separating subsets,
a quite impressive number.
 \end{example}
 \index{Computation of Sigma1@Computation of $\Sigma^1$ for!graph groups|)}
 \index{Separating subsets of a graph!examples|)}

 \vfill
 
\newpage
 
%
%
\section{The $\Sigma^1$-criterion revisited}
\label{sec:Sigma1-criterion-revisited}
%
%
%
The techniques for computing $\Sigma^1$  discussed so far can be summarized thus.
The invariant $\Sigma^1(G)$ is defined in terms of the Cayley graph $\Gamma(G,\XX)$ 
of a group $G$ with respect to a finite system of generators $\XX$.
The computation is therefore straightforward
if the Cayley graph has a simple geometric form 
and/or if it is made of well understood Cayley graphs with the help of suitable constructions.
Cayley graphs with simple geometric form are provided by free abelian groups,
and by non-abelian free groups or, more generally, free products.
In the first case, $\Sigma^1(G) = S(G)$, in the second case the invariant is empty.
Constructions that lead to Cayley graphs with properties 
that can be inferred from the Cayley graphs of the building blocks
include the direct product, the passage to a subgroup of finite index,
and, to a lesser degree, the join $G = \gp(G_{1} \cup G_{2})$ of two subgroups $G_{1}$ and $G_{2}$
(see sections  
\ref{sssec:Invariant-direct-product},  \ref{sssec:Subgroup-finite-index} and \ref{sssec:Join-subgroups}).
In calculating the invariant of groups obtained by these constructions,
one fact was used repeatedly, 
the \emph{independence} of the invariant on the generating system $\XX$
(see Theorem \ref{thm:Sigma1-well-defined}).

Most of the other results obtained in Chapter \ref{ch:Sigma-1-Cayley-graph}  
rely on Theorem \ref{thm:Sigma1-criterion}.
It states that
for each choice $t \in \YY = \XX \cup \XX^{-1}$ with $\chi(t) > 0$,
the subgraph $\Gamma(G, \XX)_{\chi}$ is connected
if, and only if,  the following condition holds:
\begin{quote}
$\Sigma^1$-\emph{criterion}:\quad
\emph{for each $y \in \YY  \smallsetminus \{t, t^{-1} \}$ 
there exists a path $p_y$  in $\Gamma(G,\XX)$
that leads from $t$ to $y\cdot t$ and satisfies the inequality}
\begin{equation}
\label{eq:Positivity-property-path-py}
v_\chi(p_y) > v_\chi((1,y)) = \min \{0, \chi(y)\}.
\end{equation}
 \end{quote}
This condition, being necessary and sufficient,
shows
 that if a point  $[\chi]$  lies in $\Sigma^1(G)$ 
 then an open neighbourhood of $[\chi]$ will be contained in $\Sigma^1(G)$.
 This fact is a key ingredient in the proofs of the \emph{openness} of $\Sigma^1$ 
 (Theorem \ref{thm:Openness-Sigma-1})
 and of the characterization of finitely generated normal subgroups above $G'$
 (Theorem \ref{thm:Characterizing-fg-N}).

The present section focuses on another aspect of the $\Sigma^1$-criterion:
it links a geometric condition to the existence of a set of relations with specific properties.
Now one may happen to know certain relations among the generators,
for instance because the group $G$ is given by a presentation.
If so,
one can try to select from these relations a subset 
which fulfills the $\Sigma^1$-criterion for some point in $S(G)$.
Each subset for which this strategy is successful yields then a non-empty open subset of $\Sigma^1(G)$, 
and the union of these sets is a lower bound of the invariant.
This simple minded approach yields rarely all of  $\Sigma^1(G)$,
but id does so in the case of groups defined by a single relator
(see Section \ref{sec:Invariant-one-relator-group}).
\smallskip

The contents of Section \ref{sec:Sigma1-criterion-revisited} are as follows:
in \ref{ssec:Algebraic-Sigma1-criterion}, 
the $\Sigma^1$-criterion is restated in algebraic terms.
As a first application,
a lower bound $\psi(\RR)$ of $\Sigma^1(G)$ is constructed in \ref{ssec:Subset-psi(R)};
here $\RR$ is a set of relators of $G$.
In section \ref{ssec:Groups-2-generators}
this lower bound is then worked out for some groups with two generators.
In the final part \ref{ssec:Whitehead-criterion-and-applications},
an improvement of the lower bound $\psi(\RR)$ is discussed.
%
\subsection{Algebraic version of the $\Sigma^1$-criterion}
\label{ssec:Algebraic-Sigma1-criterion}
%
Let $G$ be a finitely generated group,
$\eta \colon \XX \to  G$  a finite system of generators,
$\chi \colon  G \to \R$ a non-zero  character and $t \in \YY = \XX \cup \XX^{-1}$ an element with $\chi(t) > 0$. 
The  $\Sigma^1$-criterion requires 
that for  each $y \in \YY \smallsetminus \{t, t^{-1} \}$ there be a a path $p_y$ from $t$ to $y\cdot t$
which  satisfies the inequality $v_\chi(p_y) > v_\chi((1,y))$.
This geometric condition will now be restated in algebraic terms. 
%
\subsubsection{The criterion}
\label{sssec:Algebraic-version-Sigma1-criterion}
%
Let $y$ be an element of  $ \YY \smallsetminus \{t, t^{-1} \}$
and let  $w_{y}$ be the word in  $\YY$ that describes the path $p_{y}$;
so $p_{y} = (t,w_{y})$. 
Since the path $(t,w_{y})$ leads from $t$ to $y\cdot t$, 
the relations  $t\cdot w_{y} = y\cdot t$ and hence $t^{-1} y t = w_{y}$ hold in $G$.
Inequality  \eqref{eq:Positivity-property-path-py}  can then be restated 
by saying that $v_{\chi}(w_{y}) > \min\{0, \chi(y) \} - \chi(t) = v_\chi(t^{-1} y t)$.
 
Consider now the relator  $r_y = t^{-1} y t \cdot w^{-1}_y$. 
One can assume, without loss of generality,
that $w_y$ is a freely reduced word. 
This does not force $r_y$ to be  freely reduced, let alone to be cyclically reduced, 
but the assumption that $v_\chi(t^{-1} y t)$ be smaller than $v_\chi(w_y)$ 
allows one to find out what can happen. 
To cases will be of interest in the sequel.
If $\chi(y) = 0$ then $v_\chi(t^{-1} y t) =  \chi(t^{-1})$ 
and so $w_y$ does neither start with $t^{-1}$ nor does it end with $t$,  
whence $r_y$ is cyclically reduced. 
If $\chi(y) > 0$ then  $v_\chi(t^{-1} y t) =  \chi(t^{-1})$ and so $w_y$ does neither start with $t^{-1}$ 
nor does it end in $y t$; it may, however,  end  in $t$. 
If $w_{y}$ ends in $t$,
say $w_y = w'_y \cdot t$, 
then $r_y$ is freely equivalent to the  word $t^{-1} y(w'_y)^{-1}$ and this word is cyclically reduced. 

The upshot of the preceding analysis is this: 
if $\chi(y) \geq 0$ and if there exists a word $w_y$ with $v_\chi(w_y)  > v_\chi(t^{-1}yt)$ 
then there exists also a cyclically reduced relator $r'_y = s_1s_2 \cdots s_k$, 
starting  with $t^{-1}y t$ if $\chi(y) = 0$ and with $t^{-1}y$ if $\chi(y)>0$,
and so that the sequence
\[
\left(\chi(s_1),\, \chi(s_1s_2), \ldots,\, \chi(s_1s_2 \cdots  s_{k-1}), \chi(s_1s_2 \cdots  s_{k-1}s_k)\right)
\]
assumes its minimum only at $\chi(s_1)$ and $\chi(s_1s_2)$ in the first case,
and only at $\chi(s_1)$ in the second case. 
We finally pass to the cyclic permutation of the relator $r'_y$ which moves the first letter is moved to the end,
and obtain the relator $r_y$.

The new relators $r_y$ show
that the conditions described in equation 
\eqref{eq:Form-relator-algebraic-Sigma1-criterion} of the next theorem are necessary.
\begin{thm}
\label{thm:Algebraic-Sigma1-criterion}
\index{Invariant Sigma1@Invariant $\Sigma^1$!algebraic criterion} 
\index{Sigma-criterion@$\Sigma^1$-criterion!algebraic form}
Let $\eta \colon \XX\to G$ be a finite  system of generators,
$\chi \colon  G \to \R$ a non-zero character and 
$t \in  \YY = \XX \cup \XX^{-1}$ an element with positive $\chi$-value. 
Set  
\[\YY_{>0} = \{y \in \YY \mid \chi(y) > 0\} \text{ and  } \XX_{0} = \{x \in \XX \mid \chi(x) = 0 \}.
\]
Then the $\Sigma^1$-criterion holds for $\YY$,  $\chi$ and $t$  if, and only if, 
there exists, for each $y \in (\YY_{>0}  \cup \XX_{0}) \smallsetminus \{t\} $ 
a reduced word $r_y = s_1s_2 \cdots  s_k$ which is a relator of $G$ and has the form
\begin{equation}
r_y =
\begin{cases}
\label{eq:Form-relator-algebraic-Sigma1-criterion}
y \cdots t^{-1}\hspace*{1.4em} \text{ and } \chi(s_1 \cdots s_i) > 0  \text{ for }  1 < i < k 
& 
\text{ if } \; \chi(y) > 0,\\
yt\cdots t^{-1} \hspace*{1.1em}\text{ and } \chi(s_1 \cdots s_i) > 0 \text{  for }  1 < i < k 
& 
\text{  if } \; \chi(y) =  0.
\end{cases}
\end{equation}
\end{thm}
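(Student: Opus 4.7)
The plan is to prove both implications by translating between the path $p_y = (t,w_y)$ and the relator obtained from the identity $t w_y = y t$ in $G$, which gives the relator word $R := y \cdot t \cdot w_y^{-1} \cdot t^{-1}$. The whole argument reduces to checking that the $\chi$-values of the prefixes of this relator are precisely the $\chi$-values of the vertices of $p_y$.

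For the necessity direction, I would first observe that the full $\Sigma^1$-criterion (over all $y \in \YY$) follows from the restricted family of relators: the cases $y = t^{\pm 1}$ are handled by the trivial paths $(t,t)$ and $(t,t^{-1})$, while for $y \in \YY$ with $\chi(y) < 0$ (resp.\ $y \in \XX_0^{-1}$) the path $p_y = y \cdot p_{y^{-1}}^{-1}$ satisfies $v_\chi(p_y) = \chi(y) + v_\chi(p_{y^{-1}}) > \chi(y) + \min\{0,\chi(y^{-1})\} = v_\chi((1,y))$. So it suffices to construct $r_y$ for $y \in (\YY_{>0} \cup \XX_0) \smallsetminus \{t\}$. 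Given such a $y$ and a path $p_y = (t,w_y)$ satisfying the $\Sigma^1$-criterion, write $w_y = u_1 u_2 \cdots u_m$ freely reduced, and consider the word $R = y \cdot t \cdot u_m^{-1} u_{m-1}^{-1} \cdots u_1^{-1} \cdot t^{-1}$. Direct computation shows that for $2 \leq i \leq m+2$ the $i$-th prefix sum of $R$ equals $\chi(t \cdot u_1 \cdots u_{m+2-i})$, i.e.\ the $\chi$-value of some vertex of $p_y$ other than the starting vertex $t$ — which is strictly positive under the hypothesis $v_\chi(p_y) > v_\chi((1,y)) = 0$. The relator $R$ need not be cyclically reduced, but the only possible reductions occur in the ``middle'' (between letters whose prefix-sums are strictly positive), and hence cannot touch $s_1 = y$ or $s_k = t^{-1}$ in the case $\chi(y) > 0$, nor the initial pair $s_1 s_2 = yt$ together with $s_k = t^{-1}$ in the case $\chi(y) = 0$. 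Reducing $R$ therefore yields a relator $r_y$ of the required form, with the positivity of interior partial sums preserved.

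For sufficiency, given a relator $r_y = s_1 s_2 \cdots s_k$ of the stated form, I would define $w_y$ by extracting the middle portion and inverting. When $\chi(y) > 0$, set $w_y := s_{k-1}^{-1} s_{k-2}^{-1} \cdots s_2^{-1} \cdot t$; the relation $r_y = 1$ gives $s_2 \cdots s_{k-1} = y^{-1} t$, whence $t \cdot w_y = t \cdot (s_2 \cdots s_{k-1})^{-1} \cdot t = y t$, so $(t,w_y)$ really is a path from $t$ to $yt$. A telescoping calculation using $\chi(s_1 \cdots s_{k-1}) = -\chi(s_k) = \chi(t)$ shows that the $\chi$-value of the $j$-th vertex of $(t,w_y)$ equals $\chi(s_1 \cdots s_{k-1-j})$ for $0 \leq j \leq k-2$, with the last vertex having $\chi$-value $\chi(yt)$. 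The endpoint values $\chi(t)$ and $\chi(yt)$ are positive by assumption on $t$ and $y$, while the interior values are positive by the hypothesis on $r_y$; hence $v_\chi(p_y) > 0 = v_\chi((1,y))$. The case $\chi(y) = 0$ is analogous with $w_y := s_{k-1}^{-1} \cdots s_3^{-1}$.

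The main technical obstacle is the bookkeeping in the necessity direction: one must justify that free/cyclic reduction of $R$ does not disturb either the first and last letters specified by the required form or the strict positivity of the interior partial sums. The remedy is to observe that any cancellation $s_i s_{i+1} = 1$ in the interior of $R$ shortens two adjacent positive partial sums by equal amounts in opposite directions, so the strict inequalities at surrounding positions survive; and the ``boundary'' letters $y$, $t^{-1}$ (and in the $\chi(y)=0$ case also the second letter $t$) cannot participate in cancellations because of the strict sign conditions on $\chi$. Once this is checked, both directions of the equivalence follow from the computations above combined with the geometric $\Sigma^1$-criterion of Theorem~\ref{thm:Sigma1-criterion}.
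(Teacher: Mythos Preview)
Your proof is correct and follows essentially the same route as the paper. The only cosmetic difference is that you work directly with the cyclically permuted relator $R = y\,t\,w_y^{-1}\,t^{-1}$, whereas the paper first forms $t^{-1}y\,t\,w_y^{-1}$, analyses its reductions, and only then rotates $t^{-1}$ to the end; your partial-sum bookkeeping and your handling of the ``missing'' letters $y \in \YY \smallsetminus (\YY_{>0}\cup\XX_0)$ via $p_y = y\cdot p_{y^{-1}}^{-1}$ match the paper's use of Remark~\ref{remark:Sigma-1-criterion}(a), and your sufficiency argument simply spells out in detail what the paper compresses into the phrase ``the transformations \ldots\ can be reversed.''
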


\begin{proof}
The transformations leading to the conditions 
stated in Theorem \ref{thm:Algebraic-Sigma1-criterion}
can be reversed. 
It follows that there exists, 
for every $x \in \XX$, 
a sign $\varepsilon$  with the following properties:
$\chi(x^\varepsilon) \geq 0$ and a path $p_{y, \varepsilon}$ from $t$ to $tx^\varepsilon$ can be found
so that $v_\chi (p_{y, \varepsilon}) > v_\chi (1,x^\varepsilon)$.
But if so, the geometric version of the $\Sigma1$-criterion holds,
for the missing paths can be constructed with the help of  Remark \ref{remark:Sigma-1-criterion}a.
\end{proof}

In the remainder of section \ref{sec:Sigma1-criterion-revisited},
we describe an algorithm that constructs a lower bound $\psi(\RR)$ of $\Sigma^1(G)$
and then apply it to some examples.
The algorithm presupposes that a non-empty set of relators $\RR$ be known at the outset;
it works with any number of generators.
Its outcome, however, is rarely satisfactory  if $G$ is generated by more than 2 or 3.
There exists an improvement that can cope with a large number of generators;
it will be discussed in section \ref{ssec:Whitehead-criterion-and-applications}.
Its justification will rely on properties of the weaker algorithm.
%
%
\subsection{The lower bound $\psi(\RR)$}
\label{ssec:Subset-psi(R)}
%
We begin by fixing the notation.
As usual,
$G$ denotes a group, $\eta \colon \XX \to G$ a finite generating system 
and $\YY$ the alphabet $\XX \cup \XX^{-1}$.
Set
\begin{equation}
\label{eq:Subst-Y-positive}
\YY_{>0} = \{y \in \YY \mid \chi(y) > 0\} \text{ and  } \XX_{0} = \{x \in \XX \mid \chi(x) = 0 \}.
\end{equation}
Given a set $\RR$ of relators of $G$ --- 
in other words,  a set  of reduced words  
that belong to the kernel of the epimorphism $\eta_{*} \colon F(\XX)\epi G$ ---
we want to define a subset $\psi(\RR)$ and then to prove 
that $\psi(\RR) \subseteq\Sigma^1(G)$.
In the definition of $\psi(\RR)$ enter finite sequences of functions $f_r$,  
one for each $r \in \RR$; 
they are defined as follows:
suppose $r$ has the spelling $s_{1}s_{2}\cdots s_{k}$ as a $\YY$-word 
and that $\chi$ is a non-zero character of $G$.
Then 
\begin{equation}
\label{eq:sequencef(r,chi)}
\index{Notation!fsub-r-chi@$f_{(r,\chi)}$}
f_r (\chi) = \left(\chi(s_1),\, \chi(s_1s_2), \ldots,\, \chi(s_1s_2 \cdots  s_{k-1}), \; \chi(s_1s_2 \cdots  s_{k-1}s_{k})\right).
\end{equation}
The domain of definition of  $f_r (\chi)$  will be thought of as being the circle $\Z/k\Z$;
in particular,
the symbol $f_{r}(k+1)$ will denote the term $f_{r}(1)$.
\begin{definition}
\label{definition:psi(R)}
\index{Notation!psi of R@$\psi(\RR)$}
\index{Definition of!lower bound psi of R@lower bound $\psi(\RR)$}
Let $t \in \YY$ be an element for which $\HH_{t} = \{[\chi] \in S(G) \mid \chi(t) > 0 \}$ is non-empty.
The subset  $\psi(\RR)_{t}$ is made up of the points $[\chi]$ which satisfy the following conditions:
\begin{enumerate}[(i)]
\item for every $y \in \YY_{>0} \smallsetminus \{t\}$
there exists a relator $r_{y} \in \RR$ such that the 
sequence $f_r (\chi)$  assumes its minimum only once, say in $j$, 
and the subword $s_{j}s_{j+1}$ is either $y^{-1}t$ or $t^{-1}y$;
\item for every $x \in \XX_0$ there exists a relator $r_{x} \in \RR$ such that the 
sequence $f_r (\chi)$ assumes its minimum in two consecutive indices $j$, $j+1$, 
and only there, 
and that the subword $s_{j}s_{j+1}s_{j+2}$ is either $t^{-1}xt$ or $t^{-1}x^{-1}t$.
\end{enumerate}
Define $\psi(\RR)$ to be the union $\bigcup_{t} \psi(\RR)_{t}$ 
where $t$ ranges over all letters $t \in \YY$ 
for which  $\HH_{t} = \{[\chi] \in S(G) \mid \chi(t) > 0 \}$ is non-empty. 
\end{definition}

\begin{remark}
\label{remark:Definition-Sigma(R)}
The letters $y$ and $t$ occurring in statement (i) will be called the 
\emph{letters involved in the minimum}. 
This minimum can be assumed at the last index $k$;
if so,  $\min f_{r_{y}} = \chi(r_{y}) = 0$ and either $r_{y} = t \cdots y^{-1}$ or $r_{y} = y \cdots t^{-1}$.
\index{Notation!fsub-r-chi@$f_{(r,\chi)}$}

Similarly, we say in the case of statement (ii)
that the letters $x^{\varepsilon}$ and $t$ are involved in the minimum 
if the subword $s_{j}s_{j+1}s_{j+2}$ equals $t^{-1}x^{\varepsilon} t$.
\end{remark}
\begin{example}
\label{example:Illustration-R(R)}
The following example illustrates the computation of the set $\psi(\RR)$ in the case 
where  $G_{\ab}$ has rank 1; 
then $S(G)$ has only 2 points and the determination of $\psi(\RR)$ is easy.

Let $G$ be given by the presentation $\PP = \langle a, b, c \mid r_1, r_{2} \rangle$ 
where
\begin{equation}
r_{1} = a ca cb ca^{-1}b^{-1}c^{-3} \quad \text { and } \quad 
r_{2} = ac^{-1}a^{-1}b^{-2}c^{-1}bcb^2c.
\end{equation}
The exponents sums of these relators with respect to the given generators  are
\[
(\sigma_{a}(r_{1}), \sigma_{b}(r_{1}),  \sigma_{c}(r_{1})) =(1,0,0)
\text{ and }
(\sigma_{a}(r_{2}), \sigma_{b}(r_{2}),  \sigma_{c}(r_{2})) =(0,1,0).
\]
So $a$ and $b$ represent elements in the commutator subgroup of $G$,
whence $G_{\ab}$ is an infinite cyclic group generated by the image of $c$.
The points in $S(G)$ are therefore represented by the character 
\[\chi \colon a \mapsto 0, \quad b \mapsto 0,\quad  c \mapsto 1
\]
and by its opposite $-\chi$.
The functions $f_{r_{1}}$ and $f_{r_{2}}$ are easily computed:
\begin{align*}
f_{r_{1}} (\chi) &= (0,1,1,2,2,3,3,3,0), \\
 f_{r_{2}} ( \chi) &=  (0,-1,-1,-1,-1,-2,-2,-1,-1,-1,0), \\
f_{r_1} ( -\chi) &=(0,-1,-1,-2,-2,-3,-3,-3,0),\\
 f_{r_2} (-\chi) &= (0,1,1,1,1,2,2,1,1,1,0).
 \end{align*}
 In the case of the character $\chi$,
 both sequences assume their minima exactly twice and at consecutive indices;
 in the case of $f_{r_1}$ the minima occur at the last letter $c^{-1}$ and the first letter $a$;
 in the case of $f_{r_2}$ the minima occur at the fifth letter $c^{-1}$ and the sixth letter $b$.
 The letter $c$ can play the r{\^o}le  of $t$ --- it is actually the only choice for $t$;
 it is involved in both minima, 
 while $a$ is involved in the minimum of $f_{r_1}$ and $b$ is involved in $f_{r_2}$.
 The point $[\chi]$ thus lies in $\psi(\RR)$.
 
Now to the character $-\chi$. 
The sequence $f_{r_1} ( -\chi)$ assumes its minimum thrice;
as there are only two relators in $\RR$ the point $[-\chi]$ is therefore outside $\psi(\RR)$.
So $\psi(\RR) = \{[\chi] \}$.
 \end{example}

The next result explains our interest in $\psi(\RR)$:
\begin{prp}
\label{prp:psi(R)-subset-Sigma1}
\index{Invariant Sigma1@Invariant $\Sigma^1$!lower bound psi@lower bound $\psi(\RR)$}
Let $\eta \colon \XX \to G$ be a finite generating system of the group $G$
and let  $\RR$ be a non-empty set of cyclically reduced words in 
$\YY = \XX \cup \XX^{-1}$. 
If  $\RR$ is made up of  relators of $G$ then $\psi(\RR)$ is an open subset of $\Sigma^1(G)$.
\end{prp}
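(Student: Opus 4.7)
The plan is to deduce the inclusion $\psi(\RR)\subseteq\Sigma^1(G)$ from the algebraic form of the $\Sigma^1$-criterion (Theorem \ref{thm:Algebraic-Sigma1-criterion}), and then to observe that the defining conditions of $\psi(\RR)_t$ are all \emph{strict} inequalities, which makes openness automatic.

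Fix a point $[\chi]\in\psi(\RR)$ and choose $t\in\YY$ with $\chi(t)>0$ such that $[\chi]\in\psi(\RR)_{t}$. The goal is to construct, for every $y\in(\YY_{>0}\cup\XX_{0})\smallsetminus\{t\}$, a relator $r_{y}$ of $G$ that has the shape demanded by formula \eqref{eq:Form-relator-algebraic-Sigma1-criterion}. For $y\in\YY_{>0}\smallsetminus\{t\}$, definition \ref{definition:psi(R)}(i) yields a relator $r=s_{1}s_{2}\cdots s_{k}$ whose associated sequence $f_{r}(\chi)$ attains its minimum at a \emph{unique} index $j$, and with $s_{j}s_{j+1}\in\{t^{-1}y,\;y^{-1}t\}$ (indices read modulo $k$). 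Cyclically rotate $r$ to start at position $j+1$ (and, in the case $s_{j}s_{j+1}=y^{-1}t$, pass to the inverse afterwards). The resulting relator has the form $y\,u\,t^{-1}$ for some $\YY$-word $u$; its partial $\chi$-sums are precisely the differences $\chi(s_{1}\cdots s_{i})-\chi(s_{1}\cdots s_{j})$ (or their negatives), which are \emph{strictly positive} for every intermediate index thanks to the uniqueness of the minimum. This is exactly the form required by the first line of \eqref{eq:Form-relator-algebraic-Sigma1-criterion}. For $x\in\XX_{0}$, definition \ref{definition:psi(R)}(ii) gives a relator whose sequence attains its minimum on exactly two consecutive indices $j,j+1$, with $s_{j}s_{j+1}s_{j+2}\in\{t^{-1}xt,\,t^{-1}x^{\pm 1}t\}$; rotating to begin at $j+1$ (and possibly inverting) yields a relator $xt\,u\,t^{-1}$ or $x^{-1}t\,u\,t^{-1}$ whose intermediate partial sums are again strictly positive. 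This is the shape required by the second line of \eqref{eq:Form-relator-algebraic-Sigma1-criterion}.

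Having produced the needed family of relators, Theorem \ref{thm:Algebraic-Sigma1-criterion} applies and shows that the $\Sigma^1$-criterion is satisfied for $\YY$, $\chi$ and $t$; consequently $\Gamma(G,\XX)_{\chi}$ is connected by Theorem \ref{thm:Sigma1-criterion}, and $[\chi]\in\Sigma^1(G)$.

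For openness, observe that the condition $[\chi]\in\psi(\RR)_{t}$ is a finite conjunction of strict inequalities: the inequality $\chi(t)>0$, together with, for each witnessing relator $r=s_{1}\cdots s_{k}$, inequalities of the form $\chi(s_{1}\cdots s_{i})>\chi(s_{1}\cdots s_{j})$ for $i$ outside the prescribed (one- or two-element) minimum locus. Each of these inequalities is a linear strict inequality in the character $\chi$, hence persists under sufficiently small perturbations of $\chi$ in $\Hom(G,\R)$. Therefore $\psi(\RR)_{t}$ is open in $S(G)$, and $\psi(\RR)=\bigcup_{t}\psi(\RR)_{t}$ is open as a union of open sets.

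The main obstacle, and the only step that requires care, is matching the combinatorial data after cyclic rotation and inversion to the exact form prescribed in \eqref{eq:Form-relator-algebraic-Sigma1-criterion}; in particular one must verify that inversion turns ``unique minimum at position $j$'' into ``unique maximum at position $j$'' of the reversed sequence, so that the reversed relator still has all intermediate partial sums \emph{strictly positive} rather than merely non-negative. Once this bookkeeping is in place the rest of the argument is purely formal.
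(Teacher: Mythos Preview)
Your proposal is correct and follows essentially the same route as the paper: reduce to the algebraic $\Sigma^1$-criterion by cyclically rotating (and, where necessary, inverting) the witnessing relators so that they acquire the shapes required in \eqref{eq:Form-relator-algebraic-Sigma1-criterion}, and deduce openness from the fact that membership in $\psi(\RR)_t$ is a finite conjunction of strict linear inequalities. The paper's proof differs only in being more explicit about the case analysis---in particular, for $x\in\XX_0$ with $\varepsilon=-1$ one needs \emph{both} an inversion and a further one-step rotation (inverting $x^{-1}t\cdots t^{-1}$ gives $t\cdots t^{-1}x$, and only after moving the trailing $x$ to the front does one obtain $xt\cdots t^{-1}$); your closing remark about ``bookkeeping'' covers this, but it is the one place where the sketch is a bit thin.
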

\begin{proof}
The openness of $\psi(\RR)$ follows directly from
the continuity of the function $\chi \mapsto v_r(\chi)$ and the definition of $\psi(\RR)$.

Suppose $[\chi] \in \psi(\RR)$.
By the definition of $\psi(\RR)$ there exist then a letter $t \in \YY_{>0}$,
a relator $r_{y}$ for each $y \in \YY_{>0} \smallsetminus \{t\}$
and a relator $r_{x}$ for each $x \in \XX_0$,
all in such a way that the properties enunciated in statements (i) and (ii) hold.
We want to show that suitable cyclic permutations of one of the  words $r_{y}$ and $r_{y}^{-1}$, 
and of one of the words $r_{x}$ and $r_{x}^{-1}$ satisfy  condition
\eqref{eq:Form-relator-algebraic-Sigma1-criterion}.

Suppose first $y \in \YY_{>0}$.
Then there exists  a relator $r_{y}=s_{1} \cdots s_{k} \in \RR$ 
such that the sequence
\[
f_{r_y}(\chi)= \left( \chi(s_{1}),  \chi(s_{1}s_{2}), \ldots , \chi(s_{1}s_{2} \cdots s_{j}), \ldots, \chi(r_{y}) \right)
\]
has a unique minimum and that the letters $t$ and $y$ are involved in this minimum.
If the minimum occurs in $k$, the relator has either the form $t \cdots y^{-1}$ or $y \cdots t^{-1}$
and all its proper initial segments $u$ have positive $\chi$-value.
If the relator starts with $y$ it satisfies therefore the requirement stated in condition
\eqref{eq:Form-relator-algebraic-Sigma1-criterion};
if it starts with $t$, its inverse will be a relator satisfying the condition.

If the minimum occurs in $j < k$, 
we set $u = s_{1} \cdots s_{j}$ and $w= s_{j+1} \cdots s_{k}$
and consider the cyclic permutation $w\cdot u$ of $r_{y}$.
We claim that all the values
\[
\chi(s_{j+1}),\; \chi(s_{j+1}), \cdots, \chi(w),\; \chi(w \cdot s_{1}), \; \chi(w \cdot s_{1}s_{2}), \ldots, 
\chi(w \cdot s_{1} \cdots s_{j-1})
\]
are positive.
To see this, notice that $\chi(u)$ is the minimum of the sequence $f_{r_y}(\chi)$.
It follows firstly that $\chi(u \cdot s_{j+1} \cdots s_{\ell})> \chi(u)$, 
whence $\chi(s_{j+1} \cdots s_{\ell} ) > 0$ for $\ell = j+1, j+2, \ldots, k$;
and then that $\chi(s_{1}\cdots s_{h}) > \chi(u)$,
whence 
\[
\chi(w \cdot s_{1}\cdots s_{h}) = -\chi(u)  + \chi(s_{1}\cdots s_{h}) > 0.
\]
Recall now that the letters $t$ and $y$ are involved in the minimum of the sequence $f_{r_y}(\chi)$.
It follows that the cyclic permutation  $w \cdot u$ has either the form $t \cdots y^{-1}$  or $y \cdots t^{-1}$.
In the first case the word $u\cdot w$ itself has properties required by condition
\eqref{eq:Form-relator-algebraic-Sigma1-criterion};
in the second case, the relator $ (w \cdot u)^{-1}$ will have them.
\smallskip

Let's now investigate the case of a letter $x \in \XX_0$.
By the definition of $\psi(\RR)$ there exists a relator $r_{x} =s_{1} \cdots s_{k}$ 
such  that the minimum of  the sequence
\[
f_{r_x}(\chi) = \left( \chi(s_{1}),  \chi(s_{1}s_{2}), \ldots , \chi(s_{1}s_{2} \cdots s_{j}), \ldots, \chi(r_x) \right)
\]
occurs twice, at two consecutive indices $j$, $j+1$, 
and so  that $s_{j}s_{j+1}s_{j+2} = t^{-1}x^{\varepsilon} t$ for some sign $\varepsilon$.
Consider then the word $r$ obtained from $r_{x}$
by cyclic permutation which starts with the letter $s_{j+1}$.
This word is a relator of the form $x^{\varepsilon}t \cdots t^{-1}$;
one sees as in the first part of the verification 
that $\chi$ assumes positive values on all its proper initial segments.
If $\varepsilon=1$,
this relator satisfies therefore condition \eqref{eq:Form-relator-algebraic-Sigma1-criterion}
for a letter $y$ with $y \in \XX_0$.
If $\varepsilon=-1$ the cyclically permuted relator has the form $x^{-1}t \cdots t^{-1}$.
Its inverse is then of the form $t \cdots t ^{-1}x$
and has positive $\chi$-values on all proper initial segments.
By moving the last letter to the front, one arrives at a relator satisfying condition 
\eqref{eq:Form-relator-algebraic-Sigma1-criterion}.
\end{proof}
%
%
\subsubsection{Geometrical reformulation}
\label{sssec:Geometrical-reformulation}
%
If the rank of the abelianized group $G_{\ab}$ is larger than 1, 
the sphere $S(G)$ has infinitely many points and $\psi(\RR)$ cannot be determined 
by investigating the points $[\chi]$ one after another.
In some cases, a geometric reformulation will then help.
It splits the evaluation of the sequences into two part:
one uses first the relators to produce sequences with values in a Euclidean lattice $\Z^k$;
in the second part,
these sequences are composed with the linear forms 
$x \mapsto \langle u, x\rangle$;
here $u$ ranges over the unit sphere  of $\E^k$.
The details are as follows.

Let $\vartheta \colon G \epi G_{\ab}\epi \Z^k$ be an epimorphism of $G$ 
onto the standard lattice in $\R^k$ equipped with the usual scalar product.
Then $\vartheta$ induces an embedding 
\begin{equation}
\label{eq:Embedding-bis}
\sigma(\vartheta) \colon \s^{k-1} \iso S(G, \ker \vartheta) \incl S(G), 
\quad 
u \longmapsto [g \mapsto \langle u, \vartheta(g)\rangle]
\end{equation}
of spheres (see section \ref{sssec:Coordinates-sphere}).
Every relator $r = s_{1}s_{2}\cdots c_{k}$ of $G$ gives rise to a sequence
\begin{equation}
\label{eq:Vector-valued-sequence}
f_r(\vartheta) = \left( \vartheta(s_{1}), \vartheta(s_{1}) + \vartheta(s_{2}),  \ldots, \vartheta(s_{1}) + \cdots + \vartheta(s_{k}) \right)
\end{equation}
of lattice points.
The points of this sequence are the vertices of a path $\bar{p}$ in the Cayley graph $\Gamma(\Z^k, \XX)$
that ends in the origin;
we will think of $\overline{p}$ as being a loop, starting at and ending in the origin.
If $k=2$, this loop can easily be depicted.
As the examples in the next section will reveal,
such a visualization can be helpful in determining  $\psi(\RR)$.   
%

%
\subsection{Application to groups with two generators}
\label{ssec:Groups-2-generators}
%
The next aim is the computation of  $\psi(\RR)$ in the simplest interesting set-up,   
in that of a group with 2 generators.
The definition of $\psi(\RR)$ simplifies then considerably.

Suppose $G$ is generated $a$ and $b$,
and $\RR$ is a set of cyclically reduced words in $a$, $a^{-1}$ and $b$, $b^{-1}$ made up of relators of $G$. 
Given a non-zero character $\chi$ of $G$,
two cases arise: if $\chi(a)$ and $\chi(b)$ are both non-zero,
then $ \YY = \{a, a^{-1}, b, b^{-1}\}$ contains two elements,
say $y_{1}$, $y_{2}$, with positive values;
if now  $r \in \RR$ is a relator such that the associated sequence $f_{r}(\chi)$ has a unique minimum,
then $y_{1}$, $y_{2}$ must be the letters involved in this minimum.
Statement (i) in the definition \ref{definition:psi(R)}
of $\psi(R)$ is then satisfied for $t = y_{1}$ as well as $t = y_{2}$.

Suppose now that $\chi(a)= 0$. 
Then there exists a sign $\varepsilon$ so that $\chi(b^{\varepsilon} )$ is positive
and  $b^{\varepsilon} $ is the only element in $\YY$ that is eligible for $t$.
In this situation,
we seek a relator $r \in \RR$ whose associated sequence assumes its minimum  at consecutive indices,
and only there. The letters involved in this minimum are the automatically $t$ and one of $a$, $a^{-1}$.
The case where $\chi(b) = 0$ is similar.

Definition \ref{definition:psi(R)} can therefore be restated as follows:
\begin{definition}
\label{definition:psi(R)-2-generators}
Let $G$ be a group generated by two elements $a$, $b$
and let $\RR$ be a set of cyclically reduced words in $a$, $a^{-1}$, $b$ and $b^{-1}$
which are relators of $G$. 
Then a point $[\chi] \in S(G)$ belongs to $\psi(\RR)$ if, and only if, the following condition holds:
\begin{quote}
\emph{if  $\chi(a_{1})$ and $\chi(a_{2})$ are both non-zero,
the minimum of some sequence $f_{r}( \chi)$ with $r \in \RR$ occurs only once;
if one of $\chi(a)$, $\chi(b)$ is zero,
the minimum of some sequence $f_{r}( \chi)$ with $r \in \RR$ occurs at two consecutive indices and only there}.
\end{quote}
\end{definition}
\index{Computation of Sigma1@Computation of $\Sigma^1$ for!two generator groups}

In the remainder of this section \ref{ssec:Groups-2-generators},
the above proposition will be used in the geometrical interpretation described in 
\ref{sssec:Geometrical-reformulation}. 
%
\subsubsection{Groups with a single defining relator}
\label{sssec:One-relator-groups-example}
%
Let $\langle a, b \mid r \rangle $ be  a presentation with a single defining relator $r$
which has exponent sum 0 in each of the generators,
say
\begin{equation}
\label{eq:Browns-relator}
r =  a^{-1} b^{-1} ab^2 a^{-1} b^{-1} a^2 b^{-1}a^{-1} b a^{-1}ba b^{-1}.
\end{equation}
The assignments $a \mapsto (1,0)$ and $b \mapsto (0,1)$ 
extend to an epimorphism  $\vartheta \colon G \epi \Z^2$.
Under this epimorphism,
the relator gives rise to the loop $\bar{p} = ((0,0),r)$ in the Cayley graph  
$\overline{\Gamma} = \Gamma(\Z^2, \{a,b\} )$;
it is indicated on the left of Figure \ref{fig:Illustration-path-Brown}.
\index{Notation!path overline-p@$\bar{p}$}
\begin{figure}[htb]
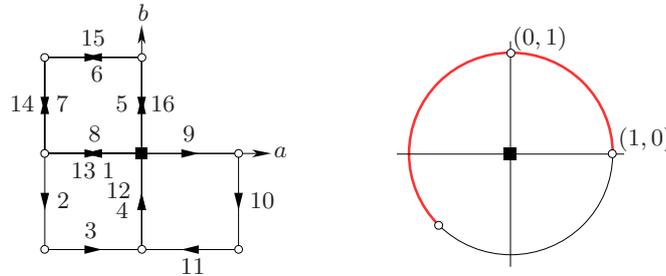

\psfrag{1}{\hspace*{-1.2mm} \small 1}
\psfrag{2}{  \hspace*{-3mm}   \small 2}
\psfrag{3}{\hspace*{-1mm}\small 3}
\psfrag{4}{\hspace*{-0.8mm}\small 4}
\psfrag{5}{\hspace*{-2.0mm} \small 5}
\psfrag{6}{  \hspace*{-2.5mm}   \small 6}
\psfrag{7}{\hspace*{-0.8mm}\small 7}
\psfrag{8}{\hspace*{-0.5mm}\small 8}
\psfrag{9}{\hspace*{-2mm} \small 9}
\psfrag{10}{  \hspace*{-3mm}   \small 10}
\psfrag{11}{\hspace*{-1mm}\small 11}
\psfrag{12}{\hspace*{-2mm}\small 12}
\psfrag{13}{\hspace*{-2.5mm} \small 13}
\psfrag{14}{  \hspace*{-4.3mm}   \small 14}
\psfrag{15}{\hspace*{-1.4mm}\small 15}
\psfrag{16}{\hspace*{-1mm}\small 16}
\psfrag{ori}{  \hspace*{-7mm}   \small $$}
\psfrag{aa}{  \hspace*{-2.5mm}   \small $a$}
\psfrag{bb}{  \hspace*{-2.5mm}   \small $b$}
\psfrag{xx1}{\hspace*{-0.0mm}\small $(1,0)$}
\psfrag{yy1}{\hspace*{-0.5mm}\small $(0,1)$}
\begin{center}
\includegraphics[height=3.5cm]{B2.fig1a.eps}
\hspace*{1.2cm}
\includegraphics[height=3.5cm]{B2.fig1b.eps}
\caption{The loop $\bar{p}$ in $\Gamma(\Z^2, \{a,b\} )$ 
and the subset $\sigma(\theta)^{-1} (\psi(\RR))$}
\label{fig:Illustration-path-Brown}
\end{center}
\end{figure}

A word on the interpretation of this figure is in order.
The path $\bar{p}$ is not simple;
worse, it contains not only multiple vertices, but edges traversed more than once.
One way of dealing with this problem is to set off  multiple occurrences of the same edge from one another
(see, e.\;g.,  \cite[p.\;493]{Bro87b}).
In Figure  \ref{fig:Illustration-path-Brown}
a different rendering is chosen: 
the edges are numbered consecutively, with the position number  written on the left of the oriented edge.
The path starts at the origin, indicated by the black square in the middle of the figure, 
then goes around the left, lower square in counterclockwise fashion,
then counterclockwise around the left, upper square,
then clockwise around the right square
and finally in clockwise fashion around the upper left square.

The  determination of the subset $\psi(\{r\})$ can now be carried out like this.
We seek unit vectors $u  = (u_{1}, u_{2}) \in \s^1$ 
for which the function $h_{u} \colon x \mapsto \langle u, x\rangle$
assumes its minimum at most twice along the vertices of the  path $\bar{p}$, 
the vertices being counted with multiplicity.
This condition is fulfilled if $u_{1}> 0$ and $u_{2}> 0$;
then the minimum is assumed  in $(-1,-1)$ and only there.
In view of definition  \ref{definition:psi(R)-2-generators}
the image of the open first quadrant of $\s^1$ under $\sigma(\vartheta) \colon \s^1 \iso S(G)$ 
belongs therefore to $\psi(\{r\})$.
If $u_{1}< 0 < u_{2}$,
the function $h_{u}$ assumes its minimum in $(1,-1)$ and only there,
whence the image of the open second quadrant of $\s^1$  lies in  $\psi(\{r\})$.
 
If $u = (-1,0)$, 
the function $f_{u}$ assumes its minimum at the endpoints of the tenth edge of the  path $\bar{p}$
and only there;
the second part of the condition stated in definition  \ref{definition:psi(R)-2-generators}  
therefore applies  
and shows that  $[\chi_{u}]$ lies in $\psi(\{r\})$.
If, finally,  $u_{1} < u_{2} < 0$, 
the minimum is assumed in $(1,0)$, and only there;
so $\psi(\{r\})$ contains a further arc. 
The union of the four subsets detected constitute all of $\psi(\{r\})$;
its preimage is depicted in red on the right of Figure
\ref{fig:Illustration-path-Brown}.
\begin{remark}
\label{remark:Browns-example}
The subset $\psi(\{r\}) $  coincides with $\Sigma^1(G)$;
see Theorem \ref{thm:Sigma1-one-relator-group}.
\end{remark}
%
\subsubsection{Groups of PL-homeomorphisms}
\label{sssec:Group-PL-homeomorphisms-example}
%
The second example presents an instance
where the invariant can be determined completely,
in spite of the fact 
that only some relators, but not a presentation,  are known.
The example fits into a larger class of groups (see \cite[Section 8]{BNS}).

Let $f$ and $g$ be piecewise linear homeomorphisms of the unit interval $[0, 1]$ 
that satisfy the following requirements  for some positive real numbers $\varepsilon < \delta$:
\begin{align}
f(t) &< t \text{ for  }  t \in\;  ]0, 1[,
\label{eq:Condition-PL-homeomorphism-group-irreducibility}\\
f (t) &= g(t) \text{ for }   t \leq \varepsilon,
\label{eq:Condition-PL-homeomorphism-group-character-lambda}\\
g (t) &= t  \text{ for }   t \geq \delta.
 \label{eq:Condition-PL-homeomorphism-group-character-rho}
\end{align}
Let $G$ denote the group generated by these homeomorphisms $f$ and $g$.

Requirements 
\eqref{eq:Condition-PL-homeomorphism-group-character-lambda}
and
\eqref{eq:Condition-PL-homeomorphism-group-character-rho} 
imply that $G_{\ab}$ is free-Abelian of rank 2.
Indeed,
the  derivative in 0 induces a homomorphism of $G$ into the multiplicative group of $\R_{>0}$;
upon composing it with the natural logarithm $\ln$, one obtains a character $\chi_{l} \colon G \to \R$;
similarly, the composition of the derivative in 1 with $\ln$ yields a character $\chi_{r}$ of $G$.
The claim now follows from the facts that 
\begin{equation}
\label{eq:Characterizing-chi-l-and chi-r}
\chi_{l}(f \circ g^{-1}) = 0 , \quad \chi_{l}(g^{-1} ) > 0, \quad \text{and}  \quad \chi_{r}(f \circ g^{-1}) > 0, 
\quad \chi_{r}(g^{-1} ) = 0.
\end{equation}
The characters $\chi_{l}$ and $\chi_{r}$ enter into the description of $\Sigma^1(G)$ as follows:
\begin{prp}
\label{prp:Invariant-PL-homeomorphism-group}
\index{Computation of Sigma1@Computation of $\Sigma^1$ for!groups of PL-homeomorphisms}
Let $G$ denote the group generated by two PL-homeomorphisms $f$, $g$ of the unit interval $[0, 1]$ 
satisfying requirements \eqref{eq:Condition-PL-homeomorphism-group-irreducibility}
through \eqref{eq:Condition-PL-homeomorphism-group-character-rho}
for some real numbers $\varepsilon < \delta < 1$.
Then $G_{\ab}$ is free Abelian of rank 2 and 
\begin{equation}
\label{eq:Invariant-PL-homeomorphism-group}
\Sigma^1(G) = S(G) \smallsetminus \{[\chi_{l}], [\chi_{r}]\}.
\end{equation}
\end{prp}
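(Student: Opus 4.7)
The plan has four pieces.

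\emph{Paragraph 1 (setup).} I would first confirm the structural claim on $G_{\ab}$. Relations \eqref{eq:Characterizing-chi-l-and chi-r} show that $\chi_l$ and $\chi_r$ are linearly independent elements of $\Hom(G,\R)$, so $r_0(G_{\ab}) \geq 2$ by Lemma \ref{lem:Dimension-S(G)}; since $G$ is 2-generated, the reverse inequality also holds and $G_{\ab}$ is free abelian of rank 2, making $S(G)$ a circle on which $[\chi_l]$ and $[\chi_r]$ are distinct. A pivotal observation for Stage 2 is that both $\chi_l$ and $\chi_r$ are \emph{rank one}: because $g = f$ on $[0,\varepsilon]$ one has $\chi_l(g) = \chi_l(f)$, so $\chi_l(G) = \Z \cdot \chi_l(f)$; symmetrically $\chi_r(g) = 0$ forces $\chi_r(G) = \Z \cdot \chi_r(f)$.

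\emph{Paragraph 2 (the crux: exclusion of $[\chi_l]$ and $[\chi_r]$).} I would argue by contradiction using Proposition \ref{prp:Ascending-HNN-extension}, treating $[\chi_l]$ in detail (the case $[\chi_r]$ is analogous with $0$ and $1$ interchanged). Assume $[\chi_l] \in \Sigma^1(G)$; then, with $t = g^{-1}$ (a positive generator of $\chi_l(G)$), the kernel $N = \ker \chi_l$ would contain a finitely generated $H$ with $gHg^{-1} \subseteq H$ and $\bigcup_{\ell \geq 0} g^{-\ell} H g^\ell = N$. Every $h \in N$ is a PL-homeomorphism with $h'(0) = 1$, hence the identity on some maximal initial interval $[0,\eta_h]$ with $\eta_h > 0$. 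A finite generating set of $H$ yields a common $\eta > 0$ on which every element of $H$ is the identity; hence every element of $g^{-\ell} H g^\ell$ is the identity on $[0, g^{-\ell}(\eta)]$. The hypothesis $f(t) < t$ on $(0,1)$ gives $g'(0) = f'(0) < 1$, so $0$ is a repelling fixed point for $g^{-1}$, and a direct monotonicity argument shows that $\eta^* := \inf_{\ell \geq 0} g^{-\ell}(\eta)$ is strictly positive. Consequently every element of $\bigcup_\ell g^{-\ell} H g^\ell$ is the identity on $[0,\eta^*]$. To contradict the equality with $N$, I would exhibit $h_k = f^k a f^{-k} \in N$ where $a = fg^{-1}$: since $a$ coincides with $f$ on $[\delta,1]$ and $f \neq \mathrm{id}$ there, $a$ is non-trivial on $(\delta,1)$; hence $h_k$ is non-trivial on $(f^k(\delta),1)$, and for $k$ large enough that $f^k(\delta) < \eta^*$ the element $h_k$ fails to be the identity on $[0,\eta^*]$, yet lies in $N$ because $\chi_l(h_k) = \chi_l(a) = 0$.

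\emph{Paragraph 3 (the inclusion of all other points).} For $S(G) \setminus \{[\chi_l], [\chi_r]\} \subseteq \Sigma^1(G)$ I would invoke the lower bound $\psi(\RR)$ of Proposition \ref{prp:psi(R)-subset-Sigma1} with the generating system $\XX = \{a,b\} = \{fg^{-1}, g^{-1}\}$; in $(a,b)$-coordinates on $\Hom(G,\R)$, $[\chi_l]$ lies on the positive $b$-axis and $[\chi_r]$ on the positive $a$-axis. The PL dynamics furnish two commutator relators. Let $k_0$ be least with $f^{k_0}(\delta) \leq \varepsilon$; then $f^{k_0} g f^{-k_0}$ has support in $[0,\varepsilon]$, where $a = \mathrm{id}$, so $r_1 := [a,\, f^{k_0} g f^{-k_0}]$ is trivial in $G$. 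Symmetrically, let $\ell_0$ be least with $f^{-\ell_0}(\varepsilon) \geq \delta$; then $f^{-\ell_0} a f^{\ell_0}$ has support in $[\delta,1]$, where $b = \mathrm{id}$, so $r_2 := [b,\, f^{-\ell_0} a f^{\ell_0}]$ is also trivial. Rewriting these words in $\{a,b\}$ via $f = ab^{-1}$ and tracing the loops $\overline{p}_{r_1}$, $\overline{p}_{r_2}$ in $\Z^2$ under $\vartheta(a) = (1,0)$, $\vartheta(b) = (0,1)$ (as in \ref{sssec:Geometrical-reformulation}), I would verify by case analysis on each open arc of $S(G) \setminus \{[\pm\chi_l], [\pm\chi_r]\}$ and the two axis points $[-\chi_l], [-\chi_r]$ that at least one of the sequences $f_{r_i}(\chi)$ satisfies the unique-minimum or two-consecutive-minima condition of Definition \ref{definition:psi(R)-2-generators}; this yields $\psi(\{r_1,r_2\}) \supseteq S(G) \setminus \{[\chi_l], [\chi_r]\}$.

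\emph{Paragraph 4 (main obstacle).} The conceptual crux is Stage 2: the delicate point is combining the rank-one identification of $\chi_l$ (resp.\ $\chi_r$) with the dynamical repulsion at $0$ (resp.\ $1$) to produce an explicit element of $N$ that escapes every $g^{-\ell} H g^\ell$. Stage 3 looks routine but in practice hides real work: the $\Z^2$-loops $\overline{p}_{r_1}$ and $\overline{p}_{r_2}$ must be drawn and their extremal vertices tracked carefully, and it may be necessary to enlarge $\RR$ by including conjugate variants $[a,\, f^{k} g f^{-k}]$ for $k \geq k_0$ (and analogous variants for $r_2$) in order to secure the unique-minimum property on the arcs that approach the excluded axis points $[\chi_l]$ and $[\chi_r]$.
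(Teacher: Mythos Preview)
Your Stages 1 and 2 are sound. For Stage 2 the paper proceeds slightly differently: it exhibits $G$ as a \emph{strictly descending} HNN-extension over the (infinitely generated) subgroup $H = \{h \in G : h|_{[\delta_0,1]} = \mathrm{id}\}$ with stable letter $f$ and then invokes Proposition \ref{prp:Strictly-descending-HNN-extension}. Your direct contradiction via Proposition \ref{prp:Ascending-HNN-extension} together with the support-at-the-endpoint analysis is the same idea unpacked and is correct (modulo the quibble that $a = fg^{-1}$ is the identity only on $[0, f(\varepsilon)]$, not on $[0,\varepsilon]$, which merely shifts your $k_0$ by one).

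Stage 3 has a genuine gap. In your $(a,b)$-coordinates there is a \emph{third} exceptional direction beyond $[\chi_l]$ and $[\chi_r]$, namely $[\chi_3]$ with $\chi_3(f) = 0$, $\chi_3(g) > 0$; this is the direction $(-1,-1)$, sitting in the interior of the third-quadrant arc. For this character $\chi_3(ab^{-1}) = \chi_3(f) = 0$, so every $f^{\pm k}$-block in $r_1$ or $r_2$ contributes a staircase along which the $\chi_3$-track is constant, and even after cyclic reduction the minimum of $f_{r_i}(\chi_3)$ is attained at (at least) two non-adjacent positions. For instance, with $k_0 = 2$ the cyclically reduced $r_1$ has its $\chi_3$-minimum at the path vertices $(1,0)$ and $(2,-1)$, positions $1$ and $3$; since $\chi_3(a)$ and $\chi_3(b)$ are both non-zero, the unique-minimum test of Definition \ref{definition:psi(R)-2-generators} fails. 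Your proposed enlargement by further commutators $[a, f^k g f^{-k}]$ or $[b, f^{-\ell} a f^\ell]$ cannot help: every such relator shares the same defect at $\chi_3$. The difficulty you sense in Paragraph 4 is real but is located at this single interior direction, not near $[\chi_l]$ or $[\chi_r]$. The paper closes the analogous gap (in the generators $f,g$) by taking \emph{two consecutive} relations $r_m$ and $r_{m+1}$ and algebraically combining them---see the derivation of the auxiliary relators in the proof of Lemma \ref{lem:Sigma1-two-relator-group}---to manufacture a new relator whose track at the problematic character attains its minimum only at adjacent positions. You would need an analogous derivation, combining two of your relators rather than simply collecting more of the same shape, to complete the inclusion.
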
 

The proof divides into two parts of different sorts.
To show that the points $[\chi_{l}]$ and $[\chi_{r}]$ lie outside $\Sigma^1(G)$,
one analyses the kernels of $\chi_{l}$, $\chi_{r}$ 
and then employs Proposition \ref{prp:Strictly-descending-HNN-extension},
a result on descending HNN-extensions.
To establish that all other points of $S(G)$ belong to $\Sigma^1(G)$
one determines some relators $r_{m}$ satisfied by the generators $f$, $g$ 
and calculates $\psi(\{r_{m}\})$.
An appeal to proposition \ref{prp:psi(R)-subset-Sigma1} 
and a short auxiliary computation then yield the desired conclusion.

\paragraph{Proof of proposition \ref{prp:Invariant-PL-homeomorphism-group}: part 1.}
We first show that $[\chi_{r}] \notin\Sigma^1(G)$.
To do so,
we introduce the \emph{support} of a permutation $h \in G$;
it is defined to be the subset $\{x \in X \mid h(x) \neq x \}$. 
If $h_1$ and $h_2$ are permutations,
the formula 
\begin{equation}
\label{eq:Support-conjugated-permutation}
\supp(h_{1}\circ h_{2}\circ h_{1}^{-1}) = h_{1}(\supp(h_{2})).
\end{equation}
holds, as one verifies easily.
Let  now $\delta_0$ be the supremum of $\supp(g)$; 
assumptions \eqref{eq:Condition-PL-homeomorphism-group-irreducibility} 
through \ref{eq:Condition-PL-homeomorphism-group-character-rho} 
imply that $\varepsilon < \delta_0 \leq \delta < 1$.
The  subset
\[
H = \{ h \in G \mid h(t) = t  \text{ for }  t \geq \delta_0\}.
\]
is actually a subgroup of $G$ and it satisfies the inclusion 
\[
 f \circ H\circ f^{-1}   = 
  \{ h \in G \mid h(t) = t  \text{ for }  t \geq f(\delta_0)\}\; \subsetneqq H 
 \]
for $g \in H \smallsetminus f \circ H \circ f^{-1}$.
It shows that $G$ is a \emph{properly descending} HNN-extension with base group $H$ and stable letter $f$.
Proposition \ref {prp:Strictly-descending-HNN-extension}
allows us therefore to conclude that $[\chi_{r}] \notin \Sigma^1(G)$. 
One can verify similarly that $[\chi_{l}] \notin \Sigma^1(G)$.
\smallskip

The following proposition provides a tool for showing 
that a rank 1 point lies \emph{outside} $\Sigma^1$.
 It can be generalized to points of arbitrary rank
(see Proposition \ref{prp:Characterization-complement-Sigma1}).

\begin{prp}
 \label{prp:Strictly-descending-HNN-extension}%
 \index{Invariant Sigma1 and@Invariant $\Sigma^1$ and!descending HNN-extension}%
 Let $\chi \colon G \to  \Z \incl \R$ be a rank 1 character of  $G$
 and let $t \in G$ be an element with $\chi(t) = 1$.
 If $G$ is a \emph{strictly} descending HNN-extension $\langle H, t \mid t H t^{-1}  \subsetneqq  H  \rangle $
 over a subgroup $H$ of $\ker \chi$  then $[\chi] \notin \Sigma^1(G)$.
 \end{prp}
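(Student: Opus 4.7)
The plan is to argue by contradiction using the characterization of rank $1$ points in $\Sigma^1(G)$ provided by Proposition \ref{prp:Ascending-HNN-extension}. Assume $[\chi] \in \Sigma^1(G)$. Then $N = \ker \chi$ contains a finitely generated subgroup $H'$ such that
\[
t^{-1} H' t \subseteq H' \quad \text{and} \quad \bigcup\nolimits_{\ell \geq 0} t^\ell H' t^{-\ell} = N.
\]
In particular, the chain $H' \subseteq tH't^{-1} \subseteq t^2 H' t^{-2} \subseteq \cdots$ is ascending with union equal to $N$.

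Next I would exploit the HNN-structure. Since $tHt^{-1} \subsetneqq H$, conjugation by $t^{-1}$ carries $H$ properly inside $t^{-1}Ht$, and iterating yields the strictly ascending chain
\[
H \subsetneqq t^{-1}Ht \subsetneqq t^{-2}Ht^2 \subsetneqq \cdots,
\]
whose union is all of $N$ (as $H$ and $t$ generate $G$ and every element of $N$ has a normal form forcing it into some $t^{-k}Ht^k$). Because $H'$ is finitely generated and $N$ is the ascending union of the $t^{-k}Ht^k$, there exists $k_0 \in \N$ with $H' \subseteq t^{-k_0}Ht^{k_0}$.

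The key computation comes next. For every integer $\ell \geq k_0$, setting $m = \ell - k_0 \geq 0$ and using $tHt^{-1} \subseteq H$ (so $t^m H t^{-m} \subseteq H$ for all $m \geq 0$), I get
\[
t^\ell H' t^{-\ell} \subseteq t^\ell \cdot t^{-k_0} H t^{k_0} \cdot t^{-\ell} = t^m H t^{-m} \subseteq H.
\]
Taking the union over $\ell \geq k_0$ gives $N = \bigcup_{\ell \geq k_0} t^\ell H' t^{-\ell} \subseteq H$. But $H \subsetneqq t^{-1}Ht \subseteq N$, so $N$ properly contains $H$, a contradiction. Hence $[\chi] \notin \Sigma^1(G)$.

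The argument is essentially bookkeeping once Proposition \ref{prp:Ascending-HNN-extension} is in hand; the only point that requires care is keeping track of directions: the hypothesis gives $G$ as a \emph{descending} HNN-extension with stable letter $t$ (equivalently, an ascending HNN-extension with stable letter $t^{-1}$), and one must be sure that the hypothetical finitely generated base group $H'$ supplied by Proposition \ref{prp:Ascending-HNN-extension} for $[\chi]$ is the one conjugated \emph{inward} by $t^{-1}$, so that the chains $\{t^\ell H' t^{-\ell}\}$ and $\{t^{-k}Ht^k\}$ both ascend in the same direction and can be compared. That bookkeeping is the one subtle step; everything else is formal.
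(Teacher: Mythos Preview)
Your proof is correct and follows essentially the same route as the paper's: both use Proposition \ref{prp:Ascending-HNN-extension}, embed the finitely generated candidate base group into some $t^{-k_0}Ht^{k_0}$, and then use $t^mHt^{-m}\subseteq H$ to trap all the conjugates $t^\ell H' t^{-\ell}$ inside a proper subgroup of $N$. The only cosmetic difference is that the paper phrases it as showing directly that no finitely generated $B$ with $t^{-1}Bt\subseteq B$ can sweep out $N$, whereas you argue by contradiction from a specific $H'$; the key computation is identical.
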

 \begin{proof}
Set $N = \ker \chi$ 
and consider a finitely generated subgroup $B \subset N$ with $ B \subset tBt^{-1}$.
Since $B$ is finitely generated and $H$ is the basis of a descending HNN-extension with stable letter $t$,
there exists a positive integer $\ell_0$ such that  $B \subset t^{-\ell} H t^\ell$.
For all $j \geq 0$ the following chain of inclusions
\[
t^j Bt^{-j} \subseteq t^j (t^{-\ell} H t^\ell)t^{-j} = t^{-\ell} (t^{j} H t^{-j})t^{^\ell} \subseteq t^{-\ell} H t^\ell
\]
is then valid. 
Since $t^{-\ell} H t^\ell \subsetneqq N$ this chain precludes the  sequence $j \mapsto t^j B t^{-j}$  
from sweeping out $N$,
whence $[\chi] \notin \Sigma^1(G)$ by Proposition \ref{prp:Ascending-HNN-extension}.
\end{proof}
 
\paragraph{Proof of Proposition \ref{prp:Invariant-PL-homeomorphism-group}: part 2.}
Requirements  \eqref{eq:Condition-PL-homeomorphism-group-irreducibility}
through \eqref{eq:Condition-PL-homeomorphism-group-character-rho}
 imply some distinctive relations among the generators $f$ and $g$.
To detect them, we proceed as follows.
Requirement \eqref{eq:Condition-PL-homeomorphism-group-irreducibility} implies, first of all
that there exists a positive exponent $m_{0}$ with  $f^{m_{0}}(\delta)  \leq \varepsilon$.
Next requirement \eqref{eq:Condition-PL-homeomorphism-group-character-rho}
and formula \eqref{eq:Support-conjugated-permutation}
give rise to the chain of inclusions
\[
\supp(f^m \circ g \circ f^{-m}) = f^m(\supp(g)) \subseteq f^m([0,\delta]) \subseteq [0, \varepsilon].
\]
Set  $g_{m} = f^m \circ g \circ f^{-m}$. 
Then the commutativity relation
\begin{equation}
\label{eq:Relation-homeomorphisms-f-g}
(g^{-1} \circ f) \circ g_{m}= g_{m} \circ (g^{-1} \circ f)
\end{equation}
is valid for every $m \geq m_{0}$;
indeed, the functions $f$ and $g$ agree on the interval $[0,\varepsilon]$,
whence the support of $g^{-1} \circ f$ is contained in $[\varepsilon,1]$ and so disjoint from the support of $g_m$.
Relation \eqref{eq:Relation-homeomorphisms-f-g}  
is equivalent to the relator
\begin{equation}
\label{eq:Relator-homeomorphisms-f-g}
r_{m} =  f \circ (f^{m} \circ g \circ f^{-m}) \circ f^{-1} \circ g \circ (f^{m} \circ g^{-1} \circ f^{-m}) \circ g^{-1} .
\end{equation}
The following Lemma then allowsw us to conclude that
$\Sigma^1(G) \supseteq S(G) \smallsetminus \{[\chi_{l}], [\chi_{r}]\}$.

\begin{lem}
\label{lem:Sigma1-two-relator-group}
Given a positive integer $m$, set 
\begin{equation}
\label{eq:Definition-commutator-word}
r_{m} = a^{m+1}\cdot b \cdot a^{-(m+1)} \cdot b\cdot a^m\cdot b^{-1} \cdot a^{-m}\cdot b^{-1}
\end{equation}
and define  $G_{m}$ to be the group given by the presentation $\langle a, b \mid r_{m}, r_{m+1} \rangle$.
Then $G_{\ab}$ is free-Abelian of rank 2 
and $\Sigma^1(G) \supseteq S(G) \smallsetminus \{[\chi_{1}], [\chi_{2}]\}$
with $\chi_{1}$, $\chi_{2}$ the characters  given by
\begin{equation}
\label{eq:Definition-characters-1-and-2}
\chi_1(a) = 1, \quad \chi_{1}(b) = 0 \quad \text{and}\quad \chi_{2}(a) = \chi_2(b) = -1.
\end{equation}
\end{lem}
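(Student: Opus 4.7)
The plan is to verify the abelianization, apply the lower bound $\psi(\{r_{m}, r_{m+1}\}) \subseteq \Sigma^{1}(G_{m})$ from Proposition \ref{prp:psi(R)-subset-Sigma1} via the geometric reformulation of Section \ref{sssec:Geometrical-reformulation}, and then patch the two axis points that $\psi$ does not see using the HNN-extension characterization of Proposition \ref{prp:Ascending-HNN-extension}.

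First I would check that both $r_{m}$ and $r_{m+1}$ have exponent sum zero in each of $a$ and $b$, so the images of $a, b$ form a basis of $G_{\ab}$; introducing coordinates $\sigma(\vartheta) \colon \s^{1} \iso S(G_{m})$ via $\vartheta \colon G_{m} \epi \Z^{2}$ with $a \mapsto (1,0)$, $b \mapsto (0,1)$, the excluded points $[\chi_{1}]$ and $[\chi_{2}]$ correspond to the unit vectors $(1,0)$ and $(-1,-1)/\sqrt{2}$. Passing to the geometric picture, the loop $\bar{p}_{m}$ traces out the boundary of the rectangle $[0,m{+}1] \times [0,1]$, then that of $[0,m]\times[1,2]$, each traversed counterclockwise. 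A case analysis on the sign pattern of $u = (u_{1}, u_{2}) \in \s^{1}$ shows that whenever $u_{1}, u_{2}$ are both non-zero and $u_{1} \neq u_{2}$, the linear form $h_{u}$ attains its minimum along the vertices of $\bar{p}_{m}$ at a single vertex, so $[\chi_{u}] \in \psi(\{r_{m}\})$. In the degenerate direction $u_{1} = u_{2} < 0$ the minimum is attained both at $(m{+}1,1)$ and at $(m,2)$, which are non-consecutive; substituting $r_{m+1}$ only shifts the tied vertices to $(m{+}2,1)$ and $(m{+}1,2)$. Hence $[\chi_{2}]$ is genuinely absent from $\psi(\{r_{m}, r_{m+1}\})$, in agreement with the statement.

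Next the four axis points need individual attention. The point $[-\chi_{1}]$, corresponding to $u=(-1,0)$, is handled by $\psi$: the minimum of $f_{r_{m}}(\chi)$ lies exactly at the two consecutive positions $m{+}1, m{+}2$, with surrounding subword $s_{m+1} s_{m+2} s_{m+3} = a\,b\,a^{-1}$ of the form $t^{-1} x t$ with $t = a^{-1}$, $x = b$, so part (ii) of the definition of $\psi$ applies. For the remaining two axis points $[\pm\chi_{0}]$ with $\chi_{0}(a)=0$, $\chi_{0}(b)=1$, the sequence $f_{r_{m}}(\chi_{0})$ is constant on $m+2$ consecutive indices, so neither $r_{m}$ nor $r_{m+1}$ witnesses condition (ii) of $\psi$. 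I would instead apply Proposition \ref{prp:Ascending-HNN-extension}: setting $a_{k} := b^{k} a b^{-k}$, the relations $r_{m}=r_{m+1}=1$ and their $b$-conjugates yield in $N = \ker \chi_{0}$ the identities
\begin{equation*}
a_{k}^{m+1}\, a_{k+1}^{-(m+1)}\, a_{k+2}^{m}\, a_{k+1}^{-m} = 1 \quad\text{and}\quad a_{k}^{m+2}\, a_{k+1}^{-(m+2)}\, a_{k+2}^{m+1}\, a_{k+1}^{-(m+1)} = 1.
\end{equation*}
Reading these for $k=-1$ expresses $a_{-1}^{m+1}$ and $a_{-1}^{m+2}$ as words in $a_{-1}, a_{0}, a_{1}$, and their quotient isolates $a_{-1}$ itself as an explicit word in $a_{0}, a_{1}, a_{2}$; a symmetric manipulation at $k=1$ isolates $a_{3}$ in the same three generators. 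Consequently $H := \langle a_{0}, a_{1}, a_{2} \rangle$ is a finitely generated subgroup of $N$ with $b^{-1} H b \subseteq H$ and $b H b^{-1} \subseteq H$, while $b^{k} H b^{-k} \ni a_{k}$ for every $k \in \Z$ forces $\bigcup_{k} b^{k} H b^{-k} = N$. Proposition \ref{prp:Ascending-HNN-extension} then places both $[\chi_{0}]$ and $[-\chi_{0}]$ in $\Sigma^{1}(G_{m})$.

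The main obstacle will be this last step: extracting $a_{-1}$ and $a_{3}$ explicitly in terms of $a_{0}, a_{1}, a_{2}$ from the two shifted relators. The relations only constrain the powers $a_{-1}^{m+1}$ and $a_{-1}^{m+2}$, and the passage to $a_{-1}$ itself implicitly uses the coprimality of $m{+}1$ and $m{+}2$, which is what makes both relators indispensable. The rest of the argument, namely the quadrant-by-quadrant analysis of $\psi$, is routine once the loop $\bar{p}_{m}$ has been drawn.
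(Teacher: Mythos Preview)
Your approach is correct and agrees with the paper's proof through the first stage: you compute $\psi(\{r_m\})$ via the loop $\bar{p}_m$ and find that it covers $S(G_m)$ except for the four points $[\chi_1], [\chi_2], [\chi_3], [\chi_4]$ (your $[\pm\chi_0]$ are the paper's $[\chi_3], [\chi_4]$). Two cosmetic slips: the second rectangle in $\bar{p}_m$ is traversed clockwise, not counterclockwise; and you never explicitly treat the direction $u_1 = u_2 > 0$, i.e.\ $[-\chi_2]$, where the minimum sits uniquely at the origin and so $[-\chi_2]\in\psi(\{r_m\})$. Also, at $[\chi_2]$ both values $\chi_2(a),\chi_2(b)$ are non-zero, so the relevant $\psi$-criterion is ``minimum once''; your remark about non-consecutiveness invokes the wrong test there, though the conclusion stands.

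Where you genuinely diverge from the paper is in handling $[\chi_3]$ and $[\chi_4]$. The paper stays entirely within the $\psi$-framework: it combines the two defining relations into auxiliary relators $r_3$, $r_4$ and checks that $f_{r_3}(\chi_3)$ and $f_{r_4}(\chi_4)$ each attain their minimum at exactly two consecutive indices, so that Proposition~\ref{prp:psi(R)-subset-Sigma1} applies once more. You instead rewrite the relations in the conjugates $a_k = b^k a b^{-k}$, exploit $\gcd(m{+}1,m{+}2)=1$ to solve for $a_{-1}$ and $a_3$ in terms of $a_0, a_1, a_2$ (in fact $a_0, a_1$ already suffice, as your own equations show), and invoke Proposition~\ref{prp:Ascending-HNN-extension}. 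Your route yields the bonus that $\ker\chi_3$ is finitely generated outright, while the paper's route keeps the argument tool-uniform. The underlying manipulations are cousins: the paper's derived relators $r_3, r_4$ encode essentially the same eliminations you perform on the $a_k$.
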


\begin{proof}
Let $\vartheta \colon G \epi \Z^2$ be the epimorphism sending $a$ to $(1,0)$ and $b$ to $(0,1)$.
Under this epimorphism the relator $r_{m}$ give rise to a closed path $\overline{p}_{m} = ((0,0), r_{m})$
in the Cayley graph $\Gamma(\Z^2, \{a, b\})$;
for $m>1$ it has the form indicated on the left of Figure \ref{fig:Path-relator-homeomorphism-group}.
The corresponding subset $\psi(\{r_{m}\})$ is depicted on the right of the figure. 
Here $\chi_{3}$ and $\chi_{4}$ denote the characters 
\[
\chi_{3}(a) = 0, \quad \chi_{3}(b) = 1 \quad \text{and} \quad \chi_{4} = - \chi_{3}.
\]
If $m=1$,  the subset $\psi(\{r_{1}\})$ contains also the point $[\chi_{4}]$. 
\begin{figure}[htb]
\psfrag{1}{\hspace*{-2mm} \tiny 1}
\psfrag{2}{  \hspace*{-3mm}  \tiny 2}
\psfrag{3}{\hspace*{-2.5mm} \tiny $\cdots$}
\psfrag{4}{\hspace*{-3.5mm}\tiny $m+1$}
\psfrag{5}{\hspace*{-7mm} \tiny $m+2$}
\psfrag{6}{  \hspace*{-5.5mm}   \tiny $m+3$}
\psfrag{7}{\hspace*{-0mm} \tiny}
\psfrag{8}{\hspace*{-0mm} \tiny }
\psfrag{9}{\hspace*{-0mm} \tiny}
\psfrag{10}{  \hspace*{-9.5 mm} \tiny $2m + 4$ }
\psfrag{11}{\hspace*{-4mm}\tiny $2m+5$}
\psfrag{12}{\hspace*{-1mm}\tiny }
\psfrag{13}{\hspace*{-2.5mm} \tiny }
\psfrag{14}{  \hspace*{-2.5mm}   \tiny $3m+5$}
\psfrag{15}{\hspace*{-1.5mm}\tiny }
\psfrag{16}{\hspace*{-1.5mm}\tiny }
\psfrag{17}{\hspace*{-1.5mm}\tiny }
\psfrag{18}{\hspace*{-0.5mm}\tiny  $4m+6$ }
\psfrag{aa}{\hspace*{-0.0mm}\footnotesize $a$}
\psfrag{bb}{\hspace*{-0.0mm}\footnotesize $b$}
\psfrag{c1}{\hspace*{-0.5mm}\footnotesize $[\chi_{1}]$}
\psfrag{c2}{\hspace*{-2mm}\footnotesize $[\chi_{3}]$}
\psfrag{c3}{\hspace*{-3.5mm}\footnotesize $[\chi_{2}]$}
\psfrag{c4}{\hspace*{-2mm}\footnotesize $[\chi_{4}]$}
\begin{center}
\includegraphics[height=3.5cm]{B2.fig2a.eps}
\hspace*{0.7cm}
\includegraphics[height=3.5cm]{B2.fig2b.eps}
\caption{Path $\overline{p}_m$ in $\Gamma(\Z^2, \{a,b\} )$ 
and subset $\psi(\{r_{m}\}) \subset S(Q)$}
\label{fig:Path-relator-homeomorphism-group}
\end{center}
\end{figure}

We are left with proving that $[\chi_{3}]$ and $[\chi_{4}]$ lie in $\Sigma^1(G_{m})$;
to this end we derive some auxiliary relations from the relators $r_{m}$ and $r_{m+1}$.
These relators are equivalent to the relations
\begin{equation}
\label{eq:Consecutive-relations}
\act{a^{m+1}}{1.0}{b} = \act{ba^m}{1.0}{b} \quad \text{and} \quad \act{a^{m+2}}{1}{b} = \act{ba^{m+1}}{1}{b} .
\end{equation}
If the first relation is conjugated by $a$,
the left members of both relations agree;
the right members then lead to the relator
\[
r_{3}  =  a b a^m  b a^{-m} b^{-1}a^{-1}\cdot  b a^{m+1} b^{-1} a^{-m-1} b^{-1}
\]
The two relations allow one also to derive the following chain of transformations
\[
\act{a^{m+1}}{1.0}{b} = \act{ba^m}{1.0}{b}  
=  \act{ba^{-1}b^{-1}} {0.0} {\left( \act{ba^{m+1}}{1.0}{b} \right) }
=\act{ba^{-1}b^{-1}a^{m+2}}{1.0}{b},
\]
which is equivalent to the relator
\[
r_{4}  =a^{m+1} ba^{-m-1} \cdot b  a^{-1} b^{-1}  a^{m+2} b^{-1} a^{-m-2} b a b^{-1}.
\]

Consider now the sequences $f_{r_3}(\chi_3)$ and $f_{r_4}(\chi_4)$. 
The first of them assumes its minimum twice, at the first letter $a$ and at $r_3$;
by definition \ref{definition:psi(R)-2-generators} and Proposition \ref{prp:psi(R)-subset-Sigma1},
the point $[\chi_3]$ lies therefore in $\Sigma^1(G_m)$. 
The second sequence assume its minimum again twice, at the initial segment $a^{m+1} ba^{-m-1}b$
and at the following one, whence $[\chi_4] \in \Sigma^1(G_m)$ by the same reasons.
\end{proof}

\begin{remarks}
\label{remarks:PL-homeomorphism-group-example}
 a)  The  calculations in the previous example can be summarized as follows:
 let $G$ be the group 
 generated by two piecewise linear homeomorphisms $f$ and $g$ of the unit interval $[0,1]$. 
Suppose the generators satisfy the requirements
 \eqref{eq:Condition-PL-homeomorphism-group-irreducibility}
 through
 \eqref{eq:Condition-PL-homeomorphism-group-character-rho}.
 Then the complement $\Sigma^1(G)^c$ of the invariant consists of only two points,
 represented by the characters $\ln \circ D_{0} $ and $\ln \circ D_{1}$.
 Here $D_{0}$ denotes the derivative at the fixed point 0
 and $D_{1}$ denotes the derivative at the left global fixed point 1 of the interval $[0,1]$.
 
This result shows 
that the invariant does not depend on the details of the functions $f$ and $g$.
By \cite[Thm.\;8.1]{BNS}
the same conclusion holds for a far larger class of groups made up of piecewise linear homeomorphisms.

b) The abelianization $G_{\ab}$ of $G$ is free abelian of rank 2.
The group $G$ contains therefore infinitely many normal subgroups $N$ with infinite cyclic quotient.
All of them are finitely generated,
except the kernels of the derivatives with respect to the end points 0 or 1
(use the previous calculation of $\Sigma^1(G)$ and Corollary \ref{crl:Characterizing-fg-kernel-rank-1-character}).
\end{remarks}
 %
 %
\subsection{The lower bound $\Psi(\RR)$}
\label{ssec:Whitehead-criterion-and-applications}
%
In this section,
an improved lower bound for $\Sigma^1(G)$ is derived.
To explain how it differs from $\psi(\RR)$, 
we recall the strategy underlying the construction of $\psi(\RR)$.

The algebraic $\Sigma^1$-criterion says 
that a non-zero-character $\chi$ represents a point of $\Sigma^1(G)$
if $\card(\XX) -1$ relators with specific properties can be found.
In the case of of $\psi(\RR)$,
these relators are sought in the symmetrization $\sym(\RR)$ of $\RR$,
that is the set consisting of the cyclic permutations of  the elements of $\RR \cup \RR^{-1}$.
In the improvement,  
one looks for relators in a larger set consisting of products 
whose factors are either elements of $\sym(\RR)$ 
or conjugates $ x  r x^{-1}$ with $x \in \XX$ and $r \in \sym(\RR)$.
%
%
\subsubsection{Defining $\Psi(\RR)$}
\label{sssec:Subset-Psi-definition}
%
We begin by fixing the notation.
As before,
 $\eta \colon \XX \to G$ denotes a finite system of generators of the group $G$ 
and  $\YY = \XX \cup \XX^{-1}$ the associated set of letters. 
Next, 
$\RR$ is a set of cyclically reduced relators in $\YY$.
Given $r \in \RR$ and a non-zero character $\chi \colon G \to \R$,
let $f_{r}(\chi)$ denote the sequence
\begin{equation}
\label{eq:sequencef(r,chi)-ter}
\left(\chi(s_1),\, \chi(s_1s_2), \ldots,\, \chi(s_1s_2 \cdots  s_{k-1}), \; \chi(s_1s_2 \cdots  s_{k-1}s_{k})\right)
\end{equation}
and view its domain as a circle with $k$ adjacent to 1.
Finally, set 
\[
\YY_{>0} = \{y \in \YY \mid \chi(y) > 0 \} \text{ and }\XX_0 = \{x \in \XX \mid \chi(x) = 0 \}.
\]
We next introduce a graph
that will be used in the definition of the subset $\Psi(\RR)$ of $S(G)$.
\begin{definition}
\label{definition:Auxiliary-grpahs}
Given a non-zero character $\chi$ of $G$, put
\begin{align*}
\RR_{{\chi,+}} &= \{ r \in \RR \mid  f_r(\chi) \text{ assumes its minimum once} \},\\
\RR_{{\chi,0}} &= \{ r \in \RR \mid  f_r(\chi)  \text{ assumes its minimum  twice,
at adjacent indices}\}.
\end{align*}
Using $\RR_{{\chi,+}}$,
define a graph  $\GG_{\RR}(\chi)$ 
with vertex set $\YY_{>0}$ and edge set
\begin{equation}
\label{eq:Edge-set-graph-sufficient-condition}
\left\{ 
\{ y_{1}, y_{2}  \} 
\mid \text{$y_{1}$ and $y_{2}$ are involved in $\min f_r(\chi) $ for some } r  \in \RR_{\chi,+} \right\}.
\end{equation}
\index{Notation!graph Gsub-R-chi@$\GG_{\RR}(\chi)$}
\end{definition}
\begin{definition}
\label{definition:Psi(R)}
A point $[\chi] \in S(G)$ belongs to $\Psi(\RR)$ if, and only if,
the following conditions are satisfied:
\begin{enumerate}[(i)]
\item the graph $\GG_{\RR}(\chi)$ is connected;
\item for every generator $x \in \XX_0$ 
there exist a relator $r = s_{1} \cdots s_{k} \in \RR_{{\chi,0}}$ so that $f_r(\chi)$ 
assumes its minimum at two consecutive indices $j$, $j+1$
and the subword $s_{j} s_{j+1}s_{j+2}$ has the form $y_{1}^{-1}x^{\varepsilon}y_{2}$ 
with $y_{1}$, $y_{2} \in \YY_{>0}$ and $\varepsilon \in \{1,-1\}$. 
\end{enumerate}
\end{definition} 
\index{Notation!Psi-R@$\Psi(\RR)$}
\index{Definition of!lower bound Psi-R@lower bound $\Psi(\RR)$}

We shall later show that $\Psi(\RR)$ is a subset of $\Sigma^1(G)$ and that it contains $\psi(\RR)$.
But first we give an example which illustrates the definition of $\Psi(\RR)$.
\begin{example}
\label{example:Illustration-Psi(R)}
Let $G$ be given by the presentation $\langle a, b, c \mid r_{1}, r_{2}\rangle$ with
\begin{equation}
\label{eq:SnapPeas-presentation-Dunfields-link}
r_{1} = abAB \quad \text{and} \quad r_{2} = ac^2bCAbCbc^2acBC^2AC^2Bc.
\end{equation}
Here $A$, $B$ are short for  $a^{-1}$ and $b^{-1}$.
The exponent sums of the second relator are
\[
(\sigma_{a}(r_{2}),\sigma_{b}(r_{2}),\sigma_{c}(r_{2})) = (0,1,0).
\]
It follows that $G_{\ab}$ is free-Abelian of rank 2 
and that $G$ admits an epimorphism $\vartheta \colon G \epi \Z^2$ taking $a$ to $(0,1)$ and $c$ to $(1,0)$ (and, of course,  $b$ to $(0,0)$).
\footnote{This choice reflects the disposition of the axes in Figure \ref{fig:Dunfields-example}.}

The paths  $\bar{p}_{r_{1}}$, $\bar{p}_{r_{2}}$  
in the Cayley graph $\Gamma = (\Z^2, \{a,b,c\})$ have  edges which are loops;
in Figure \ref{fig:Dunfields-example} they are indicated by small leaves.
The straight edges are numbered as usual;
the leaves are not labelled,
but they are taken into account in the numbering of the straight edges.
\begin{figure}[ht]
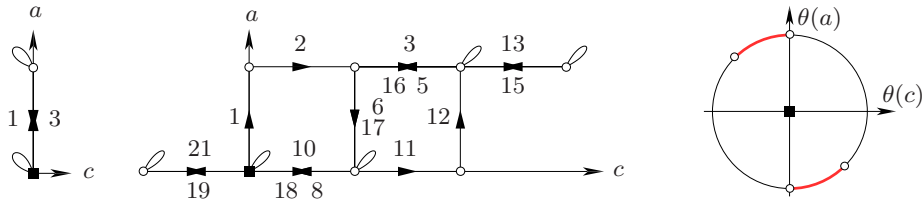

\psfrag{1}{\hspace*{-1mm} \small $1$}
\psfrag{2}{  \hspace*{-2.0mm}\small $2$}
\psfrag{3}{\hspace*{-1.5mm} \small $3$}
\psfrag{5}{\hspace*{0.0mm}\small $5$}
\psfrag{6}{\hspace*{-1.5mm} \small $6$}
\psfrag{8}{\hspace*{0mm}\small $8$}
\psfrag{10}{\hspace*{-1mm}\small $10$}
\psfrag{11}{\hspace*{-1.6mm}\small $11$}
\psfrag{12}{\hspace*{-1.4mm}\small $12$}
\psfrag{13}{\hspace*{-1.3mm}\small $13$}
\psfrag{14}{\hspace*{-1mm}\small $14$}
\psfrag{15}{\hspace*{-1.3mm}\small $15$}
\psfrag{16}{\hspace*{-1.7mm}\small $16$}
\psfrag{17}{\hspace*{-1.7mm}\small $17$}
\psfrag{18}{\hspace*{-2mm}\small $18$}

\psfrag{19}{\hspace*{-1mm}\small $19$}
\psfrag{21}{\hspace*{-0.7mm}\small $21$}
\psfrag{51}{\hspace*{-0.5mm}\small $1$}
\psfrag{53}{\hspace*{0.0mm}\small $3$}
\psfrag{a}{\hspace*{-0.3mm}\small $a$}
\psfrag{c}{\hspace*{0.0mm}\small $c$}
\psfrag{ori}{\hspace*{-0.3mm}\small $$}
\psfrag{thc}{\hspace*{1.0mm}\small $\theta(c)$}
\psfrag{tha}{\hspace*{0.0mm}\small $\theta(a)$}
\begin{center}
\includegraphics[height=2.6cm]{B2.fig3a.eps}
\hspace*{0.1cm}
\includegraphics[height=2.6cm]{B2.fig3b.eps}
\hspace*{0.6cm}
\includegraphics[height=2.6cm]{B2.fig3c.eps}
\caption{Loops of the  relators $r_1$ and $r_{2}$ and the set $\Psi'$}
\label{fig:Dunfields-example}
\end{center}
\end{figure}

Let us determine $\Psi' = \sigma(\theta)^{-1}(\Psi(\RR))$ for $\RR = \{r_{1}, r_{2} \}$.
Two cases arise.
If $\chi_u(a)$ and $\chi_u(c)$ are both non-zero, 
the graph $\GG_{\RR}(\chi)$ has two vertices
and so it is only connected if one of the graphs in Figure \ref{fig:Dunfields-example}
contributes an edge to it. 
Only the path $p_{r_2}$ can do that, 
and only if the supporting half plane $\HH_u$ touches the convex hull in a single vertex 
with no leaf attached to it.
(Recall that the unit vector $u$ points into the half plane $\HH_u$.)
These conditions are fulfilled if, and only if,  $u$ lies in the first part of the second or the fourth open quadrant.
If the stated conditions are fulfilled, 
the relator $r_1$ shows 
that $b$ satisfies condition (ii) in definition \ref{definition:Psi(R)}.
It follows that $\Psi(\RR)$ contains the described arcs.

If one of  $\chi_u(a)$ or $\chi_u(c)$ is zero,
the graph $\GG_{\RR}(\chi)$ is connected
and and condition (ii) holds only
if each of the the sequences $f_{r_1}(\chi)$ and $f_{r_2}(\chi)$ assumes its minimum precisely twice, 
at consecutive indices.
The path $p_{r_2}$ shows that $f_{r_2}(\chi)$ assumes its minimum only twice if $\chi(a) = 0$.
As the first sequence $f_{r_1}(\chi)$ is then constant,
$\Psi(\RR)$ contains none of the 4 points under discussion,
and so $\Psi(\RR)$ is the union of the two arcs found before.
\end{example}
%
\subsubsection{Verification that $\Psi(\RR)$ is a lower bound}
\label{sssec:Whitehead-criterion-statement}
%
The following result is a variant of \cite[Theorem A]{GaMe98}, 
established by Susan Garner Garille and John Meier. 
It reveals that $\Psi(\RR)$ is a lower bound of the invariant
and that this new bound improves on $\psi(\RR)$.
\index{Meier, J.}
\index{Garille, S. G.}
\begin{prp}
\label{prp:Psi(RR)-subset-Sigma1}
Let $\eta \colon \XX \to G$ be a finite generating system of the group $G$
and $\RR$ a non-empty set of cyclically reduced words in  $\YY = \XX \cup \XX^{-1}$. 
If  $\RR$ is made up of  relators of $G$ then $\Psi(\RR)$ is an open subset of $S(G)$
and $\psi(\RR) \subseteq \Psi(\RR) \subseteq \Sigma^1(G)$.
\end{prp}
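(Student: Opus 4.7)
The plan is to verify in turn three claims: $\Psi(\RR)$ is open, $\psi(\RR)\subseteq\Psi(\RR)$, and the main inclusion $\Psi(\RR)\subseteq\Sigma^1(G)$. Openness follows from two monotonicity observations. If $\chi'$ is close to $\chi$, a strict minimum of $f_r(\chi)$ persists at the same index with the same adjacent letters, so $\RR_{\chi,+}\subseteq\RR_{\chi',+}$, every edge of $\GG_{\RR}(\chi)$ survives in $\GG_{\RR}(\chi')$, and the connectedness condition~(i) is open. Similarly, the set $\XX_0$ only shrinks under perturbation, and any relator witnessing condition~(ii) at $\chi$ continues to witness it at $\chi'$ for every $x$ that remains in $\XX_0$ (the bracketing letters stay in $\YY_{>0}$ and the middle letter stays with $\chi'$-value zero). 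The inclusion $\psi(\RR)\subseteq\Psi(\RR)$ is then immediate: the distinguished letter $t\in\YY_{>0}$ of Definition~\ref{definition:psi(R)} contributes a spanning star to $\GG_{\RR}(\chi)$ through the relators of part~(i), while the relators of part~(ii) already satisfy the weaker condition~(ii) of $\Psi(\RR)$ with $y_1=y_2=t$.

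To prove $\Psi(\RR)\subseteq\Sigma^1(G)$, the strategy is to verify the algebraic $\Sigma^1$-criterion of Theorem~\ref{thm:Algebraic-Sigma1-criterion}. The pivotal auxiliary result is a \emph{chain lemma}: if $y_1,y_2\in\YY_{>0}$ are joined by a path in $\GG_{\RR}(\chi)$, there is a word $w$ over $\YY$ such that $y_1 w y_2^{-1}$ represents $1\in G$ and every proper initial segment of $y_1 w y_2^{-1}$ of length $\geq 2$ has strictly positive $\chi$-value. The base case comes from a single edge, by cyclically permuting the corresponding relator in $\RR_{\chi,+}$ so that the unique minimum of $f_r(\chi)$ lands at the split between the final letter $y_2^{-1}$ and the first letter $y_1$. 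The inductive step concatenates $y_1 w y_3^{-1}$ and $y_3 w' y_2^{-1}$, cancels the central $y_3^{-1} y_3$, and observes that the $\chi$-sum at the bridging position equals $\chi(y_3)>0$; every new proper non-endpoint initial segment thus coincides in $\chi$-value with either a proper initial segment of the first factor or with $\chi$ applied to an initial segment of $y_3 w'$, so strict positivity propagates.

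Granted the chain lemma, fix any $t\in\YY_{>0}$. For every $y\in\YY_{>0}\setminus\{t\}$, applying the lemma to a path from $y$ to $t$ in $\GG_{\RR}(\chi)$ yields the relator demanded by the first case of the algebraic criterion. For every $x\in\XX_0$, condition~(ii) of $\Psi(\RR)$ supplies a relator $r\in\RR_{\chi,0}$ with three-letter subword $y_1^{-1} x^\varepsilon y_2$ at the pair of minimum positions. Replacing $r$ by $r^{-1}$ if $\varepsilon=-1$ (which swaps the roles of $y_1$ and $y_2$ while keeping both in $\YY_{>0}$) and cyclically permuting, I obtain a relator $r' = x y_2 u y_1^{-1}$ whose proper initial segments of lengths $2,\ldots,k-1$ have strictly positive $\chi$-value. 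Combining $r'$ with the chain-lemma words $t w_1 y_2^{-1}$ and $y_1 w_2 t^{-1}$, the word $x\,t\,w_1\,u\,w_2\,t^{-1}$ is a relator; the running $\chi$-sum at each of the two internal joints equals $\chi(y_2)$ or $\chi(y_1)$, both positive, so all non-endpoint proper initial segments carry strictly positive $\chi$-values. Reducedness of each constituent prevents cancellation at the outer joints between $xt$ and $w_1$ and between $w_2$ and $t^{-1}$, so free reduction leaves the boundary letters intact and produces the reduced relator demanded by the second case of the algebraic criterion.

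The central difficulty is the $\chi$-value bookkeeping: at every bridging position between concatenated relators the running $\chi$-sum has to be identified as the $\chi$-value of a letter in $\YY_{>0}$, and it is precisely this strict positivity that guarantees that the required inequalities on proper initial segments are preserved by concatenation and not disturbed by free reduction.
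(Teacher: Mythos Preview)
Your proof is correct and follows essentially the same approach as the paper's: your ``chain lemma'' is exactly the paper's concatenation $(y w_1 y_2^{-1})(y_2 w_2 y_3^{-1})\cdots(y_{k-1}w_k t^{-1})$ along a path in $\GG_\RR(\chi)$, and your handling of $x\in\XX_0$ by sandwiching the cyclically permuted witness between two chain-lemma words is the same manoeuvre the paper performs (the paper conjugates by $x$ and multiplies on each side; you insert in the middle --- the resulting word $x\,t\,w_1\,u\,w_2\,t^{-1}$ is identical up to relabelling).

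One small gap in your openness argument: you observe that every edge of $\GG_\RR(\chi)$ survives in $\GG_\RR(\chi')$, but the vertex set $\YY_{>0}(\chi')$ can be strictly larger (a generator $x\in\XX_0(\chi)$ with $\chi'(x)\neq 0$ contributes $x$ or $x^{-1}$ as a new vertex), so edge-survival alone does not give connectedness of $\GG_\RR(\chi')$. The fix is immediate using condition~(ii): the witness relator for $x$ at $\chi$, with subword $y_1^{-1}x^{\varepsilon}y_2$ at the double minimum, acquires a \emph{unique} minimum for $\chi'$ once $\chi'(x)\neq 0$, and the letters involved in that minimum are the new vertex and one of $y_1,y_2\in\YY_{>0}(\chi)$ --- so the new vertex is attached to the old graph. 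The paper is equally terse here (``follows directly from continuity''), so this is a point where both treatments leave the same routine verification to the reader.
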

\index{Invariant Sigma1@Invariant $\Sigma^1$!lower bound Psi@lower bound $\Psi(\RR)$}

\begin{proof}
The openness of $\Psi(\RR)$ follows directly from
the continuity of the function $\chi \mapsto v_r(\chi)$ and the definition of $\Psi(\RR)$.

First we show that $\psi(\RR) \subseteq \Psi(\RR)$.
Suppose $\chi$ represents a point of $\psi(\RR)$.
By definition \ref{definition:psi(R)}
there exist then a letter $t \in \YY_{>0}$ and, for each $y \in \YY_{>0}\smallsetminus \{t\}$,
a relator $r_{y} \in \RR_{{\chi,+}}$ 
such that $t$ and $y$ are involved in the unique minimum of $f_r(\chi)$.
The graph $\GG_{\RR}(\chi)$ thus contains for every vertex $y$ distinct from  $t$ 
an edge connecting $y$ and $t$ and so it is connected.
Moreover, 
there exists by (ii) of definition \ref{definition:psi(R)},
for every $x \in \XX_{0}$, 
a relator $r_{x} \in \RR_{{\chi,0}}$ 
such that the minimum of $f_{(r_{x},\chi)}$ occurs at $j$ and $j+1$, say, 
and so that the subword $s_{j} s_{j+1}s_{j+2}$ equals $t^{-1}x^{\varepsilon}t$ with $|\varepsilon] = 1$.
These relators satisfy  then requirement (ii) of the definition of $\Psi(\RR)$,
and so $[\chi] \in \Psi(\RR)$.

We verify next that $\Psi(\RR) \subseteq \Sigma^1(G)$.
Suppose $[\chi]$ is in $\Psi(\RR)$.
By condition (i) in definition \ref{definition:Psi(R)}
the graph $\GG_{\RR}(\chi)$ is then connected.
Given an edge  $\{y_{1}, y_{2} \}$ of this graph
there exists a relator $r \in \RR$ such that the sequence  $f_r(\chi)$ has a unique minimum,
say at $j$, and that $y_{1}$ and $y_{2}$ are involved in this minimum.
So $r$ is a word of the form $w_{1}y_{1}^{-1}y_{2}w_{2}$ or the form $w_{1}y_{2}^{-1}y_{1}w_{2}$;
there is no loss in assuming that we are in first case.
A suitable cyclic permutation $r'$ of $r$ is then equal to $y_{2}w_{2}w_{1}y_{1}^{-1}$
and the $\chi$-values of all proper initial segments of $r'$ are positive.
Notice that the inverse of $r'$ has the form  $y_{1} \cdots y_{2}^{-1}$ 
and satisfies the condition that all its proper initial segment have positive $\chi$-values.
The vertex set of the graph $\GG_{\RR}(\chi)$ is not empty for $\chi$ is not the zero-homomorphism;
there exists therefore  a vertex in the graph; call it $t$.

Consider next a vertex $y \in \YY_{>0} \smallsetminus \{t \}$.
Since the graph $\GG_{\RR}(\chi)$ is connected;
there is an edge path from $y$ to  $t$, say $p = (y = y_{1}, y_{2}, \ldots, y_{k} = t)$.
By the previous paragraph
there exists, for each $i = 1$, \ldots, $k-1$, a relator $r'_{i}$ 
which has the form $r_{i} = y_{i} w_{i}y_{i+1}^{-1}$ 
and enjoys the property that all its proper initial segments have positive $\chi$-values.
The product
\[
 (yw_{1}y_{2}^{-1})\cdot (y_{2}w_{2}y_{3}^{-1}) \cdots  (y_{k-2}w_{k-1}y_{k-1}^{-1} )\cdot (y_{k-1}w_{k}t^{-1})
\]
is then a word that starts with $y$, ends with $t^{-1}$ and is a relator of  $G$.
It simplifies to $r'(y \to t) = yw_{1}w_{2}\cdots w_{k}t^{-1}$;
this word  $r(y \mapsto t)$ may not be reduced,
but is has the property that all its proper initial segments have positive $\chi$-values.
The unique freely reduced word $r_{y}$
that is freely equivalent  to $r'(y \mapsto t)$
starts then with $y$, ends with $t{-1}$ and has positive $\chi$-value on all of its proper initial segments. 
It satisfies thus the requirements of the relator denoted $r_{y}$ in equation 
\eqref{eq:Form-relator-algebraic-Sigma1-criterion}.

Consider, finally,  $x \in \XX_{0}$.
By condition (ii) in definition \ref{definition:Psi(R)},
there exists then a relator $r_{x} \in \RR$ with the following properties: 
the sequence $f_r(\chi)$ attains its minimum exactly twice,
at consecutive indices $j$, $j+1$, say,
and the subword $s_{j} s_{j+1}s_{j+2}$  has the form $y_{1}^{-1}x^{\varepsilon}y_{2}$ 
with $y_{1}$, $y_{2} \in \YY_{>0}$ and $\varepsilon \in \{1,-1\}$. 
If $\varepsilon = 1$,
a cyclic permutation of $r$ has the form $xy_{2} \cdots y_{1}^{-1}$
and $\chi$ is positive on each proper initial segment,
except, of course, on the first one $x$;
if $\varepsilon =-1$,
a cyclic permutation $r^{-1}$ will have the form $xy_{1} \cdots y_{2}^{-1}$ 
and enjoy the previously stated property.

So far we know
that there exists, for every $x \in \XX_{0}$, a reduced relator $r_{x}$ 
which has the form $xy_{1} w y_{2}^{-1}$  
and positive $\chi$-values on each proper initial segment except the first one.
Various cases now arise.
If $y_{1}= y_{2}= t$,
the relator  $r_{x}$ itself has the form stated in
\eqref{eq:Form-relator-algebraic-Sigma1-criterion}.
If $y_{2} \neq t$,
there exists, by the first part of the proof, 
a reduced relator $r(y_2 \to t)$ having the form $y_2w_{2} t^{-1}$
and positive $\chi$-values on all of its proper initial segments.
The product
\[
r_x \cdot r_{y_2 \to t} = xy_1w_{1} y_2^{-1} \cdot  y_{2} w_2 t^{-1}
\]
is then a relator that simplifies to the relator $x y_1 w_1 w_{2}t^{-1}$
and has positive $\chi$-values on its proper initial segments  distinct from the last one.
If $y_{1} = t$, this relator fulfills requirement  
\eqref{eq:Form-relator-algebraic-Sigma1-criterion}.
Otherwise, 
there  exists a relator $r(y_1 \to t) =y_1w t^{-1}$
with positive $\chi$-values on its proper initial segments.
The product
\[
xr(y_1 \to t)^{-1}x^{-1} \cdot (xy_1w_{1}w_2 y_{2}^{-1})
\]
simplifies then to a relator of the form  $xtw_{4}t^{-1}$
that satisfies requirement
\eqref{eq:Form-relator-algebraic-Sigma1-criterion}.

All taken together we have shown that there exists,
for every $y \in \YY_{>0}$, a reduced word of the form  $ys_{2} \cdots t{-1}$ 
and, for every $x \in \XX_{0}$,  a reduced word of the form  $xts_{3} \cdots t^{-1}$,
all in such a way that each of these words is a relator of $G$
for which condition \eqref{eq:Form-relator-algebraic-Sigma1-criterion} holds.
Theorem \ref{thm:Algebraic-Sigma1-criterion} thus implies
that $[\chi] \in \Sigma^1(G)$.
\end{proof}
%
\subsubsection{Application to graph groups}
\label{sssec:Whitehead-criterion-application-graph-groups}
\index{Computation of Sigma1@Computation of $\Sigma^1$ for!graph groups}
%
A graph group is given by a finite combinatorial graph $\Delta$;
its vertices $x_{j}$ are the standard generators of the group  $G = G_{\Delta}$
and  its edges $\{x_{j},x_{h} \}$ give rise to the defining relations  $x_{j}x_{h} = x_{h}x_{j}$.
Let $\RR$ be  the set consisting of the  commutator $[x_{j}, x_{h}] = x_{j}x_{h} x_{j}^{-1}x_{h}^{-1}$ 
and its inverse $[x_{h}, x_{j}]$ for every edge $\{j, h\}$ of $\Delta$.
We contend that $\Psi(\RR)$ \emph{coincides with} $\Sigma^1(G)$.
The inclusion $\Psi(\RR) \subseteq \Sigma^1(G)$ holds by Proposition \ref{prp:Psi(RR)-subset-Sigma1};
it suffices thus to establish the opposite inclusion.
This we do with the help of Theorem \ref{thm:Sigma1-right-angled-Artin-group}.
It shows 
that if a non-zero character $\chi$ represents a point of $\Sigma(G)$ 
then its living subgraph $\LL(\chi)$ is \emph{connected} 
and every vertex outside $\LL(\chi)$ must be \emph{adjacent to a vertex in} $\LL(\chi)$.

These conditions are, in essence, 
requirements (i) and (ii) enunciated in the definition of $\Psi(\RR)$.
To see this,
we compare first the living subgraph $\LL(\chi)$,
defined in section \ref{sssec:Right-angled-Artin-groups-computing-Sigma1},
with the graph $\GG_{\RR}(\chi))$ occurring in definition \ref{definition:Psi(R)}.
The vertex set of $\LL(\chi)$ consists of the generators $x_{i}$ with non-zero $\chi$-values;
its edge set is induced by that of the defining graph $\Delta$,
for  $\LL(\chi)$ is a full subgraph of $\Delta$.
The vertex set of the graph  $\GG_{\RR}(\chi))$, on the other hand,
is the set of letters
\[
\YY_{>0} 
= \{ x_{i}^{\varepsilon_{i}} \mid \varepsilon_{i} \cdot \chi(x_{i}) > 0 \text{ and } |\varepsilon_{i}| = 1 \}.
\]
Two letters $y_{1}$, $y_{2}$ are connected in $\GG_{\RR}(\chi))$ 
if there is a relator $r \in  RR$ such that the sequence $f_{(r, \chi)}$ assumes its minimum only once
and if the letters involved in the minimum are $y_{1}$, $y_{2}$.
Now each of the relators in $\RR$ contains only two generators
and the only relator in which the letters 
$y_{1} = x_{i}^{\varepsilon_{i}}$  and $y_{2} = x_{j}^{\varepsilon_{j}} $  occur 
are the commutator $[x_{i}, x_{j}]$ and its  inverse $[x_{j}, x_{i}]$.
Figure \ref{fig:Paths-commutators} then allows us to conclude  
that the graph $\GG_{\RR}(\chi))$ contains the edge $\{x_{i}^{\varepsilon_{i}}, x_{j}^{\varepsilon_{j}}\}$ 
if, and only if the living graph contains the edge $\{x_{i}, x_{j}\}$. 
The two graphs are therefore isomorphic.
\begin{figure}[htb]
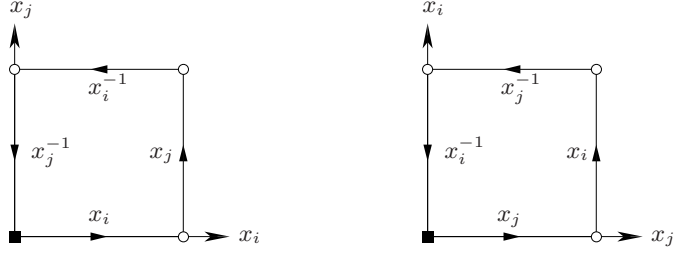

\psfrag{1}{\hspace*{-2.5mm} \small $x_{i}$}
\psfrag{2}{  \hspace*{-4mm}  \small $x_{j}$}
\psfrag{3}{\hspace*{-2.5mm}  \small $x_{i}^{-1}$}
\psfrag{4}{\hspace*{-1.5mm} \small $x_{j}^{-1}$}

\psfrag{5}{\hspace*{-3mm}  \small $x_{j}$}
\psfrag{6}{  \hspace*{-3.5mm}   \small $x_{i}$}
\psfrag{7}{\hspace*{-2.5mm}  \small $x_{j}^{-1}$}
\psfrag{8}{\hspace*{-1.5mm}  \small $x_{i}^{-1} $}
%
\psfrag{a}{\hspace*{-1.5mm} \small$x_{i}$}
\psfrag{b}{\hspace*{-1.7mm} \small $x_{j}$}
\psfrag{B}{\hspace*{-1.7mm} \small$x_{i}$}
\psfrag{A}{\hspace*{-1.5mm} \small $x_{j}$}

\begin{center}
\includegraphics[height=3.2cm]{B2.fig4a.eps}
\hspace*{2.0cm}
\includegraphics[height=3.2cm]{B2.fig4b.eps}
\caption{Paths of the  relators $[x_{i}, x_{j}]$ and $[x_{j}, x_{i}]$}
\label{fig:Paths-commutators}
\end{center}
\end{figure}

Consider now a generator $x_j$  with $\chi(x_{j})= 0$.
Then $x_{j}$ is not a vertex of the living graph
and so it is, by assumption, an end point of an edge  
whose other end point is a vertex $x_{i}$, say, of $\LL(\chi)$.
Since $x_{j}$ is adjacent to $x_{i}$,
the commutators $[x_{i},x_{j}]$ and $[x_{j},x_{i}]$ are elements of $\RR$.
A glance at Figure \ref{fig:Paths-commutators} then shows
that each of the sequences $f_{[x_{i}, x_{j}]} (\chi)$ and $f_{[x_{j}, x_{j}]}( \chi)$ assumes its minimum twice
and that the letters involved in this minimum are $x_{i}^{\sigma}$ and $x_{j}^{\varepsilon}$
with $\sigma$ the sign of $\chi(x_{i})$.
So the generator $x_{j} \in \XX_{0}$ satisfies condition (ii)  of definition  \ref{definition:Psi(R)}.
%
\subsubsection{Application to knot groups}
\label{sssec:Whitehead-criterion-application-knot-groups}
\index{Computation of Sigma1@Computation of $\Sigma^1$ for!knot groups}
Let $k$ be an oriented knot in the 3-sphere, 
represented by an oriented, regular diagram $D$ in the euclidian plane.
The crossings of $D$ give rise to arcs $\alpha_1$, \ldots, $\alpha_m$, 
going from one undercrossing to the next one.
The Wirtinger presentation associates to each arc $\alpha_j$  a generators $x_j$ of $G $
and to each crossing a defining relator of the form 
\begin{equation}
\label{eq:Relators-Wirtinger-presentation}
r_j = x_j \cdot  x_{\beta(j)}^{\sigma(j)} \cdot x_{j+1}^{-1}\cdot  x_{\beta(j)}^{-\sigma(j)} \text{ for } 
j  \in \{1, \ldots, m\}.
\end{equation}
Here $x_{m+1}$ is to be read as $x_1$ and $\sigma(j) = \pm 1$.
(See See \cite[pp. 121--122]{Fox62}, \cite[pp. 56--57]{Rol76} or   \cite[p. 8 and pp. 32--35]{BZ03} 
for terminology and proofs.)

The relators imply that $G_{\ab}$ is infinite cyclic, and that every character $\chi \colon G \to \R$ 
assume the same value on the generators $x_j$. The sphere has therefore only two points,
represented by $\chi_+$ with $\chi_+(x_j) = 1$ and its negative.
Set $\RR = \{r_1, \ldots, r_m\}$.
Each of the graphs $\GG_\RR (\chi_+)$ and  $\GG_\RR (\chi_-)$ has $m$-vertices and $m$-edges.

The edge set of the first graph is the set of generators and its edges are 
\[
e_j = \begin{cases} 
\{x_j, x_{\beta(j)} \} &\text{ if } \sigma(j) = 1, \\
\{x_{j+1}, x_{\beta(j)} \} &\text{ if } \sigma(j) = -1
\end{cases}
\quad \text{for }  j  \in \{1, \ldots, m\}.
\]
The definition of $\GG_\RR (\chi_-)$ is similar.
Notice that $[\chi_+] \in \Psi(\RR)$ precisely if $\GG_\RR (\chi_+)$ is connected,
and similarly for $[\chi_-]$.

The graphs $\GG_\RR (\chi_+)$ and  $\GG_\RR (\chi_-)$ are easy to work out, 
but they depend strongly on the chosen diagram.
We shall come back to them in a later chapter.
\newpage
 
%
%
\section{Rank 1 points in $\Sigma^1$}
\label{sec:Sigma1-via-HNN-extensions}
%
%
Theorem \ref{thm:Consequence-finite-presentation}
in Chapter \ref{ch:Sigma-1-Cayley-graph}
describes an obstruction to the finite presentability of a soluble group $G$.
The theorem goes back to Theorem  C in \cite{BNS}, 
but it had a precursor, namely Theorem A in \cite{BiSt78}.
This old result is a \emph{structure theorem} for finitely presented groups admitting a rank 1 character $\chi$
and reads like this:

\emph{Let $G$ be a finitely presented group and $\chi \colon G \to\R$ a rank 1 character.
Then $G$ is a HNN-extension with a single stable letter, 
a \emph{finitely generated} base group and \emph{finitely generated} associated subgroups, 
all three contained in the kernel of $\chi$.}

This structure theorem leads to several types of consequences;
some of them can not be inferred from Theorem  \ref{thm:Consequence-finite-presentation}.
Moreover, recent investigations have led to a generalization of the structure theorem;
this generalization   has resulted in new applications of $\Sigma^1$.
These consequences and the generalization call for an adequate treatment of the structure theorem
in this monograph on Sigma invarants.

The present section provides such an account. 
It begins with the statement and proof of the structure theorem,
followed by a summary of important consequences of the result.
Then three major consequences will be discussed in some detail.  
The section will close with a description of the generalization alluded to.
%
\subsection{Structure theorem for indicable groups of type $\FP_2$}
\label{ssec:Structure-theorem-fp-indicable-groups}
%
Let $G$ be a finitely generated soluble group 
or, more generally, a \emph{finitely generated group which contains no free subgroup of rank 2}.
If $G$ admits a finite presentation and $\chi \colon G \to \R$ is a non-zero characters,
then Theorem \ref{thm:Consequence-finite-presentation} asserts
that  $\Sigma^1(G)$ contains at least one of te points  $[\chi]$ or $[-\chi]$.
In this statement, $\chi$ is allowed to be a character of rank greater than 1.

If the rank of $\chi$ is  1, 
the conclusion of Theorem \ref{thm:Consequence-finite-presentation} 
is a consequence of the following more general result:
\begin{thm}
\label{thm:FP2-groups-and-HNN-extensions}
Assume $G$ is a finitely generated group 
that is of type $\FP_2$ over some commutative ring $K$ (with $1 \neq 0$)
and $\chi \colon G \epi \Z \incl \R$ is a rank 1 character. 
Choose an element  $t \in G$ with $\chi(t) = 1$ and set $N = \ker \chi$.
Then there exist \emph{finitely generated subgroups} $S$,  $T$ and $B$ of  $N$,
with $S$ and $T$ contained in $B$,
so that conjugation by $t$ induces an isomorphism $\mu \colon S \iso T$
and such that the inclusion of $B$ in $G$ and the assignment $y \mapsto t$ induce an isomorphism
\begin{equation}
\label{eq:FP2-groups-and-HNN-extensions}
\tau \colon \langle B, y \mid y\cdot s\cdot y^{-1} = \mu(s) \text{ for all } s \in S \rangle \iso G.
\end{equation}
\end{thm}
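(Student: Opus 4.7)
The plan is to split the argument into cases according to whether the points $[\chi]$ and $[-\chi]$ lie in $\Sigma^1(G)$. In the two easy cases I need only the finite generation of $G$; the hypothesis that $G$ be of type $\FP_2$ over $K$ is used solely to deal with the remaining case.

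Case A: $[\chi] \in \Sigma^1(G)$ (symmetrically, $[-\chi] \in \Sigma^1(G)$). Here I invoke Proposition \ref{prp:Ascending-HNN-extension} directly. It hands me a finitely generated subgroup $H \subseteq N$ satisfying $t^{-1}Ht \subseteq H$ and $\bigcup_{\ell \geq 0}t^{\ell}Ht^{-\ell} = N$. I then set $B = T = H$, $S = t^{-1}Ht \subseteq B$, and take $\mu \colon S \to T$ to be conjugation by $t$. The assignment $y \mapsto t$, together with the inclusion $B \incl G$, induces a homomorphism from the HNN-extension to $G$; surjectivity follows because $G = N \cdot \langle t\rangle$ and every element of $N$ has the form $t^{k}bt^{-k}$ with $b \in B$ and $k \geq 0$, while injectivity comes from the standard normal form of ascending HNN-extensions (or, equivalently, from Britton's Lemma applied to the exhaustion $N = \bigcup t^{\ell} B t^{-\ell}$). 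The case $[-\chi] \in \Sigma^1(G)$ is handled by applying the same argument to $-\chi$ and $t^{-1}$.

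Case B: both $[\chi]$ and $[-\chi]$ lie outside $\Sigma^1(G)$. Here I would invoke the machinery developed in the proof of Theorem \ref{thm:Structure-kernel-almost-fp-group}: choose $b$ sufficiently large so that the $N$-graph $\Delta_{b}(\chi)$ is a tree, with quotient a segment whose two vertex-stabilizers are $S_{-}$ and $S_{+}$ and whose edge-stabilizer is $S_{0}$. Bass-Serre theory yields $N = S_{-}\star_{S_{0}} S_{+}$. To upgrade this to an HNN decomposition of $G$, I use the splitting $G = N \rtimes \langle t\rangle$ (valid because any extension of $\Z$ is split) and the fact that the conjugation automorphism of $N$ by $t$ is compatible with the $\chi$-grading that underlies the construction of $\Delta_{b}$. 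Concretely, $t$ shifts levels by $1$ and therefore identifies the slice $\Gamma_{\chi}^{[0,b]}$ with $\Gamma_{\chi}^{[1,b+1]}$. By passing from the $N$-tree $\Delta_{b}(\chi)$ to the $G$-tree obtained by combining its translates along the $\chi$-direction, I arrive at a tree $T$ on which $G$ acts with a single edge-orbit, such that $t$ acts as a hyperbolic isometry translating along an axis. Bass-Serre theory (\cite[\S 4]{Ser80}) then identifies $G$ as an HNN-extension with base group $B$ the stabilizer of a vertex on the axis, associated subgroups $S$ and $T = \mu(S)$ the stabilizers of adjacent edges, and $\mu$ conjugation by $t$.

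The main obstacle is the finite generation of $B$, $S$, and $T$ in Case B. The cocompactness of the $N$-action on the slice $\Gamma_{\chi}^{[0,b]}$ (which has only $b+1$ cosets modulo $N$) shows that $S_{0}$ acts cocompactly on a connected subgraph of the Cayley graph, so Proposition \ref{prp:Criterion-finite-generation} would give finite generation of $S_{0}$ once vertex stabilizers in the Cayley graph are handled (they are trivial). The finite generation of $B$, however, is more delicate: it must be extracted from the $\FP_{2}$ hypothesis in the same way the tree property of $\Delta_{b}$ was in Theorem \ref{thm:Structure-kernel-almost-fp-group}, by exploiting the short exact sequence \eqref{eq:Beginning-free-resolution-K} and a Schanuel-type comparison between the given presentation and one arising from the tree action. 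This is where the bulk of the technical work lies; the remaining translation between the tree action and the HNN presentation \eqref{eq:FP2-groups-and-HNN-extensions} is then routine Bass-Serre bookkeeping.
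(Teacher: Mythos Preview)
Your Case A is correct and yields ascending HNN-extensions as special cases of the desired decomposition. Case B, however, contains a genuine gap that cannot be patched along the lines you suggest.

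The core difficulty is this: in Case B you have, by the very hypothesis $[\chi],[-\chi]\notin\Sigma^1(G)$ together with claims (iii) and (iv) of Proposition~\ref{prp:Key-properties-graph-Delta}, that the stabilisers $S_{+}$, $S_{-}$ and $S_{0}$ arising from the tree $\Delta_b(\chi)$ are all \emph{infinitely generated}. So the tree $\Delta_b(\chi)$, and any $G$-tree built from it by combining translates, will have infinitely generated vertex- and edge-stabilisers. Your suggestion that finite generation ``must be extracted from the $\FP_2$ hypothesis in the same way the tree property of $\Delta_b$ was'' does not identify a mechanism: the $\FP_2$ hypothesis was used in Theorem~\ref{thm:Structure-kernel-almost-fp-group} only to show that $\Delta_b$ is a tree, not to bound the stabilisers, and indeed the conclusion of that theorem is precisely that the factors are infinitely generated. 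You would need an entirely different $G$-tree, and you have not indicated how to find one.

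The paper's proof takes a completely different and much more direct route, with no case analysis on $\Sigma^1$ and no Bass--Serre theory for $G$. One chooses generators $b_1,\ldots,b_f\in N$ with $G=\gp(b_1,\ldots,b_f,t)$, picks a finite presentation (first of $G$, then in the $\FP_2$ case of a finitely presented cover $G_1$ of $G$ with $K\otimes M_{\ab}=0$ where $M=\ker(G_1\twoheadrightarrow G)$), and simply \emph{defines} $B=\gp(\{t^\ell b_i t^{-\ell}:0\le\ell\le m{+}1\})$ for $m$ large enough that all defining relators of $G_1$ lie in the corresponding subgroup of the free group. The subgroups $S,T$ are defined analogously with $\ell$ ranging over $\{0,\ldots,m\}$ and $\{1,\ldots,m{+}1\}$. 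One then writes down the HNN-extension $\tilde G$ on these data and checks directly that the natural map $\tilde G\to G$ is surjective; the finite presentation of $G_1$ gives a factorisation through $\tilde G$, and the kernel $\ker(\tilde G\to G)$ is shown to be free (it acts freely on the Bass--Serre tree of $\tilde G$, since $\tau$ is injective on $B$) with $K\otimes(\ker\tau)_{\ab}=0$, hence trivial. The finite generation of $B,S,T$ is thus built into the construction from the start rather than deduced afterwards.
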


Stated in less technical terms, Theorem \ref{thm:FP2-groups-and-HNN-extensions} asserts 
that \emph{every indicable group of type $\FP_2$} is an HNN-extension with finitely generated base group $B$ 
and finitely generated associated groups; 
moreover, $B$ can be chosen to  be a subgroup of the kernel of a preassigned rank 1 character $\chi$.
(Recall that a group is called \emph{indicable} if it projects onto $\Z$.)
\index{HNN-extensions!and fp groups}
\index{Finitely presented groups!and HNN-extensions}
\index{Structure theorem!for fp indicable groups}

\subsubsection{Proof of the structure theorem} 
\label{sssec:Proof-structure-theorem-for-fp-indicable-groups}
%
In many applications, the group $G$ is finitely presentable,
not merely of type $\FP_2$ over somme commutative ring.
As the proof of Theorem \ref{thm:FP2-groups-and-HNN-extensions}
simplifies sensibly under this stronger hypothesis,
we give first the proof with this stronger hypothesis, and then indicate
how the proof can be modified so as to become sound under the weaker hypothesis,

\paragraph{Proof for finitely presented groups.}
Assume $G$ is a finitely presented group and let $\chi \colon G \epi \Z \incl \R$ be a rank 1 character. 
Fix $t \in G$ with $\chi(t) = 1$
and choose a finite set  of generators $b_1$, \ldots, $b_f$ in $N =  \ker \chi$ 
so that $G = \gp(b_1, \ldots, b_f, t)$.
Next, let $F$ be the free group on the set $\{x_1,\ldots, x_f, y\}$ 
and $\rho$ the epimorphism of $F$ onto $G$ 
determined by $x_1 \mapsto a_1$, \ldots, $x_f \mapsto a_f$ and $y \mapsto t$.
By a basic result of B. H. Neumann's (see \cite[Lemma 8]{Neu37},
the kernel $R$ of the projection $\rho\colon F \epi G$ 
is then the normal closure of a finite set of relators, 
say $r_1$, \ldots, $r_k$.
Each relator $r_j$ is a product of powers of conjugates $y^\ell x_i y^{-\ell}$;
by replacing, if need be, some of these relators by conjugates,
we can arrange that only non-negative powers $\ell$ occur in the new defining relators. 
Since their number is finite
there is a natural number $m$ such that every $r_j$ is contained in the group
\[
V = \gp(\{ y^\ell x_i y^{-\ell} \mid 0 \leq \ell \leq m +1 \text{ and }   1 \leq i \leq f\}).
\]
Define  $B \subset G$ to be the image of $V$ under the epimorphism $\rho$;
so
\[
B = \gp(\{ t^\ell b_i t^{-\ell}  \mid 0 \leq \ell \leq m +1 \text{ and }   1 \leq i \leq f\}),
\]
and introduce the groups
$S = \gp(\{ t^\ell b_i t^{-\ell} \mid 0 \leq \ell \leq m\text{ and }   1 \leq i \leq f\})$ 
and $T = tSt^{-1}$. 
Clearly,  $S$ and $T$ are subgroups of $B$.

The inclusion of $B$ in $G$ and the assignment $u \mapsto t$ 
induce a homomorphism
\[
\tau \colon \tilde{G} = \langle H, u\mid u\cdot s\cdot u^{-1} = \mu(s) \text{ for all } s \in S \rangle \to G
\]
where $\mu(s) = t s t^{-1}$.
This homomorphism is \emph{surjective};
indeed,
the elements $b_1$, $b_2$, \ldots, $b_f$, $t$ generate $G$ and every generator $b_i$ lies in $B$.
Next the assignments $x_i \mapsto b_i$, where $1 \leq i \leq f$, and $x \mapsto u$
induce a homomorphism $\nu \colon F \epi \tilde{G}$ of the free group $F$ onto the HNN-extension $\tilde{G}$.
The defining relations of $\tilde{G}$ state 
that 
\[
u\cdot (t^\ell b_i t^{-\ell}) \cdot u^{-1} 
=
 t\cdot (t^\ell b_i t^{-\ell})\cdot t^{-1} \text{  for }0 \leq \ell \leq m \text{ and }1 \leq i \leq f.
\]
This implies 
that $u^\ell b_i u^{-\ell} = t^{\ell} b_i t^{-\ell}$ in $\tilde{G}$ 
for all for $0\leq \ell \leq m + 1$ and $1 \leq i \leq f$.
Consequently, $\nu$ maps $r_j = w_j(y^{\ell} x_i y^{-\ell})$ to
$w_j(u^\ell b_i u^{-\ell}) = w_j(t^\ell b_i t^{-\ell}) = 1$,
and so $\nu$ induces an epimorphism $\nu_* \colon  F/R \epi \tilde{G}$.
The composition 
\[
\tau \circ \nu_* \colon F/R \epi \tilde{G} \epi G
\]
is nothing but the isomorphism induced by $\rho$ and so $\tau \colon \tilde{G} \to G$ is an isomorphism.   

\paragraph{Adaptation of the proof for groups of type $\FP_2$.} 
Assume $G$ is of type $\FP_2$ over the commutative ring $K$.
Then Lemma \ref{lem:Characterization-almost-fp}
allows one to find a short exact sequence 
$M \mono G_1 \stackrel{\pi}{\epi} G$  
of groups in which $G_1$ is finitely related and $K \otimes_{\Z} M_{\ab} = 0$.
Choose a finite set of generators $b_1$, \ldots, $b_f$, $t_1$ in $G_1$ in such a way 
that $\pi \colon G_1 \epi G$ maps $t_1$ onto $t$ 
and the remaining generators $b_i$  onto elements $a_i$ in the kernel of $\chi \colon G \to \R$.

Next define $F$ to be the free group on the set $\{x_1,\ldots, x_f, y\}$ 
and $\rho$ the epimorphism of $F$ onto $G_1$ 
determined by $x_1 \mapsto b_1$, \ldots, $x_f \mapsto b_f$ and $y \mapsto t_1$.
The kernel $R$ of the projection $\rho\colon F \epi G_1$ is the normal closure of a finite set of relators, 
say $r_1$, \ldots, $r_k$.
Each relator $r_j$ is a product of powers of conjugates $y^\ell x_i y^{-\ell}$;
by replacing, if need be, some of these relators by conjugates,
we can arrange that only non-negative powers $\ell$ occur in the new defining relators. 
Since their number is finite
there is a natural number $m$ such that every $r_j$ is contained in the group
\[
V = \gp(\{ y^\ell x_i y^{-\ell} \mid 0 \leq \ell \leq m +1 \text{ and }   1 \leq i \leq f\}).
\]
Define  $B \subset G$ to be the image of $V$ under the composition $\pi \circ \rho$;
so
\[
B = \gp(\{ t^\ell a_i t^{-\ell}  \mid 0 \leq \ell \leq m +1 \text{ and }   1 \leq i \leq f\}),
\]
and introduce the groups
$S = \gp(\{ t^\ell a_i t^{-\ell} \mid 0 \leq \ell \leq m\text{ and }   1 \leq i \leq f\})$ 
and $T = tSt^{-1}$. 
Clearly,  $S$ and $T$ are subgroups of $B$.

The inclusion of $B$ in $G$ and the assignment $u \mapsto t$ 
induce a homomorphism
\[
\tau \colon \tilde{G} = \langle B, u\mid u\cdot s\cdot u^{-1} = \mu(s) \text{ for all } s \in S \rangle \to G;
\]
as before, $\mu(s) = t s t^{-1}$.
This homomorphism is \emph{surjective}.
Next the assignments $x_i \mapsto a_i$, where $1 \leq i \leq f$, and $x \mapsto u$
induce a homomorphism $\nu \colon F \epi \tilde{G}$ of the free group $F$ onto the HNN-extension $\tilde{G}$.
It then follows, exactly as in the previous proof,
that  $\nu$ induces an epimorphism $\nu_* \colon G_1 = F/R \epi \tilde{G}$.
The composition 
\[
\tau \circ \nu_* \colon G_1 \epi \tilde{G} \epi G
\]
is nothing but $\pi$ and so its kernel is $M$.
This kernel $M$ is mapped by $\nu_*$ onto the kernel of $\tau$. 
As $K \otimes_{\Z} M_{\ab} = 0$, it follows that $ K \otimes_{\Z} (\ker \tau)_{\ab} = 0$.
On the other hand, 
$\tau$ maps the base group $B$ identically onto $B \subset G$ 
and thus its kernel is a free group (\cite[Corollary 1]{KaSo71}; cf. \cite[p.\;212, Corollary 2]{Coh89}.
But if so, $\ker \tau$ must be trivial,
whence $\tau \colon \tilde{G} \epi G$ \emph{is an isomorphism}.

\begin{note}
\label{note:Structure-theorem}
a) Theorem \ref{thm:FP2-groups-and-HNN-extensions} and its proof go back to \cite[Thm.\;A]{BiSt78};
the given, more detailed, version of the proof is taken from the review \cite{Str84}.
 
b) In Theorem \ref{thm:FP2-groups-and-HNN-extensions} 
one considers a finitely presented group $G$ 
that maps onto an infinite cyclic group  with kernel $N$
and deduces that $G$ can be written as an HNN-extension with finitely generated base group 
and finitely generated  associated subgroups contained in $N$.
In \cite{BaMi07},
G. Baumslag and C. Miller III generalize
this result to a theorem involving HNN-extensions with several stable letters;
their theorem reads as follows (\cite[Thm.\;1]{BaMi07}):
\emph{Let $G$ be a finitely presented group which projects onto a free group of rank $n$; 
let $N$  denote the kernel of the projection.
Then $G$ is an HNN extension with $n$ stable letters, of a finitely generated group B and finitely
generated associated subgroups, all contained in $N$.}
\end{note}
\index{Bieri, R.}
\index{Strebel, R.}
\index{Baumslag, G.}
\index{Miller III, C. F.}
%
%
\subsubsection{Reflections on the consequences of the structure theorem} 
\label{sssec:Reflections-consequences-structure-theorem}
%
In the remainder of Section \ref{sec:Sigma1-via-HNN-extensions},
four consequences of Theorem \ref{thm:FP2-groups-and-HNN-extensions} will be discussed.
The first one is straightforward: given an indicable  group $G$ of type $\FP_2$ 
one infers from the structure theorem that $G$ splits as an HNN-extensions over a finitely generated subgroup,
and uses this information to deduce further properties of $G$.
Whether this approach leads to useful insights depends, of course, 
on additional properties of the group $G$.
Some situations where this approach has lead to satisfactory results 
will be described in section \ref{sssec:Direct-applications-structure-theorem}.
\smallskip

The next two applications make use of a \emph{dichotomy in the class of HNN-extensions};
there are \emph{ascending} HNN-extensions,
here the base group coincides with one of the associated subgroups,
and \emph{non-degenerate} HNN-extensions where the base group is distinct from both associated subgroups.
This dichotomy can be used as follows.

As before, let $G$ be an indicable group of type $\FP_2$ (over some commutative non-zero ring)
and let $\chi$ be a fixed rank 1 homomorphism.
In one application, 
one assumes that $G$ cannot be written as a non-degenerate HNN-extension
\footnote{This assumption holds, for instance, if $G$ does not contain a non-abelian free subgroup.}
or, less restrictive, as a non-degenerate HNN-extension with finitely generated base group and associated subgroups, all contained in the kernel $N$ of  $\chi$.
The structure theorem then implies 
that $G$ is an ascending HNN-extension with finitely generated base group contained in $N$;
or, alternatively, that $\Sigma^1(G)$ contains (at least) one of the points $[\chi]$ or $[-\chi]$.
This conclusion is an analogue of the conclusion of Theorem \ref{thm:Consequence-finite-presentation}
applied to the character $\chi$. 

In the other application,
one assumes that $G$ is not an ascending HNN-extension 
with finitely generated base group contained  the kernel of $\chi$ or, in other words,
that $[\chi]$ and $[-\chi]$ lie both outside of $\Sigma^1(G)$
and one deduces that $G$ is a non-degenerate HNN-extension 
with finitely generated base group and associated subgroups,
all three contained in the kernel of $\chi$.
\smallskip

There is a forth application.
It is a contraposition of the structure theorem.
This time, $G$ is a finitely generated group and $\chi \colon G \to \R$ is a cleverly chosen rank 1 character.
One assumes that $G$ cannot be written as an HNN-extension 
with finitely generated base group and and finitely generated associated subgroups, 
all three contained in the kernel of $\chi$.
The structure theorem then implies that $G$ is \emph{infinitely related} 
and that it is not of type $\FP_2$, no matter how the commutative ring is chosen.
\begin{remarks}
\label{remarks:Role-of-the-rank-1-character}
a) If the sphere $S(G)$ has positive dimension,
there are infinitely many rank 1 points and hence infinitely many essentially different choices for $\chi$.
In some of the applications, one uses this infinite set of choices,
in others they are of little significance.
In the first application,
one is interested in a splitting of $G$ as an HNN-extension with finitely generated subgroups $B$, $S$ and $T$,
and the choice of $\chi$ will typically play no rôle in the subsequent analysis.
If the aim is to show that $\Sigma^1(G)$ contains at least one of the points $[\chi]$ or $-\chi]$,
one often varies $[\chi]$ so as to find many points in $\Sigma^1(G)$.
In the other two applications, a single well chosen character will lead to the conclusion 
that $G$ admits a non-degenerate HNN-decomposition and hence contains a non-abelian free subgroup,
respectively that $G$ is infinitely related.
\smallskip

b) As stated before,
the forth application is a contraposition of the structure theorem.
It has come to light recently that one can do better:
by going through the proof of the structure theorem 
one can derive additional properties of the group $G$
(see  \cite[Thm.\;6.1]{BCGS12}). 
In section  \ref{ssec:Improved-structure-theorem-for-fp-indicable-groups}
I shall briefly explain the additional insights afforded by the new approach.
\end{remarks}
%
\subsubsection{Direct applications of the structure theorem} 
\label{sssec:Direct-applications-structure-theorem}
%
Recall the succinct version of Theorem \ref{thm:FP2-groups-and-HNN-extensions}:
\emph{every indicable group of type $\FP_2$ is an HNN-extension with finitely generated base group $B$ 
and finitely generated associated groups; 
moreover, 
$B$ can be chosen to be a subgroup of the kernel of a preassigned rank 1 character $\chi$}.

In the literature,
one can find several instances where this structure theorem is applied directly.
Often, however,  it is difficult to describe the use of the theorem 
without recalling a fair number of details from the context of the application.
In what follows, I sketch two applications that need little preparations.

\paragraph{Poincaré duality groups of dimension 2.}
The first and earliest application can be found in the paper \cite{EcMu80} by B. Eckmann and H. M{\"u}ller.
The aim of this paper is to show 
that \emph{an indicable Poincaré group $G$ of dimension 2 is a surface group}.
A first problem on the path towards this goal is the fact
that the hypothesis on the homology and cohomology of $G$ implies only
that the group is of type $\FP_2$ over $\Z$ (by \cite{BiEc74a}, cf.\;\cite{Bro75} or \cite{Str76a}),
while a surface group has a one-relator presentation.
The authors proceed as follows.

Assume $G$ is an indicable  Poincaré duality group of dimension 2. 
Then $G$ is of type $\FP_2$ over $\Z$ and so it is an HNN-extension 
with finitely generated base group and associated subgroups 
(by Theorem \ref{thm:FP2-groups-and-HNN-extensions}).
As the base group $B$ has infinite index in $G$, 
it is of cohomological dimension 1  (by  \cite{Str77}) 
and hence free by Stalling's Theorem. 
If the rank of $B$ is  greater than 1
the splitting can be changed so as to become a splitting of $G$,
as a free product with amalgamation or as an HNN-extension, over a subgroup of smaller rank
(by  the \emph{decomposition theorems for group pairs} established in \cite{Mue81}).
One is thus reduced to the case 
where the amalgamated subgroup $A$  or the the associated subgroup $S$ is infinite cyclic. 
Then the group pairs $(G_1; A)$ and $(G_2; A)$ in the case of an generalized free product,
or the pair $(B; \{S,T\}))$ in the case of an HNN-extension, 
are $\PD^2$-pairs (see \cite{BiEc78}).
By \cite[Thm.\,2]{EcMu80} these  $\PD^2$-pairs are geometric. 
It then follows easily that $G= G_1\star_A G_2$ 
or $G= \langle B, t \mid tSt^{-1} = T \rangle$ is a surface group.
\index{Bieri, R.}
\index{Brown, K. S.}
\index{Eckmann, B.}
\index{Mueller, H.@Müller, H.}
\index{Stallings, J. R.}
\index{Strebel, R.}

\paragraph{Structure of finitely presented LERF groups.}
A group $G$ is said to be \emph{LERF} (short for \emph{locally extended residually finite})
or \emph{subgroup separable}
if every \emph{finitely generated} subgroup $H$ of $G$ is an intersection of finite index subgroups of $G$.
Examples of such groups are free groups (\cite[Thm.\,5.1]{Hal49}),
polycyclic groups (see \cite[5.4.16]{Rob96}) and 
fundamental groups of closed surfaces (\cite[Thm.\,3.3]{Sco79}.
\index{Hall, M.}
\index{Scott, P.}
\index{Definition of!LERF group}
\index{Definition of!subgroup separable group}
\index{LERF groups!definition}
\index{LERF groups!example}

A LERF group is, of course, residually finite, but the converse does not hold.
This fact is made clear by the main result of \cite{BlNe74}, 
asserting that
\emph{a subgroup $H$ that is an intersection of subgroups of finite index 
cannot be conjugated to a proper subgroup of itself}.
(This implies, in particular, that the Baumslag-Solitar $\BS(1,m)$ with $m>1$ are not LERF.)
\index{Blass, A.}
\index{Neumann, P. M.}

In \cite{But08a}, 
J. O. Button establishes the following property of finitely presented LERF groups:
\begin{prp}[\protect{\cite[Theorem 7.3]{But08a}}]
\label{prp:Structure-fp-indicable.LERF-group}
A finitely presented LERF group is large
\footnote{Following Gromov in \protect{\cite[p.\;82, Thm.\:(B)]{Gro82a}},
a group will be called \emph{large} if it has a subgroup of finite index
that maps onto a free subgroup of rank 2.} 
or all the kernels of its rank 1  characters are finitely generated.
\end{prp}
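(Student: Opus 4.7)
The plan is to argue by contraposition. Assume $G$ is finitely presented and LERF, and let $\chi \colon G \epi \Z \incl \R$ be a rank 1 character whose kernel $N$ is not finitely generated; the goal is to show $G$ is large. Since every finitely presented group is of type $\FP_2$ over $\Z$ (Lemma \ref{lem:Almost-finitely-presented}), Theorem \ref{thm:FP2-groups-and-HNN-extensions} applies to $\chi$ and to any chosen $t \in G$ with $\chi(t) = 1$, producing an isomorphism
\[
G \;\iso\; \langle B, y \mid y s y^{-1} = \mu(s) \text{ for every } s \in S \rangle
\]
with $B$ a finitely generated subgroup of $N$ and $S, T = \mu(S)$ finitely generated subgroups of $B$.

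The first step is to rule out that this HNN decomposition is ascending in either direction. Suppose $S = B$. Then $tBt^{-1} = T \subseteq B$; the equality $T = B$ would force $t$ to normalize $B$, whence $N = B$ would be finitely generated, contrary to assumption. So $T \subsetneq B$, and $B$ is conjugate in $G$ via $t$ to its proper subgroup $T$. Since $B$ is finitely generated and $G$ is LERF, $B$ is an intersection of finite index subgroups of $G$; but the theorem of Blass and P.~M. Neumann \cite{BlNe74} forbids such a subgroup from being conjugate to a proper subgroup of itself, and a contradiction ensues. The case $T = B$ is ruled out symmetrically, with $t^{-1} B t = S$ taking the role of $T$. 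Hence $S \subsetneq B$ and $T \subsetneq B$, so the HNN extension is non-degenerate.

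The second, and decisive, step, and the one I expect to be the genuine obstacle, is to promote this non-degenerate HNN splitting of a LERF group over finitely generated subgroups to the existence of a finite index subgroup of $G$ surjecting onto a non-abelian free group. The strategy is to pick $b \in B \smallsetminus S$ and $b' \in B \smallsetminus T$ and to invoke the separability of $S$ and $T$ in $G$ to find a finite index subgroup $K$ containing $S \cup T$ but avoiding $\{b, b'\}$. The restriction to $K$ of the $G$-action on the Bass--Serre tree $X$ of the HNN splitting then makes $K \backslash X$ a finite graph of groups with vertex groups of the form $K \cap g B g^{-1}$ and edge groups of the form $K \cap g S g^{-1}$; by the choice of $b$ and $b'$, the two edge inclusions at the vertex stabilized by $K \cap B$ are strictly proper. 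A standard covering-space argument, passing if necessary to a further finite index subgroup, then identifies a finite cover in which the quotient graph either carries a loop (so its underlying graph has positive first Betti number) or exhibits a vertex with two essentially distinct proper edge inclusions; in either case the resulting graph of groups for this subgroup collapses to a non-trivial free product, and hence surjects onto $F_2$. Thus $G$ is large, completing the proof.
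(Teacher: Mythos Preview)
Your first step --- applying the structure theorem and using the Blass--Neumann result to rule out a properly ascending (or descending) HNN decomposition --- is correct and matches the paper exactly.

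The second step, however, has genuine gaps, and the paper's route is both simpler and avoids them. Two concrete problems with your sketch: first, the subgroup $K$ you ask for may not exist --- you need $S \cup T \subseteq K$ with $b, b' \notin K$, but nothing prevents $b \in T$ or $b' \in S$, in which case the requirements conflict. Second, and more seriously, the endgame ``collapses to a non-trivial free product, and hence surjects onto $F_2$'' is unjustified: a non-trivial free product need not surject onto $F_2$ (consider $\Z/2 \ast \Z/2$), and the preceding clause about the quotient graph is too vague to pin down the structure you need.

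The paper sidesteps all of this by passing to a \emph{quotient} rather than a finite-index subgroup. One separates a single element $b_0 \in B \smallsetminus S$ from $S$ by a finite-index subgroup $H$, takes the normal core $L$ of $H$, and observes that $G$ surjects onto the HNN-extension $\langle \bar{B}, \bar{y} \mid \bar{y}\,\bar{S}\,\bar{y}^{-1} = \bar{T} \rangle$ with $\bar{B} = BL/L$ finite. By construction $\bar{S} \subsetneq \bar{B}$; and since $\bar{T}$ is conjugate to $\bar{S}$ and $\bar{B}$ is finite, the cardinality inequality $|\bar{T}| = |\bar{S}| < |\bar{B}|$ forces $\bar{T} \subsetneq \bar{B}$ automatically --- no second separation is needed. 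A non-degenerate HNN-extension over a finite base group is virtually free and contains $F_2$, hence is large, and largeness pulls back along the surjection from $G$.
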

\index{Button, J. O.}
\index{LERF groups!structure theorem}
\index{Large groups!examples}

\begin{proof}
As the conclusion is clearly true if $G_{\ab}$ is finite,
we assume henceforth that $G$ is indicable 
and consider a rank 1 character $\chi \colon G \to \R$.
The structure theorem then shows that $G$ is an HNN-extension of the form
\[
\langle B, y \mid y s y^{-1} = \mu(s)  \text{ for all } s \in S\rangle
\]
with finitely generated base group $B$ contained in $N = \ker \chi$ 
and finitely generated associated subgroups $S$, $T = \mu(S)$.
Since $G$ is LERF 
this HNN-extension is neither properly ascending nor properly descending
(by \cite[Theorem]{BlNe74}),
and so only two possibilities remain:
either $S = T = B$, or $S \subsetneqq B$ and  $T \subsetneqq B$.
In the first case, $B$ is normal in $G$, hence coincides with $N = \ker \chi$ 
and so $N$ is finitely generated.

In the second case, 
we choose $b_0 \in B \smallsetminus S$ 
and then use the hypothesis that $G$ be LERF
to find a finite index subgroup $H$ such that $H \supset S$ and $b_0 \notin H$.
Define $L = \bigcap_{g \in G} g H g^{-1}$ to be the core of $H$ in $G$.
Then $L$ is a normal subgroup of $G$ with finite index;
moreover,   $S \cdot L \subseteq H$ and so $S \cdot L \subsetneqq B \cdot L$.
It follows that the canonical projection $\pi \colon G \epi \bar{G} = G/L$ 
maps $S$ onto a proper subgroup $\bar{S}$ of $\bar{B} = \pi(B)$.
The associated subgroup $T$ maps onto a subgroup $\bar{T}$ of $\bar{B}$;
as it is conjugated to $S$, its image $\bar{T}$ is conjugated to $\bar{S}$ and so distinct from $\bar{B}$,
for $\card(\bar{S}) < \card(\bar{B}) < \infty$. 
All taken together, this shows that  $\bar{G} = G/L$ 
is a non-degenerate HNN-extension 
\[
\langle \bar{B}, \bar{y} \mid \bar{y}\bar{S} \bar{y}^{-1} = \bar{T} \rangle
\]
with finite base group.
Such an HNN-extension is large (this follows, \eg, from \cite[II, §2, Prop.\,11]{Ser77b} 
or from \cite[Lemma 7.4]{ScWa79},
and the fact that $\bar{G}$ contains infinitely generated subgroups), 
whence $G$ is large.
\end{proof}
%
\subsection{Applications to finitely presented soluble groups}
\label{ssec:Structural-implications-for-fp-soluble-groups}
%
The next consequence of Theorem \ref{thm:FP2-groups-and-HNN-extensions} 
is the \emph{HNN-criterion for finitely presented groups $G$ containing no free subgroups of rank 2}.
 The criterion reads as follows:
\begin{prp}[HNN-criterion]
\label{prp:Soluble-FP2-groups-and-HNN-extensions}
Assume $G$ is a group of type $\FP_2$ which does not contain a non-abelian free subgroup
and whose abelianization has positive rank.
Let $\chi \colon G \to \R$ be a rank 1 character.
Then $G$ is an ascending HNN-extension with finitely generated base group $B \subseteq \ker \chi$;
put differently,  
$\Sigma^1(G)$ contains (at least) one of the points $[\chi]$ or $[-\chi]$.
\end{prp}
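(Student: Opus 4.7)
The two formulations in the conclusion are equivalent by Proposition \ref{prp:Ascending-HNN-extension}, so it suffices to show that $[\chi] \in \Sigma^1(G)$ or $[-\chi] \in \Sigma^1(G)$. The plan is to derive this from the structure theorem \ref{thm:FP2-groups-and-HNN-extensions} by exploiting the dichotomy ``ascending versus non-degenerate'' for HNN-decompositions.

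I would begin by fixing $t \in G$ with $\chi(t) = 1$ and applying Theorem \ref{thm:FP2-groups-and-HNN-extensions} to $(G,\chi)$. This furnishes finitely generated subgroups $S,\,T \subseteq B \subseteq \ker\chi$, an isomorphism $\mu \colon S \iso T$ induced by conjugation by $t$, and an isomorphism
\[
\tau \colon \langle B,\, y \mid y s y^{-1} = \mu(s),\ s \in S \rangle \iso G
\]
sending $y$ to $t$. Then either $S = B$ or $T = B$ (the \emph{ascending} case), or $S \subsetneq B$ and $T \subsetneq B$ (the \emph{non-degenerate} case). In the ascending case, suppose $T = B$; then $t^{-1}Bt = \mu^{-1}(T) = S \subseteq B$, so the chain $B \subseteq tBt^{-1} \subseteq t^{2}Bt^{-2} \subseteq \cdots$ is increasing inside $\ker\chi$. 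Its union $N_{0}$ is a subgroup of $\ker\chi$ that contains $B$ and is normalised by $t$, hence normal in $G = \langle B, t\rangle$; since $G/N_{0}$ is then cyclic, generated by the image of $t$ on which $\chi$ is non-trivial, one gets $N_{0} = \ker\chi$, and Proposition \ref{prp:Ascending-HNN-extension} places $[\chi]$ in $\Sigma^1(G)$. The symmetric case $S = B$ delivers $[-\chi] \in \Sigma^1(G)$.

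The main obstacle is to exclude the non-degenerate case under the hypothesis that $G$ contains no non-abelian free subgroup. The cleanest intrinsic argument is the classical one: a non-degenerate HNN-extension acts on its Bass--Serre tree with $t$ as a hyperbolic isometry, and for a suitable $g \in B\setminus(S\cup T)$ the conjugate $gtg^{-1}$ is a second hyperbolic isometry whose axis and $t$-translates admit a ping-pong configuration, so that sufficiently high powers of $t$ and $gtg^{-1}$ generate a free subgroup of rank $2$, contradicting the hypothesis on $G$. A less explicit but shorter alternative is to invoke Theorem \ref{thm:Consequence-almost-fp} directly, which already yields $\Sigma^1(G) \cup -\Sigma^1(G) = S(G)$ under precisely the present hypotheses; the merit of the proof via Theorem \ref{thm:FP2-groups-and-HNN-extensions} is that it exhibits the ascending HNN-decomposition with finitely generated base group inside $\ker\chi$ explicitly.
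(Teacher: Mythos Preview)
Your proof is correct and follows essentially the same route as the paper: apply the structure theorem \ref{thm:FP2-groups-and-HNN-extensions} to obtain an HNN-decomposition of $G$ over a finitely generated base $B\subseteq\ker\chi$, rule out the non-degenerate case by exhibiting a non-abelian free subgroup, and conclude via Proposition \ref{prp:Ascending-HNN-extension}. The only cosmetic difference is in the free-subgroup step: the paper picks $a\in B\smallsetminus S$ and $b\in B\smallsetminus T$ and shows directly via Britton's Lemma that $x=at^{-1}$ and $y=bt$ freely generate a rank-2 subgroup, whereas you sketch a Bass--Serre/ping-pong argument; both are standard and equivalent in spirit.
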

\index{HNN-criterion}

\begin{proof}
The hypotheses of Proposition \ref{prp:Soluble-FP2-groups-and-HNN-extensions}
include the assumptions of Theorem \ref{thm:FP2-groups-and-HNN-extensions}
and so the conclusion of the latter result shows that $G$ is an HNN-extension 
with finitely generated subgroup $B$ contained in $N = \ker \chi$ 
and finitely generated associated subgroups $S$ and $T$.
This HNN-extension cannot be non-degenerate 
for otherwise $N$ would contain non-abelian free subgroups 
by Lemma \ref{lem:Existence-free-subgroups-in-HNN-extension} below, 
in contradiction to the hypothesis of Proposition \ref{prp:Soluble-FP2-groups-and-HNN-extensions}.
So at least one of the equalities $S = B$ or $T = B$ holds,
\ie, the HNN-ascension is descending or ascending.
By Proposition \ref{prp:Ascending-HNN-extension} this finding implies  
that at least one of the points $[-\chi$ or $[\chi]$ lies in $\Sigma^1(G)$.
\end{proof}
\begin{lem}
\label{lem:Existence-free-subgroups-in-HNN-extension}
\index{Non-abelian free subgroups!existence}
Assume $G$ is an HNN-extension with stable letter $t$ 
and associated subgroups $S$, $T$ both distinct from the base group $B$. 
Then $G$ contains a non-abelian free subgroup.
\end{lem}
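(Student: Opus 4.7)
The plan is to use Bass--Serre theory and a ping-pong argument on the tree of the HNN-decomposition. Recall that $G = \langle B, t \mid t s t^{-1} = \mu(s) \text{ for } s \in S\rangle$ acts without inversions on a tree $\mathcal{T}$ in which the vertex $v_0$ corresponding to the identity coset $1 \cdot B$ has stabilizer $B$, and the edge $e_0$ at $v_0$ in the ``positive $t$-direction'' has stabilizer $S$ (so that $t \cdot e_0$ is the edge at $t \cdot v_0$ whose other endpoint is $v_0$, with stabilizer $T$ after translation).

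First I would exploit the hypothesis that $S$ and $T$ are \emph{proper} subgroups of $B$. Pick $s_0 \in B \setminus S$. Since $s_0$ fixes $v_0$ but the stabilizer of $e_0$ is exactly $S$, the edges $e_0$ and $s_0 \cdot e_0$ are distinct edges at $v_0$; in particular $v_0$ has degree at least two in $\mathcal{T}$, and similarly at every vertex. Next I would observe that the stable letter $t$ acts hyperbolically on $\mathcal{T}$, with axis $\ell$ passing through $v_0$ and $t \cdot v_0$ and translation length $1$. Conjugating, $s_0 t s_0^{-1}$ also acts hyperbolically with axis $s_0 \cdot \ell$; this axis passes through $s_0 \cdot v_0 = v_0$, but it leaves $v_0$ along $s_0 \cdot e_0 \neq e_0$. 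It follows that $\ell$ and $s_0 \cdot \ell$ coincide on one side of $v_0$ at most, and in fact their intersection is a bounded segment of $\mathcal{T}$.

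With two hyperbolic isometries whose axes meet in a bounded segment, the standard ping-pong lemma on trees (see, e.g., Serre, \emph{Trees}, or the usual formulation via attracting/repelling ends) yields an integer $n \geq 1$ such that $t^n$ and $(s_0 t s_0^{-1})^n$ generate a free subgroup of rank $2$ inside $G$. This finishes the argument.

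The main obstacle is the verification that the axes of $t$ and of $s_0 t s_0^{-1}$ meet in a bounded subtree; everything else (the hyperbolicity of $t$, the existence of $s_0$, and the final ping-pong step) is routine. An alternative, entirely combinatorial route would replace the tree argument by a direct application of Britton's normal form: choosing $s \in B \setminus S$ and $u \in B \setminus T$, one shows that for sufficiently large $n$ the elements $t^n s t^{-n}$ and $u \, t^n s t^{-n} \, u^{-1}$ (or a similar pair) give reduced HNN-words under all non-trivial products, hence generate a free group of rank $2$; this avoids Bass--Serre theory but requires a case analysis of potential pinches, which is the point at which most care is needed.
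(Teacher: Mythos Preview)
Your Bass--Serre approach is reasonable, but the step you flag as ``the main obstacle'' is a genuine gap, not just a routine verification. From $s_0\in B\setminus S$ you only get that $\ell$ and $s_0\cdot\ell$ diverge on the $S$-side of $v_0$; on the other side (the edge at $v_0$ with stabilizer $T$) nothing prevents $s_0$ from lying in $T$, and in fact from lying in $\bigcap_{k\ge 0} t^k B t^{-k}$, in which case $s_0$ fixes the whole half-axis $\{t^k v_0:k\ge 0\}$ and the two axes share an infinite ray. A concrete instance: take $B$ free on $\{a_i:i\in\Z\}$, set $S=\langle a_i:i\le 0\rangle$, $T=\langle a_i:i\le 1\rangle$, $\mu(a_i)=a_{i+1}$, and $s_0=a_1\in B\setminus S$; then $t^{-k}s_0t^k=a_{1-k}\in B$ for every $k\ge 0$, so $s_0$ fixes $t^k v_0$ for all $k\ge 0$ and $\ell\cap s_0\cdot\ell$ is a half-line. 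When the axes share an end, the standard ping-pong lemma for hyperbolic isometries does not apply, so your conclusion that suitable powers generate a free group is unjustified as written.

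The repair is short once seen: a group is never the union of two proper subgroups, so you may choose $s_0\in B\setminus(S\cup T)$; then $s_0$ moves \emph{both} edges of $\ell$ at $v_0$, the two axes meet only at $v_0$, and ping-pong goes through. The paper takes the even shorter combinatorial route you mention at the end, but with a cleaner choice of elements that eliminates all case analysis: pick $a\in B\setminus S$ and $b\in B\setminus T$, set $x=at^{-1}$ and $y=bt$, and observe that in any freely reduced word in $x^{\pm1},y^{\pm1}$ every potential pinch $t\,c\,t^{-1}$ would require $c\in S$ with $c$ a word in $a^{\pm1}$, and every $t^{-1}c\,t$ would require $c\in T$ with $c$ a word in $b^{\pm1}$ --- both excluded by the choice of $a$ and $b$ --- so Britton's Lemma gives freeness immediately.
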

\begin{proof}
Since $S \subsetneqq H$ and $T \subsetneqq H$,
there exist elements $a \in H \smallsetminus S$ and $b \in H \smallsetminus T$. 
Then $x = at^{-1}$ and $y= bt$ generate a subgroup of $G$;
it  is free on $\{x,y\}$ by Britton's Lemma  (see, e.\;g., \cite[p.\;181]{LS77}).
\end{proof}
%
\subsubsection{Remark on $\Sigma^1$ and non-degenerate HNN-extensions} 
\label{sssec:Remark-Sigma1-non-degenerate-HNN-extension}
%
If one applies the proof of Theorem \ref{thm:FP2-groups-and-HNN-extensions}
to a group given by an explicit finite presentation,
one obtains finite generating systems for $B$, $S$ and $T$.
One may then be able to find out by direct  inspection
whether $S = B$ or $T = B$.
In the first case, $[-\chi] \in \Sigma^1(G)$, in the second case, one has  $[\chi] \in \Sigma^1(G)$.

Suppose now that  $S \neq B$ or $T \neq B$.
Then $N$ must be infinitely generated,
whence Corollary \ref{crl:Characterizing-fg-kernel-rank-1-character}
shows that at least one point in the antipodal pair $\{[\chi], -\chi] \}$ lies outside of $\Sigma^1(G)$.
One can do better:
\begin{prp}
\label{prp:HNN-extension-and-Sigma1}
Let $G$ be a finitely generated group that can be written as an HNN-extension
as given in equation \eqref{eq:FP2-groups-and-HNN-extensions},
the map $\mu\colon S \iso T$ being the restriction of the inner automorphism $g \mapsto tgt^{-1}$ 
induced by $t = \tau(y) \in $G.

Let $\chi \colon G \epi \Z \incl \R$ denote the character
that sends $t$ to 1 and maps the base group  $B$ to $\{0\}$.
If $B$ is finitely generated, the following implication holds:
\begin{equation}
\label{eq:chi-in-Sigma1-implies-ascending-HNN-extension}
T \neq B  \Longrightarrow [\chi] \notin \Sigma^1(G)
\end{equation}
\end{prp}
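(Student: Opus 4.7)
The plan is to prove the contrapositive: assuming $[\chi] \in \Sigma^1(G)$, I will show that $T = B$. The main tool is the interplay between the $\Sigma^1$-criterion (Theorem \ref{thm:Sigma1-criterion}) and the Bass–Serre tree of the HNN-extension.

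First I would choose a finite generating system $\mathcal{X} = \{t\} \cup \mathcal{B}$, where $\mathcal{B}$ is a finite generating set of $B$ (possible because $B$ is finitely generated). Using $t \in \mathcal{Y}_{>0}$ as the distinguished element, the $\Sigma^1$-criterion gives, for every $b \in \mathcal{B}$ (noting that $\chi(b) = 0$), a path $p_b$ from $t$ to $bt$ in $\Gamma(G,\mathcal{X})$ satisfying $v_\chi(p_b) > 0$; since $\chi$ takes integer values, this means every vertex on $p_b$ has $\chi$-value at least $1$.

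Next I would introduce the Bass–Serre tree $\mathcal{T}$ of the HNN-extension $G = \langle B, t \mid tst^{-1} = \mu(s), s \in S\rangle$, with vertex set $G/B$ and edge set $G/S$, on which $G$ acts by left multiplication. The $G$-equivariant projection $\pi \colon \Gamma \to \mathcal{T}$, $g \mapsto gB$, is compatible with a well-defined height function $h \colon V(\mathcal{T}) \to \mathbb{Z}$ given by $h(gB) = \chi(g)$; the map is well-defined precisely because $\chi(B) = \{0\}$, and each edge of $\mathcal{T}$ connects consecutive heights. Projecting $p_b$ yields a walk in $\mathcal{T}$ from $tB$ to $btB$, both at height $1$.

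The key geometric step is to show that if $b \notin T$, then $tB \neq btB$, so this walk must descend to height $0$. The stabilizer of $tB$ is $tBt^{-1}$, and Britton's lemma gives $tBt^{-1} \cap B = T$; hence for $b \in B$ one has $b \cdot tB = tB$ if and only if $b \in T$. When $b \notin T$, the unique geodesic in $\mathcal{T}$ from $tB$ to $btB$ has length $2$ and passes through the single vertex $v_0 = B$ at height $0$. Since removing any vertex on the geodesic disconnects its endpoints in the tree, every walk from $tB$ to $btB$ must visit $v_0$; consequently some vertex on $p_b$ has $\chi$-value $0$, contradicting $v_\chi(p_b) > 0$.

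Thus every $b \in \mathcal{B}$ lies in $T$, and since $T$ is a subgroup and $\mathcal{B}$ generates $B$, one obtains $B \subseteq T$; combined with $T \subseteq B$ this gives $T = B$. The main obstacle is the careful Bass–Serre bookkeeping — verifying that the height function descends from $\chi$, identifying the stabilizer $tBt^{-1}$ and its intersection with $B$ via Britton's lemma, and invoking the ``walks in a tree visit the geodesic'' principle; the rest follows mechanically from the $\Sigma^1$-criterion.
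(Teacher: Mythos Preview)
Your proof is correct and takes a genuinely different route from the paper's. The paper argues directly: with the same generating set $\XX = \{t\} \cup \ZZ$, it picks $b_0 \in B \smallsetminus T$, considers $g_0 = t^{-1}b_0t \in G_\chi$, and shows that any path in $\Gamma_\chi$ from $1$ to $g_0$ would, after reducing via the HNN relations, yield a reduced sequence $(t^{-1}, b_0^{-1}, t^{1+e_0}, b_1, \ldots)$ representing $1$, contradicting Britton's Lemma. You instead prove the contrapositive via the $\Sigma^1$-criterion and project to the Bass--Serre tree, using the height function and the ``walks visit the geodesic'' principle to force the path $p_b$ down to level $0$.

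Both arguments rest on the same combinatorial core (the identification $tBt^{-1} \cap B = T$ via Britton's Lemma), but your packaging is more geometric. The paper's approach is self-contained and avoids setting up the Bass--Serre machinery; yours is more conceptual and anticipates the tree-action characterization of $\Sigma^1$ developed later in Chapter~\ref{ch:Alternative-definitions-overview}, where exactly this kind of reasoning (axes, invariant lines, height functions on trees) is used to generalize the result to Proposition~\ref{prp:Point-outside-Sigma1-HNN-extensions}, dropping the finite-generation hypothesis on $B$.
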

\index{Computation of Sigma1@Computation of $\Sigma^1$ for!HNN-extensions}
\index{Invariant Sigma1 and@Invariant $\Sigma^1$ and!non-degenerate HNN-extension}

\begin{proof}
Let $\eta \colon \ZZ \to B$ be a finite generating system of the base group $B$ 
and set $\XX = \ZZ \cup \{t\}$.
Then $\XX$ generates $G$ and so the Cayley graph $\Gamma = \Gamma(G,\XX)$ is connected.
Consider now a path $p$ with origin 1 that runs in the subgraph $\Gamma_{\chi} = \Gamma(G, \XX)_\chi$.
This path can be written as $p = (1, w)$ 
where $w$ is a word of the form
\[
t^{e_{0}} w_{1}t^{e_{1}} w_{2}t^{e_{1}} \cdots t^{e_{k - 1}} w_{k} t^{e_k}
\]
with each subword $w_{j}$ a word in $\ZZ^\pm$,
each exponent $e_j$ in $\{1,-1\}$,
and each partial sum $e_{0} + \cdots +e_{j}$ non-negative.
It $w$ contains a subword of the form $t w_{j}t^{-1}$ with $b = \eta_{*}(w_{j}) \in S$,
we replace the subword by a word $w'_{j}$ in $\ZZ^\pm$ that represents $\mu(h)$.
The resulting path $w'$ will then again run in the subgraph $\Gamma_{\chi}$.
If $w$ contains a subword of the form $t^{-1} w_{j}t$ with $\eta_{*}(w_{j}) \in T$,
one proceeds similarly.
There exists therefore a word $w'$ with $\eta_{*}(w') = \eta_{*}(w)$ 
such that $p' = (1,w')$ runs in $\Gamma_{\chi}$ 
and that the  associated sequence 
\begin{equation}
\label{eq:Reduced-sequence-HNN-extension}
(t^{e_{0}} ,b_{1}, t^{e_{1}}, b_{2}, t^{e_{2}},  \ldots t^{e_{\ell -1}},  b_{\ell}, t^{e_{\ell}} )
\quad \text{with}\quad  b_{j} = \eta_{*}(w_{j})
\end{equation}
is \emph{reduced},
i.\,e.,  it contains neither a subsequence $(t,b_{j},t^{-1})$ with $b_{j} \in S$ 
nor  a subsequence $(t^{-1},b_{j},t)$ with $b_{j} \in T$.

We are now ready to show that $\Gamma_\chi$ is not connected,
whence $[\chi] \notin \Sigma^1(G)$ by Theorem \ref{thm:Sigma1-well-defined}.
By assumption, there exists $b_{0} \in B \smallsetminus T$.
The product $g_{0} = t^{-1}b_{0}t$ is a vertex  of $\Gamma_{\chi}$;
if it could be connected to 1 inside $\Gamma_\chi$,
the above reasoning would provide us with a reduced sequence of the form 
\eqref{eq:Reduced-sequence-HNN-extension} with product equal to $g_{0}$.
As this reduced sequence would correspond to a path in $\Gamma_{\chi}$,
one would have $\ell > 0$ and $e_{0} \geq 0$ 
and so the sequence
\[
(t^{-1},b_{0}^{-1},t^{1+e_{0}},b_{1}, t^{e_{1}},  \ldots t^{e_{\ell -1}},  b_{\ell}, t^{e_{\ell}} )
\]
would be reduced. 
As the product of this sequence would be the unit element,
it would contradict Britton's lemma (see, e.\,g., \cite[p. 181]{LS77}).
\end{proof}
\begin{note}
\label{note:prp-HNN-extension-and-Sigma1}
Proposition \ref{prp:HNN-extension-and-Sigma1} goes back to
\cite[Prop.\;4.4]{BNS}.
It will be generalized by part (ii) of Proposition \ref{prp:Point-outside-Sigma1-HNN-extensions};
the generalization will reveal that the hypothesis that $B$ be finitely generated is redundant.
\end{note}
%
\subsubsection{Application to finitely presented nilpotent-by-cyclic groups} 
\label{sssec:Consequences-fp-nilpotent-cyclic}
%
We continue with applications of Proposition \ref{prp:Soluble-FP2-groups-and-HNN-extensions}
to some selected classes of finitely related soluble groups.
We start out with nilpotent-by-cyclic groups.

The class of \emph{finitely generated} nilpotent-by-cyclic groups contains the centre-by-metabelian 3-generator subgroup
\begin{equation}
\label{eq;Fg-centre-by-metabelian}
G = \left\langle
\begin{pmatrix} 1&1&0\\1&0&0\\0&0&1\end{pmatrix},
\begin{pmatrix} 1&0&0\\0&X&0\\0&0&1\end{pmatrix},
\begin{pmatrix} 1&0&0\\0&1&1\\0&0&1\end{pmatrix}
\right\rangle
\end{equation}
of $\GL(3, \Q(X))$ with $X$ an indeterminate,
and the class contains all homomorphic images of $G$.
It follows that the groups in the class are in general neither residually finite, 
nor do they satisfy max-n, the maximal condition on normal subgroups.
These facts are already pointed out in \cite{Hal54}.

The situation is entirely different for \emph{finitely related} nilpotent-by-cyclic groups:
\begin{crl}
\label{crl:Fp-nilpotent-by-cyclic-group}
Assume $G$ is a finitely generated  extension of a nilpotent group $N$ by a cyclic group $G/N$.
Then $G$ is finitely related if, and only if, it is polycyclic 
or else an ascending HNN-extension whose base group is finitely generated and nilpotent.
\end{crl}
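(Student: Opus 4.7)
The plan is to prove the biimplication by handling the two directions separately; the forward direction amounts to a straightforward application of the HNN-criterion just proved, while the backward direction is a classical fact.

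For the \emph{if} direction, recall that every finitely generated nilpotent group is polycyclic and hence finitely presented (P.~Hall's theorem). If $G$ is itself polycyclic, this already gives a finite presentation. In the ascending HNN case $G = \langle B, y \mid y s y^{-1} = \mu(s),\ s \in S\rangle$ with $B$ finitely generated nilpotent and $S \subseteq B$ (so that $\mu(B) \subseteq B$), the base $B$ is polycyclic hence finitely presented, and its subgroup $S = B$ is finitely generated. Adjoining one stable letter and finitely many conjugation relations $y b_i y^{-1} = \mu(b_i)$ to a finite presentation of $B$ then yields a finite presentation of $G$.

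For the \emph{only if} direction, assume $G$ is finitely related with $N \triangleleft G$ nilpotent and $G/N$ cyclic, and consider two cases according as $G/N$ is finite or infinite. If $G/N$ is finite, then $N$ has finite index in $G$, hence is finitely generated (Schreier), hence polycyclic; concatenating a polycyclic series of $N$ with the single cyclic factor $G/N$ produces a polycyclic series for $G$, so $G$ is polycyclic. If $G/N$ is infinite cyclic, let $\chi \colon G \epi G/N \iso \Z \incl \R$ be the rank 1 character with kernel $N$. Then $G$ is nilpotent-by-cyclic, hence soluble, so in particular contains no non-abelian free subgroup; since $G$ is finitely presented it is of type $\FP_2$ (Lemma \ref{lem:Almost-finitely-presented}), and $r_0(G_{\ab}) \geq 1$ because $\chi \neq 0$. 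The HNN-criterion (Proposition \ref{prp:Soluble-FP2-groups-and-HNN-extensions}) therefore applies and gives $[\varepsilon\chi] \in \Sigma^1(G)$ for some sign $\varepsilon \in \{+1,-1\}$. Proposition \ref{prp:Ascending-HNN-extension} then yields an element $t \in G$ with $\chi(t) = \varepsilon$ and a finitely generated subgroup $B \subseteq N$ satisfying $t^{-1} B t \subseteq B$ and $\bigcup_{\ell \geq 0} t^{\ell} B t^{-\ell} = N$. Since $B$ is a subgroup of the nilpotent group $N$, the group $B$ is itself nilpotent, and $G$ is the desired ascending HNN-extension with finitely generated nilpotent base group.

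No step of this argument presents a genuine obstacle once Propositions \ref{prp:Ascending-HNN-extension} and \ref{prp:Soluble-FP2-groups-and-HNN-extensions} are in hand: the main work has already been carried out in establishing the HNN-criterion. The one point requiring a moment's care is the finite-cyclic case, where one must combine Schreier's theorem (to obtain that $N$ is finitely generated) with the fact that attaching a single cyclic factor on top of a polycyclic series preserves polycyclicity; and in the infinite-cyclic case, the crucial observation is merely that arbitrary subgroups of a nilpotent group are nilpotent, so the finitely generated base group $B$ provided by Proposition \ref{prp:Ascending-HNN-extension} automatically has the required nilpotence property.
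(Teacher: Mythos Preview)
Your proof is correct and follows the same route as the paper's: split on whether $G/N$ is finite or infinite, invoke the HNN-criterion (Proposition \ref{prp:Soluble-FP2-groups-and-HNN-extensions}) in the infinite case, and observe that the base group inherits nilpotence from $N$. You have simply fleshed out the details---in particular the converse, which the paper waves off as ``clearly true,'' and the polycyclicity in the finite-index case---so there is nothing substantive to add.
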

\index{Finitely presented groups!nilpotent-by-cyclic}
\index{Groups!fp nilpotent by cyclic}
\index{Characterization of!fp nilpotent-by-cyclic groups}
\begin{proof}
Assume $G$ is finitely related 
and let $N \triangleleft G$ be a nilpotent normal subgroup with cyclic quotient $G/N$.
If $G/N$ is finite, then $G$ is  polycyclic. 
Otherwise, $G$ is by the HNN-criterion  an ascending HNN-extension 
$\langle B, t \mid t^{-1}B t \subseteq B \rangle$
whose base group $B$ is finitely generated and nilpotent. 
The converse claim is clearly true.
\end{proof}
\begin{remarks}
\label{remarks:Fp-nilpotent-by-cyclic}
a) Corollary \ref{crl:Fp-nilpotent-by-cyclic-group} is result 11.4.5 in \cite{LeRo04}
and goes back to Theorem C in \cite{BiSt78}.
It shows, first of all, that the class of finitely presented nilpotent by infinite cyclic groups 
admits of an explicit description, 
in contrast to the class of finitely generated nilpotent by infinite cyclic groups.

Moreover, the corollary implies 
that a finitely presented nilpotent-by-infinite cyclic group satisfies max-n
(\cite[Cor.\,5]{BaBi76}, cf.\,\cite[11.4.4 and 11.2.6]{LeRo04}), 
is residually finite (\cite[11.4.4 and 11.2.4]{LeRo04}), cf.\,\cite[Cor.\,7]{BaBi76}), 
that the torsion subgroup of its nilpotent normal subgroup $N$ is finite 
and the group has finite Hirsch length and that it is coherent
\footnote{A group is called coherent if each of its finitely generated subgroups is finitely related.}
(\cite[Thm.\,B]{BiSt79b} or \cite{Gro78c}, cf.\,\cite[11.4.4]{LeRo04}). 
\index{Bieri, R.}
\index{Strebel, R.}
\index{Groves, J. R. J.}
\index{Lennox, J. C.}
\index{Robinson, D. J. S.}
\smallskip

b) Suppose $G$ is a finitely related nilpotent by infinite cyclic group.
Then Corollary \ref{crl:Fp-nilpotent-by-cyclic-group} describes the structure of $G$ 
and it implies that $G$ satisfies a number of useful properties,
but the corollary does not provide a classification of the groups in question.
One reason is this:
given a finitely generated nilpotent group,
one would like to know all ascending HNN-extensions having $B$ as base group.
This presupposes, in particular, that one can describe the injective endomorphisms of $B$.
Some information on this problem can be found on page 261 of \cite{LeRo04}.
\end{remarks}

\begin{example}
\label{example:Locally-infinite-cyclic-by-infinite-cyclic-2}
Let $p$ and $q$ be positive and relatively prime integers 
and let $G_{p,q}$ denote the semi-direct product $\Z[1/(p \cdot q)] \rtimes \gp(t)$
with $t$ is an element of infinite order 
that acts on $A = \Z[1/(m \cdot n)]$ by multiplication by $m/n$.
(See Example  \ref{example:Locally-infinite-cyclic-by-infinite-cyclic} for more details.)
A finitely generated subgroup of $A$ is cyclic 
and so $G$ is an ascending HNN-extension with finitely generated base group if, and only if, $m = 1$ or $n= 1$.
Corollary \ref{crl:Fp-nilpotent-by-cyclic-group} thus implies 
that $G$ is finitely related precisely if it is one of the Baumslag-Solitar groups $\BS(m,1)$ or $\BS(1,n)$. 
\end{example}
\index{Groups!of Baumslag-Solitar}

\begin{note}
\label{note:Locally-infinite-cyclic-by-infinite-cyclic-2}
At the beginning of the 1970s both R. Bieri and J. Conway stumbled over the question
whether the group $G_{2,3}$ is infinitely related. G. Higman answered John's question in the affirmative,
as did G. Baumslag in 1973 when asked by Robert (cf.\,\cite[(1.11)]{BaSt76}).
The determination of the finitely related specimens among the groups $G_{p.q}$ 
predates \cite{BiSt78} and is contained in \cite{BaSt76} (use Lemma C on p.\,57).
The fact that $G_{p,q}$ is finitely related if, and only if, it is of type $\FP_2$ follows from \cite{BiSt78}
and is used in the classification of the soluble groups of cohomological dimension 2
published by D. Gildenhuys in  \cite{Gil79}.
\end{note}
\index{Bieri, R.}
\index{Baumslag, G.}
\index{Gildenhuys, D.}
\index{Strebel, R.}

\begin{example}
\label{example:Abelian-by-infinite-cyclic}
Let $G$ be a \emph{finitely generated, abelian by infinite cyclic group}.
Then $G$ is, of course, metabelian and a semi-direct product $A \rtimes C$ of an abelian group $A$ 
by an infinite  cyclic group $C = \gp(p)$. 
The wreath products $(\Z/k\Z) \wr C$ and $\Z \wr C$ make it clear
that the torsion subgroup $\tor(A)$ of $A$ can be infinite
and that $\bar{A} = A /\tor(A)$ can have infinite torsion-free rank.
Conjugation by the elements of $C$ turns $A$ into a $\Z{C}$-module
and as such $A$ is a finitely presentable $\Z{C}$-module;
in other words, $A$ has a presentation of the form
\[
\Z{C}^m \xrightarrow{\partial} \Z{C}^n \xrightarrow{\pi} A \to 0
\]
where $\partial$ is given by multiplication on the right 
by an $m \times n$ matrix with entries in $\Z{C}$.
(Special presentations of this kind have been studied in classical \emph{Knot Theory};
see, \eg, \cite{Cro63} and \cite{Tro74}).
\index{Crowell, R. H.}
\index{Trotter, H. F.}

Suppose now that the group $G = A \rtimes C$ is \emph{finitely related}.
Then the torsion subgroup of $A$ is finite 
and $ \bar{A} =A/\tor(A)$ is a torsion-free group of finite torsion-free rank $d$, say.
Moreover, for a suitable generator $t \in C$,
the $\Z{C}$-module $\bar{A}$ contains a free-abelian subgroup $\bar{B}$ of rank $d$ 
with $t.\bar{B} \subseteq \bar{B}$
and $\bar{A} = \bigcup_{j < 0} t^j.\bar{B}$.

It follows that 
each torsion-free, finitely presented abelian by infinite cyclic group
is given by an integer square matrix $M$ with non-zero determinant.
\end{example}
\index{Groups!fp abelian by infinite cyclic}
\index{Characterization of!fp abelian by infinite cyclic groups}

\begin{note}
\label{note:Abelian-by-infinite-cyclic}
The fact that a finitely presented abelian by infinite cyclic group $G$
can be described by an integer square matrix $M$ 
is the starting point of several papers dealing with the large-scale geometry of finitely presented abelian-by-cyclic groups that are \emph{not} polycyclic 
(see, \eg, \cite{FaMo98}, \cite{FaMo99a}, \cite{FaMo00}).
Similarly, the characterization of finitely presented nilpotent-by-cyclic groups,
afforded by Corollary \ref{crl:Fp-nilpotent-by-cyclic-group}, is used in \cite{Ahl05}.
\end{note}
\index{Farb, B.}
\index{Mosher, L.}
\index{Ahlin, A. Reiter}
%
\subsubsection{Application to coherent soluble groups} 
\label{sssec:Application-coherent-soluble-groups}
%
A group is termed \emph{coherent} if every finitely generated subgroup is finitely presented. 
Obvious examples of coherent groups are free groups, 
locally finite groups and polycyclic-by-finite groups. 
On the other hand,
finitely presented metabelian groups are typically not coherent.
This statement can be illustrated by the following simple example.
\index{Coherent group!definition}
\index{Definition of!coherent group} 
\begin{example}
\label{example:Non-coherent-metabelian-group}
Let $Q$ a free-abelian group of rank 2, generated by $s$, $t$, 
and let $p$ and $q$ be relatively prime positive integers. 
Set  $A =  \Z[1/(p \cdot q)]$
and turn $A$ into a $\Z{Q}$-module by declaring 
that $s$ act by multiplication by $p$ and $t$ by multiplication by $q$.
Finally, put $G = G_{p,q} = A \rtimes Q$.
Then $G$ is a finitely related, metabelian group with presentation
$
\langle a , s, t \mid sas^{-1} = a^p, tat^{-1} = a^q \text{ and  } st = ts \rangle.
$

Every couple $(m, n)  \in \Z^2$ gives then rise to a 2-generator subgroup
\begin{equation}
\label{eq:Non-coherent-metabelian-group}
H_{m,n}  = \gp(a = (1_A, 1_Q), u_{m,n} = s^mt^n).
\end{equation}
Here $1_A$ denotes the unit element of the ring $A =  \Z[1/(p \cdot q)]$
and $1_Q$ the neutral element of the group $Q$.
The subgroup $H_{m,n}$ is finitely presented whenever $m \cdot n \geq 0$,
but it is infinitely related if $m \cdot n$ is negative 
(see Example \ref{example:Locally-infinite-cyclic-by-infinite-cyclic-2}).
\end{example}

There exists a very satisfying characterization of finitely generated coherent soluble groups.
It is a consequence of Proposition \ref{prp:Soluble-FP2-groups-and-HNN-extensions}
and some powerful results about the structure of finitely generated soluble groups.
\begin{thm}[\protect{\cite{Gro78a}, \cite{BiSt79b}}] 
\label{thm:Coherent-soluble-groups}
A finitely presented soluble group is coherent if, and only, 
if it is polycyclic or an ascending HNN-extension with polycyclic base group.
\end{thm}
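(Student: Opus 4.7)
I would split the argument into the standard two implications, doing the easy direction first.

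\emph{Proof of the ``if'' direction.} Polycyclic groups are coherent: every subgroup of a polycyclic group is itself polycyclic, and polycyclic groups admit a finite presentation. For an ascending HNN-extension $G=\langle B,t\mid tBt^{-1}\subseteq B\rangle$ with $B$ polycyclic, I would verify coherence by analysing an arbitrary finitely generated subgroup $H\leq G$: using the Bass--Serre tree of $G$, one shows that $H$ either lies in a conjugate of $B$ (and is then polycyclic, hence finitely presented) or is itself an ascending HNN-extension of a finitely generated subgroup of $\bigcup_{\ell\geq 0} t^{-\ell}Bt^{\ell}$. The latter subgroup is a union of an ascending chain of polycyclic groups of bounded Hirsch length, hence is polycyclic, and so the resulting HNN-extension is finitely presented.

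\emph{Proof of the ``only if'' direction: setting up the induction.} Assume $G$ is a finitely presented, soluble, coherent group; I would argue by induction on the derived length~$d$. If $d\leq 1$ then $G$ is abelian and finitely generated, hence polycyclic. For the inductive step, distinguish two cases according to whether the torsion-free rank $r_0(G_{\ab})$ is zero or positive. If $r_0(G_{\ab})=0$, the sphere $S(G)$ is empty, so trivially $\Sigma^1(G)=S(G)$, and Theorem~\ref{thm:Characterizing-fg-N} forces $G'$ to be finitely generated. Coherence of $G$ then makes $G'$ finitely presented; as $G'$ has derived length $d-1$ and is soluble and coherent, the induction hypothesis shows $G'$ is polycyclic, whence $G$ itself is polycyclic (being a finite-by-polycyclic extension of $G_{\ab}$, which is finite).

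\emph{The case $r_0(G_{\ab})>0$.} Pick a rank 1 character $\chi\colon G\epi\Z\incl\R$. Since $G$ is soluble it contains no non-abelian free subgroup, so the HNN-criterion (Proposition~\ref{prp:Soluble-FP2-groups-and-HNN-extensions}) applies: after replacing $\chi$ by $-\chi$ if necessary, $G$ is an ascending HNN-extension with stable letter $t$ and finitely generated base group $B\subseteq\ker\chi$. Coherence of $G$ makes $B$ finitely presented, and $B$ inherits solubility and coherence. It remains to show that $B$ is polycyclic; this is the heart of the theorem.

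\emph{The main obstacle and how to overcome it.} Showing $B$ polycyclic is the hard step, because a priori $B$ might again be an ascending HNN-extension of a strictly smaller finitely generated subgroup, threatening an infinite descent. The plan is to introduce the Hirsch length $h(-)$ as a termination invariant. First I would establish that a finitely generated coherent soluble group has finite Hirsch length (a consequence of \cite{BiSt79b}); then, for a strictly ascending HNN decomposition $G=\langle B,t\mid t^{-1}Bt\subsetneq B\rangle$, show that $h(B)<h(G)$ by analysing how the conjugation endomorphism acts on the rational abelianization of the Fitting subgroup. With this in hand, applying the dichotomy recursively to $B$ (and to its base group in turn) produces a sequence of finitely presented, soluble, coherent groups of strictly decreasing Hirsch length; the descent must terminate at a group with $r_0(H_{\ab})=0$, which by the earlier case is polycyclic. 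Unwinding the tower of HNN-extensions, each intermediate base group is polycyclic, in particular $B$ is polycyclic, and $G$ has the required form.

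The decisive difficulty is the Hirsch-length estimate $h(B)<h(G)$ for strictly ascending HNN-extensions inside the soluble coherent world: without it, the recursive unpacking of Proposition~\ref{prp:Soluble-FP2-groups-and-HNN-extensions} would not terminate, and both the finiteness of Hirsch length for coherent soluble groups and this strict inequality rely on the deeper structural results of Bieri--Strebel and Groves rather than on the $\Sigma^1$-machinery developed so far in this chapter.
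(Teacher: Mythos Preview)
Your ``if'' direction is essentially the paper's: it, too, analyses an arbitrary finitely generated subgroup $G_1$, showing that either $G_1$ lies in a conjugate of $B$ (hence is polycyclic) or is itself an ascending HNN-extension with polycyclic base $B_1=B\cap G_1$.

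The ``only if'' direction, however, has a genuine gap. In your case $r_0(G_{\ab})=0$ you apply the induction hypothesis to $G'$ and write ``the induction hypothesis shows $G'$ is polycyclic''. But the induction hypothesis is the full theorem for derived length $d-1$: it only yields that $G'$ is polycyclic \emph{or} an ascending HNN-extension with polycyclic base. If $G'$ lands in the second (non-polycyclic) alternative, then $G$ is a finite extension of a non-polycyclic group, hence not polycyclic; and since $G_{\ab}$ is finite, $G$ admits no epimorphism onto $\Z$ and so cannot be an HNN-extension either. You must \emph{exclude} this alternative, not assume it away --- and doing so is exactly where the deep structural work lies.

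The same problem resurfaces in your unwinding. You write ``each intermediate base group is polycyclic'', but an ascending HNN-extension over a polycyclic group need not itself be polycyclic (e.g.\ $\BS(1,2)$). So from ``$B_k$ is polycyclic'' you get only that $B_{k-1}$ is an ascending HNN-extension with polycyclic base; the next step up, $B_{k-2}$, is then an ascending HNN-extension over $B_{k-1}$, and there is no mechanism in your argument for collapsing this tower to a \emph{single} ascending HNN-extension with polycyclic base. (Such towers are not automatically in the target class: for instance $\BS(1,2)\times\BS(1,2)$ is a two-step tower over $\Z$ but is not coherent.) What you need is that the class $\{$polycyclic or ascending HNN with polycyclic base$\}$ is closed under forming ascending HNN-extensions \emph{within the coherent soluble world} --- but that closure statement is essentially the theorem itself.

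The paper does not attempt this kind of induction. Its sketch establishes the easy direction in detail and, for the converse, simply records the key structural input --- that a finitely generated coherent soluble group is nilpotent-by-abelian-by-finite --- referring to \cite{BiSt79b} and \cite{LeRo04} for the argument. Once that is known, your troublesome case $r_0(G_{\ab})=0$ dissolves: the abelian-by-finite top is then finite, forcing the nilpotent kernel to have finite index, whence it is finitely generated and $G$ is polycyclic. Your Hirsch-length descent, by contrast, tries to reach the conclusion without ever establishing a comparable structural restriction on $B$, and that is why it does not close.
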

\index{Groves, J. R. J.}
\index{Bieri, R.}
\index{Strebel, R.}
\index{Characterization of!coherent soluble groups}

\begin{sketch-proof}
By \cite{Hal54}, polycyclic groups are finitely related.
As subgroups of polycyclic groups are again so,
polycyclic groups are coherent.
Assume now $G$ is an ascending HNN-extension 
$\langle B, t \mid t^{-1}B t \subseteq B \rangle$
with polycyclic base group $B$ and stable letter $t$.
The normal closure $N$ of $B$ in $G$ coincides then with the ascending union 
$\bigcup \{ t^j B t^{-j}\mid j \in \N \}$
\smallskip

Let $G_1 \subseteq G$ be a finitely generated subgroup.
Two cases arise:
if $G_1 \subseteq N$ then $G_1$ is contained in a conjugate of $B$, 
hence polycyclic and therefore finitely related.
Otherwise, set $N_1 = G_1 \cap N$.
Then $G_1/N_1 \iso (G_1 \cdot) N / N \subseteq G/N$ is infinite cyclic, 
say generated by $t_1 = x \cdot t^k$ with $x \in N$.
As $N$ is an ascending union of copies of $B$ 
there exists $\ell \in \N$ with $x  \in t^\ell Ht^{-\ell}$.
Since $G$ is also an ascending HNN-extension with base group $t^\ell Ht^{-\ell}$;
there is no harm in assuming that $x \in B$.
But if so, we may as well set $x = 1$.
Set $B_1 = B \cap H$ and $N_1 = N \cap H$.
Then $B_1$ is polycyclic and
\[
t_1^{-1}B_1 t_1  = t^{-k}(B \cap G_1)t^k =  t^{-k}B t^k  \cap t^{-k} G_1t^k  \subseteq B \cap G_1 = B_1.
\]
Moreover, 
\begin{align*}
N_1 &= \bigcup\nolimits_{j \in \N} (t^k)^j B (t^k)^{-j } \cap G_1 
= 
\bigcup\nolimits_{j \in \N} \left(t_1^j B t_1^{-j } \cap  t_1^j G_1 t_1^{-j }\right)=  
 \bigcup\nolimits_{j \in \N} t_1^j B_1 t_1^{-j }.
\end{align*}
These calculations show 
that $G_1$ is an ascending HNN-extension with polycyclic base group $B_1$ 
and hence finitely related.

So far we know that polycyclic groups 
and ascending HNN-extensions with polycyclic base groups are coherent.
The proof of the converse statement is more involved.
One has to show, in particular,
that a finitely generated coherent soluble group is nilpotent-by-abelian-by-finite
(as is every polycyclic group).
For details, see pages 236--238  in  \cite{BiSt79b} 
or pages 259--261 in  \cite{LeRo04}.
\end{sketch-proof}
\index{Robinson, D. J. S.}
\index{Lennox, J. C.}

\begin{remark}
\label{remark:Coherent-soluble-groups}
The statement of Theorem \ref{thm:Coherent-soluble-groups}
remains valid if the requirement that $G$ \emph{be coherent} 
is replaced by the condition that \emph{every finitely generated subgroup of $G$ be of type $\FP_2$};
see \cite[Proposition]{BiSt79b}.
\end{remark}
%
\subsubsection{Application to finitely presented centre-by-metabelian groups} 
\label{sssec:Application-centre-by-metabelian}
%
The group described in equation \eqref{eq;Fg-centre-by-metabelian}
is a centre-by-metabelian group whose centre is free abelian of rank $\aleph_0$. 
This example permits one to see 
that the centre of a \emph{finitely generated} centre-by-metabelian
group can be any non-trivial countable abelian group. 
The situation is, however, entirely different for \emph{finitely presented} centre-by-metabelian groups.
Indeed,
the following important result, published in 1978 by J. R. J. Groves (see \cite[Thm.\;1]{Gro78b}), holds:
\begin{thm}
\label{thm:Fp-centre-by-metabelian-groups}
A finitely presented centre-by-metabelian group is abelian-by-poly\-cyclic.
In particular, 
it has the maximal condition on normal subgroups and is residually finite.
\end{thm}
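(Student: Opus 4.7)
The strategy is to use the $\Sigma^1$-machinery to descend from $G$ to the metabelian quotient $\bar{G}=G/G''$, apply the Bieri--Strebel characterization there, then lift the resulting structure back to $G$ by exploiting the centrality of $G''$.

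First I would dispose of the case where $G_{\ab}$ is finite. Then $G'$ has finite index in $G$ and is therefore finitely generated (Schreier); since $G''\le Z(G)\le Z(G')$ and $G'/G''$ is abelian, $G'$ is nilpotent of class at most $2$, hence polycyclic (Hall). So $G$ is polycyclic-by-finite and the conclusion holds trivially. From now on assume $r_{0}(G_{\ab})\ge 1$, so $S(G)\ne\emptyset$.

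Next, since $G$ is finitely presented it is of type $\FP_{2}$ over $\Z$ (Lemma~\ref{lem:Almost-finitely-presented}), and being soluble it contains no non-abelian free subgroup. The HNN-criterion (Proposition~\ref{prp:Soluble-FP2-groups-and-HNN-extensions}), combined with the density of rank~$1$ points and the openness of $\Sigma^{1}$, yields
\[
\Sigma^{1}(G)\cup -\Sigma^{1}(G)=S(G).
\]
Let $\pi\colon G\epi\bar G=G/G''$. Since $\ker\pi=G''\subseteq G'$, Corollary~\ref{crl:Sigma1-epimorphism} gives a bijection $\pi^{*}\colon S(\bar G)\iso S(G)$ with $\pi^{*}\bigl(\Sigma^{1}(\bar G)^{c}\bigr)\subseteq\Sigma^{1}(G)^{c}$, so transferring the antipodal covering to $\bar G$ produces $\Sigma^{1}(\bar G)\cup -\Sigma^{1}(\bar G)=S(\bar G)$. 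Because $\bar G$ is metabelian, this is (after identifying $\Sigma^{1}$ with $\Sigma^{0}(\bar G_{\ab};\bar G')$ for metabelian groups, a result from Chapter~D) exactly the Bieri--Strebel condition, and Theorem~\ref{thmNN:Characterization-fp-metabelian-group}(ii) lets me conclude that $\bar G=G/G''$ is finitely presented. Since $G$ and $\bar G$ are both finitely presented, $G''$ is finitely generated as a normal subgroup of $G$; as $G''\subseteq Z(G)$, normal generation coincides with ordinary generation, so $G''$ is a \emph{finitely generated} abelian group.

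Finally, and this is where the real work lies, I need to produce a normal \emph{abelian} subgroup $A\lhd G$ with $G/A$ polycyclic. The module $M=G'/G''$ is a finitely generated $\Z Q$-module ($Q=G_{\ab}$), and the commutator map $[\,,\,]\colon G'\times G'\to G''$ descends, by centrality of $G''$, to a well-defined $\Z Q$-equivariant alternating pairing
\[
\beta\colon M\wedge_{\Z} M\longrightarrow G''
\]
(with trivial $Q$-action on $G''$). My plan is to use the sharper tameness of $M$ available now that $\bar G$ is known to be fp metabelian---i.e.\ that $\Sigma^{0}(Q;M)\cup -\Sigma^{0}(Q;M)=S(Q)$---to find a $Q$-invariant submodule $N\le M$ which is \emph{$\beta$-isotropic} ($\beta(N,N)=0$) and has $M/N$ finitely generated as an abelian group. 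The preimage $A=\pi^{-1}(N)\cap G'$ is then an abelian normal subgroup of $G$, and $G/A$ is an extension of the fg abelian group $M/N$ by the fg abelian group $G/G'$, hence polycyclic. The max-n and residual finiteness statements then follow from the classical theorems of P.~Hall and Jategaonkar--Roseblade on fg abelian-by-polycyclic groups.

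The hard part is the last step: producing a $Q$-invariant, $\beta$-isotropic sub­module with fg quotient. Being normally generated and trivial as a $\Z Q$-module makes $G''$ tractable, but isotropy of $N$ is a genuinely quadratic (non-module-theoretic) condition on $M$, and reconciling it with the cofinite-generation requirement is the main technical obstacle; it is here that the Bieri--Strebel geometric structure of $\Sigma^{0}(Q;M)^{c}$ as a finite union of rationally defined spherical polyhedra must be brought to bear on the bilinear form $\beta$.
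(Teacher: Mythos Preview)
Your outline is in the same spirit as the paper's sketch, but you overlook a simplification that dissolves the obstacle you flag at the end. The paper takes for its abelian normal subgroup $Z=\zeta(G')$, the centre of $G'$; since $G''\le Z(G)\le\zeta(G')$, the quotient $M=G'/\zeta(G')$ is an abelian $\Z Q$-module and $G/Z$ is polycyclic as soon as $M$ is shown to be finitely generated as an abelian group. In your language, $\zeta(G')/G''$ is precisely the \emph{radical} $\{x:\beta(x,M')=0\}$ of your alternating form $\beta$ on $M'=G'/G''$; it is the canonical isotropic $Q$-submodule, and both its isotropy and its $Q$-invariance are automatic (the latter because $\zeta(G')$ is characteristic in $G'$). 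So the ``genuinely quadratic'' difficulty you anticipate does not arise: one need not \emph{search} for an isotropic submodule, only prove that $M'/\operatorname{rad}(\beta)\cong G'/\zeta(G')$ is finitely generated over~$\Z$.

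Your intermediate steps---deducing that $\bar G=G/G''$ is finitely presented and hence that $G''$ is a finitely generated abelian group---are correct and of independent interest, but the paper's route does not use them. What it does use is exactly the translation you set up: from the HNN-criterion applied to all rank~$1$ characters one obtains the antipodal covering $\Sigma^{0}(Q;G'/G'')\cup-\Sigma^{0}(Q;G'/G'')=S(Q)$, and this, together with the $Q$-invariance $\beta(qx,qy)=\beta(x,y)$ of the pairing into a trivial $Q$-module, is what forces $G'/\zeta(G')$ to be finitely generated. The paper, like you, defers this last step to the later treatment of $\Sigma^{0}$ (Section~D1, following \cite{Str84}); but with the radical in hand the target is a concrete finite-generation statement about a specified module, sharper than the existence problem you pose.
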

\index{Groves, J. R. J.}
\index{Finitely presented groups!centre-by-metabelian}
\begin{sketch-proof}
Let $G$ be a finitely presented centre-by-metabelian group.
Then the derived group $G'$ is nilpotent of class at most 2;
let $Z =\zeta(G')$ be the centre of $G'$ and set $M = G'/Z$.
Suppose we have established that $M$ is a finitely generated (abelian) group.
Then $G$ is an extension of an abelian normal subgroup $Z$ 
by the polycyclic quotient group $G/Z$; 
it satisfies therefore the maximal condition on normal subgroups 
(by \cite[Thm.\.3]{Hal54}, cf. \cite[4.2.2]{LeRo04}) 
and is residually finite 
(by \cite[Thm.\,3]{Jat74} or \cite{Ros76a},  cf.\,\cite[7.2.1]{LeRo04}).

Now to the proof that $M = G'/Z$ is finitely generated.
The group $A$ has the structure of a $\Z{G_{\ab}}$-module;
as such it is finitely generated (for $G_{\ab}$ is finitely related).
To pin down its structure, 
one has to exploit the hypothesis that $G$ be finitely related.
The idea is to translate the conditions 
obtained by applying the conclusion of Proposition \ref{prp:Soluble-FP2-groups-and-HNN-extensions}
to all rank 1 characters  $\chi \colon G \to  \R$ 
into a statement about the $\Z{G_{\ab}}$-module $M$.
In the account given in section 5.4 of \cite{Str84}, 
the translated statement is expressed in terms of the invariant $\Sigma_M(G_{\ab}) = \Sigma^0(G_{\ab};M)$.
As this invariant will only be discussed in Section D1,
we postpone the proof 
that $M$ is finitely generated \emph{as an abelian group} to this section.
\end{sketch-proof}

\begin{note}
\label{note:Fp-centre-metabelian-group}
Historically speaking,
Theorem \ref{thm:Fp-centre-by-metabelian-groups}
 is the first result where a feature of Proposition \ref{prp:Soluble-FP2-groups-and-HNN-extensions} is exploited 
that is not spelled out explicitly in its statement:
if the abelianisation of a finitely presented soluble group $G$ is greater than 1,
there are infinitely many pairs $\{\chi\colon G \epi \Z \incl \R, -\chi \}$
of rank 1 characters to which Proposition \ref{prp:Soluble-FP2-groups-and-HNN-extensions} applies.
\end{note}
%
\subsection{Finding non-abelian free subgroups in fp groups}
\label{ssec:Fp-groups-groups-with-free-subgroups}
%
We come now to a third consequence of Theorem \ref{thm:FP2-groups-and-HNN-extensions}.
Recall that this theorem asserts, loosely speaking, that 
\emph{every indicable group of type $\FP_2$} is an HNN-extension with finitely generated base group $B$ 
and finitely generated associated groups.
In section \ref{ssec:Structural-implications-for-fp-soluble-groups}, 
the HNN-extension has been forced to be ascending by imposing the condition 
that $G$ contain no non-abelian free subgroups.
In this section, we require the HNN-extension to be non-degenerate 
and conclude that $G$ must contain non-abelian free subgroups.

Here is the consequence of Theorem \ref{thm:FP2-groups-and-HNN-extensions}
that will be the basis of our discussion.
\begin{prp}
\label{prp:Fp-groups-having-non-abelian-free-subgroups}
Suppose $G$ is a finitely presented group
which admits a rank 1  character $\chi \colon G \to \R$ 
so that $G$ is not an ascending HNN-extension with finitely generated base group $B$ contained in $\ker \chi$.
Then $G$ contains non-abelian free subgroups.
\end{prp}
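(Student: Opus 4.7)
The plan is to apply the structure theorem \ref{thm:FP2-groups-and-HNN-extensions} to $G$ and the given rank 1 character $\chi$, and then use the hypothesis to force the resulting HNN-decomposition to be non-degenerate, so that Lemma \ref{lem:Existence-free-subgroups-in-HNN-extension} can produce the desired non-abelian free subgroup.

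First, since $G$ is finitely presented, it is of type $\FP_2$ over $\Z$ by Lemma \ref{lem:Almost-finitely-presented}(i). Theorem \ref{thm:FP2-groups-and-HNN-extensions} therefore applies to $G$ and $\chi$: choosing $t \in G$ with $\chi(t) = 1$, we obtain finitely generated subgroups $S, T \subseteq B$ of $N = \ker \chi$ and an isomorphism $\mu \colon S \iso T$ given by conjugation by $t$ such that
\[
G \; \cong \; \langle B, y \mid ysy^{-1} = \mu(s) \text{ for all } s \in S \rangle.
\]

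The key step is then to show that this decomposition is non-degenerate, i.e.\ that $S \subsetneq B$ and $T \subsetneq B$. Suppose, for contradiction, that $T = B$. Then $t^{-1}Bt = t^{-1}Tt = S \subseteq B$, and an elementary argument using the HNN normal form shows $N = \bigcup_{\ell \geq 0} t^{\ell} B t^{-\ell}$; this exhibits $G$ as an ascending HNN-extension with finitely generated base group $B \subseteq \ker\chi$, contradicting the hypothesis (equivalently, by Proposition \ref{prp:Ascending-HNN-extension}, this would place $[\chi]$ in $\Sigma^1(G)$). If instead $S = B$, then $tBt^{-1} = tSt^{-1} = T \subseteq B$ and $N = \bigcup_{\ell \geq 0} t^{-\ell} B t^{\ell}$; applying Proposition \ref{prp:Ascending-HNN-extension} to $-\chi$ with distinguished element $t^{-1}$ again yields an ascending HNN-extension over a finitely generated subgroup of $\ker(-\chi) = \ker\chi$, again contradicting the hypothesis. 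Hence both $S$ and $T$ are proper subgroups of $B$.

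Having ruled out both degenerate cases, Lemma \ref{lem:Existence-free-subgroups-in-HNN-extension} produces elements $a \in B \smallsetminus S$ and $b \in B \smallsetminus T$ such that $at^{-1}$ and $bt$ freely generate a free subgroup of rank $2$ in $G$. This establishes the conclusion.

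The only place where care is needed is in matching the asymmetric definition of \textquotedblleft ascending\textquotedblright{} given in Remark \ref{remarks:HNN-extension}b with the symmetric hypothesis of the proposition: an HNN-decomposition with $S = B$ is ascending with respect to $t^{-1}$ rather than $t$, and one must check that the hypothesis \textquotedblleft $G$ is not an ascending HNN-extension with f.g.\ base group in $\ker\chi$\textquotedblright{} is meant to exclude \emph{both} orientations, so that ruling out $S = B$ and $T = B$ separately is legitimate. Once this is observed, the argument is almost mechanical, and no genuine obstacle remains.
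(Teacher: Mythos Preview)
Your proof is correct and follows exactly the route the paper intends. The paper does not spell out a proof of this proposition because it is the evident contrapositive of the argument given for Proposition~\ref{prp:Soluble-FP2-groups-and-HNN-extensions}: apply Theorem~\ref{thm:FP2-groups-and-HNN-extensions} to obtain an HNN-decomposition, observe that the hypothesis (equivalently $\{[\chi],[-\chi]\}\subseteq\Sigma^1(G)^c$, cf.\ the discussion in \S\ref{sssec:Reflections-consequences-structure-theorem}) rules out both degenerate cases $S=B$ and $T=B$, and then invoke Lemma~\ref{lem:Existence-free-subgroups-in-HNN-extension}. Your care in distinguishing the two orientations of ``ascending'' is exactly the point the paper makes in passing.
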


We shall give several applications of the preceding proposition.
Each of them presupposes 
that one knows a rank 1 character $\chi \colon G \to \R$ such 
that $G$ is not an ascending HNN-extension with finitely generated base group contained in $\ker \chi$
or, in terms of the invariant $\Sigma^1$, 
such that
\begin{equation}
\label{eq:Excluding-ascending-HNN-extension}
\{[\chi], [-\chi] \} \subseteq \Sigma^1(G)^c.
\end{equation}
We shall find characters satisfying condition \eqref{eq:Excluding-ascending-HNN-extension}
with the help of the following simple, but surprisingly widely applicable 
\begin{lem}
\label{lem:Infinite-locally-finite-group}
Let $G$ be a finitely generated group 
and $\chi \colon G \to \R$ a rank 1 character with kernel $N$.
Assume $G$ contains a normal subgroup  $M \subset N$ 
such that $N/M$ is an infinite, locally finite group.
Then $\{[\chi], [-\chi] \} \subseteq \Sigma^1(G)^c$.
\end{lem}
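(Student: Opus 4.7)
The plan is to argue by contradiction using the rank 1 characterization of $\Sigma^1$ afforded by Proposition \ref{prp:Ascending-HNN-extension}. Since $[\chi]$ and $[-\chi]$ play symmetric roles (they share the same kernel $N$, and an element $t$ with $\chi(t)=1$ can be traded for $t^{-1}$ with $(-\chi)(t^{-1})=1$), it suffices to show that $[\chi] \notin \Sigma^1(G)$.

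Suppose, for contradiction, that $[\chi] \in \Sigma^1(G)$. Fix $t \in G$ with $\chi(t) = 1$. By Proposition \ref{prp:Ascending-HNN-extension}, there exists a finitely generated subgroup $H \subseteq N$ satisfying
\begin{equation*}
t^{-1} H t \subseteq H \quad \text{and} \quad \bigcup\nolimits_{\ell \in \N} t^{\ell} H t^{-\ell} = N.
\end{equation*}
The plan is to project this situation into the quotient $N/M$ and derive a contradiction with the hypothesis that $N/M$ is infinite. Since $M$ is normal in $G$, conjugation by $t$ descends to an automorphism $\bar{\alpha}$ of $N/M$, and the image $\bar{H} = HM/M$ is a finitely generated subgroup of the locally finite group $N/M$, hence \emph{finite}.

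The key step is the following observation. The inclusion $t^{-1} H t \subseteq H$ passes to $\bar{\alpha}^{-1}(\bar{H}) \subseteq \bar{H}$, equivalently $\bar{H} \subseteq \bar{\alpha}(\bar{H})$. But $\bar{\alpha}(\bar{H})$ is again finite of the same cardinality as $\bar{H}$, so the inclusion is an equality, $\bar{\alpha}(\bar{H}) = \bar{H}$. Iterating, $t^{\ell} \bar{H} t^{-\ell} = \bar{H}$ for every $\ell \in \N$. Taking images in $N/M$ of the sweeping-out condition therefore yields
\begin{equation*}
N/M = \bigcup\nolimits_{\ell \in \N} t^{\ell} \bar{H} t^{-\ell} = \bar{H},
\end{equation*}
which is finite, contradicting the hypothesis that $N/M$ is infinite.

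The argument is short and the only obstacle I foresee is a minor notational one: making sure that conjugation by $t$ is well-defined on $N/M$ (which uses normality of $M$ in $G$, not merely in $N$) and that the direction of the inclusion $t^{-1}Ht \subseteq H$ is correctly translated, so that finiteness of $\bar H$ forces equality rather than a strict inclusion. Once these are in place the contradiction is immediate, and the case of $[-\chi]$ follows by replacing $t$ with $t^{-1}$ in the entire argument.
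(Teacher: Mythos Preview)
Your proof is correct and takes essentially the same approach as the paper: both use Proposition \ref{prp:Ascending-HNN-extension}, project the finitely generated base group to the locally finite quotient $N/M$, observe that finiteness forces the conjugation inclusion to be an equality, and conclude that the ascending union cannot exhaust $N$. The only cosmetic difference is that you frame it as a contradiction from assuming $[\chi]\in\Sigma^1(G)$, whereas the paper directly verifies that no finitely generated $B\subseteq N$ with $B\subseteq tBt^{-1}$ can sweep out $N$.
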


\begin{proof}
Let $t \in G_\chi \smallsetminus N$ be an element that generates a complement of $N$ in $G$
and consider a finitely generated subgroup $B \subseteq N$ with $B \subseteq tBt^{-1}$.
The  image $\bar{B}$ of $B$  under the canonical projection $\pi \colon G \epi G/M$ is then finite 
and hence $\bar{B} \subseteq \pi(t) \bar{B} \pi(t)^{-1}$ implies that $\bar{B} = \pi(t) \bar{B} \pi(t)^{-1}$.
It follows that $\bigcup\{t^\ell B t^{-\ell} \mid \ell \in \N\}$ maps onto $\bar{B}$.
But, by hypothesis,  $\bar{B} \neq \bar{N}$ and so $\bigcup\{t^\ell B t^{-\ell} \mid \ell \in \N\} < N$.

The preceding argument shows that $G$ is not an ascending HNN-extension
with finitely generated base group contained in $N$ and stable letter $t$.
Similarly, one sees that $G$ is not an ascending HNN-extension extension 
with finitely generated base group contained in  $N$ and stable letter $t^{-1}$.
The stated conclusion now follows from Proposition \ref{prp:Ascending-HNN-extension}.
\end{proof}
%
\subsubsection{Application to groups with non-negative deficiency} 
\label{sssec:Application-groups-non-negative-deficiency}
%
The preceding lemma and Proposition \ref{prp:Fp-groups-having-non-abelian-free-subgroups}
allow one to show that a finitely presented group $G$ with $m \geq 2$ generators and $n \leq m-2$ relators 
contains a non-abelian free subgroup. 
If $n = 1$, \ie, if $G$ is a one-relator group,
this fact is an obvious consequence of the \emph{Freiheitssatz} (see, e.\,g., \cite[p. 198]{LS77}).
The lemma yields actually a further result: it implies that the invariant $\Sigma^1(G)$ is empty
whenever $G$ is a finitely presented group with deficiency greater than 1.
In what follows, we first establish this second result 
and deduce then the first claim from Proposition \ref{prp:Fp-groups-having-non-abelian-free-subgroups}.

We begin by recalling the notion of deficiency of a finite presentation.
\begin{definition}
\label{definition:Deficiency}
Let $\PP = \langle \XX \mid \RR \rangle$ be a finite presentation of a finitely presentable group $G$.
The difference $\card(\XX) - \card (\RR)$ is called  the \emph{deficiency of} $\PP$;
it will be noted $\defi \PP$.
The supremum
\begin{equation}
\label{eq:Definition-deficiency-group}
\defi G = \sup\{\defi \PP \mid \PP \emph{ is a finite presentation of } G \}
\end{equation}
is called the \emph{deficiency} of $G$ and denoted by $\defi G $.
\end{definition}
\index{Definition of!deficiency of a group}%
\index{Deficiency of a group!definition}
\index{Notation!def group@$\defi(G)$}%

The supremum in equation \ref{eq:Definition-deficiency-group} is always a maximum.
Indeed, a finite presentation  $\PP$ of the group $G$ 
gives rise to a presentation of the abelianized group $G_{\ab}$;
therefore $G_{\ab}$ is a quotient of a free abelian group of rank $\card(\XX)$
modulo a subgroup generated by $\card (\RR)$ elements.
The torsion-free rank $r_{0} (G_{\ab}) = \dim_{\Q}(G_{\ab} \otimes \Q)$  of $G_{\ab}$
is thus an upper bound of $\defi \PP$.

The deficiency $\defi G$ is bounded by $r_{0} (G_{\ab})$, 
but this bound need not be optimal.
An upper bound that is often smaller is 
\begin{equation}
\label{eq:Improved-upper-bound-def}
\defi G \leq r_{0}(G_{\ab}) - d(\Cohom_{2}(G,\Z)).
\end{equation}
In this formula $d(A)$ denotes the minimal number of generators of the group $A$.
(See, e.\;g., \cite[p. 419]{Rob96} for a proof of inequality \eqref{eq:Improved-upper-bound-def}.)
\smallskip
\index{Deficiency of a group!upper bound using homology}

We come now to the announced consequence of Lemma \ref{lem:Infinite-locally-finite-group},
actually to a generalization. It asserts, roughly speaking, 
that the invariant $\Sigma^1$ of the metabelian top of a finitely related group with few relators is empty.
The precise version of the generalization  is spelled out by the next proposition.
\begin{prp}
\label{prp:Invariant-group-large-deficiency}
Let $G$ be a finitely related group 
that admits a presentation with $m \geq 2$  generators and $n$ relators.
Fix a prime number $p$.
\begin{enumerate}[(i)]
\item If $n \leq m-2$, 
or
\item if $n = m-1$ and one relator is a proper power, say $r = w^k$, and $p$ divides $k$, 
or
\item if $n = m$ and two relators are proper powers, 
say $r_1 = w_1^{k_1}$ and $r_2 = w_2^{k_2}$,
and if $p$ divides both $k_1$ and $k_2$,
\end{enumerate}
then $\Sigma^1(Q) $ is empty for every quotient group $Q = G/M$ with $M \subseteq G'' \cdot (G')^p$.

Moreover, if $S(G)$ is non-empty $G$ maps onto the wreath product $(\Z/p\Z) \wr C_\infty$ of a cyclic group of order $p$ by an infinite cyclic group.
\end{prp}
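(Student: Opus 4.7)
The plan is to reduce to showing $\Sigma^1(\bar G) = \emptyset$ where $\bar G = G/G''(G')^p$, and then---using the density of rank 1 points---to reduce further to exhibiting, for each rank 1 character of $\bar G$, a quotient of $\bar G$ onto the wreath product $W = (\Z/p\Z) \wr C_\infty$ through which the character factors.

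First, since $M \subseteq G''(G')^p$, the canonical map $Q = G/M \epi \bar G$ is well-defined. Any character $\chi \colon Q \to \R$ vanishes on $Q'$, hence on $Q''(Q')^p$, and so factors through $\bar G$. By Corollary \ref{crl:Sigma1-epimorphism}, pullback along $Q \epi \bar G$ sends $\Sigma^1(\bar G)^c$ into $\Sigma^1(Q)^c$; hence it suffices to prove $\Sigma^1(\bar G) = \emptyset$. Because $\Sigma^1(\bar G)$ is open in $S(\bar G)$ (Theorem \ref{thm:Openness-Sigma-1}) and rank 1 points are dense in $S(\bar G)$ (Lemma \ref{lem:Density-rank-1-points}), this in turn reduces to showing that no rank 1 character of $\bar G$ represents a point of $\Sigma^1(\bar G)$.

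The core of the proof is the following construction: for each rank 1 character $\chi \colon \bar G \epi \Z$, produce an epimorphism $\pi \colon \bar G \epi W$ such that $\chi = \chi_W \circ \pi$, where $\chi_W \colon W \epi C_\infty$ is the canonical character of the wreath product. The kernel of $\chi_W$ is the base group $\bigoplus_{\Z} \Z/p\Z$, which is infinite and locally finite. Lemma \ref{lem:Infinite-locally-finite-group} (with trivial $M$) then yields $\{[\chi_W], [-\chi_W]\} \subseteq \Sigma^1(W)^c$, and a further application of Corollary \ref{crl:Sigma1-epimorphism} pulls this back to $\{[\chi], [-\chi]\} \subseteq \Sigma^1(\bar G)^c$. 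The same construction delivers the ``moreover'' statement: whenever $S(G)$ is non-empty, choose any rank 1 character of $G$ and compose $G \epi \bar G \epi W$ to obtain a surjection of $G$ onto $(\Z/p\Z) \wr C_\infty$.

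The hard part is the construction of $\pi$ for an arbitrary rank 1 character $\chi$. The derived subgroup $A = \bar G'$ is a finitely generated $\F_p[G_{\ab}]$-module; writing $t \in \bar G$ with $\chi(t) = 1$ and $C = \langle t \rangle$, one must exhibit an $\F_p[C]$-linear surjection from $A$ onto the regular module $\F_p[C] \cong \bigoplus_{\Z} \F_p$ that is compatible with the extension $A \rtimes \bar G_{\ab} \cong \bar G$. The hypotheses (i)--(iii) feed into a finite free presentation of $A$ over $\F_p[G_{\ab}]$ obtained from the given presentation of $G$: in case (i) the deficiency $m - n \geq 2$ leaves enough ``unused'' generators to produce a free $\F_p[C]$-direction; in cases (ii) and (iii) each proper-power relator $w^k$ with $p \mid k$ collapses upon division by $(G')^p$ (since $k\chi(w) = 0$ forces $w$ to lie in $G'$ modulo torsion), so the effective number of constraints on $A$ is reduced accordingly. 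I expect this module-theoretic step---likely formulated via the $\Sigma^0$-invariant of $(\bar G_{\ab}, A)$ or via a Magnus-style embedding of $\bar G$ into matrices over $\F_p[G_{\ab}]$---to be the crux of the proof.
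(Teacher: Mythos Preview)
Your overall architecture is correct and essentially the same as the paper's: reduce to rank~1 characters via density and openness, then for each rank~1 character $\chi$ show that $\ker\chi$ has an infinite elementary abelian $p$-quotient (in particular that $G$ maps onto $(\Z/p\Z)\wr C_\infty$), and invoke Lemma~\ref{lem:Infinite-locally-finite-group} together with Corollary~\ref{crl:Sigma1-epimorphism}. The reductions in your first two paragraphs are fine.

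The gap is exactly what you flag as ``the hard part,'' and your outline for it takes a harder road than necessary. You propose to work with $A=\bar G'=G'/G''(G')^p$ as an $\F_p[G_{\ab}]$-module and to locate an $\F_p[C]$-free quotient, possibly via $\Sigma^0$ or a Magnus embedding. The paper bypasses all of this by working not with $G'$ but with the full kernel $N=\ker\chi$ and its quotient $\bar N=N/N'N^p$. The point is that after lifting automorphisms of $F_{\ab}$ to $F$, one can choose a basis $\{x_1,\ldots,x_{m-1},x_m\}$ of the free group $F$ so that $\eta(x_m)=t$ and $U=\eta^{-1}(N)$ is the normal closure of $x_1,\ldots,x_{m-1}$. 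Then $\bar U=U/U'U^p$ is a \emph{free} $\F_p[\gp(t)]$-module of rank $m-1$, and $\bar N=\bar U/\bar R$ where $\bar R$ is generated by the images of the $n$ relators. In cases (ii) and (iii) a relator $r=w^k$ with $p\mid k$ contributes nothing to $\bar R$: since $w^k\in R\subseteq U$ and $F/U\cong\Z$ is torsion-free, $w\in U$, whence $w^k\in U'U^p$. So in every case $\bar R$ has at most $m-2$ generators. Since $\F_p[\gp(t)]$ is a PID, a quotient of a free module of rank $m-1$ by a submodule on $m-2$ generators is infinite and surjects onto the ring itself; this gives both $[\chi]\notin\Sigma^1$ and the wreath-product epimorphism in one stroke. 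No $\Sigma^0$, no Magnus matrices.

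One minor reorganizational point: the paper does not first pass to $\bar G$ and then pull back, but applies Lemma~\ref{lem:Infinite-locally-finite-group} directly to each $Q=G/M$, observing that $M\subseteq G''(G')^p\subseteq N'N^p$, so $\ker(\bar\chi\colon Q\to\R)=N/M$ still surjects onto the infinite group $N/N'N^p$. Your route through $\bar G$ works equally well.
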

\index{Computation of Sigma1@Computation of $\Sigma^1$ for!groups with large deficiency}

\begin{proof}
The proof divides into three parts.
We first verify that $\Sigma^1(G)$ contains no point of rank 1
and then deduce that the same is true for $\Sigma^1(Q)$.
In the final part we invoke the openness of $\Sigma^1(Q) \subseteq S(Q)$ and deduce
that $\Sigma^1(Q)$ is empty.
 
Let  $\chi \colon G \epi \Z \incl \R$ be a rank 1 character of $G$,
set $N = \ker \chi$ and pick $t \in G$ with $\chi(t) = 1$. 
Given a prime number $p$, 
consider the elementary abelian $p$-group $\bar{N} = N/(N' \cdot  N^p)$, 
viewed as a left $\F_p(\gp(t))$-module via conjugation. 
Next, let $\eta \colon F  \epi  G$ be the epimorphism 
involved in the presentation with $m$ generators and $n$ relators.
Set $R =  \ker \eta$ and $U = \eta^{-1}(N)$. 
Then $\eta$ gives rise to an extension
\[
\bar{R} = (R \cdot U' \cdot U^p)/(U' \cdot U^p) \quad \mono \quad
\bar{U} = U/(U' \cdot U^p) \quad \stackrel{\eta_*}{\epi} \quad  \bar{N}
\]
 of $\F_p(\gp(t))$-modules.
 Its middle term $\bar{U}$ is a free $\F_p(\gp(t))$-module of rank $m-1$.
 
Indeed,
the matrix group $\GL(m, \Z)$ is generated by elementary matrices
and each automorphism of $F_{\ab} \iso \Z^m$ 
that is induced by an elementary matrix 
lifts to an automorphism of $F$ (cf. the proof of Theorem 3.5 in \cite[Chapt.\;3]{MKS}).
The free group $F$ admits therefore a basis $\{x_1, \ldots , x_{m-1}, x_m\}$  
such that $U$ is the normal closure of the the basis elements $x_1$, \ldots, $x_{m-1}$  
and that $\eta(x_m)$ equals $t$. 
So $\bar{U}$ is a free $\F_p(\gp(t))$-module of rank $m-1$, as claimed.

We assert that the  $\F_p(\gp(t))$-module $\bar{R}$ can be generated by $m-2$ elements.
This is clear in case (i).
In case (ii) it holds
because a relator of the form $r = w^k$, with $p$ dividing $k$, maps to  $\bar{1} \in \bar{R}$.
Indeed, 
since $r = w^k \in R \subset U$,
the root $w$ represents an element  $\bar{w} \in  F/U$ of finite order;
as $F/U \iso \Z$ is torsion-free, 
$w$ belongs therefore  to $U$ and so $w^p$ maps to $ 1 \in \bar{U} = U/(U' \cdot U^p)$.

So the $\F_p(\gp(t))$-module $\bar{R}$ is again generated by $m-2$ elements. 
In case (iii), one sees similarly that $\bar{R}$ is generated by $m-2$ elements. 
The module $\bar{N}$ is therefore isomorphic to the quotient of a free $\F_{p}(\gp(t))$-module of rank $m-1$ 
by a submodule generated by $m-2$ elements,
thus $\bar{N}$ is an \emph{infinite} elementary-abelian $p$-group
and so the character $\chi$ represents a point outside of $\Sigma^1(G)$
by Lemma \ref{lem:Infinite-locally-finite-group}.
Moreover, since $\F_{p}(\gp(t))$ is a principal ideal domain, 
the module $\bar{N}$ maps actually onto a copy of the free cyclic module $\F_{p}(\gp(t))$,
whence the wreath product $(\Z/p\Z) \wr C_\infty$ is a quotient of the group $G$.
Note, though, that in case (iii) the abelianization of $G$ can be finite; 
so one needs the extra hypothesis to justify the addendum.
\smallskip

So far we know that $\Sigma^1(G)$  contains no rank 1 point.
Consider now a normal subgroup $M \subseteq G'' \cdot (G')^p$
and let $\pi \colon G \epi Q = G/M$ be the canonical projection.
This projection induces an isomorphism of spheres $\pi^* \colon S(Q) \iso S(G)$.
Under this isomorphism,
a  character $\bar{\chi} \colon Q \to \R$ corresponds to the character $\chi = \bar{\chi} \circ \pi$ of $G$.
Now $\ker \bar{\chi} = (\ker \chi) / M = N/M$ and $M \subseteq G'' \cdot (G')^p \subseteq N' \cdot N^p$.
So $\ker \bar{\chi}$ maps onto the infinite elementary $p$-group $N/(N'\cdot N^p)$
whence $[\bar{\chi}] \notin \Sigma^1(Q)$,
again by Lemma \ref{lem:Infinite-locally-finite-group}.

The preceding paragraph shows that $\Sigma^1(Q)$ contains no rank 1 points. 
By Lemma \ref{lem:Density-rank-1-points} below these points are dense in $S(Q)$.
As $\Sigma^1(G)$  is an open subset of $S(G)$ (by Theorem  \ref{thm:Openness-Sigma-1}),
the subset $\Sigma^1(Q)$ is therefore empty. 
\end{proof}
In the above proof the lemma given next has been quoted.
\begin{lem}
\label{lem:Density-rank-1-points}
\index{Character sphere!density of rank 1 points}%
The rank 1 points constitute a dense subset in the sphere $S(G)$ of a finitely generated group $G$.
\end{lem}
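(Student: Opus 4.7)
The plan is to transport the question from $S(G)$ to the unit sphere of a Euclidean space via the coordinates introduced in Section \ref{sssec:Coordinates-sphere}, and there to reduce density to the standard fact that rational directions are dense in $S^{n-1}$.

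First I would choose an isomorphism $\iota \colon G_{\ab}/T(G_{\ab}) \iso \Z^n$ with $n = r_0(G_{\ab})$ and let $\vartheta \colon G \epi \Z^n$ be the induced epimorphism. As explained in \ref{sssec:Coordinates-sphere}, the standard inner product on $\E^n = \R^n$ yields a linear isomorphism $\vartheta^* \colon \E^n \iso \Hom(G,\R)$ sending $v$ to $\chi_v \colon g \mapsto \langle v, \vartheta(g)\rangle$, and hence a homeomorphism $\sigma(\vartheta) \colon \s^{n-1} \iso S(G)$.

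Next I would translate the rank condition into these coordinates. Since every character $\chi \colon G \to \R$ factors through $\vartheta$, its image $\chi(G)$ equals the additive subgroup of $\R$ generated by the coordinates $v_1,\dots,v_n$ of the corresponding vector $v \in \E^n$. Hence $\rk[\chi_v] = 1$ if and only if $v_1,\dots,v_n$ are not all zero and lie in a common one-dimensional $\Q$-subspace of $\R$, i.e.\ if and only if $v$ is a positive real multiple of some nonzero rational vector in $\Q^n$. Consequently the rank 1 points of $S(G)$ are exactly the images under $\sigma(\vartheta)$ of the rays through nonzero points of $\Q^n$.

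The remaining step is elementary: the set of unit vectors of the form $q/\|q\|$ with $q \in \Q^n\smallsetminus\{0\}$ is dense in $\s^{n-1}$, because $\Q^n$ is dense in $\R^n$ and the map $x \mapsto x/\|x\|$ is continuous on $\R^n\smallsetminus\{0\}$. Transporting this density back through the homeomorphism $\sigma(\vartheta)$ yields the claim. The only mild point to watch is that the case $n=0$ must be excluded a priori (then $S(G)=\varnothing$ and the statement is vacuous, cf.\ Remark \ref{Remark:Empty-sphere}), and for $n=1$ the sphere $S(G)$ consists of two rational points, so density is automatic; hence there is no genuine obstacle in the argument.
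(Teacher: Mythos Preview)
Your proof is correct and follows essentially the same route as the paper's: transport via the coordinate homeomorphism $\sigma(\vartheta)$ to $\s^{n-1}$, then invoke density of rational directions. The paper replaces your appeal to continuity of $x\mapsto x/\|x\|$ by an explicit isosceles-triangle estimate showing $d(u,x/\|x\|)\leq d(u,x)$ for a rational $x$ with $\|x\|\geq 1$; conversely, you spell out more carefully than the paper why rank~1 points correspond exactly to rays through $\Q^n\smallsetminus\{0\}$.
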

\index{Density of rank 1 points in $S(G)$}
\begin{proof}
In view of the coordinate isomorphism  \eqref{eq:Definition-sigma-theta}
it suffices to prove the claim for the unit sphere  $\s^{n-1}$.
Given $u \in \s^{n-1}$ and $\varepsilon > 0$,
there is a rational point $x \in \Q^n$ with $\norm{x} \geq 1$ and $\norm{u - x} \leq \varepsilon$. 
Then the distance from $u$ to  $x/\norm{x}$ is at most $\varepsilon$;
indeed, the triangle with vertices $u$, $x/\norm{x}$ and the origin $o$ is isosceles
and so the foot $\hat{u}$ of the perpendicular from $u$ to the line through $o$ and $x$ 
lies on the segment $[0,x/\norm{x}\,]$,
whence $ d_2(u ,x/\norm{x}) \leq d_2(u, x) \leq \varepsilon$.
\end{proof}
\begin{remarks}
\label{remarks:Invariant-groups-deficiency-bigger-1}
a) Proposition \ref{prp:Invariant-group-large-deficiency}
is a refinement of  \cite[Theorem 7.2]{BNS} which, in turn, generalizes the main proposition in \cite{BiSt81b}.
In both results, one views $\bar{R}$ as a module over the principal ideal domain $\F_p (\gp(t))$,
an idea that can already be found in Section 2 of Baumslag's paper \cite{Bau76}.
\index{Baumslag, G.}
\index{Bieri, R.}
\index{Neumann, W. D.}
\index{Strebel, R.}

b) The proposition implies 
that \emph{every normal subgroup $N$ with $G/N$ infinite cyclic is infinitely generated
whenever $G$ is finitely presented with deficiency greater than 1}.
This conclusion holds in far greater generality 
but the proof of this generalization relies on two results from the theory of $L_2$-Betti numbers:
firstly,  
the $L^2$-Betti number $\beta_1(G)$ is 0
whenever $G$ is a finitely presented group 
that admits a finitely generated normal subgroup $N$ with an infinite quotient group $G/N$
(see \cite[Thm.\;6.8]{Gab02}).
Secondly,
the $L^2$-Betti number $\beta_1(G)$ and the deficiency of a finitely presented group $G$ 
are related by the inequality $\defi(G) \leq 1 + \beta_1(G)$ (see, \eg, \cite[Thm.\;2]{Hil97}).
It follows that \emph{a finitely presented group with $\defi(G) > 1$ 
does not contain a finitely generated normal subgroup $N \neq \{1\}$ with infinite quotient $G/N$}.
\end{remarks}
\index{Gaboriau, D.}
\index{Hillman, J. A.}
%

We continue with some examples illustrating the use of Proposition \ref{prp:Invariant-group-large-deficiency}.
\begin{examples}
\label{examples-Groups-deficiency-greater-1}

a) \emph{One relator groups $G$ with at least 3 generators.}
Then the deficiency is at least 2 and so the invariant $\Sigma^1(G/M)$ is empty 
for every quotient group $Q = G/M$ of $G$ with $M \subseteq  G'' \cdot (G')^p$;
here $p$ is a suitable prime number.

Well-known specimens of one-relator groups with deficiency at least 2
are non-abelian free groups and non-abelian, orientable surface groups.
These later groups admit presentations of the form
\begin{equation}
\label{eq:Presentation-orientable-surface-group}
\langle a_1, b_1, \ldots, a_g, b_g \mid [a_1,b_1] \cdots [a_g,b_g] \rangle \text{ with } g > 1.
\end{equation}
They are special cases of discrete and co-compact groups 
of orien\-tation-preserving automorphisms of the hyperbolic plane; 
these more general groups admit presentations of the form
\begin{equation}
\label{eq:Presentation-Fuchsian-group}
\langle a_1, b_1, \ldots, a_g, b_g, c_1, \ldots, c_\ell 
\mid [a_1,b_1] \cdots [a_g,b_g]c_1 \cdots c_\ell, c_1^{k_1}, \ldots,  c_\ell^{k_\ell} \rangle. 
\end{equation}
Here  $k_i > 1$ for $i = 1,\ldots,  \ell$, and either $g > 1$, or $g = 1$ and $\ell > 0$.
(See, \eg, the theorem on page 98  in \cite{HKS71}).
The deficiency of these groups is at least 1; as these presentations involve relators that are proper powers,
Proposition \ref{prp:Invariant-group-large-deficiency}
applies also to these more general groups.
Note, however, that for $g = 1$,
the prime $p$ has to chosen as a divisor of one of the exponents $k_1$, \ldots, $k_\ell$
\smallskip
\index{Surface groups}
\index{Fuchsian groups}

b) \emph{Fundamental groups of connected, orientable and bounded 3-manifolds.}
The deficiency of these groups is positive if the manifold $M$ has a boundary component of positive genus
and it is greater than 2 if one of its boundary components has genus greater than 1
(see, \eg, Lemma V.3 in \cite{Jac80}).
\smallskip
\index{3-manifolds groups}
\index{Jaco, W.}

c) \emph{One-relator groups with torsion.}
Let $G$ be a one relator group, say $G = \langle x_1, \ldots, x_m \mid r \rangle$,
and assume that $m \geq 2$ and that the defining relator $r$ is a proper power of a non-empty word $w$.
Then case (ii) of Proposition \ref{prp:Invariant-group-large-deficiency} is satisfied.
\smallskip

d) \emph{Two generator two relator groups.}
We finally consider groups with a presentation of the form
\[
 G = \langle x,y \mid r_1 = u^k, r_2 = w^\ell \rangle 
\text{ with $u \neq 1$, $w \neq 1$ and } k > 1, \ell > 1.
\]
The abelianisation of such a group can be finite; 
indeed, $G$ can be an infinite dihedral group (and thus a finitely related metabelian group).
We concentrate therefore on some special presentations 
where $G_{\ab}$ is infinite,
namely
\begin{equation}
\label{eq:Two-generator-two-relator-infinite-abelianization}
G = \langle x,y \mid r_1 =[x,y]^2,\;  r_2 = y^\ell \rangle  \text{ with  } \ell > 1.
\end{equation}
Let $\chi \colon G \to \R$ be the character given by $\chi(x) = 1$ and let $N$ be the kernel of $\chi$.
Then $N$ is the normal closure of $y$ 
and the Reidemeister-Schreier procedure, 
applied with the Schreier transversal $\TT = \{x^j \mid j \in \Z \}$,
furnishes the presentation
\begin{equation}
\label{eq:Presentation-N}
N = \langle \{y_j\}_{j \in \Z} \mid (y_{j+1}y_j^{-1})^2  \text{ and } y_j^\ell \text{ for  } j \in \Z \rangle.
\end{equation}
of $N$. 
In this presentation the generator $y_j$, with $j \Z$,  maps onto $x^j y x^{-j}$.

Assume first that $\ell$ is \emph{even}. 
Then case (iii) of Proposition \ref{prp:Invariant-group-large-deficiency} holds
and $G_{\ab}$ is infinite. 
The normal subgroup $N$ maps obviously onto the abelian group
\[
\bar{N} =  
\langle y_j \mid [y_j, y_k] = y_j^2 = 1 = \text{ for  } j \in \Z  \text{ and } (j,k) \in \Z^2\rangle
\]
which is isomorphic to the additive group of the group algebra $(\Z/2\Z)\gp(x)$,
in accordance with the addendum to the proposition.

Assume now that $\ell$ is \emph{odd}.
Then the hypothesis of case (iii) is not satisfied;
things improve, however, if one passes to a normal subgroup $G_1$ of index $\ell$,
namely the kernel of the homomorphism $\rho \colon G \epi \Z/\ell\Z$ given by
$\rho(x) = 0 + \ell \Z$ and $\rho(y) = 1 +  \ell \Z$.
The set $\TT = \{1, y, \ldots, y^{\ell-1} \}$ is a Schreier transversal for $G_1$
and the Reidemeister-Schreier procedure based on this transversal leads to the following presentation
\begin{equation}
\label{eq:Presentation-subgroup-G1}
 \langle x_0, x_1, \ldots, x_{\ell-1} \mid 
(x_0\cdot x_1^{-1})^2, (x_1\cdot x_2^{-1})^2, \ldots, (x_{\ell-1}\cdot x_0^{-1})^2 \rangle.
\end{equation}
for $G_1$.
This presentation satisfies the hypothesis for case (iii) of Proposition \ref{prp:Invariant-group-large-deficiency} 
and so $\Sigma^1(G_1)$ is empty;
moreover,
as the abelianization of $G_1$ has torsion-free rank 1,
the groups $G_1$ maps onto the wreath product $(\Z/2\Z) \wr C_\infty$ by the addendum.
The group $G$ itself does not map onto a wreath product of the form $(\Z/p\Z) \wr C_\infty$
with $p$ a divisor of $\ell$; 
in fact,  under a homomorphism $\varphi \colon (\Z/p\Z)C_\infty \rtimes C_\infty$
the commutator $[x, y]$ must map into $ (\Z/p\Z)C_\infty$, 
hence onto the 0 element and whence the image of $\varphi$ is abelian.
The invariant $\Sigma^1(G)$, however, is empty, just as is that of $G_1$ 
(use Proposition \ref{prp:Sigma1-finite-index}).
\end{examples}
%

%
\subsubsection{Groups having no fp extension without non-abelian free subgroups} 
\label{sssec:Groups-having-no-fp-extension-without-free-subgroups}
%
In this section we have a new look at Proposition \ref{prp:Fp-groups-having-non-abelian-free-subgroups}.
To get the proper perspective,
I quote a passage from the introduction to the recent article \cite{BGH12} 
by M. G. Benli, R. Grigorchuk and P. de la Harpe.
\index{Benli, M. G.}
\index{Grigorchuk, R.}
\index{Harpe, P. de la}

The introduction to \cite{BGH12} begins thus:
\begin{quotation}
{\itshape
\noindent
In the study of finiteness conditions on groups,
the following kind of question is natural: 
\begin{questionBGH}
Given a Property $(\PP)$ of groups, 
is any finitely generated group with $(\PP)$ 
a quotient of some finitely presented group with $(\PP)$?
\end{questionBGH}

The answer can be positive for trivial reasons,
for example when Property $(\PP)$ holds for free groups
(such as exponential growth)
or when Property $(\PP)$ implies finite presentation
(such as nilpotency, or polynomial growth); [\ldots]
\par

Here, we concentrate on a case with a negative answer;
the goal of this article is to study examples and results concerning 
\emph{finitely generated amenable groups
that do not have finitely presented amenable extensions.}
More precisely, amenable groups will often be groups of subexponential growth, 
and sometimes soluble groups;
and non-ame\-nable groups will usually have non-abelian free subgroups.
Recall that an \textbf{extension} of a group $G$ 
is a group $E$ given together with an epimorphism $E \epi G$.}
\end{quotation}

In the previous quotation amenable groups are mentioned; 
they are defined like this:
a group $G$ is \emph{amenable} 
if there exists a finitely additive,  left-invariant measure $\mu$, 
defined on the set of all subsets of $G$, and such that $\mu(G) = 1$.
Abelian and finite groups are amenable, 
and the class of amenable groups is closed under extensions and directed unions,
and the formation of subgroups and quotient-groups.
(For some further informations we refer the reader to section 2 of the survey \cite{Cec01} 
and the many references cited therein.)
\index{Amenable groups!definition}
\index{Amenable groups!examples}
\index{Definition of!amenable group}

Now to some examples illustrating the new look at Proposition
\ref{prp:Fp-groups-having-non-abelian-free-subgroups}.
\begin{examples}
\label{examples:Elementary-abelian}
a) Let $G$ be finitely generated group
that contains an infinite, locally finite normal subgroup $N$ with infinite cyclic quotient $G/N$. 
Then $S(G)$ is a 0-dimensional sphere 
and $\Sigma^1(G)$ is empty by Lemma \ref{lem:Infinite-locally-finite-group}.
The group $G$ admits, of course, finitely presented extensions $E\epi G$ of $G$,
but according to Proposition \ref{prp:Fp-groups-having-non-abelian-free-subgroups} 
every such extension contains non-abelian free subgroups.
(Concrete examples of such groups $G$ are given in Examples \ref{examples:Torsion-by-infinite-cyclic}.)

b) Elementary amenable groups are a sweeping generalization of the groups just discussed. 
By definition, the class $\EG$ of \emph{elementary amenable groups}
is the smallest class of groups that contains all finite and all abelian groups 
and is closed under extensions and directed unions. 
All soluble groups, but also all locally finite or locally nilpotent groups, are in $\EG$.
The class is also closed under the passages to subgroups and to quotient groups
(see \cite{Cho80} for further informations on elementary amenable groups).
\index{Elementary amenable groups!definition}
\index{Elementary amenable groups!examples}
\index{Definition of!elementary amenable group}

A finitely generated, elementary amenable group $G$ has no non-abelian free subgroup.
If it admits a rank 1 character $\chi \colon G \to \R$ 
such that $[\chi]$ and $[-\chi]$ are both outside of $\Sigma^1(G)$,
then Proposition \ref{prp:Fp-groups-having-non-abelian-free-subgroups} 
applies
and guarantees that every finitely presented extension has a free subgroup of rank 2.
\end{examples}

\subsubsection{Large finitely presented groups} 
\label{sssec:Large-fp-groups}
%
Proposition \ref{prp:Fp-groups-having-non-abelian-free-subgroups},
in conjunction with Proposition \ref{prp:Invariant-group-large-deficiency},
allows one to prove that finitely presented groups with few relators 
contain non-abelian free subgroups,
but  by combining the addendum to  Proposition  \ref{prp:Invariant-group-large-deficiency}
with a result of G. Baumslag,
one can obtain a better conclusion and show that the groups are large.
Here, following M.~Gromov in \cite[p.\;82, Thm.\:(B)]{Gro82a},
a group will be called \emph{large} if it has a subgroup of finite index
that maps onto a non-abelian free subgroup. 
\index{Large groups!definition}
\index{Definition of!large group}

In the sequel, I shall reprove the mentioned result of G. Baumslag.  
Its proof is fairly tricky and based on ideas
that go back to the paper \cite{BaPr78} by B.~Baumslag and S.~J.Pride.
I begin by explaining the gist of the proof with the help of a simple example.

\begin{example}
\label{example:Baumslags-theorem}
Let $G$ be a one-relator group with 3 generators, 
say $G = \langle a, b, t \mid r \rangle$.
Assume $r$ has exponent sum 0 with respect to $t$ and let $\chi \colon G \to \R$
be the rank 1 character sending $t$ to 1 and the other generators to 0.
Let $F$ be the free group on $a$, $b$, and $t$ 
and let $\eta \colon  F \epi E$ denote the epimorphism involved in the presentation of $G$.
Then $r$ lies in the normal subgroup $R = \gp_F(a, b)$ of $F$.
We may and shall assume 
that $r$ is a word in the conjugates of $a$ and $b$ by non-negative powers of $t$,
say a word  in the conjugates
\[
a_0 = a, \;a_1 = tat^{-1}, \ldots, a_m = t^{m} b t^{-m} 
\text{ and }
b_0 = b,  \ldots, b_m = t^{m} b t^{-m}
\]
and their inverses.
Not all of the listed generators need actually occur in $r$.
Put now $\ell = m+1$ and $G_\ell = \gp_G(a,b,t^\ell)$.
The set $\TT = \{1, t,\ldots,  t^{m} \}$ is then a Schreier transversal of $G_\ell$ in $G$,
and the Reidemeister-Schreier procedure based on it 
leads to the generators
\begin{align*}
a_0 &\mapsto a,  a_1 \mapsto tat^{-1}, \ldots, a_{m} \mapsto t^{\ell-1} a t^{ -m},\\
b_0 &\mapsto b,  b_1 \mapsto tbt^{-1}, \ldots, b_{\ell-1} \mapsto t^{\ell-1} b t^{1 -\ell}, 
\quad \text{and}\quad
 T \mapsto t^\ell.
\end{align*}

The relators of $G_\ell$ are obtained by conjugating $r$ by the elements of the transversal 
and rewriting the conjugates in terms of the generators listed above.
If the last conjugate $t^{\ell-1}r t^{1- \ell}$  is rewritten,
the generators occurring in it are among the words
\begin{gather*}
a_{m}, \quad a_{m+1} \mapsto T a_0 T^{-1}, \quad a_{m + 2} \mapsto T a_1 T^{-1}, \ldots, 
a_{2m} \mapsto T a_{m - 1} T^{-1}, \\
b_{m},\quad  b_{m+1} \mapsto T b_0 T^{-1}, \quad b_{m+2} \mapsto T b_1 T^{-1}, \ldots, 
b_{2m} \mapsto T b_{m - 1} T^{-1}.
\end{gather*}

A \emph{first essential observation}, going back to the Baumslag-Pride paper \cite{BaPr78}, is now this: 
in the conjugate $t^{m}r t^{-m}$
the generator $T$ occurs only in subwords of the form $T a_j T^{-1}$
with $0 \leq j \leq m-1$.
This fact continues to be true for the other conjugates $r$, $trt^{-1}$, \ldots, $t^{m-1}r t^{1-m}$ of $r$.

Now to a \emph{second, crucial idea} of B. Baumslag and S. J. Pride:
let $M \triangleleft E_\ell$ be the normal closure of the elements
\[ 
a_0, a_1, \ldots, a_{m-1}  \text{ and } b_0, b_1, \ldots, b_{m-1}
\]
and set $Q_\ell = G_\ell /M$.
The  observation implies then
that the generator $T$ does not occur in any relator of the presentation of $Q_\ell$ 
that is obtained from that of $G_\ell$ by adding the listed generators $a_j$ and $b_j$ as relators.
Put differently, $Q_\ell$ is the free product of a subgroup $K$  
and the infinite cyclic group generated by $T$.
Moreover, the character $\chi \colon G \to \R$ induces a character $\bar{\chi} \colon Q_\ell \to \R$
with $K$ in its kernel and $\bar{\chi}(T) = \ell$.

The idea is now to show that the group $G_\ell = K \star \gp(T)$ 
has a subgroup of  finite index which maps onto a non-abelian free group.
If $K$ is the trivial group this is impossible.
To exclude this and other unpleasant cases,  B. Baumslag and S. J. Pride assume in \cite{BaPr78}
that the deficiency of the presentation defining $E$ is at least 2 
and deduce that $K$ is a finitely presented group of deficiency at least 1
and so maps onto $\Z$.
\end{example}
\index{Baumslag, B.}
\index{Pride, S. J.}

\paragraph{Approach taken by G.Baumslag.}
In his lecture notes \cite{Bau93}, 
G. Baumslag shows that a finitely presented group is large 
whenever it maps onto the wreath product $W =\gp(b) \wr C_\infty$ 
of a cyclic group of order some prime $p$ and an infinite cyclic group. 
The details of G. Baumslag's proof are as follows:
\index{Baumslag, G.}

assume $G$ is a finitely presented group that admits an epimorphism $\rho \colon G \epi W$
onto the wreath product $W = \gp(b) \wr \gp(s)$
of a finite cyclic group $\gp(b)$ of order a prime number $p$ by an infinite cyclic group generated by $s$.
There exists then a finite set of generators $\{ a^{(1)}, \ldots, a^{(f)},  t \}$ of $G$ 
with $\rho(a^{(1)}) = b$,
with $a^{(2)}$, \ldots, $a^{(f)}$ in the kernel of $\rho$ and with $\rho(t) = s$.
Let $\psi \colon  W \to \R$ be the (unique) character that sends $s$ to 1
and set $\chi = \psi \circ \rho$.
Define $F$ be the free group with basis $\{a^{(1)}, \ldots, a^{(f)}, t\}$
and let $\eta\colon F \epi G$ denote the obvious epimorphism.
The kernel of $\eta$ is then the normal closure of a finite set $\RR$ of relators 
(here one uses that $G$ is finitely presentable and Lemma 8 in \cite{Neu37}).

Set $R=  \gp_F(a^{(1)}, \ldots, a^{(f)})$.
Then  $\RR$ is contained in $R$;
upon conjugating the elements of $\RR$ by suitable non-negative powers of $t$,
there exists a positive integer $m$ such that
each relator $r \in \RR$ can be written as a word in the generators $t^j a^{(i)} t^{-j}$ 
with  $1 \leq i \leq f$ and $j \in \{0, 1, \ldots, m\}$.
Next, set $\ell = m+1$ and consider the subgroup $G_\ell = \gp_G(t^\ell,  a^{(1)}, \ldots, a^{(f)})$  
having index $\ell$ in $G$. 
Use the Schreier transversal $\TT = \{1,t, \ldots, t^{m} \}$ of $G_\ell$ in $G$ 
to obtain a finite presentation of $G_\ell$.
Let now $M \triangleleft E_\ell$ be the normal closure of all the conjugates  $t^j a^{(i)}t^{-j}$ 
with $j \in \{0,1, \ldots, m-1 \}$ and $1 \leq i \leq f$.
It then follows, as in the Example \ref{example:Baumslags-theorem},
that $Q_\ell = G_\ell/M$ is a free product $K \star \gp(t^\ell)$
with 
\[
K = (\gp(t^m a^{(1)}t^{-m}, \ldots, t^m a^{(f)}t^{-m})\cdot M) /M.
\]
We claim that $K$ maps onto a cyclic group of order $p$.

To justify this, 
we recall the definition of the projection $\rho \colon G \epi W$;
here $W$ is the wreath product $\gp(b) \wr \gp(s)$.
By definition, 
$\rho$ sends the generators $a^{(2)}$, \ldots, $a^{(f)}$ to $1 \in W$, 
then $a^{(1)}$ to the generator $b$ and finally $t$ to $s$.
Let $\rho_\ell$ be the restriction of $\rho$ to $G_\ell = \gp_G(t^\ell,  a^{(1)}, \ldots, a^{(f)})$.
The image of $\rho_\ell$ in $W$ is the subgroup 
\[
W_\ell =\gp\big(b, sbs^{-1}, \ldots, s^{m} b s^{-m} \big) \wr \gp(s^\ell) 
= (\Z/p\Z)\gp(s) \rtimes \gp(s^\ell).
\]
The normal subgroup $M \triangleleft G_\ell$ is the normal closure of all the conjugates  $t^j a^{(i)}t^{-j}$ 
with $j \in \{0,1, \ldots, m-1 \}$ and $1 \leq i \leq f$.
So $\rho_\ell(M)$ is the normal subgroup in $W_\ell$ generated by the group ring elements
1, $t$, \ldots, $t^{m-1}$.
We conclude that $\rho_\ell$ induces a projection of $Q_\ell = G_\ell/M$ 
onto a wreath product of the form $\Z/p\Z \wr \gp(s^\ell)$.
Under this projection the factor $K$ of $Q_\ell = K \star \gp(t^\ell)$ 
maps onto the cyclic group $\Z/p\Z$,
as asserted. 

So far we know that $G$ 
maps onto the free product $L =(\Z/p\Z) \star \gp(s^\ell)$ of a cyclic group of order $p$ and an infinite cyclic group.
Consider now the projection $\pi_1 \colon L \epi \Z/p\Z$ onto the first free factor.
Its kernel is then a free group of rank  $p \geq 2$ (this follows, \eg, by the Reidemeister-Schreier procedure).
We have thus established the following result of G. Baumslag's:
\begin{thm}[\protect{\cite[Thm.\;IV.3.7]{Bau93}}]
\label{thm:Baumslag-large-groups}
Let $G$ be a finitely generated group that maps onto a wreath product $W =(\Z/p\Z) \wr C_\infty$ of a cyclic group of order a prime $p$ and an infinite cyclic group. 
If $G$ admits a finite presentation, it is large.
\end{thm}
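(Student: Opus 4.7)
The plan is to follow the Baumslag--Pride technique that was previewed in the example immediately preceding the statement. Given the epimorphism $\rho \colon G \epi W = \gp(b) \wr \gp(s)$, I first choose a convenient finite generating system of $G$, namely $\{a^{(1)}, \ldots, a^{(f)}, t\}$ with $\rho(a^{(1)}) = b$, $\rho(a^{(i)}) = 1$ for $i \geq 2$, and $\rho(t) = s$. Because $G$ is finitely presented, a finite set of defining relators $\RR$ exists by B.~H.~Neumann's lemma (\cite{Neu37}), and every $r \in \RR$ lies in the normal closure $R = \gp_F(a^{(1)}, \ldots, a^{(f)})$ of the ``$a$--generators'' in the free group $F$. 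By conjugating relators with suitable non-negative powers of $t$, I can choose an integer $m$ so that each $r \in \RR$ becomes a word in the conjugates $t^{j} a^{(i)} t^{-j}$ with $0 \leq j \leq m$ and $1 \leq i \leq f$.

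Next, set $\ell = m+1$ and pass to the subgroup of finite index
\[
G_{\ell} = \gp_G\bigl(t^{\ell},\; a^{(1)}, \ldots, a^{(f)}\bigr) \leq G.
\]
Applying Reidemeister--Schreier to the transversal $\TT = \{1, t, \ldots, t^{m}\}$ yields a finite presentation of $G_{\ell}$ whose generators are the conjugates $a^{(i)}_{j} = t^{j} a^{(i)} t^{-j}$ for $0 \leq j \leq m$, together with the new generator $T = t^{\ell}$. The defining relators are the rewrites of the $\ell$ conjugates $t^{h} r t^{-h}$ of each $r \in \RR$, and the crucial \emph{Baumslag--Pride observation} is that in every such rewrite, $T$ only appears inside subwords of the form $T a^{(i)}_{k} T^{-1}$ with $0 \leq k \leq m-1$.

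Now let $M \triangleleft G_{\ell}$ be the normal closure of the finite set $\{ a^{(i)}_{j} : 1 \leq i \leq f,\; 0 \leq j \leq m-1 \}$. The observation shows that $T$ does not appear in any relator of the induced presentation of $Q_{\ell} = G_{\ell}/M$. Hence
\[
Q_{\ell} \;\iso\; K \star \gp(T),
\]
where $K$ is the subgroup generated by the images of $a^{(1)}_{m}, \ldots, a^{(f)}_{m}$. The decisive step is to prove that $K$ is nontrivial, in fact that it surjects onto $\Z/p\Z$. For this I restrict $\rho$ to $G_{\ell}$ and observe that the image $\rho(G_{\ell})$ is the semidirect product $(\Z/p\Z)\gp(s) \rtimes \gp(s^{\ell})$, while $\rho(M)$ is the normal subgroup generated by the group-ring elements $1, s, \ldots, s^{m-1}$; the residue class of $s^{m} b s^{-m}$ is then a nontrivial element of order $p$ in the quotient $\rho(G_{\ell})/\rho(M)$, and this image is hit by the generator $a^{(1)}_{m}$ of $K$.

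Combining everything, $G_{\ell}$ has finite index in $G$ and admits an epimorphism onto $L = (\Z/p\Z) \star \gp(T)$. The kernel of the retraction $L \epi \Z/p\Z$ is a free group of rank $p$ by the Reidemeister--Schreier method, and $p \geq 2$, so $G_{\ell}$ has a further finite-index subgroup mapping onto a non-abelian free group; therefore $G$ is large. The main obstacle I anticipate is the Baumslag--Pride step itself: verifying that $T$ only enters the rewritten relators inside complete conjugation patterns $T a^{(i)}_{k} T^{-1}$ requires a careful bookkeeping of the Schreier rewriting process, and is exactly what forces the choice $\ell = m+1$ rather than any smaller value.
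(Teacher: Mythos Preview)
Your proposal is correct and follows essentially the same approach as the paper's proof: the same choice of generators, the same passage to the finite-index subgroup $G_\ell$ via Reidemeister--Schreier, the same normal subgroup $M$ killing all but the top layer of conjugates, and the same use of $\rho$ restricted to $G_\ell$ to show that $K$ surjects onto $\Z/p\Z$, yielding the free product $(\Z/p\Z) \star \Z$ and hence largeness.
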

\index{Baumslag, G.}

\subsubsection{Examples of large finitely presented groups} 
\label{sssec:Examples-large-fp-groups}
%
We conclude section \ref{ssec:Fp-groups-groups-with-free-subgroups} with some applications of 
Theorem \ref{thm:Baumslag-large-groups}.
This theorem presupposes 
that the finitely presented group $E$ admits an epimorphism $\rho \colon G \epi W$ 
onto the wreath product  $W = (\Z/p\Z) \wr C_\infty$ 
of a cyclic group of order a prime $p$ by an infinite cyclic group.
This hypothesis implies that the sphere $S(G)$ is non-empty 
and also that $\Sigma^1(G)^c$ contains an antipodal pair of rank 1 points
(use Lemma \ref{lem:Infinite-locally-finite-group}).

The statement of Proposition \ref{prp:Invariant-group-large-deficiency}
lists three easily formulated assumptions 
that imply the hypothesis of Baumslag's theorem.
The first two of these assumptions are illustrated by the examples given below.
\index{Large groups!examples|(}

\paragraph{1) Groups of deficiency at least 2. }
If $G$ is a finitely presented group of deficiency at least 2
the sphere $S(G)$ has positive dimension 
and, for every rank 1 character $\chi$ and for every prime $p$,
the kernel $N$ of $\chi$ maps onto an infinite elementary $p$-group  
and hence onto a copy of the additive group of the group algebra $(\Z/p\Z)C_\infty$
(by the addendum in the statement of Proposition \ref{prp:Invariant-group-large-deficiency}).
So Theorem \ref{thm:Baumslag-large-groups} applies and shows that $G$ is large,
a conclusion first established by B. Baumslag and S. J. Pride in \cite{BaPr78}.
\index{Baumslag, B.}
\index{Pride, S. J.}

\paragraph{2) Groups of deficiency 1. }
A group $G$ of deficiency 1 need not be large,
witnesses being the free abelian groups of rank 1 or 2 
or, more generally, the soluble Baumslag-Solitar groups 
$\BS(1,\ell) = \langle a, t \mid tat^{-1} = a^\ell \rangle$ with $\ell \in \Z$.
\index{Groups!of Baumslag-Solitar}

The situation is more complex for the non-soluble Baumslag-Solitar groups  
\[
G_{k,\ell} = \BS(k,\ell) = \langle a, t \mid ta^kt^{-1} = a^\ell \rangle,
\]
say with  $k > 1$ and $\ell > 1$.
If $k$ and $\ell$ have a common factor $d > 0$ then $\BS(k, \ell)$ 
maps obviously onto $(\Z/d\Z) \star \gp(t)$ and so it is large.
If, on the other hand, $k$ and $\ell$ are relatively prime 
then $\BS(k, \ell)$ is not large (see \cite[Example 3.2]{EdPr84}),
in spite of the fact that $\Sigma^1(G_{k,\ell} )$ is empty
(use that $\Sigma^1$ of the metabelian top of $G_{k,\ell}$ is empty
by Example \ref{example:Locally-infinite-cyclic-by-infinite-cyclic})
and $G_{k,\ell}$ contains non-abelian free subgroups.
 
The strategy of M. Edjvet and S. J. Pride is this:
 If $G$ is a finitely generated large group it contains subgroups $M_1 \triangleleft H_1  \leq G$
so that $H_1/M_1$ is free of finite rank $r \geq 2$ and $H_1$ has finite index in $G$.
Since the ranks of the finite index subgroups of $H_1/M_1$ are not bounded,
$G$ will have subgroups of finite index $H$ 
whose abelianisation $H_{\ab}$ has arbitrary large torsion-free rank.
A finitely generated group $L$ is therefore not large if there exists an integer $b > 0$ 
so that the inequality $r_0(H_{\ab}) < b$ holds for every subgroup $H$ of finite index in $L$.
Edjvet and Pride now show by a tricky argument 
that if $k >1$ and  $\ell > 1$ are relatively prime integers and $H < \BS(k, \ell)$ is a subgroup of finite index,
then $r_0(H_{\ab}) < 2$; see \cite[Ex.\,3.3]{EdPr84}.
\index{Groups!of Baumslag-Solitar}
\index{Edjvet, M.}
\index{Pride, S. J.}

Another group of deficiency 1 that is not large is the one relator group
\begin{equation}
\label{eq:Group-with-only-cyclic-finite-images}
G = \langle a, t \mid tat^{-1} \cdot a \cdot ta^{-1} t^{-1} = a^2 \rangle
\end{equation}
G. Baumslag detected in the late 1960s 
that all finite quotients of this group are cyclic (see \cite{Bau69b}).
It follows easily that $G$ is not large.
The group $G$ has, however, non-abelian free subgroups and $\Sigma^1(E)$ is empty.
These facts can be established 
by adapting the proof of Theorem \ref{thm:FP2-groups-and-HNN-extensions},
to the given presentation of $G$.
If one proceeds as in the first proof of Theorem \ref{thm:FP2-groups-and-HNN-extensions}
one discovers, with the help of the Freiheitssatz, 
that $G$ is the HNN-extension $\langle B, t \mid t a_0t^{-1} = a_1 \rangle$
with base group  $B = \langle a_0, a_1 \mid a_1 a_0 a_1^{-1} = a_0^2 \rangle$.
The associated subgroups are $S = \gp(a_0)$ and $T = \gp(a_1)$;
as they are clearly distinct from $B$,
the invariant $\Sigma^1(G)$  is empty by Proposition \ref{prp:HNN-extension-and-Sigma1}
whence $G$ has a non-abelian free subgroups
by Proposition \ref{prp:Fp-groups-having-non-abelian-free-subgroups}.
\index{Baumslag, G.}
\smallskip

The above examples should not leave the impression
that groups of deficiency 1 are rarely large.
First of all, 
the addendum in the statement of Proposition \ref{prp:Invariant-group-large-deficiency})
shows that $E$ maps onto a wreath product
whenever it admits a presentations of deficiency 1 in which one relator is a proper power,
whence such a group $G$ will be large by Theorem \ref{thm:Baumslag-large-groups}.
(This consequence is due to
M.\ Gromov in \cite[p.\;291. Thm. (B$'$)]{Gro82a} and to R.\ St\"ohr in \cite{Sto83}, independently.
\index{Gromov, M.}
\index{Stoehr, R.@St{\"o}hr, R.}

Examples of groups fulfilling the stated assumptions include two generator one-relator groups with torsion.
Moreover, 
the investigations of J. O. Button, carried out in \cite{But08a}, 
have brought to light that many groups with a deficiency 1 presentation in which no relator is a proper power 
are large, too.
\index{Large groups!examples|)}
\index{Button, J. O.} 
%
\subsection{Applications to infinitely related groups}
\label{ssec:Infinitely-related-soluble-groups}
%
The consequence of Theorem \ref{thm:FP2-groups-and-HNN-extensions}
that will be discussed in this section 
is nothing but a contraposition of the theorem.
For ease of reference we state it as
\begin{prp}
\label{prp:Infinitely-related-soluble-groups}
Assume $G$ is a finitely generated group which contains no non-abelian free subgroup
and admits a rank 1 character $\chi \colon G \to \R$ 
so that both $[\chi]$ and $[-\chi]$ are points outside of $\Sigma^1(G)$.
Then $G$ is not of type $\FP_2$ over any commutative ring
and it does not admit a finite presentation.
\end{prp}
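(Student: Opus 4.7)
The plan is to recognize that this proposition is essentially the contrapositive of the HNN-criterion, Proposition \ref{prp:Soluble-FP2-groups-and-HNN-extensions}, combined with the elementary implication that finite presentability entails type $\FP_2$. So the work has largely been done in earlier sections, and what remains is merely to assemble the pieces correctly.

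First I would argue for the non-$\FP_2$ claim. Suppose, for contradiction, that $G$ is of type $\FP_2$ over some commutative ring $K \neq \{0\}$. Since $G$ is finitely generated, contains no non-abelian free subgroup, and $\chi \colon G \to \R$ is a rank $1$ character, the hypotheses of Proposition \ref{prp:Soluble-FP2-groups-and-HNN-extensions} are fulfilled (note that the abelianization of $G$ has positive rank, as witnessed by $\chi$ itself). Its conclusion asserts that at least one of the two points $[\chi]$, $[-\chi]$ belongs to $\Sigma^1(G)$, directly contradicting the assumption that both antipodal points lie in $\Sigma^1(G)^c$.

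For the second assertion, I would simply invoke Lemma \ref{lem:Almost-finitely-presented}(i): every finitely presented group is of type $\FP_2$ over $\Z$. Hence if $G$ admitted a finite presentation, it would be of type $\FP_2$ over $\Z$, in contradiction to what has just been established.

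There is no real obstacle here; the proposition is a direct logical repackaging of Proposition \ref{prp:Soluble-FP2-groups-and-HNN-extensions} (which itself rested on the substantive structure Theorem \ref{thm:FP2-groups-and-HNN-extensions} and Lemma \ref{lem:Existence-free-subgroups-in-HNN-extension}). The only point worth flagging is the implicit verification that the hypotheses of Proposition \ref{prp:Soluble-FP2-groups-and-HNN-extensions} are indeed met, namely that $G_{\ab}$ has positive torsion-free rank; this is automatic because the existence of a non-zero real character forces $r_0(G_{\ab}) \geq 1$ by Lemma \ref{lem:Dimension-S(G)}.
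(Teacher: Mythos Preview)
Your proposal is correct and matches the paper's own treatment: the text introduces this proposition explicitly as ``nothing but a contraposition'' of the structure theorem (Theorem \ref{thm:FP2-groups-and-HNN-extensions}), and gives no further proof. Your route via Proposition \ref{prp:Soluble-FP2-groups-and-HNN-extensions} is precisely the natural way to spell out that contraposition, and your invocation of Lemma \ref{lem:Almost-finitely-presented}(i) for the finite-presentation clause is exactly right.
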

\index{Infinitely related groups!sufficient condition}

\begin{remarks}
 \label{remarks:Fg-groups-not-ascending-HNN-extension}
 If one wants to apply Proposition \ref{prp:Infinitely-related-soluble-groups}
 to a finitely generated group containing no free subgroups of rank 2,
 one has to find a rank 1 character  $\chi \colon G \to \R$ 
 with $\{[\chi],  [-\chi] \} \subseteq \Sigma^1(G)^c$.
 
 a)  For some groups,
 finding such a character is easy: in Application 1 below the group $G$ is locally finite by infinite cyclic
 and so the sphere $S(G)$ has only two points.
 In the situation of Proposition \ref{prp:Invariant-group-large-deficiency} 
 the sphere $S(G)$ is often infinite and so there are infinitely many candidates.
 But, as the invariant $\Sigma^1(G)$ is empty, all choices lead to the desired result.
 
 b) For other groups, the search for an appropriate character may be difficult.
 Here is a case in point.
 Let $\tilde{G}$ be a one-relator group with two generators, 
 say $\tilde{G} = \langle x, y \mid r \rangle$, 
 and assume that the abelianisation of $\tilde{G}$ is free abelian of rank 2.
 Consider the metabelian quotient $G = \tilde{G}/\tilde{G}''$.
 Then $S(G)$ is a circle containing infinitely many rank 1 points.
 It is thus not clear
 whether a character exists 
 that allows one to infer that $G$ is infinitely related 
 and, it so, how it can be found.
 Fortunately, the answer to both questions can be found algorithmically;
 see Theorem B in \cite{Str81a}.
\end{remarks}
\index{Strebel, R.}
%
\subsubsection{How to prove that a finitely generated group is infinitely related?} 
\label{sssec:How-prove-that-fg-group-is-not-fp}
Prior to listing some applications of Proposition \ref{prp:Infinitely-related-soluble-groups},
I would like to describe the historical context 
in which the proposition came into being.

In Note \ref{note:Neumanns-groups} I have mentioned 
that, in \cite{Neu37},  B. H. Neumann studies the question 
as to whether there are finitely generated, infinitely related groups
and that he answers it in the affirmative, giving two kinds of justifications.
To do so, Neumann constructs groups suited to his aim;
they do not shed light on the related question 
which of the finitely generated groups discussed in a branch of group theory,
say in the \emph{Theory of Infinite Soluble Groups},
admit of a finite presentation.
\index{Neumann, B. H.}
\index{Infinitely related groups!existence}

This second question is addressed by P. Hall in his influential paper \cite{Hal54}.
He proves that every extension of two finitely presented groups is again so (Lemma 1 on p.\;426)
and deduces that every polycyclic group admits a finite presentation (Corollary on p.\;426).
Extensions provide one way of producing new groups from given groups, 
the wreath product furnishes another one.
It turns out, however,
that the wreath product  $K \wr L$ of two finitely presented non-trivial groups $K$ and $L$ 
is rarely finitely presentable;
indeed, as shown by G. Baumslag in \cite{Bau61}, this happens if, and only if, $L$ is finite
(Baumslag's proof is lengthy; shorter proofs can be found in \cite{Str84} (Theorem 4) 
and in \cite{LeRo04} (Result 11.1.2).)
\index{Hall, P.}
\index{Baumslag, G.}
\index{Strebel, R.}
\index{Robinson, D. J. S.}
\index{Lennox, J. C.}
\index{Wreath products!finite presentation}
\index{Finitely presented groups!and wreath products}

In proving the stated characterization,
Baumslag makes use of free products; 
in a later paper \cite{Bau71c}, 
he invokes the theory of free products with amalgamation to prove that a specific group is infinitely related.
At the beginning of his survey \cite{Bau74}, 
he lists some classes of finitely generated soluble groups 
the members of which have been shown to be infinitely related
and turns then to methods that permit one to justify the listed results.
On page 66, he writes:
\begin{quotation}
{\itshape
In general there seem to be two ways of proving 
that a given finitely generated group $G$ is not finitely presented.

The first of these is a two-stage procedure. 
The first stage is to produce an explicit presentation of $G$ 
in terms of a finite set of generators and an infinite set of defining relations. 
The second stage is to prove that no finite subset of these relations suffice to define $G$. 
(The fact that $G$ is not finitely presented
no matter which finite system of generators one may choose for G is the content of
a theorem of B. H. Neumann \cite{Neu37}.) 
Both parts of this procedure can be difficult to effect; 
the second tends to be the more awkward and often invokes the use of generalized free products.

The second way to prove that a finitely generated group $G$ is not finitely presented
is to show that its multiplicator $m(G)$ is not finitely generated, 
for the multiplicator of a finitely presented group is always finitely generated. 
Actually it was an open question for a while whether conversely a finitely generated group
with a finitely generated multiplicator is finitely presented. 
This is false; there are even metabelian counter-examples [\ldots].}
\end{quotation}
\index{Baumslag, G.}

The proof of Proposition \ref{prp:Infinitely-related-soluble-groups}
uses the first way described in the above quotation,
but differs from it in two important respects.
First of all, it relies on the theory of HNN-extensions and so presupposes 
that the abelianisation of the group $G$ be infinite.
Many groups that are known to be infinitely related do not satisfy this assumption
and so cannot be shown to be infinitely related with the help of the proposition;
but every finitely generated infinite soluble group admits an indicable subgroup of finite index.
Secondly, no details of the presentation enter into the hypotheses of the proposition:
one merely assumes the existence of a finite presentation and derives a structural property.
\smallskip

We now give some typical applications of Proposition \ref{prp:Infinitely-related-soluble-groups}.
In these applications,
the verification that the points $[\chi]$ and $[-\chi]$ lie outside of $\Sigma^1$ 
will often be based on Lemma \ref{lem:Infinite-locally-finite-group}.
%
\subsubsection{Application 1: locally finite by infinite cyclic groups} 
\label{sssec:Locally-finite-by-infinite-cyclic-groups}
%
We begin with the special case of of Lemma \ref{lem:Infinite-locally-finite-group} 
where the normal subgroup $M$ is reduced to the unit element.
Proposition  \ref{prp:Infinitely-related-soluble-groups} and the lemma then yield
\begin{crl}
\label{crl:Locally-finite-by-infinite-cyclic}
Assume $G$ is a finitely generated extension of an infinite, locally finite group by an infinite cyclic group.
Then $\Sigma^1(G)$ is empty and $G$ does not admit a finite presentation. 
\end{crl}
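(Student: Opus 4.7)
The plan is to obtain the corollary as a direct combination of Lemma~\ref{lem:Infinite-locally-finite-group} with Proposition~\ref{prp:Infinitely-related-soluble-groups}; the real content lies in verifying the hypotheses of these two results in the present setting. Let $N \lhd G$ be the infinite locally finite normal subgroup with $G/N$ infinite cyclic, and write $\pi\colon G \epi G/N \cong \Z$ for the canonical projection.

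First I would analyse the sphere $S(G)$. Since $N$ is locally finite, every element of $N$ has finite order; as the additive group of $\R$ is torsion-free, every character $\psi\colon G \to \R$ must vanish on $N$ and therefore factors through $\pi$. Consequently $\Hom(G,\R)$ is one-dimensional, so $S(G)$ consists of exactly two points, both represented (up to sign) by the composition $\chi = \iota \circ \pi$ with $\iota\colon \Z \incl \R$. Applying Lemma~\ref{lem:Infinite-locally-finite-group} with $M = \{1\}$ (so that $N/M = N$ is infinite and locally finite) gives $\{[\chi],[-\chi]\} \subseteq \Sigma^1(G)^c$; combined with the previous sentence this forces $\Sigma^1(G) = \varnothing$.

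For the second assertion I would invoke Proposition~\ref{prp:Infinitely-related-soluble-groups}, for which it remains to check that $G$ contains no non-abelian free subgroup. Suppose for contradiction that $F \le G$ is free of rank $2$. Then $F \cap N$ is simultaneously a subgroup of the free group $F$, hence itself free, and a subgroup of the locally finite group $N$, hence locally finite; a non-trivial free group contains elements of infinite order, so these constraints force $F \cap N = \{1\}$. But then $\pi$ restricts to an embedding of $F$ into $\Z$, which would make $F$ cyclic, a contradiction. Hence $G$ has no non-abelian free subgroup, and Proposition~\ref{prp:Infinitely-related-soluble-groups} applied to the rank $1$ character $\chi$ yields that $G$ is not of type $\FP_2$ over any commutative ring and in particular admits no finite presentation. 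The only conceptual step is the torsion argument that trivialises $S(G)$; everything else is bookkeeping, and I do not anticipate a serious obstacle.
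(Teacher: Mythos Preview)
Your proof is correct and follows exactly the route the paper indicates: the corollary is introduced as the special case $M=\{1\}$ of Lemma~\ref{lem:Infinite-locally-finite-group} combined with Proposition~\ref{prp:Infinitely-related-soluble-groups}. You spell out two details the paper leaves implicit---that $S(G)$ consists of only the two points $[\pm\chi]$ (because $N$ is torsion) and that $G$ contains no non-abelian free subgroup (because any such $F$ would intersect $N$ trivially and hence embed in $\Z$)---but these are precisely the routine verifications needed to invoke the two cited results.
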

\index{Infinitely related groups!locally finite by cyclic}
\index{Infinitely related groups!examples}
\begin{remarks}
\label{remarks:Torsion-by-infinite-cyclic-groups}
a) Corollary \ref{crl:Locally-finite-by-infinite-cyclic} is an easy consequence of Theorem
\ref{thm:FP2-groups-and-HNN-extensions},
\ie, of Theorem A in \cite{BiSt78} published in 1978.
In view of this fact is is surprising that the corollary has only been detected  in the last few years;
in fact, the earliest reference I am aware of is Theorem D in the paper \cite{BaMi09} 
by G. Baumslag and C. H. Miller III. The corollary is also part of Corollary C.4 in \cite{BGH12}.
\index{Baumslag, G.} 
\index{Miller III, C. F.}

b) The conclusion of Corollary \ref{crl:Locally-finite-by-infinite-cyclic} need not be true
if $G$ is an extension of a locally finite group by an abelian group of torsion-free rank greater than 1. 
Examples testifying to this are provided by a sequence of groups introduced 
by C. H. Houghton in \cite{Hou78} on p.\;257;
these groups are finitely related (see \cite{Bro87a}).
Their invariants will be determined in section \ref{sssec:Application-invariant-Houghtons-group}.
\index{Houghton, C. H.} 
\index{Brown, K. S.}

c) The situation can also be different
if the kernel $N$ is an \emph{infinite torsion group that is not locally finite}.
Then the group $G$ can be finitely related, but if so 
it must be an ascending HNN-extension over a finitely generated base group
(by Theorem \ref{thm:FP2-groups-and-HNN-extensions}).
Groups meeting these requirements are hard to come by,
but they exist.

The first such group has been constructed by R. Grigorchuk in \cite{Gri98}; 
it is an example of a \emph{finitely presented amenable group that is not elementary amenable}.
\index{Grigorchuk, R.}
\index{Amenable groups!examples}
Later A. Yu. Ol'shanskii and M. V. Sapir manufactured another group
that is an extension of an infinite torsion group by a cyclic group (see \cite{OlSa02}).
It was the first example of a finitely presentable non-amenable group  
that contains no non-abelian free subgroups.
\end{remarks}
\index{Ol'shanskii, A. Yu.}
\index{Sapir, M.}

\begin{examples}
\label{examples:Torsion-by-infinite-cyclic}
If one wants to apply Corollary \ref{crl:Locally-finite-by-infinite-cyclic}
one has to find an infinite locally finite group $N$ 
that admits of an automorphism $\alpha \colon N \iso N$ of infinite order,
all in such a way that $N$ is finitely generated as a $\gp(\alpha)$-group.
Here are some well-known groups of this kind.
\index{Locally finite by cyclic groups!examples|(}
\smallskip

a) $N$ is the base group of the wreath product $B \wr C_\infty$ of a finite group $B$ 
by an infinite cyclic group $C_\infty$.
\index{Wreath products!examples}

b) $N$ is the group of all permutation of $\Z$ that fix all but finitely many elements
and $\alpha$ is the automorphism induced by conjugation by the (infinitary) permutation 
$ j \mapsto j+1$.
The split extension $G = N \rtimes \gp(\alpha)$ is then generated by two elements,
namely by $\alpha$ and the transposition $\tau$ of two adjacent elements of $\Z$, say 0 and 1.

c) Far more sophisticated examples have been constructed by B. H. Neumann 
in his remarkable note \cite{Neu37}.
Here is a brief description of these groups.

Let $S$ be an infinite subset of the odd integers $n \geq 5$ 
and let $P_S$ denote the unrestricted direct product
\[
P_S = \prod\nolimits_{n \in S} A_n.
\]
Let $B= B_S$ denote the subgroup of $P_S$ 
that is generated by the following two sequences $\alpha = \alpha_S$ and $\tau = \tau_S$  of even cycles:
\begin{align}
\alpha_S \colon n &\mapsto  a_n = (1 \mapsto 2 \mapsto 3 \mapsto 1), 
\text{ the remaining elements being fixed,}
\label{eq:Definition-alpha}\\
\tau_S \colon n &\mapsto  t_n = (1,2,3, \cdots ,n-1,n).
\label{eq:Definition-tau}
\end{align}

Neumann proves that two such groups $B_S$ and $B_{S'}$ are isomorphic if, and only if, $S = S'$,
whence there are $2^{\aleph_0}$ pairwise non-isomorphic groups $B_S$.
At the end of his note he states
that $B_S$ contains the restricted direct product $D = \Dr\nolimits_{n \in S} A_n$,
that the quotient group $B_S/D$ does not depend on $S$ 
and is isomorphic to the semi-direct product of the alternating group 
on a countably infinite set by an infinite cyclic group. 
(A proof of the second claim may be found in \cite[pp.\,65+66]{Har00}).
It follows that the normal closure $N = \gp_{B_S}(\alpha)$ of the element $\alpha_S$
 is an infinite locally finite group,
and thus $B_S$ is \emph{locally finite by infinite cyclic}, as asserted. 

The fact that $N$ is locally finite is pointed out in \cite{BaMi09} (see Corollary 11).
To verify this fact it suffices to show that, for every $\ell \geq 0$,
the subgroup 
\[
M_\ell = \gp(\alpha, \tau \cdot \alpha \cdot \tau^{-1}, \ldots,
\tau^\ell \alpha \tau^{-\ell})
\]
is finite.
To do so,
fix $\ell$ and consider the images $\bar{M}_{\ell, n}$ of $M_\ell$ 
under the various projections $ \pi_n \colon B \to A_n$.
If $n \geq \ell + 3$, this image does not depend on $n$.
It follows that for each index $n_0 \geq \ell + 3$ 
the kernel of $\pi_{n_0}$ will consist of sequences $(g_n)_{n \in S}$ 
that are equal to 1 for every $n \geq n_0$.
But if so, the kernel of $\pi_{n_0}$, and hence the group $M_\ell$, are finite.
\index{Neumann, B. H.}
\index{Locally finite by cyclic groups!examples|)}

\begin{note}
\label{note:Neumanns-groups}
On pages 125--127 of his paper, B. H. Neumann discusses the question
\emph{whether there are any groups which
have a finite number of generators but which cannot be defined by a finite set of relations}.
He asserts that such groups exist and offers two proofs.
The first is direct and consists in giving appropriate examples 
(actually Neumann constructs $2^{\aleph_0}$ examples no two of which are isomorphic).
The second relies on the fact 
that there are only countably many finitely presentable groups.
So most of the previously described groups $B_S$ must be infinitely related.
But more is true: by Corollary \ref{crl:Locally-finite-by-infinite-cyclic} 
we know that all of them are infinitely related.
This fact seems to have been pointed out first by G. Baumslag and C. H. Miller III in \cite[Thm.\;C]{BaMi09}.
(The authors present two proofs, one of them is based on \cite[Thm.\;A]{BiSt78}.)
\end{note}
\end{examples}
\index{Baumslag, G.}
\index{Miller III, C. F.}
%
\subsubsection{Application 2: Infinitely related generalized soluble groups} 
\label{sssec:Infinitely-related-generalized-soluble-groups}
%
It turns out that the conclusion of Corollary \ref{crl:Locally-finite-by-infinite-cyclic}
holds for a lager class of groups.
Let $\PP$ be a property of groups and $G$ a finitely generated group
which is an extension of a normal subgroup $N$, that is locally $\PP$, by an infinite cyclic group.
\emph{We look for an easily stated condition that forces $G$ to be infinitely related.}
The condition should imply that $G$ cannot be an ascending HNN-extension 
with finitely generated base group $B$ contained in $N$.

If $\PP$ is the property of being finite a suitable requirement is that $N$ do not have $\PP$,
that is to say, be infinite (see  Corollary \ref{crl:Locally-finite-by-infinite-cyclic}).
In general, we shall require that $N$ does not satisfy $\PP$ 
and that $\PP$ has the following special feature
\begin{equation}
\label{eq:Special-feature-P}
\left.
\begin{minipage}[c]{9cm}
If  $B$  has $\PP$ and $B_\infty$ is the limit of an infinite sequence\\
  $B \mono B_1 \mono B_2 \mono \cdots$
of copies of $B$ then $B_\infty$ has $\PP$
\end{minipage}
\right\}.
\end{equation}
The following corollary then holds:
\begin{crl}
\label{crl:Locally-PP-by-infinite-cyclic}
Let $G$ be a finitely group that is an extension of $N$ by an infinite cyclic group.
Assume $\PP$ has the special feature stated in \eqref{eq:Special-feature-P}
and that $N$ is locally $\PP$, but not $\PP$.
Then $G$ is infinitely related.
\end{crl}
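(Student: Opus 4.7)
The plan is to argue by contradiction, combining Theorem~\ref{thm:FP2-groups-and-HNN-extensions} with feature~\eqref{eq:Special-feature-P}. Suppose $G$ is finitely related. Let $\chi \colon G \epi \Z \incl \R$ be the character with kernel $N$, and fix $t \in G$ with $\chi(t) = 1$. The structure theorem then furnishes an HNN-decomposition $G = \langle B, y \mid y s y^{-1} = \mu(s) \text{ for } s \in S \rangle$ in which $B$ is a finitely generated subgroup of $N$, the subgroups $S$ and $T = \mu(S)$ are finitely generated subgroups of $B$, and $\mu$ is the restriction to $S$ of conjugation by $t$. Since $N$ is locally $\PP$ and $B$ is finitely generated, $B$ enjoys $\PP$.

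First I would dispatch the ascending cases. If $S = B$, then conjugation by $t$ carries $B$ into itself, so $N$ equals the directed union $\bigcup_{\ell \geq 0} t^{-\ell} B t^{\ell}$ of isomorphic copies of $B$ linked by the chain $B \mono t^{-1} B t \mono t^{-2} B t^{2} \mono \cdots$. Feature~\eqref{eq:Special-feature-P} then forces $N$ to enjoy $\PP$, contradicting the hypothesis. The case $T = B$ is handled symmetrically, using $t^{-1}$ in place of $t$.

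The remaining, non-degenerate case has $S \subsetneq B$ and $T \subsetneq B$. By Lemma~\ref{lem:Existence-free-subgroups-in-HNN-extension}, $G$ then contains a non-abelian free subgroup $F$. A short computation shows that $N$ itself contains a copy of $F_{2}$: the intersection $F \cap N$ is the kernel of the composition $F \incl G \epi G/N \iso \Z$, whose image is either trivial or isomorphic to $\Z$; in every case $F \cap N$ is free of rank at least $2$, hence contains $F_{2}$. But then $F_{2}$ is a finitely generated subgroup of the locally $\PP$ group $N$, so $F_{2}$ itself must enjoy $\PP$.

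The principal obstacle lies in closing off this last step: for a clean contradiction one needs $F_{2}$ not to enjoy $\PP$. This is automatic for the standard properties satisfying~\eqref{eq:Special-feature-P} --- such as finiteness, nilpotence of bounded class, or solubility of bounded derived length --- and it is these intended applications that the corollary targets. In a fully general treatment I would either append the assumption ``no free group of rank $2$ enjoys $\PP$'' to the statement, or else refine the structure theorem so that $B$ can be chosen to make the HNN-extension automatically ascending under a locally-$\PP$ hypothesis, thereby bypassing the non-degenerate case and handling everything uniformly through \eqref{eq:Special-feature-P}.
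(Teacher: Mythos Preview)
Your argument follows the same route as the paper's: both rest on the structure theorem (Theorem~\ref{thm:FP2-groups-and-HNN-extensions}) and the observation that an ascending HNN-extension with finitely generated base group $B\subset N$ forces $N$ to be a directed union of copies of the $\PP$-group $B$, contradicting feature~\eqref{eq:Special-feature-P}. The paper compresses your case analysis into a single appeal to Proposition~\ref{prp:Infinitely-related-soluble-groups}: it verifies that neither $[\chi]$ nor $[-\chi]$ lies in $\Sigma^1(G)$ (equivalently, that no ascending HNN-decomposition exists), and then that proposition delivers the conclusion directly.

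Your caution about the non-degenerate case is well placed, and in fact applies equally to the paper's proof: Proposition~\ref{prp:Infinitely-related-soluble-groups} carries the standing hypothesis that $G$ contain no non-abelian free subgroup, and the paper invokes it without checking this. The tacit justification is contextual --- the section treats ``generalized soluble'' groups, and for each intended $\PP$ (finite, nilpotent, \ldots) the locally-$\PP$ group $N$ cannot contain $F_2$, whence neither can $G$. So the clean way to close your argument is not to analyse the non-degenerate case separately, but simply to note that $N$ locally~$\PP$ implies $F_2\not\subset N$ (for the $\PP$ at hand), hence $F_2\not\subset G$, and then invoke Proposition~\ref{prp:Infinitely-related-soluble-groups} as the paper does.
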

\index{Infinitely related groups!examples}
\index{Infinitely related groups!generalized soluble}

\begin{proof}
Thanks to Proposition \ref{prp:Infinitely-related-soluble-groups}
it suffices to verify 
that $G$ is not an ascending HNN-extension with finitely generated base group $B$ contained in $N$.
If it were, then $B$, being finitely generated,  would have  property $\PP$.
As $N$ would then be a directed union of the form $\bigcup_{\ell \geq 0} t^\ell B t^{-\ell}$ with $t \in G$,
it would satisfy $\PP$ by property \eqref{eq:Special-feature-P}, 
contrary to the hypothesis on $N$.
\end{proof}

To construct explicit examples covered by the corollary,
one has to fix a property $\PP$ with the special feature \eqref{eq:Special-feature-P}.
As pointed out before,
the property of being \emph{finite} qualifies 
and examples based on it have been discussed in Examples \ref{examples:Torsion-by-infinite-cyclic}.
The property of being \emph{nilpotent} has also the special feature.
Corollary \ref{crl:Locally-PP-by-infinite-cyclic} thus guarantees 
that a finitely generated group $G$ which is locally nilpotent by cyclic, but not nilpotent by cyclic,
does not admit a finite presentation.

Here is a well-known specimen of such a group.
Let $A$ be the free abelian group with basis $\{e_j \mid j \in \Z\}$ 
and $G$ the group generated by the automorphisms $\tau$ and $\alpha$ of $A$ given by
\begin{align*}
\tau(e_i) &= e_{i+1} \text{ for } i \in \Z,  \\
\alpha(e_0) &= e_0 + e_1 \text{ and } \alpha(e_j) = e_j \text{ for } j \in \Z \smallsetminus \{0\}.
\end{align*}
For every $n \geq 0$ the subgroup  
$B_n = \gp(\alpha, \tau\circ \alpha \circ \tau^{-1}, \ldots, \tau^\ell\circ \alpha \circ \tau^{-\ell})$
is then nilpotent of class $n+1$ and so $N = \gp_G(\alpha)$ is locally nilpotent, but not nilpotent.

The group $G$ is a variation of the characteristically simple groups due to D. H. McLain
(see \cite[pp.\,361--362]{Rob96}) and is an example of an elementary amenable group; 
it is described by C. Chou in \cite{Cho80} as Example 1 on page 402. 
\index{Elementary amenable groups!examples}
\index{Infinitely related groups!locally nilpotent by cyclic}
\index{Generalized soluble groups!examples}
\index{McLain, D. H.}
\index{Chou, C.}

%
\subsubsection{Application 3: free soluble groups} 
\label{sssec:Relatively-soluble-groups-with-few-relators}
%
In the early 1970s, G. Baumslag and V. N. Remeslennikov discovered
that there are many more finitely generated met\-abelian groups with a finite presentation
than one might have suspected. 
The borderline between the finitely related
and the finitely generated but infinitely related, metabelian groups had therefore become blurred.
For further progress,
a good supply of simply described finitely generated met\-abelian groups
that could be shown to be infinitely related was thus called for.

With this goal in mind, 
G. Baumslag considers in \cite{Bau76} the met\-abel\-ian top $G/G''$ of a finitely presented group $G$ 
whose deficiency is greater than 1. 
He proves that the multiplicator $H_2(G/G'',\Z)$ is infinitely generated 
whence $G/G''$ cannot be defined by finitely many relators.
(This implication follows from Hopf's formula for $H_2(-,\Z)$; 
see Formula (9.2) in \cite[Chapt.\;VI]{HiSt97}). 
\index{Baumslag, G.}

Baumslag reaches his conclusion in two steps.
First, he verifies (with the technique used in the proof of Proposition \ref{prp:Invariant-group-large-deficiency})
that $G$ maps onto the wreath product $W = \Z/p\Z \wr C_\infty$ 
of a finite cyclic group with order a prime $p$
by an infinite cyclic group $C_\infty$; the prime can be prescribed arbitrarily. 
The wreath product $W$ has an infinitely generated multiplicator (see, \eg, \cite[p.\;241]{LeRo04}).
Consider now the 5-term sequence
\[
H_2(G/G'', \Z) \to H_2(W,\Z) \to \Z \otimes_{\Z{Q}} N_{\ab} \to G_{\ab}  \to W_{\ab} \to 0
\]
associated to the extension $N \triangleleft G/G'' \epi W$ (see, \eg, \cite[Cor.\;8.2]{HiSt97}).
Since the finitely generated metabelian group $G/G''$ 
satisfies the ascending chain condition on normal subgroups, 
$N_{ab}$ is finitely generated as a $\Z{Q}$-module.
So the exactness of the above sequence at the term $H_2(W, \Z)$
and the fact that $H_2(W, \Z)$ is infinitely generated force $H_2(G/G'', \Z)$ to be likewise infinitely generated.
\smallskip
\index{Baumslag, G.}

The given argument is no longer sound if one replaces the metabelian top $G/G''$ of $G$ 
by a soluble quotient that is not metabelian, say by $G/G'''$, 
for the quotient may then not satisfy the maximal condition on normal subgroups.
Proposition \ref{prp:Infinitely-related-soluble-groups}, however,  can step in.
Here is a summary of the way towards this goal.

If one combines  Proposition \ref{prp:Infinitely-related-soluble-groups}
with Proposition \ref{prp:Invariant-group-large-deficiency},
one arrives at the following generalization of Baumslag's result:
\begin{crl}
\label{crl:Soluble-group-with-few-relators}
Let $p$ be a prime number and $\VV$ be a variety of groups 
that contains the metabelian variety $\AA_p\cdot \AA$,
but is not the variety of all groups. 

Suppose $G$ is a finitely presented group satisfying one of the assumptions (i), (ii) or (iii) listed in the statement of Proposition \ref{prp:Invariant-group-large-deficiency}
and that $G_{\ab}$ is infinite.
Then the canonical image $\bar{G} = G/\VV(G)$ of $G$ in $\VV$ is infinitely related.
\end{crl}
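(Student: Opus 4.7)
The plan is to deduce the corollary by applying Proposition~\ref{prp:Infinitely-related-soluble-groups} to the quotient $\bar G = G/\VV(G)$. I would need to verify three points about $\bar G$: that it is finitely generated (obvious, since $G$ is), that it contains no non-abelian free subgroup, and that there is a rank~1 character $\chi$ of $\bar G$ with $[\chi],[-\chi]\notin \Sigma^1(\bar G)$.

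For the absence of a non-abelian free subgroup, I would argue as follows. Varieties are closed under the formation of subgroups, so any subgroup of $\bar G\in\VV$ again lies in $\VV$. Since $\VV$ is a proper subvariety of the variety of all groups, it satisfies some non-trivial law $w=1$; a free group of rank $2$ satisfies no non-trivial law, so it cannot belong to $\VV$ and therefore cannot embed into $\bar G$.

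For the third point, I would show the stronger statement that $\Sigma^1(\bar G)$ is empty while $S(\bar G)$ is non-empty; then any rank~1 character of $\bar G$ (such characters exist and are even dense in $S(\bar G)$ by Lemma~\ref{lem:Density-rank-1-points}) does the job. The key observation is that $\VV\supseteq \AA_p\AA$ implies every law of $\VV$ is a law of $\AA_p\AA$, whence the verbal subgroups satisfy $\VV(G)\subseteq \AA_p\AA(G)=G''\cdot (G')^p$. Writing $\bar G=G/M$ with $M=\VV(G)\subseteq G''\cdot(G')^p$, one of the hypotheses (i), (ii) or (iii) of Proposition~\ref{prp:Invariant-group-large-deficiency} holds for $G$, and that proposition therefore yields $\Sigma^1(\bar G)=\emptyset$. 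To see that $S(\bar G)\neq\emptyset$, I would use the addendum of the same proposition: since $G_{\ab}$ is infinite, $S(G)$ is non-empty, so $G$ maps onto the wreath product $W=(\Z/p\Z)\wr C_\infty$. But $W$ is metabelian with $W'$ of exponent $p$, so $W\in \AA_p\AA\subseteq \VV$; the projection $G\epi W$ therefore kills $\VV(G)$ and factors through $\bar G$. Because $W_{\ab}\cong \Z/p\Z\times\Z$ has positive torsion-free rank, $\bar G_{\ab}$ has too, and $S(\bar G)$ is non-empty.

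With these three points in hand, Proposition~\ref{prp:Infinitely-related-soluble-groups} applies to $\bar G$ and asserts that $\bar G$ is not of type $\FP_2$ over any commutative ring; in particular, it admits no finite presentation. The only step requiring more than a line of bookkeeping is the verbal inclusion $\VV(G)\subseteq G''\cdot(G')^p$, and I expect this to be the main (mild) obstacle—once it is recorded, the rest is a direct concatenation of Propositions~\ref{prp:Invariant-group-large-deficiency} and~\ref{prp:Infinitely-related-soluble-groups}.
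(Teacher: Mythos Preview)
Your proof is correct and follows exactly the route the paper indicates, namely combining Proposition~\ref{prp:Invariant-group-large-deficiency} with Proposition~\ref{prp:Infinitely-related-soluble-groups}; the paper itself gives no further details beyond this instruction, and you have supplied them accurately. One small simplification: since $\AA\subseteq\AA_p\AA\subseteq\VV$, the verbal subgroup $\VV(G)$ lies inside $G'$, so $\bar G_{\ab}=G_{\ab}$ is infinite directly from the hypothesis---the detour through the wreath product to establish $S(\bar G)\neq\emptyset$ is unnecessary (though perfectly sound).
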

\index{Infinitely related groups!free soluble}
\index{Free soluble groups}

\begin{note}
\label{note:Soluble-group-with-few-relators}
Corollary \ref{crl:Soluble-group-with-few-relators} is an amplification of Theorem B in \cite{BiSt78}. 
It generalizes several earlier results, 
notably {\v{S}}mel$'$kin's theorem that a free soluble group $F/F^{d}$ 
of derived length $d\geq 2$ admits a finite presentation if, and only if it is cyclic (\cite{Sme65})
and Baumslag's Theorem F in \cite{Bau74} dealing with one-relator metabelian groups.
\end{note}
\index{Baumslag, G.}
\index{Bieri, R.}
\index{Smelkin, A. L.@{\v{S}}mel$'$kin, A. L.}
\index{Strebel, R.}
%
\subsection[Improved structure theorem for finitely generated indicable groups]%
{Improved structure theorem for fg indicable groups}
\label{ssec:Improved-structure-theorem-for-fp-indicable-groups}
%
Theorem \ref{thm:FP2-groups-and-HNN-extensions} asserts 
that every indicable group of type $\FP_2$ can be written 
as an HNN-extension with a finitely generated base group $B$
and finitely generated associated subgroups $S$ and $T$;
moreover, the base group $B$ can be required to be contained 
in the kernel $N$ of a given rank 1 character $\chi \colon G \to \R$.
If one looks at the proof of the theorem
one notices 
that the hypothesis that $G$ be of type $\FP_2$ is only used towards the end
and that the construction employed in the proof works for every finitely generated group.
The conclusion in the case of a finitely group will be weaker 
than that of Theorem \ref{thm:FP2-groups-and-HNN-extensions},
but, as detected by Y. de Cornulier and L. Guyot, 
it has useful applications,
notably in the context of \emph{condensation groups}.

In this last part of Section \ref{sec:Sigma1-via-HNN-extensions},
I explain how the idea of the proof of Theorem \ref{thm:FP2-groups-and-HNN-extensions} 
can be twisted  so as to yield the justification of a new structure theorem
and compare then the new result with its predecessor, 
Theorem \ref{thm:FP2-groups-and-HNN-extensions}.
\index{Condensation group}
\index{Cornulier, Y. de}
\index{Guyot, L.}
%
\subsubsection{Statement and proof of the theorem}
\label{sssec:Twisting-the-proof-of-Theorem-A-inBiSt78}
%
Let $G$ be a finitely generated indicable group and let $\chi \colon G \epi \Z \incl \R$ a rank 1 character.
Choose an element $t \in G$  with $\chi(t) = 1$.
Since $G$ is finitely generated, there exists a finite subset $\AA$ in  $N = \ker \chi$
such that $\AA \cup \{ t \}$ generates $G$.
The kernel $N$ will then be the normal closure of $\AA$ in $G$.

For $m> 0$, we next define a subgroup $B_m$ of $N$ 
and subgroups $S_m$, $T_m= tS_mt^{-1}$ of $B_m$,
similarly as we did  in the proof of Theorem \ref{thm:FP2-groups-and-HNN-extensions}.
We set
\[
B_m = \gp(\{ t^\ell a t^{-\ell}  \mid 0 \leq \ell \leq m \text{ and }   a \in \AA\}),
\]
and then $S_m = \gp(\{ t^\ell a t^{-\ell} \mid 0 \leq \ell < m \text{ and }   a \in \AA \})$ 
and $T_m = tS_mt^{-1}$. 
These subgroups enter in the construction of the HNN-extension
\[
G_m = \langle B_m, y_m \mid  y_m \cdot s \cdot y_m^{-1} = \tau_m(s)  \text{ for } s \in S_m \rangle,
\]
the isomorphism $\tau_m \colon S_m \iso T_m$ being given by conjugation by $t$. 
There exist unique epimorphisms $\rho_m \colon G_m \epi G_{m+1}$ and $\lambda_m \colon G_m \epi G$ 
that extend the inclusions $B_m \incl B_{m+1}$, respectively $B_m \incl N$, 
and map $y_m$ to $y_{m+1}$, respectively to $t$.
These epimorphisms satisfy the commutativity relation $\lambda_{m} = \lambda_{m+1} \circ \rho_m$
for every $m \geq 1$; 
they induce therefore an epimorphism
\[
\lambda_\infty \colon \colim\nolimits_{m \to \infty} G_m \epi G.
\]

Let $K_m$ denote the kernel of the limiting map $\lambda_m \colon G_m \epi G$ 
and consider the action of $G_m$ on the Bass-Serre tree $X_m$ associated to the HNN-extension $G_m$.
Since $\lambda_m$ is injective on the base group $B_m$,
the action of $K_m$ on $X_m$ is free and so $K_m$ is a free group.
We claim that $\lambda_\infty$ is an isomorphism. 
This amounts to show 
that every element $x \in K_0$ lies in the kernel of the composition 
$\pi_m = \rho_{m-1} \circ \cdots \circ \rho_0 \colon  G_0 \epi G_m$ for $m$ large enough. 
Since  $K_0 \subseteq N = \ker \chi$,
every $x \in K_0$ is a product of conjugates of elements $a \in \AA$ by powers of $t$;
hence a conjugate of $x$,  say $x' = t^n x t^{-n}$ with $n \geq 0$, 
will be mapped  into $B_m$ for all all sufficiently large $m$.
Since $\lambda_m$ is injective when restricted to $B_m$
and as $x' \in K_0 = \ker (\lambda_m \circ \pi_m)$ 
it follows that $\pi_m(x') = 1$ and hence $\pi_m(x) = 1$. 

Suppose now that, for some $j \geq 1$, the kernel $K_{j}$ is the normal closure of a finite set, 
say of $\FF$. 
Since $G$ is the colimit of the $G_m$ there exists then an index $h \geq j$ 
such that the canonical image of $\FF$ under the canonical epimorphism $G_{j} \epi G_{h}$ 
is trivial in $G_{h}$. 
The canonical isomorphism $(\lambda_{j})_* \colon G_j/K_j \iso G$ factors therefore through $G_h$
and so the epimorphism $\lambda_{h} \colon G_{h} \epi G$ is actually an isomorphism.

The previous reasoning constitutes a proof of 
\begin{thm}
\label{thm:Structure-theorem-improved}
Let $G$ be a finitely generated group and $\chi \colon G \epi \Z \incl \R$ a rank 1 character.
Choose $t \in G$ with $\chi(t) = 1$  and set $N = \ker \chi$.
Then $G$ is a colimit of a sequence of epimorphisms 
$\rho_m \colon G_m \epi G_{m+1}$ of finitely generated groups  $G_m = \gp(B_m, y_m)$
with the following properties:
\begin{enumerate} [a)]
\item $N$ is the ascending union $\bigcup\nolimits_{m \geq 0} B_m$ of finitely generated subgroups $B_m$,
\item each $B_m$ is generated by finitely generated subgroups $S_m$ and $T_m = tS_mt^{-1}$,
\item each $G_m$ is an HNN-extension of the form 
\[
\langle B_m, y_m \mid y_m \cdot  s \cdot y_m^{-1} = \tau_m(s) \text{ for } s \in S_m \rangle,
\]
\item each epimorphism $\rho_m \colon G_m \epi G_{m+1}$ is induced by the inclusion $B_m \incl B_{m+1} $
and the assignment  $y_m \mapsto y_{m+1}$, and each epimorphism $\lambda_m \colon G_m \epi G$
is induced by the inclusion $B_m \subset G$ and the assignment $y_m \mapsto t$.
\end{enumerate}
Then one of the following two statements holds:
\begin{enumerate}[(i)]
\item there is an index $m_0$ such that $\lambda_{m_0} \colon G_{m_0} \epi G$ is an isomorphism,
\item the kernel of every epimorphism  $\lambda_m \colon G_m \epi G$ is free of infinite rank
and infinitely generated as a normal subgroup.
\end{enumerate}
\end{thm}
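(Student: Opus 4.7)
The plan is to establish that $G$ is the colimit of the sequence $(G_m,\rho_m)$, that each $K_m = \ker \lambda_m$ is a free group, and then to extract the dichotomy from an elementary finiteness argument on the directed system. Properties (a)--(d) merely restate the construction and require no argument.

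First I would set up notation as in the discussion preceding the theorem: fix a finite set $\AA \subset N$ so that $\AA \cup \{t\}$ generates $G$, form the increasing chains $B_m$ and $S_m$, the HNN-extensions $G_m$, and the compatible epimorphisms $\rho_m$, $\lambda_m$. The relation $\lambda_m = \lambda_{m+1} \circ \rho_m$ delivers a canonical map $\lambda_\infty \colon \colim_m G_m \to G$. Surjectivity is immediate from $\lambda_0$ being onto. For injectivity, I would take $x \in G_m$ with $\lambda_m(x) = 1$, note that $x \in N$, and use the exhaustion $N = \bigcup_{m' \geq 0} B_{m'}$ to place a sufficiently high $t$-conjugate $t^n x t^{-n}$ inside $B_{m'}$ for some $m' \gg m$. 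Since $B_{m'}$ embeds into $G_{m'}$ by Britton's lemma and $\lambda_{m'}$ restricts to the identity of $B_{m'}$, the element $t^n x t^{-n}$, and hence $x$, dies in $G_{m'}$.

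The second ingredient is the freeness of each $K_m$. The HNN-extension $G_m$ acts on its Bass-Serre tree $X_m$, whose vertex stabilizers are conjugates of $B_m$ and whose edge stabilizers are conjugates of $S_m$. Because $\lambda_m|_{B_m}$ is injective (being the identity onto $B_m \subset G$), the kernel $K_m$ intersects every vertex stabilizer trivially, hence acts freely on $X_m$. By the Bass-Serre theorem $K_m$ is therefore a free group.

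The dichotomy is then a short argument. Suppose $K_j$ is normally generated in $G_j$ by a finite set $\FF$. Since $\lambda_\infty$ is an isomorphism, each $f \in \FF$ eventually becomes trivial along the directed system, so there exists $h \geq j$ with $\pi_{jh}(\FF) = \{1\}$, where $\pi_{jh} = \rho_{h-1} \circ \cdots \circ \rho_j$. This forces $\pi_{jh}(K_j) = 1$, and the factorization $\lambda_j = \lambda_h \circ \pi_{jh}$ makes $\lambda_h$ injective on $\pi_{jh}(G_j) = G_h$; since $\lambda_h$ is already surjective, it is an isomorphism, yielding case (i). If no such $j$ exists, then every $K_m$ is infinitely generated as a normal subgroup; \emph{a fortiori} it is infinitely generated as an abstract group, and being free this forces infinite rank, giving (ii). The delicate point I expect to be the appeal to Bass-Serre theory in the freeness step, since it rests on the standard but nontrivial embedding $B_m \hookrightarrow G_m$ afforded by Britton's lemma; once that is in hand, the rest of the proof is essentially bookkeeping on the directed system.
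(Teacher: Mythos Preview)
Your proposal is correct and follows the paper's argument essentially line for line: establish that $\lambda_\infty$ is an isomorphism by conjugating an element of $K_m$ into some $B_{m'}$ and using that $\lambda_{m'}|_{B_{m'}}$ is the inclusion; deduce freeness of $K_m$ from the free action on the Bass--Serre tree of the HNN-extension; and read off the dichotomy from the observation that a finite normal generating set of $K_j$ eventually dies in the directed system. The only cosmetic point is that your phrase ``note that $x \in N$'' and the conjugation by $t$ really take place in $G_m$ with $y_m$ in place of $t$ (the paper commits the same abuse), and the ``delicate'' step you flag is actually automatic: $\lambda_m|_{B_m}$ is the identity of $B_m$ by construction, so $K_m$ meets every conjugate of $B_m$ trivially without any appeal to Britton's lemma beyond the standard fact that the base group embeds.
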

\index{Structure theorem!for fg indicable groups}
\begin{note}
\label{note:Improved-structure-theorem}
Theorem \ref{thm:Structure-theorem-improved} and its proof are taken from Section 6  in \cite{BCGS12}.
\end{note}
\index{Cornulier, Y. de}
\index{Guyot, L.}
\index{Bieri, R.}
\index{Strebel, R.}
%
\subsubsection{Comparison of the new structure theorem 
with Theorem \protect{\ref{thm:FP2-groups-and-HNN-extensions}}}
\label{sssec:Comparison-new-and-old-structure-theorem}
%
Let $G$ and $\chi \colon G \epi \Z \incl \R$ be as in the statement of Theorem 
\ref{thm:Structure-theorem-improved}.
The group $G$ is then a colimit of a sequence of epimorphisms
\[
G_1 \overset{\rho_1}{\epi} G_2 \overset{\rho_2}{\epi} G_3 \overset{\rho_3}{\epi}\cdots
\]
of finitely generated groups. 
Choose a free group $F$ of finite rank that maps onto $G_0$, say $\pi \colon F \epi G_0$,
and set $R = \ker (\lambda_0 \circ \pi)$. 
Then $R$ is the ascending union of the normal subgroups 
$R_m = \ker(\rho_{m-1} \circ \cdots \rho_1 \circ \pi )$.
\enlargethispage{2mm}

Suppose now
that \emph{statement (i) in Theorem \ref{thm:Structure-theorem-improved} does not hold}.
Then there are infinitely many indices $m$ with $R_m \subsetneqq R_{m+1}$,
so $R$ cannot be the normal closure of a finite set 
and thus the group $F/R \iso G$ is infinitely related.

Next let $\RR$ be a finite subset of $R$ and let $S$ be the normal closure of $\RR$ in $F$.
Then  there exists an index $m_*$ 
so that $S \subseteq R_{m_*}$. 
Set $H = F/S$ and let $\pi_* \colon H = F/S \epi F/R \iso G$ be the obvious epimorphism;
its kernel $M$ is equal to $R/S$.
The extensions $M \triangleleft H \epi G$ and $K_{m_*} \triangleleft G_{m_*} \epi G$
then fit into the commutative diagram
\begin{equation} %
\label{eq:Third-diagram}
\xymatrix{
 0 \ar[r] & M  = R/S \ar[r] \ar@{>>}[d]^{\can} & H = F/S \ar[r] \ar@{>>}[d]^{\can} &   G \ar[r] \ar[d]^{=} & 0 \\
 0 \ar[r] & K_m = R/R_{m_*} \ar[r]                        & G_m = F/R_{m_*} \ar[r]                       &   G \ar[r] & 0.
}
\end{equation}
Its rows are exact. Let $K$ be a commutative ring (with $1 \neq 0$).
The left vertical epimorphism  induces  then an epimorphism
$K \otimes M_{ab} \epi K \otimes (K_m)_{\ab}$.

By statement (ii) in Theorem \ref{thm:Structure-theorem-improved}
the kernel $K_m$ is a infinitely generated free group;
its abelianisation is therefore a non-trivial free abelian group
and so the $K$-module $K \otimes M_{ab}$ is non-zero.
In view of Lemmata \ref{lem:Independence-free-presentation}  
and \ref{lem:Characterization-almost-fp} this conclusion implies
that $G$ is not of type $\FP_2$ over the ring $K$.
\smallskip

So far we have shown
that the group $G$ is infinitely related and that it is not of type $\FP_2$.
These findings have previously been obtained from a contraposition of Theorem 
\ref{thm:FP2-groups-and-HNN-extensions}; see Proposition \ref{prp:Infinitely-related-soluble-groups}.
But this assumption that statement (i) does not hold
has further consequences:
it shows that $G$ is the colimit of a sequence of groups $G_m$ having the property 
that each group $G_m$ contains a normal subgroup $K_m$ which is free of infinite rank.
This later property implies that $G_m$ has continuously many normal subgroups 
(by a basic result in the variety of groups, 
proved independently by Adian, Olshanskii and Vaughan-Lee in \cite{Adj70}, \cite{Ols70} and \cite{VaL70})
and that $G_m$ a condensation point in the space of marked groups (by \cite[Cor.\,5.4]{BCGS12}).
It then follows that the colimit $G$ is a condensation point in the space of marked groups.

Here is a summary of the insights obtained in the above:
\begin{crl}
\label{crl:Consequence-if-i-is-false}
Let $G$ be a finitely generated group that admits a rank 1 character $\chi \colon G \epi \Z \incl \R$ 
such that $G$ is \emph{not} an HNN-extension with finitely generated base group $B $ contained in $\ker \chi$ 
and finitely generated associated groups.

Then $G$ is not of type $\FP_2$ over a non-zero commutative ring 
and it is a condensation point in the space of marked groups.
\end{crl}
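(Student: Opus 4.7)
The plan is to leverage Theorem \ref{thm:Structure-theorem-improved} directly: the hypothesis of the corollary is precisely the negation of statement (i) in that theorem, so we know that statement (ii) must hold. Concretely, for every $m \geq 0$ the kernel $K_m$ of $\lambda_m \colon G_m \epi G$ is a free group of infinite rank and is not finitely generated as a normal subgroup of $G_m$. This is the single structural input from which both conclusions will be extracted.

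For the claim that $G$ is not of type $\FP_2$ over any non-zero commutative ring $K$, I would argue by contradiction via Lemmata \ref{lem:Independence-free-presentation} and \ref{lem:Characterization-almost-fp}. Suppose $G$ were of type $\FP_2$ over $K$. Pick a free group $F$ of finite rank and an epimorphism $F \epi G_0$, so $R = \ker(\lambda_0 \circ \pi)$ is the ascending union of the normal subgroups $R_m$; by Lemma \ref{lem:Characterization-almost-fp} one may find a finite subset $\RR \subset R$ whose normal closure $S$ satisfies $K \otimes_\Z (R/S)_{\ab} = 0$. Since $\RR$ is finite, some $R_{m_*}$ contains $S$, and the commutative diagram \eqref{eq:Third-diagram} with $m = m_*$ produces a surjection $K \otimes M_{\ab} \epi K \otimes (K_{m_*})_{\ab}$ where $M = R/S$. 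The right-hand side is non-zero because $K_{m_*}$ is a free group of infinite rank, contradicting $K \otimes M_{\ab} = 0$.

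For the second conclusion, I would invoke the Adian--Ol'shanskii--Vaughan-Lee result that any group containing a normal subgroup which is free of infinite rank has $2^{\aleph_0}$ normal subgroups, so each $G_m$ has continuum many normal quotients. Combining this with \cite[Cor.\,5.4]{BCGS12}, each $G_m$ is a condensation point in the space of marked groups. Since $G$ is the colimit of the sequence $(G_m, \rho_m)$ coming from Theorem \ref{thm:Structure-theorem-improved}, and since condensation is preserved under taking such colimits (again by the cited corollary of \cite{BCGS12}), $G$ itself is a condensation point.

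The only real obstacle I anticipate is bookkeeping in the $\FP_2$ step: one must be careful that the finite subset $\RR \subset R$ produced by Lemma \ref{lem:Characterization-almost-fp} really does land in some $R_{m_*}$ (which is immediate from $R = \bigcup_m R_m$ and finiteness of $\RR$), and that the induced vertical map $M = R/S \epi R/R_{m_*} = K_{m_*}$ on abelianizations is indeed surjective so that the tensor product surjection used above is valid. Both points are straightforward, so no serious new ideas beyond Theorem \ref{thm:Structure-theorem-improved} and the cited background results are required.
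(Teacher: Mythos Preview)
Your proposal is correct and follows essentially the same route as the paper's proof: both use Theorem \ref{thm:Structure-theorem-improved} to obtain the infinitely generated free kernels $K_m$, then derive the $\FP_2$ failure from the surjection $K \otimes M_{\ab} \epi K \otimes (K_{m_*})_{\ab}$ via Lemmata \ref{lem:Independence-free-presentation} and \ref{lem:Characterization-almost-fp}, and establish the condensation claim by invoking the Adian--Ol'shanskii--Vaughan-Lee result together with \cite[Cor.\,5.4]{BCGS12}. The only cosmetic difference is that you frame the $\FP_2$ step as a contradiction while the paper argues directly, but the content is identical.
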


We close with a word on the verification 
that a rank 1 character does not satisfy statement $(i)$ in Theorem \ref{thm:Structure-theorem-improved}.
Let $\chi \colon G \epi \Z \incl \R$ be a rank 1 character with kernel $N$ and pick $t \in \chi^{-1}(\{1\})$.
Assume $\chi$ and $-\chi$ represent points outside of $\Sigma^1(G)$ 
and consider one of the approximating
HNN-extensions $G_m$ constructed in the proof of \ref{thm:Structure-theorem-improved}.
Then the characters $\pm \chi \circ \lambda_m$ represent points outside of $\Sigma^1(G_m)$, 
so $G_m$ cannot be an ascending HNN-extension 
(see Corollary \ref{crl:Sigma1-epimorphism}) and  thus $G_m$ contains a non-abelian free subgroup.
If $G$ contains no such subgroup then $\lambda_m$ cannot be an isomorphism; 
as this argument is valid for every index $m$, we have proved that statement (i) does not hold.

The hypothesis that $G$ contain no non-abelian free subgroup is a familiar one 
in the applications of the $\Sigma$-theory; 
see, \eg, Theorem \ref{thm:Consequence-finite-presentation}
or Proposition \ref{prp:Infinitely-related-soluble-groups}.
Recently weaker substitutes have been found.
They are listed in Proposition 6.6 in \cite{BCGS12}
and lead to interesting applications; see Examples 6.7 and 6.11 in the cited paper.
\index{Cornulier, Y. de}
\index{Guyot, L.}
\index{Bieri, R.}
\index{Strebel, R.}
%
 
%
%
\section{Computation of $\Sigma^1$ for one relator groups}
\label{sec:Invariant-one-relator-group}
%
%
\enlargethispage{2mm}
In this section, 
we calculate the invariant of a group  with $m$ generators and a single defining relator $r$.
The crucial case is that where $m=2$ 
and $r$ is a cyclically reduced word which involves both generators.
In fact,
if $m = 1$, the group $G$ is cyclic and $\Sigma^1(G) = S(G)$;
if $m>2$ the invariant is empty by part (i) of  Proposition \ref{prp:Invariant-group-large-deficiency};
if, finally,  $m=2$ and $r$ is  a power of one of the generators, say if $r = a^\ell$,
the group $G$ is cyclic if $|\ell| = 1$ and a non-trivial free product otherwise.
In the first case, $\Sigma^1(G) = S(G) $ consists of two points, in the other case it is empty
(see Example \ref{examples:Groups-with-non-trivial-centre} a) for the first case
and Example 3 in section \ref{sssec:Sigma1-first-examples} 
or part (ii) of  Proposition \ref{prp:Invariant-group-large-deficiency} for the second one).

The determination of the invariant in the remaining case is more challenging.
It is due to Ken Brown (\cite{Bro87b}) and described in
\begin{thm}
\label{thm:Sigma1-one-relator-group}
Let $G$ be a group given by a presentation $\eta_{*} \colon \langle a, b, \mid r \rangle$
where $r= s_{1}\cdots s_{k}$ is a cyclically reduced, non-empty word involving both generators.
Then a non-zero character $\chi \colon G \to \R$ represents a point of $\Sigma^1(G)$ 
if, and only if, the sequence 
\begin{equation}
\label{eq:Sequence-f-one-relator}
f_r(\chi) = \left(\chi(s_{1}), \chi(s_{1}s_{2}),  \ldots \chi(s_{1}\cdots s_{k}) \right)
\end{equation}
satisfies the following condition:
\begin{equation}
\label{eq:Condition-Sigma1-one-relator-group}
\left.
\begin{gathered}
\text{if one of } \chi(a)  \text{ and } \chi(b) \emph{ is zero, } 
f_r(\chi) \text{ assumes its minimum  twice,}\\
\text{otherwise } f_r(\chi)  \emph{ assumes its minimum  once.}
\end{gathered}
\right\}
\end{equation}
\end{thm}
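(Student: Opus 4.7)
The plan is to split into sufficiency and necessity. For the sufficiency I would match condition \eqref{eq:Condition-Sigma1-one-relator-group} with the explicit formulation of $\psi(\{r\})$ for two-generator groups given in Definition \ref{definition:psi(R)-2-generators} and apply Proposition \ref{prp:psi(R)-subset-Sigma1}. When $\chi(a)$ and $\chi(b)$ are both non-zero the two formulations literally coincide. When one of them vanishes, a short auxiliary argument must show that the minimum of $f_r(\chi)$ being attained twice forces it to be attained at two \emph{consecutive} indices: if $\chi(a)=0$, the admissible increments $\chi(s_j)$ take only the values $0,\pm\chi(b)$, and any excursion of the cumulative sum away from its minimum must begin with an increment $+\chi(b)$ and end with $-\chi(b)$, so that two non-consecutive minima would necessarily be separated by further minima lying between them.

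For necessity I would handle the case of a rank $1$ character $\chi$ first, as this contains the heart of the argument. Using the invariance Theorem \ref{thm:Sigma1-well-defined} I am free to Nielsen-transform the generating pair $\{a,b\}$, and I can therefore assume $\chi$ has been put into the standard form $\chi(a)=0$, $\chi(b)=1$, so that $\sigma_b(r)=0$. Magnus' classical rewriting then expresses $r$ as a word $R$ in the conjugates $a_i = b^i a b^{-i}$ for $i$ ranging over an integer interval $[m,M]$, and $G$ splits as an HNN-extension with finitely generated base $B = \langle a_m,\ldots,a_M \mid R\rangle$, stable letter $b$, and associated subgroups $S = \langle a_m,\ldots,a_{M-1}\rangle$ and $T = \langle a_{m+1},\ldots,a_M\rangle$, both free of rank $M-m$ by the Freiheitssatz. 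Combining Proposition \ref{prp:Ascending-HNN-extension} with Proposition \ref{prp:HNN-extension-and-Sigma1}, the condition $[\chi]\in\Sigma^1(G)$ is equivalent to $T=B$; by standard one-relator theory, this holds exactly when the extremal generator $a_m$ appears in $R$ a single time, which for $r$ cyclically reduced translates into the minimum of $f_r(\chi)$ being attained at exactly two consecutive indices. The symmetric analysis for $-[\chi]$ controls $S=B$, and assembling these equivalences yields \eqref{eq:Condition-Sigma1-one-relator-group} in the rank $1$ case.

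For a rank $2$ character $\chi$ failing the condition ``minimum attained once'', the loop traced by $r$ in the Cayley graph $\Gamma(\Z^2,\{a,b\})$ revisits a lattice vertex at the positions realising the minimum, and $\chi$ lies in an open arc of $S(G)$ within which that same vertex remains extremal. The rank $1$ characters are dense in this arc by Lemma \ref{lem:Density-rank-1-points}, and each of them still sees the multiply-visited vertex as the unique minimum-vertex of $r$, hence fails the condition; by the rank $1$ analysis of the previous paragraph they lie outside $\Sigma^1(G)$, and since $\Sigma^1(G)$ is open in $S(G)$ by Theorem \ref{thm:Openness-Sigma-1} the entire arc, including $[\chi]$ itself, lies outside $\Sigma^1(G)$. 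The main obstacle, I expect, lies in the combinatorial translation at the heart of the rank $1$ case: proving that in the one-relator group $B$ the subgroup $T$ equals $B$ precisely when the extremal generator $a_m$ appears in $R$ just once. This draws on Magnus--Karrass--Solitar refinements of the Freiheitssatz and on careful tracking of how the multiplicity condition on $f_r(\chi)$ behaves under Nielsen transformations, which I would verify by checking that such transformations preserve the $\R$-span of $\chi$ and its vanishing loci on the abelianisation.
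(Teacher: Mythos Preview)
Your overall architecture mirrors the paper's: sufficiency via $\psi(\{r\})$ and Proposition \ref{prp:psi(R)-subset-Sigma1}; necessity for rank $1$ characters vanishing on a generator via the Magnus rewriting, the HNN decomposition, the Freiheitssatz, and Magnus' conjugacy theorem; and the rank $2$ case via density of rank $1$ points and openness. Your auxiliary observation for sufficiency---that when $\chi(a)=0$ and the minimum of $f_r(\chi)$ is attained exactly twice, the two positions must be cyclically consecutive---is correct and in fact tidies up a point the paper passes over in Remark \ref{remarks:Sigma1-one-relator-group}\,(a); but the reason is cyclic reducedness of $r$ (an isolated minimum at position $i$ would force $s_i$ and $s_{i+1}$ to be inverse powers of $b$), not the appearance of further minima between the two.

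The real divergence, and the genuine gap, is in your handling of rank $1$ characters with $\chi(a),\chi(b)$ both nonzero. You propose to Nielsen-transform the generating pair so that $\chi$ vanishes on one new generator; the paper instead adjoins roots to pass to an overgroup $\tilde G$ with $\tilde\chi(x)=\tilde\chi(y)=1$, uses Proposition \ref{prp:Sigma-1-join-subgroups} to transfer $[\chi]\in\Sigma^1(G)$ to $[\tilde\chi]\in\Sigma^1(\tilde G)$, and then performs a \emph{single} substitution $z=yx^{-1}$, whose effect on the minimum-count is the content of Lemma \ref{lem:Special-case-for-case-2}. Your route would require an analogue of that lemma for each elementary Nielsen move in the Euclidean-algorithm chain from $(\chi(a),\chi(b))$ to $(0,1)$. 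This is the combinatorial core of the argument, and your proposed verification---that Nielsen moves preserve the $\R$-span of $\chi$ and its vanishing loci---addresses the wrong object: the issue is not how $\chi$ transforms but how the $\chi$-track of the relator changes, letter by letter, under substitutions such as $a\mapsto ab^{\pm1}$. That analysis is where the work lies, and nothing you have said indicates how to carry it out.
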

\index{Computation of Sigma1@Computation of $\Sigma^1$ for!one relator groups}
\index{Brown's algorithm}
\index{Brown, K. S.}

\begin{remarks}
\label{remarks:Sigma1-one-relator-group}
a) Condition \eqref{eq:Condition-Sigma1-one-relator-group} is the defining property of the set $\psi(\{r\})$
(see Definitions  \ref{definition:psi(R)} and \ref{definition:psi(R)-2-generators}).
So the conclusion of Theorem \ref{thm:Sigma1-one-relator-group} can be restated by saying 
that the subset $\psi(\{r\})$ coincides with $\Sigma^1(G)$.
\index{Invariant Sigma1@Invariant $\Sigma^1$@lower bound psi@lower bound $psi(\RR)$}

b)  Theorem \ref{thm:Sigma1-one-relator-group} will be obtained by combining four ingredients:
the algebraic version of the $\Sigma^1$-criterion 
embodied in the subset $\psi(\{r\}) \subseteq \Sigma^1(G)$,
the standard method of analyzing one-relator groups due to W. Magnus \cite{Mag30},
Proposition \ref{prp:HNN-extension-and-Sigma1} and the openness of $\Sigma^1(G)$.
\end{remarks}

\subsection{Proof of Theorem \ref{thm:Sigma1-one-relator-group}}
\label{sec:Proof-Theorem-Sigma1-one-relator-group}
%
If the sequence $f_r(\chi)$ satisfies condition \eqref{eq:Condition-Sigma1-one-relator-group}
then $[\chi]$ belongs to the subset $\psi(\{r\})$ (see Definition \ref{definition:psi(R)-2-generators})
and thus lies in $\Sigma^1(G)$ by Proposition \ref{prp:psi(R)-subset-Sigma1}. 
For the proof of the  converse,  three cases will be distinguished.
We shall first treat the case where $\chi$ vanishes on one of the generators,
then the case of a character that is non-zero on both generators but has rank 1.
Finally, characters of rank 2 will be considered.
%
\subsubsection{Case 1}
\label{sssec:Proof-Sigma1-one-relator-group-Case1}
%
If  $\chi$ vanishes on one of the generators,
it has rank 1 and four, very similar subcases arise;
it will suffice to analyze the situation of the character $\chi\colon G \epi \Z \incl \R$ 
with $\chi(a) = 1$  and $\chi(b) = 0$.
Let $F$ denote the free group on the generators $t = a$ and $b$, 
let $\eta_{*} \colon F \epi G$ be the epimorphism induced  by $\eta \colon \{t,b\} \to G$. 
Set $N =  \ker \chi$ and $ U = \ker ( \chi \circ \eta_{*}) =  \eta_{*}^{-1} (N)$.

The normal subgroup $U$ is freely generated by the conjugates $b_{j} = t^j bt^{-j}$ of $b$ with $j \in \Z$.
The relator is a product of conjugates of $b$ and of $b^{-1}$
and can thus be written as a cyclically reduced word $w$ in the letters $b_{j}$ and $b_{j}^{-1}$;
let $\mu$ be the smallest and $M$ the largest of the indices that occur 
when $r$ is written as a word $w$ in these letters; 
since $r$ involves both generators, $\mu < M$.
Define subgroups $B$, $S$ and $T$ by setting
\[
B  = \gp(\{\eta_{*}(b_{j}) \mid \mu \leq j \leq M \}), \quad
S   = \gp(\{\eta_{*}(b_{j}) \mid \mu \leq j < M \}) 
\]
and $ T   = \gp(\{\eta_{*}(b_{j}) \mid \mu < j \leq M \}) = \mu( S )$.
By the Freiheitssatz (see, e.\,g., \cite[p. 198]{LS77}),
the groups $S$ and $T$ are free on the displayed generators,
whence $G$ is an HNN-extension with base group $B$, associated subgroups $S$, $T$ and stable letter $t$.
The hypothesis that  $[\chi]$ lies in $\Sigma^1(G)$
and Proposition \ref{prp:HNN-extension-and-Sigma1} allow one to deduce
that the base group $B$ coincides with the associated subgroup $T$.
\index{Magnus, W.}

By going back to the definition of $B$ and $T$ 
one sees that the generator $b_{\mu}$ can be omitted from the given set of generators of $B$.
Thus there exists a word $v$ in the generators $b_{\mu + 1}$, \ldots, $b_{M}$  
such that $b_{\mu}$ and $v$ define the same element in $B$.
But $B$, by the proof of the Freiheitssatz, is a group 
having  $\BB = \{b_{\mu}, \ldots,  b_{M} \}$ as its set of generators and a single defining relator $w$;
here $w$ denotes a word 
obtained by rewriting the original relator $r$ as a (cyclically reduced) word in the generators $b_{j}$.
So $B$ can be written as quotient $\Phi/R$ of the free group on $\BB$
modulo the normal subgroup $R$ generated by $w$.
By the previous paragraph $B$ can also be written as $\Phi/R_{1}$
where $R_{1} \triangleleft \Phi$ denotes the normal subgroup generated by $w_{1} = b_{\mu}\cdot v^{-1}$.
Since $R_{1}\subseteq R$
the identity on $\Phi$ induces an epimorphism $\rho \colon \Phi/R_{1} \epi \Phi/R$;
as the groups $\Phi/R_{1}$ and $\Phi/R$ are both isomorphic to the finitely generated free group $T$
and free groups of finite rank are hopfian (see, \eg, \cite[Sec.\;2.4, Thm.\;2.13]{MKS}),
the epimorphism $\rho$ is injective, whence $R_{1}$ equals $R$.

At this point, we invoke another theorem of W. Magnus',
the \emph{Conjugacy Theorem for Groups with One Defining Relator} (see Theorem 4.11 in \cite[p. 261]{MKS}).
It guarantees 
that the relator $w_{1} = b_{\mu} \cdot v^{-1}$ is a cyclic permutation of $w$ or of $w^{-1}$.
But if so,
the generator $b_{\mu}$ occurs only once in $w$, either as $b_{\mu}$ or $b_{\mu}^{-1}$.
This fact can be restated in terms of the original relator $r$ by saying 
that the minimum of the sequence $f_r(\chi)$ occurs twice,
once before an occurrence of $b^\pm$ and once afterwards.
(If the relators starts out with $b^\pm$ and the minimum is 0, the preceding conclusion is to be interpreted as saying that the minimum occurs after the first letter and at the end of the relator.)
Thus $f_r(\chi)$ satisfies condition \eqref{eq:Condition-Sigma1-one-relator-group}.
%
\subsubsection{Case 2}
\label{sssec:Proof-Sigma1-one-relator-group-Case2}
In Case 2,
the  character $\chi \colon G \to \R$ has rank 1 and takes non-zero values on both generators.
Its image can be assumed to be $\Z$; set $p = \chi(a)$ and $q = \chi(b)$.
Our aim is to reduce this set-up to that considered in Case 1.
To do so,
we adjoin to $G$ a $p$-th root of $a$ and a $q$-th root of $b$ ,
ending up with a group $\widetilde{G}$ with generating set $\{x,y\}$,
a character $\tilde{\chi} \colon \tilde{G} \epi \Z \incl \R$ and a defining relator $\tilde{r}$.
We then verify that $[\tilde{\chi}] \in \Sigma^1(\tilde{G})$,
introduce a new generating system $\{x, z = yx\}$,
express the relator $\tilde{r}$ in terms of  the new generators,
reduce it and arrive thus at a relator of the type studied in Case 1.
A comparison with Case 1 will then disclose
that the sequence $f_{\tilde{r}} (\tilde{\chi})$ assumes its minimum only once
and to conclude 
that the original sequence $f_r(\chi)$ has this property, too.
\smallskip

The details of the outlined verification are as follows.
By replacing, if need be, the defining relator $r$ with a cyclic permutation of itself,
one can assume that $r$ has the form
\begin{equation}
\label{eq:Original-relator}
r = a^{e_{1} } b^{f_{1}} \cdot  a^{e_{2} } b^{f_{2}}   \cdots  a^{e_{\ell} } b^{f_{\ell}}
\end{equation} 
where each of the exponents $e_{j}$, $f_{j}$ is non-zero.
Adjoin a $p$-th root $x$ of $a$ to $G$, obtaining a group with the presentation
\[
G_{1} = \langle a, b, x, \mid r(a,b), a \cdot x^{-p} \rangle \isoinv \langle x, b \mid r(x^{a}, b) \rangle.
\]
Then adjoin a $q$-th root $y$ of $b$ to $G_{1}$, ending up with the group
\begin{equation}
\label{eq:Group-with-adjoined-roots}
\tilde{G} = \langle x, y \mid \tilde{r}(x,y) \rangle
\quad  \text{with} \quad 
\tilde{r}(x,y)  = r(x^{a},y^b) =  x^{pe_{1} } \cdot y^{qf_{1}}  \cdots  x^{pe_{\ell} } \cdot y^{qf_{\ell}}.
\end{equation}
The character $\chi$ of $G$ extends uniquely to a character $\tilde{\chi}$ of $\tilde{G}$;
its image is $\Z$.
By hypothesis, $[\chi]$ belongs to $\Sigma^1(G)$  
and $\chi$ is non-zero on $a$ as well as on $b$.
Proposition \ref{prp:Sigma-1-join-subgroups} and the facts
that the invariants of the infinite cyclic groups $\gp(x)$ and $\gp(y)$ 
coincide with the spheres $S(\gp(x))$ and $S(\gp(y))$, respectively,  
thus allow one to deduce that $\tilde{\chi}$ represents a point of $\Sigma^1(\tilde{G})$.
\index{Computation of Sigma1@Computation of $\Sigma^1$ for!join of subgroups} 

The function $f_{\tilde{r}} (\tilde{\chi})$ arises from the function $f_r(\chi)$ by ``affine interpolation''.
If $f_{\tilde{r}} (\tilde{\chi})$ assumes its minimum exactly once so will therefore the function $f_r(\chi)$.
To justify the claim in Case 2, it suffices therefore to establish 
\begin{lem}
\label{lem:Special-case-for-case-2}
Assume $L$ is a one-relator group with generators $x$, $y$ and defining relator of the form
\begin{equation}
u = x^{m_1} y^{n_1} \cdots x^{m_\ell} y^{n_\ell}
\end{equation}
where $\ell > 0$ and all exponents $m_i$ and $n_i$ are non-zero.
Let $\psi \colon L \to \R$ be the character that sends $x$ and $y$ to 1.
If $[\psi] \in \Sigma^1(L)$ then the function $f_u(\psi)$ assumes its minimum only once.
\end{lem}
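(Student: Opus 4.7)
The strategy is to reduce to Case~1 via a Tietze transformation. I would introduce the new generator $z = x^{-1}y$, so that $y = xz$ and $\{x, z\}$ is still a generating set for $L$; in these new coordinates the character $\psi$ satisfies $\psi(x) = 1$ and $\psi(z) = 0$, which is precisely the set-up of Case~1. Substituting $y \mapsto xz$ and $y^{-1} \mapsto z^{-1}x^{-1}$ into $u$ gives a word $\tilde u = x^{m_1}(xz)^{n_1}\,x^{m_2}(xz)^{n_2}\cdots x^{m_\ell}(xz)^{n_\ell}$, where $(xz)^{n_i}$ is read as $(z^{-1}x^{-1})^{|n_i|}$ when $n_i < 0$; let $u^{*}$ denote its cyclically reduced form, so that $L = \langle x, z \mid u^{*}\rangle$.

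The first task would be to verify that $u^{*}$ is non-empty and involves both generators, so that Case~1 applies. A block-by-block inspection shows that the hypothesis $m_{i+1} \neq 0$ prevents any $z^{\pm 1}$ from ever becoming adjacent to its inverse during the free reduction of $\tilde u$ or during the cyclic reduction that follows; the only cancellations available are $xx^{-1}$ pairs. Thus $u^{*}$ retains all $\sum_i |n_i| > 0$ occurrences of $z^{\pm 1}$, and a symmetric argument shows that it also retains an $x^{\pm 1}$. Applying Case~1 to $\langle x, z \mid u^{*}\rangle$ under the hypothesis $[\psi] \in \Sigma^1(L)$, one obtains that $f_{u^{*}}(\psi)$ attains its minimum at exactly two cyclically-consecutive positions of $u^{*}$, with a single $z^{\pm 1}$ between them.

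It remains to translate this rigid conclusion on $u^{*}$ back to the original $u$. Since $\psi(x) = \psi(y) = 1$, the substitution $y \mapsto xz$ has the striking property that each ``middle'' position inside an $xz$-substitution carries the \emph{same} partial $\psi$-sum as the right endpoint of that substitution, whereas the middle of a $z^{-1}x^{-1}$-substitution carries a strictly \emph{larger} sum; free reduction preserves partial sums at all surviving positions, and cyclic reduction merely conjugates $u_{\mathrm{red}}$ by an initial power of $x$, shifting the entire interior sequence of partial sums by a known constant. I would argue that the composite effect of these three operations sets up a two-to-one correspondence between the cyclic minimum positions of $f_u(\psi)$ and those of $f_{u^{*}}(\psi)$, so that the conclusion of Case~1 (exactly two cyclic minimum positions of $f_{u^{*}}(\psi)$) forces $f_u(\psi)$ to attain its minimum at exactly one cyclic position.

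The main obstacle is this two-to-one correspondence in the last step. The delicate point is that cyclic reduction can both hide minima of $u_{\mathrm{red}}$ that lie in the peeled-off boundary and expose new equal-valued positions in the interior of $u^{*}$, so the bijection is not simply ``each minimum of $u$ lifts to its image under the position map into $\tilde u$''. I expect the resolution to rest on the observation that each $z^{\pm 1}$-letter of $u^{*}$ at the minimum level produces two consecutive equal-valued entries of $f_{u^{*}}(\psi)$ (one on either side of that letter), so the minimum-attaining $z^{\pm 1}$-letters of $u^{*}$ are in bijection with the cyclic minimum positions of $f_u(\psi)$; a careful case analysis on the signs of the $m_i$ and $n_i$, and on the precise locations of the $xx^{-1}$-cancellations, must then verify that this bijection accounts for all minimum positions on both sides.
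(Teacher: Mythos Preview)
Your strategy matches the paper's exactly: change to a basis $\{x,z\}$ with $\psi(z)=0$ (the paper takes $z=yx^{-1}$ rather than your $z=x^{-1}y$, a cosmetic difference), reduce the substituted relator, and invoke Case~1. Your verification that $u^{*}$ involves both generators is fine, and your statement of what Case~1 delivers is correct.

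The place where your argument is incomplete is precisely the obstacle you name, and the paper's resolution is worth seeing because it replaces the hoped-for abstract correspondence by an explicit computation. First the paper normalises (by cyclic permutation and possible inversion) so that $\min f_u(\psi)=0$, $u$ begins with $x$ and ends with $Y$, and it disposes of the degenerate case $u=(xY)^k$ separately. It then decomposes $u=v_1w_1v_2w_2\cdots v_L$ where the $v_i$ are the maximal subwords on which the track stays in $\{0,1\}$ and each $w_j$ stays $\geq 1$ and reaches $\geq 2$. The crucial observation is that the $v_i$ have very rigid shapes --- $v_1=x(Yx)^{k_1}$, each interior $v_i$ is $(Xy)^{k_i}$ or $(Yx)^{k_i}$, and $v_L=(Yx)^{k_L}Y$ --- whose substituted-and-reduced forms one writes down explicitly (with the paper's $z$ they are $Z^{k_1}x$, $Xz^{\pm k_i}x$, $XZ^{k_L+1}$). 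One checks separately that each $(\hat w_j)_{\mathrm{red}}$ cannot begin with $X$ or end with $x$, so no cancellation crosses block boundaries and $\hat u_{\mathrm{red}}$ is the concatenation of these pieces. Counting then shows that the number of $z^{\pm1}$-letters at the minimum level of $\hat u_{\mathrm{red}}$ equals the number of zeros of $f_u(\psi)$ --- so your proposed bijection is in fact correct --- and Case~1's conclusion (exactly two consecutive minima, hence exactly one such $z^{\pm1}$) forces $L=2$ with $k_1=k_L=0$, i.e.\ $f_u(\psi)$ has a single minimum.
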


\begin{proof}
By replacing $u$ with a suitable cyclic conjugate and then, if need be, with the inverse of this conjugate,
we can arrange that the minimum of the function $f_u(\psi)$ is 0 
and that the new relator begins with $x$ and ends with $y^{-1}$.
We thus assume that $m_1 > 0$ and $n_\ell < 0$ and have to show 
that the $\psi$-value of no proper initial segment $s_1 \cdots s_h$ of $u$ is 0.

To arrive at this goal, we introduce a new basis $\{x, z\}$ in the free group on $\{x,y\}$.
Put $z = y \cdot x^{-1}$.
Then $\psi$ vanishes on $z$ and $y = z \cdot x$.
Upon substituting $z\cdot x$ for $y$ in the given relator $u$
one arrives at the new relator
\begin{equation}
\label{eq:Transformed-relator}
\hat{u}   = x^{m_{1} } (z\cdot x)^{n_{1}} \cdots  x^{m_{\ell} } (z\cdot x)^{n_{\ell}}.
\end{equation}
This relator is not freely reduced and so we have to determine
the relationship between $\hat{u}$ and the word $\hat{u}_{red}$ 
that is obtained from $\hat{u}$ by free reduction.
To see where the crux lies,
we first look at
\begin{example}
\label{example:Study-free-reduction}
Consider the relator
$u = xYxxYxYxyXYXyxyXYY$
where $X$, $Y$ denote the inverses of $x$ and $y$.
The graph of the sequence $f_u(\psi)$ looks then as depicted in Figure 
\ref{fig:Graph-function-word-u}.
\begin{figure}[htb]
\psfrag{1}{\hspace*{-1.8mm} \scriptsize $1$}
\psfrag{2}{\hspace*{-1.5mm}  \scriptsize $2$}
\psfrag{3}{\hspace*{-1.3mm} \scriptsize $3$}
\psfrag{4}{\hspace*{-0.1mm}\scriptsize $4$}
\psfrag{5}{\hspace*{-1.4mm} \scriptsize $5$}
\psfrag{6}{  \hspace*{-3mm}   \scriptsize $6$}
\psfrag{7}{\hspace*{-1.2mm} \scriptsize $7$}
\psfrag{8}{\hspace*{-1.2mm} \scriptsize $8$}
\psfrag{9}{\hspace*{-1.3mm} \scriptsize $9$}
\psfrag{10}{\hspace*{-2mm} \scriptsize $10$}
\psfrag{11}{\hspace*{-2.5mm} \scriptsize $11$}
\psfrag{12}{  \hspace*{-2.0mm}\scriptsize $12$}
\psfrag{13}{  \hspace*{-2.5mm}\scriptsize $13$}
\psfrag{14}{\hspace*{-1.3mm}\scriptsize $14$}
\psfrag{15}{\hspace*{-1.2mm}\scriptsize $15$}
\psfrag{16}{  \hspace*{-3mm}   \scriptsize $16$}
\psfrag{17}{\hspace*{-2.0mm} \scriptsize $17$}
\psfrag{18}{\hspace*{-2.0mm} \scriptsize $18$}
\psfrag{x}{\hspace*{-2.2mm} \small $x$}
\psfrag{X}{  \hspace*{-1.8mm}  \small $X$}
\psfrag{y}{\hspace*{-2.5mm}  \small $y$}
\psfrag{Y}{\hspace*{-1.0mm} \small $Y$}
\psfrag{ps}{\hspace*{-4.5mm} \small $f_u(\psi)$}
\psfrag{j}{\hspace*{-2mm} \small $j$}
\begin{center}
\includegraphics[width = 12cm]{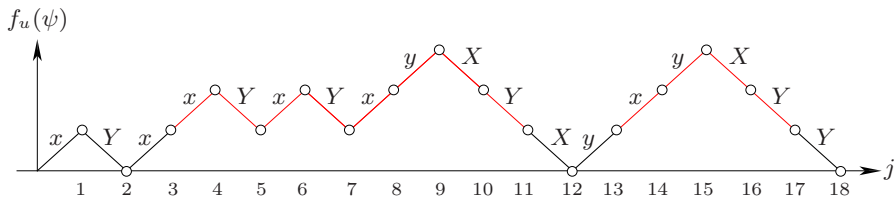}
\caption{Graph of the function $f_u(\psi)$}
\label{fig:Graph-function-word-u}
\end{center}
\end{figure}

Now replace $y$ and $Y$ by $zx$, $XZ$, respectively; 
there results the word $\hat{u}$. 
This word is not reduced; if one reduces it, one obtains the word $\hat{u}_{red}$.
Its graph is shown in Figure  \ref{fig:Graph-function-word-u-transformed}.
\begin{figure}[htb]
\psfrag{1}{\hspace*{-1.2mm} \scriptsize $1$}
\psfrag{2}{\hspace*{-1.5mm}  \scriptsize $2$}
\psfrag{3}{\hspace*{-1.3mm} \scriptsize $3$}
\psfrag{4}{\hspace*{-0.1mm}\scriptsize $4$}
\psfrag{5}{\hspace*{-1.4mm} \scriptsize $5$}
\psfrag{6}{  \hspace*{-3mm}   \scriptsize $6$}
\psfrag{7}{\hspace*{-1.2mm} \scriptsize $7$}
\psfrag{8}{\hspace*{-1.2mm} \scriptsize $8$}
\psfrag{9}{\hspace*{-1.3mm} \scriptsize $9$}
\psfrag{10}{\hspace*{-2mm} \scriptsize $10$}
\psfrag{11}{\hspace*{-2.5mm} \scriptsize $11$}
\psfrag{12}{  \hspace*{-2.0mm}\scriptsize $12$}
\psfrag{13}{  \hspace*{-2.5mm}\scriptsize $13$}
\psfrag{14}{\hspace*{-1.3mm}\scriptsize $14$}
\psfrag{15}{\hspace*{-1.2mm}\scriptsize $15$}
\psfrag{16}{\hspace*{-2mm}   \scriptsize $16$}
\psfrag{17}{\hspace*{-1.8mm} \scriptsize $17$}
\psfrag{x}{\hspace*{-2.2mm} \small $x$}
\psfrag{X}{  \hspace*{-1.8mm}  \small $X$}
\psfrag{z}{\hspace*{-1.5mm}  \small $z$}
\psfrag{Z}{\hspace*{-1.5mm} \small $Z$}
\psfrag{ps}{\hspace*{-4.5mm} \small $f_{\hat{u}_{red}}(\psi)$}
\psfrag{j}{\hspace*{-2mm} \small $j$}
\begin{center}
\includegraphics[width = 12cm]{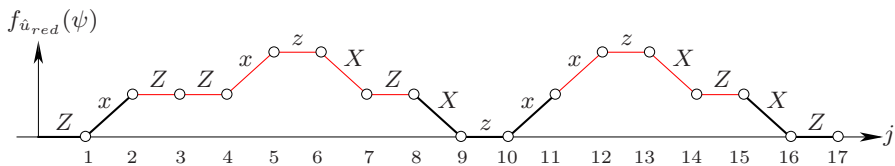}
\caption{Graph of the function $f_{\hat{u}_{red}}(\psi)$}
\label{fig:Graph-function-word-u-transformed}
\end{center}
\end{figure}

A comparison of the two graphs leads to the following observations.
The domain $\{1,2, \ldots, 18\}$ of the function $f_u(\psi)$ 
has five consecutive subintervals of maximal lengths;
these subintervals are of two types:
those of the first type are characterized by the fact 
that the function $f_u(\psi)$ assumes on them only the values 0 and 1
whereas $f_u(\psi)$ assumes at least once the value 2 on the intervals of the other type. 
The graphs of the function on these subintervals is drawn alternatively in black and in red.
Let $v_1$, $w_1$, $v_2$, $w_2$ and $v_3$ be the subwords that give rise to these intervals.
More precisely, 
let $v_1$ be the longest initial segment of $u$ 
on which the function takes only the values  0 and 1.
Next, let $v_1w_1$ be the longest initial segment such that $f_u(\psi)$ takes on the second part 
only values in $\{1,2, \ldots \}$ 
and define the subwords $v_2$, $w_2$ and $v_3$ similarly.

If the occurrences of the generator $y$ in these subwords are replaced by $zx$,
one obtains the words $\hat{v}_1$, $\hat{w}_1$,  \ldots, $\hat{v}_3$; they need not be reduced.
Let  $(\hat{v}_1)_{red}$, $(\hat{w}_1)_{red}$, \ldots, $(\hat{v}_3)_{red}$ be their reductions.
The crucial observations are now these:
the word $v_1$ has the form $v_i = x (Yx)^k$ with $k \geq 0$ 
and  $\hat{v}_i = x (XZx)^k$, whence $(\hat{v}_i)_{red}  = Z^kx$.
The word $v_2$ has the form $v = (Xy)^k$ with $k > 0$ and $\hat{v}_{red} = X z^k x$. 
For future use, we note that the word $v' =(Yx)^k$ leads to $\hat{v}'_{red} = X Z^kx$.
The word $v_3$, finally, has the form $v_\ell = (Yx)^kY$ with $k \geq 0$
and  $(\hat{v}_\ell)_{red} = XZ^{k+1}$.
Consider now the word $w = w_j$ with $j \in \{1,2\}$. 
Then $f_w(\psi)$ is nowhere negative and has the value 0 on $w$.
It follows that the functions $f_{\hat{w}}(\psi)$ and $f_{\hat{w}_{red}}(\psi)$ are nowhere negative, 
whence \emph{the word $\hat{w}_{red}$ cannot start with $x^{-1}$ and cannot end in} $x$.

Now to the upshot of these observations. 
The word $(\hat{v}_1)_{red}\cdot (\hat{w}_1)_{red}\cdots (\hat{v}_3)_{red}$
has the form
\[
Z^{k_i}x \cdot (\hat{w}_1)_{red} \cdot X Z^{\pm k} x \cdot (\hat{w}_2)_{red} \cdot  XZ^{k_\ell+1}
\]
with $k_i \geq 0$, $k > 0$ and $k_f \geq0$.
By the statement in italics, this product is therefore reduced as written; 
actually, it is even cyclically reduced.
The function $f_u(\psi)$ assumes the minimum 0 in the first subinterval, 
in the third subinterval and in the last one
and the transformed function $f_{\hat{u}_{red}}(\psi)$ has the analogous property.
\end{example}

We are now ready to deal with an arbitrary relator $u$ of length at least 2.
We get first rid of the exceptional case where $u = (xY)^k$ for some $k > 0$. 
Then $f_u(\psi)$ assumes $k$ times the value 0.
Since $\hat{u}_{red} = Z^k$ 
the function $f_{\hat{u}_{red}}(\psi)$ assumes its minimum also $k$ times.
In view of Case 1, the assumption that $[\psi] \in \Sigma^1(L)$ implies therefore that $k = 1$,
or alternatively, that $f_u(\psi)$ assumes its minimum only once. 

Suppose now that $u$ is not of the form $(xY)^k$
and subdivide the domain $\{1,2, \ldots, m\}$ of $f_u(\psi)$ into subintervals $v_1$, $w_1$, \ldots, $v_\ell$ 
as explained in the example. Three cases arise.
If $v_1 = x(Yx)^{k_1}$ and $k_1 > 0$ 
the reduced word $\hat{u}_{red}$ starts with $Z^{k_1}x$  and ends in $X Z^{k_\ell + 1}$,
whence the function $f_{\hat{u}_{red}}(\psi)$ assumes its minimum at least $k_1 + (k_\ell + 2) \geq 3$ times.
Case 1 then allows one to conclude that $[\chi] \notin \Sigma^1(L)^c$.
One sees similarly that $[\chi] \notin \Sigma^1(L)^c$ whenever $k_\ell > 0$.

In the third and last case, one has $v_1 = x$ and $v_\ell = Y$.
If $\ell = 2$ then $u = v_1 w_1 v_\ell$ and so $f_u(\psi)$ assumes its minimum only once;
if however, $\ell > 2$ 
the subword  $v_1 \cdot w_1 \cdot v_2 = x \cdot w_1 \cdot (Xy)^{\pm k_2}$ of $u$
will force the function $f_u(\psi)$ to have at least 2 absolute minima.
The transformed word $\hat{u}_{red}$ will then start with $x \cdot (\hat{w}_1)_{red} \cdot X z^{\pm k_2}x$;
since $k_2 > 0$ 
the function $f_{\hat{u}_{red}}(\psi)$ assumes therefore the values 0 at least $(1 + k_2) + 2 \geq 4$ times 
and so $[\chi] \notin \Sigma^1(L)^c$.
This completes the verification of Lemma \ref{lem:Special-case-for-case-2} 
and establishes Case 2 in the proof of Theorem \ref{thm:Sigma1-one-relator-group}.
\end{proof}
%
\subsubsection{Case 3}
\label{sssec:Proof-Sigma1-one-relator-group-Case3}
%
We want to show that every rank 2 point of $\Sigma^1(G)$ is in the subset $\psi(\{r\})$ 
defined by condition  \eqref{eq:Condition-Sigma1-one-relator-group}.
The proof of this assertion is fairly easy and is best understood 
if interpreted in the geometric set-up described in section \ref{sssec:Geometrical-reformulation}.

Rank 2 points can only occur if $G_{\ab}$ is free abelian of rank 2.
Let $\vartheta \colon G \epi  \Z^2$ be the epimorphism of $G$ that sends the couple $(a,b)$ 
to the  standard basis $((1,0),(0,1))$ of the Euclidean plane $\R^2$ equipped with the usual scalar product.
Then $\vartheta$ induces an isomorphism
\begin{equation*}
\sigma(\vartheta) \colon \s^{1} \iso  S(G), 
\quad 
u \longmapsto [g \mapsto \langle u, \vartheta(g)\rangle]
\end{equation*}
of circles (see section \ref{sssec:Coordinates-sphere}).
The relator $r = s_{1}s_{2}\cdots s_{k}$ of $G$ gives rise to  a sequence
\begin{equation}
\label{eq:Sequence-of-vertices}
\left( \vartheta(s_{1}), \vartheta(s_{1}) + \vartheta(s_{2}),  \ldots, \vartheta(s_{1}) + \cdots + \vartheta(s_{k}) \right)
\end{equation}
of lattice points.
The points of this sequence are the vertices of a path $\bar{p}$ in the Cayley graph 
$\Gamma(\Z^2, \{a,b\})$ that ends in the origin;
we shall think of $\bar{p}$ as being a loop, starting at the origin and also ending there.

The  determination of the subset $\psi(\{r\})$ amounts now to this.
One seeks unit vectors $u  = (u_{1}, u_{2}) \in \s^1$ 
for which the function $h_{u} \colon x \mapsto \langle u, x\rangle$
assumes its minimum at most twice along the vertices of the  path $\overline{p}$, 
the vertices being counted with multiplicity.
As an aid in finding these unit vectors,
construct the convex hull of the set of vertices  $v_{j} = \vartheta(s_{1} \cdots s_{j})$ of $\bar{p}$.
The boundary  $\CC$ of this hull is a closed, convex polygon 
whose vertices form a subset of the vertices of the path $\bar{p}$.
Call a vertex $v$ of $\CC$ \emph{simple}
if it equals $v_{j}$ for exactly one $j$.
Since the relator $r$ has at least length 4,
the polygon $\CC$ always contains horizontal edges at both the top and the bottom 
and a vertical edge on either side.
Let $e$ be one of these four edges.
If $e$ contains exactly two vertices,
call it \emph{special}.
(Note that a special edge is necessarily of length 1 and that its two vertices are simple.)
\index{Simple vertex}
\index{Special edge}

The proof of Case 3 can now be completed like this.
Each edge $e$ of the polygon $\CC$ determines a unit vector $u_{e} \in \s^1$ 
that is orthogonal to the line $\ell_{e}$ carrying the edge $e$ and has the property that 
the function $v_{j} \mapsto \langle u, v_{j}\rangle$ assumes its minimum in the vertices lying on $e$.
This unit vector $u_{e}$ has  rank 1.
A unit vector $u$ of rank 2 lies therefore in the complement of the set $\{u_{e}\mid e \text{ edge of } \CC \}$,
and thus belongs to an open arc $\alpha$ bounded by endpoints $u_{e}$ and $u_{e'}$, say.
Let $v_{j}$ be the common end point of the edges $e$ and $e'$.
If $v_{j}$ is simple, the arc $\alpha$ is contained in $\psi(\{r\})$ and hence in $\Sigma^1(G)$;
otherwise, $\alpha \cap \Sigma^1(G)$ contains no rank 1 point by Case 2 and hence no rank 2 points either,
rank 1 points being  dense in $\alpha$ (see Lemma \ref{lem:Density-rank-1-points}).

\begin{remarks}
\label{remarks:Browns-theorem}
a) The above proof of Brown's Theorem \ref{thm:Sigma1-one-relator-group} 
is close to that given in \cite[Section 4]{Bro87b}, 
the main differences being more detailed verifications 
and the fact 
that references to \emph{HNN-valuations} have been replaced 
by references to the characterization of rank 1 points 
in terms of ascending HNN-extension with finitely generated base group 
(Proposition \ref{prp:Ascending-HNN-extension})
and references to Proposition \ref{prp:psi(R)-subset-Sigma1}.
In \cite[Section 7]{BiRe88}, 
Bieri and Renz give a different proof of Theorem \ref{thm:Sigma1-one-relator-group}. 
\index{Brown, K. S.}
\index{Bieri, R.}
\index{Renz, B.}

b) The core of Brown's result can be stated without using the invariant $\Sigma^1$:
\emph{given an epimorphism $\chi \colon G \epi \Z$ and $t \in G$ with $\chi(t) = 1$,
the sequence of values of $\chi$ on the initial segments $s_{1} \cdots s_{j}$ 
of the relator $r = s_{1}\cdots s_{\ell}$ allows one to decide 
whether $G$ is an ascending HNN-extension $\langle B, t \mid B \subseteq tBt^{-1} \rangle$
with finitely generated base group $B$}.

c) Brown's Theorem yields an algorithm for deciding 
whether the kernel of a rank 1 character $\chi \colon G \to \R$ is finitely generated.
This algorithm has found striking applications in 3-manifold theory;
see \cite{Dun01}, \cite{But05b} and \cite{DuTh06a}. 
\index{Button, J. O.}
\index{Dunfield, N. M.}
\index{Thurston, D. P.}

d) Let $G = \langle a, b \mid r \rangle $ be a one relator group 
and $\chi \colon G \to \R$ a rank 1 character with kernel $N$.
Pull $\chi$ back to a character $\tilde{\chi} \colon F \to \R$ of  the free group $F$ on $a$, $b$.
Then $F_{\ab}$ admits a basis $(\bar{s}, \bar{t})$ 
such that $\ker \tilde{\chi} \cdot F'$ is generated by $s  \cdot F'$.
Now every automorphism of $F_{ab}$ is induced by an automorphism of $F$
(see, \eg, \cite[Chapt.\;I, Prop.\;4.4]{LS77});
so the group $F$ admits an ordered basis $(s,t)$
which projects to the previously chosen basis $(\bar{s}, \bar{t})$  of $F_{\ab}$.
It follows that $G$ has a one relator presentation with generators $s$, $t$  
such that the kernel $N$ of $\chi$ is the normal closure of the generator $s$.

Assume now that $[\chi] \in \Sigma^1(G)$. 
The analysis in Case 1 of the proof of Theorem \ref{thm:Sigma1-one-relator-group}, 
but with generators  $s$, $t$ instead of $a$, $b$, then shows 
that $G$ is an ascending HNN-extension whose base group is free (of finite rank). 
Moreover,  if both $[\chi]$ and $-[\chi]$ lie in $\Sigma^1(G)$ the kernel $N$ of $\chi$ is free.
The preceding reasoning implies 
that every finitely generated kernel of a rank 1 character of a one relator group is free.
As shown by R. Bieri in \cite[Cor.\;B]{Bi07}, 
this conclusion continues to be valid if $G$ is a finitely presented group of deficiency 1.
\index{Bieri, R.}
\index{Groups!of deficiency 1}
\end{remarks}
 %
 \subsection{Some examples}
\label{ssec:Examples-Sigma1-one-relator-group}
%
The following two examples have different objectives:
the first illustrates the actual computation of $\Sigma^1$ by means of Brown's algorithm;
the second one reinterprets a result of G. Baumslag in the frame work of the theory of the invariants Sigma.
\begin{example}
\label{example:Sigma1-one-relator-group-knots-K8n20-K8n21}
\index{Computation of Sigma1@Computation of $\Sigma^1$ for!group of knot K8n20@group of knot \texttt{K8n20}}
Figure \ref{fig:Diagrams-K8n20-K8n21}  shows diagrams  of knots of types $8_{20}$  and $8_{21}$,
as listed in \texttt{KnotInfo}
\footnote{This data base, created and maintained by C. Livingston,
is available at the URL  \url{http://www.indiana.edu/~knotinfo}, January 31, 2013.};
let  $G_{20}$ and $G_{21}$ denote the corresponding groups.
J. Weeks' program \texttt{SnapPea} 
\footnote{Available at the URL \url{http://www.geometrygames.org/weeks/}}
is able to find one-relator presentations for them.
For $G_{20}$
one gets generators $a$, $b$ and the defining relator
\begin{equation}
\label{eq:Relator-K8n20}
r_{20}  = a^{2} \cdot b^{2} \cdot a \cdot b^{-2} \cdot a^{-1} \cdot b\cdot a^{-1} \cdot b^{-2}a \cdot b^{2}
\end{equation}
The exponent sums $(\sigma_{a}, \sigma_{b})$ of $r_{20}$ are  $(2,1)$;
so $G_{20}$ admits a rank 1 character $\chi$ with $\chi(a) = 1$ and $\chi(b) = -2$.
The sequence $f_{r_{20}} ( \chi)$ reads thus
\[
(1, 2,0, -2, -1, 1, 3, 2, 0, -1, 1, 3, 4, 2, 0).
\]
Its minimum is $-2$ and it is achieved only once.

The sequence $f_{r_{20}} ( -\chi)$ is the negative of $f_{r_{20}} (\chi)$;
so $f_{(r_{20}, -\chi)}$ assumes its minimum once if the maximum of $f_{r_{20}} (\chi)$ is achieved once;
this latter condition is fulfilled.
All taken together, 
we see that $\Sigma^1(G) = S(G)$;
so the commutator subgroup of $G_{20}$ is finitely generated 
by  Corollary  \ref{crl:Characterizing-fg-kernel-rank-1-character}.
Remark \ref{remarks:Browns-theorem}d allows one to say more: $G'_{20}$ is a free group of finite rank.
\begin{figure}[htb]
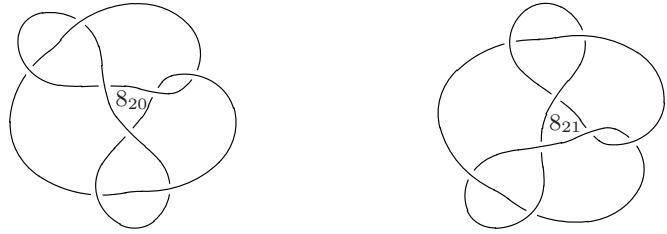

\psfrag{k20}{\hspace*{-2mm} \small $8_{20}$}
\psfrag{k21}{\hspace*{-2mm} \small $8_{21}$}
\begin{center}
\includegraphics[height=3.1cm]{B4.diagramK8n20.eps}
\hspace*{2.3cm}
\includegraphics[height=3.1cm]{B4.diagramK8n21.eps}
\caption{Diagrams for knots of types $8_{20}$ and $8_{21}$}
\label{fig:Diagrams-K8n20-K8n21}
\end{center}
\end{figure}

Now to the group $G_{21}$.
\index{Computation of Sigma1@Computation of $\Sigma^1$ for!group of knot K8n21@group of knot \texttt{K8n21}}%
A run of \texttt{SnapPea} has produced a presentation with generators $a$, $b$ and defining relator
\begin{equation}
\label{eq:Relator-K8n21}
r_{21}  = 
a^2 b\cdot a^{-1} b^{-2}\cdot a^{-1} b\cdot a^{-1} b^{-1}\cdot a b^2\cdot a b^{-1}\cdot a b^2
\cdot 
a b^{-1}\cdot a^{-1} b\cdot a^{-1} b^{-2}\cdot a^{-1} b.
\end{equation}
There is no problem in finding a rank 1 character $\chi \colon G_{21} \to \R$
and in computing the sequence  $f_{r_{21}} (\chi)$;
there is, however, an alternative approach which is often more informative 
and which I want to use in the sequel.
\begin{figure}[htb]
\psfrag{1}{\hspace*{-1.8mm} \scriptsize $1$}
\psfrag{2}{\hspace*{-1.5mm}  \scriptsize $2$}
\psfrag{3}{\hspace*{-1.3mm} \scriptsize $3$}
\psfrag{4}{\hspace*{-0.1mm}\scriptsize $4$}
\psfrag{5}{\hspace*{-1.4mm} \scriptsize $5$}
\psfrag{6}{  \hspace*{-3mm}   \scriptsize $6$}
\psfrag{7}{\hspace*{-1.2mm} \scriptsize $7$}
\psfrag{8}{\hspace*{-1.2mm} \scriptsize $8$}
\psfrag{9}{\hspace*{-1.3mm} \scriptsize $9$}
\psfrag{10}{\hspace*{-2mm} \scriptsize $10$}
\psfrag{11}{\hspace*{-2.5mm} \scriptsize $11$}
\psfrag{12}{  \hspace*{-2.3mm}\scriptsize $12$}
\psfrag{13}{  \hspace*{-2.5mm}\scriptsize $13$}
\psfrag{14}{\hspace*{-1.3mm}\scriptsize $14$}
\psfrag{15}{\hspace*{-1.2mm}\scriptsize $15$}
\psfrag{16}{  \hspace*{-3mm}   \scriptsize $16$}
\psfrag{17}{\hspace*{-2.3mm} \scriptsize $17$}
\psfrag{18}{\hspace*{-2.5mm} \scriptsize $18$}
\psfrag{19}{\hspace*{-2.5mm} \scriptsize $19$}
\psfrag{20}{\hspace*{-2mm} \scriptsize $20$}
\psfrag{21}{\hspace*{-1.5mm} \scriptsize $21$}
\psfrag{22}{  \hspace*{-2.3mm}\scriptsize $22$}
\psfrag{23}{  \hspace*{-1.6mm}\scriptsize $23$}
\psfrag{24}{\hspace*{-0.8mm}\scriptsize $24$}
\psfrag{25}{\hspace*{-1.2mm}\scriptsize $25$}
\psfrag{26}{  \hspace*{-3mm}   \scriptsize $26$}
\psfrag{27}{\hspace*{-2.2mm} \scriptsize $27$}
\psfrag{aa}{\hspace*{-1mm} \small $a$}
\psfrag{bb}{\hspace*{-1.5mm} \small $b$}
\begin{center}
\includegraphics[height=4.6cm]{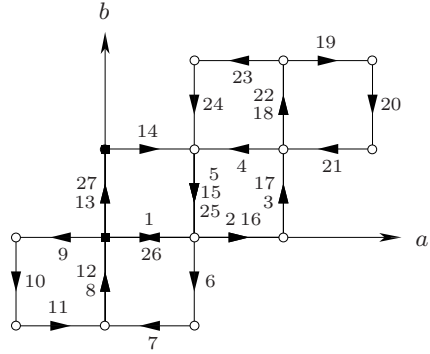}
\caption{Path of the relator $r_{21}$ defining the knot group $8_{21}$}
\label{fig:Path-relator-r21}
\end{center}
\end{figure}

In this approach, 
one determines the path $\bar{p}$ in $\Gamma(\Z^2, \{a,b\})$ 
which corresponds to $r_{21}$ 
and starts at the origin;
this path is displayed in Figure \ref{fig:Path-relator-r21}.
It has length 27 and ends in $(0,1)$;
so the exponent sum of $a$ in $r_{21}$ is 0 and that of $b$ is 1.
The unit vectors giving rise to characters of $G_{21}$ are therefore 
$u =(1,0)$ and its opposite.
The first vector leads to the linear function $x \mapsto x_{1}$;
along the path,  this function is minimal on the 10th edge and maximal on the 20th edge.
It follows that $\Sigma^1(G_{21})$ equals $S(G_{21})$.
\end{example}

%
\subsubsection{One-relator groups defined by positive words}
\label{sssec:Baumslag-positive-one-relator}
\index{Computation of Sigma1@Computation of $\Sigma^1$ for!one relator groups}
%
In \cite{Bau83} G. Baumslag considers the class of one-relator groups 
whose defining relator $r$ has the form $u \cdot v^{-1}$; 
here $u$ and $v$ are positive words in the generators 
and the exponent sums of $u$ and $v$ agree for each generator.
He proves 
that  such a group  is an extension of a free group $N$ by an infinite cyclic group
(\cite[Thm.\;1]{Bau83}). The normal subgroup $N$ need not be finitely generated;
indeed it can only be so if $G$  is generated by two elements
(see, e.\;g., Proposition \ref{prp:Invariant-group-large-deficiency}).
\index{Baumslag, G.}

We consider the two-generator case in more detail
\footnote{The other cases can reduced to this special case by embedding the group into  a two generator group
in such a way that the defining relator has still the special form $u' \cdot (v')^{-1}$ 
where $u'$ and $v'$ are positive words having the same exponents sums for both generators
(\cite[Lemma 2]{Bau83}).}
but content ourselves with an example,
say the group $G = \langle a,b \mid u \cdot v^{-1}\rangle$ with 
\[
u = a^2 \cdot b \cdot a \cdot b^3 \cdot a^3  
\quad \text{and} \quad
v = b \cdot a \cdot b \cdot a^4 \cdot b \cdot a \cdot b.
\] 
The assumption on the exponent sums of $u$ and $v$ imply 
that the relator $r = u \cdot v^{-1} $ lies in the commutator subgroup of the free group on $\{a, b \}$;
so there is an epimorphism $\vartheta \colon G \epi \Z^2$
which maps $(a, b)$ to the standard basis of $\R^2$.
The closed path $\bar{p}$ associated to $r$ in $\Gamma(\Z^2, \{a,b\})$ is depicted 
on the left of Figure \ref{fig:Path-invariant-group-positive-relation}. 
\begin{figure}[htb]
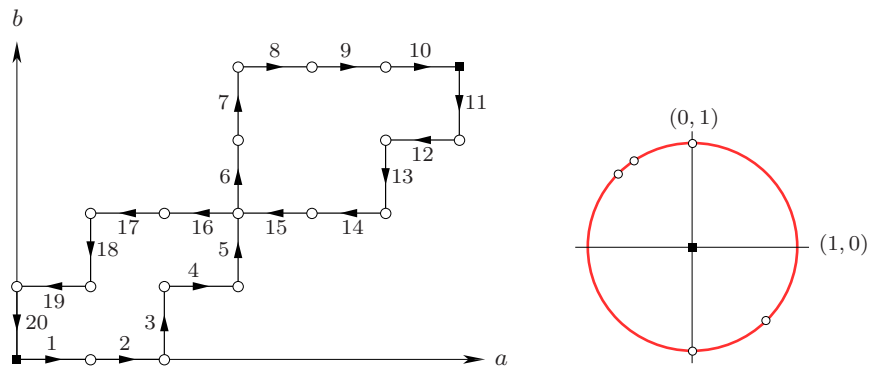

\psfrag{1}{\hspace*{-2.0mm} \footnotesize $1$}
\psfrag{2}{  \hspace*{-2.0mm}\footnotesize $2$}
\psfrag{3}{\hspace*{-1.5mm} \footnotesize $3$}
\psfrag{4}{\hspace*{-1.5mm}\footnotesize $4$}

\psfrag{5}{\hspace*{-1.5mm} \footnotesize $5$}
\psfrag{6}{  \hspace*{-2.5mm} \footnotesize $6$}
\psfrag{7}{\hspace*{-1.5mm} \footnotesize $7$}
\psfrag{8}{\hspace*{-1.7mm} \footnotesize $8$}
\psfrag{9}{\hspace*{-2.0mm} \footnotesize $9$}
\psfrag{10}{\hspace*{-2.5mm} \footnotesize $10$}
\psfrag{11}{\hspace*{-2mm} \footnotesize $11$}
\psfrag{12}{  \hspace*{-2.2mm}\footnotesize $12$}
\psfrag{13}{\hspace*{-2.0mm} \footnotesize $13$}
\psfrag{14}{\hspace*{-0.5mm}\footnotesize $14$}
\psfrag{15}{\hspace*{-2mm} \footnotesize $15$}
\psfrag{16}{  \hspace*{-3.2mm} \footnotesize $16$}
\psfrag{17}{\hspace*{-2.1mm} \footnotesize $17$}
\psfrag{18}{\hspace*{-2mm} \footnotesize $18$}
\psfrag{19}{\hspace*{-2.2mm} \footnotesize $19$}
\psfrag{20}{\hspace*{-1.5mm} \footnotesize $20$}

\psfrag{xx1}{\hspace*{-0.0mm}  \footnotesize $(1,0)$}
\psfrag{yy1}{\hspace*{-3.5mm}  \footnotesize $(0,1)$}
\psfrag{a}{\hspace*{-2.0mm} \small $a$}
\psfrag{b}{\hspace*{-1.5mm} \small $b$}

\begin{center}
\includegraphics[height=4.8cm]{B4.fig3an.eps}
\hspace*{0.5cm}
\includegraphics[height=3.6cm]{B4.fig3bn.eps}
\caption{Path and invariant of a group with positive relation}
\label{fig:Path-invariant-group-positive-relation}
\end{center}
\end{figure}

The words $u$ and $v$ do neither start with nor do they end in the same generator;
the relator $r = u \cdot v^{-1}$ is therefore cyclically reduced and 
the vertices $(0,0)$ and $\vartheta(u) = (6,4)$ are simple.
Moreover, 
the fact that $u$ and $v$ are positive words with the same exponent sums implies 
that the first and third open quadrants lie in $\Sigma^1(G)$,
and so the kernel of every rank 1 character representing a point of the open first quadrant 
is finitely generated by Corollary \ref{crl:Characterizing-fg-kernel-rank-1-character}
and free by Remark \ref{remarks:Browns-theorem}d.
There is no problem in determining $\Sigma^1$ completely; 
the outcome is depicted on the right of Figure \ref{fig:Path-invariant-group-positive-relation}.
\index{Computation of Sigma1@Computation of $\Sigma^1$ for!one relator groups}
%
 
%
 
%
\chapter[Alternate definitions of $\Sigma^1$: Overview]%
{Alternate definitions of Sigma~1: Overview}
\label{ch:Alternative-definitions-overview}
%
%
%
In section \ref{ssec:Definition-Sigma1}, 
the invariant $\Sigma^1$ has been introduced thus:
given a finitely generated group $G$,
choose a finite  generating system $\eta \colon \XX \to G$
and let $\Gamma(G, \XX)$ denote the associated Cayley graph of $G$. 
Consider then a non-zero character $\chi \colon G \to \R$ and 
define $G_{\chi}$ to the submonoid $\{ g \in G \mid \chi(g) \geq 0 \}$ of $G$.
The Cayley graph $\Gamma(G, \XX)$ is connected, 
but its subgraph $\Gamma_{\chi}$ induced by the monoid $G_{\chi}$ need not be so.
Whether or not $\Gamma_{\chi}$ is connected does not depend on the generating system,
it depends only on $\chi$, more precisely on the ray $[\chi]$, and on $G$.
One is thus led to consider the subset $\Sigma^1(G)$ made up of the rays $[\chi]$ 
with a connected subgraph.

The definition of $\Sigma^1$ in terms of Cayley graphs 
is not the original definition of the invariant $\Sigma_{G'}(G)$ proposed in 1987 by Bieri, Neumann and Strebel.
Robert Bieri and Burkhardt Renz,
Gaël Meigniez, Ken S.~Brown, Gilbert Levitt and Jean-Claude Sikorav found,
at about the same time, alternate definitions.
Some of these definitions encompass larger, others more restricted, classes of groups.
These various definitions show 
that the concept of Sigma invariant is not only of interest in the area of finitely presented soluble groups,
the area which led in 1980 to the introduction of the first such invariant.
\index{Bieri, R.}
\index{Brown, K. S.}
\index{Levitt, G.}
\index{Meigniez, G.}
\index{Neumann, W. D.}
\index{Renz, B.}
\index{Strebel, R.}

In this chapter, and in Chapter D,
some of the previously mentioned variations on the definition of $\Sigma^1$ will be discussed.
In each variation, the Cayley graph is replaced by a structure $X$
on which $G$ acts and satisfies a certain property $P$. 
Then the action is restricted to that of the various submonoids $G_{\chi}$ 
and one collects the points $[\chi] \in S(G)$ 
for which the action of $G_{\chi}$ continues to enjoy property $P$.

The variations of this chapter will be grouped into three classes.
Those of the first class rephrase in algebraic terms the idea of using $G_\chi$ 
to define a subgraph $\Gamma_\chi$ of a Cayley graph $\Gamma(G, \XX)$
and asking whether or not $\Gamma_\chi$ is connected.
The definitions of the second class take as their starting point 
the set of path components of the subgraph $\Gamma_{\chi}$; 
they admit straightforward generalizations to infinitely generated groups.
In the variations of the third class 
the groups under study are fundamental groups $G$ of connected, compact manifolds $M$
and the Cayley graph of $G$ gets replaced by the universal abelian cover $\hat{M}$ of $M$.

The overview afforded by this chapter is centered on the Cayley graph definition of the invariant:
starting from this definition, one moves on to variations of it.
This organization must not be construed as implying 
that the variations arose out of the desire to widen the applicability of the invariant $\Sigma^1$:
many of them were invented  and investigated independently of the invariant $\Sigma^1$.
Only later, did the creators became aware of the relation of their invariants with 
the invariant $\Sigma_{G'}(G)$ introduced in \cite{BNS}.
%
%
%
%
\section{Variations for finitely generated groups}
\label{ssec:Variations-fg-groups}
%
%
A distinctive feature of the variations surveyed in this section is the fact 
that their natural domain of definition is the class of all finitely generated groups.
The first of the variations is, at first sight, 
hardly more than a simple restatement of the Cayley graph definition;
it turns out, however, to be a key opening up new horizons.
The second variation is a sufficient condition for a point to lie in $\Sigma^1$;
it allows one, in certain examples,  to detect large subsets of $\Sigma^1$ with ease.
The surprise is that this condition is also necessary.
\smallskip

\emph{Notation.
Throughout this section,
$G$ will denote a finitely generated group, $\XX \subset G$ a finite \emph{set}  generating $G$
and $\chi \colon G \to \R$ a non-zero character.}
%
%
\subsection{The invariant $\Sigma^1(G; \Z)$} 
\label{ssec:Invariant-Sigma1(G,Z)}
%
The reformulation of the defining property of $\Sigma^1$,
given in this section,
will reveal 
that the invariant $\Sigma^1(G)$ is the first member of an infinite sequence of invariants 
\[
\Sigma^1(G;\Z) \supseteq \Sigma^2(G;\Z) \supseteq \Sigma^3(G;\Z) \supseteq  \cdots .
\]
%
\subsubsection{The route from $\Sigma^1(G)$ to $\Sigma^1(G; \Z)$} 
\label{sssec:Path-to-Sigma1(G,Z)}
%
We begin with an easy consequence of the defining property of $\Sigma^1(G)$.
Suppose the subgraph $\Gamma_{\chi} = \Gamma(G, \XX)_{\chi}$ is \emph{connected}.
Replacing, if need be, some generators $x \in \XX$ by their inverses, 
we may assume that $\chi(\XX) \geq 0$.
For each element $g \in G_{\chi}$ there exists then a word $w = y_{1} y_{2}\cdots y_{k}$ in 
$\YY = \XX \cup \XX^{-1}$ which represents $g$ and has the property that the sequence
\begin{equation}  
\label{eq:Non-negative-chi-track}
\left( \chi(y_{1}), \chi(y_{1}y_{2}), \ldots, \chi(y_{1}y_{2} \cdots y_{k})\right)
\end{equation}
consists of non-negative real numbers.
\footnote{This sequence will be referred to as the \emph{$\chi$-track}  of the word $y_{1}\cdots y_{k}$.}
\index{Track of a word}
It follows that the element $g-1$ of the group ring $\Z{G}$ admits the expansion
\begin{align}
g - 1 
&= 
y_{1}\cdot y_{2}\cdots y_{k} -1 = (y_{1}- 1) + y_{1} \cdot (y_{2} \cdots y_{k} -1) 
\notag\\
&=
(y_{1}- 1) + y_{1}(y_{2} - 1) + y_{1}y_{2} \cdot (y_{3}\cdots y_{k} - 1) = \cdots
\notag\\
&= (y_{1} - 1) + \sum \nolimits_{1 \leq j \leq k} y_{1}\cdots y_{j-1}(y_{j}- 1).
\label{eq:Expressing-difference}
\end{align}
In the above,
each factor $y_{1} \cdots y_{j}$ lies in the monoid $G_{\chi}$ and hence in the monoid ring $\Z{G_\chi}$.
\footnote{The multiplication of  $\Z{G_{\chi}}$ is inherited from the group ring $\Z{G}$.}

Let $\varepsilon_{\chi} \colon \Z{G_{\chi}} \to \Z$ denote the augmentation map; 
it sends each group element $g \in G_\chi$ to the number $1 \in \Z$.
The kernel $I_{\chi} = IG_{\chi}$ of $\varepsilon_{\chi}$ 
consists of the linear combinations of differences $g - 1$ with $g \in G_\chi$
and it is a two-sided ideal of $\Z{G_{\chi}}$; 
in the sequel,  we shall view it as a \emph{left} ideal.

The previous calculation shows that the left ideal $I_{\chi}$ 
is generated by the differences $y-1$ with $y \in \XX$ and establishes implication (i) $\Rightarrow$ (ii) in 
\begin{prp}
\label{prp:Equality-Sigma1-Sigma1-G-Z}
\index{Invariant Sigma1Z@Invariant $\Sigma^1(-;\Z)$!coincidence with Sigma1@coincidence with $\Sigma^1$}
Let $G$ be a finitely generated group and $\XX \subset G$ a finite generating \emph{set}. 
For each non-zero character $\chi \colon G \to \R$, set 
$\YY_{+} = \{y \in \XX \cup \XX^{-1} \mid \chi(y) \geq 0 \}$.
Then the following statements imply each other:
\begin{enumerate}[(i)]
\item $[\chi] \in \Sigma^1(G)$;
\item the left ideal $IG_{\chi}$ is generated by the differences $y - 1$ 
with $y \in \YY_{+}$;
\item $ IG_{\chi}$ is a finitely generated \emph{left} ideal.
\end{enumerate}
\end{prp}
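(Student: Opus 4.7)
The plan is to close the cycle (i) $\Rightarrow$ (ii) $\Rightarrow$ (iii) $\Rightarrow$ (i). The first implication is essentially carried out by the telescoping expansion \eqref{eq:Expressing-difference} that precedes the statement, once one verifies the minor point that a letter $y_j = x^{-1}$ with $\chi(x) > 0$ (so that $y_j \notin \YY_+$) still contributes a term $y_1 \cdots y_{j-1}(y_j - 1)$ that can be rewritten as $-h(x - 1)$ with $h = y_1 \cdots y_j \in G_\chi$ and $x \in \XX \subseteq \YY_+$, and therefore lies in the left ideal generated by $\{y - 1 : y \in \YY_+\}$. The implication (ii) $\Rightarrow$ (iii) is immediate, since $\YY_+$ is a subset of the finite set $\XX \cup \XX^{-1}$.

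The substantive content lies in (iii) $\Rightarrow$ (i), for which my plan is a generating-set augmentation argument. Suppose the left ideal $IG_\chi$ is generated by $r_1, \ldots, r_m$; expand each $r_k$ as a $\Z$-linear combination of differences $g - 1$ with $g \in G_\chi$, and let $S \subseteq G_\chi$ collect the finitely many group elements that appear. Then $IG_\chi$ is already generated by $\{g - 1 : g \in S\}$. Enlarge the original generating set to $\XX' = \XX \cup S$; this is still a finite generating set of $G$ satisfying $\chi(\XX') \geq 0$.

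The point of the augmentation is that, under $\chi(\XX') \geq 0$, the pair $(h, x')$ is automatically an edge of $\Gamma(G, \XX')_\chi$ for every vertex $h \in G_\chi$ and every $x' \in \XX'$. Consequently the image of the simplicial boundary map sending an edge $(h, x')$ to $h(x' - 1)$ is exactly the left $\Z G_\chi$-ideal generated by $\{x' - 1 : x' \in \XX'\}$, and this ideal coincides with $IG_\chi$ by the choice of $\XX'$. Given any $g \in G_\chi$, the element $g - 1 \in IG_\chi$ is then the boundary of some 1-chain on $\Gamma(G, \XX')_\chi$; by the elementary identification of $H_0$ of a graph with the free abelian group on its components, this forces $g$ and $1$ to share a component of $\Gamma(G, \XX')_\chi$. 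Hence that subgraph is connected, and Theorem \ref{thm:Sigma1-well-defined} yields $[\chi] \in \Sigma^1(G)$.

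The main obstacle I anticipate is not any single algebraic manipulation but the conceptual bridge between algebra and geometry: one must argue that an abstract boundary relation in $\Z G_\chi$ is realized by a concrete 1-chain supported on edges all of whose endpoints lie in $G_\chi$. The augmentation $\XX \rightsquigarrow \XX'$ is engineered precisely so that this is automatic --- every term $h(x' - 1)$ produced by the algebra is the boundary of a legitimate edge of the subgraph --- and with this in hand the standard $H_0$ computation closes the argument.
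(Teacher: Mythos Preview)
Your argument is correct. The cycle (i) $\Rightarrow$ (ii) $\Rightarrow$ (iii) $\Rightarrow$ (i) closes as you describe, and your observation about rewriting terms $y_1\cdots y_{j-1}(x^{-1}-1) = -y_1\cdots y_j(x-1)$ for $\chi(x)>0$ fills a detail that the paper's preamble leaves implicit. One small point: your assertion that $\chi(\XX') \geq 0$ requires $\chi(\XX) \geq 0$, which you have not stated; the paper arranges this beforehand by replacing generators with their inverses where needed, and you should say so explicitly. (Alternatively, you could drop $\XX$ entirely and work with $\XX' = S$, since your $H_0$ argument only needs the edges $(h,s)$ with $s \in S$ to realize all of $IG_\chi$ as boundaries, and these edges are automatically present because $S \subseteq G_\chi$; the paper in fact shows that $S$ alone already generates $G$.)

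Your route for (iii) $\Rightarrow$ (i) differs genuinely from the paper's. The paper extracts the same finite set $\ZZ$ (your $S$), writes an arbitrary $g-1$ as a signed sum $\sum \sigma_j g_j(z_j-1)$ with $g_j \in G_\chi$ and $z_j \in \ZZ$, and then proves by induction on the number of summands that $g$ is represented by a $\ZZ^\pm$-word with non-negative $\chi$-track. This is an explicit, constructive path-building argument: each inductive step peels off one summand and extends the word by a single letter. Your approach instead packages the algebra into the simplicial chain complex of $\Gamma(G,\XX')_\chi$, identifies the image of $\partial_1$ with $IG_\chi$, and invokes the standard description of $H_0$ to conclude connectedness. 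Your version is more conceptual and arguably cleaner; the paper's version is more hands-on and yields an explicit word representing $g$, which is occasionally useful elsewhere.
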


\begin{proof}
Implication (i) $\Rightarrow$ (ii) has been justified in the above 
and implication (ii) $\Rightarrow$ (iii) clearly holds. 
So all is well
if we can establish implication (iii) $\Rightarrow$ (i).

Suppose $I_{\chi} = IG_{\chi}$ is generated by a finite set of elements $\lambda = \sum m_{j} g_{j}$.
Then $ \varepsilon_\chi (\lambda) = \sum m_{j}$  is 0
and so $\lambda$ can be rewritten in the  $\sum m_{j}(g_{j}- 1)$.
It follows that there exists a finite set $\ZZ \subset G_{\chi}$ such that,
for each $g \in G_{\chi}$, 
the difference $g-1$ can be written in the form
\begin{equation}
\label{eq:Represention-g-minus-1}
g-1 = \sigma_{1}g_{1}(z_{1}-1) + \sigma_{2}g_{2}(z_{2}-1) + \cdots  + \sigma_{m}g_{m}(z_{m}-1) 
\end{equation}
where each $\sigma_{j}$ is 1 or $-1$, each $g_{j}$ lies in $G_{\chi}$ and each $z_{j}$ is an element of $\ZZ$.

We shall now prove by induction on the number $m$ of summands 
that every $g \in G_\chi$ can be represented by a $\ZZ^\pm$-word $w = s_{1}\cdots s_{k}$
whose $\chi$-track
\begin{equation}
\label{eq:Definition-chi-track}
\left( \chi(s_{1}), \chi(s_{1}s_{2}), \ldots, \chi(s_{1}s_{2} \cdots s_{k})\right).
\end{equation}
is non-negative.
Assume first $m=1$. 
Then $g-1 = \sigma_1 g_{1}(z_{1}-1)$ and two cases arise:
if $\sigma_{1} = 1$, one has $g_{1} = 1$ and  $g = g_{1} z_{1}= z_1$,
whence the letter $z_{1}$ can be taken as word $w$;
if $\sigma_{1} = -1$
then $g = g_1$ and $g_{1}z_{1} = 1$,  and so $ g =z_{1}^{-1}$.
Since $g$ and $z_{1}$ both belong to $G_{\chi}$, 
the equality  $\chi(z_{1}^{-1}) = 0$ must hold. 
The word $w$ can therefore be chosen to be $z_{1}^{-1}$.
Suppose now that $m>1$.
Then $g$ is either a product of the form $g_{j} z_{j}$ with $j \in \{1,2, \ldots, m\}$ or one of the $g_{j}$.
In both cases, there is no harm in assuming that $j = m$.
In the first case we obtain the equation
\[
g-1 =  \left(\sum\nolimits _{1 \leq i < m}\sigma_{i}g_{i}(z_{i}-1)\right) + g_{m}(z_{m}- 1) \text { with } g =  g_{m}z_{m},
\]
whence $g_{m} - 1 = \sum\nolimits _{1 \leq i < m}\sigma_{i}g_{i}(z_{i}-1)$.
Since $g_{m} \in G_{\chi}$, the inductive hypothesis applies 
and thus $g_{m} $ can be represented by a $\ZZ^\pm$-word $w'$ with non-negative $\chi$-track.
But if so,
$g = g_{m}z_{m}$ can be represented by the word $w = w' z_{m}$ which has non-negative $\chi$-track.
Consider, finally, the case where $g = g_{m}$ and $\varepsilon_m = -1$.
Then
\[
g -1 = -g_{m}(z_{m}- 1) + \sum\nolimits _{1 \leq i < m}\sigma_{i}g_{i}(z_{i}-1)
\]
and so $g_{m} z_{m}-1 = \sum\nolimits _{1 \leq i < m}\sigma_{i}g_{i}(z_{i}-1)$.
It follows, as in the previous case, that there exists a $\ZZ^\pm$-word $w'$ with non-negative $\chi$-track
that represents $g_{m} z_{m}$. 
The word $w = w'z_{m}^{-1}$ then represents $g = g_{m}$ and its $\chi$-track is non-negative,
for $\chi(w) = \chi(g) \geq 0$.

The above reasoning implies that $\ZZ$ generates the group $G$ 
and shows that the subgraph $\Gamma(G; \ZZ)_\chi$ is connected; 
so statement (i) is true.
\end{proof}

\begin{remarks}
\label{remarks:Left-versus-right}
a) The $\chi$-track of the word $w = s_1 \cdots s_k$  (see formula \eqref{eq:Non-negative-chi-track}) 
allows one to describe algebraically whether the associated path $(1,w)$ runs inside the subgraph 
$\Gamma_\chi$ of the Cayley graph $\Gamma(G; \XX)$.
The definition of the $\chi$-track is identical with that of the sequence $f_r(\chi)$ used in Section 
\ref{sec:Sigma1-criterion-revisited}, 
but the purpose of that sequence is different: 
there the word $r$ is a relator of $G$
and one wants to know whether the minimum of the sequence is achieved at most twice,
and if assumed twice, whether it is taken at points that are neighbours.
In this context, first and last points have to be considered as neighbours.

b) The ideal $IG_{\chi}$ is a two-sided ideal of the monoid ring $\Z{G_{\chi}}$.
In the above,
it is considered it as a left module;
if one views $IG_{\chi}$  as a right module over  $\Z{G_{\chi}}$
its properties as a right module do not correspond to those of the left module $\Z{G_{\chi}}$,
but to those of the left module $\Z{G_{-\chi}}$.
Indeed, the bijection $ g \mapsto g^{-1}$ 
extends to a linear map $\iota \colon \Z{G} \iso \Z{G}$ which is  a ring anti-automorphism.
It sends $(g-1) \cdot IG_{\chi}$ 
to $\iota(IG_{\chi}) \cdot (g^{-1} - 1) = IG_{-\chi} \cdot (g^{-1} -1)$.
A consequence of the stated fact is
\begin{lem}
\label{lem:Changing-sides}
$IG_{\chi}$ is finitely generated \emph{qua right} $\Z{G_{\chi}}$-module if, and only if,
$IG_{-\chi}$ is finitely generated \emph{qua left} $\Z{G_{\chi}}$-module.
\end{lem}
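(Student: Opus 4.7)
The plan is to exploit the $\Z$-linear ring anti-automorphism $\iota \colon \Z G \iso \Z G$ defined on group elements by $\iota(g) = g^{-1}$. First I would verify that $\iota$ restricts to a ring anti-isomorphism of monoid rings $\Z G_{\chi} \iso \Z G_{-\chi}$: this is because $\chi(g)\ge 0$ if and only if $-\chi(g^{-1})\ge 0$, so $\iota$ is a bijection of $G_{\chi}$ onto $G_{-\chi}$. Since $\iota$ clearly commutes with the augmentation map, it also carries the augmentation ideal $IG_{\chi}\subset \Z G_{\chi}$ bijectively onto $IG_{-\chi}\subset \Z G_{-\chi}$ (note that the intended reading of the lemma is that $IG_{-\chi}$ is viewed as a left module over its own monoid ring $\Z G_{-\chi}$; the computation preceding the lemma makes this plain).

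Next I would invoke the standard principle that a ring anti-isomorphism $\alpha \colon R\iso S$ induces, for every right $R$-module $M$, a left $S$-module structure on the same underlying abelian group via $s\cdot m := m\cdot \alpha^{-1}(s)$, and that this correspondence preserves the lattice of submodules. Applied to $R = \Z G_{\chi}$, $S = \Z G_{-\chi}$ and $\alpha = \iota$, the right $\Z G_{\chi}$-module $IG_{\chi}$ is turned into the left $\Z G_{-\chi}$-module $\iota(IG_{\chi}) = IG_{-\chi}$. A finite right-generating set $\{\lambda_1,\dots,\lambda_k\}$ of $IG_{\chi}$ is then transported to the finite left-generating set $\{\iota(\lambda_1),\dots,\iota(\lambda_k)\}$ of $IG_{-\chi}$, and the argument is symmetric because $\iota^2 = \mathrm{id}$.

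The lemma is essentially formal once $\iota$ has been identified, and I do not expect a genuine obstacle. The only point requiring care is to keep the sides of the multiplication straight: a typical right-module identity $(g-1)\cdot\mu = \nu$ in $IG_{\chi}$ is turned by $\iota$ into $\iota(\mu)\cdot(g^{-1}-1) = \iota(\nu)$, which is precisely the corresponding left-module identity in $IG_{-\chi}$. This bookkeeping is the substance of the proof, and it fits in a few lines.
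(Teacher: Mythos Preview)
Your proposal is correct and is exactly the approach the paper takes: the remark immediately preceding the lemma introduces the anti-automorphism $\iota\colon \Z G \to \Z G$, $g\mapsto g^{-1}$, observes that it carries $IG_\chi$ onto $IG_{-\chi}$ and reverses the order of multiplication, and then states the lemma as ``a consequence of the stated fact''. Your reading of the intended ring as $\Z G_{-\chi}$ (rather than the printed $\Z G_{\chi}$) is also the correct one, as the computation in the paper's remark confirms.
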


c) Proposition \ref{prp:Equality-Sigma1-Sigma1-G-Z} (more precisely, a variant of it)
is due to R. Bieri and B. Renz (see \cite[Cor.\,6.3]{BiRe88}). 
The proof given in the above seems to be new.
\end{remarks}
\index{Bieri, R.}
\index{Renz, B.}

\subsubsection{The homological invariants $\Sigma^k(G; A)$} 
\label{sssec:Introducing-Sigmak(G;A)}
%
Proposition \ref{prp:Equality-Sigma1-Sigma1-G-Z} shows
that the defining property of $\Sigma^1(G)$ can be stated  in terms of module theory.
This fact becomes more significant if one goes one step further 
and rephrases the proposition in terms of homological algebra.

The ideal $IG_\chi$ is, by definition,  the kernel of a ring epimorphism 
$\varepsilon_{\chi} \colon \Z{G_{\chi}} \epi \Z$.
Now  $IG_\chi$ can also be viewed as the kernel of the epimorphism of the free cyclic left module 
$\Z{G_{\chi}}$  onto the left module $\Z$,
each element $g \in G_{\chi}$ acting on $\Z$ by the identity.
Schanuel's Lemma
\footnote{see, e.\,g., \cite[p. 431, Lemma 3.1]{Pas77}}
then shows 
that the augmentation ideal $IG_{\chi}$ is a finitely generated left ideal if, and only if,
the left $\Z{G_{\chi}}$-module $\Z$ admits a finite presentation.

The condition we arrived at is a special case of a well-known homological finiteness condition:
\begin{definition}
\label{definition:Type-FPk}
Let $R$ be an associative ring with $1 \neq 0$ and $A$ a left  $R$-module.
Given $k \geq 0$,
one says that $A$ is \emph{of type} $\FP_{k}$ if there exists an exact sequence
\begin{equation}
\label{eq:Resolution}
\cdots \to P_{k+1} \to P_{k} \to F_{k-1} \to \cdots \to F_{1} \to F_{0} \to A \to 0
\end{equation}
in which each of the modules $P_{k}$, $P_{k-1}$, \ldots, $P_{1}$ and $P_{0}$ 
is a finitely generated projective  $R$-module.
\end{definition}
\index{Type FPK@Type $\FP_k$}%
\index{Definition of!type FPk@Type $\FP_k$}%

A module of type $\FP_{0}$ is nothing but a finitely generated module; 
a module is of type $\FP_{1}$ if, and only if,  it admit a finite presentation.

In the terminology just introduced, 
Proposition \ref{prp:Equality-Sigma1-Sigma1-G-Z} can be restated as
\begin{crl}
\label{crl:Equality-Sigma1-Sigma1-G-Z}
For every finitely generated group $G$ the following equation is valid:
\begin{equation}
\label{eq:Sigma1(G)-equals-Sigma1(G,Z)}
\Sigma^1(G) = \{ [\chi] \in S(G) \mid \Z \text{ is of type $\FP_{1}$ over the ring } \Z{G_{\chi}}  \}.
\end{equation}
\end{crl}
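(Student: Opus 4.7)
The plan is to read the corollary as a purely module-theoretic repackaging of Proposition \ref{prp:Equality-Sigma1-Sigma1-G-Z}, using only the short exact sequence that defines $IG_\chi$ and Schanuel's Lemma, both of which are already in play in the paragraph immediately preceding the statement.

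First, I would invoke Proposition \ref{prp:Equality-Sigma1-Sigma1-G-Z}, which gives the equivalence
\[
[\chi] \in \Sigma^1(G) \;\Longleftrightarrow\; IG_\chi \text{ is a finitely generated left } \Z G_\chi\text{-module}.
\]
Thus the content of the corollary reduces to the purely homological claim that $IG_\chi$ is finitely generated as a left ideal exactly when the trivial left $\Z G_\chi$-module $\Z$ is of type $\FP_1$ over the monoid ring $\Z G_\chi$.

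Next I would exhibit the augmentation sequence
\[
0 \longrightarrow IG_\chi \longrightarrow \Z G_\chi \xrightarrow{\;\varepsilon_\chi\;} \Z \longrightarrow 0,
\]
which is a short exact sequence of left $\Z G_\chi$-modules whose middle term $\Z G_\chi$ is free of rank one, hence finitely generated projective. This exhibits $\Z$ together with its canonical partial projective resolution of length one, and reduces the condition ``$\Z$ is of type $\FP_1$'' to the condition that the kernel $IG_\chi$ of the augmentation be finitely generated.

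The final ingredient is Schanuel's Lemma in the following guise: if $A$ admits any partial projective resolution $P_0 \twoheadrightarrow A$ with $P_0$ finitely generated, then $A$ is of type $\FP_1$ if and only if the kernel of that particular epimorphism is finitely generated (the isomorphism class of this kernel modulo finitely generated projective summands depends only on $A$). Applying this to $\Z G_\chi \twoheadrightarrow \Z$ yields the desired equivalence, and splicing it with Proposition \ref{prp:Equality-Sigma1-Sigma1-G-Z} produces equation \eqref{eq:Sigma1(G)-equals-Sigma1(G,Z)}.

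There is no genuine obstacle here; the only point that merits a line of comment is that Schanuel's Lemma is usually stated for rings that are group rings, whereas $\Z G_\chi$ is only the ring of a submonoid. However, the lemma is valid over an arbitrary associative ring with $1 \neq 0$ (see, e.g., the reference to Passman cited in the paragraph following Proposition \ref{prp:Equality-Sigma1-Sigma1-G-Z}), so no adjustment is required.
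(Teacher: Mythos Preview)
Your proposal is correct and follows exactly the approach of the paper: the corollary is not given a separate proof there, but the paragraph immediately preceding it contains precisely your argument --- the augmentation sequence $0 \to IG_\chi \to \Z G_\chi \to \Z \to 0$ together with Schanuel's Lemma (referenced via Passman) shows that $IG_\chi$ is finitely generated if and only if $\Z$ is finitely presented, i.e., of type $\FP_1$, and Proposition~\ref{prp:Equality-Sigma1-Sigma1-G-Z} supplies the link with $\Sigma^1(G)$. Your remark that Schanuel's Lemma holds over an arbitrary associative ring with $1 \neq 0$ is apt and matches the paper's reference.
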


The condition appearing on the right of  equation \eqref{eq:Sigma1(G)-equals-Sigma1(G,Z)}
admits of an obvious generalization to  (finitely generated) $\Z{G}$-modules $A$ and arbitrary dimensions $k$.
This generalization leads to the invariants $\Sigma^k(G;A)$  introduced by R. Bieri and B. Renz in \cite{BiRe88}.
Here we content ourselves with stating their definition and mentioning one obvious property:

\begin{definition}
\label{definition:Invariants-Sigmak(G;A)}
Given a finitely generated group $G$, a finitely generated left  $\Z{G}$-module and an integer $k\geq 0$, set
\begin{equation}
\label{eq:Definition-Sigmak(G,A)}
\Sigma^m(G; A) = \{ [\chi] \in S(G) \mid  A \text{   is of type $\FP_{m}$ over the ring } \Z{G_{\chi}}   \}.
\end{equation}
Then $\Sigma^m(G;A)$ is a subset of the sphere $S(G)$; 
it is called \emph{homological geometric invariant of $G$ and $A$ in dimension $m$}.
\end{definition}
\index{Definition of!invariant Sigmam(-;A)@invariant $\Sigma^m(-;A)$}
\index{Invariant Sigmam(-;A)@Invariant $\Sigma^m(-;A)$!definition}
\index{Bieri, R.}
\index{Renz, B.}

The new invariants are open subsets of the sphere $S(G)$, as is $\Sigma^1(G)$.
This property, however, is far from evident.
The only obvious fact about the new invariants is the descending chain of inclusions
\[
S(G) \supseteq \Sigma^0(G;A) \supseteq \Sigma^1(G;A) \supseteq \Sigma^2(G;A) \supseteq \cdots \supseteq \Sigma^m(G;A)\supseteq \cdots .
\]
\begin{remarks}
\label{remarks:Equality-Sigma1(G)-Sigma1(G,Z)}
a)
Corollary \ref{crl:Equality-Sigma1-Sigma1-G-Z}
asserts that the invariant $\Sigma^1(G)$ coincides with the homological invariant $\Sigma^1(G; \Z)$.
This fact is already pointed out in \cite[Proposition 6.4]{BiRe88}). 
The proof given in the above seems to be new.

b) In Section D1,
the invariant $\Sigma^0$ will be investigated in greater detail.
\end{remarks}
\index{Invariant Sigma1Z@Invariant $\Sigma^1(-;\Z)$!definition}%
\index{Definition of!invariant Sigma1Z@invariant $\Sigma^1(-;\Z)$}%
%

%
\subsection{The invariant $\Sigma_{G'}(G)$} 
\label{ssec:Invariant-Sigma-subGprime}
%
In this section,
we show that $\Sigma^1(G)$ coincides with the invariant $\Sigma_{G'}(G)$ introduced and studied in \cite{BNS}.
We begin with a restatement of the defining property of $\Sigma^1$ that is long overdue. 
%
\subsubsection{Reexpressing the connectedness of  $\Gamma_{\chi}$} 
\label{sssec:Reformulation-connectedness-subgraph-G-sub-chi}
%
Suppose the subgraph $\Gamma(G, \XX)_{\chi}$ is connected.
Then every element of $G_{\chi}$ can be written as an $\XX^\pm$-word with non-negative $\chi$-track.
This holds, in particular,  
for every element $g$ in the commutator subgroup $G'$.
Conversely, 
assume every element in $G'$ admits a representation as an $\XX^\pm$-word with non-negative $\chi$-track
and consider an element $g \in G_{\chi}$.
Since $\XX$ generates $G$, 
there exists a word $w = y_{1} y_{2} \cdots y_{k}$ in $\XX^\pm$  that represents it. 
The $\chi$ track of this word may be negative,
but by rearranging the order of the letters in $w$ 
we can construct a word $w' = y_{\sigma(1)} \cdots y_{\sigma(k)}$ with non-negative $\chi$-track;
it suffices to bring all letters with positive $\chi$-value to the front.
The words $w$ and $w'$ represent the same element of $G/G'$.
So there exists a word $w''$ representing a word of $G'$ with $w \equiv w'w''$.
Since $w''$ can be chosen to have non-negative $\chi$-track
this shows that every element $g \in G_{\chi}$ 
can be represented by an $\XX$-word with $v_{\chi}(g) \geq 0$,
and so $\Gamma(G, \XX)_{\chi}$ is connected.

The preceding argument holds for arbitrary groups,
not only for finitely generated ones,
a fact that will turn out to be useful in section \ref{sssec:Generalizing-Sigma1}.
Since the kernel of a character contains the commutator subgroup of $G$,
we have established
\begin{lem}
\label{lem:Restatement-connectivity-property}
Let $G$ be an arbitrary group,
$\eta \colon \XX \to G$ a generating system
and $\chi \colon G \to \R$ a non-zero character. 
Then the following statements imply each other:
\begin{enumerate}[(i)]
\item the subgraph $\Gamma(G, \XX)_{\chi}$ is connected;
\item for every $g \in \ker \chi$ there exists an $\XX^\pm$-word with non-negative $\chi$-track;  
\item for every $g \in G'$ there exists an $\XX^\pm$-word with non-negative $\chi$-track.
\end{enumerate}
\end{lem}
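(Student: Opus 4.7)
The plan is to establish the three implications (i) $\Rightarrow$ (ii) $\Rightarrow$ (iii) $\Rightarrow$ (i), which is the natural cycle given that (ii) $\Rightarrow$ (iii) comes for free.

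For (i) $\Rightarrow$ (ii): I would observe that any $g \in \ker \chi$ satisfies $\chi(g) = 0 \geq 0$ and so is a vertex of $\Gamma(G,\XX)_\chi$. By connectedness there is a path in $\Gamma(G,\XX)_\chi$ from $1$ to $g$; reading off the edge labels gives an $\XX^\pm$-word representing $g$ whose $\chi$-track, being the sequence of $\chi$-values at the vertices visited, is non-negative. The step (ii) $\Rightarrow$ (iii) is immediate, since every character vanishes on the commutator subgroup, whence $G' \subseteq \ker \chi$.

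The substantive step is (iii) $\Rightarrow$ (i), and the key trick (already hinted at in the paragraph preceding the lemma) is a rearrangement argument. First I would pick $g \in G_\chi$ and any $\XX^\pm$-word $w = y_1\cdots y_k$ representing $g$; the $\chi$-track of $w$ may be negative. I then form $w'$ by permuting the letters so that those with $\chi(y_i) > 0$ come first, followed by those with $\chi(y_i)=0$, followed by those with $\chi(y_i) < 0$. The partial sums of this rearranged sequence climb to a maximum, remain constant, then decrease monotonically to $\chi(g) \geq 0$; hence the $\chi$-track of $w'$ is non-negative. The element $h \in G$ represented by $w'$ has the same exponent-sum-vector in each generator as $g$, so $h^{-1}g \in G'$. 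Invoking hypothesis (iii), there is an $\XX^\pm$-word $w''$ representing $h^{-1}g$ with non-negative $\chi$-track. Concatenating gives a word $w'w''$ representing $g$; at position $k+j$ its $\chi$-track equals $\chi(h) + (\chi$-track of $w''$ at $j)$, which is a sum of two non-negative terms and so non-negative throughout. Hence the path $(1, w'w'')$ from $1$ to $g$ runs inside $\Gamma(G,\XX)_\chi$, which proves connectedness.

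The main obstacle, to the extent there is one, is making sure the monotonicity argument for the $\chi$-track of $w'$ is airtight: one must check that the descending phase, beginning from the maximum and ending at $\chi(g) \geq 0$, never dips below zero, which is exactly the statement that the accumulated negative contribution cannot exceed the accumulated positive contribution because their algebraic sum is $\chi(g) \geq 0$. Everything else is bookkeeping, and notably the argument nowhere uses that $G$ is finitely generated, which is the point the author emphasises just before stating the lemma.
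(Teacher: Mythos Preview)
Your proof is correct and follows essentially the same route as the paper's. The paper also proves the equivalence via the rearrangement trick---moving letters with positive $\chi$-value to the front to obtain a word $w'$ with non-negative $\chi$-track, then appealing to hypothesis (iii) for the commutator correction $w''$---and observes that statement (ii) slots in between (i) and (iii) because $G' \subseteq \ker\chi \subseteq G_\chi$; your organization as a cycle (i) $\Rightarrow$ (ii) $\Rightarrow$ (iii) $\Rightarrow$ (i) is just a minor repackaging of the same argument.
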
 
\subsubsection{Relation to $\Sigma^1(G)$} 
\label{sssec:Inclusion-Sigma-sub-Gprime-Sigma1(G)}
%
We begin by stating a condition, 
depending on a finitely generated group $G$ and a non-zero character $\chi$, 
and then proceed to show that it implies that $[\chi] \in \Sigma^1(G)$.

Given $G$ and $\chi$, consider the condition
\begin{equation}
\label{eq:Defining-property-Sigma-sub-Gprime}
G' \text{  is generated by a finite subset } \AA \text{ over a fg submonoid } M \subset G_{\chi}. 
\end{equation}
In this statement  $G'$ is viewed as a \emph{left} $G$-operator group,
the operation being given by conjugation, thus $\act{g}{-0.5}{a} = gag^{-1}$.

Assume $G$ and $\chi$ satisfy condition \eqref{eq:Defining-property-Sigma-sub-Gprime}.
Let $\XX$ be a finite set generating the finitely generated monoid $M$
and consider an element $g \in G'$.
There exist then a sequence of elements $g_{1}$, \ldots, $g_{\ell}$ in $M$ 
and a sequence of letters $a_{1}$, \ldots, $a_{\ell}$ in $\AA \cup \AA^{-1}$ so that
\[
g = \act{g_{1}}{0.5}{a_{1}}\cdot \act{g_{2}}{0.5}{a_{2}} \cdots \act{g_{\ell}}{0.5}{a_{\ell}}
\]
By assumption, each element $g_{j}$ can be expressed by a \emph{positive} word $w_{j}$ in the alphabet $\XX$.
The word
\[
w = w_{1}a_{1}w_{1}^{-1} \cdot w_{2}a_{2}w_{2}^{-1} \cdots w_{\ell}a_{\ell}w_{\ell}^{-1}
\]
is then a word in the alphabet $\XX \cup \XX^{-1} \cup \AA \cup \AA^{-1}$;
it represents $g$  and has non-negative $\chi$-track.
In view of  Lemma  \ref{lem:Restatement-connectivity-property} 
the subgraph  $\Gamma(G, \XX \cup \AA)_{\chi}$ is therefore connected
and so $[\chi] \in \Sigma^1(G)$.
\begin{remark}
\label{remark_Definition-SigmaBNS}
Condition \eqref{eq:Defining-property-Sigma-sub-Gprime} is the defining property of the invariant 
$\Sigma_{G'}(G)$, introduced and investigated by R.~Bieri, W.~D.~Neumann and R.~Strebel in \cite{BNS},
except for the fact 
that in \cite{BNS} the group $G'$ is considered as a \emph{right} $G$-operator group.
This implies that the invariant studied there is the image under the antipodal map of the set 
\index{Invariant SigmaBNS@Invariant $\Sigma_{G'}(G)$!definition}%
\index{Definition of!invariant SigmaBNS@invariant $\Sigma_{G'}(G)$}%
\begin{equation}
\label{eq:Definition-Sigma-sub-Gprime}
\Sigma_{G'}(G) = \{ [\chi] \in S(G) \mid G' \text{ is fg over a fg submonoid of  } G_{\chi} \}.
\end{equation}
There is thus a clash of notation between the invariant of \cite{BNS} 
and the invariant that will be investigated in this monograph.
I prefer to live with this conflict rather than to introduce new notation
and to make sure 
that the reader in never in doubt as to whether left or right action is used in the context at hand.
\end{remark}

The insight arrived at so far is summarized by the following
\begin{prp}
\label{prp:Inclusion-Sigma-sub-Gprime-Sigma1(G)}
Given a finitely generated group $G$,
view its commutator subgroup $G'$ as a \emph{left} $G$-operator group.
Then  $\Sigma_{G'}(G) \subseteq \Sigma^1(G)$.
\end{prp}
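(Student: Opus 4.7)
The plan is to unpack the defining property of $\Sigma_{G'}(G)$ into a combinatorial statement about words and then invoke Lemma~\ref{lem:Restatement-connectivity-property} to conclude connectedness of the relevant half Cayley graph. Concretely, suppose $[\chi] \in \Sigma_{G'}(G)$, so condition \eqref{eq:Defining-property-Sigma-sub-Gprime} holds: there is a finite subset $\AA \subset G'$ and a finitely generated submonoid $M \subseteq G_{\chi}$ such that $G'$ is generated, as a \emph{left} $G$-operator group, by $\AA$ over $M$. Fix a finite set $\XX$ of monoid generators of $M$; since the invariant $\Sigma^1(G)$ does not depend on the chosen finite generating system (Theorem~\ref{thm:Sigma1-well-defined}), we are free to work with any finite generating set of $G$, and by enlarging if necessary we may assume $\XX \cup \AA$ already generates $G$.

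The key step is to write each element of $G'$ as a word in $\XX \cup \AA$ with non-negative $\chi$-track. Given $g \in G'$, the hypothesis furnishes a factorisation
\[
g = \act{g_1}{0.5}{a_1} \cdot \act{g_2}{0.5}{a_2} \cdots \act{g_\ell}{0.5}{a_\ell}
\]
with $g_j \in M$ and $a_j \in \AA \cup \AA^{-1}$. Each $g_j$ is represented by a positive $\XX$-word $w_j$, and concatenating yields the word
\[
w \;=\; w_1 a_1 w_1^{-1} \cdot w_2 a_2 w_2^{-1} \cdots w_\ell a_\ell w_\ell^{-1}
\]
which represents $g$. The $\chi$-track of $w$ is non-negative: each $w_j$ has only non-negative prefix values because $\XX \subset M \subseteq G_{\chi}$, each $a_j$ contributes $\chi(a_j) = 0$ (as $\AA \subset G'$ and $\chi$ vanishes on $G'$), and the successive ``blocks'' $w_j a_j w_j^{-1}$ each have total $\chi$-value $0$, so the track merely returns to its previous level between blocks.

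Having produced, for every $g \in G'$, a word in $(\XX \cup \AA)^{\pm}$ of non-negative $\chi$-track representing $g$, the implication (iii)~$\Rightarrow$~(i) of Lemma~\ref{lem:Restatement-connectivity-property} asserts that the subgraph $\Gamma(G, \XX \cup \AA)_{\chi}$ of the Cayley graph is connected. By definition this means $[\chi] \in \Sigma^1(G)$, completing the argument. The only mildly delicate point is the book-keeping of $\chi$-tracks in the concatenation $w$, and the mild preliminary observation that we may take our finite generating system of $G$ to contain $\XX \cup \AA$; no serious obstacle arises, since the conjugation-plus-positive-word construction is tailor-made to translate the algebraic hypothesis \eqref{eq:Defining-property-Sigma-sub-Gprime} into the geometric condition appearing in Lemma~\ref{lem:Restatement-connectivity-property}.
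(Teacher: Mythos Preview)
Your proof is correct and follows essentially the same route as the paper's: pick a finite monoid generating set $\XX$ for $M$, express each $g\in G'$ as a product of conjugates $\act{g_j}{0.5}{a_j}$, replace each $g_j$ by a positive $\XX$-word $w_j$, observe that the resulting word $w_1a_1w_1^{-1}\cdots w_\ell a_\ell w_\ell^{-1}$ has non-negative $\chi$-track, and invoke Lemma~\ref{lem:Restatement-connectivity-property}. You are slightly more explicit than the paper in noting that one may enlarge the generating system so that $\XX\cup\AA$ generates $G$ (and that Theorem~\ref{thm:Sigma1-well-defined} makes this harmless), but the argument is the same.
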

%
\subsubsection[$\Sigma^1$ of Houghton's groups]%
{Application: the invariant of Houghton's groups} 
\label{sssec:Application-invariant-Houghtons-group}
%
Proposition \ref{prp:Inclusion-Sigma-sub-Gprime-Sigma1(G)} can be helpful in situations
where the action of  $G$ on its commutator subgroup $G'$ is sufficiently well known.
Instances where this condition is met with are provided by a sequence of permutation groups 
studied by C. Houghton in \cite[pp.\;257--258]{Hou78}.
These groups generalize example \ref{examples:Torsion-by-infinite-cyclic} b) 
and are defined like this.
\index{Computation of Sigma1@Computation of $\Sigma^1$ for!Houghton's groups|(}

Given a natural number $m \geq 2$, 
set $\SS =\SS_m = \N \times \{1,2,\ldots, m\}$;
one can think of $\SS_m$ as being the disjoint union of $m$ rays that emanate from a point in the plane.  
Define $G_{m}$ to be the group of all permutations of $S_m$ 
which are eventually a translation.
More precisely, 
a permutation $g \colon \SS_m \iso \SS_m$ belongs to $G_{m}$
if, and only if,  
there is a vector $x_{g} = (x_{1}, x_{2}, \ldots, x_{m}) \in \Z^m$ 
such that the equation $g((n,j)) = (n+x_{j},j)$ holds for each $j \in \{1,\ldots, m\}$ 
and all sufficiently large $n \in \N$.

The vector $x_{g}$ is uniquely determined by $g$ 
and the assignment $g \mapsto x_{g}$ defines a homomorphism $\widetilde{\vartheta} \colon G_m \to \Z^m$
whose image coincides with the the subgroup 
\[
Q_{m} = \{ x \in \Z^m \mid x_{1}+ x_{2} + \cdots + x_{m} = 0 \} 
\]
of $\Z^m$.
Indeed, it is clear that $\im \widetilde{\vartheta} \subseteq Q_m$; 
to prove the opposite inclusion, 
we construct a collection of ``translations''  in $G_{m}$.
For each $j \in \{1,2, \ldots, m\}$,
let $t_{j}$  denote the permutation 
that fixes the rays $\N \times \{i\}$ with $i \notin\{ j,j+1\}$ pointwise
and acts on the line formed by the remaining two rays by the rule
\footnote{Here and in the sequel, the index $j$ is taken modulo $m$.}
\begin{equation}
\label{eq:Translations-generating-Houghtons-group}
t_{j} (n,j) = 
\begin{cases} (n-1,j) & \text{ if } n \geq 1\\ (0,j+1) & \text{ if } n = 0, \end{cases} 
\quad \text{ and } \quad 
t_{j}(n,j+1) = (n+1, j+1).
\end{equation}
The claim now follows from the facts 
that $\widetilde{\vartheta}(t_{j})$ is  the vector $e_{j+1}- e_{j}$
and that these differences generate the subgroup $Q_m$.

Let $\vartheta \colon G_{m} \epi Q_{m}$ denote the homomorphism 
obtained from $ \tilde{\vartheta}$ by restricting the domain of values to $Q_{m} = \im \tilde{\vartheta}$.
The kernel of $\vartheta$ is the group of all finitary permutation of the set $\SS_m = \N \times \{1,2,\ldots, m\}$.
\begin{prp}
\label{prp:Invariant-Houghton-groups}
For every $m > 2$, the group $G_{m}$ is finitely generated. 
The complement  of its invariant $\Sigma^1(G_{m})$ consists of $m$ rank 1 points;
more precisely 
\begin{equation}
\label{eq:Invariant-Houghton-groups}
\Sigma^1(G_{m})^c  = \{ [-\chi_{1}], [-\chi_{2}], \ldots , [-\chi_{m}] \}
\end{equation}
where each $\chi_{j}$ is the character sending $g$  to the $j$-th component $(x_g)_{j}\in \Z$ 
of $\vartheta(g)$.
\end{prp}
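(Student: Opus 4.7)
The plan is to establish the proposition in four stages: exhibit a finite generating set, identify the sphere $S(G_{m})$, show the $m$ points $[-\chi_{j}]$ fall outside $\Sigma^{1}(G_{m})$, and finally show every other point lies in $\Sigma^{1}(G_{m})$.

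First I would take as generators $\XX = \{t_{1},\dots,t_{m-1},\tau\}$, where $\tau$ is the transposition $((0,1),(0,2))$. Since the images $\vartheta(t_{i}) = e_{i+1}-e_{i}$ already generate $Q_{m}$, only the finitary kernel $\ker\vartheta$ needs to be reached, and a direct computation (e.g.\ $t_{1}^{\pm \ell}\tau t_{1}^{\mp \ell}$ for $\ell\geq 1$) shows that one obtains every adjacent transposition on every ray; these generate the finitary symmetric group. Next I would identify the sphere: commutators such as $[t_{i},\tau]$ give 3-cycles, and the perfectness of the finitary alternating group $A_{\infty}^{\mathrm{fin}}$ for $m\geq 3$, together with $\vartheta(G_{m}') = 0$, forces $G_{m}' = A_{\infty}^{\mathrm{fin}}$. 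Hence $(G_{m})_{\ab}\cong Q_{m}\cong \Z^{m-1}$, so by Lemma \ref{lem:Dimension-S(G)} we get $S(G_{m})\cong \s^{m-2}$, and the $\chi_{j}$'s span $\Hom(G_{m},\R)$ subject to the single relation $\sum_{j}\chi_{j} = 0$.

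For the third stage I would show, for each fixed $j$, that $[-\chi_{j}]\notin \Sigma^{1}(G_{m})$ by contradiction. If it were, Proposition \ref{prp:Ascending-HNN-extension} would produce an element $t\in G_{m}$ with $\chi_{j}(t) = -1$ and a finitely generated subgroup $H\subseteq \ker\chi_{j}$ satisfying $t^{-1}Ht\subseteq H$ and $\bigcup_{\ell\geq 0}t^{\ell}Ht^{-\ell} = \ker\chi_{j}$. Define the $j$-ray depth $d_{j}(h) := \max\{n\in \N : h(n,j)\neq (n,j)\}$ (or $-1$ if $h$ fixes the $j$-ray pointwise); this is finite for $h\in \ker\chi_{j}$, and the submultiplicative estimate $d_{j}(h_{1}h_{2})\leq \max(d_{j}(h_{1}),d_{j}(h_{2}))$ (together with $d_{j}(h^{-1}) = d_{j}(h)$) shows $D := \sup_{h\in H} d_{j}(h)$ is finite, realised on a finite generating set. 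On the other hand the transpositions $\sigma_{n} := ((n,j),(n+1,j))$ all belong to $\ker\chi_{j}$, and since $t^{-1}$ asymptotically shifts the $j$-ray upward by $1$, a direct computation gives $t^{-\ell}\sigma_{n}t^{\ell} = \sigma_{n+\ell}$ for every $n$ past a threshold $N_{0}$ depending only on $t$. The membership $\sigma_{n}\in t^{\ell}Ht^{-\ell}$ would then force $n+\ell+1\leq D$; for $n\geq \max(N_{0},D)$ no $\ell\geq 0$ satisfies this, contradicting the sweeping condition.

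The fourth and most delicate stage is to show every other point of $S(G_{m})$ lies in $\Sigma^{1}(G_{m})$. Writing a non-zero character $\chi = \sum c_{i}\chi_{i}$ with the $c_{i}$'s defined modulo a common additive constant, the hypothesis $[\chi]\neq [-\chi_{j}]$ for all $j$ translates into the combinatorial statement that no single index strictly minimises the $c_{i}$'s while all the remaining coordinates coincide. By Proposition \ref{prp:Inclusion-Sigma-sub-Gprime-Sigma1(G)} it suffices to exhibit $[\chi]\in \Sigma_{G'}(G_{m})$: a finite set $\AA$ of 3-cycles and a finitely generated submonoid $M\subseteq G_{\chi}$ such that the conjugates $\{mam^{-1} : a\in\AA,\ m\in M\}$ generate $A_{\infty}^{\mathrm{fin}}$. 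The natural choice takes $M$ generated by those $t_{i}^{\pm 1}$ with $\chi(t_{i}^{\pm 1})\geq 0$; the non-degeneracy of $\chi$ guarantees that for every ray $j$ some element of $M$ pushes a base 3-cycle deeper along the $j$-ray, either directly via one of $t_{j-1}$ or $t_{j}^{-1}$ lying in $G_{\chi}$, or indirectly via a preliminary transfer to an adjacent ray whose character values allow an upward shift. The hard part will be organising this combinatorial bookkeeping uniformly across all possible ranks of $\chi$, from $1$ up to $m-1$; my fallback strategy is to verify $[\chi]\in \Sigma^{1}(G_{m})$ at a dense set of rank-$1$ points in each open region of $S(G_{m})\setminus\{[-\chi_{1}],\dots,[-\chi_{m}]\}$ and then invoke openness of $\Sigma^{1}$ (Theorem \ref{thm:Openness-Sigma-1}) to propagate to the whole complement.
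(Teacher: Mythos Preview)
Your four-stage architecture matches the paper's, and Stages~1 and~3 are sound (Stage~3 via the depth function $d_{j}$ is a valid alternative to the paper's route, which instead exhibits $G_{m}$ as a \emph{strictly descending} HNN-extension over the subgroup fixing ray~$j{+}1$ pointwise and invokes Proposition~\ref{prp:Strictly-descending-HNN-extension}). There is a small slip in Stage~2: for $m\geq 3$ the commutator $[t_{j-1},t_{j}]$ is already a \emph{transposition}, so $G'_{m}$ is the full finitary symmetric group on $\SS_{m}$, not $A_{\infty}^{\mathrm{fin}}$; this does not affect $S(G_{m})$, but it means your $\AA$ in Stage~4 should consist of transpositions rather than $3$-cycles.

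The real gap is in Stage~4. Your monoid $M$, generated by those $t_{i}^{\pm1}$ with $\chi(t_{i}^{\pm1})\geq 0$, is too rigid: the only generators acting nontrivially on ray~$j$ are $t_{j-1}$ (amplitude~$+1$) and $t_{j}$ (amplitude~$-1$), so every $m\in M$ has $\chi_{j}(m)\leq 0$ whenever $c_{j}$ is a strict local minimum among $(c_{j-1},c_{j},c_{j+1})$. For a concrete failure take $m=4$ and $(c_{1},c_{2},c_{3},c_{4})=(0,1,0,2)$; this is not one of the forbidden points $[-\chi_{k}]$, yet $M=\langle t_{1},t_{2}^{-1},t_{3}\rangle_{\mathrm{mon}}$ has $\chi_{1}(m)\leq 0$ for every $m\in M$, so $m\cdot\supp(\AA)$ never reaches $(N,1)$ for $N$ large and $G'_{m}$ cannot be swept out. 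Your ``indirect transfer'' clause does not repair this, since transferring to an adjacent ray and back still cannot produce positive net displacement on ray~$1$ within $M$.

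The paper's fix is to drop the restriction to the $t_{i}^{\pm1}$: for each ray~$j$ one seeks \emph{any} $g_{j}\in G_{m}$ with $\chi(g_{j})>0$ and $\chi_{j}(g_{j})>0$ simultaneously. In $Q_{m}\otimes\R\cong\R^{m-1}$ this asks that two open half-spaces through the origin meet, which happens precisely when their defining functionals are not negatives of one another, i.e.\ when $[\chi]\neq[-\chi_{j}]$. With such $g_{1},\dots,g_{m}$ in hand, the finite set of transpositions supported on $\bigcup_{j}\{0,\dots,p_{j}+\chi_{j}(g_{j})\}\times\{j\}$ together with the monoid $\langle g_{1},\dots,g_{m}\rangle_{\mathrm{mon}}\subseteq G_{\chi}$ sweeps out all of $G'_{m}$, yielding $[\chi]\in\Sigma_{G'}(G_{m})\subseteq\Sigma^{1}(G_{m})$.
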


\begin{proof} 
We first show that $ G =G_{m}$ is generated by the translations $t_{j}$ defined by equation
\eqref{eq:Translations-generating-Houghtons-group}.
Let $T$ denote the subgroup generated by them. 
Then $\vartheta(T) = Q = Q_m$ 
and so it suffices to verify that $T$ contains the kernel of $\vartheta$.
This kernel is the group of all finitary permutations of $S_m$,
and so generated by transpositions;
we claim each transposition is a product of commutators.
Indeed, 
the commutator $[t_{j-1},t_j]= t_{j-1} \circ t_{j} \circ t_{j-1}^{-1} \circ t_{j}^{-1}$
fixes each set  $(\N \smallsetminus \{0\}) \times \{i\}$ with $i  \in \{1,2, \ldots, m \}$ pointwise,
it fixes the the endpoints $(0,i)$ with $i \notin \{j,j+1\}$ 
and, as $m > 2$, it acts on the two remaining endpoints like this
\begin{align*}
&(0,j) \mapsto (1,j) \mapsto (0,j) \mapsto  (0,j+1)\mapsto (0,j+1),\\
&(0,j+1) \mapsto (0,j) \mapsto (0,j-1) \mapsto (0,j-1) \mapsto (0,j).
\end{align*}
So $[t_{j-1},t_j]$ is the transposition exchanging the endpoints $(0,j)$ and $(0,j+1)$.
In view of the action of $t_{j}$ on the lines $\N \times \{j\} \cup \N \times \{j+1\}$
it follows, firstly,  
that every transposition of adjacent points $(j,n)$ and $(j,n+1)$ on the $j$-th ray $\N \times \{j\}$ 
belongs to $T$ and is a commutator in $T$,
and then that every transposition is a product of commutators in $T$.
We conclude that $\ker \vartheta$ is contained in $T'$ and then that $T = G$.

We move on to the proof of formula \eqref{eq:Invariant-Houghton-groups}.
The derived group of $G_m$ is generated 
by the transpositions of adjacent elements of the rays $\N \times \{j\}$
and by the transpositions that exchange the end points of two adjacent rays.

Consider now a non-zero character $\chi \colon G \to \R$.
Suppose there exists, for each ray $\N \times \{j\}$, an element $g_j \in G_\chi$ such that $\chi(g_j) > 0$;
let $p_j$ be a positive integer such that $g_j$ acts on $\{p_j, p_j+1, \ldots \} \times \{j\}$ 
by a translation with amplitude $a_j = \chi(g_j)$.
Then $G'_m$ is generated by the transpositions of the finite set 
\[ 
\FF = \bigcup\nolimits_{1 \leq j \leq m} \{0, 1, \ldots,  p_j + a_j \}\times \{j\}
\]
and their conjugates under the positive powers of the elements $g_1$, \ldots, $g_m$.
The derived group $G'_m$ is thus finitely generated over the monoid $M$ 
generated by the elements $g_1$, \ldots, $g_m$,
whence $[\chi] \in \Sigma_{G'_m}(G_m)$ by definition \eqref{eq:Definition-Sigma-sub-Gprime}
and thus $[\chi] \in \Sigma^1(G_m)$ by Proposition \ref{prp:Inclusion-Sigma-sub-Gprime-Sigma1(G)}.

There remains the problem of finding out 
when there exists a sequence of elements $g_1$, \ldots, $g_m$ with the stated properties.
This is a problem in euclidean geometry.
Indeed, by introducing coordinates, as detailed in section \ref{sssec:Coordinates-sphere},
the previous problem becomes the following one: 
let $u_1$, \ldots,  $u_m$ be rationally defined, pairwise distinct unit vectors 
in the standard euclidean space $\R^{m-1}$.
Each $u_j$ gives rise to an open half lattice 
$\HH_j = \{ x \in \Z^{m-1} \mid \langle u, x\rangle > 0 \}$ of $\Z^{m-1}$.
Then $\HH_j$  intersects a closed half space $\HH_u = \{y \in \R^{m-1} \mid \langle u, y \rangle \geq 0 \}$
non-trivially if, and only if, $u \neq -u_j$.
The previous argument therefore implies 
that 
\[
\Sigma^1(G_m)^c \subseteq  \{ [-\chi_{1}], [-\chi_{2}], \ldots , [-\chi_{m}] \}.
\]

We are left with proving that the points $-[\chi_{1}]$, \ldots, $-[\chi_{m}]$ lie in $\Sigma^1(G_m)^c$.
Given an index $j \in \{1,2, \ldots, m\}$, 
consider the subgroup $H_{j}$ of $G_{m}$ made up of all permutations 
that fix the ray $\N \times \{j + 1\}$ pointwise.
The translation  $t_{j}$ maps the set 
$S_j = \SS_m \smallsetminus \N \times \{j+1\}$ onto $S_j \cup \{(0,j+1\}$
and the images of $S_j$ under the positive powers of $t_{j}$ sweep out all of $\SS_m$.
It follows that $G_m$ is a strictly ascending HNN-extension with base group $H_{j}$ and stable letter $t_{j}$,
whence $G_m$ is a strictly \emph{descending} HNN-extension 
with base group $H_{j}$  and stable letter $t_{j}^{-1}$.
Since $(-\chi_j)(t_j^{-1}) = 1$,
Proposition \ref{prp:Strictly-descending-HNN-extension}
allows us to see that $-[\chi] \notin \Sigma^1(G_m)$.
\end{proof}
\index{Computation of Sigma1@Computation of $\Sigma^1$ for!Houghton's groups|)}

\begin{note}
\label{note:Computation-invariant-Houghtons-groups}
The invariant of Houghton group $G_{m}$ has been worked out by Ken Brown around 1985 
by means of his characterization of $\Sigma^1$ 
in terms of actions on $\R$-trees (see \cite[Section 5]{Bro87b}).
The proof given here was found by R. Bieri and R. Strebel at about the same time.
\end{note}
\index{Houghton, C. H.}
\index{Brown, K. S.}
\index{Bieri, R.}
\index{Strebel, R.}
%

\subsubsection{Equality of $\Sigma^1(G)$ and  $\Sigma_{G'}(G)$} 
\label{sssec:Equality-Sigma1(G)-Sigma-sub-Gprime}
%
Proposition \ref{prp:Inclusion-Sigma-sub-Gprime-Sigma1(G)}
states that the invariant $\Sigma_{G'}(G)$, propounded in \cite{BNS}, is a subset of $\Sigma^1(G)$.
Actually the reverse inclusion also holds:
\begin{thm}
\label{thm:Equality-Sigma1(G)-Sigma-sub-Gprime}
For every finitely generated group $G$ the invariant $\Sigma^1(G)$ coincides with 
\begin{equation}
\label{eq:Definition-Sigma-sub-Gprime-bis}
\Sigma_{G'}(G) = \{ [\chi] \in S(G) \mid G' \text{ is fg over a fg submonoid of  } G_{\chi} \}.
\end{equation}
\end{thm}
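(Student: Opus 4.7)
Since Proposition \ref{prp:Inclusion-Sigma-sub-Gprime-Sigma1(G)} has just established $\Sigma_{G'}(G) \subseteq \Sigma^1(G)$, only the reverse inclusion $\Sigma^1(G) \subseteq \Sigma_{G'}(G)$ remains to be shown. The plan is to translate the connectedness of $\Gamma(G,\XX)_\chi$, in its strong algebraic form supplied by the $\Sigma^1$-criterion (Theorem \ref{thm:Sigma1-criterion}), into an explicit pair $(M,\AA)$ witnessing that $[\chi]$ satisfies condition \eqref{eq:Defining-property-Sigma-sub-Gprime} defining $\Sigma_{G'}(G)$.

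Given $[\chi] \in \Sigma^1(G)$, I would first select a finite generating system $\XX$ of $G$ with $\chi(x) \geq 0$ for every $x \in \XX$; this is legitimate because the invariant is independent of the chosen system (Theorem \ref{thm:Sigma1-well-defined}) and a generator $x$ with $\chi(x) < 0$ may simply be replaced by $x^{-1}$. Fix $t \in \XX$ with $\chi(t) > 0$, which exists since $\chi \neq 0$, and invoke the $\Sigma^1$-criterion to produce, for every $y \in \YY = \XX \cup \XX^{-1}$, an edge path $p_y = (t, w_y)$ from $t$ to $yt$ with $v_\chi(p_y) > \min\{0, \chi(y)\}$. Let $\ZZ$ be the finite set containing $t$, all the vertices of the paths $p_y$ for $y \in \XX$, and the inverses of the generators in $\XX_0 = \{x \in \XX : \chi(x) = 0\}$; every element of $\ZZ$ lies in $G_\chi$. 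Define $M$ to be the submonoid of $G_\chi$ generated by $\XX \cup \ZZ$, so that $M$ is finitely generated, and set $\AA = \{[x_1, x_2] : x_1, x_2 \in \XX\}$, a finite subset of $G'$.

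The central claim is that every $g \in G'$ is a product of conjugates $m \cdot a^{\pm 1} \cdot m^{-1}$ with $m \in M$ and $a \in \AA$. To establish it, represent $g$ by a word $w = s_1 \cdots s_k$ in $\YY$ whose $\chi$-track is non-negative (available by Lemma \ref{lem:Restatement-connectivity-property}); every prefix $g_j = s_1 \cdots s_j$ then lies in $G_\chi$. I would rearrange $w$ by a sequence of adjacent transpositions, using the identity $s_j s_{j+1} = [s_j, s_{j+1}] \cdot s_{j+1} s_j$: each swap modifies the element represented by a factor $g_{j-1}\,[s_j, s_{j+1}]\,g_{j-1}^{-1}$, a $G_\chi$-conjugate of an element of $\AA$. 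Carrying this out until all $\XX$-letters precede all $\XX^{-1}$-letters yields a factorization $g = c_1 \cdots c_N \cdot (u_+ u_-)$; because $g \in G'$, the multisets of generators in $u_+$ and in $u_-$ coincide, so a further bubble-sort of $u_-$ against $u_+^{-1}$ writes $u_+ u_-$ itself as a product of $M$-conjugates of elements of $\AA$, the conjugators now being prefixes of $u_+$, hence products of letters in $\XX \subseteq M$.

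The main obstacle is that the conjugators $g_{j-1}$ arising in the first stage of the rearrangement lie \emph{a priori} only in $G_\chi$, not in the prescribed finitely generated monoid $M$. The resolution is to apply the path-transformation $T$ from the proof of implication (ii)$\Rightarrow$(i) of Theorem \ref{thm:Sigma1-criterion} to the path $(1, s_1 \cdots s_{j-1})$ from $1$ to $g_{j-1}$: iteration of $T$ replaces this path, after finitely many steps, by an equivalent one whose every vertex lies in $\ZZ$, thereby exhibiting $g_{j-1}$ as a product of elements of $\ZZ \subseteq M$. Coordinating this substitution uniformly across all the transpositions, and verifying that the commutators $[s_j, s_{j+1}]$ behave consistently under the replacement, is the delicate bookkeeping at the technical core of the argument --- this is exactly the mechanism by which Proposition 3.4 of \cite{BNS} converts the topological condition ``$\Gamma(G,\XX)_\chi$ is connected'' into the monoid-theoretic description of $\Sigma_{G'}(G)$.
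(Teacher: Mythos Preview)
Your final paragraph contains a genuine error that breaks the argument. You claim that iterating the path-transformation $T$ from the proof of Theorem \ref{thm:Sigma1-criterion} on the path $(1, s_1\cdots s_{j-1})$ yields a path ``whose every vertex lies in $\ZZ$''. But $\ZZ$ is a fixed \emph{finite} set of group elements, whereas the transformation $T$ only raises $v_\chi$; the vertices of the resulting path are still arbitrary elements of $G_\chi$, not members of the finite set $\ZZ$. More to the point, even if you reinterpret the claim as ``$g_{j-1}$ is a product of elements of $\ZZ$'', this is false in general: an element of $G_\chi$ need not lie in any prescribed finitely generated submonoid of $G_\chi$. Your monoid $M$ was fixed at the outset, independently of the element $g\in G'$ under consideration, so there is no mechanism forcing the (infinitely many possible) conjugators $g_{j-1}$ into $M$.

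The paper's proof sidesteps exactly this difficulty, and the way it does so is the real content of the theorem. It does \emph{not} try to put every conjugator into a pre-chosen monoid. Instead it proceeds in two stages: first it shows $G'=\gp\bigl({}^{\WW(\XX,\chi)}\AA\bigr)$ where $\WW(\XX,\chi)$ is the (infinite) set of all $\YY$-words with non-negative $\chi$-track --- your bubble-sort idea is close to this, though the paper needs an extra wrinkle to force the representing word into $F'$ and then uses the identity $[a,bc]=[a,b]\cdot {}^b[a,c]$. Second, and this is the missing idea, it applies the first stage \emph{only to the finitely many elements} ${}^ya$ with $y\in\YY$, $a\in\AA$, and lets $M$ be the monoid generated by the images of the initial segments of the conjugating words that arise in those specific expressions. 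One then shows, via a commutator rewriting, that $H=\gp({}^M\AA)$ is invariant under conjugation by every $y\in\YY$; since $G'$ is the normal closure of $\AA$, this forces $H=G'$. Thus $M$ is not chosen in advance but is dictated by the structure of the relations the $\Sigma^1$-criterion produces, and the equality $H=G'$ is obtained by a normality argument rather than by chasing individual conjugators.
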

\index{Invariant SigmaBNS@Invariant $\Sigma_{G'}(G)$!equality with Sigma1@equality with $\Sigma^1$}

\begin{proof}
The inclusion $\Sigma_{G'}(G) \subseteq \Sigma^1(G)$ is covered by Proposition 
\ref{prp:Inclusion-Sigma-sub-Gprime-Sigma1(G)};
the reverse inclusion will be established in two steps, 
summarized by the formula
\begin{equation}
\label{eq:Description-two-steps}
[\chi] \in \Sigma^1(G) 
\Longrightarrow 
\gp(\act{\WW(\XX,\chi)}{1}{\AA}) = G'
\Longrightarrow 
[\chi] \in \Sigma_{G'}(G).
\end{equation}
In the above,
the following notation is used:
$\XX \subset G $ is a finite set of generators of $G$,
next $\YY$ is the alphabet $\XX \cup \XX^{-1}$
and $\WW(\XX, \chi)$ denotes the set of all $\YY$-words with non-negative $\chi$-track.
Finally, 
$\AA$ denotes the set of all commutators 
\[
[y_{1}, y_{2}] = y_{1} y_{2} y_{1}^{-1}y_{2}^{-1}  \text{ with } y_1 \text{ and } y_2 \text{ in } \YY.
\]

Let $\XX$, $\YY$ and $\AA$ are as explained before
and let $\chi \colon G \to \R$ a non-zero character 
that represents a point of $\Sigma^1(G)$.
Every element $g \in G'$ can then be represented by a $\YY$-word $w'$ with non-negative $\chi$-track
(cf. implication (i) $\Rightarrow$ (iii) of Lemma \ref{lem:Restatement-connectivity-property}).
We intend to rewrite $w'$ as a product  $w$ of conjugates of the commutators in $\AA$.
If such a product exists, 
it will have zero exponent sum with respect to every generator $x \in \XX$;
the word $w'$, however, may not have this property.
One can remedy such a default as follows.

Let $\eta$ denote the obvious projection of $F$ onto $G$.
The preimage $U = \eta^{-1}(G')$ contains the derived group $F'$ of $F$;
let $\UU'$ be a finite of words in $\YY$ that generates $U$ \emph{modulo} $F'$.
The image $\eta(u')$ of each  $u'\in \UU'$ lies in $G'$;
since $\eta$ maps $F$ onto all of $G$, 
there exists a word $w_{u'} \in F'$ with $\eta(u') = \eta(w_{u'})$.
By construction, 
each word in the finite set $\UU' \cup \{w_{u'} \mid u' \in \UU' \}$ represents an element of $G'$,
but its $\chi$-track may be negative.
There exists, however, 
a letter $y \in \YY$ with $\chi(y) > 0$ and an exponent $k > 0$ 
so that the $\chi$-tracks of all the elements in
\[
\UU = \{u = y^ku'y^{-k} \mid u' \in \UU'\} \quad \text{ and } \quad 
 \{w_u = y^k w_{u'}y^{-k} \mid u' \in \UU'\}
\]
are 0.
Let now $g$ be an element of $G'$.
As $[\chi] \in \Sigma^1(G)$ 
there exists a word $w' \in F$ with $\eta (w') = g$ and  $\chi$-track equal to 0.
Next, 
there exist a product $u_1 \cdots u_k$ of words in $\UU$ 
so that $w'' = w' \cdot u_1 \cdots u_k$ has exponent sum 0 for every $x \in \XX$.
The $\chi$-track of the word $w''$ is then 0,
as is the $\chi$-track of the word $w = w'' \cdot (w_{u_1} \cdots w_{u_k})^{-1}$.
Notice that the word $w$ represents the given element $g \in G'$ and that it lies in $F'$,
as we set out to show. 

We are now ready to justify the first implication in \eqref{eq:Description-two-steps}.
For every $g \in G'$ the previous argument provides one with a $\YY$-word $w \in F'$ 
and $\chi$-track equal to 0.
We intend to show by induction on the number $m$ of letters in $w$ 
that $w$ is freely equivalent to a word of the form
\[
\act{v_{1}}{0}{a_{1}} \cdot \act{v_{2}}{0}{a_{2}} \cdots \act{v_{f}}{0}{a_{f}}
\]
where each $v_{j}$ is a $\YY$-word with non-negative $\chi$-track and each $a_{j}$ is in $\AA$.
The claim is obvious if $m=0$.
If $m>0$, 
consider the leftmost letter $z$ in $w$ with $\chi(z) \leq 0$.
This letter is either preceded or succeeded by an occurrence of $z^{-1}$, 
for the exponent sums of $w$ are equal to 0.
In the first case, set $y = z^{-1}$; in the second case, set $y = z$.
Then $w$ has the form $w_{1}yw_{2}y^{-1}w_{3}$ and so it is freely equivalent to
\[
w_{1} \cdot y  w_{2}y^{-1} w_{2}^{-1} \cdot w_{2}w_{3} \equiv 
\act{w_{1}}{0}{[y,w_{2}]} \cdot w_{1}w_{2}w_{3}. 
\]
The word $w_{1}w_{2}w_{3}$ is shorter than $w$ and its exponent sums are 0.
Its $\chi$-track is likewise 0: 
if $\chi(y) \leq 0$, this assertion is obvious;
if not, each letter in $w_{1} w_{2}$ has a positive $\chi$-value by the choice of $z$,
and so the assertion holds likewise.
By the inductive assumption, the word $w_{1}w_{2}w_{3}$ represents therefore 
an element in the subgroup $\gp(\act{\WW(\XX,\chi)}{1}{\AA})$. 
Let $y_{1} \cdots y_{k}$ be the spelling of $w_{2}$.
The commutator identity $[a, bc] = [a,b] \cdot \act{b}{0}{[a,c]}$  implies then
that $\act{w_{1}}{0}{[y,w_{2}]}$ is freely equivalent to 
\[
\act{w_{1}}{0}{\left([y,y_{1}]\cdot \act{y_{1}}{0}{[y,y_{2}] }\cdots \act{y_{1}\cdots y_{k-1}}{0}{[y,y_{k}]}\right)} 
\equiv 
\act{w_{1}}{0}{[y,y_{1}]}  \cdot \act{w_{1}y_{1}}{0} {[y,y_{2}] } \cdots \act{w_{1}y_{1}\cdots y_{k-1}}{0} {[y,y_{k}]}.
\]
Let $w_0$ denote the word 
that stands on the right hand side of this equivalence.
As each of the conjugating words $wy_{1}\cdots y_{j}$ occurring $w_{0}$ 
is an initial word of $w_{1}w_{2}$  it has non-negative $\chi$-track.
Hence $w_{0}$ represents a word in the group 
$\gp(\act{\WW(\XX,\chi)}{1}{\AA})$.
The inductive step of the first implication in \eqref{eq:Description-two-steps} is now complete.
\smallskip

In the second part,
the representation obtained in the first step,
is applied to the commutators 
$\act{y}{0}{a}$ with  $y \in \YY = \XX^\pm$ and $a \in \AA = \{[y_1,y_2] \mid (y_1,y_2) \in \YY^2 \}$.
Let
\begin{equation}
\label{eq:Expression-for-second-step}
\act{y}{0}{a} = \act{w_{1}}{0}{a_{1}} \cdot \act{w_{2}}{0}{a_{2}} \cdots \act{w_{f}}{0}{a_{f}}
\end{equation}
be the expression resulting for $\act{y}{0}{a}$  
and let $\II(y,a)$ be the set of all initial words of the conjugating words $w_{j}$  occurring in this expression.
Define $\II$ to be the union of these finite sets $\II(y,a)$ 
and let $M$ be the monoid generated by the image of $\II$ in the group $G$.
We claim that $\AA$  generates $G'$ over $M$.

Let $H \subseteq G'$ denote the subgroup generated by the set $\act{M}{0}{\AA}$.
Since $G'$ is generated by $\AA$ over $G$,
the equality of $G'$ and $H$ will follow
if $H$ is stable under conjugation by the elements of $\YY$.
Let $w = y_{1} \cdots y_{k}$ be a $\YY$-word that is a product of words in $\II$.
By the construction of $\II$,
each initial segment $w'$ of $w$ is a product of words in $\II$.
 Given $a \in \AA$ and $y \in \YY$,
 we now rewrite $\act{yw}{0}{a}$ as follows:
 \begin{align*}
 \act{yy_{1}\cdots y_{k}}{0}{a}
 &=
 \act{yy_{1}y^{-1}y_{1}^{-1}\cdot y_{1} y y_{2}y_{k}}{0}{a}\\
 &= 
 [y,y_{1}] \cdot \act{y_{1} y y_{2} \cdots y_{k}}{0}{a} \cdot  [y,y_{1}]^{-1}
=
 a_{1} \cdot \act{y_{1} y y_{2} \cdots y_{k}}{0}{a} \cdot  a_{1}^{-1}.
  \end{align*}
Continuing in this manner we end up with the equation
\[
\act{yy_{1}\cdots y_{k}}{0}{a} 
= 
(a_{1} \cdot \act{y_{1}}{0}{a_{2}}   \cdots \act{y_{1} \cdots y_{k-1}}{0}{a_{k}})  
\cdot (\act{y_{1}y_{2}  \cdots y_{k} y}{1}{a}) \cdot 
(a_{1} \cdot \act{y_{1}}{0}{a_{2}}   \cdots \act{y_{1} \cdots y_{k-1}}{0}{a_{k}})^{-1}.
\]
 The first and the third factor of the right hand side are contained in $H$
 since the occurring conjugating words are initial segments of $w$;
 the second factor lies in $H$ by relation \eqref{eq:Expression-for-second-step} 
 and the definition of $M$.
 So  $\act{y}{0}{H} \subseteq H$, as desired. 
\end{proof}

\begin{note}
\label{note:Proof-equality-Sigma1(G)-Sigma-sub-Gprime}
The justification for the second implication in formula
\eqref{eq:Description-two-steps} 
goes back to the proof of implication (iv) $\Rightarrow$ (i) in \cite[Proposition 2.1]{BNS}.
\end{note}

We conclude section \ref{ssec:Invariant-Sigma-subGprime} 
with two applications of the theorem just proved.

\subsubsection{A characterization of the complement of $\Sigma^1$} 
\label{sssec:Characterization-complement-Sigma1}
%
So far, 
various  results have been established
that allow one show to that a given character $\chi \colon G \to \R$ represents a point of the invariant $\Sigma^1$.
By contrast, 
only three general results are at one's disposal
if one sets out to prove that a character $\chi \colon G \to \R$ does \emph{not} represent a point of $\Sigma^1$.
The first of them deals with  the invariant of a non-abelian free group 
or, more generally, of  a non-trivial free product $ G = G_1 \star G_2$
and states that the invariant is empty (see example 3) in section \ref{sssec:Sigma1-first-examples}).
This result will be generalized in section \ref{sssec:Illustration-HNN-extensions};
there it will be shown that if $G$ is a free product with amalgamation $ G_1 \star_A G_2$ 
with $A$ distinct from both $G_1$ and $G_2$ then $S(G, A) \subseteq \Sigma^1(G)^c$
(see part (i) of Proposition \ref{prp:Point-outside-Sigma1-HNN-extensions}).

Secondly, one knows that in case $G$ maps onto a quotient group $Q$
every point $[\psi] \in \Sigma^1(Q)^c$ pulls back to a point in $\Sigma^1(G)^c$
(see Corollary \ref{crl:Sigma1-epimorphism}).
This observation is used, for instance, in the proofs of Theorem \ref{thm:Sigma1-right-angled-Artin-group}
or of Proposition \ref{prp:Invariant-group-large-deficiency}. 

Thirdly, 
Proposition \ref{prp:Strictly-descending-HNN-extension} states 
that the canonical character of a \emph{strictly descending HNN-extension} 
represents a point in the complement of $\Sigma^1$;
this result deals only with rank 1 characters.
Theorem \ref{thm:Equality-Sigma1(G)-Sigma-sub-Gprime} 
allows one to generalize it as follows:
\begin{prp}
\label{prp:Characterization-complement-Sigma1}
\index{Characterization of!complement of Sigma1@complement of $\Sigma^1$}
Let $G$ be a finitely generated group and let $\chi$ be a non-zero character of $G$.
Consider a subgroup  $N$ of $G$ that contains $G'$ and is contained in $\ker \chi$.
Then $[\chi] \in \Sigma^1(G)^c$ if, and only if, $N$ contains an increasing chain
$B_0 \subseteq B_1 \subseteq \cdots$ of \emph{proper} subgroups with the following two properties:
\begin{enumerate}[(i)]
\item $\bigcup\nolimits_{j \in \N} B_j = N$;
\item for every $g \in G_\chi$ 
there exists an index $ j(g)$ such 
that  the inclusion $g \cdot B_j \cdot g^{-1} \subseteq B_j$ holds for each $j \geq j(g)$.
\end{enumerate}
\end{prp}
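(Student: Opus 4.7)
The approach is to relate the chain condition to a more algebraic reformulation of membership in $\Sigma^1(G)$. Concretely, I plan to first establish the following intermediate statement (Claim A$'$): under the standing hypotheses $G'\subseteq N\subseteq\ker\chi$, the point $[\chi]$ belongs to $\Sigma^1(G)$ if and only if there exist a finite subset $\mathcal{B}\subseteq N$ and a finitely generated submonoid $M\subseteq G_\chi$ with $N=\gp(M\mathcal{B}M^{-1})$. Once Claim A$'$ is available, both directions of the proposition fall out from elementary manipulations with ascending chains.

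The forward direction of Claim A$'$ leverages Theorem \ref{thm:Equality-Sigma1(G)-Sigma-sub-Gprime}: if $[\chi]\in\Sigma^1(G)=\Sigma_{G'}(G)$, one writes $G'=\gp(M\mathcal{A}M^{-1})$ for some finite $\mathcal{A}\subset G'$ and finitely generated submonoid $M\subseteq G_\chi$, and then adjoins to $\mathcal{A}$ a finite lift $n_1,\ldots,n_r\in N$ of a generating set of the finitely generated abelian group $N/G'\leq G/G'$. A short commutator computation shows that $\mathcal{B}=\mathcal{A}\cup\{n_1,\ldots,n_r\}$ satisfies $\gp(M\mathcal{B}M^{-1})=N$. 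The reverse direction is the technically delicate part: starting from $N=\gp(M\mathcal{B}M^{-1})$, one picks generators $\XX_M\subseteq G_\chi$ of $M$ and lifts $t_1,\ldots,t_k\in G_\chi$ of a generating set of the finitely generated abelian group $G/N$, and one shows that the finite set $\XX^{*}=\XX_M\cup\mathcal{B}\cup\{t_1,\ldots,t_k\}\subset G_\chi$ yields a connected subgraph $\Gamma(G,\XX^{*})_\chi$. The argument proceeds by decomposing an arbitrary $g\in G_\chi$ as $g=n\cdot t^{*}\cdot c^{-1}$, where $n\in N$, $t^{*}$ is a word in the $t_i$'s in which all positive-$\chi$ powers precede all negative-$\chi$ powers, and $c\in G'\subseteq N$ is the commutator correction arising from the reordering; each of these three factors can then be spelled as a product of ``plateau'' segments $v\cdot b^{\pm 1}\cdot v^{-1}$ with $v\in M$ and $b\in\mathcal{B}$, whose $\chi$-track never drops below its starting value, so the concatenation has non-negative $\chi$-track throughout.

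With Claim A$'$ in hand, the implication ``chain exists $\Rightarrow [\chi]\notin\Sigma^{1}(G)$'' is immediate. If $N=\gp(M\mathcal{B}M^{-1})$ with $M$ generated by finitely many elements $x_1,\ldots,x_r\in G_\chi$, then property~(ii) supplies an index $j_0$ for which each $x_i$ satisfies $x_iB_{j_0}x_i^{-1}\subseteq B_{j_0}$, hence $mB_{j_0}m^{-1}\subseteq B_{j_0}$ for every $m\in M$ by iteration; property~(i) then yields some $j_1\geq j_0$ with $\mathcal{B}\cup\mathcal{B}^{-1}\subset B_{j_1}$, forcing $M\mathcal{B}M^{-1}\subseteq B_{j_1}$ and hence $N=B_{j_1}$, contradicting properness.

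For the converse, assuming $[\chi]\notin\Sigma^{1}(G)$, I build the chain explicitly: enumerate $G_\chi=\{g_0,g_1,\ldots\}$ and $N=\{n_0,n_1,\ldots\}$, let $M_j$ be the submonoid of $G_\chi$ generated by $\{g_0,\ldots,g_j\}$, and set
\[
B_j=\gp(\{wn_iw^{-1}\mid w\in M_j,\ i\leq j\}).
\]
Each $B_j$ is contained in $N$ because $wn_iw^{-1}$ differs from $n_i$ by the commutator $[w,n_i]\in G'\subseteq N$; the chain is ascending with union $N$ since $n_i\in B_i$; every $B_j$ is proper in $N$ because otherwise Claim A$'$ would force $[\chi]\in\Sigma^{1}(G)$; and the eventual normalization condition holds because any $g=g_k\in G_\chi$ lies in $M_j$ for every $j\geq k$, so that $g(wn_iw^{-1})g^{-1}=(gw)n_i(gw)^{-1}$ is itself a generator of $B_j$. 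The main technical obstacle I anticipate is the reverse half of Claim A$'$, where transferring the algebraic description $N=\gp(M\mathcal{B}M^{-1})$ back to the connectedness of $\Gamma(G,\XX^{*})_\chi$ demands careful bookkeeping of the $\chi$-track along the rearrangement $g=n\cdot t^{*}\cdot c^{-1}$; in particular, one must ensure that the commutator correction $c$ is itself expressible through $M\mathcal{B}M^{-1}$-factors without ever dragging the cumulative $\chi$-value below zero.
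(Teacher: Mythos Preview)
Your approach is correct, but it takes a substantially longer route than the paper's, and the detour is avoidable.

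For the implication ``chain exists $\Rightarrow [\chi]\notin\Sigma^1(G)$'', the paper proceeds almost exactly as you do, except that it appeals directly to the $G'$-version afforded by Theorem~\ref{thm:Equality-Sigma1(G)-Sigma-sub-Gprime} rather than your $N$-version (Claim~A$'$): once one knows that $G'=\gp(M\mathcal{A}M^{-1})$ lands in some $B_{j}$, the finite generation of $N/G'$ forces a later $B_{j''}$ to equal $N$. So the forward half of Claim~A$'$ is not actually needed.

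The real divergence is in the converse. You construct the chain by enumerating $G_\chi$ and $N$ and setting $B_j=\gp(\{wn_iw^{-1}:w\in M_j,\ i\le j\})$; properness of each $B_j$ then hinges on the reverse half of Claim~A$'$, whose verification (your decomposition $g=n\cdot t^{*}\cdot c^{-1}$ and the associated $\chi$-track bookkeeping) is, as you anticipate, the most delicate part of the whole argument. The paper bypasses all of this with a one-line geometric definition: fix a finite generating set $\XX$ and let $B_j$ be the set of $h\in N$ that admit an $\XX^{\pm}$-word with $\chi$-track bounded below by $-j$. These $B_j$ are visibly subgroups, ascend with union $N$, and satisfy the eventual-normalization property because any $g\in G_\chi$ has some word representation with $\chi$-track $\ge -j(g)$, so conjugation by $g$ can drop the track by at most $j(g)$. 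Properness follows immediately from Lemma~\ref{lem:Connected-rays} and Lemma~\ref{lem:Restatement-connectivity-property}: if $\Gamma(G,\XX)_\chi$ is disconnected, so is every $\Gamma(G,\XX)^{[-j,\infty)}_\chi$, hence some element of $N$ evades $B_j$.

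What your approach buys is a slightly cleaner statement (Claim~A$'$) that may be of independent interest, and a chain $\{B_j\}$ that is manifestly built out of conjugates; what the paper's approach buys is that the entire converse fits in a short paragraph with no auxiliary claim and no track-reordering gymnastics.
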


\begin{proof}
Assume first a chain of subgroups $B_j$ satisfying requirements (i) and (ii) exists.
Then every subgroup $B$ of $G'$ 
that is finitely generated over a finitely generated submonoid $M \subseteq G_\chi$ is contained in $B_j$ 
for all sufficiently large indices $j$.
Since no member  $B_j$  of the chain $B_0 \subseteq B_1 \subseteq \cdots$  can contain $G'$ ---
for each member is a proper subgroup and $N/G'$ is finitely generated ---
Theorem \ref{thm:Equality-Sigma1(G)-Sigma-sub-Gprime} allows one to conclude
that $[\chi] \notin \Sigma^1(G)$.

Conversely, 
assume that $[\chi] \notin \Sigma^1(G)$.
Let $\eta \colon \XX \to G$ be a finite generating system of $G$ 
and $\Gamma = \Gamma(G, \XX)$ the associated Cayley graph of $G$.
Define $B_j$ to be the set of elements $g \in N$ 
that can be represented by a word with $\chi$-track bounded from below by $-j$.
Lemmata \ref{lem:Connected-rays} and \ref{lem:Restatement-connectivity-property},
imply that each of these subsets is distinct from $N$.
The sets $B_j$ are subgroups of $N$ (cf. Remark \ref{remark:Valuation-for-words})
and they form an ascending chain whose union is $N$.
Consider now an element $g \in G_\chi$.
Represent it by some word $w(g)$ with $\chi$-track bounded from below by some integer, say $-j(g)$.
Then $g B_j g^{-1} \subseteq B_j$ for every index $j \geq j(g)$.
The subgroups $B_j$ are therefore proper subgroups of $N$ and they form an increasing chain 
that satisfies requirements  (i) and (ii)
stated in Proposition \ref{prp:Characterization-complement-Sigma1}.
\end{proof}
\begin{note}
\label{note:Characterization-complement-Sigma1}
Proposition \ref{prp:Characterization-complement-Sigma1}  is a variant of Proposition 9.1 of  \cite{BNS}.
\end{note}
%
\subsubsection[$\Sigma^1$ of a wreath product]{Application: $\Sigma^1$ of a wreath product} 
\label{sssec:Application-invariant-wreath-product}
%
In section \ref{sssec:Invariant-direct-product},
the invariant of a direct product $H \times Q$ of two finitely generated groups has been determined.
The outcome is summarized in Proposition \ref{prp:Sigma1-direct-product};
it shows 
that $\Sigma^1(H \times Q)^c$ is,  in essence, the union of $\Sigma^1(H)^c$ and $\Sigma^1(Q)^c$.
Below,
we shall establish a formula for the invariant of the wreath product $G = H \wr Q$ 
of two finitely generated groups $H$ and $Q$.
This formula differs from that for a direct product in an important aspect:
$\Sigma^1(G)^c$ \emph{depends only on the abelianisations of $H$ and of $Q$}.
\begin{definition}
\label{definition:wreath-product}
The (restricted standard) wreath product of the group $H$ by the group $Q$ 
is the quotient of the free product $H \star Q$ 
\emph{modulo} the normal subgroup generated by the commutators
\begin{equation}
\label{eq:Definition-wreath-product}
[h_1 , qh_2q^{-1}] = h_1\cdot qh_2q^{-1} \cdot h_1^{-1}\cdot qh_2^{-1} q^{-1} ;
\end{equation}
here  $(h_1, h_2)$ ranges over $H^2$ and  $q $ over $Q \smallsetminus \{1\}$.
The wreath product of $H$ by $Q$ will be denoted by $H \wr Q$.
\end{definition}
\index{Definition of!wreath product}
\index{Wreath products!definition}

Definition \ref{definition:wreath-product}  describes $H \wr Q$ by a presentation.
Alternatively, one can define the wreath product by an explicit construction:
it is the semi-direct product $L \rtimes Q$ of the so called base group $L = \gp_G(H)$ by $Q$
where $L$  denotes the restricted direct product $\Dr \{qHq^{-1} \mid q \in Q \}$ of $\card(Q)$ copies of $H$.
\smallskip

In the sequel, $H$ and $Q$ will be assumed to be finitely generated.
If one of them is reduced to the unit element,
the wreath product is nothing but the other group;
this case presents no interest and will be excluded in the sequel.

The definition of $H \wr Q$ implies 
that there is an epimorphism $\pi \colon H \wr Q \epi Q$
which sends $h \in H$ to $1 \in Q$ and $Q$ onto itself by the identity.
This epimorphism gives rise to the embedding $\pi^* \colon S(Q) \incl S(H \wr Q)$.
In addition, there is an epimorphism $\bar{\rho}$ of $H \wr Q$ onto $H_{\ab}$;
it maps $h \in H$ to $hH'$ and $q \in Q$ to $1H'$.
The epimorphisms $\bar{\rho}$ and  $\pi$ give rise to an isomorphism groups $(H \wr Q)_{\ab} \iso H_{\ab} \times Q_{\ab}$
and to an isomorphism of spheres
\[
S(H_{\ab} \times Q) \iso S(H \wr Q).
\]
It follows, in particular, that $S(H \wr Q)$ is empty if the abelianisations of $H$ and $Q$ are both finite;
so there exist wreath products that are of  little interest in the context of $\Sigma^1$,
but which are admitted in Proposition \ref{prp:Sigma1-wreath-product} below.

This proposition describes the \emph{complement} of the invariant 
of  the wreath product of two arbitrary finitely generated  groups,
the sole restriction being that neither of the factors $H$ and $Q$ be the trivial group.
%
\begin{prp}
\label{prp:Sigma1-wreath-product}
Let $G$ be the wreath product $H \wr Q$ of two finitely generated, non-trivial groups $H$ and $Q$.
Then 
$\Sigma^1(G)^c   = S (G,H)$.
\end{prp}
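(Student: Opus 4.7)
The plan rests on the observation that a non-zero character $\chi \colon G \to \R$ vanishes on $H$ if and only if it vanishes on the entire base group $L = \bigoplus_{q \in Q} qHq^{-1}$, since $\chi(qhq^{-1}) = \chi(h)$ for every $q \in Q$ and $h \in H$. Hence $[\chi] \in S(G,H)$ is equivalent to $\chi$ factoring through the canonical projection $\pi \colon G \epi Q$ as $\chi = \bar\chi \circ \pi$ for a non-zero character $\bar\chi \colon Q \to \R$.

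For the inclusion $S(G,H) \subseteq \Sigma^1(G)^c$, I would apply Proposition \ref{prp:Characterization-complement-Sigma1}. Given $\chi$ vanishing on $L$, take $N = L \cdot Q'$ together with the ascending chain
\[
B_j = L_j \cdot Q', \qquad L_j = \bigoplus\nolimits_{\bar\chi(q) \geq -j} qHq^{-1}.
\]
One verifies that $N$ contains $G'$ and sits inside $\ker \chi$; that each $B_j$ is a proper subgroup (because $\bar\chi$ is unbounded below on $Q$); and that $\bigcup_j B_j = N$. The key computation is that in the semidirect product $G = L \rtimes Q$, conjugation by $g = (f_g, q_g) \in G_\chi$ (so $\bar\chi(q_g) \geq 0$) shifts the support of an $L$-element by $q_g$ and thus preserves $L_j$, while on a $Q'$-element it produces an $L$-part whose support lies in a fixed finite set determined by $f_g$; both contributions fall inside $B_j$ as soon as $j$ exceeds $-\min \bar\chi(\supp f_g)$.

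For the reverse inclusion, suppose $\chi(H) \neq 0$ and fix $h_0 \in H$ with $\chi(h_0) > 0$. I would verify the algebraic $\Sigma^1$-criterion (Theorem \ref{thm:Algebraic-Sigma1-criterion}) with distinguished letter $t = h_0$ and finite generating system $\XX = \XX_H \cup \XX_Q$, where $\XX_H$ and $\XX_Q$ generate $H$ and $Q$ respectively. The crucial ingredient is the family of wreath-product commutator relations $[u h_0 u^{-1}, v h_0 v^{-1}] = 1$, valid whenever $u, v$ place the two factors in different copies of $H$ inside $L$. For each generator $y \in (\XX_H \cup \XX_Q)^\pm \setminus \{h_0^{\pm 1}\}$ with $\chi(y) \geq 0$, one exhibits a reduced relator of the prescribed shape with strictly positive interior $\chi$-track: for $y \in \XX_Q^\pm$ the single commutator $[y h_0 y^{-1}, h_0]$ already begins with $y h_0$ and ends with $h_0^{-1}$; for $y \in \XX_H^\pm \setminus \{h_0^{\pm 1}\}$, the product $[y h_0, q h_0 q^{-1}] \cdot [h_0, q h_0 q^{-1}]^{-1}$ reduces freely to a relator of the required form, with $q \in \XX_Q^\pm$ chosen so that $\chi(q) \geq 0$ (always possible by replacing $q$ with $q^{-1}$ if necessary).

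The main obstacle is precisely this relator construction in the second direction, together with the verification that the $\chi$-tracks satisfy the strict positivity condition demanded by Theorem \ref{thm:Algebraic-Sigma1-criterion}. When $H$ is non-abelian --- for instance a non-abelian free group, in which case $\Sigma^1(H) = \emptyset$ --- one cannot hope to work inside $H$ alone, because $\chi|_H$ need not represent a point of $\Sigma^1(H)$. The structural feature of the wreath product that rescues the argument is that the distinct copies $qHq^{-1} \subset L$ commute pairwise, so one can route the relator through an auxiliary coordinate $qHq^{-1}$ with $q \neq 1$ and use those commutations to keep the $\chi$-track non-negative on the interior.
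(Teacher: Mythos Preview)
Your proposal is correct. For the inclusion $S(G,H) \subseteq \Sigma^1(G)^c$ you proceed essentially as the paper does: both arguments invoke Proposition~\ref{prp:Characterization-complement-Sigma1} with the same ascending filtration $L_j = \bigoplus_{\bar\chi(q)\geq -j} qHq^{-1}$ of the base group, differing only in the choice of the ambient normal subgroup (you take $N = L\cdot Q'$, the paper takes $N = \ker\chi = L\rtimes Q_0$).

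For the reverse inclusion the two arguments diverge. The paper splits into two subcases. When $\chi|_Q = 0$ it shows by a direct computation with commutators that $G'\cdot Q$ is a \emph{finitely generated} normal subgroup and then appeals to the Addendum to Proposition~\ref{prp:Comparison-invariants-G-and-Q}; when $\chi$ is non-zero on both factors it feeds the relators $[h,qhq^{-1}]$ into the $\Psi(\RR)$-criterion (Proposition~\ref{prp:Psi(RR)-subset-Sigma1}). Your route is more uniform: a single explicit family of relators, plugged into the algebraic $\Sigma^1$-criterion (Theorem~\ref{thm:Algebraic-Sigma1-criterion}) with $t=h_0$, handles both subcases at once. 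The key relator $y\,h_0\,q\,h_0\,q^{-1}h_0^{-1}y^{-1}h_0\,q\,h_0^{-1}q^{-1}h_0^{-1}$ that you obtain for $y\in\XX_H^{\pm}\setminus\{h_0^{\pm1}\}$ is freely reduced and has the required $\chi$-track, so your scheme does go through. What the paper's treatment buys in exchange for the case split is the additional structural fact that $G'\cdot Q$ is finitely generated whenever $\chi$ vanishes on $Q$; your argument establishes membership in $\Sigma^1(G)$ for each such $\chi$ individually without extracting that side information.
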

\index{Computation of Sigma1@Computation of $\Sigma^1$ for!wreath products}

 \begin{proof}
Let $\chi \colon G \to \R$ be a non-zero character.
The proof splits into three cases, 
depending on as to whether $\chi$ vanishes on $H$, on $Q$, or on neither of them.
 
 Assume first that $\chi$ \emph{vanishes on} $H$.
 The aim is to construct a sequence of subgroups $B_0 \subset B_1 \subset \cdots$ in  the kernel $N$ of  $ \chi$ 
 and to deduce,  with the help of Proposition \ref{prp:Characterization-complement-Sigma1}, 
 that $[\chi] \notin \Sigma^1(G)$. 
Put $Q_0 = Q \cap  \ker \chi $ and 
 \[
 B_j = \Dr \{ qHq^{-1} \mid \chi(q) \geq -j \} \rtimes Q_0.
 \]
Then each of these sets $B_j$ is a proper subgroup of $N = \ker \chi$ and their union is $N$.
Moreover, each $B_j$ is invariant under conjugation by the submonoid $Q _{\chi|_Q}$.
As each element $g \in G_\chi$ is a product of  the form $n \cdot q$ with $n \in N$ and $q \in  Q _{\chi|_Q}$,
Proposition  \ref{prp:Characterization-complement-Sigma1} therefore applies and shows
that $[\chi] \notin \Sigma^1(G)$.
\smallskip

Suppose next that $\chi$ \emph{vanishes on} $Q$.
The plan is to show that the normal subgroup $N = G' \cdot Q$ is finitely generated.
The Addendum to Corollary \ref{prp:Comparison-invariants-G-and-Q}
will then imply that $S(G,Q) \subseteq \Sigma^1(G)$.
Let $B$ denote the base group of $G$, \ie, the direct sum $\Dr \{q H q^{-1} \mid q \in Q\}$.
Choose a finite set of generators $\HH$ of $H$, 
and a finite set of generators $\QQ$ of $Q$;
we can and shall assume that $\QQ \subset Q \smallsetminus \{1 \}$.
Set
\begin{align}
\CC &= \{ [h_1, h_2] \mid h_i \in \HH \},
\label{eq:Definition-C}\\
\KK &=  \{ [h,q] \mid (h, q) \in \HH^\pm \times \QQ \},
\label{eq:Definition-K}
\text{ and } \\
L &= \gp(\CC \cup\KK \cup \QQ).
\label{eq:Definition-L}
\end{align}
Then $L $ is a finitely generated subgroup of $N$;
we are going to prove that $L$ equals $N$ whence $N = G' \cdot Q$ is finitely generated.

We first show 
that $L$ contains the derived group $B' = \Dr \{q H' q^{-1} \mid q \in Q\}$ of the base group $B$.
The group $H'$ is the normal closure in $H$ of the subset $\CC \subset L$.
Since conjugation by $[h,q] = h \cdot qh^{-1}q^{-1}$ has the same effect on $H$ 
as conjugation by $h$ whenever $q \in Q \smallsetminus \{1\}$ and as $\KK \cup \QQ \subset L$,
it follows, first, that $H' \subset  L$ and then that $B' \subset L$.

Now $B'$ is a normal subgroup of $G$ and $G/B'$ is isomorphic to $B_{\ab} \rtimes Q$;
it suffices therefore to prove  that $\bar{L} = L/B'$ coincides with  $\bar{N} = N/B'$.
The derived group of $\bar{G} = G/B'$ has the form $M \rtimes Q'$ 
where $M \subset B_{\ab}$ is the submodule 
generated the elements $(1-q) \cdot \bar{b}$ with $q \in Q$ and $\bar{b} \in B_{\ab}$.
Since $N = G' \cdot Q$, this shows that $\bar{N} = M \rtimes Q$.
But $B_{\ab} = \Z{Q} \otimes H_{ab}$ 
and the augmentation ideal $IQ = \ker (\Z{Q} \epi \Z)$ is generated by the finite set $\{1- q \mid q \in \QQ \}$.
The claim thus follows from the fact that $((1-q )\otimes h) \cdot B'$ is the additive notation of the element 
$[h,q] \cdot B' \in \KK \cdot B'$ 
and so it lies in $ L/B'$ by the definition of $L$.
\smallskip

Consider, finally, the case where \emph{$\chi|_Q$ and $\chi|_H$ are both non-zero}.
Choose a finite set of generators  $\HH $ of $H$ and a finite set of generators $\QQ$ of $Q$
so that $\chi$ is positive on each of the elements of  $\XX = \HH \cup \QQ$.
The set $\XX$ generates $G$;
we use it and Proposition \ref{prp:Psi(RR)-subset-Sigma1} to show that $[\chi] \in \Sigma^1(G)$.

 Pick elements $h \in \HH$ and  $q \in \QQ$. 
Then $q \neq 1$ and so the words 
  \[
  r_{h,q} = [h, qhq^{-1}]= h \cdot qhq^{-1} \cdot  h^{-1} \cdot qh^{-1}q^{-1}
  \]
are relators of $G$.
The minimum of the $\chi$-track of each of these words is 0 and it occurs once, at the end of the word.
In view of definition \ref{definition:Psi(R)}  and Proposition \ref{prp:Psi(RR)-subset-Sigma1}, 
these facts imply that $[\chi] \in \Sigma^1(G)$. 
In more detail:
   
Put  $\RR = \{ [h,qhq^{-1}] \mid h \in \HH, q \in \QQ \}$.
Then the set $ \RR_{\chi, + }$, occurring in definition \ref{definition:Psi(R)}, coincides with $\RR$ 
and the graph $\GG_{\RR,\chi}$ is the bipartite graph with $\HH$ and $\QQ$ as sets of vertices.  
This graph  is connected, 
for $\chi|_H \neq 0$ and $\chi|_Q \neq 0$,
and so the sets $\HH$ and  $\QQ$ are non-empty.
By definition \ref{definition:Psi(R)},
the point $[\chi]$ thus belongs to $\Psi(\RR)$ 
and hence to $\Sigma^1(G)$ by Proposition \ref{prp:Psi(RR)-subset-Sigma1}.
\end{proof}

\begin{remarks}
\label{remarks:Invariant-wreath-products}
a) The wreath product allows one to construct groups 
that are fairly easy to  analyze, but have interesting properties.
One such construction goes back to  Philip Hall's paper \cite{Hal54}.
In section 2.4,
Hall proves that the wreath product  $G = H \wr Q$ of two finitely generated groups 
satisfies the \emph{maximal condition max-n on normal subgroups}
if the first factor $H$ has this property and if the group ring $\Z{Q}$ of the second factor $Q$  is right (or left) noetherian.
As an infinite cyclic group $C $ is noetherian and its group ring is likewise noetherian,
this result permits one to construct a chain of finitely generated subgroups
\[
G_1 = C, \quad G_2 = C \wr C, \quad G_3 = G_2 \wr C = (C \wr C) \wr C, \ldots, \quad G_{m} = G_{m-1} \wr C, \ldots
\]
which all satisfy max-n.
Let $x_1$, $x_2$, \ldots, $x_m$ be elements in $G_m$ 
that generate the infinite cyclic groups involved in the successive wreath products making up $G_m$.
The abelianisation of $G_m$ is free abelian of rank $m$, generated by the canonical images of the $x_j$,
and, for $m > 1$, the complement of $\Sigma^1(G_m)$ is the 0-dimensional sphere $S(G_m, \gp(x_1, \ldots, x_{m-1})) $.
The derived length of $G_m$ is $m$,
as one can see by the following inductive argument. 
The group  $G_1$ is a non-trivial abelian group and so of derived length 1.
Assume, inductively,  that $G_{m-1}$ has derived length $m-1$. 
Since $G_{m}$ is an extension of the direct product  $B = \Dr_j \{x_m^{j}G_{m-1} x_m^{-j}\}$ 
by the infinite cyclic group $\gp(x_m)$, 
the derived length of $G_m$ is at most $m$.
On the other hand, the derived group of $G$ contains the commutators $[x_1,x_m]$, \ldots, $[x_{m-1}, x_m]$.
They generate a subgroup $H$ of $G_{m-1} \oplus x_m G_{m-1} x_m^{-1}$ 
which projects onto the first factor  $G_{m-1} $.
Hence the derived length of $G_m$ is at least $m$.
\index{Soluble groups!satisfying max-n}%
\index{Soluble groups!with preassigned derived length}%
\index{Wreath products!examples}%
\index{Hall, P.}
\smallskip

b) Suppose  $H$ is a non-trivial abelian group.
The base group $A= \Dr_j \{qHq^{-1}\}$ of  $G = H \wr Q$ is then a non-trivial abelian normal subgroup.
Proposition \ref{prp:Sigma1-wreath-product} shows that
every character $\chi \colon G \to \R$   with $\chi(A) \neq \{0\}$ represents a point of $\Sigma^1(G)$.
This conclusion  holds in greater generality:
\begin{lem}
\label{lem:Abelian-normal-subgroup}
Let $G$ be a finitely generated group with an abelian normal subgroup $A \triangleleft G$.
Then $S(G,A)^c$ is contained in $\Sigma^1(G)$.
\end{lem}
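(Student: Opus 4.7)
The plan is to apply the geometric form of the $\Sigma^1$-criterion (Theorem \ref{thm:Sigma1-criterion}). Fix a character $\chi \colon G \to \R$ with $\chi(A) \neq \{0\}$, and choose $a \in A$ with $\chi(a) > 0$; take $t = a$ as the distinguished element in that criterion. Starting from any finite generating set $\XX_0$ of $G$, enlarge it to
$\XX = \XX_0 \cup \{a\} \cup \{y a y^{-1} \mid y \in \XX_0 \cup \XX_0^{-1}\}$.
By normality of $A$ every new element $a_y := y a y^{-1}$ lies in $A$, and being a conjugate of $a$ satisfies $\chi(a_y) = \chi(a) > 0$. Theorem \ref{thm:Sigma1-well-defined} permits us to test membership of $[\chi]$ in $\Sigma^1(G)$ via the Cayley graph $\Gamma(G,\XX)$.

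The core of the proof is to construct, for each $y \in \YY = \XX \cup \XX^{-1}$, an edge path $p_y$ from $t = a$ to $y \cdot a$ with $v_\chi(p_y) > v_\chi((1,y)) = \min\{0, \chi(y)\}$. The generators $y = a^{\pm 1}$ are disposed of by the single-edge paths $(a, a)$ and $(a, a^{-1})$. For $y \in \XX_0^{\pm 1}$, the crucial observation is that $y \cdot a = a_y \cdot y$, and that $a$ commutes with $a_y$ because both lie in the abelian group $A$. This suggests the word $w_y = a_y \cdot a^{-1} \cdot y$ (all three letters belong to $\XX^{\pm}$). The successive vertices of the path $p_y = (a, w_y)$ are $a,\ a\cdot a_y,\ a_y,\ a_y \cdot y = y \cdot a$, with respective $\chi$-values $\chi(a),\ 2\chi(a),\ \chi(a),\ \chi(a) + \chi(y)$. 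Hence
\[
v_\chi(p_y) \;=\; \min\{\chi(a),\ \chi(a) + \chi(y)\},
\]
and a case split on the sign of $\chi(y)$ shows this strictly exceeds $\min\{0,\chi(y)\}$, the strict inequality being supplied by $\chi(a) > 0$.

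With these paths $p_y$ in hand, Theorem \ref{thm:Sigma1-criterion} implies that $\Gamma(G,\XX)_\chi$ is connected, proving $[\chi] \in \Sigma^1(G)$. The argument is essentially a normal-subgroup analogue of the proof of Proposition \ref{prp:Sigma1-centre} (for groups with a non-trivial centre); the key synergy is between normality of $A$, which makes each $a_y$ available as a generator with the correct $\chi$-value, and abelianness of $A$, which makes the middle vertex $a \cdot a_y \cdot a^{-1}$ collapse to $a_y$ and thus keeps the $\chi$-track above the critical level. No serious obstacle arises; the only point demanding attention is the verification of the inequality $v_\chi(p_y) > \min\{0,\chi(y)\}$ in the two sub-cases $\chi(y) \geq 0$ and $\chi(y) < 0$.
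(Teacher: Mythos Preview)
Your argument is correct in essence, but you have overlooked one set of generators: having enlarged $\XX_0$ by the conjugates $a_y$, you must also supply paths $p_y$ for $y = a_z^{\pm 1}$ with $z \in \XX_0^{\pm}$, not only for $y \in \XX_0^{\pm} \cup \{a^{\pm 1}\}$. The fix is immediate---each $a_z$ lies in $A$ and hence commutes with $t = a$, so the single-edge path $(a, a_z^{\pm 1})$ works exactly as for $a^{\pm 1}$---but it should be said.

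The paper's proof rests on the same observation (that $t \in A$ commutes with each conjugate $xtx^{-1}$) but packages it differently. Rather than adjoining the conjugates as new generators, it selects at the outset a finite generating set $\XX \ni t$ with $\chi(x) > 0$ for every $x \in \XX$; then for each $x$ the commutator $[xtx^{-1}, t]$ is a relator of the form $x \cdots t^{-1}$ whose $\chi$-track is positive on every proper initial segment, and the algebraic criterion (Theorem~\ref{thm:Algebraic-Sigma1-criterion}) applies directly. Your version trades that preliminary normalisation of the generating set for the bookkeeping of added generators; both routes are short, and the underlying mechanism is the same.
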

\index{Computation of Sigma1@Computation of $\Sigma^1$ for!groups with abelian normal subgroups}

\begin{proof}
We use a variation of the argument employed in the third part of the proof of Proposition \ref{prp:Sigma1-wreath-product}.
Suppose $\chi \colon G \to \R$ does not vanish on $A$
and $t \in A $ is an element with $\chi(t) > 0$.
There exists then a finite generating system $\XX$ 
that includes $t$ and satisfies $\chi(x) > 0$  for every $x \in \XX$.
For each $x \in \XX$ the commutator  $r_x = t \cdot x tx^{-1} \cdot t^{-1} \cdot tx^{-1} t^{-1}$ 
is then a relator of $G$ 
whose $\chi$-track has positive values on every proper initial segment,
whence $[\chi] \in \Sigma^1(G)$ by Theorem \ref{thm:Algebraic-Sigma1-criterion}.
\end{proof}
Lemma \ref{lem:Abelian-normal-subgroup} 
will be generalized in section \ref{sssec:Subnormal-subgroups};
see Corollary \ref{crl:Sigma-1-subnormal-subgroups-with-Sigma1=S}.
\smallskip

c) Random walks on wreath products exhibit unusual behaviour;
see, \eg, \cite{PiSC02} and the references cited therein. 
In the literature on random walks some of these wreath products, 
in particular the metabelian group  $(\Z/2\Z) \wr C_\infty$,
are called \emph{lamplighter groups}.
\index{Computation of Sigma1@Computation of $\Sigma^1$ for!lamplighter groups}
\end{remarks}
\index{Lamplighter groups!definition}
%

\newpage

%
 
%
%
\section{Extensions to infinitely generated groups}
\label{sec:Extensions-infinitely-generated-groups}
%
%
The variations treated in this section have two features in common:
\begin{enumerate}[a)]
\item their definition makes sense for arbitrary groups $G$, 
and
\item they allow one to establish properties of $\Sigma^1$ with ease  
that are awkward to prove in the set-up of Chapter \ref{ch:Sigma-1-Cayley-graph}.
\end{enumerate}
It turns out that the Cayley graph definition of $\Sigma^1$ 
admits also of a straightforward extension to infinitely generated groups.
We start out with this extension and then discuss some of its properties and applications.
In section \ref{ssec:Ends-direction-character} 
we move on to the variation put forward and investigated  by Gaël Meigniez 
in \cite{Mei87}, \cite{Mei88} and \cite{Mei90}.
Then we discuss a characterization due to Ken Brown (see \cite{Bro87b})
(sections \ref{ssec:Tree-like-structures-associated-Cayley-graph} 
through \ref{ssec:Construction-morphisms-into-trees}).
In section \ref{ssec:Some-applications-of-Browns-characterization}, finally,  
we list some consequences of this characterization.
%
\subsection{Extending $\Sigma^1$ to arbitrary groups} 
\label{ssec:Extending-Sigma1-arbitrary-groups}
%
We begin by explaining 
how the definition of $\Sigma^1$ in terms of Cayley graph can be adapted to infinitely generated groups.
This extension will then be shown to coincide  with the alternative definition of Gaël Meigniez 
and also with that of Ken Brown.
The Cayley graph definition will allow one to comprehend 
why the definitions of Meigniez and Brown are often easier to work with
than the original definition,
a fact that is true even for finitely generated groups.
%
\subsubsection{The generalization}
\label{sssec:Generalizing-Sigma1}
%
Let $G$ denote an arbitrary group,
$\eta \colon \XX \to G$ a generating system and $\chi \colon G \to \R$ a non-zero character of $G$.
As in sections \ref{sssec:Terminology-Cayley-graph} and \ref{ssec:Definition-Sigma1},
one can then define the Cayley graph $\Gamma(G, \XX)$ 
and the full subgraph $\Gamma(G,\XX)_{\chi}$ induced by the submonoid 
$G_{\chi} = \{ g \in G \mid \chi(g) \geq 0 \}$.
The Cayley graph is connected, 
while its subgraph $\Gamma(G,\XX)_{\chi}$ need not be so.
Whether it is connected depends on $\chi$ --- this is intended --- but also on $\eta$. 
To get rid of this undesired dependence, 
we shall work, not with a single generating system, 
but with all of them.

Whether this manner of getting around a difficulty allows one to derive useful results 
remains to be seen. 
Here we are first concerned with the question 
whether the proposed definition is compatible with the original one.
The affirmative answer is given by

\begin{lem}
\label{lem:Compatibility-definition}
For every finitely generated group $G$ 
and every non-zero character $\chi \colon G \to \R$ the following statements are equivalent:
\begin{enumerate}[(i)]
\item there exists a finite generating system $\eta \colon \XX_{f} \to G$ 
for which the subgraph  $\Gamma(G, \XX_{f})_\chi$ is connected,
\item the subgraph  $\Gamma(G, \XX)_\chi$ is connected for every generating system $\eta \colon \XX \to G$.
\end{enumerate}
\end{lem}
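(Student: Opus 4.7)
The implication (ii) $\Rightarrow$ (i) is trivial: since $G$ is finitely generated, \emph{some} finite generating system $\eta_f \colon \XX_f \to G$ exists, and specialising (ii) to it yields (i). For the non-trivial direction (i) $\Rightarrow$ (ii), my plan is to reduce the claim, for each given generating system $\eta \colon \XX \to G$, to the invariance of $\Sigma^1$ under change of \emph{finite} generating systems recorded in Theorem \ref{thm:Sigma1-well-defined}; the present lemma is essentially that theorem freed from the finiteness assumption on $\XX$.

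First I would form the ``common refinement'' $\XX' = \XX \sqcup \XX_f$ with the obvious labelling map into $G$. Since $\Gamma(G, \XX_f)_\chi$ embeds as a subgraph of $\Gamma(G, \XX')_\chi$ with the same vertex set, the larger subgraph is connected. The task thus reduces to the second, genuinely substantive step: passing from $\XX'$ to $\XX$ by absorbing the \emph{finitely many} extra letters coming from $\XX_f$. This is precisely the ``deleting a redundant generator'' step employed in the proof of Theorem \ref{thm:Sigma1-well-defined}; one can either iterate that argument once for each letter of $\XX_f \smallsetminus \XX$, or run it in a single pass.

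The mechanics of this absorption are as follows. For each $z \in \XX_f$, I fix once and for all an $\XX^\pm$-word $u_z$ representing $\eta_f(z)$, which is possible because $\eta$ generates $G$; since $\XX_f$ is finite, the common deficit $m = \max\{-v_\chi(u_z) \mid z \in \XX_f\}$ is a finite non-negative real number. Next, because $\chi$ is not the zero character and the image of $\eta$ generates $G$, at least one letter $t \in \XX \cup \XX^{-1}$ satisfies $\chi(t) > 0$; I then pick a positive integer $k$ with $\chi(t^k) \geq m$. Given vertices $g, h \in G_\chi$, the conjugates $g' = t^{-k} g t^k$ and $h' = t^{-k} h t^k$ still lie in $G_\chi$; by the connectedness of $\Gamma(G, \XX')_\chi$ there is an $(\XX')^\pm$-path $p'$ from $g'$ to $h'$ inside that subgraph. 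Replacing each occurrence in $p'$ of a letter of $\XX_f^\pm$ by the corresponding word $u_z^{\pm 1}$ yields an $\XX^\pm$-path $p$ from $g'$ to $h'$ whose valuation satisfies $v_\chi(p) \geq -\chi(t^k)$; translating $p$ on the left by $t^k$ then produces an $\XX^\pm$-path from $g$ to $h$ lying entirely inside $\Gamma(G, \XX)_\chi$, which is what is required.

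The only step that might look like an obstacle is guaranteeing a letter of positive $\chi$-value inside the \emph{possibly very wild} alphabet $\XX$; but this is immediate from $\chi \neq 0$ combined with the fact that $\eta(\XX)$ generates $G$. Once such a $t$ is available, the argument is a verbatim extension of the one used in Theorem \ref{thm:Sigma1-well-defined}, the only new feature being that finitely many letters of $\XX_f$ are absorbed simultaneously rather than a single redundant generator being deleted.
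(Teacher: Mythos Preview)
Your argument is correct, but the paper proceeds by a quicker reduction that avoids reopening the machinery of Theorem~\ref{thm:Sigma1-well-defined}. Given an arbitrary generating system $\eta\colon\XX\to G$, the paper first extracts a \emph{finite} subsystem $\XX_0\subseteq\XX$ whose image still generates $G$ (possible because $G$ is finitely generated). Theorem~\ref{thm:Sigma1-well-defined} then applies directly to the two finite systems $\XX_f$ and $\XX_0$, giving that $\Gamma(G,\XX_0)_\chi$ is connected; since this subgraph sits inside $\Gamma(G,\XX)_\chi$ with the same vertex set $G_\chi$, the latter is connected as well. In other words, the paper keeps Theorem~\ref{thm:Sigma1-well-defined} as a black box and only adds the trivial observation that enlarging a generating system cannot destroy connectedness of $\Gamma_\chi$.

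Your route --- adjoining $\XX_f$ to $\XX$ and then re-running the ``delete redundant generators'' argument to absorb the finitely many extra letters --- is sound, and the uniform bound $m=\max_z(-v_\chi(u_z))$ is exactly the right control. One small imprecision: left-translating $p$ by $t^k$ lands at $gt^k$ and $ht^k$, not at $g$ and $h$; you need to prepend and append the $t^k$ and $t^{-k}$ segments, i.e.\ use the path $(g,\,t^kwt^{-k})$, as in the proof of Theorem~\ref{thm:Sigma1-well-defined}. Your approach is more hands-on and shows that the argument of Theorem~\ref{thm:Sigma1-well-defined} tolerates an infinite $\XX$ provided only finitely many letters are being discarded; the paper's approach is shorter but uses the extra observation that a finite generating subsystem can always be extracted.
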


\begin{proof}
Assume $\eta \colon \XX_{f} \to G$ is a finite generating system 
for which $\Gamma(G, \XX_{f})_{\chi}$ is connected, 
and let $\XX$ is a generating system of $G$.
Since $G$ is finitely generated $\XX$ contains a finite subset $\XX_{0}$ 
which generates $G$. 
Theorem   \ref{thm:Sigma1-well-defined} then shows
that the graph $\Gamma(G, \XX_{0})_{\chi}$ is connected.
Since $\Gamma(G, \XX_{0})_{\chi}$ is a subgraph of $\Gamma(G, \XX)_{\chi}$
and as both graphs have the same set of vertices,
the graph $\Gamma(G, \XX)_{\chi}$  is connected, too.

The previous argument shows that (i) implies (ii).
The converse is obvious. 
\end{proof} 

In view of  Lemma \ref{lem:Compatibility-definition},
the following definition of the invariant $\Sigma^1$ for arbitrary groups extends
the definition given in section \ref{ssec:Definition-Sigma1}:
\begin{definition}
\label{definition:Sigma1-arbitrary-group} 
\index{Invariant Sigma1 Z@Invariant $\Sigma^1$!for arbitrary groups}%
Given a group $G$, let $S(G)$ denote the set of rays $[\chi]$ in the vector space $\Hom(G,\R)$
that emanate from the origin and put
\begin{equation}
\label{eq:Sigma1-arbitrary-group}
\Sigma^1(G) = \{ [\chi] \in S(G) \mid  \Gamma(G, \XX)_{\chi} \text{ is connected for every gen. system } \XX\}.
\end{equation}
\end{definition}
\begin{remarks}
\label{remarks:Extending-Cayley-graph-definition}
a) 
In section \ref{ssec:Definition-Sigma1},
 the invariant $\Sigma^1$ has only been defined for \emph{finitely generated groups} $G$.
This restriction is responsible for two hallmarks of the theory treated up to now:
\begin{itemize}
\item the vector space $\Hom(G, \R)$ is finite dimensional and thus carries a canonical topology;
\item there exists a finitary condition $\FF\CC$ that characterizes the points $[\chi] \in \Sigma^1(G)$.
\end{itemize}
This finitary condition $\FF\CC$ --- 
introduced in Section \ref{sec:Sigma1-criterion} and called \emph{$\Sigma^1$-criterion} --- implies 
that the invariant is an open subset of the sphere $S(G)$;
in addition,
it is the main tool in the proof of many basic results,
but also of various explicit calculations, of $\Sigma^1$
treated in Chapters \ref{ch:Sigma-1-Cayley-graph} and 
\ref{ch:Sigma-1-Cayley-graph-complements}:
see, \eg., Theorems 
\ref{thm:Characterizing-fg-N},
\ref{thm:Algebraic-Sigma1-criterion},
\ref{thm:Sigma1-one-relator-group}, 
Propositions \ref{prp:Ascending-HNN-extension},
\ref{prp:psi(R)-subset-Sigma1} 
and \ref{prp:Psi(RR)-subset-Sigma1}
for results,
and 
section \ref{sssec:Group-PL-homeomorphisms-example}
for examples.

b)
The preceding chapters contain, however, also results
which are obtained by direct geometric arguments.
Instances of such results are sprinkled over the first two chapters,
starting with the very first examples 
(abelian groups and non-trivial free products in section \ref{sssec:Sigma1-first-examples}\,),
the computation of the invariant of a direct product
in example \ref{example:Direct-product-groups},
that of  a subgroup of finite index (Proposition \ref{prp:Sigma1-finite-index})
or that of a join of subgroups (Proposition \ref{prp:Sigma-1-join-subgroups}).
In these instances the finite generation of the groups plays  a minor r{\^{o}}le;
so the question arises whether this assumption can dispensed with.

c)
There are other reasons 
which  prompt one to generalize the invariant to infinitely generated groups $G$,
in spite of the fact 
 that for such a group  the vector space $\Hom(G, \R)$ can be infinite dimensional
 and, if so, does not carry a canonical topology.
 One reason is that a generalization to infinitely generated groups puts one in a position 
 to obtain results for a finitely generated group 
 by studying certain of its infinitely generated subgroups.
 In addition,
 the constructions treated in sections \ref{ssec:Ends-direction-character} 
 through \ref{ssec:Characterization-Sigma1-action}
 produce invariants that coincide with $\Sigma^1$ for finitely generated groups
 \footnote{See \cite[Th\'{e}or\`{e}me 3.19]{Mei90} and \cite[Theorem 5.2]{Bro87b}} 
 but have as their natural habitat the class of all groups.
\end{remarks}

To help the reader with getting used to determine the invariant  of an arbitrary group,
we continue with the determination of $\Sigma^1$ for four classes of examples.
%
\subsubsection[$\Sigma^1$ of groups with no free submonoids of rank 2]%
{Illustration 1: groups with no free submonoids of rank 2}
\label{sssec:Illustration-invariant-groups-with-no-free-submonoid}
Let $G$ be a group and $\chi \colon G \to \R$ a non-zero character.
In order to prove that $[\chi]$ lies in $\Sigma^1(G)$, 
one has to verify that the subgraphs $\Gamma_\chi$ of an infinite number of Cayley graphs 
$\Gamma(G, \XX)$ are connected.
In this section and the following one,
we show that this verification is very easy
if $G$ is a group that does not contain a free submonoid of rank 2
or if $G$ is an Engel group.

The basis of our verification will be the following
\begin{lem}
\label{lem:No-free-submonoid}
Let $\chi$ be a non-zero character of the group $G$.
Assume that for every couple $(x_1,x_2)$ of elements in G with $\chi(x_1) = \chi(x_2) > 0$ 
there exists words $u_1$ and $u_2$ in $\{x_1, x_1^{-1}, x_2, x_2^{-1} \}$ such that 
\begin{equation}
\label{eq:No-free-submonoid}
\begin{gathered}
x_1 \cdot u_1 \text{ and }  x_2 \cdot u_2  \text{ represent the same element in } G \text{ and}\\
 \text{the $\chi$-tracks of the words $u_1$ and $u_2$ are non-negative}.
\end{gathered}
\end{equation}
Then $[\chi] \in \Sigma^1(G)$.
\end{lem}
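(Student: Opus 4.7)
By Definition~\ref{definition:Sigma1-arbitrary-group}, it suffices to show that $\Gamma(G,\XX)_\chi$ is connected for every generating system $\eta \colon \XX \to G$. Fixing such an $\XX$, by the equivalence (i)$\,\Leftrightarrow\,$(iii) of Lemma~\ref{lem:Restatement-connectivity-property}, this amounts to showing that every $g \in G'$ admits an $\XX^\pm$-word representation with non-negative $\chi$-track. Denote by $\overline{H}_\chi$ the set of all elements of $G$ that admit such a representation. Since concatenating two non-negative-track words again yields a non-negative-track word, $\overline{H}_\chi$ is a submonoid of $G_\chi$ containing every letter of $\XX^\pm$ with non-negative $\chi$-value.

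The heart of the argument is a preliminary track computation together with the closure property it entails. Whenever $u_1, u_2$ are words, in any alphabet of group elements, with non-negative $\chi$-tracks and with $\chi(u_1) = \chi(u_2) = c$, the concatenation $u_1 u_2^{-1}$ has non-negative $\chi$-track: reading $u_1$ from $0$ stays $\ge 0$ and ends at $c$, and reading $u_2^{-1}$ from $c$ each intermediate level equals $c$ minus a trailing $\chi$-sum of $u_2$, which equals a leading $\chi$-sum of $u_2$ and is therefore $\ge 0$. Combined with the hypothesis of the lemma, this shows that whenever $x_1, x_2 \in \overline{H}_\chi$ satisfy $\chi(x_1) = \chi(x_2) > 0$, the element $x_1^{-1} x_2$ again lies in $\overline{H}_\chi$: the hypothesis produces $u_1, u_2$ in $\{x_1^{\pm 1}, x_2^{\pm 1}\}$ with $x_1 u_1 = x_2 u_2$, hence $u_1 u_2^{-1} = x_1^{-1} x_2$, and re-expanding each occurrence of $x_i^{\pm 1}$ by a non-negative-track $\XX^\pm$-word for $x_i$ preserves non-negativity by the same level-shift argument.

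Using these closure properties of $\overline{H}_\chi$, the plan is to prove $G_\chi \subseteq \overline{H}_\chi$ by induction on the depth $d(w) = -\min_i \chi(y_1 \cdots y_i)$ of an $\XX^\pm$-word $w = y_1 \cdots y_k$ representing a given $g \in G_\chi$. When $d(w) = 0$ the word itself witnesses $g \in \overline{H}_\chi$. When $d(w) > 0$, one isolates a segment $y_{a+1} \cdots y_b$ of $w$ responsible for a deepest dip, choosing $a$ with $\chi(y_1 \cdots y_a) = -d(w)$ and $b$ just after the track climbs strictly back above this value; read from level $-d(w)$, this segment has non-negative $\chi$-track and therefore represents an element $x_2 \in \overline{H}_\chi$ of positive $\chi$-value. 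One then fabricates a suitable $x_1 \in \overline{H}_\chi$ with the same positive $\chi$-value (for instance, another sub-segment of $w$ lying in $\overline{H}_\chi$, adjusted by a common right-tail drawn from the part of $w$ following the excursion), applies the closure property to rewrite $x_1^{-1}x_2$ with non-negative track, and substitutes the result back into $w$ to obtain a new $\XX^\pm$-word for $g$ of strictly smaller depth.

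The hard part will be the combinatorial bookkeeping in this inductive step. The hypothesis of the lemma requires \emph{exact} equality $\chi(x_1) = \chi(x_2) > 0$, not merely comparable positive values, so the candidate $x_1$ must be constructed from $w$ with care, typically by adjoining to both $x_1$ and $x_2$ a common positive-$\chi$ tail until their $\chi$-values coincide. One then has to verify that the substitution genuinely decreases $d$ rather than simply relocating a dip of the same depth to another part of the word, so that finitely many iterations reduce any word to one of depth zero and the induction terminates.
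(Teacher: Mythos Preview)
Your second paragraph already contains the whole proof, but you do not see it and instead wander into an induction you cannot finish. You only need to show that every $g\in G'$ (or $\ker\chi$) lies in $\overline{H}_\chi$; the crucial feature of such $g$ is that $\chi(g)=0$. Take any $\XX^\pm$-word $w$ for $g$, let $h$ be a vertex of the path $(1_G,w)$ with minimal $\chi$-value, and split $w=w_1w_2$ at $h$. If $\chi(h)=0$ you are done. Otherwise set $x_1=h^{-1}$ and $x_2=h^{-1}g$. Because $h$ is a minimum, the word $w_1^{-1}$ is a non-negative-track $\XX^\pm$-word for $x_1$ and $w_2$ is one for $x_2$; so $x_1,x_2\in\overline{H}_\chi$. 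And since $\chi(g)=0$, the equality $\chi(x_1)=\chi(x_2)=-\chi(h)>0$ is \emph{automatic}. Your closure property then gives $g=x_1^{-1}x_2\in\overline{H}_\chi$ in a single step. This is precisely the paper's argument.

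The induction on depth for general $g\in G_\chi$ is both unnecessary and, as you half-acknowledge, not worked out: the fabrication of an $x_1$ with the exact same $\chi$-value as your excursion segment $x_2$ is left vague, and it is not clear that your proposed substitution actually lowers $d(w)$ rather than relocating the deepest dip. All of this difficulty evaporates once you restrict to $g\in\ker\chi$, which Lemma~\ref{lem:Restatement-connectivity-property} allows you to do and which you yourself invoke at the outset.
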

\begin{proof}
Let $\eta \colon \XX \to \R$ be a generating system of $G$ 
and $\Gamma$ the corresponding Cayley graph of $G$.
For every element $g \in \ker \chi \smallsetminus \{1_G \}$ 
there exists a path $p= (1_G, w)$ in $\Gamma$ 
that leads from $1_G$  to $g$; we may assume that $p$ is simple.
Let $h$ be a vertex of $p$ with minimal $\chi$-value 
and let $w_1$ the initial segment of $w$ that describes the part of $p$ from $1_G$ to $h$;
similarly, let $w_2$ the terminal segment of $w$ that describes the part of $p$ from $h$ to $g$.
If $\chi(h) = 0$, then $p$ runs inside the subgraph $ \Gamma_\chi$ 
and shows that  $g$ belongs to the component $\CC_1$ of 1 in $\Gamma_\chi$.

Otherwise, 
set $x_1 = h^{-1}$ and $x_2 = h^{-1} \cdot g$.
Then $\chi(x_1) = \chi(x_2) > 0$;
by hypothesis there exists therefore words $u_1$, $u_2$ 
which satisfy requirement \eqref{eq:No-free-submonoid}.
Let $\tilde{u}_1$, $\tilde{u}_2$ be the words in $\XX \cup \XX^{-1}$ 
that are obtained from $u_1$ and $u_2$, respectively,
by replacing each occurrence of the letter $x_1^\varepsilon$  by the word $w_1^{-\varepsilon}$ 
and each occurrence of the letter $x_2^\varepsilon$ by $w_2^{\varepsilon}$. 
Since $h$ has minimal $\chi$-value among the vertices of $p$,
the paths $(1_G,w_1^{-1})$ and  $(1_G,w_2)$ run inside $\Gamma_\chi$;
as the $\chi$-tracks of $u_1$ and $u_2$ are non-negative, 
the path $p_1 = (1_G, \tilde{u}_1)$ runs likewise inside the subgraph $\Gamma_\chi$.
Similarly, one sees that the path $(1_G, \tilde{u}_2)$ stays inside the subgraph $\Gamma_\chi$.
The facts 
that $x_1 \cdot u_1$ and $x_2 \cdot u_2$ represent the same element in $G$ and that $\chi(x_1) = \chi(x_2)$
imply next 
that $\chi(\tilde{u}_1) = \chi(\tilde{u}_2)$
and so the path $(1_G, \tilde{u}_1 \cdot \tilde{u}_2^{-1})$ runs inside $\Gamma_\chi$.
Its endpoint is $g$; indeed,
\[
\eta_*(\tilde{u}_1 \cdot \tilde{u}_2^{-1}) = u_1 \cdot u_2^{-1} 
= 
x_1^{-1} \cdot x_2 = (h^{-1})^{-1} \cdot h^{-1} g= g.
\]
All together together,
we have shown that every element of $\ker \chi$ belongs to $\CC_1$.
So Lemma \ref{lem:Restatement-connectivity-property} allows us to conclude 
that the subgraph $\Gamma(G,\XX)_\chi$ is connected. 
As $\eta \colon \XX \to G$ is an arbitrary generating system, the proof is complete
\end{proof}

There are two noteworthy situations 
in which the previous lemma can be applied to every non-zero character of the group.
The first one is treated in Corollary \ref{crl:Sigma1-Group-with-no-free-submonoid},
the second one in Corollary \ref{crl:Sigma1-Engel-group}.
\begin{crl}
\label{crl:Sigma1-Group-with-no-free-submonoid}
Assume $G$ is a group that contains no free submonoid of rank 2.
Then $\Sigma^1(G) = S(G)$.

If, in addition,  $G$ is finitely generated the members of the derived series
\[
G' = [G,G], \quad G'', \quad G''', \ldots
\] 
of $G$ are finitely generated and every soluble quotient of $G$ is polycyclic.
\end{crl}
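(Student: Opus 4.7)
The plan is to apply Lemma \ref{lem:No-free-submonoid} to every non-zero character of $G$ and then bootstrap via Theorem \ref{thm:Characterizing-fg-N}. Let $\chi\colon G\to\R$ be non-zero and let $x_1,x_2\in G$ satisfy $\chi(x_1)=\chi(x_2)>0$. First I would argue that the submonoid $M\subseteq G$ generated by $\{x_1,x_2\}$ is, by hypothesis, not free of rank~$2$. Hence there exist two distinct positive words $W_1,W_2$ in the alphabet $\{x_1,x_2\}$ representing the same element of $G$. Using left-cancellation in the group $G$, I may strip off the longest common prefix of $W_1$ and $W_2$; the remaining positive words still represent the same element and, since they are distinct, must begin with different letters. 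After possibly swapping indices, this yields positive words $u_1,u_2$ in $\{x_1,x_2\}$ with $x_1\cdot u_1=x_2\cdot u_2$ in $G$. Because $\chi(x_1)=\chi(x_2)>0$, every initial segment of $u_1$ or $u_2$ has non-negative (in fact non-negative) $\chi$-value, so requirement \eqref{eq:No-free-submonoid} holds. Lemma \ref{lem:No-free-submonoid} therefore gives $[\chi]\in\Sigma^1(G)$, and since $\chi$ was arbitrary we conclude $\Sigma^1(G)=S(G)$.

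For the second statement, assume $G$ is finitely generated. I would proceed by induction on $i$ to show that every term $G^{(i)}$ of the derived series is finitely generated. The base case $i=0$ holds by hypothesis. For the inductive step, $G^{(i)}$ is a subgroup of $G$, so it contains no free submonoid of rank $2$ either; applying the first part to $G^{(i)}$ gives $\Sigma^1(G^{(i)})=S(G^{(i)})$, and Theorem~\ref{thm:Characterizing-fg-N} then yields that $(G^{(i)})'=G^{(i+1)}$ is finitely generated.

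Finally, let $Q$ be a soluble quotient of $G$. Then $Q$ is finitely generated, and $Q$ inherits the property of having no free submonoid of rank $2$: any two elements of $Q$ lift to elements of $G$, whose submonoid satisfies a non-trivial positive relation which projects to a non-trivial positive relation in $Q$. Consequently the induction of the previous paragraph applies to $Q$, so that each $Q^{(i)}$ is finitely generated. Since $Q$ is soluble, $Q^{(d)}=1$ for some $d$, and the successive quotients $Q^{(i)}/Q^{(i+1)}$ are finitely generated abelian groups, hence polycyclic; as polycyclicity is preserved under extensions, $Q$ is polycyclic.

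\textbf{Main obstacle.} The only delicate step is the passage from the absence of a free submonoid of rank $2$ to a relation of the precise form $x_1u_1=x_2u_2$ with $u_1,u_2$ positive; this requires using cancellation in the ambient group to strip a common prefix and to ensure the two sides really begin with \emph{different} generators. Once this is in hand, the remainder is a routine application of Lemma~\ref{lem:No-free-submonoid}, Theorem~\ref{thm:Characterizing-fg-N}, and induction on the derived length.
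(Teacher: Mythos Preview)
Your proof is correct and follows essentially the same route as the paper: strip a common prefix from two distinct positive words to obtain $x_1u_1=x_2u_2$ with positive $u_i$, invoke Lemma~\ref{lem:No-free-submonoid}, and then bootstrap along the derived series via Theorem~\ref{thm:Characterizing-fg-N}. The only minor difference is in the last paragraph: the paper observes directly that each $G/G^{(m+1)}$ is polycyclic (since all $G^{(i)}$ are finitely generated) and that any soluble quotient of $G$ factors through one of these, whereas you pass the no-free-submonoid property to the quotient $Q$ and rerun the induction there---both arguments work.
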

\index{Computation of Sigma1@Computation of $\Sigma^1$ for!groups with no free submonoid}
\begin{proof}
Given a non-zero character $\chi$ of $G$ and two elements $x_1$, $x_2$ with $\chi(x_1) = \chi(x_2) > 0$,
there exist, by hypothesis, two distinct words $v'_1$, $v'_2$ in the alphabet $\{x_1, x_2\}$ 
which represent the same element of $G$;
they will have the same length. 
If they begin with the same letter, 
let $v$ be the largest common initial subword
and let $v_1$ and $v_2$ be the remainders. 
Then $v_1$ and $v_2$ are distinct words which also represent the same element of $G$
and start, one with $x_1$, the other with $x_2$.
By exchanging $v_1$ and $v_2$, if need be, 
we may arrange that $v_1$ begins with $x_1$ and $v_2$ with $x_2$.
Then $v_1$ has the form $x_1 \cdot u_1$ and $v_2$ the form $x_2 \cdot u_2$.
Since $u_1$, $u_2$ have non-negative $\chi$-tracks (for they are positive words)
their  $\chi$-tracks satisfy requirement \eqref{eq:No-free-submonoid}
and so $[\chi] \in \Sigma^1(G)$ by Lemma \ref{lem:No-free-submonoid}.

Assume now that $H$ is a finitely generated subgroup of $G$.
Then $H$ contains no a free submonoid of rank 2,
whence $\Sigma^1(H) = S(H)$ by the part already proved. 
Theorem \ref{thm:Characterizing-fg-N}  then implies
that the derived group $H'$ of $H$ is finitely generated.
If the group $G$ itself is finitely generated, 
this argument can be applied successively to the terms $G^{(m)}$ of the derived series of $G$ 
and shows that each of them is finitely generated. 
It follows, in particular, 
that every quotient group $G^{(m)}/G^{(m+1)}$ is finitely generated abelian
and that every quotient $G/G^{(m+1)}$ is polycyclic.
\end{proof}
\begin{note}
\label{note:Sigma1-Group-with-no-free-submonoid}
Corollary \ref{crl:Sigma1-Group-with-no-free-submonoid} is due to G. Levitt
(see \cite[p.\;659, Remarques]{Lev87}).
In \cite{LMR95}, the addendum to the corollary is proved by a direct argument (see Corollary 3 on page 1421.)
\end{note}
\index{Levitt, G.}
\index{Longobardi, P.}
\index{Maj, M.}
\index{Rhemtulla, A. H.}

\begin{examples}
\label{examples:No-free-submonoid}
a) There are two classes of groups 
which, for obvious reasons,  do not contain non-abelian free submonoids:
the class of \emph{torsion groups} and the class of \emph{abelian groups}.
More surprisingly, 
nilpotent and thus locally nilpotent groups contain no non-abelian free submonoids, either.
A  simple proof of this fact is given in A. Shalev's paper  \cite{Sha91} (see part (iv) of Proposition 7.1).
To describe this proof,
we introduce two sequences $n \mapsto u_n$ and $n \mapsto v_n$ of positive words on the alphabet $\{x,y\}$,
defined as follows:
\begin{align*}
u_1&= x y, &\quad v_1 &=yx,\\
u_2&= u_1 v_1 = x y^2 x, &\quad v_1&=v_1u_1 = yx^2y,
\end{align*}
and, in general, $u_{n+1}= u_n v_n$ and $v_{n+1} =v_n u_n$. 
Notice that $u_n(x,y)$ and $v_n(x,y)$ are distinct words (of length $2^n$).

Suppose now $x$ and $y$ are elements in a group $H$ 
such that the relation $u_n(x,y) = v_n(x,y)$ holds  modulo the centre $\zeta(H)$ of $H$.
There exists then a central element $z \in H$ with $v_n(x,y) = u_n(x,y) \cdot z$
and so 
\[
u_{n+1}(x,y) = u_n v_n = u_n \cdot u_n z =  u_n z  \cdot u_n  = v_n u_n = v_{n+1}(x,y).
\]
It follows inductively 
that the relation $u_n = v_n$ is a law in every nilpotent group of class at most $n$.

b) Consider next an \emph{extension $G$ of a locally nilpotent normal subgroup $N$ by a torsion group $G/N$}.
Given $g$ and $h$ in $G$ there exists then a positive integer $m$ 
such that $x = g^m$, $y = h^m$ lie in $N$;
by hypothesis, the elements $x$, $y$ generate a nilpotent subgroup, say of class $n$, 
whence part a) allows us to conclude 
that the equation $u_n(g^m,h^m) = v_n(g^m,h^m)$ holds in $G$. 
This equation testifies that $x$ and $y$ do not generate a free submonoid of $G$.

A concrete example of such a group $G$ is described on page 1420 of \cite{LMR95}: 
let $F$ be a non-abelian free group and $R \triangleleft F$ a normal subgroup 
such that $F/R$ is a torsion group.
Then $G = F/R'$ is abelian-by-torsion. 
Moreover, the canonical projection $F \epi F/R'$ induces an isomorphisms $F_{\ab} \iso G_{\ab}$.
It shows that $G_{\ab}$ is free abelian of positive rank and implies that $G$ is torsion-free.

c) Consider finally a finitely generated \emph{soluble} group $G$  
that does not contain a non-abelian free submonoid.
By a result of J. M. Rosenblatt $G$ is then nilpotent-by-finite 
(\cite{Ros74}, see also \cite[Theorem 1]{LMR95}). 
Conversely, 
by the previous example b) every nilpotent-by-finite group contains no non-abelian free submonoid. 
\end{examples}
\index{Rosenblatt, J. M.}
%

\subsubsection[$\Sigma^1$ of Engel groups]{Illustration 2: Engel groups}
\label{sssec:Illustration-invariant-Engel-groups}
%
Engel groups are generalizations of nilpotent groups.
They are defined like this (cf. \cite[pp.\;371--376]{Rob96}):
\begin{definition}
\label{definition:Engel-group}
Let $x$, $y$ be elements of a group and $n \geq 1$ a natural number.
The iterated commutators $ n \mapsto c_n(x,y)$ are defined inductively like this:
\begin{align}
c_1(x,y) &= [x,y] = x y x^{-1} y^{-1},
\label{eq:Iterated-commutator-1}\\
c_{n+1}(x,y) &= [c_{n}(x,y),y] = c_{n}(x,y) \cdot y \cdot c_{n}(x,y)^{-1} \cdot y^{-1} \text{ for } n \geq 1.
\label{eq:Iterated-commutator-n}
\end{align}

A group is called an \emph{Engel group} 
if there exists for every couple $(x,y)$ of elements in $G$ 
a natural number $k \geq1$ such that $c_k(x,y) = 1$. 
\end{definition}
\index{Definition of!Engel group}
The following result is the analogue of Corollary \ref{crl:Sigma1-Group-with-no-free-submonoid}:
\begin{crl}
\label{crl:Sigma1-Engel-group}
If $G$ an Engel group then $\Sigma^1(G) = S(G)$.
If $G$ is finitely generated so are the members of its derived series.
\end{crl}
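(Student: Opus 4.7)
The plan is to verify the hypothesis of Lemma~\ref{lem:No-free-submonoid} for every non-zero $\chi \colon G \to \R$. Fix a couple $(x_1, x_2)$ of elements of the Engel group $G$ with $\chi(x_1) = \chi(x_2) = c > 0$. By hypothesis there is an integer $k \geq 1$ with $c_k(x_1, x_2) = 1$. If $k = 1$, this is the relation $x_1 x_2 = x_2 x_1$, so I would take $u_1 = x_2$ and $u_2 = x_1$, both having $\chi$-track $(c)$. For $k \geq 2$, the identity $c_k = [c_{k-1}, x_2] = 1$ rearranges to the equality $c_{k-1}(x_1, x_2) \cdot x_2 = x_2 \cdot c_{k-1}(x_1, x_2)$; since the reduced word $c_{k-1}(x_1, x_2)$ begins with $x_1$ and ends with $x_2^{-1}$ (easily proved by induction from the recursion defining $c_k$), after free reduction the left-hand side starts with $x_1$ and equals $x_1 \cdot u_1$, where $u_1$ is obtained from $c_{k-1}$ by deleting its initial letter $x_1$ and its terminal letter $x_2^{-1}$. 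Setting $u_2 = c_{k-1}(x_1, x_2)$, one has $x_1 u_1 = x_2 u_2$ with $u_1, u_2$ words in $\{x_1^{\pm 1}, x_2^{\pm 1}\}$.

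The technical core is to show that both $\chi$-tracks are non-negative. Denoting by $(S_1, \ldots, S_{n_k})$ the $\chi$-track of the reduced word $c_k(x_1, x_2)$, I would establish by induction on $k$ the strengthened claim: $S_1 = c$, $S_{n_k - 1} = c$, $S_{n_k} = 0$, and $S_j \geq c$ for every $j$ with $2 \leq j \leq n_k - 1$. The base $k = 1$ is immediate, since $c_1 = x_1 x_2 x_1^{-1} x_2^{-1}$ has track $(c, 2c, c, 0)$. For the inductive step, because $c_k$ ends in $x_2^{-1}$, the expansion $c_{k+1} = c_k \cdot x_2 \cdot c_k^{-1} \cdot x_2^{-1}$ reduces only by canceling the central $x_2^{-1} x_2$, giving a word of length $n_{k+1} = 2 n_k$. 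A direct calculation then expresses the new partial sums in terms of the old ones: for $j = 1, \ldots, n_k - 1$ one has $S_j^{(k+1)} = S_j^{(k)}$, and for $j = n_k, \ldots, 2 n_k - 1$ one has $S_j^{(k+1)} = c + S_{2n_k - 1 - j}^{(k)}$; in both ranges the lower bound $c$ follows from the inductive hypothesis together with the non-negativity of the $S_j^{(k)}$, and the boundary values are read off directly. Granting the claim, the track of $u_2 = c_{k-1}$ is non-negative throughout, while the track of $u_1$ equals $(S_2 - c, \ldots, S_{n_{k-1} - 1} - c)$, which is non-negative by the claim applied to $c_{k-1}$. Lemma~\ref{lem:No-free-submonoid} then delivers $[\chi] \in \Sigma^1(G)$ for every non-zero $\chi$, so $\Sigma^1(G) = S(G)$.

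For the addendum, if $G$ is finitely generated, combining $\Sigma^1(G) = S(G)$ with Theorem~\ref{thm:Characterizing-fg-N} shows that $G'$ is finitely generated. Since the Engel condition passes to subgroups (the identity $c_k(x,y) = 1$ is inherited by any subgroup containing $x$ and $y$), the same reasoning applies to $G'$, then to $G''$, and so on, yielding the claim for every term of the derived series. The main obstacle is the inductive bound $S_j \geq c$ for interior positions of the track of $c_k$: it requires the precise structural analysis of how the single $x_2^{-1} x_2$ cancellation in $c_{k+1}$ propagates through the partial sums, and the lower bound $c$ (rather than merely $0$) is essential in order to absorb the subtraction of $c$ that arises when the leading $x_1$ of $c_{k-1}$ is stripped off to form $u_1$.
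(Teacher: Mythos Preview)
Your proof is correct and follows essentially the same route as the paper: both reduce to Lemma~\ref{lem:No-free-submonoid} via the relation $c_{k-1}\cdot x_2 = x_2\cdot c_{k-1}$, obtain the identical decomposition $x_1 u_1 = x_2 u_2$ (the paper writes it as $x\cdot u = y\cdot v$ with $u = y w_n x^{-1}$ and $v = c'_n$, which after unwinding its auxiliary word $w_n$ are exactly your $u_1,u_2$), and verify non-negativity of the $\chi$-tracks by induction on the structure of the reduced commutator. The paper packages the structural facts into a separate lemma about the words $w_n$ and leaves the track bound implicit, whereas you work directly with the partial sums $S_j^{(k)}$ and make the key inequality $S_j\ge c$ explicit; both arguments are equivalent.
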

\index{Computation of Sigma1@Computation of $\Sigma^1$ for!Engel groups}

\begin{proof}
Given a non-zero character $\chi$ of $G$,
let $x_1$, $x_2$ two elements of $G$ 
satisfying the condition $\chi(x_1) = \chi(x_2)$. 
By hypothesis, 
there exists then an index $m > 1$ so that the iterated commutator $c_m(x_1,x_2)$ vanishes.
If $m = 1$, 
the relation $x_1 \cdot x_2 = x_2 \cdot x_1$ is then valid 
and shows that the hypothesis of Lemma \ref{lem:No-free-submonoid} is satisfied for the triple $(\chi, x_1, x_2)$.

Assume next that $m>1$ and set $n = m-1$ as well as  $x= x_1$, $y=x_2$.
The relation 
\begin{equation}
\label{eq:Relation-derived-from-c(n+1)}
c_n(x,y) \cdot y = y \cdot c_n(x,y)
\end{equation} 
then holds.
In general, the inductive definition of  $c_n(x,y)$ does not produce a freely reduced word;
but is is easy to describe the free reduction of $c_n(x,y)$ by an inductive scheme.

\begin{lem}
\label{lem:Analysis-of-iterated-commutators}
Let $n \mapsto w_n$ be the sequence of words in $\{x,x^{-1}, y, y^{-1}\}$ defined by
\begin{equation}
\label{eq:Analysis-of-iterated-commutator}
w_1 = 1 \quad\text{and}\quad w_{n+1} = w_n \cdot x^{-1}yx \cdot w_n^{-1} \cdot y^{-1}.
\end{equation}
Then each word $w_n$ is freely reduced; if $n>1$, it starts with $x^{-1}$ and ends with $y^{-1}$.
Moreover,
for each $n \geq 1$, the word $c'_n = xy \cdot w_n \cdot x^{-1}y^{-1} $ is freely reduced and is the free reduction of $c_n(x,y)$.
\end{lem}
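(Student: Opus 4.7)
The plan is to prove all three assertions simultaneously by induction on $n$, since the free reducedness of $w_{n+1}$, the identification of its first and last letters, the free reducedness of $c'_{n+1}$, and the identity $c'_{n+1} = c_{n+1}(x,y)$ in the free group are tightly interlocked and will feed into each other at the next inductive step.

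For the base case $n = 1$, the word $w_1$ is the empty word, which is vacuously freely reduced, and $c'_1 = xy \cdot x^{-1}y^{-1}$ is clearly freely reduced; it coincides letter by letter with $c_1(x,y)$ by the very definition of the commutator. (The clause about first/last letters does not apply when $n = 1$, and the case $n = 2$ will have to be handled by direct inspection inside the inductive step, since $w_1$ is empty and the junction analysis used for $n \geq 2$ does not apply verbatim.)

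For the inductive step, assume the three claims hold for $n$. To show that $w_{n+1} = w_n \cdot x^{-1}yx \cdot w_n^{-1} \cdot y^{-1}$ is freely reduced, I would examine the three junction points. First, the junction $w_n \mid x^{-1}yx$: if $n = 1$ this is no junction at all; if $n \geq 2$ then $w_n$ ends with $y^{-1}$ by induction, and $y^{-1}$ does not cancel with $x^{-1}$. Second, the internal word $x^{-1}yx$ is freely reduced, and its junction with $w_n^{-1}$: if $n = 1$, $w_n^{-1}$ is empty and we pass directly to $y^{-1}$, yielding $x^{-1}yxy^{-1}$, which is freely reduced and indeed starts with $x^{-1}$ and ends with $y^{-1}$; if $n \geq 2$, $w_n$ starts with $x^{-1}$ (inductive hypothesis), so $w_n^{-1}$ begins with $x$ — wait, more usefully, the letter of $w_n^{-1}$ adjacent to $x$ is the inverse of the last letter of $w_n$, namely $y$, and $x \cdot y$ is reduced. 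Third, the junction $w_n^{-1} \mid y^{-1}$: the first letter of $w_n$ is $x^{-1}$, so the last letter of $w_n^{-1}$ is $x$, and $x \cdot y^{-1}$ is reduced. Thus $w_{n+1}$ is freely reduced. Its first letter is that of $w_n$ (if $n \geq 2$), namely $x^{-1}$, and if $n = 1$ it is again $x^{-1}$; its last letter is $y^{-1}$ in either case.

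Knowing this, $c'_{n+1} = xy \cdot w_{n+1} \cdot x^{-1}y^{-1}$ is freely reduced, since $y$ is followed by $x^{-1}$ (the first letter of $w_{n+1}$) and $y^{-1}$ (its last letter) is followed by $x^{-1}$. Finally, to check $c'_{n+1} = c_{n+1}(x,y)$ in the free group, I would substitute the inductive identity $c_n(x,y) \equiv c'_n$ into the recursion
\begin{equation*}
c_{n+1}(x,y) = c_n(x,y) \cdot y \cdot c_n(x,y)^{-1} \cdot y^{-1} \equiv (xy \, w_n \, x^{-1}y^{-1}) \cdot y \cdot (y x \, w_n^{-1} \, y^{-1}x^{-1}) \cdot y^{-1},
\end{equation*}
cancel the single pair $y^{-1} y$ in the middle, and read off
\begin{equation*}
xy \cdot w_n \cdot x^{-1}yx \cdot w_n^{-1} \cdot y^{-1} x^{-1}y^{-1} = xy \cdot w_{n+1} \cdot x^{-1}y^{-1} = c'_{n+1}.
\end{equation*}
Since $c'_{n+1}$ is already freely reduced by the previous step, it is automatically the free reduction of $c_{n+1}(x,y)$, completing the induction. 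There is no serious obstacle here; the only delicate point is the base case bookkeeping, because $w_1$ is empty and so some of the junction arguments used for $n \geq 2$ degenerate and must be verified by hand for the transition $n = 1 \rightsquigarrow n = 2$.
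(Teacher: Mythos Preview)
Your induction is correct and is exactly the ``straightforward induction'' the paper alludes to without spelling out; the paper gives no further details. Your junction-by-junction check of free reducedness, the identification of the first and last letters of $w_{n+1}$, and the single cancellation $y^{-1}y$ in the recursive formula for $c_{n+1}(x,y)$ are all sound, including the separate handling of the degenerate case $n=1$ where $w_1$ is empty.
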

The lemma can be verified by a straightforward induction.
\smallskip 

Let's now go back to the relation  \eqref{eq:Relation-derived-from-c(n+1)}.
If one replaces in it the commutator $c_n(x,y)$ 
by its free reduction $c'_n = xy \cdot w_n \cdot x^{-1}y^{-1}$ 
and discards the subword $y^{-1} y$ at the end of the left term, 
one arrives at the relation
\[
x \cdot u = y \cdot v \quad \text{with  } u = yw_n x^{-1} \text{ and } v = x(y w_n x^{-1})y^{-1}.
\]
Since $\chi(x) = \chi(y) > 0$, 
the inductive definition of $w_n$ allows one to deduce
that the $\chi$-track of $u$ is non-negative,
whence so it that of $v$.
The hypothesis of  Lemma \ref{lem:No-free-submonoid} holds therefore also for indices $m > 1$.

The additional claim then follows as in the proof of Corollary \ref{crl:Sigma1-Group-with-no-free-submonoid}.
\end{proof}

\begin{note}
\label{note:Addendum-crl-Sigma1-Engel-group}
The addendum to Corollary \ref{crl:Sigma1-Engel-group} is due to K. W. Gruenberg (\cite[Theorem 2]{Gru53}).
\end{note}
\index{Gruenberg, K. W.}

\begin{examples}
\label{examples:Engel-group}
Every nilpotent group, hence every locally nilpotent group, is an Engel group.
Conversely, every \emph{finite} Engel group is nilpotent (\cite{Zor36}), 
as is every \emph{finitely generated soluble} Engel group (\cite[Theorem 1]{Gru53}).

On comparing these results with those stated in examples \ref{examples:No-free-submonoid} a) and c),
one sees that, for finite groups and for finitely generated soluble groups,
the hypothesis of being an Engel group is more stringent 
than that of being a group with no free submonoids of rank 2.
\end{examples}
%
\subsubsection[Free products with amalgamations and HNN-extensions]
{Illustration 3: amalgamated free products and HNN-extensions}
\label{sssec:Illustration-HNN-extensions}
%
In the preceding sections \ref{sssec:Illustration-invariant-groups-with-no-free-submonoid}
and \ref{sssec:Illustration-invariant-Engel-groups},
we considered situations which allow one to conclude
that every point $[\chi]$ lies in $\Sigma^1(G)$.
In this section, we turn to constructions 
that allow one to infer that certain points are outside of $\Sigma^1$.
The constructions will amalgamated free products and HNN-extensions.
One has:
\begin{prp}
\label{prp:Point-outside-Sigma1-HNN-extensions}
\begin{enumerate}[(i)]
\item Let $G$ be an amalgamated free product $G_1 \star_A G_2$
and $\chi \colon G \to \R$ a non-zero character.
If  $A$ is distinct from both $G_1$ and $G_2$
and $\chi$ vanishes on $A$, then $[\chi] \notin \Sigma^1(G)$.

\item Let $G$ be an HNN-extension 
$\langle B, t \mid t \cdot  s \cdot t^{-1} = \sigma(s) \text { for } s \in S \rangle$
with base group $B$, associated subgroups $S$, $T$ and isomorphism $\sigma\colon S \iso T$.
If the character $\chi$ vanishes on $S$ then $[\chi] \notin \Sigma^1(G)$,
except, possibly, if  $\chi(t) < 0$ and $S = B$ or $\chi(t) > 0$ and $T = B$.
\end{enumerate}
\end{prp}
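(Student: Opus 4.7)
The plan is to prove both parts by exhibiting, for each character $\chi$ satisfying the hypotheses, a generating system $\XX$ of $G$ and a witness element $g \in G_\chi$ such that $1$ and $g$ lie in distinct connected components of $\Gamma(G,\XX)_\chi$; by Definition~\ref{definition:Sigma1-arbitrary-group} this places $[\chi]$ outside $\Sigma^1(G)$. Both arguments generalize Example~3 of section~\ref{sssec:Sigma1-first-examples} (the case of non-trivial free products, where $A = \{1\}$ in (i)), replacing the free-product normal form by the normal form for amalgamated free products in (i) and by Britton's lemma in (ii). A common structural remark is that the hypothesis $\chi|_A = 0$ (respectively $\chi|_S = 0$, which automatically forces $\chi|_T = 0$ since $\chi(tst^{-1}) = \chi(s)$) means that $\chi$ vanishes on every edge stabilizer of the corresponding Bass--Serre tree.

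For (i), since $\chi$ is non-zero and vanishes on $A$, we have $\chi|_{G_1} \neq 0$ or $\chi|_{G_2} \neq 0$; after swapping the factors we fix $g_1 \in G_1$ with $\chi(g_1) > 0$ (necessarily $g_1 \notin A$). Using $A \neq G_2$ we pick $h \in G_2 \setminus A$ with $\chi(h) \geq 0$, replacing $h$ by $h^{-1}$ if need be. The witness is $g := g_1^{-1}\, h\, g_1 \in G_\chi$. Take $\XX = \XX_1 \sqcup \XX_2$ with $\XX_i$ generating $G_i$. Any word in $\XX^\pm$ representing $g$ decomposes into maximal single-factor blocks $w_{(1)} \cdots w_{(m)}$; by uniqueness of normal form for amalgams, after absorbing any $A$-valued block into its neighbours, the reduced block sequence recovers the triple $(g_1^{-1}, h, g_1)$. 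This forces some prefix $w_{(1)} \cdots w_{(j)}$ to represent an element of the coset $g_1^{-1} A$, whose $\chi$-value equals $-\chi(g_1) < 0$, contradicting that the path stays inside $\Gamma_\chi$.

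For (ii), three cases arise according to the sign of $\chi(t)$. If $\chi(t) > 0$ and $T \neq B$, pick $b \in B \setminus T$ with $\chi(b) \geq 0$ (replace $b$ by $b^{-1}$ if needed, using that $T$ is a subgroup) and set $g := t^{-1} b\, t \in G_\chi$; with $\XX := \BB \cup \{t\}$ for $\BB$ generating $B$, Britton's lemma then implies that every word in $\XX^\pm$ representing $g$ contains a prefix in the coset $t^{-1} \cdot T$, whose $\chi$-value is $-\chi(t) < 0$. The symmetric case $\chi(t) < 0$ and $S \neq B$ is handled with $g := t\, b\, t^{-1}$ for $b \in B \setminus S$, $\chi(b) \geq 0$. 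If $\chi(t) = 0$, then $\chi|_B \neq 0$ (else $\chi \equiv 0$), so $S \neq B$ and $T \neq B$ and neither exception applies; picking $b \in B$ with $\chi(b) > 0$ (so $b \notin S \cup T$), the witness $g := b^{-1} \cdot t\, b\, t^{-1}$ is Britton-reduced, lies in $G_\chi$ (as $\chi(g) = 0$), and any word representing it must visit an intermediate vertex whose prefix represents an element of $b^{-1} \cdot S$, hence has $\chi$-value $-\chi(b) < 0$.

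The principal obstacle is the rigorous verification, in each case, that \emph{every} word over $\XX^\pm$ representing the chosen witness $g$ must have a prefix of strictly negative $\chi$-value. It rests on the rigidity of the amalgam and Britton normal forms: while reduction is allowed to insert or cancel pinches (respectively to push elements of $A$ across factors), the $\chi$-values at the path vertices are controlled by the fact that $\chi$ factors through a well-defined height function on the oriented edges of the Bass--Serre tree; equivalently, the tree-geodesic from $v_0$ to $g \cdot v_0$ is forced to traverse an edge of strictly negative $\chi$-height, and no cellular detour in $\Gamma(G,\XX)_\chi$ can circumvent this.
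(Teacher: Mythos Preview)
Your proposal is correct and follows essentially the same strategy as the paper: exhibit a witness element $g\in G_\chi$ whose normal form forces every representing word to pass through a vertex of negative $\chi$-value, using the Reduced Form Theorem for amalgams in (i) and Britton's Lemma in (ii).

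Two minor differences are worth noting. First, the paper takes the \emph{maximal} generating sets $\XX = G_1 \cup G_2$ in (i) and $\XX = B \cup \{t\}$ in (ii); since Definition~\ref{definition:Sigma1-arbitrary-group} only requires one bad generating system, this choice is legitimate and makes the block-decomposition step trivial (each letter is already a factor element), whereas your smaller $\XX_i$, $\BB$ force you to collapse maximal subwords into blocks first. Second, the paper replaces your Bass--Serre height argument by the cleaner observation that amalgam/Britton reduction only \emph{removes} vertices from the path $(1,w)$, so the function $v_\chi$ cannot decrease under reduction; since the reduced form has $v_\chi < 0$, so does the original $w$. Your case split in (ii) by the sign of $\chi(t)$ is a harmless reorganisation of the paper's split by whether $\chi|_B$ vanishes, and your length-$4$ witness $b^{-1}tbt^{-1}$ in the case $\chi(t)=0$ works just as well as the paper's length-$3$ witness $b^{-1}t^\varepsilon b$ used throughout the case $\chi|_B \neq 0$.
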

\index{Computation of Sigma1@Computation of $\Sigma^1$ for!amalgamated free products}
\index{Computation of Sigma1@Computation of $\Sigma^1$ for!HNN-extensions}

\begin{proof}
(i) Assume first that $\chi$ is non-zero on $G_1$.
Choose elements $g_1 \in G_1$ with $\chi(g_1) > 0$
and $g_2 \in G_2 \smallsetminus A$ with $\chi(g_2) \geq 0$,
and set $g = g_1^{-1} g_2 g_1$.
Then $\chi(g_1^{-1}) < 0$, but  $\chi(g)\geq 0$.
Consider now the generating set $\XX = G_1 \cup G_2$; 
let $\Gamma = \Gamma(G, \XX) $ be the associated Cayley graph 
and $\Gamma_\chi$ its subgraph corresponding to $\chi$. 

We claim that $\Gamma_\chi$ is not connected.
Indeed,
the word $w_0 = (g_1^{-1}, g_2, g_1)$ is in reduced form and it describes a path $p = (1, w_0)$ in $\Gamma$
that start and ends in $\Gamma_\chi$, but leaves $\Gamma_\chi$.
If $w$ is another word in the alphabet $\XX^\pm = G_1 \cup G_2$ 
that defines the element $g \in G$,
then $w$ can be reduced to a word $w'$ in normal form.
In this reduction the value of the function $v_\chi \colon W(\XX^\pm) \to \R$ will not decrease.
By the \emph{Reduced-Form Theorem} (see, \eg, \cite[Chapt.\,1, Thm.\,26]{Coh89}),
$w'$ has the form $(h_1, h_2, h_3 )$ with $h_1 \in G_1 \smallsetminus A$,
$h_2 \in G_2 \smallsetminus A$, $h_3 \in G_1 \smallsetminus A$ and $h_1 \in g_1^{-1} A$.
But if so, the assumption that $\chi$ vanishes on $A$ implies
that $\chi(h_1) = \chi (g_1^{-1}) < 0$ and thus the subgraph $\Gamma_\chi$ is not connected.

If $\chi$ vanishes on $G_1$,
exchange the rôles of $G_1$ and $G_2$ in the preceding argument
and infer, as before, that  $[\chi] \notin \Sigma^1(G)$.
\smallskip

(ii) The proof will be similar to the preceding one; 
it is based on the \emph{Reduced Form Theorem} for HNN-extensions 
(see, for instance, \cite[p.\,36, Thm.\,37]{Coh89}).

Three cases will be distinguished;
in each of them we choose $\XX = B \cup \{t\}$ as generating set. 
Assume first that $\chi$ \emph{does not vanish on} $B$.
Choose an element $b \in B$ with $\chi(b) > 0$; notice that $b \notin S \cup T$.
Consider the word $w_0 = (b^{-1}, t^\varepsilon, b)$
where the sign $\varepsilon$ is to be chosen so that $\chi(y^\varepsilon) \geq 0$.
Then $w_0$ is in reduced form;
as $\chi$ is negative on its first letter it follows, as in part (i),
that $\Gamma_\chi$ is not connected.

Assume now that $\chi(B) = \{0\}$.
If $\chi(t) < 0$ and $S \neq B$, pick $b \in B \smallsetminus S$ 
and consider the word $w_0 = (t, b, t^{-1})$. 
It is in reduced form and allows one to deduce that $\Gamma_\chi$ is not connected.
If, on the other hand, $\chi(t) > 0$ and $T \neq B$, find $b \in B \smallsetminus T$, 
and set $w_0 = (t^{-1}, b, t)$.
This word is reduced and reveals that $\Gamma_\chi$ is not connected.
\end{proof}

\begin{remarks}
\label{remarks:Sigma1-HNN-extension}
a) Part (i) of Proposition \ref{prp:Point-outside-Sigma1-HNN-extensions}
generalizes the conclusion stated in Example  3 of section \ref{sssec:Sigma1-first-examples}.

b) The exceptional cases listed in statement (ii) of the preceding proposition deserve a comment.
Assume that $\chi(t) > 0$.
If $[\chi] \in \Sigma^1(G)$ the proposition implies that $T$ coincides with $B$.
This conclusion brings to light that Proposition \ref{prp:HNN-extension-and-Sigma1} continues to be valid 
if the base group $B$ is allowed to be infinitely generated.

Suppose now that $T = B$. 
If $B$, and hence $G$, are \emph{finitely generated}, 
then $[\chi] \in \Sigma^1(G)$ by Proposition \ref{prp:Ascending-HNN-extension}.
If, however, $B$ is infinitely generated  the point $[\chi] $ may lie outside of $\Sigma^1(G)$, 
but it can also be contained in it.
One reason is this: 
the assumption that $T = B$ is infinitely generated does not exclude the case where $B = N = \ker \chi$.
Then each of the three situations
\[
\text{a) } [\chi] \in \Sigma^1(G) \text{ and } [-\chi] \notin \Sigma^1(G),
\quad
\text{b) } [\chi] \notin \Sigma^1(G) \text{ and } [-\chi] \in \Sigma^1(G),
\]
and $[\chi] \notin \Sigma^1(G) $ and  $[-\chi] \notin \Sigma^1(G)$ are possible.

In the previous example the group $G$ may be chosen to be finitely generated.
Here is a different type of example where $G$ will be infinitely generated.
Let $B$ be a strictly ascending union $\bigcup\nolimits_{j \in \N} B_j$ of finitely generated subgroups $B_j$
and assume that $B_j \subseteq tB_j t^{-1} $ for every index $j \in \N$. 
Then $G = \gp(B \cup \{t\})$ is an ascending union of  finitely generated subgroups $G_j = \gp(B_j \cup \{t\})$,
and $[\chi_{|G_j}] \in \Sigma^1(G_j)$  for every index $j$.
Proposition \ref{prp:Sigma-1-join-subgroups-2} thus allows us to conclude that $[\chi] \in \Sigma^1(G)$.

Particular instances of the previous type of example are certain extensions of locally finite groups 
by infinite cyclic groups.
They reveal that the conclusion of Lemma \ref{lem:Infinite-locally-finite-group}
need not be true if the group $G$ is not finitely generated.
\end{remarks}

We conclude with an application of Proposition \ref{prp:Point-outside-Sigma1-HNN-extensions}.
\begin{crl}
\label{crl:Invariant-group-with-infinitely-many-ends}
If $G$ is a finitely generated group with infinitely many ends then $\Sigma^1(G)$ is empty.
\end{crl}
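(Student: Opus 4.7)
The plan is to reduce the statement to a single application of Proposition \ref{prp:Point-outside-Sigma1-HNN-extensions} via the classical structure theorem of Stallings on groups with more than one end. The key observation is that every character $\chi\colon G\to\R$ vanishes on any finite subgroup, since the additive group of $\R$ is torsion-free. Thus, if one can decompose $G$ as an amalgamated free product or an HNN-extension over a \emph{finite} subgroup, then $\chi$ automatically vanishes on the amalgamated or associated subgroups, and Proposition \ref{prp:Point-outside-Sigma1-HNN-extensions} will yield $[\chi]\notin\Sigma^1(G)$.

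More precisely, I would invoke Stallings' structure theorem for finitely generated groups with more than one end: such a group admits a nontrivial splitting over a finite subgroup, either as an amalgamated free product $G=G_1\star_A G_2$ with $A$ finite and $A\subsetneq G_1$, $A\subsetneq G_2$, or as an HNN-extension $G=\langle B,t\mid t s t^{-1}=\sigma(s)\text{ for }s\in S\rangle$ with $S$ finite and $S\subsetneq B$, $T=\sigma(S)\subsetneq B$. Since $G$ has infinitely many ends, it certainly has more than one end, so one such decomposition exists; fix it once and for all. Let $\chi\colon G\to\R$ be an arbitrary non-zero character. In the amalgamated case, $\chi(A)=\{0\}$ because $A$ is finite and $A$ is distinct from both factors, so part (i) of Proposition \ref{prp:Point-outside-Sigma1-HNN-extensions} gives $[\chi]\notin\Sigma^1(G)$. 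In the HNN case, $\chi(S)=\{0\}$; moreover the hypothesis $S\subsetneq B$ and $T\subsetneq B$ rules out both exceptional situations in part (ii) of the proposition, so again $[\chi]\notin\Sigma^1(G)$. As $\chi$ was arbitrary, $\Sigma^1(G)=\emptyset$.

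The only nontrivial input is Stallings' theorem, which is the main obstacle in the sense that everything else is a direct verification from the already-established Proposition \ref{prp:Point-outside-Sigma1-HNN-extensions}; but it is a standard black box and in fact only its weaker form (existence of \emph{some} nontrivial splitting over a finite subgroup) is needed, not the full Bass--Serre tree picture. One should also note that the argument does not require the splitting to be canonical or to depend on $\chi$: a \emph{single} splitting serves uniformly for every non-zero character, because finiteness of $A$ (respectively of $S$) automatically forces vanishing of any real-valued character on it. This uniformity is what makes the conclusion $\Sigma^1(G)=\emptyset$ immediate rather than just ``contains no specified pair of antipodal points''.
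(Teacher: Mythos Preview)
Your proposal is correct and follows essentially the same route as the paper: invoke Stallings' structure theorem to obtain a splitting over a finite subgroup, observe that every real character vanishes on a finite subgroup, and apply Proposition~\ref{prp:Point-outside-Sigma1-HNN-extensions}. One small imprecision: your formulation of Stallings' theorem for groups with \emph{more than one} end asserts $S\subsetneq B$ and $T\subsetneq B$ in the HNN case, but this fails for two-ended groups such as $\Z$ (where $S=B=\{1\}$); the proper inclusions are guaranteed precisely because $G$ has \emph{infinitely} many ends, which is how the paper states the hypothesis of the structure theorem.
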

\begin{proof}
By Stallings' structure theorem (see \cite[5.A.9]{Sta71}), 
a finitely generated group $G$ with infinitely many ends
is either a non-trivial free product $G_1\star_A G_2$ with finite amalgamated group $A$,
properly contained in both factors and of index greater than 2 in at least one factor,
or an HNN-extension  
\[
\langle B,t; t \cdot  s \cdot t^{-1} = \sigma(s) \text{ for } s \in S\rangle
\] 
with finite associated subgroups $S$, $T$, both properly contained in $B$. 
As every character vanishes on a finite subgroup,
Proposition \ref{prp:Point-outside-Sigma1-HNN-extensions}
allows us to see that no character represents a point of $\Sigma^1(G)$.
\end{proof}
\index{Computation of Sigma1@Computation of $\Sigma^1$ for!groups with infinitely many ends}
%
\subsubsection{A glimpse of the invariants $\Sigma^M$ and $\Sigma^B$}
\label{sssec:Glimpse-invariants-Meigniez-and-Brown}
%
In this final part of section \ref{ssec:Extending-Sigma1-arbitrary-groups}
I shall describe two constructions
that underlie the proofs of the equivalence of $\Sigma^1$ with the invariants 
propounded by Gaël Meigniez in \cite{Mei87}, \cite{Mei90} and by Ken Brown in \cite{Bro87b}.

Let $G$ be a group and  $\chi \colon G \to \R$ a non-zero character.
Given a generating system $\eta \colon \XX \to G$ of $G$ 
we are interested in finding out 
whether the subgraph $\Gamma_\chi = \Gamma(G, \XX)_\chi$ 
of the Cayley graph $\Gamma(G, \XX)$  is connected.
If the subgraph $\Gamma_\chi$ is \emph{not} connected
it has infinitely many components.
\footnote{See the proof of Corollary \ref{crl:Infinitely-many-components}.}
In a case where one suspects that $\Gamma_\chi$ is not connected 
it might therefore be more profitable to consider subsets of connected components 
rather than individual components,
and try to show that there exists such a subset whose union is neither empty nor all of $\Gamma_\chi$.
One is thus led to consider subsets of $G$ 
with properties resembling those of the vertex sets of unions of connected components of $G_\chi$.
Gaël Meigniez has found a framework in which such subsets can conveniently be discussed.
One benefit of his approach is the fact 
that the generating system $\XX$ does not enter into the calculations.

Ken Brown has detected another avenue to the problem of finding out 
whether the subgraph $\Gamma_\chi $ is connected;
it involves an action of $G$.
Let  $C = C(\Gamma_\chi)$ denote the set of all connected components of $\Gamma_\chi$.
The group $G$ acts on the Cayley graph $\Gamma(G, \XX)$ by left translations,
but this action does not induce one on $C$, 
as $C$ in only invariant under $N = \ker \chi$.
Now in the applications of $\Sigma^1$ one typically has some information on $G$ 
and aims at using it to deduce properties of $N$.
So one should try to enlarge the $N$-set $C(\Gamma_\chi)$ to a $G$-set,  say $T$.
This goal can be reached in different ways, one solution being the following one.

Let $T = T(G, \XX, \chi)$ be the set of all connected components of the subgraphs
$\Gamma(G, \XX)_\chi^{[b, \infty)}$ 
with $b$ ranging over $\im \chi \subseteq \R$.
Then $T$ carries an obvious $G$-action and it is equipped with additional structure:
the homomorphism $\chi$, for instance,  induces  a  map $\lambda \colon T \to  \R$ 
that sends a component of $\Gamma(G, \XX)_\chi^{[b, \infty)}$  to $b$.
This map $\lambda$ is $G$-equivariant;
it is injective if, and only if, $\Gamma_\chi$ is connected 
or, expressed in a more suggestive, geometric language, if $T$ is a ``line''.
One crucial observation is now this:
every element $g \in G$ with $\chi(g) \neq 0$ gives rise to a subset $A_g$ 
that is invariant under $\gp(g)$ and enjoys properties similar 
to those of the axis of a hyperbolic element of a group acting on an $\R$-tree
(see Lemma \ref{lem:Properties-A-sub-g} for details).
Using the axes $A_g$ of suitably chosen elements $g$ 
one can try to deduce that $T$ is a line;  
if one succeeds, one has shown that $\Gamma_\chi$ is connected.

The preceding paragraph indicates 
how the construction of $T$ can help one in proving 
that a given point $[\chi]$ belongs to $\Sigma^1(G)$.
But more is true.
The additional structure of the $G$-set $T$ turns it into a kind of tree, 
to be called the \emph{measured tree} associated to $G$, $\XX$ and $\chi$.
One has also a notion of an abstract measured tree
which  comprises, in particular, certain quotients of $T$.
An astonishing fact is now this:
it there exists, 
given a group $G$ and a  non-zero character $\chi$ of $G$,
an abstract measured tree $T'$ which is \emph{not} a line 
there exists a generating system $\XX$ of $G$ 
so that the measured tree $T(G, \XX, \chi) $ associated to $G$, $\XX$ and $\chi$ is not a line
(see Proposition \ref{prp:Constructing-morphism-from-T(G,XX,chi)}),
whence $[\chi] \notin \Sigma^1(G)$.
So the approach via actions on tree-like structures also provides a tool for showing 
that a point $[\chi]$ does \emph{not} belong to $\Sigma^1(G)$.

The approach has a third merit.
The notion of an abstract measured tree $T'$ does not involve the choice of a generating system;
in particular,
an abstract measured tree $T'$ associated to $G$ and $\chi$ 
gives rise to an abstract measured tree for every subgroup $G_1$ of $G$ with $\chi(G_1) \neq \{0\}$.
This fact allows one to use the hypothesis that $[\chi|G_1] \in \Sigma^1(G_1)$ 
in proving that $[\chi] \in\Sigma^1(G)$. 
In this way one can show, for instance,
that the statement of Proposition \ref{prp:Sigma-1-join-subgroups},
dealing with joins of subgroups, carries over to infinitely generated groups
(see Proposition \ref{prp:Sigma-1-join-subgroups-2}).
%
%

%
\subsection{Ends in the direction of a character (G. Meigniez)} 
\label{ssec:Ends-direction-character}
%
Gaël Meigniez introduces in  \cite{Mei87} (cf.\,\cite{Mei88} and \cite{Mei90}) an invariant $\Sigma^M$
whose definition is phrased in terms reminiscent of the number of ends of a space.
This section begins therefore with a reminder of this notion.
Then follow the definition of  the invariant $\Sigma^M$
and the proof of the equality of $\Sigma^M$ and $\Sigma^1$
(see Theorem \ref{thm:Equality-Sigma1-Meigniez-invariant}).
%
\subsubsection[Number of ends of a topological space]%
{Number of ends of a topological space --- a reminder}
\label{sssec:Number-ends-topological-space}
%
Let  $X$ be a locally finite, connected simplicial complex.
For each finite subcomplex $K$, 
the number of connected components of $X \smallsetminus K$ is finite;
let $n(K)$ denote the number of the infinite ones.
The number of ends $e(X)$ of the space $X$ is then defined to be the supremum
\begin{equation}
\label{eq:Number-ends-topological-space}
\index{Number of ends!of a topological space}%
\sup \{n(K) \mid K \subset  X \text{ finite subcomplex} \}.
\end{equation}
This number is either a positive integer or $\infty$.
(See, \eg, \cite[Section 5]{ScWa79} for further information.)
\index{Number of ends!of a topological space}

The preceding definition applies, in particular, to the Cayley graph $\Gamma = \Gamma(G, \XX)$ 
of an infinite group with a finite generating system $\eta \colon \XX \to G$. 
The infinite components of a subset $\Gamma  \smallsetminus K$ 
are then infinite connected subgraphs  $S \subset \Gamma  \smallsetminus K$ 
whose boundaries $\delta(S) = \delta_{\XX}(S)$ are finite.
Here a vertex $v$ is said to lie in the boundary of $S$ 
if it is the endpoint of an edge having one extremity in $S$ and the other in $G \smallsetminus S$;
more precisely,
\begin{equation}
\label{eq:Definition-boundary}
\delta_{\XX}(S) 
= 
\bigcup\nolimits_{y \in \XX \cup \XX^{-1}} (S\cdot y \smallsetminus S)\cup (S\smallsetminus S \cdot y).
\end{equation}

The concept of the \emph{number of ends} of a topological space goes back to \cite{Hop44}. 
In this paper,  Heinz Hopf establishes, inter alia, the following results:
\begin{enumerate}[(i)]
\item  let $X = \widetilde{Y}$ be the universal covering of a finite, connected  simplicial complex $Y$
with infinite fundamental group $G = \pi_{1}(Y)$.
Then the number of ends of $X$  is 1, 2 or $\infty$ (\cite[p.\;91, Satz I]{Hop44});
\item the number of ends of a Cayley graph $\Gamma = \Gamma(G, \XX)$ of an infinite, 
finitely generated group does not depend on the choice of the finite generating system $\eta \colon \XX \to G$
and is therefore an invariant of the group $G$ (\cite[p.\;96, Art.\;16]{Hop44}).
\end{enumerate}
\index{Hopf, H.}

Hopf establishes claims (i) and (ii) by topological arguments 
and then raises the question of finding a purely algebraic theory of the number of ends of a group.
Hans Freudenthal proposes such a theory in \cite{Fre45}.
\index{Freudenthal, H.}

Here are some elements of his theory;
they will be important in the sequel.
Assume $G$ is a finitely generated, infinite  group,
$\XX \incl G$ is a finite generating set of $G$,
and $\Gamma = \Gamma(G, \XX)$  the corresponding Cayley graph.
Consider a subset $A \subseteq G$. 
It gives rise to  a (full) subgraph $\Gamma_{|A}$ of $\Gamma$;
let $\delta_{\XX}(A)$ denote its boundary  in $\Gamma$.
Then  $\delta_{\XX}(A)$ is finite if, and only if,  the set
 \begin{equation}
 \label{eq:Definition-delta-g}
 \delta_{h}(A) = (Ah \smallsetminus A) \cup (A \smallsetminus Ah)
\end{equation}
is finite for every $h \in G$. 
It follows 
that the answer to the question whether a set $A$ has a finite boundary $\delta_{\XX}(A)$ 
with respect to a finite generating set $\XX$ of $G$ 
does not depend on the choice of $\XX$.

Let us apply this fact to an \emph{infinite connected component}  $\CC$ 
of the subgraph of $\Gamma$ spanned by the complement  $G  \smallsetminus \FF$ of a finite subset $\FF$.
The boundary of $\CC$ is then finite.
Assume now $\XX'$ is another finite generating set of $G$.
Then $A =V(\CC)$ induces a subgraph $\Gamma'_{|A}$ of $\Gamma(G,\XX')$.
This subgraph may not be connected, but it has a finite boundary 
and so it contains at least one infinite connected component.
As the rôles of $\Gamma(G, \XX)$ and $\Gamma(G, \XX')$ can be interchanged,
the previous reasoning constitutes an algebraic proof of Hopf's assertion (ii); 
this proof yields, in addition, a characterization of groups  with one end:
\emph{a finitely generated group $G$ has one end if, 
and only if, 
the following implication holds for every subset $A \subset G$}:
 \begin{equation}
 \label{eq:Group-one-end}
 \delta_{h}(A) \text{ is finite for every }  h \in G \Longrightarrow A \text{ or } G \smallsetminus A \text{ is finite}.
 \end{equation}
This algebraic characterization of a finitely generated group with one end
will be the starting point of the next section.
%
\subsubsection{Number of ends in the direction of a character}
\label{sssec:Number-ends-direction-character}
%
Let $G$ be a group and $\chi \colon G \to \R$ a non-zero character.
The definition of a group with one end in the direction of $\chi$ 
is obtained from characterization  \eqref{eq:Group-one-end} of a finitely generated group with one end
by replacing in the latter characterization the word \emph{finite} 
by the phrase \emph{with $\chi$-values that are bounded from above}.
\begin{definition}
\label{definition:Group-one-end-direction-character}
\index{Number of ends!in the direction of chi@in the direction of $\chi$}%
$G$ has \emph{one end in the direction of  $\chi$} 
if the following implication holds for every subset $A \subset G$:
\begin{gather*}
\chi(\delta_{h}(A)) \text{ is bounded from above for every }  h \in G \\
\Downarrow\\
\chi(A) \text{ or } \chi(G \smallsetminus A) \text{ is bounded from above}.
 \end{gather*}
 \end{definition}
 
 A glance at  definition \ref{definition:Group-one-end-direction-character} shows 
that if $G$ has one end in the direction of $\chi$ 
it has one end in the direction of every positive multiple of $\chi$.
The definition allows one therefore to single out a subset $\Sigma^M(G)$ of the sphere $S(G)$,
namely  
 \begin{equation}
\label{eq:Definition-Meigniez-invariant}
\index{Invariant SigmaM@Invariant $\Sigma^M$!definition}%
\index{Definition of!invariant SigmaM@invariant $\Sigma^M$}%
\Sigma^{M} (G) = \{ [\chi] \in S(G) \mid G \text{ has one end in the direction of } \chi \} .
\end{equation}

We shall prove that $\Sigma^{M}$ coincides with $\Sigma^1$.
Prior to establishing this result, we pause for some historical comments.
 \begin{notes}
 \label{notes:Group-infinitely-many-ends-direction-character}
a) The concept of \emph{ends of a group $G$ in the direction of a character} 
 is due to Gaël Meigniez  (see \cite[\S 1]{Mei87}).
 In \cite{Mei87}, a number of results involving this concept are established,
 in particular the following two:
 firstly, if $G$ is finitely generated, 
 the subsets $\Sigma^{M}(G)$ and $\Sigma^1(G)$ coincide (Corollaire 5); 
 secondly, the analogue of Proposition \ref{prp:Point-outside-Sigma1-HNN-extensions} for the invariant $\Sigma^M$
 (see Theorems 4.1 and 4.2).

  b) The concept of ends of a group in a given direction is also of interest for other types of ``characters''.
  This novel type of generalization is discussed in Gaël Meigniez' thesis \cite{Mei88} 
  and in his article \cite{Mei90};
  it has been taken up by Robert Bieri in \cite{Bi93}.
\end{notes}
\index{Bieri, R.}
\index{Meigniez, G.}
%
\subsubsection{Equivalence of $\Sigma^M$ and $\Sigma^1$}
\label{sssec:Equivalence-Sigma-super-M-Sigma1}
%
In this section, 
it is shown that a group $G$ has one end in the direction of a character if, and only if, 
that character represents a point of $\Sigma^1(G)$.
We begin with
\begin{lem}
\label{lem:Boundary-delta-h}
Let $G$ be a group with generating system $\eta \colon \XX \to G$.
Consider a subset $A$ of $G$.
If  $\chi(\delta_{x}(A))$ is bounded from above for each $x \in \XX$
then  $\chi(\delta_{h}(A))$ is bounded from above for every $h \in G$.
\end{lem}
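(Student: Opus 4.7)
The plan is to reduce the assertion for an arbitrary $h \in G$ to the hypothesis on generators by expressing $h$ as a word in $\YY = \XX \cup \XX^{-1}$ and using a simple set-theoretic inclusion for the ``boundary'' $\delta_h(A)$ under products.

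First I would recast $\delta_h(A)$ as the symmetric difference $A \bigtriangleup Ah$. This viewpoint makes two key properties transparent. On one hand, right translation preserves symmetric differences, giving
\[
\delta_h(A)\cdot g \;=\; (A\bigtriangleup Ah)\cdot g \;=\; Ag \bigtriangleup Ahg,
\]
which in particular yields $\delta_{h^{-1}}(A) = \delta_h(A)\cdot h^{-1}$, so $\chi(\delta_{h^{-1}}(A)) = \chi(\delta_h(A)) - \chi(h)$. Consequently, if $\chi$ is bounded above on $\delta_x(A)$ for each $x \in \XX$, then the same holds on $\delta_{x^{-1}}(A)$ for each $x \in \XX$, and hence on $\delta_y(A)$ for every $y \in \YY$.

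The second key property is the triangle-like inclusion for symmetric differences: for any sets $X, Y, Z$ one has $X \bigtriangleup Y \subseteq (X \bigtriangleup Z) \cup (Z \bigtriangleup Y)$. Applied with $X = A$, $Z = Ag$, $Y = Agh$, this gives
\[
\delta_{gh}(A) \;\subseteq\; \delta_g(A) \;\cup\; \bigl(Ag \bigtriangleup Agh\bigr) \;=\; \delta_g(A) \;\cup\; \delta_h(A)\cdot g.
\]
Passing to $\chi$-values and using $\chi(\delta_h(A)\cdot g) = \chi(\delta_h(A)) + \chi(g)$, we see that if $\chi$ is bounded above on both $\delta_g(A)$ and $\delta_h(A)$, then it is bounded above on $\delta_{gh}(A)$.

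Now given any $h \in G$, write $h = y_1 y_2 \cdots y_k$ with $y_i \in \YY$, which is possible because $\eta(\XX)$ generates $G$. A straightforward induction on $k$, using the inclusion above at each step, shows that $\chi(\delta_h(A))$ is bounded from above by a constant depending only on the bounds for $\chi(\delta_{y_i}(A))$ and on $\chi(y_1), \chi(y_1 y_2), \ldots, \chi(y_1\cdots y_{k-1})$. The main (and essentially only) content is the symmetric-difference inclusion; everything else is bookkeeping, so I do not anticipate a genuine obstacle.
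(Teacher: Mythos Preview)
Your approach is the same as the paper's --- triangle inequality for symmetric differences followed by induction on word length --- but there is a genuine slip in your key inclusion. You take $Z = Ag$ and write
\[
\delta_{gh}(A) \;\subseteq\; \delta_g(A)\;\cup\;(Ag \bigtriangleup Agh) \;=\; \delta_g(A)\;\cup\;\delta_h(A)\cdot g.
\]
The last equality is wrong in a non-abelian group: by your own translation formula, $\delta_h(A)\cdot g = Ag \bigtriangleup Ahg$, whereas the set you actually need is $Ag \bigtriangleup Agh$, and $Ahg \neq Agh$ in general. What the term $Ag \bigtriangleup Agh$ equals is $\delta_h(Ag)$, about which the hypothesis says nothing.

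The fix is immediate: take $Z = Ah$ instead. Then
\[
\delta_{gh}(A) \;\subseteq\; (A \bigtriangleup Ah)\;\cup\;(Ah \bigtriangleup Agh) \;=\; \delta_h(A)\;\cup\;\delta_g(A)\cdot h,
\]
since $(A \bigtriangleup Ag)\cdot h = Ah \bigtriangleup Agh$. This is precisely the inclusion the paper uses (with $g = h'$ the inductively shorter word and $h = y$ the new letter appended on the right). With this corrected, your induction goes through exactly as you describe.
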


\begin{proof}
Given an element $h \in G$, 
write it as a product   $y_{1}\cdot y_{2}\cdots y_{\ell}$
with factors $ y_{j} = x_{j}^{\varepsilon_{j }} \in \XX \cup \XX^{-1}$.
By hypothesis, the set $\chi( \delta_{x}(A))$ is bounded from above 
for each $x \in \XX$. 
The calculation 
\begin{align*}
\delta_{x^{-1}}(A) 
&= 
(Ax^{-1} \smallsetminus A) \cup (A\smallsetminus Ax^{-1} )  \\
&=
(A \smallsetminus Ax) \cdot x^{-1} \cup  (Ax \smallsetminus A)\cdot x^{-1}
=
\delta_{x}(A) \cdot x^{-1}
\end{align*}
then shows that each $\chi(\delta_{x^{-1}}(A))$ is likewise bounded from above. 
It follows, in particular,  that each set $\chi(\delta_{y_{i}}(A)$ is bounded from above;
in particular, $\chi(\delta_{y_{1}}(A))$.
Write $h$ as a product $h' \cdot y$ with $h' = y_{1}\cdot y_{2}\cdots y_{\ell-1}$ and $y = y_{\ell}$
and assume, inductively, that $\chi(\delta_{h'}(A))$ is bounded from above.
The computation
\begin{align*}
\delta_{h'y}(A) 
&= 
(Ah'y \smallsetminus A) \cup (A\smallsetminus Ah'y )  \\
&\subseteq
(Ah'y \smallsetminus Ay) \cup  (Ay \smallsetminus A) \cup  (A \smallsetminus Ay)\cup (Ay \smallsetminus Ah'y)\\
&=(Ah' \smallsetminus A) \cdot y \cup \delta_{y}(A) \cup (A \smallsetminus Ah') \cdot y\\
&= \delta_{y}(A) \cup \delta_{h'}(A)\cdot y
\end{align*}
then shows  that $\chi(\delta_{h}(A))$ is bounded from above.
\end{proof}
\begin{thm}
\label{thm:Equality-Sigma1-Meigniez-invariant}%
\index{Invariant SigmaM@Invariant $\Sigma^M$!equality with Sigma1@equality with $\Sigma^1$}%
The invariants $\Sigma^{M}(G)$ and $\Sigma^1(G)$ coincide for every group $G$.
\end{thm}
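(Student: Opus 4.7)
My plan is to prove the two inclusions separately, and to treat each by contraposition.

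For $\Sigma^M(G)\subseteq \Sigma^1(G)$, I would assume $[\chi]\notin \Sigma^1(G)$, so that some Cayley graph $\Gamma(G,\XX)_\chi$ fails to be connected. Letting $\CC$ denote one of its components and $\CC'$ another (which exists by Corollary~\ref{crl:Infinitely-many-components}), I would build a witness to $[\chi]\notin \Sigma^M(G)$ by setting
\[
A \;=\; \ver(\CC)\;\cup\;\{g\in G \mid \chi(g)<0\}.
\]
Unboundedness of $\chi(A)$ and of $\chi(G\smallsetminus A)$ above would follow from the observation that right multiplication by any generator $x\in\XX$ with $\chi(x)>0$ keeps a vertex inside its component of $\Gamma_\chi$, so $\ver(\CC)$ and $\ver(\CC')$ contain elements of arbitrarily large $\chi$-value. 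Boundedness of $\chi(\delta_h A)$ would use Lemma~\ref{lem:Boundary-delta-h} to reduce to $h=x\in \XX^{\pm}$, and then a ``tube'' argument: if $g$ and $gx$ lie in $G_\chi$ but on opposite sides of $A$, then writing $h$ as an $\XX$-word of bounded length shows $g$ and $gx$ lie in a common component of $\Gamma_\chi^{[\chi(g)-|\chi(x)|,\infty)}$, so they must already be in the same component of $\Gamma_\chi$ once $\chi(g)\ge|\chi(x)|$, contradicting the crossing.

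For $\Sigma^1(G)\subseteq \Sigma^M(G)$, I would argue again by contradiction. Given $A$ with $\chi(\delta_h A)$ bounded above for every $h$ and with both $\chi(A)$ and $\chi(G\smallsetminus A)$ unbounded above, the plan is to exhibit a generating system $\XX$ for which $\Gamma(G,\XX)_\chi$ is disconnected. Assuming first that $G$ is \emph{finitely} generated, I would fix a finite generating set $\XX$ and set $D=\max_{x\in\XX^{\pm}}\sup\chi(\delta_x A)$, a finite number. Picking $v\in A$ and $w\in G\smallsetminus A$ with $\chi(v),\chi(w)>D$, Lemma~\ref{lem:Connected-rays} furnishes a path from $v$ to $w$ inside $\Gamma_\chi^{[D+1,\infty)}$. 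Some edge $(g,gx)$ of this path crosses $A$, forcing either $g$ or $gx$ into $\delta_x A$ and hence to have $\chi$-value at most $C_x\le D$, contradicting that the path lies in $\Gamma_\chi^{[D+1,\infty)}$.

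The hard part, and the main obstacle, is the reverse inclusion when $G$ is \emph{not} finitely generated. For an infinite generating system the quantities $C_x=\sup\chi(\delta_x A)$, though all finite individually, need not be uniformly bounded, and the single-edge-crossing trick collapses. To recover, I would attempt to tailor the generating system to the given bad set $A$ — for instance, by taking $\XX_K=\{h\in G\mid C_h\le K\}$ for some threshold $K$ and arguing that for $K$ large enough $\XX_K$ still generates $G$ while making $\Gamma(G,\XX_K)_\chi$ disconnected. Establishing that such a restricted alphabet really generates $G$, and carrying the uniform boundedness that made the finitely generated case work over to the tailored system, is where the real difficulty lies.
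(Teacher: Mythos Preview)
Your first direction and the finitely generated half of the second are both correct; the paper argues the first direction the same way (it takes simply $A=\ver(\CC)$, but your padded version works too, and your ``tube'' observation comes down to the fact that two adjacent vertices of $G_\chi$ lie in the same component).

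For general $G$, the gap you flag is real but smaller than you think: your set $\XX_K$ \emph{does} generate $G$, and your crossing argument then goes through unchanged. Fix $t$ with $\chi(t)>0$ and take $K=C_t$. For any $z\in G$ and $n\ge 1$, a short walk along $a,\,at,\,\ldots,\,at^n,\,at^nz^{-1}$ shows
\[
C_{zt^{-n}}\;\le\;\max\bigl(C_t-\chi(t),\;C_z-n\chi(t)\bigr),
\]
so $zt^{-n}\in\XX_K$ once $n$ is large; together with $t\in\XX_K$ this gives generation. The potential inverse problem is harmless too: at any edge $(g,gy)$ with $y\in\XX_K^{\pm}$ crossing the boundary of $A$ one has both $\chi(gy)\le C_y$ and $\chi(g)\le C_{y^{-1}}$, and one of $C_y,\,C_{y^{-1}}$ is $\le K$ since one of $y,\,y^{-1}$ lies in $\XX_K$. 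So any path in $\Gamma(G,\XX_K)_\chi^{[K+1,\infty)}$ cannot cross~$A$, contradicting the choice of $v\in A$ and $w\notin A$.

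The paper handles this implication differently: it argues \emph{directly} rather than by contraposition. Assuming $[\chi]\in\Sigma^1(G)$ and given $A$ with $\chi(A)$ unbounded and all $\chi(\delta_hA)$ bounded, it picks $t$ with $\chi(t)>0$, a high point $g_*\in A$ (above the bound for $\delta_tA$), and sets
\[
\XX=\{t\}\cup\{\,t^{\ell(h)}\,h\,t^{-\ell(h)}:h\in G_\chi\smallsetminus\{t\}\,\}
\]
with $\ell(h)$ chosen so that the path $(g_*,t^{\ell(h)}ht^{-\ell(h)})$ stays in $A$. Since $[\chi]\in\Sigma^1(G)$, the subgraph $\Gamma(G,\XX)_\chi$ is connected, and an induction on word length then shows $g_*\!\cdot\!G_\chi\subseteq A$, bounding $\chi(G\smallsetminus A)$ by $\chi(g_*)$. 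Both routes hinge on the same conjugate trick $h\mapsto t^{\ell}ht^{-\ell}$; your $\XX_K$ packages it implicitly, the paper makes it explicit.
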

\begin{proof}
Let $\chi \colon G \to \R$ be a non-zero character.
If $[\chi] \notin \Sigma^1(G)$,
there exists a generating system $\eta \colon \XX \to G$ 
for which  the subgraph $\Gamma_{\chi}$ is not connected. 
We claim that every path component $\CC$ of $\Gamma_{\chi}$ has arbitrary large $\chi$-values,
but boundaries which are bounded from above.
Set $A = V(\CC)$. 
Clearly $\chi(A)$ is not bounded from above.
Consider now a boundary $\delta_{h}(A)$ with $h \in G$.
Assume first that $h = x \in \XX$.
If $g$ is an element of
\[
 \delta_{x}(A) = (Ax \smallsetminus A) \cup (A \smallsetminus Ax)
 \]
then either $g \notin A$ but $g x^{-1} \in A$, or $g \in A$ but $gx^{-1} \notin A$. 
In the first case, $ \chi(g) < 0$;
in the other case, $\chi(x) > 0$ and $\chi(g) < \chi(x)$.
In both cases, 
$\chi(g)$ is bounded from above by $|\chi(x)|$.
It follows that $\chi(\delta_{x}(A) )$ is bounded from above for each $x \in \XX$
and so Lemma \ref{lem:Boundary-delta-h} implies
that $\chi(\delta_{h}(A) )$ is bounded from above for every $h \in G$.
The fact that $\Gamma(G, \XX)_\chi$ has several (infinite) path components allows us, finally,  
to deduce that $G$ \emph{has more than one end in the direction of} $\chi$.
\smallskip

Conversely,  assume that $[\chi] \in \Sigma^1(G)$.
Let $A$ be a subset of $G$ which has unbounded $\chi$-values,
but boundaries $\delta_{h}(A)$ whose $\chi$-values are bounded from above.
We aim at showing
that $\chi(G \smallsetminus A)$ is bounded from above.
To reach this goal,
we shall construct a generating set $\XX$ of $G$ 
and then consider the subgraph $\Gamma(G, \XX)_\chi$;
since $[\chi] \in \Sigma^1(G)$ this subgraph is connected.
The construction of $\XX$ will imply 
that the vertex set of a translate of $\Gamma(G, \XX)_\chi$  is contained in $A$; 
this, in turn, will allow us to conclude 
that the $\chi$-values of $G \smallsetminus A$ are bounded from above.

Choose an element $t \in G$ with $\chi(t) > 0$ 
and let $\beta_{t}$ denote an upper bound of  $\chi(\delta_{t}(A))$.
Since $\chi$ is not bounded on $A$, there exists a vertex $g_{*} \in A$ with $\chi(g_{*}) \geq \beta_{t}$.
The ray $\ell \mapsto g_* \cdot t^\ell$ belongs then entirely to $A$;
here $\ell \geq 0$.
Consider now an element $h \in G_\chi \smallsetminus \{t\}$. 
Since $\chi( \delta_h(A))$ is bounded from above 
there exists a positive integer $\ell(h)$ so that the path 
\[
p_{h} = (g_{*}, t^{\ell(h)} h t^{-\ell(h)})
\] 
has all its vertices in  $A$.
(Recall that $\chi(\delta_t(A)$ is bounded from above by $\beta_{t}$ and that $\chi(g_*) \geq \beta_t$.)
The set
\[
\XX  
= 
\left\{t \right\} \cup \left\{x_{h} = t^{\ell(h)} h t^{-\ell(h)} \mid h \in G_{\chi} \smallsetminus \{t\} \right \}.
\]
obviously generates $G$;
since $[\chi] \in \Sigma^1(G)$,
the subgraph $\Gamma(G, \XX)_{\chi}$ is therefore connected.
For each $g \in G_{\chi}$ there thus exists an $\XX^\pm$-word $w$
that represents $g$ and has non-negative  $\chi$ values on all its initial segments;
let 
$x_{1}^{\varepsilon_{1}} \cdots x_{k}^{\varepsilon_{k}}  $
be the spelling of $w$. 
Each generator $x_{j}$ is a conjugate, say 
\[
x_j =  t^{\ell_j} h_{j} t^{-\ell_j},
 \]
 of an element $h_j \in G_{\chi}$ (if $x_j = t$ set $\ell_j = 0$.
 The word $w$ gives rise to the sequence
 \[
 s 
 =  
 (t^{\ell_1} ,h_{1}^{\varepsilon_{1} }, t^{-\ell_1} , t^{\ell_2} ,h_{2}^{\varepsilon_{2} }, t^{-\ell_2},
 \ldots , 
 t^{\ell_k}, h_{k}^{\varepsilon_{k} },  t^{-\ell_k)} ).
 \]
 This sequence, in turn, leads to a path $p_{g} = (g_{*}, s)$ that starts at $g_{*}$ and ends at $g_{*}g$;
 we assert that all of its vertices lie in $A$.
 
 For the origin  and the vertices of the subsequence
 $s_1 =(t^{\ell_1} ,h_{1}^{\varepsilon_{1} }, t^{-\ell_1} )$
 this holds by the choice of $g_{*}$  and the integer $\ell_1$;
 it follows that  the end point $g_{1} =g_{*}x_{1}^{\varepsilon_{1}}$ of the subpath 
 corresponding to $s_1 = x_1^{\varepsilon_1}$ lies in $A$.
 Since the $\chi$-value of the end point $g_{1}$ is at least as large as that of $g_{*}$,
 the choice of $h_2$ then implies 
 that the vertices of the second subsequence 
 $s_2 =(t^{\ell_2} ,h_{2}^{\varepsilon_{2} }, t^{-\ell_2} )$
 lie in $A$, 
 and that the $\chi$-value of the end point $g_{2}$ of the subpath corresponding to 
 $x_{1}^{\varepsilon_1} \cdot x_{2}^{\varepsilon_2}$  
 is at least as large as  $\chi(g_{*}) \geq \beta_t$.
 And so on and so forth.
It follows that $g_{k} = g_{*}g$, the end point of the entire path, lies in $A$.

The subgraph $\Gamma(G, \XX)_\chi$ is connected;
hence so the translated subgraph
 \[
 G_{\chi}^{[\chi(g_*), \infty)} = \{g \in G \mid \chi(g) \geq \chi(g_*) \}.
 \]
The preceding argument thus implies that the vertex set of this entire subgraph is contained in $A$
and so $\chi(G \smallsetminus A)$ is bounded from above, 
namely by by $\chi(g_*)$.
\end{proof}

\begin{note}
\label{note:Equality-Sigma-Meigniez-Sigma1}
The above theorem is due to G. Meigniez:
the inclusion $\Sigma^1(G)^c \subseteq \Sigma^{M}(G)^c$ is established in \cite{Mei87}
for finitely generated groups
and the argument given there generalizes easily to arbitrary groups.
The proof of inclusion $\Sigma^1(G) \subseteq \Sigma^{M}(G)$ 
is taken from a letter which G. Meigniez posted to R. Strebel in 1991.
\end{note} 
\index{Meigniez, G.} 
%

%
\subsection{Tree-like structures associated to a Cayley graph} 
\label{ssec:Tree-like-structures-associated-Cayley-graph}
%
The discussion of Ken Brown's approach will be in two parts.
In this section, 
we first explain how a Cayley graph $\Gamma(G,\XX)$ and the choice of a point $[\chi] \in S(G)$
give rise to a tree-like structure $T = T(G, \XX, \chi)$ and a canonical action of $G$ on $T$.
We next show by way of examples
that the action of $G$ on $T$ can help one in proving that $\Gamma(G,\XX)_\chi$  is connected.
In sections \ref{ssec:Characterization-Sigma1-action}
and \ref{ssec:Construction-morphisms-into-trees},
we go one step further
and prove that membership in $\Sigma^1$ can be characterized
in terms of certain actions on $\R$-trees or on similar tree like structures.
%
\subsubsection{The construction of $T(G, \XX, \chi)$}
\label{sssec:Tree-like-structures-associated-Cayley-graph-construction}
%
Let $G$ denote a group, $\eta \colon \XX \to G$ a generating system 
and $\chi \colon G \to \R$ a non-zero character.
Set $\Gamma_{\chi} = \Gamma(G, \XX)_{\chi}$.
The goal is to construct a tree-like structure $T$, equipped with a $G$-action, 
\emph{that is a line if, and only if,  $\Gamma_{\chi}$ is connected}.

We begin with 
\begin{definition}
\label{definition:(T-leq-lambda)}
Define the set $T$, the relation $\leq$ and the function $\lambda \colon T \to \R$ as follows:
\begin{align}
T &= \bigcup\nolimits_{r \in \im \chi} 
\{ \CC  \mid \CC \text{ connected component of } \Gamma(G, \XX)_\chi^{[r, \infty)}\},
\label{eq:Definition-T}\\
\leq &\hspace*{1.2mm}: \text{relation defined by \;} \CC \leq \CC' \Longleftrightarrow \CC \supseteq \CC',
\label{eq:Definition-relation} \\
\lambda&\hspace*{1.8mm}\colon \CC  \mapsto \inf \{\chi(h) \mid h \in \ver(\CC) \}.
\label{eq:Definition-lambda}
\end{align}
The triple $(T, \leq, \lambda)$ enjoys some characteristic properties 
that will be enunciated in Lemma \ref{lem:Properties-(T-leq-lambda)} below;
in view of these properties, 
the structure $(T, \leq, \lambda)$ will be called the \emph{measured tree} 
\footnote{See Remark \protect{\ref{remarks:Root-descending-from-c}}a 
for an explanation of this designation.}
associated to  $(G, \XX, \chi)$. 
It will be denoted by $T(G, \XX, \chi)$.
\end{definition}
\index{Notation!T-leq-lambda@$T(G,\leq,\lambda)$}%
\index{Definition of!T-leq-lambda@$T(G,\leq,\lambda)$}%
\index{Measured tree!associated to (G,XX,chi)@associated to $(G, \XX, \chi)$}
\index{Notation!T(G-XX-chi)@$T(G,\XX,\chi)$}

\begin{remark}
\label{remark:Definition-T(G-XX-chi)}
Every point of $T$ is a connected component of a subgraph 
\begin{equation}
\label{eq:Definition-subgraph-Gamma-sub-r}
\Gamma_r =\Gamma(G, \XX)_\chi^{[r, \infty)}.
\end{equation} 
Since $r $ is required to be the $\chi$-value of an element $h \in G$,
the component $\CC_1$, say,  containing $h$ has the property  
that  $\chi(h) = \inf\{ \chi(g_1) \mid g_1 \in \ver(\CC_1) \}$.
Actually, every component $\CC$ of $\Gamma_r$ contains an element $h_{\CC}$ with $\chi(h_{\CC}) = \chi(h)$,
the reason being that $\ker \chi$ acts transitively on the components of  $\Gamma_r$
(see Lemma \ref{lem:Transitive-action-on components}). 
In the sequel, 
it will  be convenient to call a component $\CC_h$ 
if $h $ is a vertex of the component on which the infimum is attained.
\end{remark}

The next result lists some basic properties of $T(G, \XX, \chi)$;
later on,
these properties will be taken as the axioms of an (abstract) measured $G$-tree.
\begin{lem}
\label{lem:Properties-(T-leq-lambda)}
For every group $G$, generating system $\eta \colon \XX \to G $ and non-zero character $\chi$,
 the associated measured tree $T(G, \XX, \chi)$ satisfies the properties
 \begin{enumerate}[(i)]
\item the relation $\leq$ is a partial order on $T$;
\item  the relation $\leq$ is \emph{directed};
more precisely, for every pair $a_{1}$, $a_{2}$ in $T$ 
there exists a point $a \in T$ with $a \leq a_{1}$ and $a \leq a_{2}$;
\item  for every $b \in T$ the set $(-\infty, b] = \{a \in T \mid a \leq b \} $ is linearly ordered 
and $\lambda$ maps it bijectively and in an order-preserving way onto the subset 
$(-\infty, \lambda(b)] \cap \im \lambda$ of the real line $\R$.
\end{enumerate}
\end{lem}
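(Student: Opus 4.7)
The plan is to treat the three claims in order, with (i) being immediate, (ii) relying on connectedness of the Cayley graph, and (iii) hinging on the observation recorded in Remark \ref{remark:Definition-T(G-XX-chi)} that $\lambda(\CC) = r$ whenever $\CC$ is a component of $\Gamma_r$ with $r \in \im \chi$.

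For (i), the relation $\CC \leq \CC'$ is defined by reverse inclusion $\CC \supseteq \CC'$; since inclusion is a partial order on any family of subsets of the vertex set of $\Gamma$, so is its reverse, and thus $(T,\leq)$ is a poset.

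For (ii), given $\CC_{1}, \CC_{2}\in T$ with $\CC_{i}$ a component of $\Gamma_{r_{i}}$, pick vertices $g_{i}\in \CC_{i}$. The Cayley graph $\Gamma(G,\XX)$ is connected, so there is a path $p$ from $g_{1}$ to $g_{2}$; let $r_{*}$ denote the minimum of $\chi$ along the vertices of $p$. Since $\chi$ is non-zero, $\im \chi$ is an unbounded subgroup of $\R$, hence contains some $r\leq \min\{r_{1},r_{2},r_{*}\}$. The path $p$ then lies entirely in $\Gamma_{r}$, so the component $\CC$ of $\Gamma_{r}$ containing $p$ contains both $\CC_{1}$ and $\CC_{2}$. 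Thus $\CC \leq \CC_{1}$ and $\CC \leq \CC_{2}$.

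For (iii), fix $b = \CC \in T$ and let $r = \lambda(b)$; by Remark \ref{remark:Definition-T(G-XX-chi)}, $\CC$ is a component of $\Gamma_{r}$ and $r \in \im \chi$. For each $r'\in \im\chi$ with $r'\leq r$, the subgraph $\Gamma_{r'}$ contains $\Gamma_{r}$ and hence $\CC$; since $\CC$ is connected, it lies in a unique component $a_{r'}$ of $\Gamma_{r'}$, and $\lambda(a_{r'})=r'$ by the same remark. Conversely, any $a\in(-\infty,b]$ is a component of some $\Gamma_{r'}$ with $r'\in\im\chi$ and $a\supseteq\CC$; the vertex of $\CC$ achieving $\chi=r$ lies in $a$ and the vertices of $a$ have $\chi$-value $\geq r'$, so $r'\leq r$, and $a$ must coincide with $a_{r'}$ by uniqueness. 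Therefore the assignment $r'\mapsto a_{r'}$ is a bijection between $(-\infty,r]\cap\im\chi$ and $(-\infty,b]$ whose inverse is $\lambda$. Finally, if $r'\leq r''\leq r$ then both $a_{r'}$ and $a_{r''}$ contain $\CC$; since $a_{r''}\subseteq \Gamma_{r''}\subseteq \Gamma_{r'}$ and $a_{r''}$ is connected, $a_{r''}\subseteq a_{r'}$, i.e.\ $a_{r'}\leq a_{r''}$. This establishes that $(-\infty,b]$ is linearly ordered and that $\lambda$ is order-preserving, noting that $\im \lambda = \im \chi$ so the target set of the bijection is indeed $(-\infty,\lambda(b)]\cap\im\lambda$.

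No step looks genuinely difficult; the main care is in (iii), where one has to keep track of the dual roles of $r'$ (as a parameter in $\im\chi$) and of the components $a_{r'}$ (as elements of $T$), and invoke the remark to identify $\lambda(a_{r'})$ with $r'$ so that the bijection and its order property are transparent.
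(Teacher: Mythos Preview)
Your proof is correct and follows essentially the same approach as the paper. The only notable difference is in the organization of (iii): the paper first proves that $(-\infty,b]$ is linearly ordered by a direct comparability argument and then handles injectivity, order-preservation, and surjectivity of $\lambda_*$ separately (using a word-reordering trick to construct a path down to level $r'$ for surjectivity), whereas you construct the inverse map $r'\mapsto a_{r'}$ explicitly and read off all properties at once. Your route is in fact a bit cleaner, since the containment $\CC\subseteq\Gamma_{r'}$ already gives surjectivity without building any auxiliary path.
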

\index{Measured tree T-leq-lambda@Measured tree $T(-,\leq,\lambda)$!properties}%

\begin{proof}
Assertion (i) is clear.
To prove (ii), 
assume $a_1$, $a_2$ are  the components $\CC_{g_{1}}$, $\CC_{g_{2}}$, 
and choose a path $p$ that connects $g_{1}$ with $g_{2}$. 
If $h$ is a vertex with minimal $\chi$-value along $p$,
the component $a = \CC_{h}$ contains both vertices $g_{1}$ and $g_{2}$, 
hence the components $a_1$ and $a_2$,
and so the relations $a \leq a_1$ and $a \leq a_1$ hold.

We are left with claim (iii). 
Assume $b = \CC_{g}$ and
consider components  $a_1  = \CC_{h_{1}}$ and $a_2  = \CC_{h_{2}}$
which belong to the subset $(-\infty, b]$ of $T$.
Then $g$ is a vertex of both $\CC_{h_{1}}$ and $\CC_{h_{2}}$. 
If $\lambda(\CC_{h_{1}}) \leq \lambda(\CC_{h_{2}})$ this implies
that $\CC_{h_{1}}$  contains a path from $h_1$ to $h_2$,
hence the component $\CC_{h_{2}}$ and so  $a_1 \leq a_2$. 
If $\lambda(\CC_{h_{2}}) \leq \lambda(\CC_{h_{1}})$ one sees similarly that $a_2 \leq a_1$.
So the subset $(-\infty, b]$ of $T$ is linearly ordered.

It remains to show that the restriction $\lambda_* \colon (-\infty, b] \to [-\infty,\lambda(b) ] \cap \im \chi$
of the function $\lambda$ has the stated properties.
The definitions of the order relation on $T$ and of the function $\lambda$,
given in \eqref{eq:Definition-relation}  and \eqref{eq:Definition-lambda},
guarantee that $\lambda_*$ preserves the order relations 
defined on the set  $(-\infty, b] \subset T$ and on the interval $(-\infty, \lambda(b)]$.

The injectivity of $\lambda_*$ is a direct consequence of the above proof of claim (ii).
The definition of $\lambda_*$ and Remark \ref{remark:Definition-T(G-XX-chi)} imply next 
that the value $\lambda(a)$ of a point $a$ in the set $(\infty, b]$ lies in $\im \chi$.
So we are only left with proving that $\lambda$* maps the subset $(-\infty, b]$ surjectively onto the interval
$(-\infty, \lambda(b)\,] \cap \im \chi$.

Suppose $b = \CC_g$ and $r'$ is an element of $\im \chi$ with $r' \leq r = \lambda(\CC_g)$.
Then $r - r'$ is the $\chi$-value of some element $h_0 \in G_\chi$
Write $h_0$ as a word $w_0$ in $\XX \cup \XX^{-1}$ 
and reorder the letters of $w_0$ so as to obtain a word $w_1$ with non-negative $\chi$-track.
The path $(g, w_1^{-1})$ will then lead from $g$ to a element $h'$ with $\chi(h') = \chi(g) - (r-r') = r'$;
moreover, as the $\chi$-track of $w_1$ is non-negative,
this path runs in $\Gamma_{r'}$.
The connected component $a' = \CC_{h'}$ belongs therefore to the subset $(-\infty, b\,]$ of $T$ 
and $\lambda(a') = r'$, as desired.
\end{proof}

\begin{remarks}
\label{remarks:Root-descending-from-c}
a) The chosen designation of the structure $T(G, \XX, \chi)$ deserves some comment.
The set $T$, equipped with the order relation $\leq$, is a partially ordered set or a poset.
The fact that the subsets $(-\infty, b]$ sitting below a point $b$ are linearly ordered
turns the poset into a \emph{forest in the sense of order theory}.
As every pair of components $\CC_1$, $\CC_2$ in $T$ has a lower bound,
the forest is ``connected'' and hence a tree.
The function $\lambda$, finally allows one to measure the length of intervals 
$[a, b] = \{c \in T \mid a \leq c \leq b\}$, 
a fact that explains the choice of the adjective ``measured''.

b) Claim (iii) of the preceding lemma shows that every point $b \in T$ 
is the end point of a linearly ordered subset $(-\infty, b]$.
This subset is uniquely determined by $c$ 
and will be referred to as the \emph{ray descending from} $c$.
In view of (ii) two rays intersect in an infinite set;
this set, however, need not be a ray as it may not possess a largest element.

c) The concept of a measured tree and the analysis of its structure
--- to be given in sections 
\ref{sssec:Tree-like-structures-associated-Cayley-graph-construction}
and
\ref{sssec:Canonical-action-on-(G,XX,chi)} ---
go back to sections II.1 and II.2 in the monograph \cite{BiSt92},
except for its name: in \cite{BiSt92} it is called \emph{measured rooted pre-tree} or \emph{romp-tree}, for short.
The new name arouse out of a discussion of R. Strebel with Reinhard Diestel.
\end{remarks}
\index{Ray descending from a point!definition}%
\index{Diestel, R.}

So far we know that the measured tree $T(G, \XX, \chi)$ contains half-lines,
namely the rays descending from its elements.
As we shall see in the next section,
it also contains \emph{lines} in the sense of the following:
\begin{definition}
\label{definition:Line-in-a-pre-tree}
A (non-empty) subset $L \subseteq T$ is called a \emph{line} 
if it is linearly ordered, contains the ray of each of its elements 
and if  $ \sup \{\lambda(c) \mid c \in L\}= \infty$.
\end{definition}
\index{Definition of!line in a measured tree}%
\index{Line in a measured tree!definition}%

A ray of  $T(G, \XX, \chi)$  is determined by its largest element;
a line by the choice of a linearly ordered sequence $c_{0} < c_1 < c_2 < \cdots$
with $\lim_{n \to \infty} \lambda(c_n) = + \infty$.
The line defined by such a sequence sequence 
is the union  $L = \bigcup \{(-\infty, c_{j}] \mid j \in \N \}$.

\begin{remark}
\label{remark:Visualizing-measured-tree}
In general, it is difficult to get a good intuitive picture of a measured tree $T(G, \XX, \chi)$,
unless, of course, it is a line. If $\chi$ has rank 1, the situation improves:
then $T(G, \XX, \chi)$ is, in essence, a combinatorial tree (or $\Z$-tree) 
and can be described algebraically rather easily (see Claim (i) on p.\,157).
\end{remark}

\subsubsection{Canonical action on $T(G, \XX, \chi)$}
\label{sssec:Canonical-action-on-(G,XX,chi)}
%
The group $G$ acts on  $T =T(G, \XX, \chi)$;
indeed, each component $\CC_{h}  \in T$ is a subgraph of the Cayley-graph  $\Gamma(G, \XX)$
and the action of $G$ on $\Gamma(G, \XX)$ induces an action of $G$ on its set of subgraphs,
in particular, on the subset of the components that are the elements of $T$.
The following result amounts to say
that $G$ acts on $T(G, \XX, \chi)$ by automorphisms.
\begin{lem}
\label{lem:Properties-action}
Let $T = T(G,\XX, \chi)$  be the measured tree associated to $G$, $\XX$, $\chi$,
 and let $\mu \colon G \times T \to T$ denote the canonical action of $G$ on $T$.
Then $G$ acts transitively on $T$ and the following formulae hold for all $g$, $h \in G$ and all $a$, $b \in T$: 
\begin{align}
g.\CC_{h}& = \CC_{gh}, \label{eq:Formula-action-T}\\
a \leq b &\Rightarrow g.a \leq g.b, 
\label{eq:Compatibility-action-relation}\\
\lambda (g. a) &= \chi(g) +  \lambda(a).
\label{eq:Equivariance-lambda}
\end{align}
\end{lem}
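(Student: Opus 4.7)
The plan is to deduce all four assertions from the fact that left multiplication on $G$ extends to a $G$-action by graph automorphisms of the Cayley graph $\Gamma(G,\XX)$, with the observation that this action shifts the $\chi$-level by a controlled amount. Formula \eqref{eq:Formula-action-T} is the key, and the remaining three items will follow from it almost formally.

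First I would verify \eqref{eq:Formula-action-T}. Left multiplication by $g$ is a graph automorphism of $\Gamma(G,\XX)$ and satisfies $\chi(g \cdot k) = \chi(g) + \chi(k)$ for every vertex $k$, so it carries the full subgraph $\Gamma(G,\XX)_{\chi}^{[r,\infty)}$ isomorphically onto $\Gamma(G,\XX)_{\chi}^{[r+\chi(g),\infty)}$ and hence permutes connected components. Consequently $g$ maps the component $\CC_h$ of the subgraph at level $r=\chi(h)$ onto the component of the subgraph at level $r+\chi(g) = \chi(gh)$ that contains $gh$; by the convention recalled in Remark~\ref{remark:Definition-T(G-XX-chi)}, this component is exactly $\CC_{gh}$.

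Formula \eqref{eq:Equivariance-lambda} is then immediate from $\lambda(\CC_{gh})=\chi(gh)=\chi(g)+\chi(h)=\chi(g)+\lambda(\CC_h)$, and \eqref{eq:Compatibility-action-relation} follows from the definition $a\leq b \Leftrightarrow a\supseteq b$ together with the fact that an automorphism of $\Gamma(G,\XX)$ preserves containment of subgraphs.

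The only step that needs a little work is transitivity, and this is where I would invoke the preparation of the previous section rather than argue from scratch. Given $\CC_{h_1}, \CC_{h_2} \in T$, set $r_i=\lambda(\CC_{h_i})=\chi(h_i)\in\im\chi$ and pick any $g_1\in G$ with $\chi(g_1)=r_2-r_1$. By \eqref{eq:Formula-action-T}, the translate $g_1.\CC_{h_1}=\CC_{g_1 h_1}$ is a connected component of $\Gamma(G,\XX)_{\chi}^{[r_2,\infty)}$. Lemma~\ref{lem:Transitive-action-on components}(ii), applied at level $a=r_2\in\im\chi$, asserts that $\ker\chi$ acts transitively on the components of $\Gamma(G,\XX)_{\chi}^{[r_2,\infty)}$; hence there exists $n\in\ker\chi$ with $n.(g_1.\CC_{h_1})=\CC_{h_2}$, and $g=n g_1$ carries $\CC_{h_1}$ to $\CC_{h_2}$. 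The main (and really only) obstacle is thus a bookkeeping one, namely checking that the $\chi$-values appearing in \eqref{eq:Definition-lambda} lie in $\im\chi$ so that an element $g_1$ with $\chi(g_1)=r_2-r_1$ is available; this is guaranteed by the definition of $T$ in \eqref{eq:Definition-T}, which only admits components at levels $r\in\im\chi$.
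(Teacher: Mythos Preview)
Your proof is correct and follows essentially the same approach as the paper, which dispatches formulae \eqref{eq:Formula-action-T} and \eqref{eq:Compatibility-action-relation} as immediate from the definitions and verifies \eqref{eq:Equivariance-lambda} by a one-line infimum computation. One remark: your transitivity argument, while correct, is more elaborate than necessary. Once \eqref{eq:Formula-action-T} is established, transitivity is a one-liner: any point of $T$ has the form $\CC_h$ for some $h\in G$ (Remark~\ref{remark:Definition-T(G-XX-chi)}), and then $(h_2 h_1^{-1}).\CC_{h_1}=\CC_{h_2}$ directly. The detour through Lemma~\ref{lem:Transitive-action-on components} is not needed.
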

\index{Measured tree T-leq-lambda@Measured tree $T(-,\leq,\lambda)$!canonical action}%

\begin{proof}
The transitivity of the action and
formulae \eqref{eq:Formula-action-T}
and \eqref{eq:Compatibility-action-relation}  are immediate consequences 
of the definitions (see formula \eqref{eq:Definition-relation}),
while formula \eqref{eq:Equivariance-lambda}
 follows from the definition of $\lambda$ and the computation
\[
\lambda (g. \CC_{h}) = \inf \{ \chi(gh_1) \mid h_1 \in V(\CC_{h} ) \} = \chi(g) + \lambda(\CC_{h}).
\]
In the above calculation, 
one uses that $g. \CC_{h} $ is the image of $\CC_{h}$ 
under the left translation $h \mapsto gh$.
\end{proof}

The action of $G$ on $T$ allows one to assign to each $g \in G$  a subset $A_g$:
\begin{definition}
\label{definition:A-sub-g}
For every $g \in G$, the subset
\begin{equation}
\label{eq:Definition-A-sub-g}
A_{g} = \{a \in T \mid a \leq g.a \text{ or } g.a \leq a \}.
\end{equation}
is called the \emph{characteristic subset} of $g$.
\end{definition}
\index{Definition of!characteristic subset}%
\index{Notation!A-sub-g@$A_g$}%

The basic properties of these  characteristic subsets are described in
\begin{lem}
\label{lem:Properties-A-sub-g}
For every $g \in G$ and measured tree $T = T(G, \XX, \chi)$ 
the subset $A_{g}$ has the following properties:
\begin{enumerate} [(i)]
\item $A_{g}$ is not empty and contains with each $c \in A_{g}$
the ray descending from $c$.

\item If $\chi(g) = 0$ then  $A_g $ coincides with the set  $T^g = \{a \in T \mid g.a = a \}$ of points fixed by $g$.

\item If   $\chi(g) \neq 0$  then $A_g$ is a line.
It is the unique $\gp(g)$-invariant line of $T$ 
and, for every point $a \in A_g$,
the line  $A_g$ coincides with the union of the rays descending from the points of the orbit $\gp(g).a$.
\end{enumerate}
\end{lem}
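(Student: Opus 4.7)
My plan is to derive everything from the three axioms listed in Lemma~\ref{lem:Properties-(T-leq-lambda)} together with the equivariance identities of Lemma~\ref{lem:Properties-action}. The one tool I will use repeatedly is that each descending ray $(-\infty,b]$ is linearly ordered and that the restriction of $\lambda$ to it is an order-preserving injection into $\R$; together these force comparability of any two elements that happen to lie below a common point.

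For (i) I would start from an arbitrary $a\in T$, apply directedness to get a common lower bound $c$ of $a$ and $g.a$, and then observe that $c\le a$ gives $g.c\le g.a$ while $c\le g.a$ holds by choice of $c$; the two elements $c$ and $g.c$ then sit inside the linearly ordered ray $(-\infty,g.a]$ and are therefore comparable, so $c\in A_g$. The ray-closure half is almost the same: if $c\in A_g$ with, say, $c\le g.c$ and $d\le c$, then $d\le c\le g.c$ and $g.d\le g.c$, placing $d$ and $g.d$ inside the linearly ordered $(-\infty,g.c]$; the case $g.c\le c$ is symmetric. For (ii), equivariance gives $\lambda(g.a)=\lambda(a)$, and the injectivity of $\lambda$ on the descending ray containing both $a$ and $g.a$ forces $a=g.a$; the converse is trivial.

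For (iii) I would replace $g$ by $g^{-1}$ if necessary (note $A_g=A_{g^{-1}}$) to assume $\chi(g)>0$. Then $\lambda(g.a)=\lambda(a)+\chi(g)>\lambda(a)$ rules out $g.a\le a$ for any $a\in A_g$, so $a<g.a$, and iterating, the orbit $\{g^n.a\}_{n\in\Z}$ is strictly increasing with $\lambda$-values unbounded above and below. I would then set $L_a=\bigcup_{n\in\Z}(-\infty,g^n.a]$; as a nested union of linearly ordered rays it is itself linearly ordered, contains the ray below each of its points, and has unbounded $\lambda$, hence is a $\gp(g)$-invariant line, and the argument used in (i) gives $L_a\subseteq A_g$.

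The real work is to show $A_g\subseteq L_a$, and I expect this to be the main obstacle, because the measured tree $T(G,\XX,\chi)$ need not satisfy the stronger ``two points with a common lower bound are comparable'' property enjoyed by $\R$-trees. My plan here is to associate to an arbitrary $b\in A_g$ its own line $L_b=\bigcup_n (-\infty,g^n.b]$ and to prove $L_a=L_b$ via an intermediate line $L_c$, where $c$ is a common lower bound of $a$ and $b$ (which lies in $A_g$ by~(i)). The inclusions $c\le a$ and $c\le b$ yield $g^k.c\le g^k.a$ and $g^k.c\le g^k.b$ for every $k$, so $L_c\subseteq L_a\cap L_b$; for the reverse inclusion $L_a\subseteq L_c$ I would take $d\in L_a$, choose $k$ so large that $\lambda(g^k.c)\ge\lambda(d)$, and use that $d$ and $g^k.c$ both lie in the linearly ordered set $L_a$ together with the order-preservation and injectivity of $\lambda$ on $L_a$ to conclude $d\le g^k.c$, whence $d\in L_c$. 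Symmetrically $L_b=L_c$, so $b\in L_b=L_a$. Uniqueness of the invariant line, and its description as $\bigcup_{n\in\Z}(-\infty,g^n.a]$ for any $a\in A_g$, then follow: any $\gp(g)$-invariant line $L'$ contains some $a'$ and hence the whole line $L_{a'}$ by the same $\lambda$-argument, so $L'=L_{a'}=A_g$.
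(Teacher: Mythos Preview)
Your proof is correct and follows essentially the same route as the paper's. The only cosmetic differences are that the paper normalises to $\chi(g)\le 0$ before doing the ray-closure step in (i), and in (iii) it phrases the key step as ``$L_1\cap L_2$ contains the line $L_c$, hence $L_1=L_2$'' rather than spelling out, as you do, the $\lambda$-comparison showing $L_a\subseteq L_c$; your version is in fact slightly more explicit there.
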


\begin{proof}
(i) Choose a point $b \in T$ and consider the intersection  $I = (-\infty, b] \cap (-\infty, g.b]$.
It is non-empty (the rays intersect because $T$ is directed; 
see claim (ii) of Lemma \ref{lem:Properties-(T-leq-lambda)}).
Let $x$ be a point in  $I$. Then $x \leq b$ and $x \leq g.b$ and so $g.x \leq g.b$; 
so  $x$ and $g.x$ lie both on the ray $(-\infty, g.b]$.
As this ray which linearly ordered (by part (iii) of Lemma \ref{lem:Properties-(T-leq-lambda)})
either $x \leq g.x$ or $g.x \leq x $ must hold 
and so $x \in A_{g}$ by Definition \ref{definition:A-sub-g}. 
We next show that $A_g$ contains, with every point $c \in T$, the ray descending from $c$.
In this verification, we may and shall assume that $\chi(g) \leq 0$, for $A_g = A_{g^{-1}}$.
Fix a point $c \in A_g$. Then $g.c \leq c$.
If $a \leq c$ then $g.a \leq g.c$ by property \eqref{eq:Compatibility-action-relation}
and so $g.a \leq c$ by the transitivity of the relation $\leq$.
So $a$ and $g.a$ lie both on the  linearly ordered ray descending from $c$.
Since $\chi(g) \leq 0$, this shows that $g.a \leq a$ whence $a \in T$.
\smallskip

(ii) Clearly  $ T^g \subseteq A_g$. 
Conversely, if $a  \in A_g$ then $a \le ga$ or $ga \le a$.
The assumption that $\chi(g) = 0$ 
and statement (iii) in Lemma \ref{lem:Properties-(T-leq-lambda)}
then imply that $\lambda(a) = \lambda(g.a)$,  whence $a \in T^g$.
\smallskip

(iii) By claim (i) the subset $A_g$ is not empty; let $a$ be one of its points.
Then $a < g.a$ or $g.a < a$; in either case,
$\{(g^m).a \mid m \in \Z \}$ is a totally ordered set that is not bounded from above 
and so the union $L_a =\bigcup  \{ ( -\infty, g^m a]  \mid m \in \Z  \}$ is a line;
by claim (i), this line is contained in $A_{g}$.
It follows that $A_{g}$ contains all lines generated by the $\gp(g)$-orbits of points in $A_{g}$.

Let now $a_1$ and $a_2$ be two points  of $A_g$ 
and let $L_1$ and $L_2$ be the lines generated by the orbits $\gp(g).a_1$ and $\gp(g).a_2$, respectively.
By claim (ii) in Lemma \ref{lem:Properties-(T-leq-lambda)}
there exists $a \in T$ with $a \leq a_1$ and $a \leq a_2$.
It then follows that $L_1 \cap L_2$ contains the line generated by $\gp(g).a$.
Hence $L_1 \cap L_2$ is a line and so $L_1 = L_2$.
This implies, in particular, that $A_g$ is a line.

Consider, finally, a $\gp(g)$-invariant line $L$.
If $a \in L$ then $g.a \in L$ by the $gp(g)$-invariance of $L$;
as $L$ is totally ordered, $a$ and $g.a$ are comparable and so $a \in A_g$ by the very definition of $A_g$.
Thus $L \subseteq A_g$.
\end{proof}

\begin{definition}
\label{definition:Elliptic-hyperbolic-elements}
Let  $G$ be a group, $\eta \colon \XX \to G$ a generating system, 
$\chi \colon G \to \R$ a non-zero character
and $T = T(G;\XX, \chi)$ the associated measured tree.
An element $g \in G$ will be called \emph{elliptic} if it lies in the kernel of $\chi$,
and otherwise \emph{hyperbolic}.
By the previous lemma,
the characteristic subset $A_{g}$ of a hyperbolic element is a line;
it will be referred as  the \emph{axis} of $g$.
\end{definition}
\index{Elliptic elements!definition}%
\index{Hyperbolic elements!definition}%
\index{Definition of!elliptic element}%
\index{Definition of!hyperbolic element}%
\index{Axis of a hyperbolic element!definition}%
\index{Definition of!axis of a hyperbolic element}%

%
\subsubsection{A first application}
\label{sssec:Application-canonical-action-on-(G,XX,chi)}
%
In sections
\ref{sssec:Tree-like-structures-associated-Cayley-graph-construction}
and
\ref{sssec:Canonical-action-on-(G,XX,chi)},
we saw that every triple $(G, \eta \colon \XX \to G, \chi)$ leads to a measured tree $T(G, \XX, \chi)$
that carries a canonical $G$-action.
In this section,
we illustrate 
how the properties of $T(G, \XX, \chi)$ 
--- 
stated in lemmata
\ref{lem:Properties-(T-leq-lambda)},
\ref{lem:Properties-action} and
\ref{lem:Properties-A-sub-g} ---
lead to a simple proof of an attractive result established earlier by different techniques.
The result deals with a group generated by a set $\XX$ having the property 
that many pairs of elements $x_1$, $x_2$ in $\XX$ commute.

We begin with a simple remark on the characteristic sets of commuting elements.
 \begin{lem}
 \label{lem:Axes-commuting-elements}
 Let $\eta \colon \XX \to G$ be a generating system 
and $\chi \colon G \to \R$ a non-zero character of $G$.
Consider the measured $G$-tree $T = T(G, \XX, \chi)$.

Assume $g$ and $h$ are elements of $G$ which commute and $h$ is hyperbolic.
Then the axis $A_h$  of $h$ is invariant under $g$.
If, in addition, $g$ is hyperbolic then $A_g = A_h$.
\end{lem}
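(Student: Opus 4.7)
The plan is to exploit the uniqueness statement in part (iii) of Lemma \ref{lem:Properties-A-sub-g}: for a hyperbolic element, its axis is the \emph{unique} $\gp(h)$-invariant line in $T$. Once this is recalled, both assertions reduce to showing that certain translates of axes are still lines and still invariant under the appropriate cyclic subgroup.

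First I would verify that the $G$-action sends lines to lines. Indeed, by \eqref{eq:Compatibility-action-relation} each translation $a \mapsto g.a$ is order-preserving, so it carries linearly ordered subsets to linearly ordered subsets and rays to rays; by \eqref{eq:Equivariance-lambda} it shifts $\lambda$ by the constant $\chi(g)$, so $\sup\{\lambda(c) \mid c \in g.L\} = +\infty$ whenever $L$ is a line. Hence $g.A_h$ is again a line.

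Next I would use commutativity to show that the line $g.A_h$ is $\gp(h)$-invariant. For any $a \in A_h$,
\begin{equation*}
h.(g.a) \;=\; (hg).a \;=\; (gh).a \;=\; g.(h.a) \;\in\; g.A_h,
\end{equation*}
so $h.(g.A_h) \subseteq g.A_h$, and applying the same argument to $h^{-1}$ gives equality. Since $h$ is hyperbolic, part (iii) of Lemma \ref{lem:Properties-A-sub-g} asserts that $A_h$ is the unique $\gp(h)$-invariant line in $T$; therefore $g.A_h = A_h$, which is the first claim.

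For the second claim, I simply reverse the roles of $g$ and $h$. The first claim, just established, shows that $A_h$ is $\gp(g)$-invariant. If $g$ is also hyperbolic, then the uniqueness clause of Lemma \ref{lem:Properties-A-sub-g}(iii), applied this time to $g$, forces $A_h = A_g$. I do not anticipate a real obstacle: the only subtle point is the verification that $g.A_h$ is a line, and this is immediate from the two equivariance properties \eqref{eq:Compatibility-action-relation} and \eqref{eq:Equivariance-lambda} together with Definition \ref{definition:Line-in-a-pre-tree}.
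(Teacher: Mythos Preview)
Your proof is correct and follows essentially the same route as the paper: both arguments hinge on the uniqueness clause of Lemma \ref{lem:Properties-A-sub-g}(iii). The only cosmetic difference is that the paper packages your two steps (that $g.A_h$ is a line and that it is $\gp(h)$-invariant) into the single general formula $g.A_h = A_{ghg^{-1}}$, derived directly from the definition of $A_h$; when $g$ and $h$ commute this gives $g.A_h = A_h$ at once, with no separate verification that $g.A_h$ is a line.
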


\begin{proof}
The definition \eqref{eq:Definition-A-sub-g} of characteristic subsets implies 
that $A_{h} = A_{h^{-1}}$.
We can, and shall, therefore assume that $\chi(h) > 0$.
The calculation
\[
g.A_{h} = g.\{a \in T \mid a \leq h.a \} = \{g.a \in T \mid g.a \leq (ghg^{-1}).(g.a) \} = A_{ghg^{-1}}
\]
then shows that  $g. A_{h}$  is the axis of  $ghg^{-1}$.
If $g$ and $h$ commute, the axis $A_{h}$ is  therefore invariant under $g$ and so it is a $\gp(g)$-invariant line. 
Moreover, if $g$ is hyperbolic,
part (iii) Lemma \ref{lem:Properties-A-sub-g} shows that $A_h$ is the axis of $g$.
\end{proof}

Now to the announced application.
 \begin{example}
 \label{example:Right-angled-Artin-groups}
 \index{Computation of Sigma1@Computation of $\Sigma^1$ for!right angled Artin groups}
 Let $G$ be a graph group, alias right angled Artin group, given by the (finite) combinatorial graph $\Delta$,
 and let $\chi \colon G \to \R$ be a non-zero character.
 Let $\LL(\chi)$ denote the living subgraph of $\chi$;
 by definition, this is the subgraph spanned by the set $\{ x \in \ver(\Delta) \mid \chi(x) \neq 0 \}$.
 According to Theorem \ref{thm:Sigma1-right-angled-Artin-group},
 the character $\chi$ represents a point of $\Sigma^1(G)$ if, and only if,
 $\LL(\chi)$ \emph{is connected and dominates} $\Delta$.
 In the proof, 
 this condition is shown to be necessary by the construction of a suitable quotient group of $G$;
 the sufficiency is deduced from a result on the invariant of a join of groups
 (namely Proposition \ref{prp:Sigma-1-join-subgroups}).
 
The properties of the measured tree $T(G, \XX, \chi)$ enable one to give a neater proof of the sufficiency.
 Pick  a vertex  $x_{1}$ be in $\LL(\chi)$ and let $A = A_{x_{1}}$ be the axis of the corresponding generator. 
 Consider another vertex $x$ of $\LL(\chi)$;
 since $\LL(\chi)$ is assumed to be connected, 
 there exists a path $(x_{1}, x_{2}, \ldots, x_{k} = x)$ 
 in $\LL(\chi)$ from $x_{1}$ to $x$. 
 The generators $x_{1}$ and $x_{2}$ commute;
 by Lemma \ref{lem:Axes-commuting-elements}
 the axis $A$ is invariant under $x_{2}$;
 in addition, it is the axis of the generator $x_2$.
 Continuing in the indicated manner, 
 one sees that $A$ is a line which is invariant under every generator corresponding to a vertex of $\LL(\chi)$.
Now  $\LL(\chi)$ is also assumed to dominate $\Delta$. 
So each generator outside $\ver(\LL_{\chi})$ commutes  with a generator in $\ver(\LL_{\chi})$, 
whence the line $A$ is invariant under all generators of $G$ and so under $G$.
As $G$ acts transitively on  $T(G, \XX, \chi)$ this implies, in particular, 
that  $\Gamma(G, \ver(\Delta))_{\chi}$ is connected.
\end{example}

\begin{remark}
\label{remark:Quotients-of-graph-groups}
The above proof applies also to quotient groups of graph groups 
and it allows of some modifications.
A striking modification is used in the proof of the main result of L.\,A.\,Orlandi-Korner's  article \cite{OrKo00}.
\end{remark}
%

%
\subsection{Characterization of $\Sigma^1$ in terms of actions on trees} 
\label{ssec:Characterization-Sigma1-action}
%
In this section, $\Sigma^1$ will be described in terms of actions on trees.
We first introduce a generalization of the measured tree $T(G,\XX, \chi)$,
discuss the properties of this new structure
and then state a characterization of $\Sigma^1$, valid for arbitrary groups.
The easy parts of the proof will be given in this section \ref{ssec:Characterization-Sigma1-action}, 
the more involved parts in section \ref{ssec:Construction-morphisms-into-trees}.
%
\subsubsection{Notion of a measured tree}
\label{sssec:Notion-rooted-pre-tree}
%
In order to discuss quotients of the measured tree $T(G,\XX, \chi)$,
we introduce the concept of a measured tree,
taking the properties of $T(G,\XX,\chi)$ enunciated in Lemma
\ref{lem:Properties-(T-leq-lambda)} as axioms of the new structure $(T,\leq,\lambda)$.
There is a natural notion of morphism for these structures;
moreover,
the action of a group $G$ by automorphisms  on an measured tree $(T,\leq,\lambda)$
gives rise to a canonical character  $\chi_{T} \colon G \to \R$.
\begin{definition}
\label{definition:Rooted-pre-tree}
A \emph{measured tree}  is a triple $(T, \leq, \lambda)$,
consisting of  a non-empty set $T$, an order  relation  $\leq$  on $T$ and a function  $\lambda \colon T \to \R$,
which satisfies the following axioms:
\begin{enumerate}[(i)]
\item $(T, \leq)$ is a tree in the sense of \emph{order theory};
so $(T, \leq)$ is a poset with the additional properties 
that every finite subset of $T$ has a lower bound 
and that each subset $(-\infty, a] = \{b \in T \mid b \leq a \} $ is linearly ordered;
\item $\lambda$ maps each subset of the form $(-\infty, a]$  injectively and in an order-preserving fashion onto a subset of $\R$
that is not bounded from below.
\end{enumerate}
\end{definition}
\index{Measured tree!definition}%
\index{Definition of!measured tree}%

\begin{remarks}
\label{remarks:Root-descending-from-a}
a) As in the case of the measured tree $T(G, \XX, \chi)$, 
the subset $(-\infty, a]$ occurring in axiom (i) will be called \emph{ray descending from} $a$.

b) The addendum to axiom (ii),
namely that $\lambda((-\infty, a])$ be not bounded from below,
is automatically satisfied  in all the examples that are of interest in the sequel.
This property has therefore been required from the outset, 
 although some results remain valid without it.
\end{remarks}
\index{Ray descending from a point!definition}%
\index{Definition of!ray descending from a point}%

The measured tree $T(G, \XX, \chi)$, associated to a triple $(G, \XX, \chi)$ is, of course,
an example of a measured tree;
other examples are provided by $\R$-trees with a distinguished end 
(see example \ref{examples:rooted-pre-trees}c).
Prior to turning to them, 
we introduce the notion of morphism of measured trees.

\begin{definition}
\label{definition:Morphism-rooted-pre-trees}
Let $(T, \leq,\lambda)$ and $(T,' \leq',\lambda')$ be measured trees.
A \emph{morphism} from the first structure to the second one is a function $\varphi \colon T \to T'$ 
that is order-preserving  
and for which there exists a real number $r$ such that the equation 
$
\lambda'(\varphi(a)) = \lambda(a) + r
$
holds for every point $a \in T$.
\end{definition}
\index{Measured trees!morphisms}%
\index{Definition of!morphism of measured trees}%
\index{Morphisms of measured trees!definition}%

In the sequel, three types of morphisms will be important:
\emph{automorphisms} induced by groups acting on measured trees,
\emph{inclusions} of subtrees and  \emph{epimorphisms} of measured trees.
We begin with group actions.
\begin{prp}
\label{prp:Character-associated-to-G-romp-tree}
Let $G$ be a group which acts by morphisms on a measured tree $(T, \leq, \lambda)$.
Then the function $g \mapsto \lambda (g.a) - \lambda (a)$ does not depend on  the point $a \in T$ 
and yields a homomorphism $\chi_{T} \colon G \to \R$.
\end{prp}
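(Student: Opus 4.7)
The plan is to extract both claims almost immediately from Definition \ref{definition:Morphism-rooted-pre-trees}. By hypothesis, $G$ acts on $(T,\leq,\lambda)$ by morphisms, which means that for every $g \in G$ the bijection $\mu_g \colon T \to T$, $a \mapsto g.a$, is a morphism of measured trees. Applying the defining property of a morphism to $\mu_g$, there exists a real number $r_g \in \R$ such that
\[
\lambda(g.a) \;=\; \lambda(a) + r_g \qquad \text{for every } a \in T.
\]
This is precisely the statement that the function $a \mapsto \lambda(g.a) - \lambda(a)$ is constant, equal to $r_g$; so the assignment $g \mapsto \chi_T(g) := \lambda(g.a) - \lambda(a)$ is well defined independently of the chosen point $a \in T$.

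Next I would verify that $\chi_T$ is a homomorphism from $G$ into the additive group of $\R$. Pick any point $a \in T$ and any pair $g, h \in G$. Using the constancy obtained in the first step twice (once with base point $a$ and the element $h$, once with base point $h.a$ and the element $g$), one computes
\[
\chi_T(gh) \;=\; \lambda((gh).a) - \lambda(a) \;=\; \lambda\bigl(g.(h.a)\bigr) - \lambda(h.a) + \lambda(h.a) - \lambda(a) \;=\; \chi_T(g) + \chi_T(h).
\]
This establishes additivity; since $\chi_T(1_G) = \lambda(a) - \lambda(a) = 0$, the map $\chi_T \colon G \to \R$ is a group homomorphism, as claimed.

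There is, honestly, no main obstacle here: the content of the proposition is already encoded in the definition of a morphism of measured trees, which demands precisely the existence of a real shift constant $r$. The proposition merely records two observations — that this constant depends only on the acting element (not on the base point), and that the resulting assignment $g \mapsto r_g$ respects composition. Both observations are one-line computations, and nothing about axioms (i) or (ii) of Definition \ref{definition:Rooted-pre-tree} (the order structure or the unboundedness of $\lambda$ on descending rays) needs to be invoked.
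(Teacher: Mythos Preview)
Your proof is correct and essentially identical to the paper's own argument: both extract the shift constant $r_g$ directly from the definition of a morphism of measured trees, and both verify additivity by the same telescoping computation $\lambda(gh.a)-\lambda(a) = \bigl(\lambda(g.(h.a))-\lambda(h.a)\bigr) + \bigl(\lambda(h.a)-\lambda(a)\bigr)$.
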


\begin{proof}
For every $g \in G$ the function $x \mapsto g.x$ is a morphism of $(T, \leq, \lambda)$;
by definition \ref{definition:Morphism-rooted-pre-trees}
there exists therefore a number $r$ 
such that the equation $\lambda (g.x) = \lambda(x) + r$ is valid for every $x \in T$.
It follows that the function $\chi_{T}(g) = \lambda(g.a) - \lambda(a)$  does not depend on the choice of $a$.
Assume now that $g$ and $h$ are elements of $G$, 
pick $a \in T$ and set $b = h.a$.
The computation
\begin{align*}
\chi_{T}(gh) &= \lambda(gh.a) - \lambda(a) \\
&= (\lambda (g.b)) -\lambda(b) )+( \lambda(h.a) - \lambda(a) )
= \chi_{T}(g) + \chi_{T}(h)
\end{align*}
then shows that $\chi_{T}$ is a homomorphism of $G$ into the additive group of $\R$.
\end{proof}

The homomorphism $\chi_{T}$ provided by the above proposition 
will be referred to as the \emph{character associated to the action of} $G$ 
on the measured tree $T$.
\index{Character associated to an action!on a measured tree}

\begin{examples}
\label{examples:rooted-pre-trees}
\index{Measured trees!examples|(}
\index{Morphisms of measured trees!examples|(}
a) Consider the triple $(T = \R, \leq, \lambda =\id)$ 
where $\leq$ denotes the usual order relation on the field of real numbers.
This triple is a measured tree.
 A  function of  $\R$ into itself  is an endomorphism $\varphi$ of the triple
 if it is increasing and if there there exists a real number $r$  with   $\varphi(x)  = x + r$ for $x \in \R$;
 this additional property says that $\varphi$ is a translation   $\tau$ with amplitude $r$.
Suppose now $G$ is a group which acts on $(\R, \leq, \id)$ by automorphism.
Then $G$ acts by translations and the character $\chi_{\R} $ associated to this action is the function 
which associates to each $g \in G$ the amplitude $r_{g}$ of the  translation $\tau_g \colon \R \iso \R$.
\smallskip

b)
Let $T(G,\XX, \chi)$ be the measured tree associated to a group $G$, 
a generating system $\eta \colon \XX \to G$ and a non-zero character $\chi$.
By Lemma \ref{lem:Properties-(T-leq-lambda)},
it is a measured tree in the sense of definition
\ref{definition:Rooted-pre-tree}.
The group $G$ acts on $T(G,\XX,\chi)$ in a canonical way;
in view of lemma \ref{lem:Properties-action}
 it acts  by automorphisms.
 Equation \eqref{eq:Equivariance-lambda} then shows that the character $\chi_{T}$,
 associated to the action by $G$, is nothing but $\chi$.
 \smallskip

 c) Let $(T,d)$ be an $\R$-tree.
 So $(T,d)$ is a metric space satisfying the following two requirements:
 \footnote{See, \eg, \cite[p. 276, sections  2.3 and  2.5]{Sha87} 
 or \cite{Chi01}, Definition on page 29 and remark at the top of  page 30.}
 \begin{itemize}
 \item For every couple  $(a, b)$ of points in $T$, 
 there exists a unique geodesic arc $\gamma_{a,b} \colon  [0, d(a,b)] \to T$ 
 that starts in $a$ and ends in $b$;
 let $[a,b]$ denote the image of $\gamma$; 
 \item if the geodesic segments $[a,b]$  and $[b,c]$ have only the end point $b$ in common, 
 then $[a,b] \cup [b,c]$ is the geodesic segment from $a$ to $c$.
 \end{itemize} 
 Assume now that  $(T,d)$ admits a geodesic ray $(-\infty,b_{*}]$ of infinite length
 and let $e$ be the end of $T$ represented by this ray.
 For any point $a \in T$ there exists then a unique geodesic ray $(-\infty, a]$ with endpoint $a$
 that represents $e$, i.\;e. which intersects $(-\infty,b_{*}]$ in a subray.
 The geodesic rays representing $e$ enable one to define a relation on $T$,
 defined by
 \begin{equation}
 \label{eq:Definition-order-relation-rooted-R-tree} 
 a_1 \leq a_2 \Longleftrightarrow (-\infty,a_1] \subseteq (-\infty, a_2].
 \end{equation}
 This relation is an order relation; 
 in addition, it is directed.
 
The distance function $d$, on the other hand,  
allows one to define a function $\lambda \colon T \to \R$ as follows:
 one chooses a base point $b_*\in T$ and sets
 \begin{equation}
 \label{eq:Definition-lambda-R-tree}
 \lambda(x)   = d(c, x) - d(c, b_*) \quad \text{ with  } c \leq x \text{ and } c \leq b_*.
 \end{equation}
 (The properties of an $\R$-tree imply that $\lambda(x)$ does not depend on the choice of $c$.)
 The function  $\lambda$ maps each ray $(-\infty,a]$ 
 bijectively and in a fashion that respects the order relations
 onto the interval $(-\infty, \lambda(a)]$ of the real line $\R$.
 \smallskip
 
So far we have verified that the triple $(T, \leq, \lambda)$,
associated to the $\R$-tree $(T,d)$ and its end $e$, 
 is a measured tree in the sense of definition \ref{definition:Rooted-pre-tree}. 
 Suppose now that a group $G$  acts on $(T,d)$ by isometries which fix the end $e$;
 this assumption amounts to say 
 that $G$ permutes the rays descending from the points of $T$.
 We claim that $G$ acts on $(T, \leq, \lambda)$ by morphisms.
Two properties need to be checked:
that $G$ acts by automorphisms of the poset $(T, \leq)$
and that the function  $\varphi_{g} \colon x \mapsto \lambda(g.x) - \lambda(x)$ 
is constant for each $g\in G$.
The first property follows immediately from the definition 
\eqref{eq:Definition-order-relation-rooted-R-tree} 
 of the order relation $\leq$ and from the fact 
 that $G$ permutes the rays descending from the points of $T$.
 
 The verification of the second property needs a short computation. 
 Let $b_*$ denote the selected base point (occurring in definition \eqref{eq:Definition-lambda-R-tree})
 and let $x$ be an arbitrary point.
 Pick a point $c$ in the intersection of the rays descending from the points $x$, $g.x$ and $b_*$.
 Then $c \leq x$, whence $g.c \leq g.x $;
 thus $c$ and  $g.c$ lie both on the ray descending from $g.x$.
 Since rays are linearly ordered, either $c \leq g.c$, or $g.c < c$.
 If $c \leq g.c$,
 then $d(c, g.x)= d(c, g.c) + d(g.c, g.x) = d(c, g.c) + d(c, x)$
 and  so
 \begin{align*}
 \lambda(g.x) - \lambda(x) &= (d(c,g.x) - d(c,b_*)) - (d(c,x) - d(c,b_*)) \\ 
 &= d(c, g.x) - d(c,x) = d(c,g.c).
 \end{align*}
If $g.c < c$, 
one finds similarly that $ \lambda(g.x) - \lambda(x) = -d(c,g.c)$.
The claim now follows from the fact that the number $\lambda(x)$,
defined by equation \eqref{eq:Definition-lambda-R-tree}  with the help of $b_*$ and $c$,
 does not depend on $c$.
 \footnote{The distance $d(c, g.c)$ is the translation length of $g \in G$.}
 \end{examples}
 \index{Measured trees!examples|)}%
 \index{Morphisms of measured trees!examples|)}
 %
\subsubsection{Lines and axes}
\label{sssec:Lines-axes-romp-trees}
The characterization of $\Sigma^1$ in terms of actions on measured trees 
uses the concept of a line;
in the applications, lines often arise as axes of hyperbolic elements.
It is therefore essential that the notion of a line can be defined for all measured trees
and that it enjoys the familiar properties.

Definition \ref{definition:Line-in-a-pre-tree} 
makes sense for all measured trees;
we use it to define lines in the more general set-up: 
 \begin{definition}
\label{definition:Line-in-rooted-pre-tree}
Let $(T, \leq, \lambda)$ be a measured tree.
A subset $L \subseteq T$ is called a \emph{line} 
if it is linearly ordered, contains the ray of each of its elements 
and if the set $\{\lambda(c) \mid c \in L\}$ is not bounded from above.
\end{definition}
\index{Definition of!line in measured tree}%
\index{Line in a measured tree!definition}%

The lines that play a crucial rôle in the sequel are the axes of hyperbolic elements.
Their definition relies on that of a characteristic subset.
This definition, in turn, can be stated as in definition \ref{definition:A-sub-g}:
 \begin{definition}
\label{definition:A-sub-g-general-case}
Suppose $G$ acts  on a measured tree $(T, \leq, \lambda)$ by automorphisms.
For every $g \in G$, the subset
\begin{equation}
\label{eq:Definition-A-sub-g-general-case}
A_{g} = \{a \in T \mid a \leq g.a \text{ or } g.a \leq a \}.
\end{equation}
is then called the \emph{characteristic subset} of $g$.
\end{definition}
\index{Definition of!characteristic subset}
\index{Characteristic subset!properties}

The salient features of a characteristic subset are collected in the next lemma.
It is the analogue of Lemma \ref{lem:Properties-A-sub-g} and it can be proved as its model,
except for the fact that references to the properties of $T(G,\XX,\chi)$
have to be replaced  by references to the axioms in Definition \ref{definition:Rooted-pre-tree}.
\begin{lem}
\label{lem:Properties-A-sub-g-general-case}
Suppose $G$ acts by automorphism on a measured tree $(T, \leq, \lambda)$.
Then, for every $g \in G$, the characteristic subset $A_{g}$ enjoys the following properties:
\begin{enumerate} [(i)]
\item $A_{g}$ is not empty and contains with each $c \in A_{g}$
the ray descending from $c$.

\item If $\chi(g) = 0$ then  $A_g $ coincides with the set  $T^g = \{a \in T \mid g.a = a \}$ of points fixed by $g$.

\item If $\chi(g) \neq 0$  then $A_g$ is a line.
It is the unique $\gp(g)$-invariant line of $T$ 
and, for every point $a \in A_g$,
the line $A_g$ coincides with the union of the rays descending  from the points of the orbit $\gp(g).a$.
\end{enumerate}
\end{lem}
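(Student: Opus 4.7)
My plan is to transcribe the proof of Lemma \ref{lem:Properties-A-sub-g} verbatim, replacing each appeal to properties of the specific tree $T(G,\XX,\chi)$ (Lemma \ref{lem:Properties-(T-leq-lambda)}) by the corresponding axiom from Definition \ref{definition:Rooted-pre-tree}, and each use of equivariance (Lemma \ref{lem:Properties-action}) by the hypothesis that $G$ acts by automorphisms of $(T, \leq, \lambda)$. The character $\chi$ of $G$ gets replaced throughout by the associated character $\chi_T \colon G \to \R$ furnished by Proposition \ref{prp:Character-associated-to-G-romp-tree}; one symmetry that I shall use repeatedly is $A_g = A_{g^{-1}}$, immediate from \eqref{eq:Definition-A-sub-g-general-case}.

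For (i), given $b \in T$, the directedness axiom produces a point $x$ in the intersection of the rays $(-\infty, b\,]$ and $(-\infty, g.b\,]$; since $g$ preserves order, $g.x \leq g.b$, so $x$ and $g.x$ both lie on the linearly ordered ray $(-\infty, g.b\,]$, hence are comparable, whence $x \in A_g$. For the ray-closure, after passing from $g$ to $g^{-1}$ if necessary I reduce to $\chi_T(g) \leq 0$. Pick $c \in A_g$, so $g.c \leq c$; for $a \leq c$, order-preservation gives $g.a \leq g.c \leq c$, so $a$ and $g.a$ both lie on the linearly ordered ray $(-\infty,c\,]$, and comparison of their $\lambda$-values via axiom (ii) forces $g.a \leq a$, i.e.\ $a \in A_g$.

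For (ii) the inclusion $T^g \subseteq A_g$ is trivial; conversely, if $a \in A_g$ and $\chi_T(g) = 0$, then $\lambda(a) = \lambda(g.a)$ and the order-preserving injectivity of $\lambda$ on the ray through $a$ and $g.a$ forces $a = g.a$. For (iii), pick any $a \in A_g$; assuming $\chi_T(g) > 0$, the orbit $\{g^m.a \mid m \in \Z\}$ is linearly ordered (since $a$ and $g.a$ are comparable and $\lambda(g^m.a) = \lambda(a) + m\chi_T(g)$), with $\lambda$-values tending to $\pm \infty$. The set $L_a = \bigcup_{m \in \Z}(-\infty, g^m.a\,]$ is therefore a line contained in $A_g$ by the ray-closure from (i).

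The one point that needs more than routine adaptation, and which I expect to be the main small obstacle, is showing that $A_g$ contains no extra points outside $L_a$ and is hence \emph{equal} to a single line. I would argue as follows: given another $a' \in A_g$ with its own line $L_{a'}$, the directedness axiom yields a common lower bound $a_0 \leq a, a'$; by (i) the ray $(-\infty, a_0]$ is contained in $A_g$, hence in both $L_a$ and $L_{a'}$ (since each of these lines contains the ray of each of its points). Any $c \in L_a$ is then comparable to some $g^m.a_0 \in L_{a'}$ of strictly larger $\lambda$-value (by $\lambda$-unboundedness of $L_{a'}$ and axiom (ii)), so $c \leq g^m.a_0 \in L_{a'}$, giving $L_a \subseteq L_{a'}$; the symmetric argument yields equality. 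Uniqueness of $A_g$ as a $\gp(g)$-invariant line is then immediate: if $L$ is such a line and $a \in L$, then $a$ and $g.a \in L$ are comparable because $L$ is linearly ordered, so $a \in A_g$ and hence $L \subseteq A_g$; being a line inside the line $A_g$ with the same $\lambda$-image, it must equal $A_g$.
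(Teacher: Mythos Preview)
Your proposal is correct and follows essentially the same approach as the paper: the paper's proof of this lemma simply says to adapt the proof of Lemma \ref{lem:Properties-A-sub-g} by replacing references to Lemma \ref{lem:Properties-(T-leq-lambda)} with the axioms of Definition \ref{definition:Rooted-pre-tree}, which is exactly what you have done. Your treatment of the ``main small obstacle'' in part (iii) mirrors the paper's argument in Lemma \ref{lem:Properties-A-sub-g} (find a common lower bound $a_0$ of two points of $A_g$, use it to show the orbit-generated lines coincide), with only cosmetic differences in how the inclusion $L_a \subseteq L_{a'}$ is phrased.
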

\index{Characteristic subset!properties}
\begin{remark}
\label{remark:Elliptic-hyperbolic-elements-2}
An element $g \in G$ will be called \emph{elliptic} if it belongs to the kernel of $\chi$,
and otherwise \emph{hyperbolic}.
By the previous lemma,
the characteristic subset $A_{g}$ of a hyperbolic element is a line;
it will be referred as  the \emph{axis} of $g$.
\end{remark}
\index{Elliptic elements!definition}%
\index{Hyperbolic elements!definition}%
\index{Definition of!elliptic element}%
\index{Definition of!hyperbolic element}%
\index{Axis of a hyperbolic element!definition}%
\index{Definition of!axis of a hyperbolic element}%
%
\subsubsection{Characterizations of $\Sigma^1$}
\label{sssec:Characterization-action-rooted-pre-trees}
%
We now come to the main result of section \ref{ssec:Characterization-Sigma1-action},
a characterization of $\Sigma^1$ 
in terms of actions on measured trees
and a second one in terms of actions on $\R$-trees.
\begin{thm}
\label{thm:Characterization-Sigma1-via-actions-on-trees}
For every group $G$ and every non-zero character $\chi \colon G \to \R$
the following statements are equivalent:
\begin{enumerate} [(i)]
\item $[\chi] \in \Sigma^1(G)$;
\item for every generating system $\eta \colon \XX \to G$
 the measured $G$-tree $T(G, \XX, \chi)$ is a line;
\item every measured $G$-tree 
with  associated character $\chi_{T} = \chi$ contains a $G$-invariant line;
\item every $(G, \R)$-tree $(T,d)$ which satisfies the restrictions
\begin{enumerate} [a)]
\item $G$ fixes an end $e$ of $(T,d)$,
\item the character $\chi_{T}$ associated to this end $e$ coincides with $\chi$,
\end{enumerate}
 contains a $G$-invariant line.
\end{enumerate}
\end{thm}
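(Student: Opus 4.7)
The plan is to establish the cycle by verifying the easy equivalences (i) $\Leftrightarrow$ (ii), (iii) $\Rightarrow$ (ii), and (iii) $\Rightarrow$ (iv) first, then tackling the two non-trivial directions (ii) $\Rightarrow$ (iii) and (iv) $\Rightarrow$ (iii). The equivalence (i) $\Leftrightarrow$ (ii) is essentially a reformulation of Definition \ref{definition:Sigma1-arbitrary-group}: by that definition, $[\chi] \in \Sigma^1(G)$ amounts to $\Gamma(G,\XX)_\chi$ being connected for every generating system $\XX$. Now the measured tree $T(G,\XX,\chi)$ is the disjoint union, over $r \in \im\chi$, of the sets of connected components of the subgraphs $\Gamma_\chi^{[r,\infty)}$; combining Lemma \ref{lem:Transitive-action-on components} (which shows that $\ker\chi$ permutes these components transitively) with Lemma \ref{lem:Connected-rays} (which transfers connectedness from one ray to all rays), $T(G,\XX,\chi)$ is a line if and only if every such component set is a singleton, equivalently $\Gamma_\chi$ is connected.

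For (iii) $\Rightarrow$ (ii) I would apply (iii) to the measured $G$-tree $T' = T(G,\XX,\chi)$ itself, which by Example \ref{examples:rooted-pre-trees}\,b has associated character $\chi$. The resulting $G$-invariant line $L \subseteq T'$ must exhaust $T'$ because $G$ acts transitively on $T(G,\XX,\chi)$ by Lemma \ref{lem:Properties-action}. The implication (iii) $\Rightarrow$ (iv) is essentially tautological: Example \ref{examples:rooted-pre-trees}\,c shows that every $(G,\R)$-tree with a $G$-fixed end $e$ canonically inherits the structure of a measured $G$-tree, and the character singled out in hypothesis (b) of (iv) is precisely the character $\chi_T$ attached to that measured tree by Proposition \ref{prp:Character-associated-to-G-romp-tree}.

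The main obstacle is the implication (ii) $\Rightarrow$ (iii). Given any abstract measured $G$-tree $(T',\leq',\lambda')$ with associated character $\chi$, the plan is to construct a generating system $\XX$ of $G$ together with a $G$-equivariant morphism of measured trees $\varphi \colon T(G,\XX,\chi) \to T'$; the construction of $\varphi$ is the technical heart of the argument and is the substance of Proposition \ref{prp:Constructing-morphism-from-T(G,XX,chi)}, deferred to section \ref{ssec:Construction-morphisms-into-trees}. Once $\varphi$ is available, I would form
\[
L' \;=\; \bigcup\nolimits_{a \in T(G,\XX,\chi)} (-\infty, \varphi(a)]_{T'}.
\]
By (ii), $T(G,\XX,\chi)$ is a line, so its image under the order-preserving map $\varphi$ is linearly ordered; $L'$ is built to contain the ray descending from each of its points; the morphism identity $\lambda'(\varphi(a)) = \lambda(a) + r$ shows that $\lambda'$ is unbounded on $L'$; and $G$-equivariance of $\varphi$ makes $L'$ a $G$-invariant line.

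The remaining step (iv) $\Rightarrow$ (iii) will be handled by an $\R$-tree completion. Given a measured $G$-tree $T$ with associated character $\chi$, I would construct an $\R$-tree $\hat{T}$ containing $T$ by metrically gluing in, for each comparable pair $a \leq b$ in $T$, a euclidean segment of length $\lambda(b) - \lambda(a)$, with compatible identifications along chains; the partial order and the function $\lambda$ extend continuously, the collection of rays descending from points of $T$ assembles into a unique $G$-fixed end $\hat{e}$ of $\hat{T}$, and the associated character of the $(G,\R)$-tree $(\hat{T},\hat{e})$ is still $\chi$. Applying (iv) gives a $G$-invariant line $\hat{L} \subseteq \hat{T}$; since $T$ is cofinal in $\hat{T}$, the intersection $\hat{L} \cap T$ is $G$-invariant, linearly ordered, closed under descending rays in $T$, and $\lambda$-unbounded, and therefore is the $G$-invariant line of $T$ required by (iii).
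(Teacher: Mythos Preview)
Your treatment of (i) $\Leftrightarrow$ (ii), (iii) $\Rightarrow$ (ii), (iii) $\Rightarrow$ (iv), and the deferral of (ii) $\Rightarrow$ (iii) to Proposition \ref{prp:Constructing-morphism-from-T(G,XX,chi)} matches the paper. The divergence is in how you close the cycle: the paper proves (iv) $\Rightarrow$ (ii), whereas you attempt (iv) $\Rightarrow$ (iii) directly for an arbitrary measured $G$-tree $T$.

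Your completion step has a real gap. For a general measured $G$-tree $T$ the formula $d(a_1,a_2) = \inf\{\lambda(a_1)+\lambda(a_2)-2\lambda(c) : c \leq a_1,\, c \leq a_2\}$ is only a \emph{pseudo}-metric: when $\rk\chi > 1$ one can have distinct, incomparable points $a_1 \neq a_2$ with $d(a_1,a_2)=0$ (common predecessors exist arbitrarily close to the top but none at the top). Your ``gluing in euclidean segments'' does not fix this --- the glued space then fails to be a metric space, hence is not an $\R$-tree, so (iv) does not apply to it. The paper's construction (Proposition \ref{prp:Constructing-morphism-into-G-R-tree}) deals with this by first passing to the metric quotient $\bar T$ and then completing to $(X,d_X)$; but then the canonical map $T \to X$ is only a $G$-equivariant morphism, not an embedding, and what you wrote as ``$\hat L \cap T$'' becomes a preimage $\varphi^{-1}(\hat L)$. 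A morphism of measured trees is order-preserving but not order-reflecting, so this preimage need not be linearly ordered, and your conclusion that it is a $G$-invariant line is unjustified.

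The paper avoids this difficulty by arguing (iv) $\Rightarrow$ (ii) via the contrapositive, starting from the specific tree $T = T(G,\XX,\chi)$. The extra leverage is that $G$ acts \emph{transitively} on $T(G,\XX,\chi)$: once one shows (and this is the substantial content of claim (iii) in Proposition \ref{prp:Constructing-morphism-into-G-R-tree}) that a $G$-invariant line $L \subset X$ is linearly ordered and meets the image $\bar T$, transitivity forces $\bar T = L \cap \bar T$, whence $\bar T$ and then $T$ itself are lines. Your direct route to (iii) would need a replacement for this transitivity argument --- some device to pull a single $G$-invariant line back through a non-injective quotient map --- and none is supplied.
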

\index{Brown's characterization of Sigma1@Brown's characterization of $\Sigma^1$}%

\begin{proof}
By its very construction, the measured tree $T(G, \XX, \chi)$ is a line if, and only if, 
the subgraph $\Gamma(G, \XX)_{\chi}$ is connected. 
Indeed, $T(G, \XX, \chi)$ is a line if, and only if,  all the subgraphs $\Gamma(G,\XX)_{\chi}^{[\chi(g),\infty)}$ 
are connected. These various subgraphs are permuted by $G$ and hence isomorphic.
Biimplication $(i) \Leftrightarrow (ii)$ is thus valid.

Consider now biimplication $(ii) \Leftrightarrow (iii)$.
For every generating system $\eta \colon \XX \to G$,
 the measured tree $T(G, \XX, \chi)$ is a measured tree in the sense of Definition 
 \ref{definition:Rooted-pre-tree}
 (see example \ref{examples:rooted-pre-trees}b). 
 If statement (iii) holds, $T(G, \XX, \chi)$ will therefore contain a $G$-invariant line. 
 As $G$ acts transitively on $T(G, \XX, \chi)$, 
 this means that $T(G, \XX, \chi)$ is a line. So  (iii) implies  (ii).
 The proof of the converse is harder. 
 It relies on the fact that every measured tree $(T',\leq',\lambda')$
 gives rise to a generating system $\eta \colon \XX \to G$  
 that permits one to construct a $G$-equivariant morphism 
 $\varphi \colon T(G, \XX,\chi) \to (T', \leq', \lambda')$.
 If $T(G, \XX,\chi)$ is a line,
its image under this morphism $\varphi$ will be a $G$-invariant line of $(T',\leq',\lambda')$.
Details are given in the proof of Proposition 
\ref{prp:Constructing-morphism-from-T(G,XX,chi)}   
below.

We are left with justifying the implications  $(iii) \Rightarrow (iv)$ and $(iv) \Rightarrow (ii)$.
Let $(T,d)$ be a $(G,\R)$-tree that satisfies conditions a), b) enunciated in statement (iv).
Then the metric space $(T,d)$ gives rise to a measured tree $(T, \leq, \lambda)$
on which $G$ acts by automorphisms (see example \ref{examples:rooted-pre-trees}c for details).
As every line in the measured tree $(T, \leq, \lambda)$ is a geodesic line in the $\R$-tree $(T,d) $
(see Definition \ref{eq:Definition-order-relation-rooted-R-tree} of the order relation $\leq$) 
and as the action of $G$ on the set $T$ is the same in both structures,
statement (iv) is therefore a consequence of statement (iii).
    
The proof of implication $(iv) \Rightarrow (ii)$ employs another construction.
Assume statement (ii) does \emph{not} hold. 
Then there exists a generating system $\eta \colon \XX \to G$  
so that the associated measured tree $T(G,\XX,\chi)$ is \emph{not} a line.
The aim is now to fabricate, with the help of $T(G,\XX,\chi)$, a $(G, \R)$-tree $(X,d)$,
and then to verify that this tree does not contain a $G$-invariant line.
The details of this construction will be given in section
\ref{sssec:Construction-morphism-real-tree}.
\end{proof}
\begin{remarks}
\label{remarks:Browns-characterization-Sigma1}
a) 
For \emph{finitely} generated groups $G$,
biimplication $(i) \Leftrightarrow (iv)$ is due to Ken Brown,
except for the fact that Brown uses the invariant $\Sigma_{G'}(G)$ discussed in \cite{BNS}
(hence the restriction to finitely generated groups),
and right actions (see \cite[p.\,499, Corollary 7.4]{Bro87b}).
Brown's proof involves a characterization of $\Sigma^1(G)$ 
in terms of so-called \emph{HNN valuations} and divides into two parts:
in the first one,
he establishes that
a point $[\chi]$ lies outside $\Sigma^1(G)$ if, and only if, 
there is a non-trivial HNN valuation on $G$ with associated character $\chi$ (see Theorem 5.2).
In the proof of Theorem 7.1 Brown then constructs,
given a non-trivial HNN valuation of a group $G$ with associated character $\chi$,  a $(G,\R)$-tree 
which has a fixed end and associated character $\chi$, 
but contains no $G$-invariant line.
This construction is based on the Alperin-Moss generalization of Chiswell's Theorem 1 in \cite{Chi76}
(see, \eg, \cite[p.\,303, Theorem 3.9]{AlBa87}).

Both Chiswell and Alperin-Moss start out with a Lyndon length function defined on a group $G$.
This function is intended to measure the distance $d(x_0, g.x_0)$ in the $(G,\R)$-tree $(T,d)$
between the base point $x_0$ and its image $g.x_0$ under the action of $g \in G$,
once the tree $(T,d)$ and the action of $G$ on it have been found.
In Brown's situation the $G$-action on the tree $(T,d)$ is of a special sort: 
it fixes an end and $G$ acts by translations along the end.

The question thus arises whether a suitable $(G,\R)$-tree 
can be obtained in a more direct manner.
This question has been answered in the affirmative, first by Robert Bieri in the late 1980s 
(cf. \cite[Chapt.\,II, proof of Thm.\,4.1]{BiSt92}) and later by Gilbert Levitt in \cite{Lev94} (see Proposition 3.3).
In section \ref{sssec:Construction-morphism-real-tree}, 
I shall give a construction using bits of these solutions.
\smallskip

b) Theorem \ref{thm:Characterization-Sigma1-via-actions-on-trees} has many useful applications,
in particular to joins of subgroups (see section \ref{sssec:Join-subgroups-2}),
groups with a locally nilpotent normal subgroup (see section \ref{sssec:Subnormal-subgroups})
and to direct products of groups (see section \ref{sssec:Illustration-invariant-direct-product}).
\end{remarks}
%

\subsection{Construction of morphisms} 
\label{ssec:Construction-morphisms-into-trees}
%
In this section,
two auxiliary results will be established
that have been used in the proof of Theorem \ref{thm:Characterization-Sigma1-via-actions-on-trees}.
In the first of them, 
one is given a measured  $G$-tree $(T', \leq', \lambda)$ 
whose associated character $\chi \colon G \to \R$ is non-zero.
One then shows 
that there exists a set of generators $\XX$ 
and a $G$-equivariant morphism $\varphi \colon T(G,\XX,\chi) \to (T',  \leq', \lambda')$.
The second result starts out with a measured $G$-tree $T(G, \XX, \chi)$
whose character $\chi \colon G \to \R$ is non-zero,
and constructs a $(G,\R)$-tree $(X, d_X)$ with associated character $\chi$,
all in such a way that $(X, d_X)$ has no $G$-invariant line
if the measured $G$-tree $T(G, \XX, \chi)$ is not a line.
%
\subsubsection{Morphism into a measured $G$-tree}
\label{sssec:Construction-morphism-rooted-pre-tree}
In the proof of Proposition \ref{prp:Constructing-morphism-from-T(G,XX,chi)}
the following lemma will be used:
\begin{lem}
\label{lem:Constructing-morphism-from-transitive-rooted-pre-tree} 
Suppose $(T,\leq, \lambda)$ and $(T',\leq', \lambda')$ are measured $G$-trees
and $G$ acts  transitively on $T$.
 Then a  $G$-equivariant morphism $\varphi \colon (T,\leq, \lambda) \to (T',\leq', \lambda')$ exists
if, and only if,
one can find points $b \in T$ and $b' \in T'$ so that the implication
\begin{equation}
\label{eq:Existence-morphism}
b \leq h. b\quad  \Longrightarrow \quad b' \leq' h. b'
\end{equation}
holds for every element $h \in G$.
\end{lem}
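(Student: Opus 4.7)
The plan is to treat the two directions separately, the sufficiency being the substantial one. For necessity, given an equivariant morphism $\varphi$, take any $b \in T$ and set $b' := \varphi(b)$; since $\varphi$ is order-preserving and $G$-equivariant, $b \leq h.b$ yields $b' = \varphi(b) \leq' \varphi(h.b) = h.\varphi(b) = h.b'$.

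For sufficiency, I would use the transitivity of the $G$-action to write each $a \in T$ in the form $a = g.b$ and set $\varphi(a) := g.b'$. The crux is well-definedness, and this is where the hypothesis does its work. If $g_{1}.b = g_{2}.b$, let $h := g_{1}^{-1}g_{2}$, so $h.b = b$; in particular the trivial relations $b \leq h.b$ and $b \leq h^{-1}.b$ hold. Applying the hypothesis to both $h$ and $h^{-1}$ then yields $b' \leq' h.b'$ together with $b' \leq' h^{-1}.b'$, i.e.\ $h.b' \leq' b'$. Antisymmetry of $\leq'$ forces $h.b' = b'$, whence $g_{1}.b' = g_{2}.b'$. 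Once $\varphi$ is well-defined, $G$-equivariance is built into the construction; order-preservation follows by the same pattern, since $g_{1}.b \leq g_{2}.b$ rewrites as $b \leq (g_{1}^{-1}g_{2}).b$, to which the hypothesis applies directly.

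The last property to verify is that $\lambda'(\varphi(a)) - \lambda(a)$ is a constant $r$, as required by Definition \ref{definition:Morphism-rooted-pre-trees}. Using Proposition \ref{prp:Character-associated-to-G-romp-tree}, the difference evaluates for $a = g.b$ to $\lambda'(b') - \lambda(b) + \chi_{T'}(g) - \chi_{T}(g)$, so constancy is equivalent to the equality of the two associated characters $\chi_{T}$ and $\chi_{T'}$. This is the main obstacle I anticipate. In the intended application inside the proof of Theorem \ref{thm:Characterization-Sigma1-via-actions-on-trees}, both characters equal the fixed $\chi$ by set-up, so the point evaporates and one simply takes $r := \lambda'(b') - \lambda(b)$. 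More generally, one should either read $\chi_{T} = \chi_{T'}$ as a tacit standing hypothesis of the lemma, or derive it from the given implication by applying it to $h$, $h^{-1}$, and to conjugates $ghg^{-1}$ shifted along a ray descending from $b$, so as to force the signs, and then the values, of $\chi_{T}(h)$ and $\chi_{T'}(h)$ to agree.
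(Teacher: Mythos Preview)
Your argument matches the paper's proof essentially line for line: same necessity direction, same well-definedness via applying the hypothesis to both $h$ and $h^{-1}$, same order-preservation, same $\lambda$-computation. You have in fact been more scrupulous than the paper, which in its final computation simply writes a single symbol $\chi$ for both characters and thereby tacitly assumes $\chi_T = \chi_{T'}$; your reading that this equality is a standing hypothesis (satisfied in the application to Proposition~\ref{prp:Constructing-morphism-from-T(G,XX,chi)}) is exactly right.

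Your closing suggestion, however, that one might instead \emph{derive} $\chi_T = \chi_{T'}$ from the implication \eqref{eq:Existence-morphism}, does not work. Take $G = \Z$ acting on $T = T' = \R$ (with the usual order and $\lambda = \lambda' = \id$) by $n.x = x+n$ on $T$ but by $n.x = x+2n$ on $T'$; with $b = b' = 0$ the implication holds (both sides reduce to $n \geq 0$), yet $\chi_T \neq \chi_{T'}$ and indeed no $G$-equivariant morphism exists. So the equality of characters must be taken as part of the set-up, not extracted from the hypothesis.
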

\index{Morphisms of measured trees!existence}

\begin{proof}
Assume first a $G$-equivariant morphism $\varphi \colon (T,\leq, \lambda) \to (T',\leq', \lambda')$ exists.
Pick $b \in T$ and set $b' = \varphi(b)$. 
Then implication \eqref{eq:Existence-morphism} holds,
for it is a consequence of the compatibility of $\varphi$ 
with respect to the partial orderings $\leq$ and $\leq'$ required in Definition
\ref{definition:Morphism-rooted-pre-trees}.

Conversely, 
assume $b\in T$ and $b'\in T'$ are points enjoying property \eqref{eq:Existence-morphism}.
Let $S$ be the stabilizer of the point $b$.
Then the inequalities $b \leq h. b$ and $b \leq h^{-1}.b$ hold for each element $h \in S$
whence  $S$ stabilizes $b'$ by implication \eqref{eq:Existence-morphism}.
The assignment $g.b \mapsto g.b'$ ist therefore licit;
as $G$ acts transitively on $T$,
it defines a map $\varphi \colon T \to T'$. 
Implication \eqref{eq:Existence-morphism} also entails 
that $\varphi$ is compatible with the order relations $\leq$ and $\leq'$;
indeed,
if $g_1. b \leq g_2.b$ then $b \leq (g_1^{-1}g_2).b$ 
(for $G$ acts by order preserving automorphisms on $(T,\leq, \lambda)$),
so $b' \leq (g_1^{-1}g_2).b$ by \eqref{eq:Existence-morphism}
and thus $g_1. b' \leq g_2.b'$.
The computation
\begin{align*}
\lambda'(\varphi(g.b)) - \lambda (g.b)  &=\lambda'(g.b') - \chi(g.b) 
= (\chi(g) + \lambda'(b')) - (\chi(g) + \lambda(b)\\
&=
 \lambda'(b') - \lambda(b)
\end{align*}
finally implies 
that the difference $\lambda'(\varphi(a)) - \lambda(a)$ does not depend on $a = g.b$
and so $\varphi$ satisfies also the second property required in Definition
\ref{definition:Morphism-rooted-pre-trees}. 
\end{proof}

Now to the construction of a morphism into a given measured $G$-tree:
\begin{prp}
\label{prp:Constructing-morphism-from-T(G,XX,chi)} 
Let $G$ be a group, $\chi \colon G \to \R$ a non-zero character
and $(T',\leq', \lambda')$ a measured $G$-tree with associated character $\chi$.
Choose an element $t \in G$ with positive $\chi$-value and a point $b' \in T'$ on the axis of $t$.
Then there exists a generating set $\eta \colon \XX \incl G$ 
so that the function $g \mapsto g.b'$ induces a $G$-equivariant morphism
$\varphi \colon T(G, \XX, \chi) \to (T', \leq', \lambda')$.
\end{prp}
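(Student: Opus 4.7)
The strategy is to apply Lemma \ref{lem:Constructing-morphism-from-transitive-rooted-pre-tree}. Since $G$ acts transitively on $T(G,\XX,\chi)$ for any generating system, and since the associated character of this action is $\chi = \chi_{T'}$, it suffices to exhibit a generating system $\eta \colon \XX \to G$ and a point $b \in T(G,\XX,\chi)$ such that for every $h \in G$,
\[
b \leq h.b \quad \Longrightarrow \quad b' \leq' h.b'.
\]
I take $b = \CC_{1}$, the connected component of $1 \in G$ in $\Gamma(G,\XX)_\chi = \Gamma(G,\XX)_\chi^{[0,\infty)}$; then the relation $\CC_1 \leq h.\CC_1 = \CC_h$ amounts to saying that there is a word $w = y_1 \cdots y_k$ in $\XX^\pm$ with $\eta_*(w) = h$ whose $\chi$-track is non-negative. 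The morphism $\varphi$ provided by the lemma will then send $g.\CC_1$ to $g.b'$, as required.

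The key is the choice of generating set: let $\XX = \{\,g \in G \mid b' \in A_g\,\}$, where $A_g$ is the characteristic subset of $g$ acting on $T'$ (see Definition \ref{definition:A-sub-g-general-case}). Observe that $t \in \XX$ by hypothesis, and that $\XX$ is closed under inversion, since $A_g = A_{g^{-1}}$. To check that $\XX$ generates $G$, I would argue that for every $g \in G$, the element $t^k g$ belongs to $\XX$ for all sufficiently large $k$: indeed, let $c = g.b' \wedge b'$ denote the meet; by Lemma \ref{lem:Properties-A-sub-g-general-case}(iii) and the fact that $b' \in A_t$, the ray descending from $b'$ is contained in $A_t$, so $c \in A_t$, and therefore $t^{k}.c \in A_t$ with $\lambda'(t^k.c) = \lambda'(c) + k\chi(t)$. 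For $k$ large this exceeds $\lambda'(b')$, so $b' \leq' t^k.c \leq' t^k.(g.b') = (t^k g).b'$, whence $b' \in A_{t^k g}$, so $t^k g \in \XX$. Since $t \in \XX$ we conclude $g = t^{-k}(t^k g) \in \langle \XX \rangle$.

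It remains to verify the implication. Given a word $w = y_1 \cdots y_k$ in $\XX^\pm$ with non-negative $\chi$-track, set $h_j = y_1 \cdots y_j$ and $P_j = h_j.b'$, so that $P_0 = b'$ and $\lambda'(P_j) = \chi(h_j) + \lambda'(b') \geq \lambda'(b')$. I would argue by induction on $j$ that $b' \leq' P_j$. Since every letter satisfies $b' \in A_{y_j}$, translating by $h_{j-1}$ gives $P_{j-1} \in A_{h_{j-1} y_j h_{j-1}^{-1}}$, so $P_{j-1}$ and $P_j = (h_{j-1} y_j h_{j-1}^{-1}).P_{j-1}$ are comparable. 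If $P_{j-1} \leq' P_j$ (the case $\chi(y_j) \geq 0$), the induction hypothesis $b' \leq' P_{j-1}$ gives $b' \leq' P_j$ by transitivity. If $P_j \leq' P_{j-1}$ (the case $\chi(y_j) < 0$), then both $b'$ and $P_j$ lie on the ray $(-\infty, P_{j-1}]$, which is linearly ordered by Definition \ref{definition:Rooted-pre-tree}(ii); since $\lambda'$ is order-preserving on that ray and $\lambda'(b') \leq \lambda'(P_j)$, we again conclude $b' \leq' P_j$.

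The main obstacle, as I see it, is the tension between allowing the letters in $w$ to run in both directions along $\XX^\pm$ and needing each step of the path in $T'$ to stay compatible with the direction $b' \leq'$. The resolution is conceptual rather than computational: the correct generating set is not $\{g : b' \leq' g.b'\}$, which is not closed under inversion, but rather $\{g : b' \in A_g\}$, i.e., the elements whose characteristic subset contains $b'$; the axis/fixed-point structure then ensures that even ``downward'' letters $y_j$ move $b'$ to a point $y_j.b'$ that remains comparable to $b'$, so the non-negative $\chi$-track of $w$ is enough to force the final product to lie at or above $b'$.
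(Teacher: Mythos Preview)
Your proof is correct and follows essentially the same route as the paper's: choose a generating set whose letters $y$ all have the property that $b'$ and $y.b'$ are comparable, take $b=\CC_1$, and verify the implication of Lemma~\ref{lem:Constructing-morphism-from-transitive-rooted-pre-tree} by induction on the length of a word with non-negative $\chi$-track. Your choice $\XX=\{g\in G\mid b'\in A_g\}$ is slightly more elegant than the paper's (which takes $\XX=\{t\}\cup\{t^{m_z}z\mid z\in G\}$ with $m_z$ large enough that $b'\leq'(t^{m_z}z).b'$), since yours is already closed under inversion; but the inductive step is the same in both. One minor imprecision: the meet $g.b'\wedge b'$ need not exist in a general measured tree (axiom~(i) only guarantees lower bounds, not greatest lower bounds, and $\lambda$ need not be surjective onto an interval); your argument goes through unchanged if you take $c$ to be \emph{any} common lower bound of $g.b'$ and $b'$.
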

 \index{Morphisms of measured trees!construction}

\begin{proof}
We begin by constructing a subset $\XX \subset G_{\chi}$ with the property
that the relation $b' \leq x.b'$ holds  for every $x \in \XX$. 
Since $b'$ is on the axis of $t$ and as  $\chi(t) > 0$, 
the desired relation holds for $x = t$.
Consider now an arbitrary element $z \in G$. 
The rays descending from $z.b'$  and $b'$ intersect;
since $b' \in A_{t} $ 
there exists therefore a positive integer $m_{z} $ with $t^{-m_{z}}.b' \leq z.b'$,
and so $b' \leq (t^{m_{z}} z).b'$. 
Set 
\[
\XX  = \{t\} \cup \{ t^{m_{z}} \cdot z \mid z \in G \}.
\]
Then $\XX$  is contained in $G_\chi$ and it generates the group $G$.

Let $\Gamma$ be the Cayley graph $\Gamma( G, \XX)$ 
and let $\CC_{1}$ denote the connected component 
of the subgraph $\Gamma_{\chi}$ containing the unit element $1 $ of $G$.
We assert that $b' \leq h.b'$ for every group element $h$ in the vertex set of $\CC_1$.
The proof will be by induction on the length $\ell$ of the $(\XX \cup \XX^{-1})$-word 
$w =y_{1}\cdots y_{\ell}$
that describes a path $p = (1, w)$ from  1 and $h$.
If $\ell =0$, then $h = 1$ and the claim holds.
Assume therefore that $\ell > 0$ and write $h = h' \cdot y_{\ell}$.
Then $b' \leq h'.b'$ by the inductive hypothesis.  
If $y = y_{\ell} \in \XX$ 
then $b' \leq y.b'$ by the choice of $\XX$,
whence $h'.b' \leq h'.(y.b') = h.b'$ and so $b' \leq h.b'$.
If, on the other hand,
$y^{-1}  \in \XX$ then $y. b' \leq b'$ and $h.b' = h'.(y.b') \leq h'.b'$.
The points $b'$ and $h.b'$ lie therefore both to the ray descending from $h'.b'$.
As this ray is totally ordered and as $\chi(h) \geq 0$, 
the relation $b' \leq h.b'$ must hold.

The component $\CC_1$ is a point of the measured tree $T(G, \XX, \chi)$; call it $b$.
The inequality $b \leq h.b$ holds precisely for the vertices of $\CC_1$;
in view of the preceding paragraph implication \eqref{eq:Existence-morphism} is therefore satisfied.
But if so,
Lemma \ref{lem:Constructing-morphism-from-transitive-rooted-pre-tree}  allows one to conclude 
that the assignment $g. \CC_1 \mapsto g.b'$ defines a $G$-equivariant morphism 
$\varphi \colon T(G, \XX, \chi) \to (T', \leq', \lambda')$.
\end{proof}
\begin{note}
\label{note:prp-Constructing-morphism-from-T(G,XX,chi)} 
The proof of Proposition \ref{prp:Constructing-morphism-from-T(G,XX,chi)} 
goes back to the proof of Theorem 4.1 in Chapter II of \cite{BiSt92}.
\end{note}
%
\subsubsection{Morphism into a $(G,\R)$-tree with fixed end}
\label{sssec:Construction-morphism-real-tree}
In this section, 
we show that a measured tree of the form $T(G, \XX, \chi)$ can be ``completed'' to a 
$(G,\R)$-tree $(X,d_X)$ with a fixed end and associated character $\chi$.
The details are spelled out in
\begin{prp}
\label{prp:Constructing-morphism-into-G-R-tree}  
Given a group $G$, a generating system $\eta \colon \XX \to G$ of $G$ and a non-zero character $\chi$ of $G$,
let $T = T(G, \XX, \chi)$ be the measured tree associated to the triple $(G, \XX, \chi)$.
Define a function $d \colon T \times T \to \R$ be the formula
\begin{equation}
\label{eq:Defining-pseudo-metric-on-T}
d(a_1,a_2) 
= 
\inf\{\lambda(a_1) + \lambda(a_2) - 2 \lambda(a) \mid  a\leq a_1 \text{ and } a \leq a_2 \}.
\end{equation}
Then $d$ is a pseudo-metric on $T$;
let $(\bar{T},\bar{d})$ denote the associated metric space.
This metric space can be embedded into a $(G,\R)$-tree $(X,d_X)$ 
with the following properties:
\begin{enumerate}[(i)]
\item if the rank $\rk(\chi)$ of $\chi$ is 1, then $d$ is a metric, $(T,d)$ is a $(G, \Z)$-tree 
and $(X,d_X)$ can be chosen to be the geometric realization of $(T,d)$.
\item if $\rk(\chi) > 1$,
there exists a $G$-equivariant isometry $\varphi \colon (T,d) \to (X, d_X)$ with dense image;
\item if the $(G, \R)$-tree $(X,d_X)$ contains a $G$-invariant line 
the measured tree $ T = T(G, \XX, \chi)$ is a line
 and the subgraph $\Gamma(G, \XX)_\chi$ is connected.
\end{enumerate}
\end{prp}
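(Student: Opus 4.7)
The plan is to split the proof into three parts: first, to check that $d$ is a pseudo-metric on which $G$ acts isometrically with associated character $\chi$; second, to construct $(X, d_X)$ separately in the rank 1 and rank $>1$ cases; and third, to establish (iii) by an orbit argument.

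For the first part, symmetry of $d$ is immediate, and non-negativity follows because any common lower bound $a$ of $a_1, a_2$ satisfies $\lambda(a) \leq \lambda(a_i)$. For the triangle inequality on $a_1, a_2, a_3$, I would choose approximate common lower bounds $b$ of $\{a_1, a_2\}$ and $c$ of $\{a_2, a_3\}$; since both lie in the linearly ordered ray $(-\infty, a_2]$ they are comparable, so the smaller one serves as a common lower bound for $\{a_1, a_3\}$, and a short calculation using $\lambda(c) \leq \lambda(a_2)$ yields the required inequality. The isometry of the $G$-action and the equality of the associated character with $\chi$ come directly from Proposition \ref{prp:Character-associated-to-G-romp-tree} and the identity $\lambda(g.a) = \lambda(a) + \chi(g)$.

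For the second part, the rank 1 case is easy: if $\im \chi = \alpha \Z$, then $\lambda(T) = \alpha \Z$ and the infimum in \eqref{eq:Defining-pseudo-metric-on-T} is attained at the meet $a_1 \wedge a_2$, so $d$ is a metric taking values in $\alpha \Z_{\geq 0}$, and $(T, d)$ is a combinatorial $G$-tree whose geometric realization serves as $(X, d_X)$. In the rank $>1$ case, $\im \chi$ is dense in $\R$; I would pass to the quotient metric space $\bar T$ and take its metric completion $(X, d_X)$. The main obstacle lies here: verifying that $X$ is an $\R$-tree. I would attack this by checking the four-point (Gromov $0$-hyperbolicity) condition directly on $(T,d)$, using the tripod structure afforded by a common lower bound of any three given points; the four-point condition survives the metric completion and, together with completeness, characterizes $\R$-trees. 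The $G$-action extends to $X$ by isometries, the common ``downward'' direction of the descending rays of $T$ furnishes a $G$-fixed end, and the associated character is again $\chi$.

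For the third part, suppose $L \subset X$ is a $G$-invariant line and pick a hyperbolic $t \in G$, say with $\chi(t) > 0$. Then $L$ is $\gp(t)$-invariant and $t$ acts on it by a non-trivial translation, so $L$ must be the axis of $t$ in $X$ by Lemma \ref{lem:Properties-A-sub-g-general-case}(iii). The axis $A_t \subseteq T$ is the unique $\gp(t)$-invariant line in $T$, and its image $\varphi(A_t)$ is a totally ordered, $\gp(t)$-invariant subset of $X$ on which $t$ translates with the same length, hence $\varphi(A_t) \subseteq L$. Since $G$ acts transitively on $T$ (Lemma \ref{lem:Properties-action}), every $a \in T$ has the form $g.b$ with $b \in A_t$ and $g \in G$, so $\varphi(a) = g.\varphi(b) \in g.L = L$; thus $\varphi(T) \subseteq L$. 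Because $\varphi$ sends descending rays to sub-rays of $L$ pointing to the fixed end, and such sub-rays are linearly ordered by inclusion, $T$ itself is linearly ordered; combined with directedness and the unbounded $\lambda$-values along rays, this forces $T$ to be a line. By the very construction of $T(G, \XX, \chi)$, this means that $\Gamma(G,\XX)_\chi^{[r,\infty)}$ is connected for every $r \in \im \chi$, and in particular $\Gamma(G,\XX)_\chi$ is connected.
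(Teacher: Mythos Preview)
Your plan tracks the paper's structure through the pseudo-metric and the rank-$1$ case, but there are two real gaps in the rank-$>1$ construction and in claim (iii).

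For the construction of $(X,d_X)$ when $\rk(\chi)>1$: the assertion that the four-point condition together with completeness characterizes $\R$-trees is false --- one also needs the space to be geodesic (or at least connected); a two-point metric space is complete and $0$-hyperbolic but not an $\R$-tree. The paper does \emph{not} take the full metric completion of $\bar T$; it explicitly calls that space ``too large for our purpose'' and instead defines $X$ as the subspace of the completion consisting of limits of \emph{descending} Cauchy sequences. This subspace inherits a measured-tree structure $(X,\le_X,\lambda_X)$ extending that of $\bar T$, and the paper then verifies the two defining properties of an $\R$-tree by exhibiting geodesics directly: one descends along the ray $(-\infty,x_1]$ to a meet point and climbs back along $(-\infty,x_2]$. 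That auxiliary order structure on $X$ is not decoration --- it is what drives the proof of (iii).

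For (iii): you correctly obtain $\varphi(T)\subseteq L$ from translation lengths and transitivity, but the step ``hence $T$ itself is linearly ordered'' does not follow. The map $\varphi$ factors through the pseudo-metric quotient and may collapse distinct incomparable points $a_1,a_2\in T$ with $\lambda(a_1)=\lambda(a_2)$ and $d(a_1,a_2)=0$; that the \emph{images} of the descending rays are nested sub-rays of $L$ says nothing about the rays themselves. The paper confronts exactly this obstacle: using $\le_X$ it first shows that for every $x\in L$ one has $x\le_X h.x$, hence identifies $L$ with the axis of $h$ inside the measured tree $(X,\le_X,\lambda_X)$, deduces (via transitivity of the $G$-action on $\bar T$) that $\bar T$ is a line, and then argues that any $a_1\in T$ at pseudo-distance $0$ from the point $a_2\in A_h\subseteq T$ with the same $\lambda$-value must in fact equal $a_2$, because $h^{-1}.a_1$ and $h^{-1}.a_2$ both land on the ray $(-\infty,a_2]$ and share a $\lambda$-value. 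Without an argument of this kind your proof of (iii) is incomplete.
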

\index{Construction of!morphism into R-tree@morphism into an $\R$-tree}

\begin{proof}
Let $d \colon T \times T \to \R$ be the function defined by formula
\eqref{eq:Defining-pseudo-metric-on-T}.
Since every pair of points $\{a_1, a_2\}$ in $T$ is preceded by some element $a \in T$,
the definition is licit.
The function $d$ is symmetric and vanishes if $a_1=a_2$;
moreover, a short calculation will confirm that it satisfies the triangle inequality,
and so it is a pseudo-metric.
The computation
\begin{align*}
d(g.a_1,g.a_2) 
&= 
\inf\{\lambda(g.a_1) + \lambda(g.a_2) - 2 \lambda(b) \mid  b\leq g.a_1 \text{ and } b \leq g.a_2 \}\\
&=
\inf\{\lambda(g.a_1) + \lambda(g.a_2) - 2 \lambda(g.a) \mid  a \leq a_1 \text{ and } a \leq a_2 \}\\
&=
\inf\{\lambda(a_1) + \lambda(a_2) - 2 \lambda(a) \mid  a\leq a_1 \text{ and } a \leq a_2 \}\\
&=
d(a_1,a_2)
\end{align*}
finally proves that this pseudo-metric is $G$-invariant.

Let $(\bar{T}, \bar{d})$ be the resulting metric space;
as the pseudo-metric $d$ is $G$-invariant, so is the metric $\bar{d}$.
Let $\pi \colon T \epi \bar{T}$ denote the canonical projection;
it sends the point $a$ to the ball $\bar{a}$ of radius 0 centered at $a$,
and it is $G$-equivariant.

\paragraph{Claim (i)}If $\im \chi$ is infinite cyclic, we may and shall assume that $\im \chi = \Z$.
The function $\lambda$ maps then $T$ onto $\Z$.
For every $j \in \Z$, 
let $V_j = \lambda^{-1}(\{ j \})$ denote the set of components of the subgraph $\Gamma_\chi^{[j, \infty)}$. 
Then $T = \bigcup_{j \in \Z} V_j$.
Moreover, every vertex $v \in V_j$ is preceded by a unique  $w_v \in V_{j-1}$ 
and so gives rise to a positively oriented edge $e_v =(v, w_v)$;
let $E^+$ be the set of all these positive edges and set $E = E^+ \cup E^-$ 
(cf.\,section \ref{ssec:Terminology-graphs}).
Then $(T, E)$ is a $\Z$-tree
and the action of $G$ on $T$ turns it into a $(G,\Z)$-tree.
It can be embedded into an $(G,\R)$-tree 
by replacing the oriented edges $(v, w_v)$ by segments $[v,w_v]$
that are isometric to the unit interval $[0,1] \subset \R$ 
and extending the metric and the action suitably
(see, \eg,  \cite[Cor.\,4.5]{AlBa87}, \cite[Section 1.9]{BH99}, or  \cite[pp.\,44--49]{Chi01}).

\paragraph{Claim (ii)} If the rank of $\chi$ is bigger than 1,
the image of $\chi$ is a dense subgroup of $\R_{\add}$;
the space $(\bar{T}, \bar{d})$, however, need not be path-connected 
and so it may not be an $\R$-tree.
We therefore add to $\bar{T}$ certain points of its metric completion.
One way of controlling these additions consists in resorting to Chiswell's construction of a rooted tree
(see, \eg, \cite[pp.\,299--308]{AlBa87}). 
We shall reach our goal by a different route:
we shall equip $\bar{T}$ with the structure of a measured $G$-tree,
use this structure to define a subspace $(X, d_X)$ of the metric completion of $(\bar{T}, \bar{d})$
and then verify that $(X, d_X)$ carries a canonical $G$-action and is actually a $(G,\R)$-tree.
\smallskip

\emph{Step 1: construction of the measured $G$-tree  $(\bar{T}, \leq', \lambda')$.}
We introduce a relation $\leq'$ on $\bar{T}$ and a function $\lambda' \colon \bar{T} \to \R$ by setting:
\begin{align}
\bar{a}_1 \leq' \bar{a}_2
&\Longleftrightarrow 
\text{there exists } b \in T \text{ with } d(a_1,  b) = 0 \text{ and } b \leq a_2,
\label{eq:Definition-ordering-relation-metric-space-X}\\
\lambda'(\bar{a}) &\hspace*{2mm}= \hspace*{1.5mm}  \lambda(a).
\label{eq:Definition-function-lambda-metric-space-X}
\end{align}
The definition of $\lambda'$ is licit 
because the function $\lambda$ is constant on each ball of radius 0.
Indeed,
suppose $c_1$ and $c_2$ are at distance 0.
For every positive real number $\varepsilon$ 
there exists then a point $c \in T$
which precedes both $c_1$ and $c_2$ and satisfies the inequality 
\[
(\lambda(c_1) - \lambda(c) )+(\lambda(c_2) - \lambda(c) )  < \varepsilon.
\]
This inequality implies that $|\lambda(c_1) - \lambda(c_2)|$ is bounded by $\varepsilon$.

The definition of  $\leq'$ is also licit.
Indeed, 
suppose $\tilde{a}_1$ and $\tilde{a}_2$ are points 
with $d(\tilde{a}_1, a_1) = d(\tilde{a}_2, a_2) = 0$.
Then  $d(\tilde{a}_1, b) = 0$ and two cases arise:
if $\lambda(b) < \lambda(a_2)$,
the definition of the pseudo metric $d$ implies 
that the rays descending from $a_2$ and from $\tilde{a}_2$ both contain $b$ whence $b \leq \tilde{a}_2$.
So suppose that $\lambda(b) = \lambda(a_2)$.
Then the points $b$ and  $a_2$ coincide and so  $d(b, \tilde{a}_2) = 0$,
whence $d(\tilde{a}_1,  \tilde{a}_2) = 0$.
The rôle of the auxiliary element $b$ occurring in equation \eqref{eq:Definition-ordering-relation-metric-space-X}
can therefore be played by $\tilde{a}_2$.
\smallskip

We verify next that $\leq'$ is an order relation.
It is clear that the relation is reflexive.
Suppose that $\bar{a}_1 \leq' \bar{a}_2$ 
and let $b \in T$ be an element with $d(a_1, b) = 0$ and $b \leq a_2$.
Then $\lambda(a_1) \leq \lambda(a_2)$. 
If one has also $\bar{a}_2 \leq' \bar{a}_1$ 
then $\lambda(a_1) = \lambda (a_2)$, 
hence $b = a_2$ and  $\bar{a}_1 = \bar{a}_2$.
To see that the relation is transitive,
assume that $\bar{a}_1 \leq' \bar{a}_2$ and  $\bar{a}_2 \leq' \bar{a}_3$.
There exist then points $b$ and $c$ such that 
$d(a_1,b) = 0$, $ b \leq a_2$ and $d(a_2,c) = 0$, $ c \leq a_3$.
Two cases arise:
if $b = a_2$ then $d(a_1,c) = 0$ and so $\bar{a}_1 \leq' \bar{a}_3$;
otherwise, $\lambda(b) < \lambda(a_2)$ 
whence $b$ lies on the ray descending from $a_3$ and so $b \leq a_3$.
In both cases, $\bar{a}_1\leq' \bar{a}_3$. 
All taken together this proves that $\leq'$ an an order relation.

The fact that $(T,\leq)$ is directed 
and definition \eqref{eq:Definition-ordering-relation-metric-space-X}
immediately imply that  $(\bar{T},\leq')$ is directed. 
In addition,
the fact that $\lambda$ maps each ray $(-\infty, a]$ isometrically  into the interval $(-\infty, \lambda(a)] \subset \R$ and the definition of $\leq'$ guarantee
that $\lambda'$ has the analogous property.
The triple $(\bar{T}, \leq', \lambda')$ is therefore a measured tree;
the group $G$ acts on it by automorphisms, 
for it acts by automorphisms on $(T, \leq, \lambda)$.
It follows 
that the canonical projection $\pi \colon T \epi \bar{T}$ 
is a $G$-equivariant morphism of measured $G$-trees. 
\smallskip

\emph{Step 2: constructions of the metric space $(X,d_X)$ 
and of the measured $G$-tree $(X,\leq_X, \lambda_X)$.}
The metric space $(\bar{T}, \bar{d})$ can embedded into a complete metric space, say $(Y, d_Y)$.
This space is too large for our purpose, 
and so we define a subspace $(X, d_X)$
in which only the rays $(-\infty, \bar{a}]$ of $(\bar{T}, \bar{d})$ are ``completed''.

Let $X$ be the subset made up of the points $y \in Y$ 
that can be represented by a \emph{descending Cauchy-sequences},
\ie, by Cauchy-sequences  
\begin{equation}
\label{eq:Definition-special-sequence}
n \mapsto \bar{a}_n \quad \text{with}\quad  \bar{a}_{n+1}  \leq' \bar{a}_{n} \text{ for all } n \in \N.
\end{equation}
The metric $\bar{d}$ on $\bar{T}$ extends uniquely to a metric $d_X$ on $X$.

The next aim is to equip the metric space $(X, d_X)$ with the structure of a measured tree.
The definition of the measuring function $\lambda_X \colon X \to \R$ is straightforward:
the function $\lambda' \colon \bar{T} \to \R$ is Lipschitz-continuous (with constant 1) 
and so it extends uniquely to a function $\lambda_Y$ on the complete metric space $Y$;
let $\lambda_X \colon X \to \R$ be its restriction to $X$.
Consider now the ray $\bar{R}_{\bar{a}} = (-\infty, \bar{a}]$ descending from a point $\bar{a} \in \bar{T}$.
The function $\lambda'$ maps it bijectively 
onto the subset $(\infty, \lambda'(\bar{a})] \cap \im \chi$ of the real line $\R$.
Let $R_{\bar{a}} \subset X$ be the subset 
containing all the limit points of sequences in $\bar{R}_{\bar{a}}$. 
The extended function $\lambda_X \colon X \to \R$ maps $R_{\bar{a}}$ bijectively and isometrically
onto the interval $(-\infty, \lambda_X(\bar{a})]$ of $\R$. 
We use this bijection to pull back the order relation on the interval $(-\infty, \lambda_X(\bar{a})]$
onto the ``ray'' $R_{\bar{a}}$ 
and to obtain thus an order relation $\leq_X$ on $R_{\bar{a}}$.
This order relation extends the original order relation on the ray $\bar{R}_{\bar{a}}$
and it does not change on $R_{\bar{a}}$ if the upper limit $\bar{a}$ is replaced by a point $\bar{b} \in \bar{T}$
with $\bar{a} \leq' \bar{b}$. 
The union of the order relations on all the ``rays'' $R_{\bar{c}}$ with $\bar{c} \in X$ 
is therefore an order relation on $X$; we denote it by  $\leq_X$. 

For future reference we summarize the above reasonings in
\begin{lem}
\label{lem:Rays-in-X}
Every ray $(-\infty, x]$ descending from a point $x \in X$ is linearly ordered and 
$\lambda_X$ maps it isometrically 
and in an order-preserving way onto the interval $(-\infty,  \lambda_X(x)]$ of $\R$.
\end{lem}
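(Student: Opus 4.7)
The plan is to analyze the descending ray $(-\infty,x]_X=\{y\in X\mid y\leq_X x\}$ of an arbitrary $x\in X$ by approximating $x$ by points of $\bar T$. First I would represent $x$ by a descending Cauchy sequence $(\bar a_n)_{n\in\N}$ in $\bar T$ and observe that $x$ itself lies in every $R_{\bar a_n}$: indeed $x$ is the limit of the tail $(\bar a_k)_{k\ge n}$, which is a sequence in $\bar R_{\bar a_n}=(-\infty,\bar a_n]$. In particular $\lambda_X(x)=\lim_n\lambda'(\bar a_n)\le\lambda'(\bar a_n)$ for all $n$. The key structural fact is that since $\bar a_{n+1}\leq'\bar a_n$, the sets $R_{\bar a_n}$ form a decreasing chain of linearly ordered ``rays'' whose orderings extend one another and on which $\lambda_X$ is, by construction, an order-preserving isometry onto the interval $(-\infty,\lambda'(\bar a_n)]$.

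Next I would establish the identification
\begin{equation*}
(-\infty,x]_X \;=\; \bigcup\nolimits_{n\in\N}\,\bigl\{y\in R_{\bar a_n}\mid y\leq_X x\bigr\}.
\end{equation*}
The inclusion ``$\supseteq$'' is immediate from the definition of $\leq_X$ as the union of the linear orders on the rays $R_{\bar c}$. For ``$\subseteq$'', if $y\leq_X x$ then by definition there is some $\bar c\in\bar T$ with both $y$ and $x$ lying in $R_{\bar c}$ and $y\leq_X x$ within that ray. Since $\bar T$ is directed one can find a common lower bound of $\bar c$ and a chosen $\bar a_n$, and the compatibility statement (recorded just before the lemma) for the orderings of overlapping rays then shows that $y$ lies in $R_{\bar a_n}$ weakly below $x$ for all sufficiently large $n$. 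This is the main technical step, and the only obstacle of any substance: it amounts to matching the abstract definition of $\leq_X$ via an auxiliary $\bar c$ against the explicit linear orders on the rays of the sequence $(\bar a_n)$.

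Once this description is in hand, the conclusions follow quickly. Given $y_1,y_2\in(-\infty,x]_X$, choose $n$ so large that both $y_i$ lie in $R_{\bar a_n}$ below $x$; since $R_{\bar a_n}$ is linearly ordered by $\leq_X$, so are $y_1$ and $y_2$, proving that $(-\infty,x]_X$ is linearly ordered. Because $\lambda_X$ restricted to each $R_{\bar a_n}$ is an order-preserving isometry, the same is true on the directed union, so $\lambda_X$ maps $(-\infty,x]_X$ isometrically and in an order-preserving way into $\R$, with image contained in $(-\infty,\lambda_X(x)]$ by the inequality $\lambda_X(y)\le\lambda_X(x)$ for $y\le_X x$.

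Finally I would verify surjectivity onto $(-\infty,\lambda_X(x)]$. Given any real $r\le\lambda_X(x)$, pick $n$ so large that $\lambda'(\bar a_n)\ge r$; since $\lambda_X$ maps $R_{\bar a_n}$ bijectively onto $(-\infty,\lambda'(\bar a_n)]$, there is a unique $y_n\in R_{\bar a_n}$ with $\lambda_X(y_n)=r$. The compatibility of orders on the nested rays $R_{\bar a_m}\subseteq R_{\bar a_n}$ forces $y_m=y_n$ for all $m\ge n$, so the $y_n$ define a single point $y\in X$ with $\lambda_X(y)=r$; since $\lambda_X(y)\le\lambda_X(x)$ and both lie in $R_{\bar a_n}$, the linearity of the order on $R_{\bar a_n}$ yields $y\leq_X x$, completing the proof.
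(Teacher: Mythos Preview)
Your approach mirrors the paper's: the paper presents the lemma literally as a \emph{summary} of the preceding construction of $R_{\bar a}$ and $\leq_X$, giving no further argument. You have correctly unpacked what that summary should mean for a general $x\in X$, and your identification $(-\infty,x]_X=\{y\in R_{\bar a_0}:\lambda_X(y)\le\lambda_X(x)\}$ is the right target.

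There is, however, a genuine gap in your step 5. Saying that ``$\bar T$ is directed'' and invoking the compatibility statement is not enough to conclude $y\in R_{\bar a_n}$. Directedness only gives you \emph{some} common lower bound $\bar d$ of $\bar c$ and $\bar a_n$; but $\lambda'(\bar d)$ may well be smaller than $\lambda_X(y)$, in which case $y\notin R_{\bar d}$ and you learn nothing. The compatibility statement (orders on $R_{\bar a}$ and $R_{\bar b}$ agree when $\bar a\leq'\bar b$) only relates \emph{comparable} base points.

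What closes the gap is the metric. Since $x\in R_{\bar c}\cap R_{\bar a_0}$, choose $\bar p\in\bar R_{\bar a_0}$ and $\bar q\in\bar R_{\bar c}$ with $\bar d(\bar p,x),\bar d(\bar q,x)<\varepsilon$; then $\bar d(\bar p,\bar q)<2\varepsilon$, and the defining formula for $\bar d$ produces a common lower bound $\bar e\leq'\bar p,\bar q$ with $\lambda'(\bar e)>\lambda_X(x)-2\varepsilon$. Taking $\varepsilon$ so small that $\lambda'(\bar e)>\lambda_X(y)$, you get $y\in R_{\bar e}$ (by uniqueness of the point with $\lambda_X$-value $\lambda_X(y)$ inside $R_{\bar c}\supseteq R_{\bar e}$), and hence $y\in R_{\bar e}\subseteq R_{\bar a_0}$. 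With this correction your argument goes through; the paper, for its part, leaves this entire verification to the reader.
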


The triple $(X, \leq_X, \lambda_X)$ is thus a measured tree.
The facts that $G$ acts by isometries on the metric space $(\bar{T}, \bar{d})$ 
and permutes the rays of the measured tree $(\bar{T}, \leq', \lambda')$, finally,  implies 
that $G$ acts by isometries on the metric space $(X, d_X)$ 
and by morphisms on the measured tree $(X, \leq_X, \lambda_X)$.
Moreover, 
the inclusion $\mu \colon \bar{T} \incl X$ is a $G$-equivariant morphism of measured $G$-trees. 
\smallskip

\emph{Step 3: verification that $(X,d_X)$ is a $(G,\R)$-tree.}
By Steps 1 and 2 we know 
that the composite $\varphi = \mu \circ \pi$  is a $G$-equivariant morphism 
of the measured $G$-tree  $T(G,\XX,\chi)$ into the measured $G$-tree $(X,\leq_X,\lambda_X)$
and that $X$ is equipped with a metric $d_X$. This metric satisfies the equation
\[
d_X(x_1,x_2) = \inf\{\lambda_X(x_1) + \lambda_X(x_2) - 2 \lambda_X(x) \mid  x\leq_X x_1 \text{ and } x \leq_X x_2 \}.
\]

To verify that $(X,d_X)$ is an $\R$-tree,
we have to \emph{check two properties}:
given points $x_1$, $x_2$ of $X$ 
there  must exist a \emph{unique geodesic segment} $[x_1,x_2]$ from $x_1$ to $x_2$,
and given three points $x_1$, $x_2$ and $x_3$ 
such that the geodesic segments $[x_1, x_2]$ and $[x_2, x_3]$ have only the point $x_2$ in common, 
\emph{the union $[x_1,x_2] \cup [x_2, x_3]$ must be the geodesic segment from $x_1$ to $x_3$}
(see, e.\;g., \cite[p. 276, items  2.3 and  2.5]{Sha87} or \cite[Definition 2.9]{AlBa87}).

Let $x_1$ and $x_2$ be points of $X$. 
For the proof of the \emph{existence} of a geodesic arc from $x_1$ to $x_2$,
two cases will be distinguished:
if $x_1 \leq_X x_2$ or $x_2 \leq_X x_1$, the claim follows directly from Lemma
\ref{lem:Rays-in-X};
otherwise, the local compactness of the rays implies 
that there exists a point $x$ satisfying the properties
\[
x \leq_X x_1, \quad x \leq_X x_2 \text{ and } d_X(x_1, x_2) 
= (\lambda_X(x_1) - \lambda_X(x)) + (\lambda_X(x_2) - \lambda_X(x)).
\]
Set $r_1 = d_X(x, x_1)$.
Lemma \ref{lem:Rays-in-X} then permits one to define a path
\[
\gamma \colon [0, d_X(x_1,x_2)] \to (\-\infty, x_1] \cup (\-\infty, x_2],
\]
given by the formula
\begin{equation}
\gamma(r) = 
\begin{cases} 
\text{point $z$ on $(\-\infty, x_1]$ with } \lambda(x_1) - \lambda(z) = r
&\text{if } r \leq r_1, \\
\text{point $z$ on $(\-\infty, x_2]$ with } \lambda(z) - \lambda(x) = r -r_1 
&\text{ if } r > r_1.
\end{cases}
\label{eq:Definition-geodesic-with-comparable-end-points}
\end{equation}
This path is a geodesic arc (cf.\,\cite[Lemma 2.1.12]{Chi01}).

We next show
that the geodesic arc between two points is \emph{unique}.
The strategy will be similar to that used in the proof of Lemma 2.1.12 in Chiswell's monograph \cite{Chi01}.
Let $[x_1,x_2]$ denote the image of the previously constructed geodesic arc $\gamma$ from $x_1$ to $x_2$.
This set contains for every real $r \in[0, d_X(x_1, x_2)]$ a unique point $x$ with  $d_X(x_1,x) = r$.
Consider now an arbitrary geodesic arc $\gamma'$ form $x_1$ to $x_2$.
If its image lies in $[x_1,x_2]$,  the geodesic arc $\gamma'$  must coincide with $\gamma$.
Otherwise, it contains a point $z$ outside $[x_1, x_2]$. 
Two cases then arise.

If $x_1$ and $x_2$ are \emph{comparable}, 
we may assume that $x_2 \leq_X x_1$.
The rays descending from $x_1$ and $z$, respectively,
intersect then in a ray $(-\infty, u]$.
If $x_1=u$, then $x_1 <_X z$ and so
\[
 d_X(x_1,x_2) = d_X(x_1, z) + d_X(z,x_2) = 2 d_X(x_1, z) + d_X(x_1, x_2),
\]
a contradiction. 
If $u < x_2$ one arrives at a similar contradictory situation.
If, finally, $x_2 \leq_X u \leq_X x_1$, 
one deduces the equality
$d_X(x_1, x_2) = 2 d_X(u, z) + d_X(x_1, x_2)$, 
a third contradiction.

Assume now that $x_1$ and $x_2$ are \emph{not comparable}
and let $x$ be the point where the rays descending form $x_1$, respectively $x_2$, meet.
Various cases arise;
they are summarized in Figure \ref{fig:Uniqueness-geodesics}.
Each of these cases leads to a contradiction in the manner exemplified before.
All taken together, we have thus shown that
there is a unique geodesic arc from a given point $x_1$ to a given point $x_2$.
\begin{figure}[h]
\psfrag{xx}{\hspace*{-1mm}\small $x$}
\psfrag{xx1}{\hspace*{1mm}\small $x_1$}
\psfrag{xx2}{\hspace*{1mm} \small $x_2$}
\psfrag{zz1}{\hspace*{-1mm} \small $z_1$}

\psfrag{zz3}{\hspace*{-0.5mm}\small $z_2$}
\psfrag{zz4}{\hspace*{-0.3mm}\small $z_3$}
\psfrag{zz5}{\hspace*{-1.3mm}\small $z_4$}
\psfrag{zz6}{\hspace*{-0.5mm}\small $z_5$}
\psfrag{uu3}{\hspace*{-0.5mm}\small $u_2$}
\psfrag{uu4}{\hspace*{-0.5mm}\small $u_3$}
\psfrag{uu5}{\hspace*{-1.2mm}\small $u_4$}
\begin{center}
\includegraphics[width=9cm]{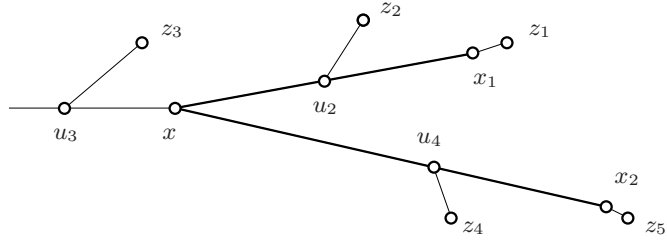}
\caption{Proof of the uniqueness of the geodesics in $(X, d_X)$}
\label{fig:Uniqueness-geodesics}
\end{center}
\end{figure}

We are left with the verification of the second characteristic property.
Let $x_1$, $x_2$, $x_3$ be three points of $X$
and let $[x_1, x_2]$ denote the geodesic segment with endpoints $x_1$, $x_2$,
and  $[x_2,x_3]$ the segment with endpoints $x_2$, $x_3$.
Suppose these segments have only the point $x_2$ is common;
we claim that $[x_1, x_2] \cup [x_2,x_3]$ is a geodesic segment.
This is clearly the case if $x_2 \leq_X x_1$ or if $x_2 \leq_X x_3$.
If, on the other hand, neither $x_2 \nleq_X x_1$ nor $x_2 \nleq_X x_3$ were valid,
there would exist a point $x <_X x_2$ 
so that the segment $[x,x_2]$ would be contained  both in $[x_1,x_2]$ and in $[x_2,x_3]$,
a flagrant contradiction to the assumption 
that the segments $[x_1, x_2]$ and $[x_2,x_3]$ meet only in the point $x_2$.

\paragraph{Claim (iii)}  Assume first that $\rk \chi > 1$; then $\im \chi$ is a dense subset of $\R$.
Let $L$ be a $G$-invariant line in  the $(G,\R)$-tree $(X, d_X)$.
Our first aim is to prove 
that $L$ is a linearly ordered subset of the measured tree $(X, \leq_X, \lambda_X)$.

Fix an element $h \in H$ with $\chi(h) >0$. 
Since $h$ is an automorphism of the measured tree $(X, \leq_X, \lambda_X)$,
the equation
\begin{equation}
\label{eq:Equivariance-lambda-new}
\lambda_X(h.y) = \chi(h) + \lambda_X(y) > \lambda_X(y)
\end{equation}
holds for every point $y \in X$. 
This equation shows that $h$ has no fixed points
and so the isometry $\alpha_h$ induced by $h$ the on $(G, \R)$-tree $(X, d_X)$ is hyperbolic
(cf.\,\cite[p.\,82]{Chi01}).
The axis $L_h$ of $h$ is a geodesic line; 
it coincides with $L$ and can be written as s union of segments
\[
\bigcup\nolimits_{j \in \Z} [h^j.x,  h^{j+1}.x] \,;
\]
here $x$ is an arbitrary point of $L_h$
(cf.\;\cite[Thm.\,3.1.4]{Chi01}).
Consider now the segment $[x, h.x]$ of $X$.
According to Step 3 in the construction of $(X, d_X)$,
this segment contains a point $z$ 
so that both subsegments $[x,z]$ and $[z, h.x]$ are linearly ordered.
If $z$ is the endpoint $h.x$, then $x$ lies on the ray $(-\infty, h.x]$ descending from $h.x$.
Otherwise, 
there would exist an element $z \in X$ with $z \leq_X x$ and $<_X h.x$, hence $h.z \leq_X h.x$. 
So $z$ and $h.z$ would both belong to the ray descending from $h.x$.
Equation \eqref{eq:Equivariance-lambda} with $y = z$ would then show that $z < h.z$,
and so the computation
\[
d_X(z, h.z) < d_X (x,z) +d_X(z, h.z) +d_X(h.z, h,x) = d_X(x, h.x)
\]
would be valid; it would contradict the fact that $h$ acts on its axis by a translation. 

The above argument shows that $x \leq_X h.x$ for every $x \in L$. 
So $L$ is contained in the axis $A_h$ of $h$ in the measured tree $(X, \leq_X, \lambda_X)$;
since $L$ contains the orbit $\gp(h).x$ for every $x \in L$, 
the line $L$ actually coincides with the axis $A_h$ of $h$.
Lemma \ref{lem:Rays-in-X} next shows 
that the  function $\lambda_X$ maps the axis $A_h$ bijectively onto $\R$ 
and so $L$ contains points of the measured subtree  $(\bar{T}, \leq', \lambda')$.
Since $\bar{T}$ is a $G$-invariant subset of $X$,
the intersection $\bar{A}_h = \bar{T} \cap A_h$ is $G$-invariant 
and it is the axis of the action of $h$ on $\bar{T}$.
As $G$ acts transitively on the tree $T$, hence on the quotient $\bar{T}$ of $T$,
the tree $\bar{T}$ coincides therefore with the axis $\bar{A}_h$ of $h$,
and so it is a line.

But if so, \emph{the tree $T$ must be a line}.
Indeed:  the axis $A_h$ of $h$ in the measured tree $T = T(G, \XX, \chi)$ is a line
and every point $a_1 \in T$ is at distance 0 from some point $a_2 \in A_h$ 
(this follows from the definition of $\bar{T}$ and the equality $\bar{T} = \bar{A}_h$).
The definition \eqref{eq:Defining-pseudo-metric-on-T} of the pseudo metric $d$ then implies that
$h^{-1}.a_1$ and $h^{-1}.a_2$ lie on the ray descending from $a_2$
and so they coincide. So $a_1 = h.(h^{-1}(a_1)) = a_2 \in A_h$.
\smallskip

Assume now that  $\rk \chi = 1$. 
We may and shall assume that $\im \chi = \Z$.
The measured tree $T = T(G, \XX, \chi)$ can then be viewed as a $\Z$-tree,
with each edge $(v,w)$ of length 1 and $\lambda(v) = \lambda(w) + 1$
(cf.\,proof of claim (i)). 
The tree $T$ is a subtree of the $(G, \R)$-tree $(X, d_X)$
and the definition of the metric $d_X$ implies 
that every isometric embedding $\varphi \colon R \mono X$ passes through a point of this subtree $T$.

Let $L$ be a $G$-invariant line of $(X, d_X)$.
Then $L$ is the image of an isometric embedding $\varphi \colon R \mono X$; 
by the previous remark it thus contains a point $a_0 \in T$.
Choose $t \in G$ with $\chi(t) = 1$. 
It then follows as in the previous part 
that $t$ acts on $(X, d_X)$ by a hyperbolic isometry
and that it acts on the line $L$ by a translation, say with amplitude $\alpha$.
Since $t.a_0 \in T$ this amplitude is a positive integer.
It is actually 1, 
for otherwise the segment $[a_0, t.a_0] \subset X$ would contain 
more than two elements of $T$ 
and one could find, as before, an element $z \in [a_0, t.a_0]$ with $d_X(z, t.z) < c$. 
The segment $[a_0, h.a_0]$ is therefore 
the geometric realization of an edge of the $\Z$-graph $T = T(G, \XX, \chi)$.
It follows, first,  that the line $L$ is linearly ordered 
and then that the intersection $T \cap L$ is a non-empty, linearly ordered $G$-invariant subset of $T$.
Since $G$ acts transitively on $T$ this means that $T$ is linearly ordered and thus a line.
\end{proof}

\subsection{Some applications} 
\label{ssec:Some-applications-of-Browns-characterization}
%
Theorem \ref{thm:Characterization-Sigma1-via-actions-on-trees}
is a very powerful tool 
that allows one to establish,
by pleasant geometric arguments,
that a point $[\chi]$ belongs to the invariant $\Sigma^1(G)$ of the group under consideration.
In the section, this assertion will be illustrated by some examples.
%
\subsubsection{Application 1: joins of subgroups}
\label{sssec:Join-subgroups-2}
We begin with a generalization of Proposition \ref{prp:Sigma-1-join-subgroups}.
\begin{prp}
\label{prp:Sigma-1-join-subgroups-2}
Assume $G$ is generated by a (non-empty) collection  of subgroups $\{ G_v  \mid v \in V \} $.
Given a non-zero character $\chi \colon G \to \R$,
let $\GG(\chi)$ denote the combinatorial graph with vertex set $V$ 
and edge set the set of those pairs $\{u,v\}$ for which $\chi$ is non-zero on the intersection $G_u \cap G_v$.
If the conditions
\begin{enumerate}[(i)]
\item for every $v \in V$,
the restriction of $\chi$ to $G_v$ is non-zero and represents a point of $\Sigma^1(G_v)$,
\item the graph $\GG(\chi)$ is connected
\end{enumerate}
are satisfied, then $\chi$  represents a point of $\Sigma^1(G)$.
\end{prp}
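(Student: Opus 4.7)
The plan is to exploit the characterization of $\Sigma^1$ afforded by Theorem \ref{thm:Characterization-Sigma1-via-actions-on-trees}, in the form of statement (iii): to show that $[\chi] \in \Sigma^1(G)$, it suffices to verify that every measured $G$-tree $(T, \leq, \lambda)$ whose associated character equals $\chi$ contains a $G$-invariant line. The argument for two subgroups uses Cayley graph word calculus, which is ill-suited when $V$ is infinite or when the subgroups $G_v$ are not themselves finitely generated; the tree-theoretic characterization sidesteps both difficulties.

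So I would fix an arbitrary measured $G$-tree $(T, \leq, \lambda)$ with $\chi_T = \chi$ and construct inside it a single line $L \subseteq T$ that is invariant under each $G_v$ with $v \in V$. First, for each vertex $v \in V$, restriction turns $(T, \leq, \lambda)$ into a measured $G_v$-tree with associated character $\chi|_{G_v}$, which by hypothesis (i) is non-zero and represents a point of $\Sigma^1(G_v)$. Applying (iii) of Theorem \ref{thm:Characterization-Sigma1-via-actions-on-trees} to $G_v$ produces a $G_v$-invariant line $L_v \subseteq T$. Pick any hyperbolic element $g \in G_v$ (which exists because $\chi|_{G_v} \neq 0$); by part (iii) of Lemma \ref{lem:Properties-A-sub-g-general-case}, the line $L_v$ is forced to coincide with the axis $A_g$, since an axis is the unique $\gp(g)$-invariant line.

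The crucial step, and the place where hypothesis (ii) enters, is the following rigidity observation. Suppose $\{u,v\}$ is an edge of $\GG(\chi)$, so that $\chi$ does not vanish on $G_u \cap G_v$. Pick a hyperbolic element $h \in G_u \cap G_v$. Since $h$ is hyperbolic and lies in $G_u$, the $G_u$-invariant line $L_u$ must be the axis $A_h$ of $h$, again by part (iii) of Lemma \ref{lem:Properties-A-sub-g-general-case}; for the same reason $L_v = A_h$. Hence $L_u = L_v$. Because $\GG(\chi)$ is connected, traversing any path in it yields $L_u = L_v$ for all $u, v \in V$. Call this common line $L$.

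Finally, since $L = L_v$ is $G_v$-invariant for every $v \in V$ and $G$ is generated by $\bigcup_{v \in V} G_v$, the line $L$ is $G$-invariant. Theorem \ref{thm:Characterization-Sigma1-via-actions-on-trees} then yields $[\chi] \in \Sigma^1(G)$. I do not foresee a serious obstacle: the heavy lifting — constructing $\R$-trees out of Cayley graphs, and comparing morphisms of measured trees — has already been absorbed into the characterization theorem, and what remains is the clean geometric observation that a hyperbolic element determines its axis uniquely, so shared hyperbolic elements force shared axes.
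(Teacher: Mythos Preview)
Your proposal is correct and follows essentially the same approach as the paper's own proof: both use the characterization of $\Sigma^1$ via measured $G$-trees (Theorem \ref{thm:Characterization-Sigma1-via-actions-on-trees}), obtain a $G_v$-invariant line $L_v$ for each $v$, invoke the uniqueness of axes (Lemma \ref{lem:Properties-A-sub-g-general-case}(iii)) to force $L_u = L_v$ whenever $\{u,v\}$ is an edge of $\GG(\chi)$, and then use connectedness to conclude that all the $L_v$ coincide and give a $G$-invariant line.
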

\index{Computation of Sigma1@Computation of $\Sigma^1$ for!joins of subgroups}
\begin{proof}
Assume conditions (i) and (ii) are satisfied.
If $V$ has only one element, then $G = G_v$ and so the claim holds by assumption (i);
so suppose $\card(V) > 1$ and consider a measured $G$-tree $(T, \leq, \lambda)$ 
with associated character $\chi$. 
Our aim is to find a $G$-invariant line in this tree.
By restricting the action of $G$ on $(T, \leq, \lambda)$ to that of a subgroup $H$ 
one obtains a measured $H$-tree.
If this observation is applied to one of the subgroups $G_v$ with $v \in V$,
hypothesis (i) implies that 
the restriction $\chi_v$ of $\chi$ to $G_v$ is non-zero and $[\chi_v] \in G_v$;
by Theorem \ref{thm:Characterization-Sigma1-via-actions-on-trees} 
the measured $G_v$-tree $T$ contains therefore an invariant line $L_v$.
This line coincides with the axis of every hyperbolic element $h_v \in G_v$
(cf.\,claim (iii) in Lemma \ref{lem:Properties-A-sub-g-general-case}).

Consider next an edge $\{u, v\}$ in the graph $\GG(\chi)$.
Then $G_u \cap G_v$ will contain a hyperbolic element $h_{uv}$; 
let $A_{uv}$ be its axis.
As $L_u = A_{uv}$, the axis $A_{uv}$ is $G_u$ invariant;
since $L_v = A_{uv}$, the axis $A_{uv}$ is $G_v$ invariant, too, 
and so it is invariant under the join $\gp(G_u \cup G_v)$ of these two subgroups.

Fix now a vertex $v_0$ and let $L_0$ an invariant line of $G_{v_0}$.
The hypotheses that $\GG(\chi)$ be connected and an obvious induction then allow one to see
that  $L_0$ is invariant under every subgroup $G_v$ with $v \in V$ 
and so under $G$.

As the previous argument is valid for every measured $G$-tree,
implication $(iii) \Rightarrow (i)$ of Theorem \ref{thm:Characterization-Sigma1-via-actions-on-trees} 
applies and shows that $[\chi] \in \Sigma^1(G)$.
\end{proof}

\begin{remarks}
\label{remarks:thm-Sigma-1-join-subgroups}
a) In the above proof, we considered an arbitrary measured $G$-tree; 
one could equally well have used the measured trees $T(G, \XX, \chi)$ associated to $(G, \XX, \chi )$ 
with $\XX$ ranging over the generating sets of $G$.

b) The previous proof is phrased in terms of measured $G$-trees.
One could also use $(G,\R)$-trees with a fixed end and associated character $\chi$.
Whether one type of tree or the other is employed seems to be merely a matter of taste;
indeed, I know of no instance 
where a proof that a character $\chi$ represents a point of $\Sigma^1$ 
which is based on actions on one type of tree cannot be transformed in a straightforward manner 
into a proof using actions on the other type of tree.

Things can be different if one uses Proposition \ref{prp:Sigma-1-join-subgroups-2}
to establish that a given character does \emph{not} represent a point in $\Sigma^1$;
then a tree without an invariant line of only one type may be close at hand.
\end{remarks}

\subsubsection{Application 2: subnormal subgroups}
\label{sssec:Subnormal-subgroups}
Our second result generalizes Lemma \ref{lem:Abelian-normal-subgroup}:
\begin{prp}
\label{prp:Sigma-1-subnormal-subgroups}
Let $N$ be a subnormal subgroup of the group $G$ 
and let $\chi \colon G \to \R$ be a character
that does not vanish on $N$. 
Assume the restriction $\chi_{|N}$ of $\chi$ represents a point of $\Sigma^1(N)$.
Then $[\chi] \in \Sigma^1(G)$.
\end{prp}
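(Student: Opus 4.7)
The plan is to apply the characterization of $\Sigma^1$ via actions on measured trees (Theorem \ref{thm:Characterization-Sigma1-via-actions-on-trees}, implication $(iii)\Rightarrow(i)$), after first reducing to the case where $N$ is \emph{normal} in $G$ by induction on the length of a subnormal series $N = N_0 \triangleleft N_1 \triangleleft \cdots \triangleleft N_k = G$. Since $\chi$ does not vanish on $N$, its restriction to each intermediate subgroup $N_i$ is non-zero, so it suffices to prove the following one-step statement: if $N \triangleleft H$, $\chi \colon H \to \R$ is non-zero on $N$, and $[\chi|_N] \in \Sigma^1(N)$, then $[\chi] \in \Sigma^1(H)$. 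Iterating $k$ times along the subnormal series then yields the proposition.

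To prove the one-step statement, I would take an arbitrary measured $H$-tree $(T, \leq, \lambda)$ whose associated character is $\chi$, and exhibit an $H$-invariant line in it. Restricting the action to $N$ produces a measured $N$-tree with associated character $\chi|_N$, which is non-zero; by hypothesis and the characterization $(i)\Rightarrow(iii)$, this restricted tree contains an $N$-invariant line $L$. The key step is then to show that this $L$ is automatically $H$-invariant.

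For this, I would use Lemma \ref{lem:Properties-A-sub-g-general-case}(iii): pick $h \in N$ with $\chi(h) \neq 0$, so $h$ is hyperbolic and its axis $A_h$ is the \emph{unique} $\langle h \rangle$-invariant line in $T$. Any $N$-invariant line is in particular $\langle h \rangle$-invariant, hence equals $A_h$; thus $L = A_h$. Now for any $g \in H$, the translate $g.L$ is again a line (because $g$ acts by an automorphism of the measured tree), and it is $N$-invariant: for $n \in N$, normality of $N$ in $H$ gives $g^{-1}ng \in N$, whence $n.(g.L) = g.((g^{-1}ng).L) = g.L$. By uniqueness this forces $g.L = L$, so $L$ is $H$-invariant.

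The main (and only non-routine) obstacle is ensuring that the $N$-invariant line provided by the hypothesis $[\chi|_N] \in \Sigma^1(N)$ is the \emph{same} line as any of its $H$-translates; this is precisely where normality of $N$ combines with the uniqueness clause in Lemma \ref{lem:Properties-A-sub-g-general-case}(iii). Once $H$-invariance of $L$ is established, the implication $(iii)\Rightarrow(i)$ of Theorem \ref{thm:Characterization-Sigma1-via-actions-on-trees} concludes the argument, and the outer induction on the subnormal length then delivers the proposition in full generality.
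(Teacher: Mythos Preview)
Your proof is correct and follows essentially the same approach as the paper's: both restrict a measured $G$-tree to $N$, obtain an $N$-invariant line, use normality together with the uniqueness of the axis of a hyperbolic element in $N$ (Lemma \ref{lem:Properties-A-sub-g-general-case}(iii)) to propagate invariance up to the normalizer, and then iterate along a subnormal chain. The only cosmetic difference is that you set up an explicit induction on the subnormal length, whereas the paper works inside a single measured $G$-tree and climbs the chain directly; your version has the virtue of making the uniqueness step fully explicit.
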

\index{Computation of Sigma1@Computation of $\Sigma^1$ for!groups with subnormal subgroups}

\begin{proof}
Let $(T, \leq, \lambda)$ be a measured $G$-tree with character $\chi$.
View it as a measured $N$-tree by restricting the action.
In view of Theorem \ref{thm:Characterization-Sigma1-via-actions-on-trees}
and the hypothesis that $[\chi_{|N}] \in \Sigma^1(N)$,
the measured $N$-tree $T$ contains an $N$-invariant line $L$.
Moreover,
if $g \in G$ is an element which normalizes $N$ 
then $g.L$ is invariant under $g Ng^{-1}$.
As $g Ng^{-1}  = N$,
this shows
that $L$ is invariant under the normalizer of $N$.

If the previous argument is applied to successive members of a chain 
$N = N_0 < N_1 < \cdots <  N_\ell = G$ 
of subgroups in $G$ which testifies that $N$ is a subnormal subgroup,
one sees first that $L$ is invariant under $N_1$, then under  $N_2$, and finally under $G$.
Implication $(iii) \Rightarrow (i)$ of Theorem \ref{thm:Characterization-Sigma1-via-actions-on-trees}, 
then permits one to conclude that $[\chi] \in \Sigma^1(G)$.
\end{proof}

\begin{crl}
\label{crl:Sigma-1-subnormal-subgroups-with-Sigma1=S}
Let $N$ be a subnormal subgroup of the group $G$.
If  $\Sigma^1(N) = S(N)$
then $\Sigma^1(G)$ contains the complement of the subsphere $S(G,N)$.
\end{crl}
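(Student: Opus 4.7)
The plan is to deduce the corollary directly from Proposition \ref{prp:Sigma-1-subnormal-subgroups} by unpacking what the hypothesis $\Sigma^1(N) = S(N)$ gives us for each point in the complement of $S(G,N)$.

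First I would translate the conclusion into pointwise form: showing that $S(G,N)^c \subseteq \Sigma^1(G)$ means showing that every character $\chi \colon G \to \R$ with $\chi(N) \neq \{0\}$ represents a point $[\chi] \in \Sigma^1(G)$. So I would fix such a character $\chi$ and verify the two hypotheses of Proposition \ref{prp:Sigma-1-subnormal-subgroups} for it.

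The first hypothesis, namely that $\chi$ does not vanish on $N$, holds by the very choice of $\chi$, since $[\chi]$ lies in the complement of $S(G,N) = \{[\psi] \in S(G) \mid \psi(N) = \{0\}\}$. For the second hypothesis, I would note that $\chi_{|N}$ is a non-zero character of $N$, so it defines a genuine point $[\chi_{|N}] \in S(N)$; and by the assumption $\Sigma^1(N) = S(N)$, this point automatically lies in $\Sigma^1(N)$.

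Having verified both hypotheses, Proposition \ref{prp:Sigma-1-subnormal-subgroups} applies and yields $[\chi] \in \Sigma^1(G)$. Since $\chi$ was an arbitrary character representing a point of $S(G,N)^c$, this establishes the inclusion $S(G,N)^c \subseteq \Sigma^1(G)$ and completes the proof. There is no real obstacle here — the whole content of the corollary is a clean pointwise application of the preceding proposition, with the hypothesis $\Sigma^1(N) = S(N)$ serving precisely to guarantee, uniformly in $\chi$, that the restriction condition $[\chi_{|N}] \in \Sigma^1(N)$ is automatic whenever $\chi_{|N} \neq 0$.
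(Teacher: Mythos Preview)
Your proof is correct and is exactly the argument the paper intends: the corollary is stated without proof immediately after Proposition~\ref{prp:Sigma-1-subnormal-subgroups}, and the intended deduction is precisely the pointwise application you describe.
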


\begin{remarks}
\label{remarks:prp-Sigma-1-subnormal-subgroups}
a) Proposition \ref{prp:Sigma-1-subnormal-subgroups} 
and its corollary are taken from the monograph \cite{BiSt92} (see section II.4.3).

b) The hypotheses that $\Sigma^1(N)$ be the entire sphere $ S(N)$ holds, in particular, 
if $N$ is nilpotent or,  more generally, \emph{locally nilpotent}.
This follows, \eg, from Corollary \ref{crl:Sigma1-Group-with-no-free-submonoid}
and Remark \ref{examples:No-free-submonoid}b; it can also be derived from the fact 
that the derived group of a finitely generated nilpotent subgroup is finitely generated 
and Proposition \ref{prp:Sigma-1-join-subgroups-2}.
We thus see that Corollary  \ref{crl:Sigma-1-subnormal-subgroups-with-Sigma1=S} 
generalizes lemma \ref{lem:Abelian-normal-subgroup} 
(in that Lemma, $N$ is assumed to be an abelian normal subgroup of $G$).
\end{remarks}
%
\subsubsection{Application 3:  $\Sigma^1$ of a direct product}
\label{sssec:Illustration-invariant-direct-product}
%
As a third application, 
we establish a formula for the complement of the invariant of a direct product of arbitrary groups;
in section \ref{sssec:Invariant-direct-product} 
the same formula has already been shown to hold for direct products of finitely generated groups.
\begin{prp}
\label{prp:Sigma1-direct-product-generalization}
Let $G = G_{1}\times G_{2}$ be the direct product of two arbitrary groups
and let $\pi_{1}$, $\pi_2$ denote the canonical projections onto the factors.
Then
\begin{equation}
\label{eq:Sigma1-direct-product-bis}
\Sigma^1(G_1 \times G_2)^c = \pi^*_1(\Sigma^1(G_1))^c \cup \pi^*_2(\Sigma^1(G_2))^c .
\end{equation}
\end{prp}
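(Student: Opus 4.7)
The plan is to generalize the finite-generation argument of Proposition \ref{prp:Sigma1-direct-product} by replacing its word-theoretic step with the tree-action characterization of $\Sigma^1$ (Theorem \ref{thm:Characterization-Sigma1-via-actions-on-trees}) together with the commuting-axes lemma. Split a non-zero character $\chi \colon G \to \R$ into its restrictions $\chi_i = \chi \circ (G_i \incl G)$; via the canonical isomorphism $\Hom(G,\R) \iso \Hom(G_1,\R) \oplus \Hom(G_2,\R)$ exactly one of the following holds: (a) $\chi_2 = 0$ and $\chi_1 \neq 0$, i.e.\ $[\chi] \in \pi_1^*(S(G_1))$; (b) the symmetric situation $[\chi] \in \pi_2^*(S(G_2))$; or (c) both $\chi_1$ and $\chi_2$ are non-zero, so $[\chi]$ lies outside $\pi_1^*(S(G_1)) \cup \pi_2^*(S(G_2))$ and hence outside the right-hand side of the claimed formula automatically.

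In case (a) I want to show $[\chi] \in \Sigma^1(G) \Longleftrightarrow [\chi_1] \in \Sigma^1(G_1)$. The implication $\Rightarrow$ follows directly from Corollary \ref{crl:Sigma1-epimorphism} applied to the epimorphism $\pi_1$. For the converse, take any measured $G$-tree $(T, \leq, \lambda)$ with associated character $\chi$; restricting the action to $G_1$ yields a measured $G_1$-tree with associated character $\chi_1$. Assuming $[\chi_1] \in \Sigma^1(G_1)$, implication (i)$\Rightarrow$(iii) of Theorem \ref{thm:Characterization-Sigma1-via-actions-on-trees} furnishes a $G_1$-invariant line $L \subset T$, and by claim (iii) of Lemma \ref{lem:Properties-A-sub-g-general-case} this $L$ is the axis $A_{h_1}$ of any hyperbolic $h_1 \in G_1$ (pick $h_1$ with $\chi_1(h_1) > 0$, possible since $\chi_1 \neq 0$). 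Each $g_2 \in G_2$ commutes with $h_1$, so Lemma \ref{lem:Axes-commuting-elements} gives $g_2.L = L$; since $L$ is already $G_1$-invariant, it is invariant under $\gp(G_1 \cup G_2) = G$. Another appeal to Theorem \ref{thm:Characterization-Sigma1-via-actions-on-trees} then yields $[\chi] \in \Sigma^1(G)$. Case (b) is symmetric.

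In case (c) I claim directly that $[\chi] \in \Sigma^1(G)$, which will complete the proof since then $[\chi] \notin \Sigma^1(G)^c$ matches $[\chi] \notin $ RHS. Let $(T, \leq, \lambda)$ be any measured $G$-tree with associated character $\chi$, and choose $h_i \in G_i$ with $\chi_i(h_i) > 0$ for $i = 1, 2$. Both $h_1$ and $h_2$ are hyperbolic in $T$ and so possess well-defined axes $A_{h_1}, A_{h_2}$. Because $h_1$ and $h_2$ commute and both are hyperbolic, the second assertion of Lemma \ref{lem:Axes-commuting-elements} gives $A_{h_1} = A_{h_2}$; denote this common line by $L$. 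For any $g_1 \in G_1$, the element $g_1$ commutes with the hyperbolic element $h_2$, so $g_1$ preserves $A_{h_2} = L$; symmetrically, every $g_2 \in G_2$ preserves $A_{h_1} = L$. Hence $L$ is $G$-invariant, and Theorem \ref{thm:Characterization-Sigma1-via-actions-on-trees} delivers $[\chi] \in \Sigma^1(G)$.

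The only real obstacle is case (a), because the reverse implication there was, in the finitely generated setting of Proposition \ref{prp:Sigma1-direct-product}, proved by a concrete Cayley-graph reordering argument that has no natural infinitely generated analogue. Substituting that argument with the axis-preservation trick (Lemma \ref{lem:Axes-commuting-elements}) is precisely where the tree-action framework pays off, and it is what allows the hypothesis of finite generation to be dropped. Once the three cases are assembled, collecting equivalences gives the desired set equality $\Sigma^1(G_1 \times G_2)^c = \pi_1^*(\Sigma^1(G_1)^c) \cup \pi_2^*(\Sigma^1(G_2)^c)$.
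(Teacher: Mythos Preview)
Your proof is correct and follows essentially the same route as the paper: the same three-way case split on which restrictions $\chi_i$ vanish, the same appeal to Theorem \ref{thm:Characterization-Sigma1-via-actions-on-trees}, and the same use of Lemma \ref{lem:Axes-commuting-elements} to propagate invariance of the axis from one factor to the other. The only difference is in case~(a): for the direction $[\chi]\in\Sigma^1(G)\Rightarrow[\chi_1]\in\Sigma^1(G_1)$ you invoke Corollary \ref{crl:Sigma1-epimorphism}, which in the paper is stated only for finitely generated groups, whereas the paper argues this direction directly by pulling back along $\pi_1$ a measured $G_1$-tree with no invariant line to obtain a measured $G$-tree with no invariant line; this tree argument is the clean way to handle the forward direction in the infinitely generated setting.
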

\index{Computation of Sigma1@Computation of $\Sigma^1$ for!direct products}
\begin{proof}
Let $\chi = (\chi_1, \chi_2) \colon G_1 \times G_2 \to \R$ be a non-zero character.
Assume first that the restrictions $\chi_i \colon G_i \to \R$ are both non-zero
and let $g_1 \in G_1$ and $g_2 \in G_2$ be elements with non-zero $\chi$-value.
Consider now a measured $G$-tree $(T, \leq, \lambda)$
and let $A_1$ be the axis of the hyperbolic element $g_1$.
Since $g_1$ is centralized by $G_2$,
Lemma \ref{lem:Axes-commuting-elements} shows that the axis is $G_2$-invariant;
moreover, since $\chi(g_2) \neq 0$ this lemma tells us also 
that $A_1$ coincides with the axis $A_2$ of $g_2$.
But if so,
one sees, 
by exchanging the rôles of $G_1$ and $G_2$ in the preceding reasoning,
that $A_2$ is a $G_1$-invariant line,
whence $A_1 = A_2$ is $G$-invariant.
Use now implication $(iii) \Rightarrow (i)$ of Theorem \ref{thm:Characterization-Sigma1-via-actions-on-trees}.
\smallskip

Suppose next that $\chi$ vanishes on one of the factors of $G = G_1 \times G_2$,
say on $G_{1}$. 
We assert that $[\chi]$ lies in $\Sigma^1(G)$ if, and only, if $[\chi_2] \in \Sigma^1(G_2)$.
Assume first that $[\chi_2] \in \Sigma^1(G_2)$ and consider a measured $G$-tree $(T, \leq, \lambda)$.
Since $G_2$ is a subgroup of $G$ the tree can be viewed as a measured $G_2$-tree.
By implication  $(i) \Rightarrow (iii)$ of Theorem \ref{thm:Characterization-Sigma1-via-actions-on-trees}
this tree has a $G_2$-invariant line; it is the axis of $A_2$ of a hyperbolic element $g_2 \in G_2$.
As $g_2$ is centralized by $G_1$,
Lemma \ref{lem:Axes-commuting-elements} next shows that $A_2$ is $G_2$-invariant.
But if so, this axis is $G = \gp(G_1 \cup G_2)$-invariant;
as $(T, \leq, \lambda)$ is an arbitrary measured $G$-tree, 
implication $(iii) \Rightarrow (i)$ of Theorem \ref{thm:Characterization-Sigma1-via-actions-on-trees}
thus shows that $[\chi] \in \Sigma^1(G)$.
Conversely, assume that $[\chi_2] \in \Sigma^1(G_2)^c$.
By  Theorem \ref{thm:Characterization-Sigma1-via-actions-on-trees}
there exists then a measured $G_2$-tree $(T, \leq, \lambda)$ that contains no $G_2$-invariant line.
View this tree as a $G$-tree by pulling back the action along $\pi_2 \colon G \epi G_2$.
The resulting $G$-tree has no $G$-invariant line,
whence implication $(i) \Rightarrow (iii)$ of Theorem \ref{thm:Characterization-Sigma1-via-actions-on-trees}
allows one to conclude that $[\chi] \in \Sigma^1(G)^c$.
\end{proof}
%

%
%
\section{Variation adapted to fundamental groups}
\label{sec:Variation-adapted-fundamental-groups}
%
%
In this section,
we present an alternate description of the invariant for groups
that are fundamental groups of finite connected CW-complexes $Y$ or of compact,  connected manifolds.
Given a non-zero character $\chi \colon G \to \R$,
the requirement  that the subgraph $\Gamma_\chi$ of a Cayley graph $\Gamma(G, \XX)$ be connected 
will be replaced by a condition  
that involves a function $f \colon X  \to \R$  and the subspace  $f^{-1}([0, \infty))$.
Here $X = \hat{Y}$ is the \emph{universal abelian cover} of the space $Y$ 
and $f$ denotes a continuous function satisfying the equation
\begin{equation}
\label{eq:Definition-G-equivariant}
f(g.x) = f(x) + \chi(g).
\end{equation}
The formula says that $f$ is $G$-\emph{equivariant:}
it is compatible with the action on $X$ 
given by the deck transformations of the covering projection $p \colon X \epi Y$ 
and the action by the translations $r \mapsto r + \chi(g)$ on the real line.
(TO BE COMPLETED)
%
 
%

%
\backmatter
\pagestyle{back}
\chapterstyle{back1}
%
%
%
\markboth{Bibliography}{Bibliography}
\addcontentsline{toc}{chapter}{Bibliography}
\bibliography{Z.main_2012}
\bibliographystyle{amsalpha}%
\markboth{Bibliography}{Bibliography}
\addcontentsline{toc}{chapter}{Index}
\markboth{Index}{Index}
\begin{theindex}

  \item 3-manifolds groups, 91

  \indexspace

  \item \textbf{A}bels, H., 41, 49
  \item Ahlin, A. Reiter, 84
  \item Almost finitely presented group, 40
  \item Amenable groups
    \subitem definition, 92
    \subitem examples, 92, 99
  \item Ascending HNN-extension
    \subitem definition, 18
  \item Axis of a hyperbolic element
    \subitem definition, 147, 152

  \indexspace

  \item \textbf{B}aumslag, B., 94, 96
  \item Baumslag, G., vii, 77, 84, 90, 94--96, 98--100, 102, 114
  \item Benli, M. G., 92
  \item Bestvina, M., 41
  \item Bieri, R., vii, ix--xii, 15, 23, 35, 39, 40, 48, 77, 79, 83--85, 
		90, 102, 104, 106, 111, 112, 115, 118, 119, 123, 141
  \item Blass, A., 80
  \item Brown's algorithm, 106
  \item Brown's characterization of $\Sigma^1$, 153
  \item Brown, K. S., xii, 79, 99, 106, 111, 115, 123
  \item Button, J. O., 80, 97, 112

  \indexspace

  \item \textbf{C}ayley graph
    \subitem definition, 5
    \subitem examples, 5--7
  \item Character associated to an action
    \subitem on a measured tree, 150
  \item Character of a group
    \subitem definition, 1
    \subitem equivalence relation, 2
    \subitem rank, 2
    \subitem rational, 2
  \item Character sphere
    \subitem coordinates, 3, 26
    \subitem definition, 2
    \subitem density of rank 1 points, 89
    \subitem morphisms, 45
    \subitem rank, 2
    \subitem sub-hemispheres, 3
    \subitem subspheres, 3
    \subitem topology, 2
  \item Characteristic subset
    \subitem properties, 152
  \item Characterization of
    \subitem coherent soluble groups, 85
    \subitem complement of $\Sigma^1$, 126
    \subitem fg normal subgroups, 23
    \subitem fp abelian by infinite cyclic groups, 84
    \subitem fp nilpotent-by-cyclic groups, 83
  \item Chou, C., 101
  \item Coherent group
    \subitem definition, 84
  \item Computation of $\Sigma^1$ for
    \subitem abelian groups, 8, 11
    \subitem amalgamated free products, 136
    \subitem direct products, 12--13, 164
    \subitem Engel groups, 135
    \subitem free groups, 10
    \subitem free products, 10
    \subitem graph groups, 32, 54--58, 73
    \subitem group $\BS(1,2)$, 9
    \subitem group of knot \texttt{K8n20}, 112
    \subitem group of knot \texttt{K8n21}, 113
    \subitem group of torus knot, 12
    \subitem groups of PL-homeomorphisms, 66
    \subitem groups with abelian normal subgroups, 129
    \subitem groups with centre, 11
    \subitem groups with fg derived group, 12
    \subitem groups with infinitely many ends, 137
    \subitem groups with large deficiency, 89
    \subitem groups with no free submonoid, 133
    \subitem groups with subnormal subgroups, 163
    \subitem HNN-extensions, 82, 136
    \subitem Houghton's groups, 121--123
    \subitem join of subgroups, 108
    \subitem joins of subgroups, 52, 162
    \subitem knot groups, 74
    \subitem lamplighter groups, 129
    \subitem metabelian groups, 19
    \subitem one relator groups, 106, 114
    \subitem right angled Artin groups, 148
    \subitem two generator groups, 65
    \subitem wreath products, 127
  \item Condensation group, 103
  \item Construction of
    \subitem morphism into an $\R$-tree, 156
  \item Cornulier, Y. de, 103, 104, 106
  \item Crowell, R. H., 84

  \indexspace

  \item \textbf{D}eficiency of a group
    \subitem definition, 88
    \subitem upper bound using homology, 88
  \item Definition of
    \subitem amenable group, 92
    \subitem axis of a hyperbolic element, 147, 152
    \subitem characteristic subset, 146, 152
    \subitem coherent group, 84
    \subitem deficiency of a group, 88
    \subitem elementary amenable group, 93
    \subitem elliptic element, 147, 152
    \subitem Engel group, 134
    \subitem hyperbolic element, 147, 152
    \subitem invariant $\Sigma^1$, 8
    \subitem invariant $\Sigma^1(-;\Z)$, 119
    \subitem invariant $\Sigma_{G'}(G)$, 121
    \subitem invariant $\Sigma^M$, 140
    \subitem invariant $\Sigma^m(-;A)$, 119
    \subitem large group, 93
    \subitem LERF group, 80
    \subitem line in a measured tree, 145
    \subitem line in measured tree, 152
    \subitem lower bound $\psi(\RR)$, 61
    \subitem lower bound $\Psi(\RR)$, 70
    \subitem measured tree, 149
    \subitem morphism of measured trees, 149
    \subitem oriented graph, 4
    \subitem ray descending from a point, 149
    \subitem relation module, 39
    \subitem subgroup separable group, 80
    \subitem $T(G,\leq,\lambda)$, 144
    \subitem Type $\FP_k$, 119
    \subitem wreath product, 127
  \item Delzant, T., 32
  \item Density of rank 1 points in $S(G)$, 90
  \item Diestel, R., 145
  \item Dunfield, N. M., 112

  \indexspace

  \item \textbf{E}ckmann, B., 79
  \item Edjvet, M., 96
  \item Elementary amenable groups
    \subitem definition, 93
    \subitem examples, 93, 101
  \item Elliptic elements
    \subitem definition, 147, 152

  \indexspace

  \item \textbf{F}arb, B., 84
  \item Finite generation
    \subitem graph theoretic criterion, 28
  \item Finitely presented groups
    \subitem and HNN-extensions, 75
    \subitem and wreath products, 98
    \subitem centre-by-metabelian, 86
    \subitem nilpotent-by-cyclic, 83
  \item Finiteness condition $\FP_2$
    \subitem characterization, 41
  \item Free soluble groups, 48, 102
  \item Freudenthal, H., 140
  \item Fuchsian groups, 91

  \indexspace

  \item \textbf{G}aboriau, D., 90
  \item Garille, S. G., 71
  \item Generalized soluble groups
    \subitem examples, 101
  \item Gildenhuys, D., 84
  \newpage
  \item Graph
    \subitem Cayley graph, 5
    \subitem connectivity, 54
    \subitem separating subset, 32
  \item Graph groups
    \subitem definition, 32, 53
    \subitem examples, 41
    \subitem invariant $\Sigma^1$, 54
  \item Grigorchuk, R., 92, 99
  \item Gromov, M., 97
  \item Groups
    \subitem fp abelian by infinite cyclic, 84
    \subitem fp nilpotent by cyclic, 83
    \subitem of Baumslag-Solitar, 19, 83, 96
    \subitem of deficiency 1, 112
  \item Groves, J. R. J., 48, 83, 85, 86
  \item Gruenberg, K. W., 135
  \item Guyot, L., 103, 104, 106

  \indexspace

  \item \textbf{H}all, M., 80
  \item Hall, P., 98, 129
  \item Harpe, P. de la, 92
  \item Hillman, J. A., 90
  \item HNN-criterion, 81
  \item HNN-extensions
    \subitem and fp groups, 75
  \item Hopf, H., 140
  \item Houghton, C. H., xiii, 99, 123
  \item Hyperbolic elements
    \subitem definition, 147, 152

  \indexspace

  \item \textbf{I}nfinitely related groups
    \subitem examples, 99, 101
    \subitem existence, 98
    \subitem free soluble, 102
    \subitem generalized soluble, 101
    \subitem locally finite by cyclic, 99
    \subitem locally nilpotent by cyclic, 101
    \subitem sufficient condition, 97
  \item Invariant $\Sigma^1$
    \subitem algebraic criterion, 60
    \subitem and rank 1 points, 17
    \subitem definition, 8
    \subitem geometric criterion, 15
    \subitem independence, 10
    \subitem lower bound $\Psi(\RR)$, 71
    \subitem lower bound $\psi(\RR)$, 62
    \subitem openness, 17
  \item Invariant $\Sigma^1$ and
    \subitem ascending HNN-extensions, 17
    \subitem automorphisms, 48
    \subitem change of groups, 47
    \subitem descending HNN-extension, 67
    \subitem descending HNN-extensions, 22
    \subitem fg normal subgroups, 23, 29
    \subitem finite presentability, 34
    \subitem $\FP_2$, 42, 43
    \subitem non-degenerate HNN-extension, 82
    \subitem quotient groups, 25, 49
    \subitem subgroups of finite index, 50
  \item Invariant $\Sigma^1$
    \subitem for arbitrary groups, 131
  \item Invariant $\Sigma^1(-;\Z)$
    \subitem coincidence with $\Sigma^1$, 117
    \subitem definition, 119
  \item Invariant $\Sigma_{G'}(G)$
    \subitem definition, 121
    \subitem equality with $\Sigma^1$, 123
  \item Invariant $\Sigma^M$
    \subitem definition, 140
    \subitem equality with $\Sigma^1$, 141
  \item Invariant $\Sigma^m(-;A)$
    \subitem definition, 119

  \indexspace

  \item \textbf{J}aco, W., 91

  \indexspace

  \item \textbf{L}amplighter groups
    \subitem definition, 129
  \item Large groups
    \subitem definition, 93
    \subitem examples, 80, 95--97
  \item Lennox, J. C., 83, 86, 98
  \item LERF groups
    \subitem definition, 80
    \subitem example, 80
    \subitem structure theorem, 80
  \item Levitt, G., xii, 115, 133
  \item Line in a measured tree
    \subitem definition, 145, 152
  \item Locally finite by cyclic groups
    \subitem examples, 99--100
  \item Longobardi, P., 133

  \indexspace

  \item \textbf{M}agnus, W., 107
  \item Maj, M., 133
  \item McCool, J., 32
  \item McLain, D. H., 101
  \item Measured tree
    \subitem associated to $(G, \XX, \chi)$, 144
    \subitem definition, 149
  \item Measured tree $T(-,\leq,\lambda)$
    \subitem canonical action, 146
    \subitem properties, 144
  \item Measured trees
    \subitem examples, 150--151
    \subitem morphisms, 149
  \item Meier, J., 32, 33, 54, 71
  \item Meigniez, G., x, 115, 141, 143
  \item Meinert, H., 54
  \item Miller III, C. F., 77, 99, 100
  \item Morphisms of measured trees
    \subitem construction, 155
    \subitem definition, 149
    \subitem examples, 150--151
    \subitem existence, 154
  \item Mosher, L., 84
  \item M\"uller, H., 79

  \indexspace

  \item \textbf{N}eumann, B. H., 98, 100
  \item Neumann, H., 48
  \item Neumann, P. M., 80
  \item Neumann, W. D., vii, ix, 15, 23, 35, 90, 115
  \item Noel, B., 41
  \item Non-abelian free subgroups
    \subitem existence, 81
  \item Notation
    \subitem $[\chi]$, 2
    \subitem $A_g$, 146
    \subitem $\sigma(\vartheta)$, 3
    \subitem $\chi$, 1
    \subitem $\defi(G)$, 88
    \subitem $\Delta_{b}$, 35
    \subitem $\edg(\Gamma)$, 4
    \subitem $\eta \colon \XX \to G$, 5
    \subitem $f_{(r,\chi)}$, 61, 62
    \subitem $G_\chi$, 7
    \subitem $G_\chi^I$, 14
    \subitem $\Gamma(G, \XX)$, 5
    \subitem $\Gamma_\chi$, 8
    \subitem $\GG_{\RR}(\chi)$, 70
    \subitem $\Hom(G,\R)$, 1
    \subitem $P(\Gamma)$, 4
    \subitem $\bar{p}$, 65
    \subitem $\psi(\RR)$, 61
    \subitem $\Psi(\RR)$, 70
    \subitem $r_{0}$, 2
    \subitem $S(G,H)$, 3
    \subitem $T(G,\XX,\chi)$, 144
    \subitem $T(G,\leq,\lambda)$, 144
    \subitem $v_\chi$, 10
    \subitem $\ver(\Gamma)$, 4
  \item Number of ends
    \subitem in the direction of $\chi$, 140
    \subitem of a topological space, 139

  \indexspace

  \item \textbf{O}l'shanskii, A. Yu., 99
  \item Orlandi-Korner, L. A., 32

  \indexspace

  \item \textbf{P}apadima, S., 54
  \item Pride, S. J., 94, 96

  \indexspace

  \item \textbf{R}ay descending from a point
    \subitem definition, 145, 149
  \item Remeslennikov, V. N., vii
  \item Renz, B., xi, xii, 111, 115, 118, 119
  \item Rhemtulla, A. H., 133
  \item Right angled Artin groups, \see{Graph groups}{32}
  \item Robinson, D. J. S., 83, 86, 98
  \item Rosenblatt, J. M., 134

  \indexspace

  \item \textbf{S}apir, M., 99
  \item Schanuel's Lemma, 40
  \item Scott, P., 80
  \item Separating subsets of a graph
    \subitem determination, 56
    \subitem examples, 56--58
  \item Shalen, P., 39
  \item $\Sigma^1$-criterion
    \subitem algebraic form, 60
    \subitem geometric form, 15
  \item Simple vertex, 111
  \item {\v{S}}mel$'$kin, A. L., 102
  \item Soluble groups
    \subitem Abels' example, 41
    \subitem examples of Baumslag-Solitar, 8
    \subitem satisfying max-n, 129
    \subitem with preassigned derived length, 129
  \item Special edge, 111
  \item Stallings, J. R., 79
  \item St{\"o}hr, R., 97
  \item Strebel, R., vii, ix, 15, 23, 35, 40, 48, 77, 79, 83--85, 90, 
		97, 98, 102, 104, 106, 115, 123
  \item Structure theorem
    \subitem for fg indicable groups, 104
    \subitem for fp indicable groups, 75
  \item Suciu, A., 54
  \item Surface groups, 91

  \indexspace

  \item \textbf{T}hurston, D. P., 112
  \item Track of a word, 116
  \item Trotter, H. F., 84
  \item Type $\FP_k$, 119

  \indexspace

  \item \textbf{V}anWyck, L., 32, 33, 54

  \indexspace

  \item \textbf{W}reath products
    \subitem definition, 127
    \subitem examples, 99, 129
    \subitem finite presentation, 98

\end{theindex}

%
%
\end{document}